\newtheorem{theorem}{Theorem}
\newtheorem{thm}{Theorem}[section]
\newtheorem{lem}[thm]{Lemma}
\newtheorem{prop}[thm]{Proposition}
\newtheorem{cor}[thm]{Corollary}
\theoremstyle{remark}
\theoremstyle{claim}
\newtheorem*{claim}{Claim}
\newtheorem*{thm*}{Theorem}
\theoremstyle{definition}
\newtheorem{definition}[thm]{Definition}
\theoremstyle{example}
\theoremstyle{convention}
\newtheorem{convention}[thm]{Convention}
\theoremstyle{convention}
\newtheorem{rem}[thm]{Remark}
\theoremstyle{convention}
\newtheorem{notation}[thm]{Notation}
\newcommand{\bbN}{\mathbb{N}}
\newcommand{\bbZ}{\mathbb{Z}}
\newcommand{\bbR}{\mathbb{R}}
\newcommand{\bbH}{\mathbb{H}}
\newcommand{\diam}{\mathrm{diam}}
\newcommand{\vol}{\mathrm{vol}}
\newcommand{\Ric}{\mathrm{Ric}}
\newcommand{\inj}{\mathrm{inj}}
\newcommand{\cal}[1]{\mathcal{#1}}
\newcommand{\dist}{\operatorname{dist}}
\begin{document}
\title[Stability of Einstein metrics and effective hyperbolization]
{Stability of Einstein metrics and effective hyperbolization in large Hempel distance}
\author{Ursula Hamenst\"adt and Frieder J\"ackel}
\thanks
{AMS subject classification: 53C20, 53C25, 57K32 \\
Both authors were supported by the DFG priority program "Geometry at infinity"}
\date{November 22, 2022}

\begin{abstract}
Extending earlier work of Tian, we show that if a manifold admits a metric that is almost hyperbolic in a suitable sense, then there exists an Einstein metric that is close to the given metric in the \(C^{2,\alpha}\)-topology. In dimension \(3\) the original manifold only needs to have finite volume, and the volume can be arbitrarily large. Applications include a new proof of the hyperbolization of \(3\)-manifolds of large Hempel distance yielding some new geometric control on the hyperbolic metric, and an analytic proof of Dehn filling and drilling that allows the filling and drilling of arbitrary many cusps and tubes.
\end{abstract}

\maketitle

\tableofcontents

\section{Introduction}\label{Sec: Introduction}

\subsection{Statement of the main results} 
The search for \emph{Einstein metrics} on a closed manifold $M$ has a long and fruitful history. Such metrics can be 
found using the Ricci flow, perhaps with surgery. This approach was used by Perelman to prove the 
so-called geometrization conjecture for 3-manifolds, embarking from the easy fact that in dimensions 2 and 3, 
Einstein metrics have constant curvature. 

An older method for the construction of Einstein metrics consists in starting from a metric $\bar g$ 
which is 
almost Einstein in a suitable sense, and construct a nearby Einstein metric as a perturbation of the 
given metric. The perturbation can be done using once again the Ricci flow, as for example in 
\cite{MO90}. The cross curvature flow is another tool for evolving metrics on 3-manifolds towards an Einstein metric
\cite{KY09}. One may also use compactness properties
for Riemannian manifolds with a 
uniform upper bound on the diameter, 
a uniform lower bound on the volume and a
suitable curvature control, like an $L^p$-bound on the norm of the curvature
tensor, to establish the existence of Einstein metrics which are 
close to a metric $\bar{g}$ with these properties and 
for which in addition the $L^p$-norm of 
$\Ric(\bar{g})-\lambda \bar{g}$ is sufficiently small (see \cite[Corollary 1.6]{PetersenWei1997} and also 
\cite{Anderson2006} and \cite{Bamler2012}).

Much more recently, Fine and Premoselli \cite{FP20} 
constructed Einstein metrics using a gluing method which can 
be described as follows. Starting from a manifold $M$ which is the union of two open submanifolds $U,V$ 
admitting each an Einstein metric whose restrictions to $U\cap V$ are close to each other in a controlled way, one can 
glue these metrics on $U\cap V$ and try to use
an implicit function theorem for the so-called \emph{Einstein operator} at the glued metric 
to find a nearby Einstein metric. This method 
depends on stability of Einstein metrics near the given metric, which means 
that locally, 
if there is an Einstein metric in an a priori chosen neighborhood of the glued
metric, then this metric is unique up to scaling and pull-back by 
diffeomorphisms. No 
a priori volume bound is necessary. 

In the setting we are interested in, stability is guaranteed by curvature control. Namely, 
a classical result states that on compact manifolds of dimension \(n \geq 3\), 
Einstein metrics with negative sectional curvature are isolated in the moduli space of Riemannian structures (see \cite[Corollary 12.73]{Besse1987}, \cite[Theorem 3.3]{Koiso1978}). 
For manifolds of infinite volume, this is no longer true. We refer to \cite{Biquard2000} for more information and for references. 
We also note that for positive sectional curvature there are much stronger rigidity results. For example, 
Berger showed that if an Einstein metric has positive strictly \(\frac{n-2}{n-1}\)-pinched sectional curvature, then the sectional curvature is constant (see \cite{Berger1966}).

The main goal of this article is to develop a systematic approach for the construction of Einstein metrics 
by a perturbation of metrics whose sectional curvature is close to $-1$. The first result we prove 
is a general existence result for Einstein metrics in this setting. 
It requires a uniform lower bound on the injectivity radius, but no volume bounds. 
Its formulation is similar to the main result of an unpublished preprint of Tian \cite{Tian}, and the proof we give follows his outline. 
\Cref{Sec: Proof of pinching with inj radius bound} contains a stronger but also more technical version of this result. 

%but for the moment we present an easier formulation.

\begin{theorem}[Stability of Einstein metrics with a lower 
injectivity radius bound]\label{Pinching with inj radius bound - introduction} 
  For any \(n \geq 3, \, \alpha \in (0,1), \, \Lambda \geq 0,\)
  and \(\delta \in (0,2\sqrt{n-2})\) there exist constants 
  \(\varepsilon_0=\varepsilon_0(n,\alpha,\Lambda,\delta)>0$
  and $C=C(n,\alpha,\Lambda,\delta)>0\) with the following property.
Let \(M\) be a closed \(n\)-manifold that admits a Riemannian metric \(\bar{g}\) satisfying the following conditions for some \(\varepsilon \leq \varepsilon_0\):
\begin{enumerate}[i)]
\item \(-1-\varepsilon \leq \mathrm{sec}_{(M,\bar{g})} \leq -1+\varepsilon\);
\item \(\mathrm{inj}(M,\bar{g})\geq 1\);
\item \(|| {\nabla} \Ric(\bar{g})||_{C^0(M,\bar{g})} \leq \Lambda\);
\item It holds
\[
  \int_Me^{-(2\sqrt{n-2}-\delta)r_x(y)}
| \Ric(\bar{g})+(n-1)\bar{g}|_{\bar{g}}^2(y) \, d\vol_{\bar{g}}(y) \leq \varepsilon^2
\]
for all \(x \in M\), where $r_x(y)=d(x,y)$. 
\end{enumerate}
Then there exists an Einstein metric \(g_0\) on \(M\) so that \(\Ric(g_0)=-(n-1)g_0\) and 
\[
	||g_0-\bar{g}||_{C^{2,\alpha}(M,\bar{g})} \leq C \varepsilon^{1-\alpha}.
\]
Moreover, if additionally \(\Ric(\bar{g})=-(n-1)\bar{g}\) outside a region \(U\), and if 
\[
	\int_U | \Ric(\bar{g})+(n-1)\bar{g}|_{\bar{g}}^2 \, d\vol_{\bar{g}} \leq \varepsilon^2,
\]
then
\[
	| g_0-\bar{g}|_{C^{2,\alpha}}(x) \leq C \varepsilon^{1-\alpha} e^{-(\sqrt{n-2}-\frac{1}{2}\delta)\dist_{\bar{g}}(x,U)}	
\]
for all \(x \in M\).
\end{theorem}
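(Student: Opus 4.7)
The plan is to gauge-fix the Einstein equation via the DeTurck trick and solve the resulting elliptic system by a Newton iteration in weighted H\"older spaces. Concretely, set
\[
F(g) \;=\; \Ric(g) + (n-1)g + \tfrac12 \mathcal{L}_{V(g,\bar g)}\, g,
\]
where $V(g,\bar g)$ is the DeTurck vector field with components $g^{ij}(\Gamma(g)^{k}_{ij} - \Gamma(\bar g)^{k}_{ij})$. Then $F$ is strictly elliptic, and any $g$ close to $\bar g$ with $F(g)=0$ yields an Einstein metric satisfying $\Ric = -(n-1)g$ after pull-back by a diffeomorphism. The linearization $L := dF_{\bar g}$ is a zeroth-order perturbation of the rough Laplacian on symmetric $2$-tensors which, on an exactly hyperbolic background, reduces to a Lichnerowicz-type operator with spectrum bounded below by a constant $\geq 2(n-2)$. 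The threshold $2\sqrt{n-2}$ in the hypothesis is the square root of this gap, corresponding to the exponential decay rate of the associated Green kernel on $\bbH^n$.

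The next step is a uniform weighted coercivity estimate
\[
\int_M e^{-\beta\, r_x(y)}\, |h|_{\bar g}^2 \, d\vol_{\bar g}(y) \;\leq\; C\int_M e^{-\beta\, r_x(y)}\, |Lh|_{\bar g}^2 \, d\vol_{\bar g}(y), \qquad \beta \leq 2\sqrt{n-2} - \delta,
\]
with $C=C(n,\delta,\Lambda)$, uniformly in the basepoint $x\in M$. It is obtained by a Bochner computation for $L$ tested against $e^{-\beta r_x}h$, combined with the hyperbolic Laplacian comparison $\Delta r_x \leq (n-1)\coth r_x$ (valid up to $O(\varepsilon)$-error under the pinching) in order to dominate the first-order drift produced by the weight. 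The sectional-curvature pinching makes the coefficients of $L$ agree with those of the hyperbolic model to order $O(\varepsilon)$, and the bound $\|\nabla \Ric\|_{C^0}\leq \Lambda$ provides the $C^{0,\alpha}$-regularity of these coefficients required for Schauder theory. The injectivity radius bound $\inj(M,\bar g)\geq 1$ then upgrades the weighted $L^2$-estimate to a weighted $C^{2,\alpha}$-estimate on unit balls, giving a uniformly bounded right inverse of $L$.

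With the linear theory in hand, the existence part is a contraction argument. Writing $g=\bar g+h$, the Taylor expansion
\[
F(\bar g + h) \;=\; F(\bar g) + Lh + Q(h),
\]
with $Q$ at least quadratic in $(h,\nabla h,\nabla^2 h)$, reduces matters to solving $Lh = -F(\bar g) - Q(h)$ for small $h$. Condition (iv) applied over a covering by unit balls, combined with local bootstrapping from (ii) and (iii), yields $\|F(\bar g)\|_{C^{0,\alpha}} \leq C\varepsilon^{1-\alpha}$, with the interpolation factor $\varepsilon^{1-\alpha}$ coming from Gagliardo--Nirenberg interpolation between the weighted $L^2$-control of $F(\bar g)$ and the $C^{1}$-control provided by (iii). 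Newton iteration in the ball $\{\|h\|_{C^{2,\alpha}}\leq C\varepsilon^{1-\alpha}\}$ then produces a solution $h$, and composing with the DeTurck diffeomorphism yields the Einstein metric $g_0$.

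The exponential decay in the moreover part is obtained by re-running the iteration in the exponentially weighted norm centered at an arbitrary basepoint $x\in M$. When $\Ric(\bar g)=-(n-1)\bar g$ outside $U$, the source $F(\bar g)$ is supported in $U$, so
\[
\int_M e^{-\beta r_x(y)}\, |F(\bar g)|_{\bar g}^2 \, d\vol_{\bar g}(y) \;\leq\; \varepsilon^2\, e^{-\beta\,\dist_{\bar g}(x,U)};
\]
the weighted coercivity propagates this decay to $h$, and converting the weighted $L^2$-decay into a pointwise $C^{2,\alpha}$-decay halves the exponent (because the integrand is $|h|^2$), yielding the stated rate $\sqrt{n-2}-\delta/2$. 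The main obstacle is the weighted coercivity step: the critical threshold $2\sqrt{n-2}$ saturates the relation $\tfrac14\beta^2 = n-2$ that balances the drift generated by $\beta\nabla r_x$ against the spectral gap of $L$, so keeping the $\delta$-dependence explicit while absorbing the $O(\varepsilon)$-perturbation of the coefficients is the most delicate computation and ultimately determines the constants $\varepsilon_0$ and $C$.
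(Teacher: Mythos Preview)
Your proposal is correct and follows essentially the same route as the paper: your DeTurck operator $F$ is the paper's Einstein operator $\Phi_{\bar g}$ (with the Bianchi vector field in place of $V$), the weighted $L^2$-coercivity is exactly Proposition~3.4/Corollary~3.5 (the relevant constant is $n-2$, matching your relation $\tfrac14\beta^2=n-2$), and the contraction is run in the paper's hybrid norms that combine $C^{2,\alpha}$ with $\sup_x$ of the weighted $L^2$-norm rather than in pure weighted H\"older spaces. Two small corrections: no diffeomorphism pull-back is needed, since under $\Ric<0$ a zero of $\Phi$ is directly Einstein with $\beta_{\bar g}(g)=0$ (Lemma~2.1); and the factor $\varepsilon^{1-\alpha}$ for $\|F(\bar g)\|_{C^{0,\alpha}}$ comes from $C^0$--$C^1$ interpolation using conditions i) and iii), not from the weighted $L^2$-bound.
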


We refer to Section \ref{Section - Hölder Norms} for a detailed explanation of the notion of Hölder norm used here.

As in \cite{FP20}, \cite{Anderson2006} and \cite{Bamler2012}, 
the proof of \Cref{Pinching with inj radius bound - introduction} is based on an application of the implicit function theorem 
to the Einstein operator \(\Phi\) (see \Cref{Subsec: Einstein operator} for the definition of $\Phi$). 
The novelty of our approach consists in the use of Banach spaces for tensor fields whose construction is adapted to the specific geometric situation.  
These Banach spaces are defined by 
%This  
%The strategy to study the Einstein operator to construct Einstein metrics has been used in the past by other researchers (see e.g., \cite{Anderson2006}, \cite{Bamler2012}). What distinguishes \Cref{Pinching with inj radius bound - introduction} from the results in \cite{Anderson2006}, \cite{Bamler2012}, is that we do \textit{not} need a bound on the volume. To not rely on volume bounds, we will work 
\textit{hybrid norms} which are a combination of Hölder- and weighted Sobolev norms. 

%A byproduct of the use of 
%the implicit function theorem is that up to pullbacks via diffeomorphisms, 
%the Einstein metric \(g_0\) is the unique Einstein metric in a neighbourhood of \(\bar{g}\) in the \(C^{2,\alpha}\)-topology. 
%\Cref{Pinching with inj radius bound - introduction} could potentially be of use to construct Einstein manifolds with negative sectional curvature. It is very hard to do so. Besides locally homogeneous Einstein manifolds (e.g., quotients of real- or complex hyperbolic space), the only known examples were recently constructed by J. Fine and B. Premoselli in \cite{Fine2020}. In fact, they show that the manifolds constructed by Gromov and Thurston (\cite{GromovThurston87}) admit negatively curved Einstein metrics, and they too achieve this by an application of the Implicit Function Theorem.

For the main applications we have in mind, Theorem \ref{Pinching with inj radius bound - introduction} 
is not strong enough due to the assumption of a uniform positive lower bound on the injectivity radius. 
But Theorem \ref{Pinching with inj radius bound - introduction} does not extend in a 
straightforward way to finite volume manifolds without such a lower injectivity radius bound. 
We illustrate this in 
\Cref{Sec: Counterexamples} by constructing for any $L>1$ 
a metric on a closed $3$-manifold which fulfills all assumptions of 
Theorem \ref{Pinching with inj radius bound - introduction}
with the exception of a uniform lower bound on the injectivity radius, with an arbitrarily small control constant $\varepsilon$,
which is not $L$-bilipschitz equivalent to an 
Einstein metric
 
 Finding the correct assumptions for $3$-dimensional manifolds of finite volume 
 for which an extension of Theorem  \ref{Pinching with inj radius bound - introduction}
 without the hypothesis of a uniform positive lower bound on the injectivity radius holds true is the main technical 
 result of this article. For its formulation, recall that a complete manifold of bounded negative sectional curvature admits
 a decomposition into its \emph{thick} part $M_{\rm thick}$ consisting of points where the injectivity
 radius is bigger than a fixed \emph{Margulis constant} for the curvature bounds, 
 and its complement $M_{\rm thin}$.
 
%we construct examples that show that \Cref{Pinching with inj radius bound - introduction} does no longer hold if the bound on the injectivity radius is dropped. It will be appearant from the discussion in \Cref{Sec: Counterexamples} that a natural strengthening of the assumption is that the sectional curvature is not just pinched, but that the pinching becomes exponentially small when moving further and further into the thin part of the manifold. In fact, this is the key condition in the following stability result.

 \begin{theorem}[Stability of Einstein metrics in dimension $3$]
 \label{Pinching without inj radius bound - introduction} For all \(\alpha \in (0,1)\), \(\Lambda \geq 0\), \(\delta \in (0,2)\), \(b > 1\) and \(\eta > 2\) there
   exist $\varepsilon_0=\varepsilon_0(\alpha,\Lambda,\delta,b,\eta)>0$
   and $C=C(\alpha,\Lambda,\delta,b,\eta) >0$
 with the following property. Let \(M\) be a \(3\)-manifold that admits a complete Riemannian metric \(\bar{g}\) satisfying the following conditions for some \(\varepsilon \leq \varepsilon_0\):
\begin{enumerate}[i)]
\item \(\vol(M,\bar{g}) < \infty\);
\item \(-1-\varepsilon \leq \mathrm{sec}_{(M,\bar{g})} \leq -1+\varepsilon\);
\item It holds
\[
  \max_{\pi \subseteq T_xM}|\mathrm{sec}(\pi)+1|, \,
 | \nabla R|(x), \, | \nabla^2R|(x) \leq \varepsilon e^{-\eta d(x,M_{\rm thick})}
\]
for all \( x \in M_{\rm thin}\);
\item \(|| {\nabla} \Ric(\bar{g})||_{C^0(M,\bar{g})} \leq \Lambda\);
\item It holds
\[
  e^{bd(x, M_{\rm thick})}\int_Me^{-(2-\delta)r_x(y)}
| \Ric(\bar{g})+2\bar{g}|_{\bar{g}}^2(y) \, d\vol_{\bar{g}}(y) \leq \varepsilon^2
\]
for all \(x \in M\), where $r_x(y)=d(x,y)$.
\end{enumerate}
Then there exists a hyperbolic metric \(g_{\rm hyp}\) on \(M\) so that
\[
	||g_{\rm hyp}-\bar{g}||_{C^{2,\alpha}(M,\bar{g})} \leq C \varepsilon^{1-\alpha}.
\]
Moreover, if additionally \(\bar{g}\) is already hyperbolic outside a region \(U \subseteq M\), and if 
\[
	\int_{U}|\Ric(\bar{g})+2\bar{g}|_{\bar{g}}^2 \, d\vol_{\bar{g}} \leq \varepsilon^2,
\]
then for all \(x \in M_{\rm thick}\) it holds
\[
|g_{\rm hyp}-\bar{g}|_{C^{2,\alpha}}(x) \leq C\varepsilon^{1-\alpha}e^{-(1-\frac{1}{2}\delta)\mathrm{dist}_{\bar{g}}(x,\,U \, \cup \, \partial M_{\rm thick})}.
\]
\end{theorem}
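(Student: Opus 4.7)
The plan is to mirror the proof of \Cref{Pinching with inj radius bound - introduction}, but with the injectivity radius bound replaced by a direct analysis of the geometry of the thin part. Since $\bar g$ has finite volume and pinched negative curvature, the Margulis lemma implies that each connected component of $M_{\rm thin}$ is either a cusp neighborhood (topologically $T^2 \times [0,\infty)$) or a Margulis tube around a short closed geodesic. Hypothesis (iii) forces the intrinsic geometry of $\bar g$ on each such component to be exponentially close, with rate $\eta > 2$, to a model hyperbolic metric $g_{\rm mod}$ which is either a standard cusp or a standard hyperbolic tube; this model is uniquely determined by the conformal class of the cross-sectional torus (for cusps) or the complex length of the core geodesic (for tubes).

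First I would construct a comparison metric $\tilde g$ that agrees with $\bar g$ on $M_{\rm thick}$, agrees with $g_{\rm mod}$ deep in $M_{\rm thin}$, and is smoothly interpolated in a collar of $\partial M_{\rm thick}$. Hypothesis (iii) with $\eta > 2$ guarantees that $\tilde g - \bar g$ and $\Ric(\tilde g) + 2\tilde g$ are pointwise controlled by $\varepsilon e^{-\eta d(x, M_{\rm thick})}$, so $\tilde g$ inherits the smallness of the Ricci defect encoded in hypothesis (v) and is exactly hyperbolic away from a neighborhood of $\partial M_{\rm thick}$. I would then set up the implicit function theorem for the Einstein operator $\Phi$ at $\tilde g$ using \emph{hybrid} Banach spaces of symmetric two-tensors that combine the $C^{2,\alpha}$-norm used in \Cref{Pinching with inj radius bound - introduction} with an $L^2$-type weighted Sobolev norm containing the factor $e^{bd(\cdot, M_{\rm thick})}$ of hypothesis (v).

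The technical heart of the argument, and the main obstacle I expect to encounter, is establishing invertibility of the linearization $D\Phi_{\tilde g}$ on these hybrid spaces, with bounds independent of how thin the tubes are allowed to become. On $M_{\rm thick}$ the Weitzenböck-type coercive estimates from \Cref{Pinching with inj radius bound - introduction} apply directly, using the lower injectivity radius bound at the Margulis constant. On each thin component, $\tilde g$ is an exact hyperbolic model, and in divergence- and trace-free gauge $D\Phi_{g_{\rm mod}}$ reduces to a Bochner-type operator that can be diagonalized by Fourier expansion along the cross-section. The bounded solutions of the homogeneous equation consist of modes decaying toward infinity in the cusp or toward the core of the tube, together with finitely many bounded but non-decaying modes corresponding to infinitesimal geometric deformations (Dehn filling parameters, cusp shape changes, changes of core length and holonomy). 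The weight $e^{bd(\cdot, M_{\rm thick})}$ with $b > 1$, together with the spectral bound coming from $2 - \delta$ in hypothesis (v), is calibrated precisely so that these non-decaying modes are excluded from the weighted space; elliptic Fredholm theory then yields surjectivity, and a matching argument across $\partial M_{\rm thick}$ assembles a bounded right inverse for $D\Phi_{\tilde g}$.

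Once this invertibility is in hand, the implicit function theorem supplies an Einstein metric $g_0$ with $\Ric(g_0) = -2 g_0$ in a hybrid ball of radius $C\varepsilon^{1-\alpha}$ around $\tilde g$, and hence around $\bar g$. In dimension three such a metric is automatically hyperbolic, so $g_0 = g_{\rm hyp}$. The pointwise exponential decay estimate under the additional hypotheses follows as in \Cref{Pinching with inj radius bound - introduction}: the decay of the source in hypothesis (v), combined with the spectral gap of the linearization on the hyperbolic model and local elliptic regularity upgrading $L^2$- to $C^{2,\alpha}$-control, produces the pointwise bound with rate $1 - \delta/2$. The appearance of $\partial M_{\rm thick}$ in the exponent reflects that this spectral argument is available on the thick part, while on the thin side the decay is already provided by the model analysis.
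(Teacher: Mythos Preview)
Your outline captures the overall architecture correctly---reduce to invertibility of the linearization on hybrid spaces, analyze the thin part via model hyperbolic metrics, then run the implicit function theorem---but the step you identify as the ``technical heart'' contains a genuine gap, and it is exactly the gap the paper devotes Sections 6--9 to closing.

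You assert that the weight $e^{bd(\cdot,M_{\rm thick})}$ with $b>1$ is ``calibrated precisely so that these non-decaying modes are excluded from the weighted space.'' This is false for Margulis tubes. In a tube of radius $R$, the distance to $M_{\rm thick}$ is bounded by $R$, so the weight is bounded by $e^{bR}$ on the whole tube; a trivial Einstein variation (an infinitesimal change of the conformal structure of the cross-sectional torus) has \emph{constant} pointwise norm with respect to the cusp metric and therefore lies in any such weighted $C^0$-space, with norm depending on $R$. No weight in $d(\cdot,M_{\rm thick})$ alone can expel these modes while keeping the norm of $\mathcal L^{-1}$ uniform in $R$. The paper makes this failure explicit: \Cref{Sec: Counterexamples} constructs metrics on Dehn-filled manifolds that satisfy all the integral smallness conditions (including the weighted one in (v)) with arbitrarily small $\varepsilon$, yet are not bi-Lipschitz close to any hyperbolic metric. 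These examples are produced precisely by exciting the trivial Einstein variation modes deep inside a tube. So ``elliptic Fredholm theory then yields surjectivity'' would give you invertibility on each fixed manifold, but with operator norm blowing up as the tubes get thin, which destroys the implicit-function-theorem step.

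The paper's fix is structural rather than a matter of choosing a better weight. It introduces a further subdivision $M_{\rm small}\subset M_{\rm thin}$ (where the level tori have small diameter), handles $M_{\rm thin}\setminus M_{\rm small}$ by a preimage-counting argument (\Cref{A-priori estimate away from the small part}), and on $M_{\rm small}$ uses an averaging operator to reduce $\mathcal Lh=f$ to explicit constant-coefficient ODEs. The non-decaying modes are then dealt with not by a weight but by a \emph{decomposition norm} $\|\cdot\|_{C_\lambda^{2,\alpha};\ast}$ (\Cref{Subsec: various norms}) that explicitly subtracts off a canonical trivial Einstein variation in each tube and cusp and measures the remainder in a weighted H\"older norm; see especially \Cref{Use of *-norm}, which states directly that without the $\ast$-norm the operator $\mathcal L$ is \emph{not} uniformly invertible. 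The source and target spaces are thus asymmetric in a way your proposal does not anticipate. Your idea of first replacing $\bar g$ by an interpolated $\tilde g$ is harmless but unnecessary; the paper works with $\bar g$ throughout and only uses the model metrics $g_{\rm cusp}$ as auxiliary tools for the ODE analysis on $M_{\rm small}$.
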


Here \(R\) denotes the Riemann curvature endomorphism. Section \ref{Sec: Proof of pinching without inj radius bound} contains
a slightly stronger but also more technical version of this result. 

We believe that there is a version of Theorem \ref{Pinching without inj radius bound - introduction} 
with similar assumptions which 
holds true in all dimensions. However, it turns out that the well
known difference 
between geometric properties of the thin parts of negatively curved  3-manifolds
and the thin parts of negatively curved manifolds
in higher dimensions require a 
modification of the strategy we use, and we do not attempt to establish 
such an extension of Theorem \ref{Pinching without inj radius bound - introduction} in this article.

In view of a good understanding of large scale geometric properties of 
asymptotically hyperbolic manifolds of infinite volume as
considered for example in \cite{Biquard2000} and \cite{HQS12} we also expect that there are extensions of 
Theorem \ref{Pinching with inj radius bound - introduction} to
negatively curved 
manifolds of infinite volume with finitely generated fundamental group and asymptotically hyperbolic 
infinite volume ends. 

%The main ingredient of the proof is to define certain \textit{decomposition norms} (see \Cref{Subsec: various norms}). This is adapted from work of R. Bamler (see \cite{Bamler2012}).

Theorem \ref{Pinching without inj radius bound - introduction} 
makes it possible to glue finite volume hyperbolic 
metrics which are defined on open submanifolds $U,V$ of a given 3-manifold $M$ 
along the intersection $U\cap V$ and deform the glued metrics
to a hyperbolic metric. As a fairly immediate application,
we obtain an analytic approach to 
hyperbolic Dehn filling and Dehn drilling in dimension 3,
without the use of the deformation theory of hyperbolic cone manifolds.
An earlier analytic proof of hyperbolic Dehn filling
under a uniform
upper bound for the volume which also is based on an implicit function
theorem is due to Anderson \cite{Anderson2006} and
Bamler \cite{Bamler2012}.
We refer to \cite{FPS19b}, \cite{HK08}
for an overview on what is known to date about Dehn filling and Dehn drilling.

For the statement of our drilling result, recall that a \emph{Margulis tube} in 
a negatively curved manifold $M$ is a tubular neighborhood of a 
closed geodesic $\beta$ which is a connected component of the thin 
part of $M$. The \emph{radius} of the Margulis tube is the distance 
between the core geodesic of the tube and its boundary. 

\begin{theorem}[The drilling theorem]\label{drilling - introduction}
For any \(\varepsilon >0\), \(\kappa \in (0,1)\) and \(m>0\) there exists
a number $R=R(\varepsilon,\kappa,m) >0$
with the following property. 
Let $M$ be a finite volume hyperbolic 3-mani\-fold, and let 
$T_1,\dots,T_k$ be a family of Margulis tubes in $M$. Let $R_i>0$ be the radius of the 
tube $T_i$, and let $\beta_i$ be its core geodesic. 
If for each $r>0$ and each $x\in M$ we have
$\# \{i\mid {\rm dist}(x,T_i)\leq r\}\leq me^{\kappa r }$ and if $R_i\geq R$ for all $i$, 
then the manifold
obtained from $M$ by drilling each of the 
geodesics $\beta_i$ admits a complete hyperbolic metric of 
finite volume, and the restriction of this hyperbolic metric to the complement
of the cusps obtained from the drilling is $\varepsilon$-close 
in the $C^2$-topology 
to the restriction of the metric on $M$.
\end{theorem}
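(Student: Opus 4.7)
The plan is to construct, on the drilled manifold $M'=M\setminus\bigcup_i\beta_i$, a complete Riemannian metric $\bar g$ that is hyperbolic outside a collection of annular ``gluing collars'' at depth roughly $R_i$ inside each tube $T_i$, and then to apply Theorem~\ref{Pinching without inj radius bound - introduction} to $\bar g$. The resulting hyperbolic metric will automatically be complete of finite volume (from the cusp asymptotics of $\bar g$) and $C^{2}$-close to the original hyperbolic metric on $M$ away from the drilled cores, through the moreover clause of that theorem.

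To build $\bar g$, fix once and for all a number $D_{0}>0$ (depending only on the $3$-dimensional Margulis constant). Inside each tube $T_{i}$, use Fermi coordinates $(r,\theta,z)$ along $\beta_{i}$ in which the hyperbolic metric is, modulo a lattice quotient and a possible twist,
\[
g_{\rm tube}=dr^{2}+\sinh^{2}(r)\,d\theta^{2}+\cosh^{2}(r)\,dz^{2}.
\]
Under the substitution $t=\log 2-r$ this tensor differs from the rank-$2$ cusp model $g_{\rm cusp}=dt^{2}+e^{-2t}(d\theta^{2}+dz^{2})$ by a symmetric $2$-tensor of pointwise $\bar g$-size $O(e^{-2r})$, with derivatives of the same order. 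A smooth cutoff on the collar $U_{i}=\{R_{i}-D_{0}-1\le r\le R_{i}-D_{0}\}$, together with a smooth extension of $g_{\rm cusp}$ across the drilled locus, produces a complete metric $\bar g$ on $M'$ that is hyperbolic outside $U:=\bigcup_{i}U_{i}$ and satisfies
\[
\bigl|\Ric(\bar g)+2\bar g\bigr|_{\bar g}+\bigl|\nabla\Ric(\bar g)\bigr|_{\bar g}+\bigl|\nabla^{2}\Ric(\bar g)\bigr|_{\bar g}\le C e^{-2R_{i}}\le C e^{-2R}
\]
pointwise on $U_{i}$. The thick--thin decomposition of $(M',\bar g)$ is that of $M$ with each drilled tube replaced by a new cusp, so every $U_{i}$ lies at uniformly bounded distance from $\partial M'_{\rm thick}$; in particular, the pointwise conditions (ii), (iii), (iv) of Theorem~\ref{Pinching without inj radius bound - introduction} hold once $R$ is large.

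The critical step is condition~(v), and here the density hypothesis $\#\{i:\dist(x,T_{i})\le r\}\le m e^{\kappa r}$ enters. Since each $U_{i}$ has uniformly bounded volume,
\[
\int_{M'}e^{-(2-\delta)r_{x}(y)}\bigl|\Ric(\bar g)+2\bar g\bigr|_{\bar g}^{2}\,d\vol_{\bar g}(y)\le C'e^{-4R}\sum_{i}e^{-(2-\delta)\dist(x,T_{i})},
\]
and an Abel summation against the density bound yields $\sum_{i}e^{-(2-\delta)\dist(x,T_{i})}\le \tfrac{m(2-\delta)}{2-\delta-\kappa}$ provided $\delta<2-\kappa$. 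When $x$ sits deep in a new cusp at depth $D=d(x,M'_{\rm thick})$, the Ricci error inside that cusp is located at $\bar g$-distance at least $D-D_{0}$ from $x$, so its contribution is damped by $e^{-(2-\delta)(D-D_{0})}$, which beats the amplification $e^{bD}$ as soon as $b<2-\delta$. Choosing in advance $\delta\in(0,1)$ and $b\in(1,2-\delta)$, possible because $\kappa<1$ gives $2-\kappa>1$, condition~(v) holds with $\varepsilon^{2}=O(e^{-4R})$.

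Given $\varepsilon,\kappa,m$ as in the statement, pick any $\alpha\in(0,1)$, any $\Lambda$ that bounds $|\nabla\Ric|$ on hyperbolic-like metrics, any $\eta>2$, and let $\varepsilon_{0}(\alpha,\Lambda,\delta,b,\eta)$ and $C(\alpha,\Lambda,\delta,b,\eta)$ be the constants of Theorem~\ref{Pinching without inj radius bound - introduction}. Choose $R$ so large that the defect constructed above is $\le\varepsilon_{0}$ and $C\varepsilon_{0}^{1-\alpha}\le\varepsilon$. Theorem~\ref{Pinching without inj radius bound - introduction} produces a hyperbolic metric $g_{\rm hyp}$ on $M'$ with $\|g_{\rm hyp}-\bar g\|_{C^{2,\alpha}(M',\bar g)}\le\varepsilon$; its moreover clause forces $|g_{\rm hyp}-\bar g|_{C^{2,\alpha}}(x)$ to decay exponentially in $\dist_{\bar g}(x,U\cup\partial M'_{\rm thick})$. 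In each new cusp this makes $g_{\rm hyp}$ asymptotic to $g_{\rm cusp}$, so $g_{\rm hyp}$ is complete and of finite volume; on the complement of the new cusps it gives the claimed $C^{2}$-closeness to the original hyperbolic metric on $M$. The principal obstacle is precisely the balancing act within~(v): the tube-density factor $e^{\kappa\dist(x,T_{i})}$ when $x$ is in the thick part and the thin-part amplification $e^{bD}$ when $x$ is deep in a new cusp must both be dominated by the weight $e^{-(2-\delta)r_{x}(y)}$, and this is where the hypothesis $\kappa<1$ is decisive.
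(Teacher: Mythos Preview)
Your approach matches the paper's: interpolate between the tube metric and the cusp model in a unit-width collar near each $\partial T_i$, then invoke Theorem~\ref{Pinching without inj radius bound - introduction} and verify condition~(v) via summation against the sparsity hypothesis.

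One correction: the collars $U_i$ do \emph{not} have uniformly bounded volume. The torus $\partial T_i$ has area $2\pi\ell(\beta_i)\sinh(R_i)\cosh(R_i)$, and the Margulis condition only yields $\ell(\beta_i)\cosh(R_i)\le 2\mu$ (a shortest essential loop on $\partial T_i$ must cross the height of the cylinder), so $\mathrm{area}(\partial T_i)=O(e^{R_i})$ and hence $\vol(U_i)=O(e^{R_i})$. Your $e^{-4R}$ should read $e^{-3R}$; the conclusion is unaffected since this still tends to zero.

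Two further points worth tightening. First, the density hypothesis is stated for distances in $M$, not in $(M',\bar g)$; the paper passes to any $\kappa'\in(\kappa,1)$ and uses that $\bar g$ is $C^0$-close to $g$ outside the drilled cores to transfer the counting bound. Second, your two-case analysis for condition~(v) can be unified: since every $\partial T_i$ lies in $M'_{\rm thick}$, one has $d(x,M'_{\rm thick})\le\min_i\dist(x,\partial T_i)$ for \emph{every} $x\in M'$, and then the single inequality $b+\delta+\kappa'<2$ with $b>1$ handles all points at once. This is how the paper organizes its Step~3 and avoids worrying separately about points in $M'_{\rm thin}\setminus M'_{\rm small}$.
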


The same argument which allows for drilling closed geodesics in finite volume hyperbolic 
3-manifolds can also be used to Dehn fill cusps. This is formulated in our next result. 
%To this end we follow \cite{HK08}  and define the \emph{normalized length} of 
%a closed geodesic $\alpha$ on a flat $2$-torus $(T,h)$ 
%to be the length of $\alpha$ for the metric $h/{\rm vol}(T)$ on $T$. 
Recall that 
the \emph{meridian} of a solid torus $T$ is a simple closed curve on the 
boundary torus $\partial T$ of $T$ which is homotopic to zero in $T$.
A \emph{torus cusp} in a hyperbolic 3-manifold is a cusp diffeomorphic
to $T^2\times[0,\infty)$ where $T^2$ denotes the 2-torus.
Any cusp in a finite volume orientable hyperbolic 3-manifold is a torus cusp.

\begin{theorem}[The filling theorem]\label{filling - introduction}
For any \(\varepsilon >0\), \(\kappa \in (0,1)\) and \(m>0\) there exists
a number $L=L(\varepsilon,\kappa,m) >0$
with the following property. Let $M$ be a finite volume hyperbolic 3-mani\-fold, $C_1,\dots,C_k\subseteq M$ be a finite collection of 
torus cusps, and assume that  
for each $r>0$ and each $x\in M$ we have
$\# \{i\mid {\rm dist}(x,C_i)\leq r\}\leq me^{\kappa r}$.
For each $i\leq k$ let $\alpha_i$ be a flat simple closed geodesic in  
$\partial C_i$ of length $L_i\geq L$. 
Then the manifold obtained from $M$ by filling the cusps $C_i$, 
with meridian $\alpha_i$,  
is hyperbolic, and the restriction of its metric to the complement
of the Margulis tubes obtained from the filling is $\varepsilon$-close to the metric on 
$M-\cup_iC_i$. 
\end{theorem}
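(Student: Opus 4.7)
The plan is to build a model metric $\bar g$ on the filled manifold $M'$ that is exactly hyperbolic outside a thin interpolation shell near each new Margulis tube boundary, to verify the hypotheses of Theorem \ref{Pinching without inj radius bound - introduction} for $\bar g$, and to invoke that theorem.

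\textbf{Step 1: Constructing the model metric.} For each torus cusp $C_i$ with specified meridian $\alpha_i$ of length $L_i \geq L$, I use the standard hyperbolic Dehn filling model: a solid torus $V_i$ carrying a complete hyperbolic metric whose core geodesic has holonomy making $\alpha_i$ a meridian, realised as a quotient of a tubular neighbourhood of a geodesic in $\bbH^3$. As $L_i \to \infty$ the tube radius $R_i$ tends to infinity, and on an annular collar of $\partial V_i \cong \partial C_i$ the metric on $V_i$ is exponentially close in $R_i$ to the cusp metric on $C_i$. Cutting $C_i$ deep in the cusp, gluing in $V_i$, and interpolating the two metrics on a bounded-width collar via a fixed cutoff yields a smooth metric $\bar g$ on $M'$. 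Outside the union $U$ of the interpolation shells $\bar g$ is exactly hyperbolic; on $U$ its sectional curvature lies in $[-1-\varepsilon,-1+\varepsilon]$ and $|\Ric(\bar g)+2\bar g|_{\bar g}$ has size at most $\omega(L)$, with $\omega(L) \to 0$ as $L \to \infty$.

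\textbf{Step 2: Local conditions (i)--(iv).} Finite volume, the sectional curvature pinching, and the $C^1$ bound on the Ricci tensor follow from the construction and smoothness of the cutoff. For (iii), note that each interpolation shell lies adjacent to $\partial M_{\rm thick}$, so $d(x, M_{\rm thick})$ is uniformly bounded on the support of the curvature defect, and the pointwise bound by $\varepsilon e^{-\eta d(x, M_{\rm thick})}$ is immediate once $\varepsilon$ is small. Outside the shells, $\bar g$ is hyperbolic and the defect vanishes identically.

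\textbf{Step 3: The weighted integral (v).} This is the main step and the place where the density hypothesis plays a role. For fixed $x \in M'$, let $d_i(x) = \dist(x, U_i)$. Since the curvature defect is supported on $\bigcup_i U_i$, bounded by $\omega(L)^2$ on each $U_i$, and $\vol(U_i)$ is uniformly bounded,
\[
\int_{M'} e^{-(2-\delta) r_x(y)}\, |\Ric(\bar g)+2\bar g|_{\bar g}^2(y)\, d\vol_{\bar g}(y) \;\leq\; C\, \omega(L)^2 \sum_i e^{-(2-\delta) d_i(x)}.
\]
The density bound on cusps transfers, using bounded geometry of the thick/thin decomposition, to $\#\{i : d_i(x) \leq r\} \leq m' e^{\kappa r}$, and Abel summation gives $\sum_i e^{-(2-\delta) d_i(x)} \leq C(m, \kappa, \delta)$ whenever $\kappa < 2-\delta$. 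When $x$ lies deep in a filled tube the dominant contribution is $e^{-(2-\delta) d(x, M_{\rm thick})}$, so the extra weight $e^{b d(x, M_{\rm thick})}$ in (v) is absorbed provided $b + \kappa < 2 - \delta$. Since $\kappa < 1$, this is achievable by choosing $b$ slightly above $1$ and $\delta$ small; taking $L$ large enough that $\omega(L)$ is sufficiently small then gives (v) with control $\varepsilon^2$.

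\textbf{Step 4: Conclusion.} Theorem \ref{Pinching without inj radius bound - introduction} now produces a hyperbolic metric $g_{\rm hyp}$ on $M'$ with $\|g_{\rm hyp}-\bar g\|_{C^{2,\alpha}(M',\bar g)} \leq C\varepsilon^{1-\alpha}$. Applying its second conclusion with $U$ the union of interpolation shells yields
\[
|g_{\rm hyp}-\bar g|_{C^{2,\alpha}}(x) \leq C\varepsilon^{1-\alpha}\, e^{-(1-\delta/2)\dist_{\bar g}(x,\, U \cup \partial M_{\rm thick})}
\]
for $x \in M_{\rm thick}$. Since $\bar g$ agrees with the original hyperbolic metric on $M$ outside a neighbourhood of the cusps, choosing $\varepsilon$ small enough delivers $C^2$-closeness of $g_{\rm hyp}$ to the original metric on the complement of the new Margulis tubes, as claimed.

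\textbf{Main obstacle.} The central difficulty is Step 3: balancing three competing exponential rates---the weight $e^{-(2-\delta) r_x}$, the tube density $e^{\kappa r}$, and the amplification $e^{b d(x, M_{\rm thick})}$---against one another while the per-tube error $\omega(L)$ decays fast enough in $L$ for the resulting integral to lie under $\varepsilon^2$. The constraint $\kappa < 1$ provides the necessary slack, but realising it requires care with the quantitative geometry of the hyperbolic filling model and the translation between distance to a cusp and distance to the corresponding interpolation shell. All other steps are essentially soft consequences of Theorem \ref{Pinching without inj radius bound - introduction} and explicit geometry of the filling model.
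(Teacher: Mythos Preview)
Your proposal is correct and follows essentially the same approach as the paper: construct the model metric by interpolating between the cusp metric and the hyperbolic tube metric on a collar of bounded width, then feed it into Theorem~\ref{Pinching without inj radius bound - introduction}, with the density hypothesis entering exactly where you put it---to control the weighted sum $\sum_i e^{-(2-\delta)d_i(x)}$ via the counting bound and ensure the extra factor $e^{b d(x,M_{\rm thick})}$ is absorbed by choosing $b+\kappa<2-\delta$. The paper makes the interpolation explicit in horospherical coordinates and carries out the summation via an integral comparison rather than Abel summation, but the argument is otherwise the same (indeed the paper simply says the drilling proof ``applies without change'').
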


Although unlike \cite{FPS19b},
we do not obtain effective constants for Dehn filling and drilling, 
Theorem \ref{drilling - introduction} and Theorem \ref{filling - introduction} 
allows 
to drill or fill an arbitrary number of 
tubes or cusps with fixed meridional length 
as long as the boundaries of the tubes 
or cusps are sufficiently sparsely distributed in the manifold. 
We also obtain a more precise geometric
control on the complement of the drilled or filled tubes or cusps
which we discuss in Section \ref{Sec: drillingand}.

The proofs of Theorem \ref{drilling - introduction} and Theorem \ref{filling - introduction} use  
Theorem \ref{Pinching without inj radius bound - introduction} 
to deform a metric glued from 
the metric on the given manifold and hyperbolic metrics on tubes and cups 
to a hyperbolic 
metric on the drilled or filled manifold.
This strategy can also be used
to construct hyperbolic metrics on manifolds glued in a controlled way from hyperbolic 
pieces as long as the gluing regions are sufficiently sparsely distributed. An example
of such a construction can be found in the article \cite{BMNS16}. We do however 
not discuss such a potential application of our main result here.

Apart from a new approach to drilling and filling, we also obtain results
towards what sometimes is called \emph{effective Mostow rigidity}, a
program which lead among others to the solution of the so-called \emph{ending
lamination conjecture} (see \cite{M10}, \cite{BCM12}). 
The idea is as follows. Due to the groundbreaking work of Thurston and 
Perelman, a closed aspherical atoroidal 3-manifold admits a 
hyperbolic metric, which is moreover unique up to isotopy 
by Mostow rigidity. 
Thus topological information gives rise to geometric invariants,
and some of these invariants, like for example 
the injectivity radius or the volume, should be 
recoverable from suitably chosen topological data. 
Even more ambitious, it may be possible to construct a
bi-Lipschitz model
for the hyperbolic metric from topological information as in
\cite{BCM12}.

To implement this program, one may try to decompose 
a closed 3-manifold $M$ into pieces which are equipped with hyperbolic metrics 
constructed from the knowledge of the pieces and knowledge on how these 
pieces glue together to $M$.
This program is well suited for an application 
of Theorem \ref{Pinching without inj radius bound - introduction}. 
%and it is particularly well understood
%in the case that the manifold $M$ is glued from two convex cocompact handlebodies
%with comparable geometry near the boundaries of the convex cores as discussed in 
%\cite{HV22}.  We call such a process \emph{effective hyperbolization}. 

For the formulation of our last main result, recall that 
a \emph{handlebody} of genus $g\geq 1$
is a compact 3-manifold with boundary which is diffeomorphic to the
connected sum of $g$ solid tori. 
The boundary $\partial H$ of such a handlebody
$H$ is a closed oriented surface $\partial H$ of genus $g$. 
Any closed 3-manifold can be realized as
the gluing $M_f=H_1\cup_fH_2$ of 
two handlebodies $H_1,H_2$ of the same genus $g\geq 1$
along a diffeomorphism
$f:\Sigma=\partial H_1\to \partial H_2$ of the boundaries.
The manifold
$M_f$ only depends on a double coset of the mapping class of
$\Sigma$ defined by $f$. 
The boundaries $\partial H_1,\partial H_2$ of $H_1,H_2$ 
contain collections ${\cal D}_1,{\cal D}_2$ 
of curves, the simple closed curves in $\partial H_1,\partial H_2$ 
which bound disks in $H_1,H_2$. We call these curves the \emph{disk sets} of $H_1,H_2$.
Using the identification of $\partial H_1$ and $\partial H_2$ via $f$,
these disk sets define subsets
in the \emph{curve graph} ${\cal C\cal G}(\Sigma)$ of the boundary surface
$\Sigma$ of $H_1,H_2$. The vertices of this graph are isotopy classes of simple closed curves 
on $\Sigma$, and two such curves are connected by an edge of length one if they
can be realized disjointly. 

The \emph{Hempel distance} of the manifold 
$M_f$ is the distance in ${\mathcal C\mathcal G}(\Sigma)$ between 
the image of the disk set ${\mathcal D}_1$ of $H_1$ under the map $f$ and the disk
set ${\mathcal D}_2$ of $H_2$. Hempel \cite{He01}
showed that if the 
Hempel distance of $M_f$ is at least $3$, then $M_f$ is aspherical
atoroidal and hence hyperbolic by the work of Perelman.

The following result can be viewed as a first step towards an 
effective geometrization of 3-manifolds. 
In its formulation,
we denote by $d_{\cal C\cal G}$ the distance in the curve graph of 
the boundary surface $\Sigma$.

%\begin{thm}[A priori length bounds]\label{minsky}
%  Given $\Sigma$ and $\epsilon >0$, there exists a number
%  $k=k(\Sigma,\epsilon) >0$ with the following property. 
%If $Y\subset \Sigma$ is a proper essential
%subsurface of $\Sigma$ which is strongly incompressible
%in the hyperbolic manifold $M_f$, and if 
%${\rm diam}_Y({\cal D}_1\cup {\cal D}_2)\geq k$,
%then $\ell_f(\partial Y)\leq \epsilon$.
%\end{thm}

\begin{theorem}[Effective hyperbolization in large Hempel distance]\label{hyperbolizationfinal - introduction}
For every $g\geq 2$ there exist
numbers $R=R(g)>0$ and $C=C(g)>0$  with the following property. 
Let $M_f$ be a closed 3-manifold with Heegaard surface $\Sigma$
of genus $g$ and gluing map $f$, 
and assume that $d_{\cal C\cal G}({\cal D}_1,{\cal D}_2)\geq R$. 
Then $M_f$ admits a hyperbolic metric, and the volume of $M_f$ for this 
metric is at least $Cd_{\cal C\cal G}({\cal D}_1,{\cal D}_2)$.
\end{theorem}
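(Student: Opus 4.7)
The plan is to apply \Cref{Pinching without inj radius bound - introduction} to a carefully chosen model metric $\bar g$ on $M_f$ that is honestly hyperbolic outside of two collars $U_1,U_2$ of bounded diameter and whose thick part contains a long product region of length comparable to $d:=d_{\mathcal{CG}}(\mathcal{D}_1,\mathcal{D}_2)$. The hyperbolic metric $g_{\rm hyp}$ produced by the theorem is then $C^{2,\alpha}$-close to $\bar g$, which yields both the hyperbolizability of $M_f$ and, via a volume comparison, the lower bound $\vol(M_f,g_{\rm hyp})\ge C\,d$.

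To build $\bar g$, fix a geodesic $v_0,v_1,\dots,v_d$ in $\mathcal{CG}(\Sigma)$ with $v_0\in\mathcal{D}_1$ and $f(v_d)\in\mathcal{D}_2$, and follow the Namazi--Souto recipe for almost hyperbolic metrics on Heegaard splittings of large Hempel distance. Pick convex cocompact hyperbolic structures on the interiors of $H_1,H_2$ whose conformal boundaries are controlled by $v_0$ and $f(v_d)$, together with a quasi-Fuchsian hyperbolic structure on $\Sigma\times\mathbb{R}$ whose Minsky combinatorial model contains a product region $N\cong\Sigma\times[0,L]$ with $L$ comparable to $d$. Truncating the ends of the handlebodies and the quasi-Fuchsian manifold along matching level surfaces and smoothly interpolating in two collars $U_1,U_2$ of uniformly bounded diameter produces a smooth metric $\bar g$ on $M_f$ that is hyperbolic off $U_1\cup U_2$; by choosing $R(g)$ large enough the matching surfaces can be made $C^k$-close to isometric, so that $|\mathrm{sec}_{\bar g}+1|$ and any fixed number of covariant derivatives of $R$ are globally bounded by some $\varepsilon=\varepsilon(R)\to 0$.

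Hypotheses (i), (ii), and (iv) of \Cref{Pinching without inj radius bound - introduction} are then immediate. Arranging the model to be exactly hyperbolic on $M_{\rm thin}$ (thin regions come only from honest hyperbolic Margulis tubes inside the handlebodies or the quasi-Fuchsian piece) kills the left-hand side of (iii) pointwise. For (v), choose $b,\delta$ with $1<b<2-\delta$. Since $U:=U_1\cup U_2\subseteq M_{\rm thick}$ has uniformly bounded diameter and volume, $\Ric(\bar g)+2\bar g$ is supported in $U$ with $|\Ric(\bar g)+2\bar g|\le C\varepsilon$, and $d(x,y)\ge d(x,M_{\rm thick})$ for every $y\in U\subseteq M_{\rm thick}$, one finds
\[
  e^{b\,d(x,M_{\rm thick})}\int_M e^{-(2-\delta)r_x(y)}\,|\Ric(\bar g)+2\bar g|_{\bar g}^2(y)\,d\vol_{\bar g}(y) \leq C'\varepsilon^2
\]
for every $x\in M_f$, which gives (v) after rescaling $\varepsilon$. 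The theorem then furnishes a hyperbolic metric $g_{\rm hyp}$ on $M_f$ with $\|g_{\rm hyp}-\bar g\|_{C^{2,\alpha}(M_f,\bar g)}\le C\varepsilon^{1-\alpha}$.

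For the volume lower bound, the product region $N\subset (M_f,\bar g)$ has its injectivity radius bounded below (depending only on $g$) away from any Margulis tubes, so $\vol(N,\bar g)\ge c_0 L\ge c_1 d$. The $C^{2,\alpha}$-closeness of $g_{\rm hyp}$ to $\bar g$ then gives $\vol(M_f,g_{\rm hyp})\ge\tfrac12 c_1 d$ once $R=R(g)$ is large. The main obstacle is the construction of the model metric $\bar g$: one must invoke the Minsky combinatorial model for quasi-Fuchsian ends together with Namazi--Souto's compactness argument matching the level surfaces of the hyperbolic handlebody ends with those of the quasi-Fuchsian piece up to arbitrarily small $C^k$-error, both of which are standard input in the hyperbolic Heegaard splitting literature.
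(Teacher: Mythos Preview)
Your approach is essentially the paper's Theorem~\ref{relative} (Section~\ref{effective}): glue convex cocompact handlebodies along a product region coming from a quasi-Fuchsian piece and apply Theorem~\ref{Pinching without inj radius bound - introduction}. The paper carries this out via the gluing theorem~\ref{gluinghv} from \cite{HV22}, and then explicitly remarks that this is \emph{not} enough to prove Theorem~\ref{hyperbolizationfinal - introduction}: the construction requires a subsegment $\hat\zeta$ of the minimal curve-graph geodesic $\zeta$ along which the endpoints have \emph{bounded combinatorics} (no subsurface projection of diameter $\geq k$). Only then does the associated Teichm\"uller segment lie in the thick part ${\cal T}_\delta(\Sigma)$, which is what makes the matching level surfaces $C^k$-close to isometric and the collars $U_1,U_2$ genuinely of bounded diameter with controlled curvature.

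The gap in your argument is precisely the sentence ``by choosing $R(g)$ large enough the matching surfaces can be made $C^k$-close to isometric.'' Large Hempel distance alone does not guarantee a long subsegment of $\zeta$ with bounded combinatorics: one can have $d_{\cal C\cal G}({\cal D}_1,{\cal D}_2)$ arbitrarily large while every subsegment of $\zeta$ of length $b$ sees a subsurface $Y$ with ${\rm diam}_Y({\cal D}_1\cup{\cal D}_2)\geq k$. In that situation the Minsky model of any candidate quasi-Fuchsian piece is riddled with Margulis tubes near \emph{every} level, there is no thick product region in which to glue, and the Namazi--Souto compactness argument you invoke does not produce a $C^k$-small error. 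The paper handles this second case by an entirely different mechanism (Sections~\ref{lengthbounds}--\ref{convexcocompact}): the a~priori length bound Theorem~\ref{minsky} forces $\partial Y_1,\partial Y_2$ short in any hyperbolic structure, one drills these curves to get an atoroidal Haken manifold (hyperbolic by Thurston), controls the Dehn-surgered disk sets via Lemma~\ref{deletesub}, and finally refills using Theorem~\ref{filling}. The volume bound likewise splits into two cases, counting either the long bounded-combinatorics product region or the many disjoint Margulis tubes coming from the large subsurface projections.
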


This result does not rely on the work of Perelman and does not use the Ricci flow, 
that is, we give a new proof of hyperbolization under the assumption of large Hempel distance.
The lower bound on the Hempel distance we need is not
effective, however  we obtain some explicit information on the hyperbolic metric.
An earlier proof of hyperbolization of random 3-manifolds without the use of the Ricci flow
can be found in \cite{FSV19}.

The first geometric information which is new is the lower volume
bound in terms of the Hempel distance stated in
Theorem \ref{hyperbolizationfinal - introduction}.
Note that up to a universal constant,
the volume of a closed hyperbolic 3-manifold
coincides with its simplicial volume. Work of Brock (see \cite{Br03} 
and \cite{HV22} for more details)
shows that this simplicial volume is bounded
from above by a fixed multiple of 
the smallest distance in the so-called \emph{pants graph} of $\Sigma$ 
between a pants decomposition 
consisting of pants curves in ${\cal D}_1$, and a pants decomposition
consisting of pants curves in ${\cal D}_2$.
We conjecture that this upper estimate 
computes the volume up to a universal multiplicative constant. 
This conjecture holds true for random 3-manifolds \cite{V21}. 
Our lower volume bound is expected to be far from sharp.

The second geometric information we obtain applies to manifolds $M_f$ for which
there is a sufficiently long segment of a minimal geodesic in the curve graph of
$\Sigma$ connecting ${\cal D}_1$ to ${\cal D}_2$ which has \emph{bounded
combinatorics}. In this case we obtain that the hyperbolic metric is
uniformly close to a metric obtained by gluing two convex cocompact
hyperbolic metrics on handlebodies near the boundary.

Motivated by \cite{M00},
in the absence of such a segment, and  
assuming that the manifold $M_f$ is equipped with a hyperbolic metric, 
we prove an a priori length bound for
closed geodesics in $M_f$ which arise in the following way.
For a proper essential subsurface $Y$ of $\Sigma$, denote by
${\rm diam}_Y({\cal D}_1,{\cal D}_2)$ the diameter in
the curve graph of $Y$  
of the \emph{subsurface projections} of the disk sets ${\cal D}_1,{\cal D}_2$ 
into $Y$. Furthermore, for a multicurve $c$ in $\Sigma$,
let $\ell_f(c)$ be  the length of the geodesic representative 
of $c$ in the manifold $M_f$ (which may be zero if $c$ is compressible). 

\begin{theorem}[A priori length bounds]\label{lengthbound - introduction}
  Given $\Sigma$, there exists a number
$p=p(\Sigma)\geq 3$, and for every $\varepsilon>0$,
 there exists a number $k=k(\Sigma,\varepsilon)>0$ with the 
following property.  Let $M_f$ be a hyperbolic $3$-manifold of Heegaard genus $g$ and 
Hempel distance at least $4$ and let $Y\subset \Sigma$ be a proper
essential subsurface of $\Sigma$ such that 
$d_{\cal C\cal G}(\partial Y,{\cal D}_1\cup {\cal D}_2)\geq p$. If  
${\rm diam}_Y({\cal D}_1\cup {\cal D}_2)\geq
k$, then $\ell_f(\partial Y)\leq \varepsilon$.
\end{theorem}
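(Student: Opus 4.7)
The plan is to reduce the length bound to Minsky's a priori length bound theorem for pleated surfaces in hyperbolic $3$-manifolds, using the Masur--Schleimer bounded disk projection theorem to relate the subsurface projections of the disk sets $\mathcal{D}_1,\mathcal{D}_2$ to the subsurface projections of markings realized by a pleated-surface sweepout of $M_f$.

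First, I would exploit the assumption $d_{\mathcal{CG}}(\partial Y, \mathcal{D}_1\cup\mathcal{D}_2)\geq p$. Choosing $p=p(\Sigma)$ above the Masur--Schleimer constant guarantees on the one hand that $\partial Y$ is not a meridian of either handlebody, and on the other hand that $\diam_Y(\mathcal{D}_i)\leq D_0(\Sigma)$ for a universal constant $D_0$. In particular the coarse projection $\pi_Y(\mathcal{D}_i)$ is well-defined, and
\[
 \bigl|d_{\mathcal{CG}(Y)}(\pi_Y(\mathcal{D}_1),\pi_Y(\mathcal{D}_2))-\diam_Y(\mathcal{D}_1\cup\mathcal{D}_2)\bigr|\leq 2D_0.
\]
Next, I would sweep out a neighborhood of $\Sigma$ in $M_f$ by pleated (or simplicial hyperbolic) surfaces $\phi_t\colon\Sigma\to M_f$, $t\in[0,1]$, chosen so that $\phi_0$ realizes a curve in $\mathcal{D}_1$ and $\phi_1$ realizes a curve in $\mathcal{D}_2$; such sweepouts exist since disk curves can be isotoped to become arbitrarily short in their respective handlebodies. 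The sequence of realized markings defines a path in the marking graph of $\Sigma$ whose endpoints project to $Y$ within additive error $D_0$ of $\pi_Y(\mathcal{D}_1)$ and $\pi_Y(\mathcal{D}_2)$.

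The length bound then follows from the standard pleated-surface projection estimate together with Minsky's short curve theorem: if no pleated surface in the sweepout has $\partial Y$ as a geodesic of length at most $\varepsilon$, then every $\phi_t$ is entirely contained in the $\varepsilon$-thick part of $M_f$, whence Thurston's covering lemma forces the $\mathcal{CG}(Y)$-distance between the markings of consecutive pleated surfaces along the sweepout to be uniformly bounded. Summing along the sweepout bounds $d_{\mathcal{CG}(Y)}(\pi_Y(\mathcal{D}_1),\pi_Y(\mathcal{D}_2))$, and hence also $\diam_Y(\mathcal{D}_1\cup\mathcal{D}_2)$, by some constant $k_0(\Sigma,\varepsilon)$. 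Taking $k:=k_0+2D_0$ and contraposing yields the theorem.

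The main obstacle is the second step: constructing the pleated-surface sweepout and identifying its end markings with the disk sets in a way compatible with the subsurface projection to $Y$. Because $\Sigma$ is compressible in $M_f$, pleated representatives realizing curves in $\mathcal{D}_i$ can degenerate as they try to collapse into $H_i$, so one must ensure that throughout the sweepout the image surface remains essential on $Y$. This is precisely what the lower bound on $d_{\mathcal{CG}}(\partial Y,\mathcal{D}_1\cup\mathcal{D}_2)$ provides: the Masur--Schleimer bounded projection of the disk sets forces $\partial Y$ to remain an essential, non-meridional curve on each intermediate pleated surface, allowing the Minsky-type argument to go through with an effective constant $k$ depending only on $\Sigma$ and $\varepsilon$.
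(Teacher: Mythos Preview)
Your overall strategy---reduce to a Minsky-type argument by controlling subsurface projections along a path of pleated surfaces, with the Masur--Schleimer bound supplying the endpoint control on $\diam_Y(\mathcal{D}_i)$---is in the right spirit and matches the paper's framework. But there is a genuine gap in how you handle compressibility, and the gap is not the one you flag.

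First, the sweepout you describe cannot be built as stated: a curve in $\mathcal{D}_i$ is null-homotopic in $M_f$, so ``$\phi_0$ realizes a curve in $\mathcal{D}_1$'' means the pleated surface degenerates. The paper sidesteps this by spinning the disk curve $\gamma$ about $\partial Y$ to produce a finite-leaved lamination $\lambda$ containing $\partial Y$; because $d_{\mathcal{CG}}(\partial Y,\gamma)\geq 3$, the complement of $\partial Y\cup\gamma$ is simply connected, so $\lambda$ is maximal and defines a genuine pleated surface $g_\gamma\in\mathcal{P}(\partial Y)$. The two pleated surfaces are then joined not by a generic sweepout but by a simplicial path in $\mathcal{L}(\partial Y)$, the space of $1$-Lipschitz maps that map $\partial Y$ \emph{geodesically}, built from diagonal exchanges.

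Second, and more seriously, your third step invokes Thurston's covering (uniform injectivity) lemma, but that lemma requires the pleated map $\Sigma\to M_f$ to be $\pi_1$-injective, which it is not---the disk curves lie in the kernel. Your proposed fix (``$\partial Y$ remains essential and non-meridional on each intermediate surface'') addresses the wrong object: the obstruction is not that $\partial Y$ becomes inessential, it is that short curves on the pleated surface may be diskbounding in $M_f$, so the standard counting argument (bounding the number of homotopy classes of short arcs via the Margulis lemma) fails. The paper's substantial new content is precisely a case analysis along the path in $\mathcal{L}(\partial Y)$: when the core of a Margulis tube for $\sigma(g_s)$ is diskbounding, one shows (using the Hempel distance $\geq 4$ hypothesis) that all such cores lie in a \emph{single} disk set $\mathcal{D}_j$, whence their $Y$-projections have bounded diameter; when no such core is diskbounding, one proves a relative uniform injectivity statement restricted to the incompressible part of the surface. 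Further cases treat the situation where $\partial Y$ enters deep into a Margulis tube of $M_f$, or has large diameter in $M_f$. None of this is captured by a direct appeal to Minsky's theorem.
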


Note that a priori, the statement of Theorem \ref{lengthbound - introduction} 
relates a property 
which only depends on the manifold $M_f$, namely the hyperbolic length 
of a system of essential closed curves, which is well defined by Mostow rigidity, 
to a property which depends 
on choices, namely the choice of a Heegaard splitting for $M_f$.
However, as the diameter of the subsurface projection of 
the disk sets of $M_f$ into $Y$ is required to be large, it follows from \cite{JMM10}
that up to isotopy, $Y$ appears as a subsurface of every Heegaard surface
of sufficiently small genus.

\subsection{Organization of the article and outline of the proofs}
The article is roughly divided into four parts which can be read independently.
The first part, contained in Sections 2-5, is devoted to the proof of 
Theorem \ref{Pinching with inj radius bound - introduction}, and
it is the only part containing results on manifolds of dimension different from $3$.
Section \ref{Sec: Preliminaries} organizes the basic set-up and collects some technical results
used later on. It also introduces the conventions and notations we are going to use. 
In particular, we introduce the Einstein operator $\Phi$, and we 
formulate and prove a general statement which allows to obtain 
$C^0$-estimates for a solution of its linearization from suitable integral bounds.
The results in this section are variations of results available in the literature, adjusted 
to our needs. 

The goal is to use an implicit function theorem
to construct solutions of the equation 
$\Phi(\bar g+h)=0$ 
(see \Cref{Zeros of Phi are Einstein}).
To this end it is necessary to invert the linearization of $\Phi$ and 
obtain a good norm control on this inverse,
acting on suitably chosen Banach spaces of 
sections of the bundle of symmetric $(0,2)$-tensors over the manifold $M$. 
This is carried out in Section \ref{Section - Integral Inequalities}. 
As the Einstein operator is closely related to the Laplacian, we begin
with establishing 
a uniform $L^2$-Poincare inequality for the manifolds
with small pinched negative curvature.
We then introduce the weighted Sobolev spaces which are our primary tool. 
They enter in the definition of the \emph{hybrid norms} in Section \ref{Section - Invertibiliy of L}, 
which are combinations 
of H\"older and weighted Sobolev norms.
These norms are used in 
an a priori estimate for the linearized Einstein equation leading to 
an invertibility statement for the linearized equation. In Section \ref{Sec: Proof of pinching with inj radius bound} we then show that this 
is sufficient for an application of the implicit function theorem 
which completes the proof of 
Theorem \ref{Pinching with inj radius bound - introduction}.

A crucial point in this proof is a uniform $C^0$-bound for solutions of the linearization of the 
Einstein equation, which depends on a uniform lower bound on the injectivity radius. 
In \Cref{Sec: Counterexamples} we show that there is no straightforward way of dropping this assumption 
by exhibiting a family of metrics on closed 3-manifolds obtained by Dehn filling a 
finite volume hyperbolic 3-manifold with a single cusp 
and slowly changing the conformal structure 
of the level tori for the distance to the core geodesics 
of the filled cusp  
while keeping the monodromy of the core curve fixed. 
This construction does not alter the metric in the thick part of the manifold, and we show that 
it can be done in such a way that its curvature is arbitrarily close to to $-1$, the 
weighted \(L^2\)-norm of $\Ric(g)+2g$ is arbitrarily small,
while the length ratio of the core curves of the
modified metric and the hyperbolic metric can be made arbitrarily large. 

The second part of this article is devoted to overcoming this difficulty for
finite volume 3-manifolds satisfying suitable curvature assumptions. We begin with analyzing 
the case when the injectivity radius may be arbitrarily small, but an extension of the strategy
used in the proof of Theorem \ref{Pinching with inj radius bound - introduction} is possible. 
Namely, in Section \ref{Sec:thethin} we define the \emph{small part} of a finite volume negatively curved 
$3$-manifold to consist of points of small injectivity radius and such that moreover the diameter
of the distance torus or horotorus containing the points is bounded from above by a universal
constant. In the case of hyperbolic 3-manifolds,
the diameter of a component of the small part may be arbitrarily large.
Using a counting argument for preimages of points in the thin but not small part of a negatively
curved 3-manifold which are contained in a fixed size ball in the universal covering, in 
\Cref{A-priori estimate away from the small part}
we extend the main $C^0$-estimate in the proof of 
Theorem \ref{Pinching with inj radius bound - introduction}
to the thin but not small part of the 3-manifold.

Motivated by work of Bamler \cite{Bamler2012}, to deal with the small part of the manifold we take advantage
of the fact that on the small part of a hyperbolic Margulis tube or cusp, solutions of the linearized 
Einstein equations can be controlled with an ODE. The main task is then to use the geometric assumptions
to construct a hyperbolic model metric for the small parts of tubes and cusps in Section \ref{Sec:model metrics} 
and to use the ODE for the hyperbolic model metric to 
analyze the solutions of the linearized Einstein equation. We construct Banach spaces adapted to our needs
which control the growth of solutions in the small part, and we use these Banach spaces 
to invert the linearized Einstein operator with uniformly controlled norm in Section \ref{Sec: L without bound on inj}. 
This then leads to the proof of 
\Cref{Pinching without inj radius bound - introduction} in \Cref{Sec: Proof of pinching without inj radius bound}. 

In Section \ref{Sec: drillingand} which contains the third part of the article, we apply 
Theorem \ref{Pinching without inj radius bound - introduction} to Dehn filling and 
Dehn drilling as formulated in Theorem \ref{drilling - introduction} and Theorem \ref{filling - introduction}.  

The last part of this article is devoted to the proof of Theorem \ref{hyperbolizationfinal - introduction}. We begin in 
Section \ref{effective} with showing that Theorem \ref{Pinching without inj radius bound - introduction}
together with a gluing result taken from \cite{HV22} can fairly immediately be used
to construct a hyperbolic metric close to a model metric on 
a closed 3-manifold $M_f=H_1\cup_f H_2$ which has the following property. 
The Hempel distance of $M_f$ is large, and a minimal geodesic in 
the curve graph ${\cal C\cal G}(\Sigma)$ of the boundary surface $\Sigma=\partial H_1=\partial H_2$ 
connecting the disk set ${\cal D}_1$ to the disk set ${\cal D}_2$  
contains a sufficiently long segment whose endpoints have \emph{bounded combinatorics}. 

This statement is not sufficient for the proof of Theorem \ref{hyperbolizationfinal - introduction}
as it uses an assumption which is not be fulfilled for an arbitrary 3-manifold of large
Hempel distance. To complete the proof of Theorem \ref{hyperbolizationfinal - introduction} we use
instead an approach which has some resemblance to the work \cite{FSV19}. 

Namely, we first establish Theorem \ref{lengthbound - introduction}
which gives a length bound on closed geodesics in a \emph{hyperbolic} 
manifold $M_f$ arising as boundary curves of proper essential 
subsurfaces of $\Sigma$ with
large subsurface projections of the disk sets. Then we use this
a priori length bound for two distinct curves $c_1,c_2$
arising from two different such subsurfaces 
together with Dehn surgery and Thurston's hyperbolization result for
pared acylindrical manifolds 
to find a hyperbolic metric on
$M_f$. This construction is carried out in Section \ref{convexcocompact}. 

Counting Margulis tubes and segments of a geodesic in the curve graph of
$\Sigma$ connecting the disk sets with 
no large subsurface projection then yields the lower volume bound stated in 
Theorem \ref{hyperbolizationfinal - introduction}.

\bigskip\noindent
{\bf Acknowledgements:}
This work is largely inspired by the unpublished preprint
\cite{Tian}. We are grateful to Yair Minsky for making this preprint available to us.
The first author is moreover grateful to Richard Bamler for helpful discussions 
which took place during her visit of the MSRI in Berkeley in fall 2019. 
Thanks to Yair Minsky for pointing out the reference \cite{C96}, and
to Ken Bromberg and Yair Minsky for valuable information regarding Dehn surgery.

\section{The basic set-up}\label{Sec: Preliminaries}

In this section we introduce the notations we are going to use, and collect 
some technical tools needed later on. 

\subsection{Notation}\label{Subsec: Notation}

We start by stating our notational conventions of various operations on tensors. We use the sign convention 
\[
	R(x,y)z:=\nabla_x\nabla_y z-\nabla_y\nabla_x z-\nabla_{[x,y]}z
\]
for the Riemannian curvature endomorphism. The Ricci tensor \({\rm Ric}(g)\) of a Riemannian metric \(g\) is the \((0,2)\)-tensor given by 
\[
	{\rm Ric}(g)(x,y)={\rm tr}_g\big(z \mapsto R(z,x)y\big)=\sum_{i=1}^n\langle R(e_i,x)y,e_i\rangle,
\]
where \((e_i)_{1 \leq i \leq n}\) is a local orthonormal frame. The associated \((1,1)\)-tensor is denoted by
\[
	{\rm Ric}_g(x)=\sum_{i=1}^nR(x,e_i)e_i.
\]
The \((1,1)\)-tensor \({\rm Ric}_g\) induces the \textit{Weitzenböck curvature operator}, also denoted by \({\rm Ric}_g\), that acts on \((0,2)\)-tensors \(h\) by
\[
	\Ric_g(h)(x,y)=h\big(\Ric_g(x),y\big)+h\big(x,\Ric_g(y)\big)-2 \,{\rm tr}_g h\big(\cdot, R(\cdot,x)y \big).
\]
For the covariant differentiation of tensors we use the last input as the direction of differentiation, that is,
\[
	(\nabla h)(y_1,...,y_k,x)=(\nabla_x h)(y_1,...,y_k).
\]
The \textit{adjoint of the covariant derivative} is 
\[
	(\nabla^\ast h)(y_1,...,y_{k-1}):=-\sum_{i=1}^n (\nabla h)(y_1,...,y_{k-1},e_i,e_i),
\]
where \((e_i)_{1 \leq i \leq n}\) is a local orthonormal frame. The \textit{Connection Laplacian} and \textit{Lichnerowicz Laplacian} of a \((0,2)\)-tensor \(h\) are 
\[
	\Delta h:=\nabla^\ast\nabla h \quad \text{and} \quad \Delta_L h:=\Delta h + \Ric_g(h).
\]
Similarly, we define the Laplacian of a function \(u:M \to \bbR\) as 
\[
	\Delta u:=\nabla^\ast du.
\]
The \textit{divergence} \(\delta_g\) of a \((0,2)\)-tensor \(h\) is defined as
\[
	\delta_g h:=-\sum_{i=1}^n(\nabla_{e_i}h)(\cdot, e_i)
\]
Finally, the \textit{Bianchi operator} \(\beta_g\) of a metric \(g\) acts on \((0,2)\)-tensors \(h\) by
\[
	\beta_g(h):=\delta_g(h)+\frac{1}{2}d{\rm tr}_g(h).
\]
We will often drop the metric \(g\) from the notation, and hope that this leads to no confusion with the notation of Weitzenböck cruvature operator and the Ricci tensor. More precisely, if \(g\) is a background metric, then for a generic tensor \(h\) we simply write \({\rm Ric}(h)\) for \({\rm Ric}_g(h)\), while if \(g^\prime\) is another metric, then \({\rm Ric}(g^\prime)\) is the Ricci tensor of \(g^\prime\), and not \({\rm Ric}_g(g^\prime)\).

Throughout the article we shall use the following convention regarding the use of constants appearing in analytic estimates.

\begin{convention}\label{convention constants}\normalfont In a chain of inequalities, constants denoted by the same symbol may change from line to line, and 
may depend on varying sets of parameters. In short, 
the letter \(C\) does \textit{not} always refer to the same constant.
\end{convention} 

We also use the following convention for the \(O\)-notation. Here \(X\) shall be an arbitrary set.

\begin{notation}\label{big O notation}\normalfont
For functions \(u, \varphi_1,...,\varphi_m: X \to \bbR\) we write \(u=\sum_{k=1}^m O(\varphi_k)\) if there are \textit{universal} constants \(c_k\) such that \(|u(x)| \leq \sum_{k=1}^mc_k \varphi_k(x)\) for all \(x \in X\).
\end{notation}

Moreover, we always assume the following.

\begin{convention}\label{convention orientable}\normalfont
Unless otherwise stated, all manifolds are assumed to be connected and orientable.
\end{convention}

\subsection{The Einstein operator}\label{Subsec: Einstein operator}

As mentioned in the introduction,
 we shall construct the Einstein metric by an application of the implicit function theorem for the so-called \emph{Einstein operator} (see \cite[Section I.1.C]{Biquard2000}, \cite[page 228]{Anderson2006} for more information).
This operator is defined as follows. 

Consider the operator $\Psi:g\to \Ric(g)+(n-1)g$. As the diffeomorphism group ${\rm Diff}(M)$ of the manifold $M$ acts on metrics by pull-back and $\Psi$ is equivariant for this action,
the linearization of $\Psi$ is not elliptic. To remedy this problem, for a given background metric \(\bar{g}\) one defines the \textit{Einstein operator} $\Phi_{\bar{g}}$ by 
 %The naive choice of operator to study would be \(\Psi(g)=\Ric(g)+(n-1)g\). The problem with this is that the linearisation \((D\Ric)_g\) is \textit{not} an elliptic operator. This comes from the fact that \(\Ric(\cdot)\) is equivariant with respect to the action of the diffeomorphism group.  
%However, the action of the diffeomorphism group is the only obstruction to the ellipticity of \((D\Ric)_g\) (see \cite[page 141]{Besse1987}). To make the operator elliptic we add a correction term. Let \(\bar{g}\) be a background metric. The 
%\textit{Einstein operator} \(\Phi\) is defined as
\begin{equation}\label{Def of Phi}
	\Phi_{\bar{g}}(g):=\Ric(g)+(n-1)g+\frac{1}{2}\mathcal{L}_{(\beta_{\bar{g}}(g))^\sharp}(g),
\end{equation}
where the musical isomorphism 
\(\sharp\) is with respect to the metric \(g\). 
Using the formula for the linearisation of \(\Ric\) (\cite[Proposition 2.3.7]{topping_2006}), one shows that the linearisation of \(\Phi_{\bar{g}}\) at \(\bar{g}\) is
\[
	(D\Phi_{\bar{g}})_{\bar{g}}(h)=\frac{1}{2}\Delta_L h +(n-1)h.
\]
Hence \((D\Phi_{\bar{g}})_{\bar{g}}\) is an elliptic operator. This opens up the possibility for an application of the implicit function theorem.

It has been observed many times in the literature that the Einstein operator can detect Einstein metrics. The following observation 
can for example be found in \cite{Anderson2006} (Lemma 2.1).

%In order to find Einstein metrics near the original metric \(\bar{g}\) we must also know that \(\Phi\) can detect Einstein metrics. This is expressed by the following result.

\begin{lem}\label{Zeros of Phi are Einstein}
Let \((M,\bar{g})\) be a complete Riemannian manifold, and let \(g\) be another metric on \(M\) so that 
\[
	\sup_{x \in M}|\beta_{\bar{g}}(g)|(x)<\infty  \quad \text{and} \quad \Ric(g) \leq \lambda g   \,\text{ for some } \lambda < 0,
\]
where \(\beta_{\bar{g}}(\cdot)=\delta_{\bar{g}}(\cdot)+\frac{1}{2}d {\rm tr}_{\bar{g}}(\cdot)\) is the Bianchi operator of the background metric \(\bar{g}\). Denote by \(\Phi=\Phi_{\bar{g}}\) the Einstein operator defined in (\ref{Def of Phi}). Then
\begin{equation*}
 \Phi(g)=0 \quad \text{if and only if } \quad g  \text{ solves the system} \quad \begin{cases} \Ric(g)=-(n-1)g \\
	\beta_{\bar{g}}(g)=0
	\end{cases}.
\end{equation*}
\end{lem}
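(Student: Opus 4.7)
The \emph{if} direction is immediate: when $\Ric(g)+(n-1)g=0$ and $\beta_{\bar{g}}(g)=0$, the Lie-derivative term in \eqref{Def of Phi} vanishes and $\Phi(g)=0$. For the \emph{only if} direction the plan is a DeTurck-type argument: apply the Bianchi operator $\beta_g$ of $g$ itself to the equation $\Phi(g)=0$ in order to extract an elliptic equation satisfied by the $1$-form $\omega:=\beta_{\bar{g}}(g)$, and then rule out non-trivial solutions via a maximum principle that simultaneously exploits the strict negativity of $\Ric(g)$ and the hypothesis $\sup_M|\beta_{\bar{g}}(g)|<\infty$.

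The twice-contracted second Bianchi identity, together with the metric identity $\beta_g(g)=0$, gives $\beta_g(\Ric(g)+(n-1)g)=0$ identically, so applying $\beta_g$ to $\Phi(g)=0$ collapses it to $\beta_g(\mathcal{L}_V g)=0$, where $V=\omega^\sharp$. A direct computation using $(\mathcal{L}_V g)_{ab}=\nabla_a\omega_b+\nabla_b\omega_a$, the curvature commutator $[\nabla_a,\nabla_b]\omega_c=-R^{d}{}_{cab}\omega_d$, and the trace identity $\mathrm{tr}_g(\mathcal{L}_V g)=-2\delta_g\omega$ will then yield the clean formula
\[
\beta_g(\mathcal{L}_V g)=\Delta\omega-\Ric_g(\omega),
\]
where $\Ric_g$ denotes the Ricci endomorphism of $g$ acting on $1$-forms. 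Hence $\omega$ satisfies the linear elliptic equation $\Delta\omega=\Ric_g(\omega)$.

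I would then pair this equation with $\omega$ and use the Weitzenböck identity $\tfrac{1}{2}\Delta|\omega|^2=\langle\Delta\omega,\omega\rangle-|\nabla\omega|^2$: combined with $\Ric(g)\leq\lambda g$, $\lambda<0$, this produces the differential inequality
\[
\tfrac{1}{2}\Delta|\omega|^2 \;=\; \Ric(g)(\omega^\sharp,\omega^\sharp)-|\nabla\omega|^2 \;\leq\; \lambda|\omega|^2.
\]
In the paper's convention (in which $\Delta u\geq 0$ at an interior maximum of a smooth function $u$), at any interior maximum $p$ of $|\omega|^2$ one gets $0\leq\Delta|\omega|^2(p)\leq 2\lambda|\omega|^2(p)$, which together with $\lambda<0$ forces $|\omega|(p)=0$. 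When no interior maximum is attained, the plan is to invoke the Omori--Yau maximum principle at infinity, the boundedness of $|\omega|^2$ being supplied by $\sup_M|\beta_{\bar{g}}(g)|<\infty$: along a sequence $p_k$ with $|\omega|^2(p_k)\to\sup|\omega|^2$ and $\liminf_k\Delta|\omega|^2(p_k)\geq 0$, the inequality above would yield $0\leq 2\lambda\sup|\omega|^2$, forcing $\sup|\omega|^2=0$. Once $\omega=\beta_{\bar{g}}(g)\equiv 0$, substituting back into $\Phi(g)=0$ immediately gives $\Ric(g)=-(n-1)g$. The main technical obstacle is exactly this final step in the noncompact complete setting: the Omori--Yau principle as applied requires a lower bound on $\Ric(g)$ that is not among the explicit hypotheses of the lemma, but which is automatic in every intended application from the near-hyperbolic sectional curvature control; a self-contained alternative would be a weighted cutoff and integration-by-parts argument tailored to $\lambda<0$ and the boundedness of $|\omega|$.
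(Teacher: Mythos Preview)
The paper does not give its own proof of this lemma; it simply refers to Lemma~2.1 of Anderson \cite{Anderson2006}. Your proposal reproduces precisely that standard DeTurck--Bianchi argument: apply $\beta_g$ to $\Phi(g)=0$, use the twice-contracted Bianchi identity to kill the $\Ric(g)+(n-1)g$ part, compute $\beta_g(\mathcal{L}_{\omega^\sharp}g)$ to obtain a Bochner-type equation for $\omega=\beta_{\bar g}(g)$, and conclude via the maximum principle using $\Ric(g)\le\lambda g<0$. So your approach is the same as the one the paper invokes.

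Your caveat about the noncompact case is well taken and is the only genuinely delicate point. The Omori--Yau maximum principle needs a \emph{lower} Ricci bound on $(M,g)$, which is not among the stated hypotheses. As you observe, this is harmless in every application in the paper: the metric $g$ is always $C^{2,\alpha}$-close to $\bar g$, which has sectional curvature near $-1$, so $(M,g)$ is automatically complete with two-sided curvature bounds and Omori--Yau applies. Anderson's original setting is compact, where the issue does not arise at all. Your suggested alternative---a cutoff and integration-by-parts argument exploiting only $\lambda<0$ and $\sup_M|\omega|<\infty$---would also work and would make the lemma self-contained, but the paper does not pursue this and neither need you.
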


%\begin{proof}This is \cite[Lemma 2.1]{Anderson2006}.
%\end{proof}

\subsection{Hölder norms}\label{Section - Hölder Norms}

To apply the implicit function theorem to the Einstein operator \(\Phi\), we have to study its linearization \((D\Phi)_{\bar{g}}\) at the initial metric \(\bar{g}\),
acting on a suitably chosen Banach space of sections of the symmetric tensor product ${\rm Sym}^2(T^*M)={\rm Sym}(T^*M \otimes T^*M)$. The Banach norms we shall use are hybrids of two rather classical Banach norms: 
$C^{k,\alpha}$-norms, defined locally using charts, 
and weighted $L^2$-norms.  

It is important for our main results that H\"older estimates arising from 
Schauder theory for the Einstein operator on the manifold $(M,\bar g)$ only depend on local geometric information: 
The injectivity radius, and a bound on 
$\Vert  {\rm Ric}(\bar g)\Vert_{C^1(M,\bar g)}$. Since we were unable to find a suitable reference in the literature, 
we summarize what we need in the following proposition. The existence of $C^{k,\alpha}$-norms 
with the stated properties is part of the claim and will be established below. Similar statements
can for example be found in \cite{Anderson2006} (page 230 for his definition of Hölder norms, and inequality (3.16) for the estimate).

%By \Cref{Subsec: Einstein operator}, this linearization \((D\Phi)_{\bar{g}}\) is an elliptic operator and hence we can use Schauder theory for 2-tensors on $M$. 
%One of the central results for elliptic operators in \(\bbR^n\) are the famous Schauder estimates. It is the goal of this section to establish Schauder estimates on Riemannian manifolds, that is, we want to %introduce a notion of Hölder norms \(||\cdot||_{C^{m,\alpha}}\) for tensors so that the following proposition holds.

\begin{prop}[Schauder estimate for tensors]\label{Schauder for tensors}Let \((M,g)\) be an $n$-dimensional Riemannian manifold satisfying
\[
		||\Ric(g)||_{C^1(M,g)} \leq \Lambda \quad \text{and} \quad \mathrm{inj}(M) \geq i_0.
\]
Let 
$\mathcal{S} \in \mathrm{End}\big(T^{(0,2)}M\big)$ and let $\mathcal{R} \in \mathrm{Hom}\big(T^{(0,3)}M,T^{(0,2)}M\big)$ be such that 
\[||\mathcal{R}||_{C^{0,\alpha}(M)}, \quad ||\mathcal{S}||_{C^{0,\alpha}(M)} \leq \lambda.\]
For   $f \in C^{0,\alpha}\big(T^{(0,2)}M \big)$  let $h \in C^{2,\alpha}\big(T^{(0,2)}M \big)$ be a solution of the equation 
\[
	\Delta h + \mathcal{R}(\nabla h) +\mathcal{S}(h)=f.
\]
Then it holds
\[
	||h||_{C^{2,\alpha}(M)}\leq C\left(||f||_{C^{0,\alpha}(M)}+||h||_{C^0(M)} \right)
\]
and
\[
	||h||_{C^{1,\alpha}(M)}\leq C\left(||f||_{C^0(M)}+||h||_{C^0(M)} \right)
\]
for some \(C>0\) only depending on \(n,\alpha,\lambda,\Lambda,i_0\).
\end{prop}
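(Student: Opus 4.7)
The plan is to reduce everything to the classical interior Schauder estimates on balls in $\mathbb{R}^n$ by means of harmonic coordinates and a covering argument. First, I would \emph{define} the tensor Hölder norms via harmonic coordinate charts. By the results of Jost--Karcher, from the hypotheses $\|\Ric(g)\|_{C^0}\leq \Lambda$ and $\inj(M)\geq i_0$ one obtains a radius $r_0=r_0(n,\Lambda,i_0)>0$ such that for every $x\in M$ there exists a harmonic coordinate chart $\varphi_x:B(x,r_0)\to \mathbb{R}^n$ in which the components $g_{ij}$ are $C^{1,\alpha}$-close to $\delta_{ij}$, with bounds depending only on $n,\alpha,\Lambda,i_0$. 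One then sets
\[
  \|T\|_{C^{k,\alpha}(M,g)}:=\sup_{x\in M}\|T\circ\varphi_x^{-1}\|_{C^{k,\alpha}(\varphi_x(B(x,r_0/2)))},
\]
the right-hand side being the usual Euclidean Hölder norm of the coordinate components of the tensor $T$. Using that the transition maps between two overlapping harmonic charts are uniformly controlled in $C^{2,\alpha}$ (by the bound on $\|\Ric\|_{C^1}$), one checks that this norm is equivalent, up to constants depending only on $(n,\alpha,\Lambda,i_0)$, to any reasonable alternative definition (e.g.\ via parallel transport along minimal geodesics).

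Next, I would translate the equation $\Delta h+\mathcal{R}(\nabla h)+\mathcal{S}(h)=f$ into such a chart. Writing $h$ as a tuple of coordinate components $(h_{kl})$, the operator becomes a second-order linear elliptic system
\[
  -g^{ij}(x)\,\partial_i\partial_j h_{kl}+b^{i,mn}_{kl}(x)\,\partial_i h_{mn}+c^{mn}_{kl}(x)\,h_{mn}=f_{kl},
\]
whose leading coefficients $g^{ij}$ are uniformly elliptic and $C^{1,\alpha}$-bounded, and whose lower-order coefficients depend on $g_{ij},\Gamma^k_{ij}$ and on $\mathcal{R},\mathcal{S}$. Since $g_{ij}\in C^{1,\alpha}$ in harmonic coordinates with bounds depending on $n,\alpha,\Lambda,i_0$, the Christoffel symbols lie in $C^{0,\alpha}$ with the same kind of bounds, and together with the assumption $\|\mathcal{R}\|_{C^{0,\alpha}}+\|\mathcal{S}\|_{C^{0,\alpha}}\leq \lambda$ this yields a uniform $C^{0,\alpha}$-bound on all coefficients of the system in terms of $(n,\alpha,\lambda,\Lambda,i_0)$.

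Now I would invoke the classical interior Schauder estimate for elliptic systems (e.g.\ Gilbarg--Trudinger, Ch.\ 6, in the scalar case, with the extension to systems being standard): on the smaller concentric ball $\varphi_x(B(x,r_0/2))$,
\[
  \|h\|_{C^{2,\alpha}(B(x,r_0/2))}\leq C\bigl(\|f\|_{C^{0,\alpha}(B(x,r_0))}+\|h\|_{C^0(B(x,r_0))}\bigr),
\]
with $C=C(n,\alpha,\lambda,\Lambda,i_0)$. Taking the supremum over $x\in M$ gives the first claimed inequality. For the second inequality one cannot directly invoke Schauder with $f$ only in $C^0$; instead, I would use the interior $W^{2,p}$ estimate for the same system on $B(x,r_0)$ for some $p=p(n,\alpha)$ large enough that $2-n/p>1+\alpha$, yielding
\[
  \|h\|_{W^{2,p}(B(x,r_0/2))}\leq C\bigl(\|f\|_{L^p(B(x,r_0))}+\|h\|_{L^p(B(x,r_0))}\bigr)
\]
with the same type of constants, and then combine with the Morrey embedding $W^{2,p}\hookrightarrow C^{1,\alpha}$ and the trivial bound $\|\cdot\|_{L^p}\leq C\|\cdot\|_{C^0}$ on a ball of controlled volume.

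The only genuinely delicate point is ensuring that every constant produced along the way depends solely on the advertised parameters $(n,\alpha,\lambda,\Lambda,i_0)$. This hinges on the Jost--Karcher construction, which provides harmonic charts of uniform size with uniform $C^{1,\alpha}$-bounds on $g_{ij}$ purely in terms of $n,\Lambda,i_0$, and on the scale-invariance of the Schauder and $W^{2,p}$ constants on the fixed-size balls $\varphi_x(B(x,r_0))\subset\mathbb{R}^n$. Once this uniformity is in place, the covering argument (the balls $B(x,r_0/2)$ covering $M$ with uniformly bounded multiplicity) is automatic.
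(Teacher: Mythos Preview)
Your overall strategy—harmonic coordinates, then classical interior estimates on Euclidean balls—is exactly the paper's approach, and the definition of the Hölder norms via harmonic charts is correct in spirit. But there is a genuine gap in the regularity accounting.

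You write that in a harmonic chart the system has ``lower-order coefficients depend[ing] on $g_{ij},\Gamma^k_{ij}$ and on $\mathcal{R},\mathcal{S}$''. This is not true for the connection Laplacian acting on $(0,2)$-tensors. When you expand $(\Delta h)_{ij}=-g^{mk}\nabla_m\nabla_k h_{ij}$ in coordinates, the second covariant derivative produces terms of the form $g^{mk}(\partial_m\Gamma^l_{ki})h_{lj}$ and $g^{mk}(\partial_m\Gamma^l_{kj})h_{il}$, i.e.\ the \emph{zero-order} coefficients involve \emph{derivatives} of Christoffel symbols, hence second derivatives of $g_{ij}$. (The harmonic gauge $g^{mk}\Gamma^p_{mk}=0$ only kills the contribution $g^{mk}\Gamma^p_{mk}\nabla_p h_{ij}$; it does not eliminate the $\partial_m\Gamma^l_{ki}$ terms, whose indices are tied to the tensor slots of $h$.) For the Schauder estimate you therefore need these zero-order coefficients in $C^{0,\alpha}$, which requires $g_{ij}\in C^{2,\alpha}$ in the chart.

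The Jost--Karcher result you invoke, using only $\|\Ric\|_{C^0}\le\Lambda$, yields harmonic charts with $g_{ij}\in C^{1,\alpha}$ (equivalently $W^{2,p}$ for all $p$), which is one order short. This is precisely why the hypothesis of the proposition is $\|\Ric\|_{C^1}\le\Lambda$ and not merely a $C^0$ bound: under the $C^1$ bound on Ricci, Anderson's refinement (Main Lemma~2.2 of \cite{Anderson1990}; see also \cite{Anderson2006}, \cite{Biquard2000}) produces harmonic charts of uniform radius in which $\|g_{ij}\|_{C^{2,\alpha}}\le C(n,\alpha,\Lambda,i_0)$. With that in hand, all coefficients of the coordinate system—including the $\partial\Gamma$ terms—are uniformly $C^{0,\alpha}$, and the classical interior Schauder estimates apply as you describe. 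Your $W^{2,p}$ route for the $C^{1,\alpha}$ inequality is fine once this regularity is available (the paper simply quotes the pointwise estimates (\ref{Schauder pointwise - C^2}) and (\ref{Schauder pointwise - C^1}) directly), but it suffers from the same problem if you only have $g\in C^{1,\alpha}$: the zero-order coefficients then need not even be in $L^\infty$.
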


%Obtaining a priori \(C^{2,\alpha}\)-estimates for the PDE \((D\Phi)_{\bar{g}}(h)=f\) thus reduces to finding \(C^0\)-estimates. It is possible to prove bounds \(||h||_{C^0}\leq C||f||_{C^0}\). However, in general %this is only possible if the constant \(C\) depends on \(\vol(M)\). But it is extremely important that all our estimates are independet of \(\vol(M)\). For this reason, working soley with Hölder norms will not be %sufficient for our purposes. To remedy this we will later introduce \textit{hybrid norms} that are a mixture of Hölder norms and weighted Sobolev norms (see \Cref{subsection - hybrid norm}).

The remainder of this subsection is devoted to 
the construction of the Hölder norms and a sketch of the proof of Proposition \ref{Schauder for tensors}. 
%We will, of course, want to import the classical Schauder estimates from subsets of \(\bbR^n\). Recall that classical interior Schauder estimates state the following: Let \(\Omega \subseteq \bbR^n\) be a %bounded open subset and \(\mathcal{L}\) be an elliptic partial differential operator of second order on \(\Omega\). Then for any \(\Omega^\prime \Subset \Omega\) there is a constant \(C\) so that
%\[
%	||h||_{C^{2,\alpha}(\Omega^\prime)}\leq C\Big(||\mathcal{L}h||_{C^{0,\alpha}(\Omega)}+||h||_{C^0(\Omega)} \Big).
%\]

%The constant \(C\) depends on \(n,\alpha,\diam(\Omega),\mathrm{dist}(\Omega^\prime,\partial\Omega)\), the coercivity constant of \(\mathcal{L}\), and an upper bound on the \(C^{0,\alpha}\)-norm of the coefficients of \(\mathcal{L}\). In particular, the constant \(C\) becomes very large if one of the following three things happens: 
%\begin{itemize}
%\item The coercivity constant of \(\mathcal{L}\) is very small;
%\item The coefficients of \(\mathcal{L}\) have very big \(C^{0,\alpha}\)-norm;
%\item The distance \(\mathrm{dist}(\Omega^\prime,\partial\Omega)\) is very small (in particular, if \(\Omega\) itself is very small).
%\end{itemize}
\begin{proof}[Proof of Proposition \ref{Schauder for tensors}]
In local coordinates, the Connection Laplacian \(\Delta\) has the form
\[
	(\Delta h)_{ij}=-g^{kl}\partial_{kl}^2(h_{ij})+\text{Lower Order Terms}
\]
and the coefficients of the lower order terms can involve up to two derivatives of \(g_{ij}\). 
Therefore, to import the Schauder estimates from \(\bbR^n\) by writing the equation in local coordinates, we need coordinates \(\varphi\) that have the following properties:
\begin{itemize}
\item The matrix \((g_{\varphi}^{ij})\) is uniformly elliptic;
\item \(||g_{ij}^\varphi||_{C^{2,\alpha}}\) is bounded by a universal constant;
\item The coordinates are defined on a metric ball of a priori size.
\end{itemize}
The existence of such charts is guaranteed if we have a lower bound on the injectivity radius and an upper bound on \(||\Ric(g)||_{C^1(M,g)}\).
% In fact, we can even assume that the charts are harmonic (this is not strictly necessary, but it makes certain arguments a bit easier). 
% More precisely, 
Namely, Anderson proved the following (see \cite{JK82}, \cite[Main Lemma 2.2]{Anderson1990}, \cite[page 230]{Anderson2006} and \cite[Proposition I.3.2]{Biquard2000}):

For any \(n \in \bbN\), \(\alpha \in (0,1)\), \(\Lambda \geq 0\), \(i_0 > 0\) there exist \(\rho=\rho(n,\alpha,\Lambda,i_0)>0\) and 
\(C=C(n,\alpha,\Lambda,i_0)\) with the following property. Let \((M,g)\) be a Riemannian \(n\)-manifold satisfying
\[
	||\Ric(g)||_{C^{1}(M,g)} \leq \Lambda \quad \text{and} \quad \inj(M) \geq i_0.
\]
Then for all \(p \in M\) there exists a \textit{harmonic} chart \(\varphi:B(p,2\rho) \subseteq M \to \bbR^n\) centered at $p$ so that 
\begin{equation}\label{properties of admissable harmonic charts 1}
	e^{-Q}|v|_g \leq |(D\varphi)(v)|_{\rm eucl.} \leq e^Q |v|_g
\end{equation} 
for all \(v \in TB(p,2\rho)\), and
\begin{equation}\label{properties of admissable harmonic charts 2}
	||g_{ij}^\varphi||_{C^{2,\alpha}} \leq C
\end{equation}
for all \(i,j=1,...,n\). Here \(Q > 0\) is a very small fixed constant, and \(||\cdot||_{C^{2,\alpha}}\) is the usual Hölder norm of the coefficient functions in \(\varphi(B(p,2\rho) )\subseteq \bbR^n\).

Assume from now on that the Riemannian manifold \((M,g)\) satisfies
\begin{equation*}\label{assumptions for Hölder norm}
	||\Ric(g)||_{C^1(M,g)} \leq \Lambda \quad \text{and} \quad \mathrm{inj}(M) \geq i_0
\end{equation*}
for some \(\Lambda \geq 0\) and \( i_0 > 0\). 

\begin{rem}\normalfont
The \(C^1\) bound on \(\Ric(g)\) is only used at two places in this article. First, we need it for the construction of our notion of Hölder norm. Second, it is used to obtain an upper bound on \(||\Ric(\bar{g})+(n-1)\bar{g}||_{C^{0,\alpha}}\) (see the proof of \Cref{Pinching with inj radius bound - full version}).
\end{rem}

We now state the definition of the Hölder norms. For \(k=1,2\), a \(C^k\)-tensor field \(T\) and \(p \in M\) we define the \(C^{k,\alpha}\)-norm of \(T\) at \(p\) as
\[
	|T|_{C^{k,\alpha}}(p):=\max_{i,j}||T_{ij}^\varphi||_{C^{k,\alpha}},
\]
where \(\varphi\) is some harmonic chart satisfying (\ref{properties of admissable harmonic charts 1}) and (\ref{properties of admissable harmonic charts 2}), 
and \(||\cdot||_{C^{k,\alpha}}\) is the usual Hölder norm in \(\varphi ( B(p,\frac{\rho}{2})) \subseteq \bbR^n\).  Similarly, we define the \(C^{0,\alpha}\)-norm of \(T\) at \(p\) to be
\[
	|T|_{C^{0,\alpha}}(p):=\max_{i,j}||T_{ij}^\varphi||_{C^{0,\alpha}},
\]
where \(\varphi\) is some harmonic chart satisfying (\ref{properties of admissable harmonic charts 1}) and (\ref{properties of admissable harmonic charts 2}), 
and \(||\cdot||_{C^{0,\alpha}}\) is the usual H\"older norm in \(\varphi ( B(p,\rho)) \subseteq \bbR^n\). For all \(k=0,1,2\) we also define 
\[||T||_{C^{k,\alpha}(M)}:=\sup_{p \in M}|T|_{C^{k,\alpha}}(p).\] 
Note that 
the euclidean domain in which the Hölder norms \(C^{0,\alpha}\) 
are computed is bigger than for the \(C^{1,\alpha}\) and \(C^{2,\alpha}\)-norm.

By (\ref{properties of admissable harmonic charts 1}), the matrix \((g^{ij}_{\varphi})\) is uniformly elliptic, 
and we have 
\[	\diam \big(\varphi(B(p,2\rho))\big)\leq 4e^Q \rho 
	\, \text{ and } \,
	\dist_{\bbR^n}\big(\varphi(B(p,\frac{\rho}{2})),\partial \varphi(B(p,\rho)) \big) \geq \frac{1}{2}e^{-Q}\rho.\] 
Therefore, the classical interior Schauder estimates imply
\begin{equation}\label{Schauder pointwise - C^2}
	|h|_{C^{2,\alpha}}(p)\leq C\Big(|\mathcal{L}h|_{C^{0,\alpha}}(p)+\sup_{B(p,\rho)}|h| \Big)
\end{equation}
and
\begin{equation}\label{Schauder pointwise - C^1}
	|h|_{C^{1,\alpha}}(p)\leq C\Big(\sup_{B(p,\rho)}|\mathcal{L}h|+\sup_{B(p,\rho)}|h| \Big)
\end{equation}
for a constant \(C=C(n,\alpha,\lambda,\Lambda,i_0)\), where \(\mathcal{L}\) is the elliptic operator from \Cref{Schauder for tensors}. This immediately yields \Cref{Schauder for tensors}.
\end{proof}

Apart from Schauder estimates, there is one more basic property that an elliptic operator \(\mathcal{L}\) should satisfy, namely the continuity property \(||\mathcal{L}h||_{C^{0,\alpha}(M)} \leq C||h||_{C^{2,\alpha}}\). In fact, an elliptic operator \(\mathcal{L}\) as in \Cref{Schauder for tensors} satisfies 
\begin{equation}\label{Continuity of elliptic operator - pointwise}
	|\mathcal{L}h|_{C^{0,\alpha}}(p) \leq \sup_{q \in B(p,\rho)}|h|_{C^{2,\alpha}}(q)
\end{equation}
for some \(C=C(n,\alpha,\lambda,\Lambda,i_0)\). 

\begin{proof}[Proof of (\ref{Continuity of elliptic operator - pointwise})]Fix \(p \in M\) and let \(\varphi:B(p,2\rho) \to \bbR^n\) be a harmonic chart satisyfing (\ref{properties of admissable harmonic charts 1}) and (\ref{properties of admissable harmonic charts 2}). Let \(q \in B(p,\rho)\) be arbitrary and choose a harmonic chart \(\psi:B(q,2\rho) \to \bbR^n\) satisfying (\ref{properties of admissable harmonic charts 1}) and (\ref{properties of admissable harmonic charts 2}). It suffices to show that the \(C^{3,\alpha}\)-norm of the coordinate change 
\[
	\psi \circ \varphi^{-1}: B(\varphi(q),\frac{1}{4}e^{-Q}\rho)\subseteq \bbR^n \to \psi(B(q,\frac{1}{2}\rho)) \subseteq \bbR^n
\]
is bounded by a universal constant. Note that this coordinate change is well-defined since \(\varphi\) is a \(e^Q\)-biLipschitz equivalence by (\ref{properties of admissable harmonic charts 1}). In fact, this coordinate change is even defined on \(B(\varphi(q),\frac{1}{2}e^{-Q}\rho)\).  Abbreviate \(B_1:=B(\varphi(q),\frac{1}{4}e^{-Q}\rho)\), \(B_2:=B(\varphi(q),\frac{1}{2}e^{-Q}\rho)\) and \(F:=\psi \circ \varphi^{-1}\). As \(\psi\) is a harmonic chart, we have \(\Delta_g\psi^m=0\) for every coordinate function \(\psi^m\) of \(\psi\). Also \(-\Delta_g u=g_\varphi^{ij}\frac{\partial^2 u}{\partial x^i \partial x^j}\) for any \(C^2\) function \(u\) because \(\varphi\) is harmonic. Hence for every \(m=1,...,n\) we get
\[
	g_\varphi^{ij}\frac{\partial^2 F^m}{\partial x^i \partial x^j}=0 \quad \text{in }\, B_2.
\]
Invoking the classical interior Schauder estimates yields 
\[
	||F^m||_{C^{3,\alpha}(B_1)} \leq C||F^m||_{C^0(B_2)}
\]
for a universal constant \(C\). We may without loss of generality assume that \(\psi(q)=0 \in \bbR^n\). Then \(\mathrm{Im}(\psi) \subseteq B(0,2e^Q\rho) \subseteq \bbR^n\). In particular, \(||F^m||_{C^0(B_2)} \leq 2e^Q \rho\). This completes the proof.
\end{proof}

This argument also shows that (up to equivalence) the choice of harmonic chart for the definition of the pointwise Hölder norm is irrelevant (as long as the chart satisfies (\ref{properties of admissable harmonic charts 1}) and (\ref{properties of admissable harmonic charts 2})).

\begin{rem}\label{Hölder norms without inj rad bound}\normalfont
For \Cref{Pinching without inj radius bound - introduction}, we have to consider manifolds without a lower injectivity radius bound. However, these manifolds are negatively curved, 
and hence their universal covers have infinite injectivity radius. So our notion of Hölder norms applies to the universal cover. We then define \(|T|_{C^{m,\alpha}}(p):=|\tilde{T}|_{C^{m,\alpha}}(\tilde{p})\) where $\tilde T$ is the pull-back of $T$ to the universal cover.
\end{rem}

\subsection{$C^0$-estimates}
To obtain $C^0$-estimates for the linearization of the Einstein operator, we use once again a standard tool, the  
De Giorgi–Nash–Moser estimates on manifolds in the following form. In its formulation \(\rho=\rho(n,\alpha,\Lambda,i_0)>0\) shall denote the constant appearing in the definition of the \(C^{k,\alpha}\)-norms (see the proof of \Cref{Schauder for tensors}). 

\begin{lem}[$C^0$-estimates]\label{Nash-Moser} For \(\alpha \in (0,1)\), \(\Lambda \geq 0\), \(i_0>0\), \(q>n\), \(\lambda >0$, and \(r \in (0,\rho)\)
there exists a constant $C=C(n,\alpha,\Lambda,i_0, q,\lambda, r)>0$ with the following property. 
Let \((M,g)\) be a Riemannian \(n\)-manifold satisfying 
\[
		||\Ric(g)||_{C^1(M,g)} \leq \Lambda \quad \text{and} \quad \mathrm{inj}(M) \geq i_0.
\] 
Let $X$ be a continuous vector field, and let $c$ be a continuous function on 
$M$ so that $||X||_{C^0(M)}, ||c||_{C^0(M)}\leq \lambda$. If $u \in C^2(M)$ and $f \in C^0(M)$ satisfy
\[
	-\Delta u + \langle X,\nabla u \rangle +cu \geq f,
\]
then for all \(x \in M\) it holds
\begin{equation}\label{c0} 
	u(x) \leq C\Big( ||u||_{L^2(B(x,r))}+||f||_{L^{q/2}(B(x,r))}\Big).
\end{equation}
Moreover, if \(-\Delta u + \langle X,\nabla u \rangle +cu = f\), then the same upper bounds holds for \(|u|(x) \). 
\end{lem}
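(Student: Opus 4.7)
The plan is to reduce the differential inequality to a uniformly elliptic divergence-form problem on a Euclidean ball via Anderson's harmonic chart and then apply the classical Moser local $L^\infty$-bound for weak subsolutions.

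Fix $x\in M$ and choose the harmonic chart $\varphi\colon B(x,2\rho)\to\mathbb{R}^n$ from the proof of \Cref{Schauder for tensors}, which satisfies the bi-Lipschitz bound \eqref{properties of admissable harmonic charts 1} and the $C^{2,\alpha}$-control \eqref{properties of admissable harmonic charts 2} of $g^\varphi_{ij}$. Since $r<\rho$, the ball $B(x,r)$ sits inside a single such chart. Harmonicity of $\varphi$ yields $g^{ij}_\varphi\Gamma^k_{ij}=0$, so on functions $-\Delta u = -g^{ij}_\varphi \partial_i\partial_j u$, and multiplying the hypothesis by $\sqrt{\det g^\varphi}$ rewrites it as
\[
  -\partial_i(a^{ij}\partial_j u) + \tilde b^i\,\partial_i u + \tilde c\, u \;\ge\; \tilde f
\]
in $\varphi(B(x,r))\subset\mathbb{R}^n$, with $a^{ij}=\sqrt{\det g^\varphi}\,g^{ij}_\varphi$, $\tilde b^i=\sqrt{\det g^\varphi}\,X^i$, $\tilde c=\sqrt{\det g^\varphi}\,c$, $\tilde f=\sqrt{\det g^\varphi}\,f$. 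By \eqref{properties of admissable harmonic charts 1}--\eqref{properties of admissable harmonic charts 2} the matrix $(a^{ij})$ is uniformly elliptic, and the lower-order coefficients are $L^\infty$-bounded in terms of $n,\alpha,\Lambda,i_0,\lambda$ alone.

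The classical Moser local maximum principle for weak subsolutions of divergence-form uniformly elliptic equations (e.g.\ Gilbarg--Trudinger, Theorem 8.17), applied to $u^+=\max(u,0)$---which satisfies the same inequality with $\tilde f\,\chi_{\{u>0\}}$ in place of $\tilde f$---gives, since $q>n$,
\[
  \sup_{\varphi(B(x,r/2))}u^+ \;\le\; C\Big(||u^+||_{L^2(\varphi(B(x,r)))} + ||\tilde f||_{L^{q/2}(\varphi(B(x,r)))}\Big)
\]
with $C$ depending only on the ellipticity constants, on the $L^\infty$-bounds of $\tilde b^i$ and $\tilde c$, on $q$, and on the Euclidean radius. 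Using \eqref{properties of admissable harmonic charts 1} to compare Euclidean and Riemannian volumes (a factor bounded by $e^{nQ}$) converts these norms on $\varphi(B(x,r))$ to norms on $B(x,r)$; combined with $u(x)\le u^+(x)$, this produces the claimed estimate. The $|u|$ bound in the equation case follows by applying the same argument to $-u$, which solves the analogous equation with right-hand side $-f$.

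The argument is entirely classical, and the main point that requires care is the bookkeeping of constants: one must verify that the Moser constant together with the bi-Lipschitz distortion from $\varphi$ depends only on the listed parameters $n,\alpha,\Lambda,i_0,q,\lambda,r$. This is immediate from Anderson's chart, in which every quantity entering Moser's iteration is already controlled in these terms, so that no genuine obstacle arises beyond a faithful invocation of the Euclidean divergence-form De Giorgi--Nash--Moser theory.
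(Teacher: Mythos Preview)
Your approach is correct and essentially identical to the paper's: pass to a harmonic chart, recognize the inequality as a uniformly elliptic problem with bounded coefficients on a Euclidean ball, apply Gilbarg--Trudinger Theorem~8.17, and pull the estimate back via the bi-Lipschitz property of the chart. One cosmetic slip: with the paper's convention $\Delta=\nabla^*d$, harmonic coordinates give $-\Delta u = +\,g^{ij}_\varphi\,\partial_i\partial_j u$ (not $-g^{ij}_\varphi\,\partial_i\partial_j u$), so your divergence-form principal term should carry the opposite sign; this does not affect the argument, since in harmonic coordinates $\partial_i(\sqrt{|g|}\,g^{ij})=0$ and the non-divergence and divergence forms of the leading term agree.
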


\begin{rem}\normalfont
It will be apparent from the proof that assuming \(||\Ric(g)||_{C^0(M,g)} \leq \Lambda\) is sufficient for the statement of \Cref{Nash-Moser}. 
\end{rem} 

Note that when dealing with negatively curved manifolds without a lower injectivity radius bound, 
the terms on the right hand side of inequality (\ref{c0}) 
has to be replaced with the corresponding term in the universal cover.

\begin{proof}Fix \(x_0 \in M\) and pick a harmonic chart \(\varphi:B(x_0,2\rho) \to \bbR^n\) satisfying (\ref{properties of admissable harmonic charts 1}) and (\ref{properties of admissable harmonic charts 2}). Also fix \(r \in (0,\rho)\), and abbreviate \(\Omega:=\varphi\big(B(x_0,r) \big) \subseteq \bbR^n\). In the local coordinates given by \(\varphi\) the differential inequality reads
\[
g_{\varphi}^{ij}\partial_i\partial_j (u \circ \varphi^{-1})+X^i\partial_i (u \circ \varphi^{-1})+c(u \circ \varphi^{-1}) 
	\geq (f \circ \varphi^{-1}) \quad \text{in } \, \Omega \subseteq \bbR^n.\]
By (\ref{properties of admissable harmonic charts 1}) \((g_{\varphi}^{ij})\) is uniformly elliptic. Note that as \(\varphi\) is an \(e^Q\)-bi-Lipschitz equivalence onto its image, it holds \(B(\varphi(x_0),2r^\prime) \subseteq \Omega\) for \(r^\prime=\frac{1}{2}e^{-Q}r\). 
The classical De Giorgi–Nash–Moser estimates (see \cite[Theorem 8.17]{Gilbarg2001}) yield that there is
\(C=C(\lambda,q,r^\prime,n,\alpha,\Lambda,i_0)\) so that
\[
	\sup_{B(\varphi(x_0),r^\prime)} (u \circ \varphi^{-1})  \leq C\Big( ||u\circ \varphi^{-1}||_{L^2(\Omega)}+||f\circ \varphi^{-1}||_{L^{q/2}(\Omega)}\Big).
\]
Since \(\varphi^{-1}:\Omega  \to B(p,r)\) is an \(e^Q\)-biLipschitz equivalence, this completes the proof.
\end{proof}

\section{Integral inequalities}\label{Section - Integral Inequalities}

Our goal is to invert the linearisation \((D\Phi)_{\bar{g}}(h)=\frac{1}{2}\Delta h+\frac{1}{2}\Ric(h)+(n-1)h\) of the Einstein operator \(\Phi\).

\subsection{Poincaré inequalities}
As a first step, we establish   
that the Laplacian \(\Delta\) acting on the space of \((0,2)\)-tensors has a spectral gap, that is, that it satisfies a \textit{Poincaré inequality}. This is expressed by the following proposition,
which is taken from \cite{Tian} (Corollary 1 of Section 3).

\begin{prop}\label{Poincare inequality}
For every $n\geq 2$ there exist numbers \(\varepsilon(n)>0\) and \(c=c(n)>0\) with the following property. 
  Let \(M^n\) be a Riemannian manifold with
\(
	|\sec +1|\leq \varepsilon \leq \varepsilon(n);
\)
then
\[
	||h||_{L^2(M)}^2\leq \frac{1}{n-c \cdot \varepsilon}||\nabla h||_{L^2(M)}^2
\]
for all \(h \in C_c^2\big({\rm Sym}^2(T^*M)\big)\) with \({\rm tr}(h)=0\).
\end{prop}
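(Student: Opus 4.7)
The strategy is a Bochner-type integration-by-parts argument exploiting the trace-free condition and the near-hyperbolic curvature. The starting pointwise observation is the inequality
\[
|\nabla h|^2 - \nabla^i h^{jk}\,\nabla_j h_{ik} \;=\; \tfrac12|\nabla_i h_{jk} - \nabla_j h_{ik}|^2 \;\geq\; 0
\]
(indices raised with $g$), so that after integration
\[
\|\nabla h\|_{L^2(M)}^2 \;\geq\; I \;:=\; \int_M \nabla^i h^{jk}\,\nabla_j h_{ik}\,d\vol_g,
\]
and it suffices to show $I \geq (n-c\varepsilon)\|h\|_{L^2}^2$ for some $c=c(n)$.

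To evaluate $I$, I would integrate by parts in the outer derivative $\nabla^i$, commute $\nabla_l$ past $\nabla_j$ at the cost of the curvature commutator, and use $\nabla^i h_{ik} = -(\delta h)_k$ together with a second integration by parts. The result is the identity
\[
I \;=\; \|\delta h\|_{L^2}^2 \;-\; \int_M h^{jk}\, g^{il}[\nabla_l,\nabla_j]h_{ik}\,d\vol_g.
\]
The heart of the proof is the Ricci commutator evaluated on a symmetric $(0,2)$-tensor. On a space of constant sectional curvature $-1$ one has $R_{ijkl} = -(g_{ik}g_{jl}-g_{il}g_{jk})$, and an explicit index computation yields the algebraic identity
\[
g^{il}[\nabla_l,\nabla_j]h_{ik} \;=\; -n\, h_{jk} + g_{jk}\,\mathrm{tr}(h),
\]
which under $\mathrm{tr}(h)=0$ reduces to $-n h_{jk}$. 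This gives $I = \|\delta h\|_{L^2}^2 + n\|h\|_{L^2}^2$ in the genuine hyperbolic case, so that $\|\nabla h\|_{L^2}^2 \geq I \geq n\|h\|_{L^2}^2$ proves the proposition with $\varepsilon=0$ and optimal constant $n$.

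For the perturbative case, the hypothesis $|\mathrm{sec}+1|\leq\varepsilon$ implies $|R - R_{\rm hyp}|_g \leq c(n)\varepsilon$ on the full $(0,4)$ Riemann tensor (the curvature tensor is algebraically controlled by the sectional curvatures up to a purely dimensional factor). Carrying this error through the commutator computation produces, for traceless $h$,
\[
g^{il}[\nabla_l,\nabla_j]h_{ik} \;=\; -nh_{jk} + E_{jk},\qquad |E|_g \leq c'(n)\varepsilon|h|_g,
\]
so that $I \geq \|\delta h\|_{L^2}^2 + (n-c\varepsilon)\|h\|_{L^2}^2$, finishing the proof as long as $\varepsilon < \varepsilon(n) := n/c$. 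The only nontrivial bookkeeping is the sign in the Ricci identity and the dimensional constant in the passage from $|\mathrm{sec}+1|\leq\varepsilon$ to a pointwise bound on $R-R_{\rm hyp}$; these are routine but must be done cleanly, and constitute essentially all the work.
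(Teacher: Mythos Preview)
Your proof is correct and is essentially the same argument as the paper's, just written out in index notation rather than invoking the Weitzenb\"ock formula abstractly. Your pointwise inequality $\tfrac12|\nabla_i h_{jk}-\nabla_j h_{ik}|^2\geq 0$ is exactly $|d^\nabla h|^2\geq 0$ for $h$ viewed as a $T^*M$-valued $1$-form, and your integration-by-parts identity $I=\|\delta h\|^2 - \int h^{jk}g^{il}[\nabla_l,\nabla_j]h_{ik}$ is the Weitzenb\"ock identity $\|d^\nabla h\|^2+\|\delta^\nabla h\|^2=\|\nabla h\|^2+\tfrac12(\Ric(h),h)$ unpacked; the paper cites this from Besse (Lemma~\ref{1st integral inequality}) and then evaluates the curvature term via the algebraic Lemma~\ref{Estimate for (Ric(h),h)}, which is your commutator computation in coordinate-free language.
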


The proof of this proposition is divided into two steps. We begin with some
purely algebraic control of the curvature tensor $R$ of a Riemannian manifold
$(M,\langle \cdot \,,\cdot \rangle)$ whose sectional curvature is close to $-1$, summarized in Lemma \ref{Estimate for (Ric(h),h)}
below. 
The second step consists in
controlling $L^2$-norms of covariant derivatives. 

For the algebraic control, 
recall that a Riemannian manifold \((M\) has constant sectional curvature \(\kappa\) if and only if the curvature endomorphism satisfies 
\(R(x,y)z=\kappa(\langle y,z\rangle x - \langle x,z\rangle y )\). Motivated by this, we define for \(\kappa \in \bbR\) the tensors \(R^\kappa\) and \(Rm^\kappa\) by
\begin{equation}\label{def of R^k and Rm^k}
	R^\kappa(x,y)z:=\kappa(\langle y,z\rangle x - \langle x,z\rangle y) \quad \text{and} \quad Rm^\kappa(x,y,z,w):=\langle R^\kappa(x,y)z, w\rangle.
\end{equation}
More specifically, we denote \(R^{hyp}:=R^{-1}\) and \(Rm^{hyp}:=Rm^{-1}\), where \textit{hyp} stands for \textit{hyperbolic}. We also remark that bounding \( |Rm-Rm^\kappa|\) is equivalent to bounding \(|\mathrm{sec}-\kappa|\). More precisely, there is a constant \(c(n)>0\) such that
\begin{equation}\label{Rm-Rm^k small iff sec almost k}
	\sup_{\sigma}\big|\mathrm{sec}(\sigma)-\kappa\big| \leq |Rm-Rm^\kappa|\leq c(n) \cdot \sup_{\sigma}\big|\mathrm{sec}(\sigma)-\kappa\big|,
\end{equation}
where the supremum is taken over all planes \(\sigma \subseteq T_pM\)
and \(p \in M\) is arbitrary.
This is clear because curvature operators are determined by their sectional curvatures
through an explicit formula (see \cite[Exercise 3.4.29]{Petersen2016}). 

With these notations we observe

\begin{lem}\label{Estimate for (Ric(h),h)} Let \(M\) be a Riemannian \(n\)-manifold and \(\kappa \in \bbR\). Then the pointwise estimate
\[
	\left|\frac{1}{2}\langle \Ric(h),h\rangle -\kappa\big(n|h|^2-{\rm tr}(h)^2\big) \right| \leq (1+\sqrt{n})|Rm-Rm^\kappa| |h|^2
\]
holds for all symmetric \((0,2)\)-tensors \(h\).
\end{lem}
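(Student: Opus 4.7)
The plan is to exploit the fact that in constant curvature $\kappa$, the Weitzenböck term simplifies explicitly, and then to use the defining formula of $\Ric(h)$ to write the difference $\frac{1}{2}\langle \Ric(h),h\rangle - \kappa(n|h|^2 - {\rm tr}(h)^2)$ as a sum of two terms, each controlled linearly by $|Rm - Rm^\kappa|$.

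First I would establish the model computation: assume momentarily that $(M,g)$ has constant sectional curvature $\kappa$, so that $R = R^\kappa$ and $\Ric_g(x) = (n-1)\kappa x$. Plugging this into the defining formula
\[
\Ric_g(h)(x,y) = h(\Ric_g(x),y) + h(x,\Ric_g(y)) - 2\,{\rm tr}\,h(\cdot, R(\cdot,x)y),
\]
and computing $\sum_k h(e_k, R^\kappa(e_k, x)y) = \kappa\bigl(\langle x,y\rangle {\rm tr}(h) - h(x,y)\bigr)$ in an orthonormal frame, one obtains $\Ric^\kappa(h) = 2n\kappa\, h - 2\kappa\, {\rm tr}(h)\, g$. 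Pairing with $h$ and using $\langle g,h\rangle = {\rm tr}(h)$ yields
\[
\tfrac{1}{2}\langle \Ric^\kappa(h),h\rangle = \kappa\bigl(n|h|^2 - {\rm tr}(h)^2\bigr),
\]
which is exactly the term subtracted in the lemma.

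Next, set $S := R - R^\kappa$ and ${\rm Ric}^S := {\rm Ric}_g - {\rm Ric}^\kappa_g$. Linearity of the formula defining $\Ric(h)$ gives
\[
\Ric(h)(x,y) - \Ric^\kappa(h)(x,y) = h({\rm Ric}^S x, y) + h(x, {\rm Ric}^S y) - 2\,{\rm tr}\, h(\cdot, S(\cdot, x)y).
\]
I would then bound the pairing with $h$ of each of the two summands separately in an orthonormal frame. For the first summand, using $h(e_i,e_j) = h(e_j,e_i)$ and Cauchy–Schwarz gives a bound of $|{\rm Ric}^S|\,|h|^2$, and the contraction estimate $|{\rm Ric}^S| \leq \sqrt{n}\,|S|$ (which follows from the frame formula ${\rm Ric}^S_{jk} = \sum_i S_{ji}{}^{ki}$ together with Cauchy–Schwarz in the single index $i$) yields a bound of $\sqrt{n}\,|S|\,|h|^2$. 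For the second summand, writing $A_{ij} := \sum_k h(e_k, S(e_k,e_i)e_j)$ and applying Cauchy–Schwarz on the pair of summation indices $(k,m)$ in the component expression $A_{ij} = \sum_{k,m} S_{kij}{}^m h_{km}$ yields $|A| \leq |S|\,|h|$, and hence $|\sum_{i,j} A_{ij} h_{ij}| \leq |S|\,|h|^2$.

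Combining the two bounds and using $|S| = |R - R^\kappa| = |Rm - Rm^\kappa|$ gives
\[
\bigl|\tfrac{1}{2}\langle \Ric(h) - \Ric^\kappa(h), h\rangle\bigr| \leq (1 + \sqrt{n})\,|Rm - Rm^\kappa|\,|h|^2,
\]
and the triangle inequality with the model computation above finishes the proof. No step looks substantial here; the main care is in the combinatorics of indices to obtain the precise constant $(1+\sqrt{n})$ rather than a worse dimensional factor, in particular using the symmetry of $h$ to collapse the two Ricci-type terms into one before bounding.
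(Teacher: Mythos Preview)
Your proof is correct and follows essentially the same approach as the paper: the paper introduces the notation $\rho(h)(x,y)=\tfrac{1}{2}\bigl(h(\Ric(x),y)+h(x,\Ric(y))\bigr)$ and $L(h)(x,y)={\rm tr}\,h(\cdot,R(\cdot,x)y)$, verifies $\langle \rho^\kappa(h),h\rangle - \langle L^\kappa(h),h\rangle = \kappa(n|h|^2-{\rm tr}(h)^2)$, and then bounds $|\langle \rho(h)-\rho^\kappa(h),h\rangle|\le \sqrt{n}\,|Rm-Rm^\kappa|\,|h|^2$ and $|\langle L(h)-L^\kappa(h),h\rangle|\le |Rm-Rm^\kappa|\,|h|^2$ exactly as you do. The only cosmetic difference is that the paper diagonalizes $h$ for the $\rho$-estimate whereas you use the symmetry of $h$ directly; both yield the same bound.
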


By the irreducible decomposition of the curvature tensor (see Section G of Chapter 1 in \cite{Besse1987}) \(Rm\) decomposes as
\[
	Rm= \frac{{\rm scal}}{2n(n-1)} g \owedge g + \frac{1}{n-2} \left({\rm Ric}(g)-\frac{{\rm scal}}{n}g \right) \owedge g +W,
\]
where \(\owedge\) is the Kulkarni-Nomizu product, \({\rm scal}={\rm tr}({\rm Ric}(g))\) is the scalar curvature, and \(W\) is the Weyl tensor. Using the Kulkarni-Nomizu one can write \(Rm^\kappa=\frac{\kappa}{2}g \owedge g\). Therefore, if \(g\) is almost Einstein in the sense that \(|{\rm Ric}(g)-(n-1)\kappa g|\) is small, then \(|Rm-Rm^\kappa|\) is small if and only if the norm of the Weyl tensor \(|W|\) is small.

\begin{proof}For this proof we introduce the following abbreviations (here $(e_i)_{1\leq i\leq n}$ is an orthonormal frame): 
\begin{itemize}
\item \(\rho(h)(x,y)=\frac{1}{2}\Big( h\big(\Ric(x),y\big)+h\big(x,\Ric(y)\big)\Big) \);
\item \(L(h)(x,y):={\rm tr} h\big(\cdot, R(\cdot,x)y\big)\);
\item \(\Ric^\kappa(x,y)={\rm tr} Rm^\kappa(x,\cdot,\cdot,y)\) and \(\Ric^\kappa(x)=\sum_{i}R^\kappa(x,e_i)e_i\);
\item \(\rho^\kappa(h)(x,y)=\frac{1}{2}\Big( h\big(\Ric^\kappa(x),y\big)+h\big(x,\Ric^\kappa(y)\big)\Big) \);
\item \(L^\kappa(h)(x,y):= {\rm tr} h\big(\cdot, R^\kappa(\cdot,x)y\big)\).
\end{itemize}
\begin{claim}[1]For every symmetric \((0,2)\)-tensor \(h\), it holds
\[
	\big|\langle L(h)-L^\kappa(h), h\rangle\big| \leq |Rm-Rm^\kappa| |h|^2 \quad \text{and} \quad \big|\langle \rho(h)-\rho^\kappa(h),h \rangle\big|\leq \sqrt{n}\,|Rm-Rm^\kappa||h|^2.
\]	
\end{claim}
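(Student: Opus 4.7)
Both estimates reduce to pointwise Cauchy--Schwarz inequalities applied to tensor contractions. The plan is to expand each expression in a local orthonormal frame $(e_i)_{1\leq i\leq n}$, identify the result as a full contraction of tensors, and then invoke elementary algebraic bounds; no geometric input is required.

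For the first inequality, set $Rm_{ijkl}:=\langle R(e_i,e_j)e_k,e_l\rangle$ and $h_{ij}:=h(e_i,e_j)$. Unfolding the definition $L(h)(x,y)=\mathrm{tr}\,h(\cdot,R(\cdot,x)y)$ gives $L(h)_{jk}=\sum_{i,l}Rm_{ijkl}h_{il}$, so that
\[
\langle L(h),h\rangle=\sum_{i,j,k,l}Rm_{ijkl}\,h_{il}h_{jk},
\]
with the analogous formula for $L^\kappa$ in terms of $Rm^\kappa$. The right-hand side is the full contraction of the $(0,4)$-tensor $Rm-Rm^\kappa$ with the $(0,4)$-tensor $T_{ijkl}:=h_{il}h_{jk}$. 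Cauchy--Schwarz on $(0,4)$-tensors then gives $|\langle L(h)-L^\kappa(h),h\rangle|\leq|Rm-Rm^\kappa|\cdot|T|$, and the factorisation $|T|^2=\bigl(\sum_{i,l}h_{il}^2\bigr)\bigl(\sum_{j,k}h_{jk}^2\bigr)=|h|^4$ yields the first claim.

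The second inequality is analogous, with an extra factor $\sqrt{n}$ coming from passing from $Rm$ to $\Ric$. Writing the $(1,1)$-Ricci tensor in the frame as $\Ric_g(e_j)=\sum_l\Ric_{jl}e_l$ and using the symmetry of $h$ and of $\Ric$, a short computation yields
\[
\langle\rho(h),h\rangle=\sum_{j,k,l}\Ric_{jl}\,h_{jk}h_{kl}=\sum_{j,l}\Ric_{jl}H_{jl},
\]
with $H_{jl}:=\sum_k h_{jk}h_{kl}$, i.e.\ $H$ is the matrix $h^2$. Two elementary algebraic facts then finish the argument: the eigenvalue inequality $\mathrm{tr}(h^4)\leq(\mathrm{tr}(h^2))^2$ gives $|H|\leq|h|^2$, while Cauchy--Schwarz in $i$ applied to the contraction $\Ric_{jk}=\sum_iRm_{ijki}$ gives $|\Ric-\Ric^\kappa|\leq\sqrt{n}\,|Rm-Rm^\kappa|$. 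Combining these via Cauchy--Schwarz on $(0,2)$-tensors yields the second inequality. The only real obstacle throughout is careful index bookkeeping and the correct exploitation of the symmetries of $Rm$ and $h$.
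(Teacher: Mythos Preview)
Your proof is correct and follows essentially the same approach as the paper: both arguments expand in an orthonormal frame and invoke Cauchy--Schwarz. For the second inequality the paper diagonalises $h$ first, whereas you keep $h$ arbitrary and pass through the matrix $H=h^2$ together with the eigenvalue bound $\mathrm{tr}(h^4)\leq(\mathrm{tr}(h^2))^2$; this is a cosmetic difference, since diagonalising $h$ is precisely how one would verify that eigenvalue bound.
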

\begin{proof}[Proof of Claim 1]
  The first inequality follows by writing the expression in
  an orthonormal basis and invoking the Cauchy-Schwarz inequality.

For the second inequality, choose an orthonormal basis \((e_i)_{1\leq i \leq n}\) so that \(h(e_k,e_l)=0\) for \(k\neq l\). Writing the expression in this frame and using \(|\Ric(x,y)-\Ric^\kappa(x,y)| \leq \sqrt{n}|Rm-Rm^\kappa||x||y|\) (which holds since \(|{\mathrm {tr}}(\cdot)|\leq \sqrt{n} |\cdot|\)) yields the second inequality. 
\end{proof}
\begin{claim}[2]For every symmetric \((0,2)\)-tensor \(h\), we have
\[
	\langle L^\kappa(h), h\rangle =\kappa\big({\rm tr}(h)^2-|h|_g^2\big) \quad \text{and} \quad \langle \rho^\kappa(h),h \rangle=\kappa (n-1)|h|_g^2.
\]	
\end{claim}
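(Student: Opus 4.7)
The plan is to prove Claim 2 by direct computation from the explicit definition of $R^\kappa$, reducing both identities to elementary linear-algebraic manipulations in an orthonormal frame. There is no real obstacle here — the claim is purely algebraic and follows from unpacking the formulas.

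First I would compute $\mathrm{Ric}^\kappa$ explicitly. Working in a local orthonormal frame $(e_i)$ and using $R^\kappa(x,e_i)e_i = \kappa(\langle e_i, e_i\rangle x - \langle x, e_i\rangle e_i) = \kappa(x - \langle x, e_i\rangle e_i)$, summing over $i$ gives $\mathrm{Ric}^\kappa(x) = \kappa(n-1)x$. Substituting into the definition of $\rho^\kappa$ yields $\rho^\kappa(h)(x,y) = \kappa(n-1)h(x,y)$, whence $\langle \rho^\kappa(h), h\rangle = \kappa(n-1)|h|_g^2$. This handles the second identity immediately.

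For the first identity, I would expand $L^\kappa(h)(x,y)$ in the frame $(e_i)$:
\[
L^\kappa(h)(x,y) = \sum_i h\!\left(e_i,\, R^\kappa(e_i,x)y\right) = \kappa \sum_i h\!\left(e_i,\, \langle x,y\rangle e_i - \langle e_i, y\rangle x\right).
\]
The first summand contributes $\kappa\langle x,y\rangle\, \mathrm{tr}(h)$, while the second, using $y = \sum_i \langle e_i, y\rangle e_i$ and symmetry of $h$, contributes $-\kappa\, h(x,y)$. Thus $L^\kappa(h)(x,y) = \kappa\bigl(\mathrm{tr}(h)\langle x,y\rangle - h(x,y)\bigr)$.

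Finally, pairing with $h$ and summing over an orthonormal frame gives
\[
\langle L^\kappa(h), h\rangle = \kappa\, \mathrm{tr}(h)\sum_i h(e_i,e_i) - \kappa \sum_{i,j} h(e_i,e_j)^2 = \kappa\bigl(\mathrm{tr}(h)^2 - |h|_g^2\bigr),
\]
completing the proof of the claim. The only conceptual point is the observation that a model constant-curvature tensor has Ricci curvature a scalar multiple of the identity, which trivializes both calculations.
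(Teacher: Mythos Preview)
Your proof is correct and follows essentially the same approach as the paper: both arguments compute directly from the explicit formula for $R^\kappa$ in an orthonormal frame. The only minor difference is that the paper first diagonalizes $h$ and computes $L^\kappa(h)(e_l,e_l)$ entrywise, whereas you derive the full tensor identity $L^\kappa(h)(x,y)=\kappa\bigl(\mathrm{tr}(h)\langle x,y\rangle - h(x,y)\bigr)$ before pairing with $h$; this is a slightly cleaner intermediate step but not a genuinely different route.
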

\begin{proof}[Proof of Claim 2]
  Note that \(\Ric^\kappa(x)=\kappa(n-1)x\). So the second equality is clear.
  Choose an orthonormal basis \((e_i)_{1\leq i \leq n}\) for the metric $g$ 
  so that \(h(e_k,e_l)=0\) for \(k\neq l\). Then
\[
	L^\kappa(h)(e_l,e_l)=\sum_{i,j}h(e_i,e_j)\kappa (\langle e_i, e_j\rangle -\langle e_i, e_l\rangle\langle e_j, e_l\rangle)=\sum_{i}h_{ii}\kappa(1-\delta_{il})=\kappa\sum_{i \neq l}h_{ii}
\]
and thus
\[
	\langle L^\kappa(h), h\rangle=\sum_{l}L^\kappa(h)_{ll}h_{ll}=\kappa\sum_{l}\sum_{i \neq l}h_{ll}h_{ii}=\kappa\big((\sum_{i}h_{ii})^2-\sum_{i}h_{ii}^2\big)=
	\kappa\big({\rm tr}(h)^2-|h|_g^2 \big).
\]
This finishes the proof of the second claim.
\end{proof}
As \(\frac{1}{2}\langle \Ric(h),h\rangle =\langle \rho(h)-L(h),h \rangle\), the two claims immediately imply the desired result.
\end{proof}

The following $L^2$-identity is the second auxiliary result we need.

\begin{lem}\label{1st integral inequality}
Let \(M\) be a Riemannian manifold. Then it holds
\[
	0 \leq ||\nabla h||_{L^2(M)}^2 + \frac{1}{2}\big(\Ric(h),h \big)_{L^2(M)}
\]
for all \(h \in C_c^2\big({\rm Sym}^2(T^*M)\big)\).
\end{lem}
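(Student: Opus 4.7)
The plan is to prove the inequality as a Weitzenböck-type identity expressing the quantity in question as a sum of $L^{2}$-norms of first-order expressions in $h$.  Concretely, alongside the divergence $\delta h$ from \Cref{Subsec: Notation}, I would introduce the first-order operator
\[
D \colon \Gamma\bigl(\mathrm{Sym}^{2}(T^{*}M)\bigr) \to \Gamma\bigl(\Lambda^{2}T^{*}M \otimes T^{*}M\bigr), \qquad (Dh)(X,Y,Z) := (\nabla_{X}h)(Y,Z) - (\nabla_{Y}h)(X,Z).
\]
Since $|Dh|^{2}$ and $|\delta h|^{2}$ are pointwise non-negative, the lemma follows at once from the identity
\[
\int_{M}|Dh|^{2}\,d\vol \;+\; \int_{M}|\delta h|^{2}\,d\vol \;=\; \int_{M}|\nabla h|^{2}\,d\vol \;+\; \tfrac{1}{2}\bigl(\Ric(h),h\bigr)_{L^{2}(M)},
\]
which is what I will derive by direct calculation.

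Expanding $|Dh|^{2}$ in a local orthonormal frame produces $|Dh|^{2} = |\nabla h|^{2} - P$, where $P := \sum_{i,j,k}(\nabla_{i}h_{jk})(\nabla_{j}h_{ik})$, so the identity reduces to the claim $\int_{M}P\,d\vol = \|\delta h\|_{L^{2}}^{2} - \tfrac{1}{2}(\Ric(h),h)_{L^{2}}$.  First I would integrate by parts on the index $i$ (legitimate because $h$ has compact support), producing $\nabla_{i}\nabla_{j}h_{ik}$ inside the integral.  Commuting covariant derivatives via the Ricci identity $[\nabla_{i},\nabla_{j}]h_{kl} = -R_{ijk}{}^{a}h_{al} - R_{ijl}{}^{a}h_{ka}$ then splits the result into a ``straight'' piece $\sum_{i}\nabla_{j}\nabla_{i}h_{ik} = -\nabla_{j}(\delta h)_{k}$, which after another integration by parts against $h^{jk}$ contributes exactly $-\|\delta h\|_{L^{2}}^{2}$, and a curvature piece coming from the commutator.

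The essentially routine but delicate step is verifying that this curvature piece matches $+\tfrac{1}{2}(\Ric(h),h)_{L^{2}}$.  Using the paper's convention $\Ric_{jk} = \sum_{i}R_{ijki}$ together with the antisymmetry $R_{ijkl} = -R_{ijlk}$, the contraction in $i$ gives $\sum_{i}R_{iji}{}^{a} = -\Ric_{j}{}^{a}$, yielding a Ricci-type contribution $\Ric_{j}{}^{a}h_{ak}h^{jk}$ and a full Riemann-type contribution $R_{ijka}h^{jk}h^{ia}$.  Comparing with the definition $\Ric_{g}(h)(x,y) = h(\Ric_{g}(x),y)+h(x,\Ric_{g}(y))-2\,\mathrm{tr}_{g}h(\cdot,R(\cdot,x)y)$ from \Cref{Subsec: Notation}, one checks that these two pieces sum to exactly $\tfrac{1}{2}\langle\Ric(h),h\rangle$: the factor $\tfrac{1}{2}$ in the lemma ultimately originates from the fact that the definition of $\Ric(h)$ contains each of these contractions with coefficient $2$ (the first arising from the symmetrization over $(x,y)$ of $h(\Ric_{g}(x),y)$, the second from the Riemann contraction).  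Rearranging then yields the claimed identity.
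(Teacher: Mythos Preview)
Your proof is correct and takes essentially the same approach as the paper: your operator $D$ is precisely the exterior differential $d^{\nabla}$ on $T^{*}M$-valued $1$-forms and your $\delta h$ is $\delta^{\nabla}h$, so the identity $\int|Dh|^{2}+\int|\delta h|^{2}=\|\nabla h\|_{L^{2}}^{2}+\tfrac{1}{2}(\Ric(h),h)_{L^{2}}$ that you derive by hand is exactly the Weitzenb\"ock formula the paper quotes from \cite[p.~355]{Besse1987}. The only difference is that you compute the curvature commutator directly in a frame, whereas the paper invokes Besse's stated formula; both routes yield the same inequality.
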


\begin{proof}This follows immediately from the equation on the bottom of page 355 of \cite{Besse1987}. Indeed, this equation states
\[
	(\delta^\nabla d^\nabla + d^\nabla \delta^\nabla )h=\nabla^\ast \nabla h - \overset{\circ}{R}h + h \circ r.
\]
By \(\overset{\circ}{R}h\) Besse denotes \((\overset{\circ}{R}h)(x,y)={\rm tr}h\big(\cdot,R(\cdot,x)y) \big)\) (see \cite[page 51]{Besse1987} and note that \cite{Besse1987} uses the sign convention opposite to ours). Moreover, \(r\) is the \((0,2)\)-Ricci tensor \({\rm Ric}(g)\), and \(\circ\) denotes the symmetric product, that is, the pairing that under the isomorphism \({\rm Sym}^2(T^\ast M) \cong {\rm Sym}\big({\rm End}(TM)\big)\) corresponds to the symmetric product \((L,L^\prime) \to \frac{1}{2}(L \circ L^\prime + L^\prime \circ L)\), where \(L \circ L^\prime\) is the composition of endomorphisms. So \((h \circ r)(x,y)=\frac{1}{2}\big(h({\rm Ric}(x),y)+h(x,{\rm Ric}(y)) \big)\), and hence the right hand side in the equation above is just \(\nabla^\ast \nabla h +\frac{1}{2}{\rm Ric}(h)\).

Moreover, \(d^\nabla\) is the exterior differential on \(T^\ast M\)-valued 1-forms induced by the Levi-Civita connection \(\nabla\), and \(\delta^\nabla\) denotes its dual (note that a symmetric \((0,2)\)-tensor can be thought of as a \(T^\ast M\)-valued 1-form). Therefore, 
\begin{align*}
	0 \leq & ||d^\nabla h||^2_{L^2(M)}+||\delta^\nabla h||^2_{L^2(M)} \\
	= & \big((\delta^\nabla d^\nabla + d^\nabla \delta^\nabla )h,h \big)_{L^2(M)} \\
	= & (\nabla^\ast \nabla h,h)_{L^2(M)}+\frac{1}{2}\big({\rm Ric}(h),h \big)_{L^2(M)} \\
	= & ||\nabla h||^2_{L^2(M)}+\frac{1}{2}\big({\rm Ric}(h),h\big)_{L^2(M)}.
\end{align*}
This completes the proof.
\end{proof}

We are now ready for the proof of Proposition \ref{Poincare inequality}.

\begin{proof}[Proof of \Cref{Poincare inequality}]
  We copy the proof from \cite[Corollary 1 in Section 3]{Tian}.
  By \Cref{1st integral inequality}, we have
  \(0 \leq ||\nabla h||_{L^2(M)}^2+\frac{1}{2}\big(\Ric(h),h\big)_{L^2(M)}\). As
 \(|\sec+1| \leq \varepsilon\), it holds \(|Rm-Rm^{hyp}|\leq \varepsilon c_0\) for some constant \(c_0=c_0(n)\) due to (\ref{Rm-Rm^k small iff sec almost k}). Hence Lemma \ref{Estimate for (Ric(h),h)} yields \(-\frac{1}{2}\langle \Ric(h),h\rangle \geq  \big(n- \varepsilon c_0(1+\sqrt{n})\big)|h|^2\). Combining these two inequalities completes the proof in case \(\varepsilon\) is small enough that \(n- \varepsilon c_0(1+\sqrt{n}) > 0\).
\end{proof}

\subsection{Weighted $L^2$-norms}
The main tool for obtaining 
a priori \(C^0\)-estimates for the differential equation \((D\Phi)_{\bar{g}}(h)=f\)
which are independent of \(\vol(M)\) is the use of \textit{hybrid norms}
that are a mixture of Hölder norms and weighted Sobolev norms (see \Cref{subsection - hybrid norm}).
The next proposition establishes the required a priori estimates for weighted \(L^2\)-norms.
It follows Corollary 2 in Section 3 of \cite{Tian}.

\begin{prop}\label{weighted integral estimate - smooth}
Let \(M\) be a complete Riemannian \(n\)-manifold of finite volume, let
\(f \in C^0\big({\rm  Sym}^2(T^*M)\big)\cap L^2(M)\) and let \(h \in C^2\big({\rm  Sym}^2(T^*M)\big)\cap L^2(M)\)
be a solution of
\[
	\frac{1}{2}\Delta_L h+(n-1)h=f.
\]
Let \(\varphi \in C^\infty(M)\) be so that \(\varphi h, \, \varphi f \in L^2(M)\). Then it holds
\begin{align*}
	(n-2)\int_M \varphi^2|h|^2 \, d\vol & \leq  2 \int_M \varphi^2\langle h,f\rangle \, d\vol +\int_M |\nabla\varphi|^2|h|^2 \,d\vol \\
	&+ (1+\sqrt{n})\int_M  \varphi^2|Rm-Rm^{hyp}| |h|^2 \, d\vol.
\end{align*}
\end{prop}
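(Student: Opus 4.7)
The plan is to pair the differential equation $\tfrac{1}{2}\Delta_L h + (n-1)h = f$ with $\varphi^2 h$, integrate by parts to produce a Dirichlet integral $\int \varphi^2 |\nabla h|^2$, and then absorb this Dirichlet integral by applying Lemma \ref{1st integral inequality} to the weighted tensor $\varphi h$. The leftover Ricci contribution is controlled pointwise through Lemma \ref{Estimate for (Ric(h),h)} with $\kappa = -1$.

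More concretely, using $\Delta = \nabla^*\nabla$ and $\nabla(\varphi^2 h) = 2\varphi\, d\varphi \otimes h + \varphi^2 \nabla h$, pairing the equation with $\varphi^2 h$ and multiplying by $2$ gives
\[
\int_M \varphi^2 |\nabla h|^2 + 2\int_M \varphi \langle d\varphi \otimes h, \nabla h\rangle + \int_M \varphi^2 \langle \Ric(h), h\rangle + 2(n-1)\int_M \varphi^2 |h|^2 = 2\int_M \varphi^2 \langle h, f\rangle.
\]
On the other hand, since $\Ric$ acts tensorially on $(0,2)$-tensors, the expansion $|\nabla(\varphi h)|^2 = |\nabla \varphi|^2 |h|^2 + 2\varphi \langle d\varphi \otimes h, \nabla h\rangle + \varphi^2 |\nabla h|^2$ together with Lemma \ref{1st integral inequality} applied to $\varphi h$ yields
\[
0 \leq \int_M \varphi^2 |\nabla h|^2 + 2\int_M \varphi \langle d\varphi \otimes h, \nabla h\rangle + \int_M |\nabla \varphi|^2 |h|^2 + \tfrac{1}{2}\int_M \varphi^2 \langle \Ric(h), h\rangle.
\]
Subtracting this inequality from the preceding identity makes the $\nabla h$ terms cancel and leaves
\[
2(n-1)\int_M \varphi^2 |h|^2 + \tfrac{1}{2}\int_M \varphi^2 \langle \Ric(h), h\rangle \leq 2\int_M \varphi^2 \langle h, f\rangle + \int_M |\nabla \varphi|^2 |h|^2.
\]
Invoking Lemma \ref{Estimate for (Ric(h),h)} with $\kappa = -1$ to write $\tfrac{1}{2}\langle \Ric(h), h\rangle \geq -n|h|^2 + \mathrm{tr}(h)^2 - (1+\sqrt{n})|Rm - Rm^{\mathrm{hyp}}||h|^2$, dropping the nonnegative $\mathrm{tr}(h)^2$, and rearranging converts the coefficient $2(n-1)$ into $2(n-1)-n = n-2$ and produces the claimed estimate.

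The main technical point is that Lemma \ref{1st integral inequality} is stated for compactly supported tensors, and the integration by parts in the first step also needs justification at infinity, whereas $\varphi h$ is only assumed to lie in $L^2$. I would handle this by running the entire argument with an auxiliary cutoff $\chi_k \in C^\infty_c(M)$ satisfying $\chi_k \equiv 1$ on $B(p_0, k)$, $\mathrm{supp}\,\chi_k \subseteq B(p_0, 2k)$, and $|\nabla \chi_k| \leq C/k$, so that both relations hold for $\chi_k \varphi h$ in place of $\varphi h$; one then passes to the limit $k \to \infty$. The extra error terms picked up are of the form $\int |\nabla \chi_k|^2 \varphi^2 |h|^2$ and $\int \chi_k \varphi |\nabla \chi_k||\nabla\varphi||h|^2$, both of which vanish in the limit thanks to $\varphi h \in L^2$, the harmless-to-assume finiteness of $\int |\nabla\varphi|^2 |h|^2$, and the finite volume of $M$. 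The remaining integrands converge by dominated convergence (using $\varphi^2|h||f| \in L^1$ via Cauchy-Schwarz from $\varphi h, \varphi f \in L^2$), so the two displayed relations above hold in the stated generality.
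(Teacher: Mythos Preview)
Your proof is correct and follows essentially the same approach as the paper: apply Lemma~\ref{1st integral inequality} to $\varphi h$, control the Ricci term via Lemma~\ref{Estimate for (Ric(h),h)} with $\kappa=-1$, and pass from compactly supported $\varphi$ to the general case by multiplying with cutoffs $\chi_k$ and taking $k\to\infty$. Your bookkeeping is slightly more direct---pairing the equation against $\varphi^2 h$ and subtracting, rather than computing $\Delta(\varphi h)$ and invoking the divergence theorem on the $1$-form $\omega=\varphi|h|^2\,d\varphi$ as the paper does---but the two computations are equivalent.
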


The weights appearing in our hybrid norms will \textit{not} be smooth.
For convenience, 
we state the precise version of Proposition \ref{weighted integral estimate - smooth}.

\begin{cor}\label{weighted integral estimate - lipschitz} Let \(M,h,f\)
  be as in \Cref{weighted integral estimate - smooth}, and
  let \(\rho:\mathbb{R}\to \mathbb{R}\) be
a smooth function so that \(\rho\) and \(\rho^\prime\) are bounded on \(\bbR_{\geq 0}\). Then for every \(x \in M\) we have
\begin{align*}
	(n-2)\int_M \rho(r_x)^2|h|^2 \, d\vol  & \leq   2\int_M \rho(r_x)^2\langle h,f\rangle \, d\vol +\int_{M} \big(\rho^\prime(r_x)\big)^2|h|^2 \,d\vol \\
	&+ (1+\sqrt{n})\int_M  \rho(r_x)^2|Rm-Rm^{hyp}| |h|^2 \, d\vol,
\end{align*}
where \(r_x=d_M(\cdot,x )\).
\end{cor}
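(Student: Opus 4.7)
The plan is to reduce Corollary \ref{weighted integral estimate - lipschitz} to Proposition \ref{weighted integral estimate - smooth} via a density argument. The only subtlety is that $r_x(y)=d_M(x,y)$ is not smooth on $M$ (it fails to be differentiable at $x$ and on the cut locus), so the weight $\varphi := \rho(r_x)$ is only Lipschitz, not smooth as required by Proposition \ref{weighted integral estimate - smooth}. Set $K := \sup_{\mathbb{R}_{\geq 0}} |\rho|$ and $K' := \sup_{\mathbb{R}_{\geq 0}} |\rho'|$. Since $r_x$ is $1$-Lipschitz, $\rho(r_x)$ is $K'$-Lipschitz and pointwise bounded by $K$; together with $\vol(M) < \infty$ and $h, f \in L^2$, this yields $\rho(r_x) h, \rho(r_x) f \in L^2$ as well as the integrability of the other integrands appearing in the statement.

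I would then construct smooth approximations $r_x^{(k)} : M \to \mathbb{R}$ of $r_x$ with $r_x^{(k)} \to r_x$ a.e.\ and $\|\nabla r_x^{(k)}\|_{C^0(M)} \leq 1 + \epsilon_k$ for some $\epsilon_k \to 0$. Such approximations exist by a standard mollification argument: fix a locally finite cover of $M$ by charts with subordinate partition of unity and mollify $r_x$ chart-by-chart at scale $\epsilon_k$, using that mollification of a Lipschitz function on a Euclidean domain preserves the Lipschitz constant (up to an arbitrarily small error coming from the Riemannian metric versus the Euclidean metric in the chart) and produces a smooth function converging uniformly on compact sets. Setting $\varphi_k := \rho(r_x^{(k)})$ produces smooth functions with $|\varphi_k| \leq K$ and $|\nabla \varphi_k| \leq K'(1+\epsilon_k)$, and finite volume gives $\varphi_k h, \varphi_k f \in L^2$. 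Proposition \ref{weighted integral estimate - smooth} therefore applies to each $\varphi_k$, yielding the inequality for $\varphi=\varphi_k$.

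The final step is to pass to the limit $k \to \infty$ using the dominated convergence theorem. Pointwise a.e.\ convergence of the integrands to those in the Corollary follows from continuity of $\rho$ and $\rho'$. For the gradient term, the limit is $\int |\rho'(r_x)|^2 |\nabla r_x|^2 |h|^2 \leq \int (\rho'(r_x))^2 |h|^2$, using the almost-everywhere bound $|\nabla r_x| \leq 1$ on the complement of the cut locus; this is precisely the upper bound appearing in Corollary \ref{weighted integral estimate - lipschitz}. Uniform $L^1$-dominants are easy to produce, namely $K^2 |h|^2$, $K^2 |h||f|$, $(K')^2(1+\epsilon_1)^2 |h|^2$, and $(1+\sqrt{n}) K^2 |Rm - Rm^{hyp}| |h|^2$; all four are integrable since $h, f \in L^2$ and the curvature factor $|Rm - Rm^{hyp}|$ is uniformly bounded in all applications to manifolds with $|\sec + 1| \leq \varepsilon$.

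The only nontrivial ingredient is the construction of the smooth approximations $r_x^{(k)}$ with near-$1$ gradient, but this is entirely routine. The conceptual point, namely that Lipschitz weights are already good enough to run the integration-by-parts identity underlying Proposition \ref{weighted integral estimate - smooth}, is reflected in the fact that the Lipschitz constant of $r_x$ is precisely $1$ and thus does not degrade the estimate.
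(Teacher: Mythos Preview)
Your proposal is correct and follows essentially the same approach as the paper. The only difference is that where you sketch the chart-by-chart mollification of $r_x$ to obtain smooth approximations with Lipschitz constant close to $1$, the paper simply invokes Theorem~1 of \cite{Azagra2007}, which provides exactly such smooth Lipschitz approximations on Riemannian manifolds; the application of Proposition~\ref{weighted integral estimate - smooth} and the passage to the limit are then identical.
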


\begin{proof}Theorem 1 in \cite{Azagra2007} shows that for every \(\epsilon>0\) there is a Lipschitz function \(r_\epsilon \in C^\infty(M)\) such that \(||r_\epsilon-r_x||_{C^0(M)}<\epsilon\) and \(\mathrm{Lip}(r_\epsilon)\leq 1+\epsilon \). Consider \(\varphi_\epsilon:=\rho\circ r_\epsilon \), and note that \(\varphi_\varepsilon h,\varphi_\varepsilon f \in L^2(M)\) as \(\rho\) is bounded. Applying \Cref{weighted integral estimate - smooth} to \(\varphi_\epsilon:=\rho\circ r_\epsilon \) and taking \(\epsilon \to 0\) implies the desired result.
\end{proof}

\begin{proof}[Proof of \Cref{weighted integral estimate - smooth}]
 We adapt the proof from \cite[Corollary 2 in Section 3]{Tian} and include further details. 

We first consider the case that \(\varphi\) has compact support. Abbreviate \(\tilde{h}=\varphi h\). A direct computation yields
\[
	\Delta \tilde{h}=(\Delta\varphi)h-2{\rm tr}^{(1,4)}(\nabla\varphi \otimes \nabla h)+\varphi \Delta h.
\]
This implies
\[
	\Delta \tilde{h}=(\Delta\varphi)h-2{\rm tr}^{(1,4)}(\nabla\varphi \otimes \nabla h)+2\varphi f-2(n-1)\tilde{h}-\Ric(\tilde{h})
\]
as \(f=\frac{1}{2}\Delta h +\frac{1}{2}\Ric(h)+(n-1)h\). Consider the \(1\)-form \(\omega=\varphi |h|^2 d\varphi\). A straightforward calculation in a local orthonormal frame shows
\[
	-\nabla^\ast \omega=|\nabla \varphi|^2|h|^2+2\langle {\rm tr}^{(1,4)}(\nabla\varphi \otimes \nabla h),\tilde{h}\rangle - \langle (\Delta \varphi) h,\tilde{h}\rangle.
\]
Note that \(\int_M \nabla^\ast \omega \, d\vol=0\). Indeed, \(-\nabla^\ast \omega={\rm div}(\omega^\sharp)\) because the musical isomorphisms commute with covariant differentiation, and \(\int_M {\rm div}(\omega^\sharp) \, d\vol=0\) by the divergence theorem since \(\omega^\sharp\) is compactly supported.
 
Thus \(\big((\Delta\varphi) h,\tilde{h}\big)_{L^2}-2\big({\rm tr}^{(1,4)}(\nabla\varphi \otimes \nabla h),\tilde{h} \big)_{L^2}=||\,|\nabla \varphi|\,  h ||_{L^2}^2\). Together with \Cref{1st integral inequality} this shows
\begin{align*}
	0\leq & \big(\Delta\tilde{h},\tilde{h} \big)_{L^2}+\frac{1}{2}\big(\Ric(\tilde{h}),\tilde{h} \big)_{L^2} \\
	=&|| \,|\nabla \varphi|\, h||_{L^2}^2+2\big(\varphi f,\tilde{h} \big)_{L^2}-2(n-1)||\tilde{h}||_{L^2}-\frac{1}{2}\big(\Ric(\tilde{h}),\tilde{h} \big)_{L^2}.
\end{align*}
Lemma \ref{Estimate for (Ric(h),h)} implies
\begin{align*}
	-\frac{1}{2}\langle \Ric(\tilde{h}),\tilde{h} \rangle\leq & \big(n|\tilde{h}|^2-{\rm tr}(\tilde{h})^2\big)+ (1+\sqrt{n})|Rm-Rm^{hyp}| |\tilde{h}|^2 \\
	\leq & n|\tilde{h}|^2+ (1+\sqrt{n})|Rm-Rm^{hyp}| |\tilde{h}|^2.
\end{align*}
Hence
\[
	(n-2)||\tilde{h}||_{L^2}^2 \leq 2 \big( \varphi f, \tilde{h}\big)_{L^2}+|| \,|\nabla \varphi|\, h||_{L^2}^2+(1+\sqrt{n})\int_{M} |Rm-Rm^{hyp}| |\tilde{h}|^2 \, d\vol.
\]
Since \(\tilde{h}=\varphi h\) this finishes the case that \(\varphi\) is compactly supported.

We now consider the general case. Choose a pointwise non-decreasing sequence of bump functions \((\psi_k)_{k \in \bbN} \subseteq C_c^\infty(M)\) so that \(0 \leq \psi_k \leq 1\), \(||\nabla \psi_k||_{C^0(M)}\leq \frac{1}{k}\), and \(\psi_k \to 1\) pointwise. Since \((a+b)^2\leq (1+k)a^2+(1+\frac{1}{k})b^2\),
we have
\[|\nabla (\psi_k \varphi)|^2 \leq (1+k)|\nabla \psi_k|^2\varphi^2+\left(1+\frac{1}{k}\right)\psi_k^2 |\nabla \varphi|^2 \leq \frac{1+k}{k^2}\varphi^2+\left(1+\frac{1}{k}\right) |\nabla \varphi|^2.\]
Applying the result for the case of compact support to \(\varphi_k:=\psi_k\varphi\) gives
\begin{align*}
	(n-2)\int_M \psi_k^2\varphi^2|h|^2 \, d\vol  &
	\leq  2\int_M \psi_k^2\varphi^2\langle h,f\rangle \, d\vol \\
	& +\frac{1+k}{k^2}||\varphi h||_{L^2}^2 + \left(1+\frac{1}{k}\right)\int_M |\nabla \varphi|^2|h|^2\, d\vol\\
	& + (1+\sqrt{n})\int_M  \psi_k^2\varphi^2|Rm-Rm^{hyp}| |h|^2 \, d\vol.
\end{align*}
Taking \(k \to \infty\) implies the desired result. Indeed, the second summand on the right hand side converges to \(0\) since \(\varphi h \in L^2(M)\), and the first and fourth summand converge by dominated convergence because \(\varphi h, \varphi f \in L^2(M)\). Also we may assume \(|\nabla \varphi|h \in L^2(M)\) since otherwise the desired inequality trivially holds. So the third summand on the right hand side also converges. 
\end{proof}

\section{Invertibility of \(\mathcal{L}h=\frac{1}{2}\Delta_Lh+(n-1)h\)}\label{Section - Invertibiliy of L}

In order to apply the implicit function theorem
it is necessary to invert the linearisation of the Einstein operator \(\Phi\) at the original metric \(\bar{g}\). This linearisation is given by 
\[
	(D\Phi)_{\bar{g}}(h)=\frac{1}{2}\Delta_Lh+(n-1)h.
\]
For simplicity of notation we abbreviate this operator by \(\mathcal{L}\). It is of utmost importance that \(||\mathcal{L}^{-1}||_{\rm op}\) is bounded by some constant
that is independent of \(\vol(M)\). To achieve this we consider special \textit{hybrid norms} that are defined in \Cref{subsection - hybrid norm}. The a priori estimate is then proven in \Cref{Subsec: a priori estimate}.

\subsection{The hybrid norm}\label{subsection - hybrid norm}

Bounding the operator norm of the inverse \(\mathcal{L}^{-1}\) boils down to proving an a priori estimate \(||h||_{\rm source} \leq C ||\mathcal{L}h||_{\rm target}\). As \(\mathcal{L}\) is an elliptic operator, it is natural to work with Hölder norms and use the Schauder estimates
established in \Cref{Schauder for tensors}. To obtain constants that are
independent of \(\vol(M)\) we define norms that are a combination of Hölder
and weighted Sobolev norms.
The basic reason for this is that \(C^0\)-bounds can be deduced
from \(L^2\)-bounds by De Giorgi–Nash–Moser estimates (\Cref{Nash-Moser}). 

Let \(M\) be a Riemannian manifold of dimension $n\geq 3$. For \(\bar{\epsilon}>0, \delta \in (0,2\sqrt{n-2})\) and \(r_0 \geq 1\) define 
\begin{equation}\label{def of set E}
	E:=E(M;\bar{\epsilon},\delta,r_0):=\left\{x \in M \, \left| \, \int_{B(x,2r_0)\setminus B(x,r_0)}e^{-(2\sqrt{n-2}-\delta)r_x(y)} \, d\vol(y) \leq \bar{\epsilon} \right.\right\},
\end{equation}
where \(r_x(y)=d_M(x,y)\). 
%Roughly speaking, the set \(E\) consists of all points
%in \(M\) which are centers of balls of 
%strictly smaller volume growth than their lifts to the universal cover.
Inside \(E\) we will be able to bound \(||h||_{C^0}\) in terms of \(||f||_{C^0}\). Outside of \(E\)
we will use \Cref{Nash-Moser} to bound \(||h||_{C^0}\)
in terms of \(||f||_{L^2}\). The norms we define are
supposed to capture \(L^2\)-information outside of \(E\).

We now come to the precise definition, which we take from Section 5 of \cite{Tian}. 

\begin{definition}\label{hybridnorm}For \(\alpha \in (0,1)\), \(\bar{\epsilon}>0\), \(\delta \in (0,2\sqrt{n-2})\) and \(r_0 \geq 1\)
the \textit{hybrid norms} \(|| \cdot ||_k \) on \(C^{k,\alpha}\big({\rm Sym}^2(T^*M)\big)\) are defined as
\begin{equation}\label{Hybrid 2-norm}
	||h||_2:=\max \left\{||h||_{C^{2,\alpha}(M)}\, , \, \sup_{x \nin  E}\left(\int_Me^{-(2\sqrt{n-2}-\delta)r_x(y)}\big(|h|^2+|\nabla h|^2+|\Delta h|^2\big) \, d\vol(y)\right)^{\frac{1}{2}}\right\}
\end{equation}
and
\begin{equation}\label{Hybrid 0-norm}
	||f||_0:=\max \left\{||f||_{C^{0,\alpha}(M)} \, , \, \sup_{x \nin  E}\left(\int_Me^{-(2\sqrt{n-2}-\delta)r_x(y)}|f|^2 \, d\vol(y)\right)^{\frac{1}{2}}\right\},
\end{equation}
where \(E=E(M;\bar{\epsilon},\delta,r_0)\) is the set defined in (\ref{def of set E}).
\end{definition} 

Strictly speaking these norms also depend on a choice of constants \(\Lambda \geq 0\) and \(i_0>0\) for which it holds \(||\nabla \Ric(g)||_{C^0(M,g)}\leq \Lambda\) and \(\inj(M,g)\geq i_0\). This is because our notion of Hölder norm depends on this geometric information (see \Cref{Schauder for tensors} and its proof).

The reason why we use weights of the form \(e^{-(2\sqrt{n-2}-\delta)r_x}\)  (and not \(e^{-a r_x}\) for arbitrary big \(a > 0\)) is that in order to obtain weighted \(L^2\)-estimates we can only use weights \(e^{2\omega}\) for functions 
\(\omega\) satisfying \(|\nabla \omega| < \sqrt{n-2}\). This is because in the estimate of \Cref{weighted integral estimate - smooth} the factor \(n-2\) on the left hand side needs to be able to absord the factor \(|\nabla \omega|^2\) on the right hand side.

For later purposes it will be useful to mention the following equivalent version of the norm \(||\cdot||_2\).

\begin{rem}\label{alternative hybrid norm}\normalfont Let \(M\) be a closed Riemannian \(n\)-manifold with \(|\mathrm{sec}| \leq 2\). The norm \(||\cdot||_{2}\) is equivalent to the norm
\[
		||h||_{2}^\prime:=\max \left\{||h||_{C^{2,\alpha}(M)} \, , \, \sup_{x \nin  E}\left(\int_Me^{-(2\sqrt{n-2}-\delta)r_x(y)}\big(|h|^2+|\nabla h|^2+|\nabla^2 h|^2\big) \, d\vol(y)\right)^{\frac{1}{2}}\right\}.
\]
and the equivalence constants can be chosen to depend only on \(n\).
\end{rem}

\begin{proof}[Sketch of proof]By approximation it suffices to show that for any smooth Lipschitz function \(\varphi:M \to \bbR\) and any smooth \((0,2)\)-tensor \(h\) it holds
\[
	\int_M e^{\varphi}|\nabla^2h|^2 \, d\vol \leq c\int_Me^{\varphi}\big(|h|^2+|\nabla h|^2+|\Delta h|^2\big) \, d\vol
\]
for a constant \(c\) depending only on \(n\) and \({\rm Lip}(\varphi)\) (see \cite[Theorem 1]{Azagra2007} for the approximation of Lipschitz functions). It is a well-known fact that for \(u \in C_c^\infty(\bbR^n,\bbR)\) it holds \(\int_{\bbR^n}|\nabla^2 u| \, dx=\int_{\bbR^n}|\Delta u|^2 \, dx\) (see (3) on page 326 of \cite{EvansPDE}). In fact, this only needs integration by parts and the fact that second order partial derivatives commute. A similar calculation applies in the present context. The only difference is that now second order partial derivatives only commute up to a term involving the curvature tensor. But since \(|\sec|\leq 2\) we have uniform control on any term involving the curvature.
\end{proof}

\subsection{The a priori estimate}\label{Subsec: a priori estimate}

After having introduced the hybrid norms \(||\cdot||_2\) and \(||\cdot||_0\) in the previous subsection, we now prove that 
\[
	\mathcal{L}: \Big( C^{2,\alpha}\big( {\rm Sym}^2(T^*M)\big), ||\cdot||_2 \Big) \longrightarrow \Big( C^{0,\alpha}\big( {\rm Sym}^2(T^*M)\big), ||\cdot||_0 \Big)
\]
satisfies an a priori estimate with a constant independent of \(\mathrm{vol}(M)\). This is Proposition 5.1 in \cite{Tian}.

\begin{prop}\label{A-priori estimate for L}For all \(n \geq 3\), \(\alpha \in (0,1)\), \(\Lambda \geq 0\), \(\delta \in (0,2\sqrt{n-2})\) and \(r_0 \geq 1\) there exist constants \(\varepsilon_0\), \(\bar{\epsilon}_0\) and \(C >0\) with the following property. Let \(M\) be a closed Riemannian \(n\)-manifold with
\[
	 |\,\mathrm{sec}+1 \, |\leq \varepsilon_0, \quad \mathrm{inj}(M) \geq 1 \quad \text{and} \quad ||\nabla \Ric||_{C^0(M)}\leq \Lambda.
\]
Then for all \(h \in C^{2,\alpha}\big( {\rm Sym}^2(T^*M)\big)\) it holds
\[
	||h||_2 \leq C ||\mathcal{L}h||_0,
\]
where \(||\cdot||_2\) and \(||\cdot||_0\) are the norms defined in (\ref{Hybrid 2-norm}) and (\ref{Hybrid 0-norm}) with respect to any \(\bar{\epsilon} \leq \bar{\epsilon}_0\).
\end{prop}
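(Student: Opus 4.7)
The plan is to bound each of the two pieces of the hybrid norm $\|h\|_2$ (the global $C^{2,\alpha}$-norm, and the weighted-$L^2$ sup over $x_0\notin E$) by $C\|f\|_0$. The $C^{2,\alpha}$-part reduces via \Cref{Schauder for tensors} to a pointwise $C^0$-bound $\|h\|_{C^0(M)}\le C\|f\|_0$, which is obtained from the De Giorgi-Nash-Moser estimate of \Cref{Nash-Moser} applied to a scalar subsolution for $|h|$, coupled with the weighted-$L^2$ bound that comes directly from \Cref{weighted integral estimate - lipschitz}.

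For the weighted-$L^2$ part I apply \Cref{weighted integral estimate - lipschitz} with the weight $\rho(t)=\exp\!\bigl(-(\sqrt{n-2}-\tfrac{\delta}{2})t\bigr)$, so that $\rho^2$ matches the exponential weight of the hybrid norms and $(\rho')^2=(\sqrt{n-2}-\tfrac{\delta}{2})^2\rho^2$. Because $\delta\in(0,2\sqrt{n-2})$, a direct computation gives
\[
n-2-(\sqrt{n-2}-\tfrac{\delta}{2})^2=\delta\sqrt{n-2}-\tfrac{\delta^2}{4}>0,
\]
so the $(\rho')^2|h|^2$-term on the right-hand side of the Corollary is strictly absorbed by the $(n-2)\rho^2|h|^2$-term on the left with a definite coercivity constant $c(n,\delta)>0$. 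The curvature error $|Rm-Rm^{\mathrm{hyp}}|\le C(n)\varepsilon_0$ is likewise absorbed once $\varepsilon_0$ is small, and Cauchy-Schwarz on $\int\rho^2\langle h,f\rangle$ gives
\[
\int_M e^{-(2\sqrt{n-2}-\delta)r_{x_0}}|h|^2\,d\vol\le C\int_M e^{-(2\sqrt{n-2}-\delta)r_{x_0}}|f|^2\,d\vol
\]
for every center $x_0\in M$. The analogous bound with $|\nabla h|^2$ follows from pairing $\mathcal{L}h=f$ with $\rho^2 h$, integrating by parts, and absorbing the cross terms $|\nabla\rho||\nabla h||h|$ via Young's inequality; the bound with $|\Delta h|^2$ follows from $|\Delta h|\le C(|f|+|h|)$, using $\Delta h=2f-\Ric(h)-2(n-1)h$ and $|\Ric(h)|\le C|h|$. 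For $x_0\notin E$ the right-hand side is $\le\|f\|_0^2$ by definition, delivering exactly the $L^2$-part of $\|h\|_2\le C\|f\|_0$.

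For the pointwise $C^0$-bound, a Kato-type calculation ($|h|\Delta|h|\le\langle\Delta h,h\rangle$) combined with the identity above yields the scalar subsolution inequality $-\Delta|h|+C|h|\ge-2|f|$ for a constant $C=C(n,\varepsilon_0)$. \Cref{Nash-Moser} then gives
\[
|h|(x)\le C_2\bigl(\|h\|_{L^2(B(x,r))}+\|f\|_{L^{q/2}(B(x,r))}\bigr)\qquad(r<\rho).
\]
The $f$-term is $\le C\|f\|_{C^0}\le C\|f\|_0$ because $\vol B(x,r)$ is uniformly bounded (Bishop-Gromov under $\sec\ge-1-\varepsilon_0$). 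For $x\notin E$ the $h$-term is converted into weighted $L^2$,
\[
\|h\|_{L^2(B(x,r))}^2\le e^{(2\sqrt{n-2}-\delta)r}\int_M e^{-(2\sqrt{n-2}-\delta)r_x}|h|^2\,d\vol,
\]
which by the previous step is $\le C\|f\|_0^2$ since the weighted $L^2$-norm of $f$ at $x$ appears in $\|f\|_0$. This gives $|h|(x)\le C\|f\|_0$ for all $x\notin E$.

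The main obstacle is the case $x\in E$, for which the weighted $L^2$-norm of $f$ centered at $x$ is not part of $\|f\|_0$ and only $\|f\|_{C^{0,\alpha}(M)}$ is available. The strategy is to exploit the defining small-shell condition $\int_{B(x,2r_0)\setminus B(x,r_0)}e^{-(2\sqrt{n-2}-\delta)r_x}\le\bar{\epsilon}$ to obtain a uniform bound on $\int_M e^{-(2\sqrt{n-2}-\delta)r_x}\,d\vol$ for $x\in E$ when $\bar{\epsilon}$ is small: the $\rho_x^2$-mass is concentrated in $B(x,r_0)$, whose volume is uniformly controlled by Bishop-Gromov and the injectivity lower bound, while the small-shell condition together with an iteration/propagation argument through successive annuli bounds the contribution from $M\setminus B(x,r_0)$. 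Then $\int\rho_x^2|f|^2\le\|f\|_{C^0}^2\int\rho_x^2\le C\|f\|_0^2$, and the same Nash-Moser computation yields $|h|(x)\le C\|f\|_0$. Combining, $\|h\|_{C^0(M)}\le C\|f\|_0$, and \Cref{Schauder for tensors} then gives $\|h\|_{C^{2,\alpha}(M)}\le C(\|f\|_{C^{0,\alpha}}+\|h\|_{C^0})\le C\|f\|_0$, completing the estimate $\|h\|_2\le C\|f\|_0$. The delicate step is the uniform bound on $\int\rho_x^2$ for $x\in E$ independent of $\vol(M)$; this is where the combination of the injectivity lower bound, the curvature pinching, and the specific shell condition cutting out $E$ all enter jointly.
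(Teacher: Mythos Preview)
Your Step 1 and the case $x\notin E$ in Step 2 match the paper's argument essentially line for line, and your derivation of the scalar subsolution inequality for $|h|$ via Kato is a legitimate variant of what the paper does. The genuine gap is the case $x\in E$.

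Your proposed strategy there is to bound $\int_M e^{-(2\sqrt{n-2}-\delta)r_x}\,d\vol$ uniformly, arguing that the small-shell condition on the single annulus $B(x,2r_0)\setminus B(x,r_0)$ propagates outward. This cannot work. The weight decays at rate $2\sqrt{n-2}-\delta$, whereas under $\sec\ge -1-\varepsilon_0$ the volume of $B(x,R)$ grows at rate essentially $n-1$; since $2\sqrt{n-2}-\delta<2\sqrt{n-2}\le n-1$ for all $n\ge 3$, the weighted volume integral is \emph{not} uniformly bounded as $\diam(M)\to\infty$. The condition defining $E$ says nothing about annuli beyond radius $2r_0$, so there is no mechanism for the ``iteration/propagation'' you invoke.

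The paper's device for $x\in E$ is different and is exactly the missing idea: instead of bounding the global weighted volume, one applies \Cref{weighted integral estimate - lipschitz} with a \emph{localized} weight $\rho(t)=\eta(t/r_0)e^{-(\sqrt{n-2}-\delta/2)t}$, where $\eta$ is a cutoff supported in $[0,2r_0]$ and equal to $1$ on $[0,r_0]$. The gradient-error term $(\rho')^2$ is then supported precisely on the annulus $B(x,2r_0)\setminus B(x,r_0)$, and the $E$-condition bounds its contribution by $C\bar\epsilon\,\|h\|_{C^0}^2$. One obtains
\[
\|h\|_{L^2(B(x,r_0))}\le C\bigl(\bar\epsilon^{1/2}\|h\|_{C^0(M)}+\|f\|_0\bigr),
\]
where the $f$-integral is now only over $B(x,2r_0)$ and hence controlled by $\|f\|_{C^0}$. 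Plugging this into the Nash--Moser bound at a point where $|h|(x)\ge\tfrac12\|h\|_{C^0}$ and taking $\bar\epsilon$ small absorbs the $\bar\epsilon^{1/2}\|h\|_{C^0}$ term and closes the estimate. The $E$-condition is tailored to make precisely this localization work; it is not a statement about global weighted volume.
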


\begin{proof}Our proof follows that in \cite{Tian}. We add further details and at times give alternative arguments. Abbreviate \(f:=\mathcal{L}h\). 

\medskip
  \textbf{Step 1 (Integral estimate):}
  Let \(c(n,\delta):=n-2-(\sqrt{n-2}-\delta/2)^2\) and
  \(\epsilon_0 \leq \frac{c(n,\delta)}{2(1+\sqrt{n})} \). 
Assume that \(||Rm-Rm^{hyp}||_{C^0(M)} \leq \varepsilon_0\).
An application of  \Cref{weighted integral estimate - lipschitz} with \(\rho(t)=e^{-(\sqrt{n-2}-\delta/2)t}\) gives
\begin{align*}
	c(n,\delta)\int_M e^{-(2\sqrt{n-2}-\delta)r_x}|h|^2 \, d\vol  & \leq 2\int_M e^{-(2\sqrt{n-2}-\delta)r_x}\langle f, h \rangle \, d\vol \\
	& + (1+\sqrt{n})\varepsilon_0 \int_M e^{-(2\sqrt{n-2}-\delta)r_x}|h|^2 \, d\vol,
\end{align*}
and consequently 
\[
	\frac{c(n,\delta)}{2}\int_M e^{-(2\sqrt{n-2}-\delta)r_x}|h|^2 \, d\vol \leq \int_M e^{-(2\sqrt{n-2}-\delta)r_x}\left(\frac{c(n,\delta)}{4}|h|^2+\frac{4}{c(n,\delta)}|f|^2 \right) \, d\vol
\]
by the Cauchy-Schwarz inequality
and the inequality between the arithmetic and the geometric mean. Hence 
\begin{equation}\label{int1 - weight}
	\int_M e^{-(2\sqrt{n-2}-\delta)r_x}|h|^2 \, d\vol \leq \frac{16}{c(n,\delta)^2}\int_M e^{-(2\sqrt{n-2}-\delta)r_x}|f|^2 \, d\vol.
\end{equation}
Note that \(|\Ric(h)| \leq c(n)|h|\) because \(|\mathrm{sec}|\leq 2\). Testing the equation \(f-\frac{1}{2}\Ric(h)-(n-1)h=\frac{1}{2}\Delta h \) with \(e^{-(2\sqrt{n-2}-\delta)r_x}\Delta h\) and using the inequality between the weighted arithmetic and the weighted geometric mean implies
\begin{equation}\label{int2 - weight}
	\int_M e^{-(2\sqrt{n-2}-\delta)r_x}|\Delta h|^2 \, d\vol \leq C(n)\int_M e^{-(2\sqrt{n-2}-\delta)r_x}\big(|h|^2+|f|^2\big) \, d\vol.
      \end{equation}
Set \(\varphi:=(2\sqrt{n-2}-\delta)r_x\). We may act as if \(\varphi\) were smooth. Indeed, by Theorem 1 in \cite{Azagra2007} there exists a sequence \((\varphi_\varepsilon)_{\varepsilon > 0} \subseteq C^\infty(M)\) so that \(\lim_{\varepsilon \to 0}||\varphi-\varphi_\varepsilon||_{C^0(M)}=0\) and \({\rm Lip}(\varphi_\varepsilon)\leq 2\sqrt{n-2}\). Then all the arguments below apply to \(\varphi_\varepsilon\), so that (\ref{integral1}) will hold for \(\varphi_\varepsilon\) instead of \(\varphi\). But then taking \(\varepsilon \to 0\) will yield (\ref{integral1}) for \(\varphi\).

Using       
\(\frac{1}{2}\Delta (|h|^2)=\langle \Delta h,h \rangle - |\nabla h|^2 \leq \frac{1}{2}|\Delta h|^2+\frac{1}{2}|h|^2-|\nabla h|^2\) we obtain
\begin{equation}\label{inttt}
	\begin{split}
	\int_M e^{-\varphi}|\nabla h|^2 \, d\vol \leq & \frac{1}{2}\int_M e^{-\varphi}|h|^2 \, d\vol+\frac{1}{2}\int_M e^{-\varphi}|\Delta h|^2 \, d\vol \\
	&- \frac{1}{2}\int_M e^{-\varphi}\Delta (|h|^2) \, d\vol. 
	\end{split}
\end{equation}
Integration by parts shows that 
\[
	- \frac{1}{2}\int_M e^{-\varphi}\Delta (|h|^2) \, d\vol=-\frac{1}{2}\int_M \langle \nabla (|h|^2),\nabla (e^{-\varphi}) \rangle \, d\vol, 
      \]
  moreover \(\left|\frac{1}{2}\langle\nabla (|h|^2),\nabla \varphi \rangle\right| \leq |h||\nabla h||\nabla \varphi| \leq \frac{1}{2}|\nabla \varphi|^2|h|^2+\frac{1}{2}|\nabla h|^2 \). 
  Absorbing \(\frac{1}{2}e^{-\varphi}|\nabla h|^2\) to the left hand side of inequality (\ref{inttt}) 
  and using \(|\nabla \varphi| \leq 2\sqrt{n-2}\) yields
\begin{equation}\label{int3 - weight}
\begin{split}
	\int_M e^{-\varphi}|\nabla h|^2 \, d\vol \leq & \int_M e^{-\varphi}|h|^2 \, d\vol+\int_M e^{-\varphi}|\Delta h|^2 \, d\vol \\
	&+ 4(n-2)\int_M e^{-\varphi}|h|^2 \, d\vol.
\end{split}
\end{equation}
Therefore 
\begin{equation}\label{integral1}
	\int_M e^{-(2\sqrt{n-2}-\delta)r_x}\big(|h|^2+|\nabla h|^2+|\Delta h|^2\big) \, d\vol \leq C(n,\delta) \int_M e^{-(2\sqrt{n-2}-\delta)r_x}|f|^2 \, d\vol
\end{equation}
by combining (\ref{int1 - weight}), (\ref{int2 - weight}) and (\ref{int3 - weight}). This completes the integral estimates.

\medskip
\textbf{Step 2 (\(C^0\)-estimate):}
It remains to estimate \(||h||_{C^{2,\alpha}(M)}\).
By \Cref{Schauder for tensors}, it suffices to bound \(||h||_{C^0(M)}\). We reduce the \(C^0\)-estimate to an \(L^2\)-estimate. Namely, we show that there is a constant \(C=C(n,\alpha,\Lambda)\) so that for each $x\in M$ it holds
\begin{equation}\label{C^0 from L^2}
	|h|(x) \leq C \big(||h||_{L^2(B(x,\rho))}+||f||_{C^0(B(x,\rho))} \big),
\end{equation}
where \(\rho\) is the constant appearing in the definition of the Hölder norms. This will follow from the De Giorgi–Nash–Moser estimates of \Cref{Nash-Moser}. The problem is that De Giorgi–Nash–Moser estimates only hold for scalar equations, but not for systems. For this reason we can not directly apply \Cref{Nash-Moser} to \(\mathcal{L}h=f\). To remedy this, we show that \(|h|\) satisfies an elliptic partial differential inequality.

Recall \(f=\frac{1}{2}\Delta h+\frac{1}{2}\Ric(h)+(n-1)h \). Using \(\frac{1}{2}\Delta (|h|^2)=\langle \Delta h,h \rangle -|\nabla h|^2\) and the estimate on \(\frac{1}{2}\langle\Ric(h),h\rangle\) from Lemma \ref{Estimate for (Ric(h),h)} we get (assuming  \(\varepsilon_0 \leq \frac{1}{1+\sqrt{n}}\))
\begin{align}\label{intttt}
	-\frac{1}{2}\Delta (|h|^2)&=-2\langle f,h\rangle + \langle \Ric(h),h\rangle +2(n-1)|h|^2+ |\nabla h|^2 \notag \\
	& \geq -2|f||h| -2(n+\varepsilon_0(1+\sqrt{n}))|h|^2+2{\rm tr}(h)^2 +2(n-1)|h|^2 + |\nabla h|^2 \notag\\
	& \geq -2|f||h| -4|h|^2 + |\nabla h|^2.
\end{align}
Suppose for the moment that \(h \neq 0\) everywhere. Then \(|h|\) is a nowhere vanishing \(C^2\) function. Observe
\[
	|\nabla(|h|)|\leq |\nabla h| \quad \text{and} \quad -\frac{1}{2}\Delta(|h|^2)=-|h|\Delta(|h|)+|\nabla(|h|)|^2.
\]
Combining this with inequality (\ref{intttt}) and dividing by \(|h|\) shows
\begin{equation}\label{differential inequality for |h|}
	-\Delta(|h|)\geq -2|f|-4|h|.
\end{equation}
Applying \Cref{Nash-Moser} to (\ref{differential inequality for |h|}) yields (\ref{C^0 from L^2}).

Recall that we assumed \(h \neq 0\) everywhere. We will now show that this assumption can be dropped. 
Namely, (\ref{C^0 from L^2}) is stable under \(C^2\)-convergence, that is,
if (\ref{C^0 from L^2}) holds for a sequence of \(h_i\) and if \(h_i \to h\) in the \(C^2\)-topology, then (\ref{C^0 from L^2}) also holds for \(h\). Therefore, it suffices to construct a sequence \(h_i\) converging to \(h\) in the \(C^2\)-topology so that \(h_i \neq 0\) everywhere.

Let \(h \in C^{2,\alpha}\big( {\rm Sym}^2(T^*M)\big)\) be arbitrary. Then $h$ can be approximated in the $C^2$-topology by symmetric 
$(0,2)$-tensors $h_i$ $(i\geq 1)$ which are transverse 
to the zero-section of ${\rm Sym}^2(T^*M)$. For reasons of dimension, such a section is disjoint from the zero-section, in other words, the tensors $h_i$ vanish nowhere. Therefore, the estimate (\ref{C^0 from L^2}) holds for all \(h \in C^{2,\alpha}\big( {\rm Sym}^2(T^*M)\big)\) and \(x \in M\).

Fix \(h \in C^{2,\alpha}\big( {\rm Sym}^2(T^*M)\big)\). Choose \(x \in M\) so that \(|h|(x) \geq \frac{1}{2}||h||_{C^0(M)}\). Then (\ref{C^0 from L^2}) implies
\begin{equation}\label{C^0 from L^2 - global}
	\frac{1}{2}||h||_{C^0(M)} \leq C \big(||h||_{L^2(B(x,\rho))}+||f||_{C^0(M)} \big)
\end{equation}
for some \(C=C(n,\alpha,\Lambda)\). We can without loss of generality assume that the \(\rho\) from the definition of Hölder norms is at most \(1\). So it suffices to bound \(||h||_{L^2(B(x,r_0))}\) as \(r_0 \geq 1 \geq \rho\). To this end we distinguish two cases.

We first consider the case that \(x \nin E \). By (\ref{int1 - weight})
\begin{align*}
	\int_{B(x,r_0)}|h|^2 \, d\vol &\leq e^{2\sqrt{n-2}r_0}\int_M e^{-(2\sqrt{n-2}-\delta)r_x}|h|^2 \, d\vol \\
	&\leq e^{2\sqrt{n-2}r_0}C(n,\delta)\int_M e^{-(2\sqrt{n-2}-\delta)r_x}|f|^2 \, d\vol \\
	&\leq C(n,\delta,r_0) ||f||_0^2,
\end{align*}
where in the last line we used (\ref{Hybrid 0-norm}) and \(x \nin E \). This finishes the case \(x \nin E\). 

Now consider the case \(x \in E\). Choose \(\eta:\bbR \to \bbR \) smooth such 
that \(\eta=1 \) on \((-\infty,1] \), \(\eta=0 \) on \([2,\infty) \) and \(-\frac{3}{2} \leq \eta^\prime \leq 0\). 
Let \(\rho(t)=\eta(\frac{t}{r_0})e^{-(\sqrt{n-2}-\delta/2)t}\). Compute 
\[
	\rho^\prime(t)=e^{-(\sqrt{n-2}-\delta/2)t}\left(\frac{1}{r_0}\eta^\prime(t/r_0)-\eta(t/r_0)(\sqrt{n-2}-\delta/2) \right).
\]
Abbreviate \(\sigma:=n-2-(\sqrt{n-2}-\delta/2)^2-(1+\sqrt{n})\varepsilon_0\). As \(||Rm-Rm^{hyp}||\leq \varepsilon_0\), \Cref{weighted integral estimate - lipschitz} implies 
\begin{align}\label{int4 - weight}
	\int_M e^{-(2\sqrt{n-2}-\delta)r_x}\eta^2 & \big(\sigma|h|^2-2\langle h,f\rangle \big)\, d\vol \notag \\
	&\leq \int_M \left(\eta (-\eta^\prime)\frac{2\sqrt{n-2}-\delta}{r_0}+(\frac{\eta^\prime}{r_0})^2\right)|h|^2e^{-(2\sqrt{n-2}-\delta)r_x} \, d\vol \notag \\
	&\leq \int_{B(x,2r_0)\setminus B(x,r_0)}\left(\frac{3\sqrt{n-2}}{r_0}+\frac{9}{4r_0^2}\right)|h|^2e^{-(2\sqrt{n-2}-\delta)r_x} \, d\vol \notag \\
	&\leq C(n,r_0)||h||_{C^0(M)}^2 \int_{B(x,2r_0)\setminus B(x,r_0)}e^{-(2\sqrt{n-2}-\delta)r_x} \, d\vol \notag \\
	&\leq \bar{\epsilon} C(n,r_0)||h||_{C^0(M)}^2,
\end{align}
where we used that \(\eta^\prime(\frac{r_x}{r_0})=0\) outside \(B(x,2r_0)\setminus B(x,r_0)\) in the second inequality, and \(x \in E \) together with the definition (\ref{def of set E}) of \(E\) in the last inequality. Moreover,
\begin{equation*}
	\int_M e^{-(2\sqrt{n-2}-\delta)r_x}\eta^2|f|^2\, d\vol \leq e^{4\sqrt{n-2}r_0}\int_{B(x,2r_0)}e^{-(2\sqrt{n-2}-\delta)r_x}|f|^2 \, d\vol \leq c(n,r_0)||f||_0^2
\end{equation*}
since \(\eta(\frac{r_x}{r_0}) =0\) outside \(B(x,2r_0)\). Combining this with (\ref{int4 - weight}) yields
\[
	\sigma\int_M e^{-(2\sqrt{n-2}-\delta)r_x}\eta^2 \big|h-\frac{1}{\sigma}f\big|^2 \, d\vol \leq \bar{\epsilon} C(n,r_0)||h||_{C^0(M)}^2 +\frac{c(n,r_0)}{\sigma}||f||_0^2.
\]
Note \(\sigma=c(n,\delta)-(1+\sqrt{n})\varepsilon_0\). Assume \(\varepsilon_0 \leq \frac{c(n,\delta)}{2(1+\sqrt{n})}\). Then  \(\sigma\geq \frac{c(n,\delta)}{2}\). Hence
\begin{align*}
	\int_{B(x,r_0)}\big| h-\frac{1}{\sigma}f\big|^2 \, d\vol &\leq e^{2\sqrt{n-2}r_0}\int_M e^{-(2\sqrt{n-2}-\delta)r_x}\eta^2 \big| h-\frac{1}{\sigma}f\big|^2 \, d\vol\\
	&\leq C(n,\delta,r_0)\big( \bar{\epsilon}||h||_{C^0(M)}^2+||f||_0^2 \big).
\end{align*}
Using the triangle inequality we get
\[
	||h||_{L^2(B(x,r_0))} 
	\leq C(n,\delta, r_0) \big(\bar{\epsilon}^{\frac{1}{2}}||h||_{C^0(M)}+||f||_0\big). 
\]
Combining this with (\ref{C^0 from L^2 - global}) yields
\[
	\frac{1}{2}||h||_{C^0(M)}\leq C\big(\bar{\epsilon}^\frac{1}{2}||h||_{C^0(M)}+||f||_0 \big)
\]
for some \(C=C(n,\alpha,\delta,r_0,\Lambda)\). Thus for \(\bar{\epsilon} \leq \frac{1}{16C^2}\)
\[
	\frac{1}{2}||h||_{C^0(M)}\leq \frac{1}{4}||h||_{C^0(M)}+C||f||_0.
\]
This implies the desired \(C^0\)-estimate.
\end{proof}

%\textcolor{red}{(New)}

We now make some further remarks concerning this proof. First, we point out the following estimate that can be extracted from the proof. In fact, it follows immediately from the estimates (\ref{int1 - weight}) and (\ref{C^0 from L^2}). This estimate will be the key ingredient to obtain the exponential decay estimate in \Cref{Pinching with inj radius bound - introduction}.

\begin{rem}\label{moreover estimate 1}\normalfont Let \(f \in C^{0,\alpha}\big({\rm Sym}^2(T^*M)\big)\) and let \(h \in C^{2,\alpha}\big({\rm Sym}^2(T^*M)\big)\) be a solution of \(\mathcal{L}h=f\). Then there is a constant \(C=C(n,\alpha,\delta,\Lambda)\) so that 
\[
	|h|(x) \leq C \left(||f||_{C^0(B(x,\rho))} +\left(\int_M e^{-(2\sqrt{n-2}-\delta)r_x}|f|^2 \, d\vol \right)^{\frac{1}{2}}\right)
\]
for all \(x \in M\). Here \(\rho>0\) is the constant appearing in the definition of the Hölder norms.
\end{rem}

%\textcolor{red}{(New)}

For the proof of \Cref{Pinching without inj radius bound - introduction} we have to deal with manifolds that may no longer be compact (but have finite volume), and do \textit{not} have a positive lower bound on the injectivity radius. The next two remarks explain to what extend the arguments from the proof of \Cref{A-priori estimate for L} are still valid in that situation.

\begin{rem}\label{integration by parts works in finite volume case}\normalfont 
Let \(M\) be a finite volume manifold that satisfies all the assumptions from \Cref{A-priori estimate for L} except the compactness assumption and the lower bound on the injectivity radius. If \(h \in C^2\big({\rm Sym}^2(T^*M)\big) \cap H^2(M)\) and if $\mathcal{L}(h)=f$, then the inequality (\ref{integral1}) is still valid. 
Here \(h \in C^2\big({\rm Sym}^2(T^*M)\big) \cap H^2(M)\) just means that \(h\) is \(C^2\) and that \(\int_M \big( |h|^2+|\nabla h|^2+|\nabla^2 h|^2 \big) \, d\vol < \infty\).
%Here $H^2(M)$ denotes the $L^2$-based Sobolev space for $(0,2)$-tensor fields on $M$.
%then all the integral estimates from the proof of \Cref{A-priori estimate for L} are still valid.
\end{rem}

\begin{proof}
The proof of inequality (\ref{integral1}) carries over without change provided we can verify the equality 
\begin{equation}\label{integral2}
	\int_M e^{-\varphi}\Delta (|h|^2) \, d\vol=\int_M \langle \nabla (e^{-\varphi}),\nabla (|h|^2)\rangle \, d\vol,
\end{equation}
which involved an integration by parts. Here \(\varphi=(2\sqrt{n-2}-\delta)r_x\). As in the proof of \Cref{A-priori estimate for L} we may act as if \(\varphi\) were smooth. 

Consider the vector field \(X:=e^{-\varphi}\nabla(|h|^2)\). As \(h \in H^2(M)\) and because \(\varphi\) is Lipschitz and bounded from below, it is easy to see that \(X \in L^1(M)\) and \({\rm div}(X)=\langle \nabla (e^{-\varphi}),\nabla (|h|^2)\rangle -e^{-\varphi}\Delta(|h|^2) \in L^1(M)\). Therefore, the main result of \cite{Gaffney1954} shows \(\int_M {\rm div}(X) \, d\vol=0\).
\end{proof}

\begin{rem}\label{pointwise C^0 estimate given inj rad bound}\normalfont 
Let \(M\) be a finite volume manifold that satisfies all the assumptions from \Cref{A-priori estimate for L} except the compactness assumption and the lower bound on the injectivity radius. Then there exist \(\bar{\epsilon}_0=\bar{\epsilon}_0(n,\alpha, \Lambda,\delta, r_0)>0\) and \(C=C(n,\alpha, \Lambda,\delta, r_0)\) with the following property. Let \(h \in C^2\big({\rm Sym}^2(T^*M)\big) \cap H^2(M)\), and assume there is \(x_0 \in M_{\rm thick}\) with \(|h|(x_0) \geq \frac{1}{2}||h||_{C^0(M)}\). Then it holds
\[
	||h||_{C^0} \leq C ||\mathcal{L}h||_0,
\] 
where \(||\cdot||_0\) is the norm defined in (\ref{Hybrid 0-norm}) with respect to  any \(\bar{\epsilon}\leq \bar{\epsilon}_0\).
\end{rem}

\begin{proof}A priori (\ref{C^0 from L^2}) only holds in the universal cover. But for \(x \in M_{\rm thick}\), the norm \(||\cdot||_{L^2(B(x,\rho))}\) is the same in the universal cover and in the base manifold (here we assume without loss of generality that the universal radius \(\rho\) used to define Hölder norms is smaller than a chosen Margulis constant). So (\ref{C^0 from L^2}) holds for \(x \in M_{\rm thick}\), and the rest of the argument of \Cref{A-priori estimate for L} applies without change. 
\end{proof}

%\textcolor{red}{(New)}

We finish this section by showing that the operator \(\mathcal{L}\) is invertible. This follows from \Cref{A-priori estimate for L} by 
standard techniques.

%The next result is \cite[Proposition 5.2]{Tian}.

\begin{prop}\label{Invertibility of L}For all \(n \geq 3\), \(\alpha \in (0,1)\), \(\Lambda \geq 0\), \(\delta \in (0,\sqrt{n-2})\) and \(r_0 \geq 1\) there exist constants \(\varepsilon_0\), \(\bar{\epsilon}_0\) and \(C>0\) with the following property. Let \(M\) be a closed Riemannian \(n\)-manifold with
\[
	 |\,\mathrm{sec}+1 \, |\leq \varepsilon_0, \quad \mathrm{inj}(M) \geq 1 \quad \text{and} \quad ||\nabla \Ric||_{C^0(M)}\leq \Lambda.
\]
Then the operator
\[
	\mathcal{L}: \Big( C^{2,\alpha}\big( {\rm Sym}^2(T^*M)\big), ||\cdot||_2 \Big) \longrightarrow \Big( C^{0,\alpha}\big( {\rm Sym}^2(T^*M)\big), ||\cdot||_0 \Big)
\]
is invertible, and 
\[
	||\mathcal{L}||_{\rm op}, ||\mathcal{L}^{-1}||_{\rm op} \leq C,
\]
where \(||\cdot||_2\) and \(||\cdot||_0\) are the norms defined in (\ref{Hybrid 2-norm}) and (\ref{Hybrid 0-norm}) with respect to any \(\bar{\epsilon} \leq \bar{\epsilon}_0\).
\end{prop}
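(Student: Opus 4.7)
The plan is to deduce invertibility from the a priori estimate of \Cref{A-priori estimate for L}, combined with a boundedness statement for $\mathcal{L}$ and a Fredholm/self-adjointness argument for surjectivity. Throughout, note that on the closed manifold $M$ the underlying vector spaces in both normed spaces coincide with the standard Hölder spaces $C^{2,\alpha}$ and $C^{0,\alpha}$ of sections of $\mathrm{Sym}^2(T^*M)$; only the norms are new.

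\textbf{Step 1 (Boundedness of $\mathcal{L}$).} Write $\mathcal{L}h=\tfrac12\Delta h+\tfrac12\mathrm{Ric}(h)+(n-1)h$. For the Hölder part of $\|\mathcal{L}h\|_0$, the pointwise Schauder-type inequality (\ref{Continuity of elliptic operator - pointwise}) together with $\mathrm{inj}(M)\geq 1$ and $\|\nabla\mathrm{Ric}\|_{C^0}\leq\Lambda$ directly yields
\[
\|\mathcal{L}h\|_{C^{0,\alpha}(M)}\leq C\,\|h\|_{C^{2,\alpha}(M)}.
\]
For the weighted $L^2$ part, the sectional curvature bound gives $|\mathrm{Ric}(h)|\leq c(n)|h|$, so for any $x\notin E$,
\[
\int_M e^{-(2\sqrt{n-2}-\delta)r_x}|\mathcal{L}h|^2\,d\vol\;\leq\; C(n)\int_M e^{-(2\sqrt{n-2}-\delta)r_x}\bigl(|\Delta h|^2+|h|^2\bigr)\,d\vol\;\leq\; C\|h\|_2^2,
\]
using the definition (\ref{Hybrid 2-norm}) of $\|\cdot\|_2$. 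Combined, $\|\mathcal{L}h\|_0\leq C\|h\|_2$.

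\textbf{Step 2 (Injectivity and bound on $\mathcal{L}^{-1}$).} \Cref{A-priori estimate for L} applied with any $\bar{\epsilon}\leq\bar{\epsilon}_0$ gives $\|h\|_2\leq C\|\mathcal{L}h\|_0$ for every $h\in C^{2,\alpha}(\mathrm{Sym}^2 T^*M)$. In particular $\ker\mathcal{L}=0$, and the inverse $\mathcal{L}^{-1}$, once it exists on all of the target space, has operator norm bounded by $C$.

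\textbf{Step 3 (Surjectivity).} This is the only step that is not a direct consequence of the a priori estimate. The operator $\tfrac12\Delta_L+(n-1)\mathrm{Id}$ is a second order elliptic operator on the closed manifold $M$, and it is formally self-adjoint: both the connection Laplacian $\Delta=\nabla^*\nabla$ and the Weitzenböck curvature endomorphism are symmetric. Standard elliptic theory on closed manifolds then gives that
\[
\mathcal{L}\colon C^{2,\alpha}\!\bigl(\mathrm{Sym}^2(T^*M)\bigr)\longrightarrow C^{0,\alpha}\!\bigl(\mathrm{Sym}^2(T^*M)\bigr)
\]
is Fredholm with index zero, and moreover its cokernel is isomorphic to its $L^2$-kernel, which coincides with $\ker\mathcal{L}$ on $C^{2,\alpha}$ by elliptic regularity. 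Injectivity from Step 2 therefore implies surjectivity. Since the hybrid norms on a closed manifold define the same underlying vector spaces as the usual Hölder spaces, surjectivity carries over to the hybrid-normed spaces: any $f\in C^{0,\alpha}$ is hit by a unique $h\in C^{2,\alpha}$, and this $h$ automatically has finite $\|h\|_2$ (the integrals in (\ref{Hybrid 2-norm}) being over a compact set), with the uniform bound $\|h\|_2\leq C\|f\|_0$ from Step 2.

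The main (only nonroutine) point is Step 3, i.e.\ arguing that $\mathcal{L}$ is surjective. Steps 1 and 2 are essentially bookkeeping on top of already established results. Combining the three steps yields invertibility with the quantitative norm bounds as claimed.
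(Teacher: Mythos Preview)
Your proof is correct, but your Step~3 takes a genuinely different route from the paper. You invoke the standard Fredholm package for elliptic self-adjoint operators on closed manifolds: since $\mathcal{L}=\tfrac12\Delta_L+(n-1)\mathrm{id}$ is formally self-adjoint and elliptic, it has index zero between H\"older spaces, so the injectivity from the a priori estimate immediately gives surjectivity. The paper instead establishes surjectivity by a direct Lax--Milgram argument: it splits the equation into trace and trace-free parts, uses the Poincar\'e inequality (\Cref{Poincare inequality}) together with \Cref{Estimate for (Ric(h),h)} to prove coercivity of the associated bilinear form on trace-free tensors, produces weak solutions, upgrades them via the Weyl lemma, and finally runs an approximation argument to reach all of $C^{0,\alpha}$.

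Your argument is shorter and avoids the Poincar\'e inequality entirely for this step. The paper's approach, while heavier here, pays off later: the same Lax--Milgram and approximation template is reused in \Cref{Subsec: Surjectivity of L} to handle the finite-volume case without lower injectivity radius bound, where the manifold is non-compact and the Fredholm-index argument is not directly available. So both approaches are valid for this proposition; yours is more economical in the closed setting, while the paper's is chosen with an eye toward the non-compact generalization.
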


\begin{proof} By \Cref{A-priori estimate for L}, it remains to show that \(\mathcal{L}\) is surjective. We split up the equation into its trace and its trace-free part. Namely, note that any \((0,2)\)-tensor \(f\) can be written as \(f=f^\circ+\varphi g\), where \(f^\circ\) has vanishing trace, \(\varphi\) is a function, and \(g\) is the given Riemannian metric of \(M\). 

Note that \(\mathcal{L}(ug)=\big(\frac{1}{2}\Delta u+(n-1)u\big)g\). The bilinear form \(a_0:H^1(M) \times H^1(M) \to \bbR\) associated to the equation \(\frac{1}{2}\Delta u+(n-1)u=\varphi\) is given by 
\[
	a_0(u,v)=\int_M \left(\frac{1}{2}\langle \nabla u,\nabla v\rangle +(n-1)uv \right) \, d\vol.
\] 
This is clearly bounded and coercive. 
Thus by Lax-Milgram and the Weyl Lemma, for any \(\varphi \in C^\infty(M)\) there is \(u \in C^\infty(M)\) so that \(\mathcal{L}(u g)=\varphi g\).

Let \(E\to M\) be the vector bundle of symmetric \((0,2)\)-tensors with vanishing trace. The bilinear form \(a:H^1(E) \times H^1(E) \to \bbR\) associated to \(\mathcal{L}\) is given by
\[
	a(h,h^\prime)=\int_M \left(\frac{1}{2}\langle \nabla h,\nabla h^\prime \rangle +\frac{1}{2}\langle \Ric(h),h^\prime\rangle + (n-1)\langle h,h^\prime \rangle \right)\, d\vol.
\]
By \Cref{Poincare inequality} the Poincaré inequality holds for tensors with vanishing trace. Together with the estimate from Lemma \ref{Estimate for (Ric(h),h)} we get
\[
	a(h,h) \geq \frac{1}{2}||\nabla h||_{L^2(M)}^2-(1+(1+\sqrt{n})\varepsilon_0)||h||_{L^2(M)}^2 \geq \left(\frac{1}{2}-\frac{1+(1+\sqrt{n})\varepsilon_0}{n-c(n)\varepsilon_0} \right)||\nabla h||_{L^2(M)}^2
\]
for all \(h \in H^1(E)\), so that for \(\varepsilon_0>0\) small enough, the form \(a\) is coercive on \(E\).
%For \(\varepsilon_0>0\) small enough, this form is coercive on \(E\) because of the Poincaré inequality (\Cref{Poincare inequality}) and Lemma \ref{Estimate for (Ric(h),h)}. 
So again by Lax-Milgram and the Weyl Lemma, for any \(f^\circ \in C^\infty(E)\) there is \(h \in C^\infty(E)\) so that \(\mathcal{L}h=f^\circ\).

Therefore, splitting any \(f\) up into its trace part \(\varphi g\) and its trace-free part \(f^\circ\), we obtain that for any \(f \in C^\infty\big({\rm Sym}^2(T^*M)\big)\) there is \(h \in C^\infty\big({\rm Sym}^2(T^*M)\big)\) so that \(\mathcal{L}h=f\). 

Recall the well-known fact that for any \(u \in C^{0,\alpha}(\bbR^n)\) there is a sequence \((u_\varepsilon)_{\varepsilon>0} \subseteq C^\infty(\bbR^n)\) so that \(\lim_{\varepsilon \to 0}||u_\varepsilon - u||_{C^{0,\beta}(\bbR^n)}=0\) for any \(\beta \in (0,\alpha)\). Moreover, if \(u\) has compact support in some open set \(\Omega \subseteq \bbR^n\), then \(u_\varepsilon\) can be assumed to have compact support in \(\Omega\) too. Now let \(f \in C^{0,\alpha}\big({\rm Sym}^2(T^*M)\big)\) be arbitrary. Applying this approximation result locally, we obtain a sequence \((f_i)_{i \in \bbN}\) in \(C^\infty\big({\rm Sym}^2(T^*M)\big)\) 
converging to \(f\) with respect to the \(C^{0,\frac{\alpha}{2}}\)-norm. Let \(h_i\) be the solutions 
of \(\mathcal{L}h_i=f_i\). Note that the norms \(||\cdot||_{C^{0,\frac{\alpha}{2}}}\) and \(||\cdot||_0\) are equivalent on \(C^{0,\frac{\alpha}{2}}\big({\rm Sym}^2(T^\ast M)\big)\) (but with a non-universal constant). It follows from \Cref{A-priori estimate for L} (applied with \(\frac{\alpha}{2}\)) 
\[
	||h_i-h_j||_{C^2}\leq C||f_i-f_j||_{C^{0,\frac{\alpha}{2}}} \to 0 \quad \text{ as }\, i,j \to \infty
\]
for a (non-universal) constant \(C\). So \((h_i)_{i \in \bbN} \subseteq C^{2}\big({\rm Sym}^2(T^*M)\big)\) is a Cauchy sequence. Denote the limit tensor field by \(h\). Clearly \(h\) solves \(\mathcal{L}h=f\). Finally, \(h \in C^{2,\alpha}\big({\rm Sym}^2(T^*M)\big)\) by 
elliptic regularity theory. Therefore, \(\mathcal{L}\) is bijective. The bound on \(||\mathcal{L}^{-1}||_{\rm op}\) follows from \Cref{A-priori estimate for L}, and the one for \(||\mathcal{L}||_{\rm op}\) is obvious.
\end{proof}

Recall that by \Cref{convention orientable} we assume all manifolds to be orientable. Nonetheless, we have the following.

\begin{rem}\label{Invertibility of L - non-orientable}\normalfont
\Cref{Invertibility of L} also holds when \(M\) is not orientable.
\end{rem}

\begin{proof}\Cref{Invertibility of L} holds for the orientation cover \(\hat{M}\) of \(M\). Moreover, since the non-trivial decktransformation \(\tau:\hat{M} \to \hat{M}\) is an isometry, \Cref{Invertibility of L} shows that the elliptic operator \(\mathcal{L}\) on \(\hat{M}\) restricts to an isomorphism between subbundles of \(\tau\)-invariant Hölder sections of symmetric \((0,2)\)-tensors. But \(\tau\)-invariant Hölder sections on \(\hat{M}\) are nothing else than Hölder sections on \(M\). 
\end{proof}

\section{Proof of the pinching theorem with lower injectvity radius bound}\label{Sec: Proof of pinching with inj radius bound}

We start by stating a more precise formulation of \Cref{Pinching with inj radius bound - introduction}.

\begin{thm}\label{Pinching with inj radius bound - full version} For any \(n \geq 3, \, \alpha \in (0,1), \, \Lambda \geq 0, \, \delta \in (0,2\sqrt{n-2})\) and \(r_0 \geq 1\) there exist constants \(\varepsilon_0\) and \(C>0\) with the following property.
Let \(M\) be a closed \(n\)-manifold that admits a Riemannian metric \(\bar{g}\) satisfying the following conditions for some \(\varepsilon \leq \varepsilon_0\):
\begin{enumerate}[i)]
\item \(-1-\varepsilon \leq \mathrm{sec}_{(M,\bar{g})} \leq -1+\varepsilon\);
\item \(\mathrm{inj}(M,\bar{g})\geq 1\);
\item \(|| {\nabla} \Ric(\bar{g})||_{C^0(M,\bar{g})} \leq \Lambda\);
\item It holds
\[
	\int_Me^{-(2\sqrt{n-2}-\delta)r_x(y)}|\Ric(\bar{g})+(n-1)\bar{g}|_{\bar{g}}^2(y) \, d\vol_{\bar{g}}(y) \leq \varepsilon^2
\]
for all \(x \in M\) with 
\[\int_{B(x,2r_0)\setminus B(x,r_0)}e^{-(2\sqrt{n-2}-\delta)r_x(y)} \, d\vol_{\bar{g}}(y)>\varepsilon_0,\] where \(r_x(y)=d_{\bar{g}}(x,y)\).
\end{enumerate}
Then there exists an Einstein metric \(g_0\) on \(M\) so that \(\Ric(g_0)=-(n-1)g_0\) and 
\[
	||g_0-\bar{g}||_{2} \leq C \varepsilon^{1-\alpha},
\]
where \(||\cdot||_{2}\) is the norm defined in (\ref{Hybrid 2-norm}) with respect to  
the metric  \(\bar{g}\) and the constants \(\epsilon_0,\delta,r_0\).

Moreover, if for some \(\beta \leq 2\sqrt{n-2}-\delta\) and \(U \subseteq M\) it holds
\[
	\int_M e^{-(2\sqrt{n-2}-\delta)r_x(y)}|{\rm Ric}(\bar{g})+(n-1)\bar{g}|^2(y) \, d\vol(y) \leq \varepsilon^{2(1-\alpha)} e^{-2\beta \dist_{\bar{g}}(x,U)} \quad \text{ for all } \, x \in M,
\]
then 
\[
	|g_0-\bar{g}|_{C^{2,\alpha}}(x) \leq C \varepsilon^{1-\alpha} e^{-\beta\dist_{\bar{g}}(x,U)}	\quad \text{ for all }\, x \in M.
\]
In particular, if \(\Ric(\bar{g})=-(n-1)\bar{g}\) outside a region \(U\), and if 
\[
	\int_U |\Ric(\bar{g})+(n-1)\bar{g}|^2 \, d\vol_{\bar{g}} \leq \varepsilon^2,
\]
then
\[
	|g_0-\bar{g}|_{C^{2,\alpha}}(x) \leq C \varepsilon^{1-\alpha} e^{-(\sqrt{n-2}-\frac{1}{2}\delta)\dist_{\bar{g}}(x,U)}	\quad \text{ for all }\, x \in M.
\]
\end{thm}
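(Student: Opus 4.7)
The plan is to write $\Phi = \Phi_{\bar g}$ and reformulate the Einstein equation for $g_0 = \bar g + h$ as the fixed point problem
$$h \;=\; T(h) \;:=\; -\mathcal{L}^{-1}\bigl(\Phi(\bar g) + Q(h)\bigr),$$
where $\mathcal{L} = (D\Phi)_{\bar g}$ and $Q(h) := \Phi(\bar g + h) - \Phi(\bar g) - \mathcal{L}h$ is the quadratic remainder. Since $\beta_{\bar g}(\bar g) = 0$, we have $\Phi(\bar g) = \mathrm{Ric}(\bar g) + (n-1)\bar g$, so assumption (iv) directly bounds the weighted $L^2$-part of $\|\Phi(\bar g)\|_0$ (for $x\notin E$) by $\varepsilon$. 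By \Cref{Invertibility of L}, $\mathcal{L}$ is an isomorphism $(C^{2,\alpha},\|\cdot\|_2)\to(C^{0,\alpha},\|\cdot\|_0)$ with $\|\mathcal{L}^{-1}\|_{\mathrm{op}} \le C$ independently of $\mathrm{vol}(M)$, provided $\varepsilon_0$ is small. I would then run a Banach contraction argument on a ball $\{\|h\|_2 \le K\varepsilon^{1-\alpha}\}$.

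\textbf{Linear and quadratic estimates.} For the Hölder part of $\|\Phi(\bar g)\|_0$, assumption (i) gives the pointwise bound $|\Phi(\bar g)|\le c(n)\varepsilon$, while assumption (iii) together with $\nabla\bar g=0$ yields a uniform $C^1$-bound on $\Phi(\bar g)$; interpolation in the harmonic charts of \Cref{Schauder for tensors} upgrades this to $\|\Phi(\bar g)\|_{C^{0,\alpha}}\le C\varepsilon^{1-\alpha}$. Hence $\|\Phi(\bar g)\|_0 \le C\varepsilon^{1-\alpha}$. The remainder $Q(h)$ is a smooth algebraic expression in $(h,\nabla h,\nabla^2 h)$ and the curvature of $\bar g$, with each monomial at least quadratic in the jet of $h$. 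As long as $\|h\|_{C^0}$ is small so that $\bar g + h$ is uniformly equivalent to $\bar g$, the Hölder component of $\|Q(h)\|_0$ is controlled by $\|h\|_{C^{2,\alpha}}^2$ using that $C^{0,\alpha}$ is a local Banach algebra, while the weighted $L^2$-component is controlled by pairing one $C^0$-factor of the jet of $h$ with the weighted $L^2$-norms appearing in $\|h\|_2$. Together this gives
$$\|Q(h_1)-Q(h_2)\|_0 \;\le\; C\bigl(\|h_1\|_2+\|h_2\|_2\bigr)\,\|h_1-h_2\|_2$$
on any ball of $\|\cdot\|_2$-radius $\le 1$. Combining with the bound on $\mathcal{L}^{-1}$, $T$ contracts on $\{\|h\|_2 \le K\varepsilon^{1-\alpha}\}$ for $K$ sufficiently large and $\varepsilon_0$ small, yielding a unique solution $h$ with $\|h\|_2\le C\varepsilon^{1-\alpha}$. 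Since $\|h\|_2$ is small, $g_0:=\bar g+h$ is a metric, $\beta_{\bar g}(g_0)$ is bounded, and $\mathrm{Ric}(g_0)$ is close to $-(n-1)g_0$; hence \Cref{Zeros of Phi are Einstein} applies and gives $\mathrm{Ric}(g_0)=-(n-1)g_0$.

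\textbf{Exponential decay.} For the moreover part I would either run the same contraction on the weighted Banach space of tensors satisfying $|h|_{C^{2,\alpha}}(x) \le A\varepsilon^{1-\alpha}e^{-\beta d(x,U)}$, or bootstrap on the already obtained $h$ using \Cref{moreover estimate 1}. Applied to $\mathcal{L}h = -\Phi(\bar g) - Q(h)$, the latter yields
$$|h|(x) \;\le\; C\Bigl(\|\Phi(\bar g)+Q(h)\|_{C^0(B(x,\rho))} + \Bigl(\int_M e^{-(2\sqrt{n-2}-\delta)r_x}|\Phi(\bar g)+Q(h)|^2\,d\mathrm{vol}\Bigr)^{1/2}\Bigr).$$
Under the weighted hypothesis, the linear contribution is at most $C\varepsilon^{1-\alpha}e^{-\beta d(x,U)}$ (interpolation again gives the local $C^0$-bound). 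The quadratic contribution is bounded by $\|h\|_2\cdot\bigl(|h|_{C^{2,\alpha}}(\cdot)\cdot\text{weighted decay factor}\bigr)$, which is $C\varepsilon^{1-\alpha}$ times the decay bound one is trying to prove, hence absorbable. A Schauder upgrade via \Cref{Schauder for tensors} converts the pointwise $C^0$-decay into the stated $C^{2,\alpha}$-decay. The ``in particular'' case follows by specialising $\beta = \sqrt{n-2}-\tfrac12\delta$: when $\mathrm{Ric}(\bar g) = -(n-1)\bar g$ outside $U$, the weighted integral collapses to $e^{-(2\sqrt{n-2}-\delta)d(x,U)}\int_U |\Phi(\bar g)|^2\,d\mathrm{vol} \le \varepsilon^2 e^{-(2\sqrt{n-2}-\delta)d(x,U)}$, which fits the hypothesis with the specified $\beta$.

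\textbf{Main obstacle.} The delicate point is that the nonlinear term $Q(h)$ must not spoil either the $\varepsilon^{1-\alpha}$ rate or the exponential decay, and this must be verified in the hybrid norm in a way that is uniform in $\mathrm{vol}(M)$. The $C^{0,\alpha}$-component of $\|Q(h)\|_0$ is harmless because Schauder estimates localise, but the weighted $L^2$-component must be estimated by absorbing one $C^0$-factor of the jet of $h$ against the weighted $L^2$-norms inside $\|h\|_2$, where it is crucial that \(C^0(h)\) be controlled not just by integral quantities but through the Hölder part of the hybrid norm. Arranging this bookkeeping so that both parts of the quadratic estimate carry through while preserving the exponential weight is the main technical hurdle in the proof.
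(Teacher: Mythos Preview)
Your proposal is correct and follows essentially the same route as the paper: the contraction map you call $T$ coincides with the paper's $\Psi(h)=h-\mathcal{L}^{-1}\Phi(\bar g+h)$, and your quadratic estimate is the content of the paper's \Cref{Pointwise continuity of differential} and \Cref{global continuity of differential}. For the exponential decay the paper implements precisely your first option, proving that $\Psi$ preserves an explicit set $\mathcal{U}$ defined by simultaneous pointwise $C^{2,\alpha}$ decay and weighted $H^2$ decay (both conditions are needed, since the pointwise estimate from \Cref{moreover estimate 1} feeds on the weighted $L^2$ bound and vice versa); your bootstrap sketch would require an iteration rather than a single absorption step, but as you indicate the invariant-set argument avoids this.
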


As mentioned previously, we will prove this using the implicit function theorem. 
The linearisation \((D\Phi)_{\bar{g}}\) of the Einstein operator $\Phi$ 
at the initial metric was studied in \Cref{Section - Invertibiliy of L}, and we showed that it is invertible, 
with controlled norm of its inverse. 
To control the size of a
neighborhood of \(\Phi(\bar{g})\) in which \(\Phi\) is invertible requires an estimate of the Lipschitz constant of the mapping \(g \mapsto (D\Phi)_g\). This will follow from the next lemma.

\begin{lem}\label{Pointwise continuity of differential} For all \(n \geq 2\), \(\alpha \in (0,1)\), \(\Lambda \geq 0\), \(i_0>0\) there exist \(\varepsilon=\varepsilon(n,\alpha,\Lambda,i_0)> 0\) and \(C=C(n,\alpha,\Lambda,i_0)\) with the following property. Let \((M,\bar{g})\) be a Riemannian \(n\)-manifold with
\[
	||\Ric(\bar{g})||_{C^1(M,\bar{g})}\leq \Lambda \quad \text{and} \quad \mathrm{inj}(M,\bar{g}) \geq i_0
\] 
and let \(g \in C^{2,\alpha}\big( {\rm Sym}^2(T^*M)\big)\) be another Riemannian metric so that \(||g-\bar{g}||_{C^{2,\alpha}(M,\bar{g})} \leq \varepsilon\). 
Then the linearization of the 
Einstein operator \(\Phi=\Phi_{\bar{g}}\) defined in (\ref{Def of Phi}) satisfies the pointwise estimates
\[
	|(D\Phi)_g(h)-(D\Phi)_{\bar{g}}(h)|_{C^{0,\alpha}}(x) \leq C \max_{y \in B(x,\rho)}|g-\bar{g}|_{C^{2,\alpha}}(y)|h|_{C^{2,\alpha}}(y)
\]
and
\[
	|(D\Phi)_g(h)-(D\Phi)_{\bar{g}}(h)|_{C^{0}}(x) \leq C |g-\bar{g}|_{C^{2}}(x)|h|_{C^{2}}(x)
\]
for all \(h \in C^{2,\alpha}\big( {\rm Sym}^2(T^*M)\big)\), where all norms are taken w.r.t. the background metric \(\bar{g}\).
\end{lem}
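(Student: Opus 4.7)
The plan is to express the difference $(D\Phi)_g(h)-(D\Phi)_{\bar g}(h)$ as a bilinear differential expression in $(g-\bar g,h)$, each slot of order at most two, and then to invoke the multiplicativity of H\"older norms. Fix $x\in M$ and take a harmonic chart $\varphi\colon B(x,2\rho)\to\mathbb{R}^n$ adapted to $\bar g$ as produced in the proof of Proposition~\ref{Schauder for tensors}. In such a chart the components $\bar g_{ij}^\varphi$ are uniformly bounded in $C^{2,\alpha}$, and the inverse $\bar g^{ij}_\varphi$ is uniformly elliptic. Provided $\varepsilon\leq\varepsilon_0$ is sufficiently small, the same uniform bounds hold for $g_{ij}^\varphi$ and $g^{ij}_\varphi$ with constants depending only on $n,\alpha,\Lambda,i_0$.

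In the chart $\varphi$, $\Phi_{\bar g}(g)$ is a universal polynomial expression in the entries of $g,g^{-1},\bar g,\bar g^{-1}$ and their partial derivatives of order $\le 2$; the gauge term $\tfrac{1}{2}\mathcal{L}_{(\beta_{\bar g}(g))^\sharp}g$, in which $\sharp$ is taken with respect to $g$, has the same structure because it involves $g$ and $g^{-1}$ algebraically and $g$ through at most two partial derivatives. Consequently $g\mapsto(D\Phi)_g(h)$ depends smoothly on $g$ as a second-order linear operator in $h$, and the fundamental theorem of calculus gives
\[
(D\Phi)_g(h)-(D\Phi)_{\bar g}(h)=\int_0^1(D^2\Phi)_{g_s}(g-\bar g,h)\,ds,\qquad g_s:=\bar g+s(g-\bar g),
\]
where $(D^2\Phi)_{g_s}$ is bilinear in its two arguments and can be written as a finite sum of terms of the form $\mathcal{A}(g_s,\bar g)\cdot\partial^a k\cdot\partial^b h$ with $|a|,|b|\leq 2$, where each coefficient $\mathcal{A}$ is a universal polynomial in the components of $g_s,g_s^{-1},\bar g,\bar g^{-1}$ and their partial derivatives up to order $2$.

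By the first paragraph, every such $\mathcal{A}(g_s,\bar g)$ has $C^{0,\alpha}$-norm bounded by a constant $C=C(n,\alpha,\Lambda,i_0)$ uniformly in $s\in[0,1]$. Multiplicativity of the H\"older norm on the Euclidean ball $\varphi(B(x,\rho/2))$ then yields
\[
\|\mathcal{A}(g_s,\bar g)\,\partial^a k\,\partial^b h\|_{C^{0,\alpha}}\leq C\,\|\partial^a k\|_{C^{0,\alpha}}\|\partial^b h\|_{C^{0,\alpha}}\leq C\,|k|_{C^{2,\alpha}}(y)\,|h|_{C^{2,\alpha}}(y)
\]
for $y$ ranging over $B(x,\rho)$; summing the finitely many such terms, integrating in $s$ over $[0,1]$, and setting $k=g-\bar g$ gives the first asserted inequality. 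For the $C^0$ estimate the same representation holds, and the trivial pointwise product bound $|\mathcal{A}(g_s,\bar g)\,\partial^a k\,\partial^b h|(x)\leq C\,|k|_{C^2}(x)\,|h|_{C^2}(x)$ yields the second inequality after summing and integrating.

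The main technical burden will be the bookkeeping for the second Fr\'echet derivative $(D^2\Phi)_{g_s}$: the gauge term is a composition of the Bianchi operator (one derivative of $g$), the sharp isomorphism (algebraic in $g$), and a Lie derivative (one more derivative of $g$), so linearizing twice produces a sizeable list of terms. The crucial check is that no derivative of order greater than $2$ of either $h$ or $g-\bar g$ appears and that all coefficients are universal polynomials in data already controlled by the harmonic chart bounds; once this is verified the estimate follows mechanically.
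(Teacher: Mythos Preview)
Your proposal is correct and follows essentially the same route as the paper. The only organizational difference is that the paper first writes out $(D\Phi)_g(h)$ explicitly---computing in particular the derivative $(D_h\sharp)_g=-\sharp_g\circ\flat_h\circ\sharp_g$ of the musical isomorphism---and then compares term by term with $(D\Phi)_{\bar g}(h)$, whereas you package the same comparison into the fundamental theorem of calculus applied to $s\mapsto (D\Phi)_{g_s}(h)$. Both arguments reduce to the identical ``straightforward (albeit tedious) calculation in local coordinates'' that the paper, like you, declines to write out in full; the key structural observation---that every term is a universal polynomial in $g,g^{-1},\bar g,\bar g^{-1}$ and their partial derivatives of order at most two, so that differentiating once more in $g$ never produces a third derivative of $h$ or of $g-\bar g$---is exactly the check you flag in your final paragraph.
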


The term \(\max_{y \in B(x,\rho)}\) comes from the fact that \(C^{0,\alpha}\)-norms are defined in bigger local charts than \(C^{2,\alpha}\)-norms 
(also see inequality (\ref{Continuity of elliptic operator - pointwise})). Also note that since all norms are taken w.r.t. \(\bar{g}\), we do \textit{not} need an upper bound on \(||\nabla {\rm Ric}(g)||_{C^0}\).

\begin{proof}The linearisation of the operator \(g \to {\rm Ric}(g)\) is given by (see \cite[Proposition 2.3.7]{topping_2006})
\[
	(D{\rm Ric})_g(h)=\frac{1}{2}\Delta_Lh-\frac{1}{2}\mathcal{L}_{(\beta_g(h))^{\sharp_g}}(g),
\]
where \(\beta_g=\delta_g(\cdot)+\frac{1}{2}d{\rm tr}_g(\cdot)\) is the Bianchi operator of \(g\), \(\sharp_g:T^\ast M \to TM\) is the musical isomorphism associated to \(g\), and \(\mathcal{L}_X(\cdot)\) is the Lie derivative in direction \(X\). Therefore, it follows from the definition (\ref{Def of Phi}) of \(\Phi\) and the product rule that
\[
	(D\Phi)_g(h)=\Delta_Lh+(n-1)h+\frac{1}{2}\mathcal{L}_{(\beta_{\bar{g}}(g))^{\sharp_g}}(h)+\frac{1}{2}\mathcal{L}_{(\beta_{\bar{g}}(h)-\beta_g(h))^{\sharp_g}}(g)+\frac{1}{2}\mathcal{L}_{(D_h\sharp)_g(\beta_{\bar{g}}(g))}(g),
\]
where \((D_h\sharp)_g:T^\ast M \to TM\) is the linearisation of \(g \to \sharp_g\) in direction \(h\). Denote by \(\flat_g:TM \to T^\ast M, v \mapsto g(v,\cdot)\) the inverse of \(\sharp_g\). Differentiating the identity \({\rm id}_{T M}=\sharp_g \circ \flat_g\)  in direction \(h\), and applying \(\flat_g\) yields \((D_h\sharp)_g=-\sharp_g \circ \flat_h \circ \sharp_g\), where \(\flat_h(v)=h(v,\cdot)\). In local coordinates this reads \(\big((D_h\sharp)_g(\omega)\big)^m=-g^{mk}h_{kj}g^{ji}\omega_i\). Therefore, \Cref{Pointwise continuity of differential} can be checked by a straightforward (albeit tedious) calculation in local coordinates. We will not carry this out in more detail.
\end{proof}

\Cref{Pointwise continuity of differential} and \Cref{alternative hybrid norm} immediately imply the following corollary.

\begin{cor}\label{global continuity of differential} For any \(n \geq 3, \, \alpha \in (0,1), \, \Lambda \geq 0\) there exist \(\varepsilon=\varepsilon(n,\alpha,\Lambda)>0\) 
and \(C=C(n,\alpha,\Lambda)\) with the following property. Let \((M,\bar{g})\) be a closed Riemannian \(n\)-manifold with
\[
	|\mathrm{sec}| \leq 2, \quad \mathrm{inj}(M,\bar{g})\geq 1 \quad \text{and} \quad || {\nabla}\Ric(\bar{g})||_{C^0(M,\bar{g})} \leq \Lambda
\]
and let \(g \in C^{2,\alpha}\big( {\rm Sym}^2(T^*M)\big)\) be another Riemannian metric so that \(||g-\bar{g}||_{C^{2,\alpha}(M,\bar{g})} \leq \varepsilon\). Then the operator \(\Phi=\Phi_{\bar{g}}\) defined in (\ref{Def of Phi}) satisfies 
\[
	||(D\Phi)_g(h)-(D\Phi)_{\bar{g}}(h)||_{0} \leq C ||g-\bar{g}||_{2}||h||_{2}
\]
for all \(h \in C^{2,\alpha}\big( {\rm Sym}^2(T^*M)\big)\), where \(||\cdot||_{2}\) and \(||\cdot||_{0}\) are the norms defined in (\ref{Hybrid 2-norm}) and (\ref{Hybrid 0-norm}) with respect to the metric \(\bar{g}\) and any \(\bar{\epsilon}, \delta,r_0\).
\end{cor}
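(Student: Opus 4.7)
The plan is to split the target hybrid norm $\|\cdot\|_0$ into its two defining pieces---the global $C^{0,\alpha}$-norm and the supremum over $x\notin E$ of the weighted $L^2$-integrals---and to bound each with one of the two pointwise estimates from \Cref{Pointwise continuity of differential}.

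For the $C^{0,\alpha}$-piece I would apply the first pointwise estimate of \Cref{Pointwise continuity of differential} at each $x\in M$, dominate both H\"older factors on the right by the global $\|\cdot\|_{C^{2,\alpha}(M,\bar g)}$, take the supremum in $x$, and use that $\|\cdot\|_{C^{2,\alpha}(M,\bar g)}\le\|\cdot\|_2$ is built into the definition of the hybrid norm; this immediately gives $\|(D\Phi)_g(h)-(D\Phi)_{\bar g}(h)\|_{C^{0,\alpha}(M,\bar g)}\le C\|g-\bar g\|_2\|h\|_2$.

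For the weighted $L^2$-piece at $x\notin E$, the cleanest route is via a pointwise product inequality expressed in covariant derivatives,
\begin{equation*}
|(D\Phi)_g(h)-(D\Phi)_{\bar g}(h)|(y)\le C\bigl(|g-\bar g|+|\nabla(g-\bar g)|+|\nabla^2(g-\bar g)|\bigr)(y)\cdot\bigl(|h|^2+|\nabla h|^2+|\nabla^2 h|^2\bigr)^{1/2}(y),
\end{equation*}
which one extracts from the explicit coordinate formula for $(D\Phi)_g(h)$ recorded in the proof of \Cref{Pointwise continuity of differential}: the difference $(D\Phi)_g-(D\Phi)_{\bar g}$ is a linear second-order differential operator in $h$ whose coefficients are smooth functions of $g-\bar g$, $\nabla(g-\bar g)$, and $\nabla^2(g-\bar g)$ that vanish when $g=\bar g$. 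I would then bound the first factor pointwise by $C\|g-\bar g\|_{C^{2,\alpha}(M,\bar g)}\le C\|g-\bar g\|_2$, square, and integrate against $e^{-(2\sqrt{n-2}-\delta)r_x}$; the result is a constant multiple of $\|g-\bar g\|_2^2$ times the weighted $L^2$-integral of $|h|^2+|\nabla h|^2+|\nabla^2 h|^2$, which by \Cref{alternative hybrid norm} is bounded by $C\|h\|_2^2$ uniformly in $x\notin E$.

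The main obstacle is justifying the covariant pointwise inequality above. \Cref{Pointwise continuity of differential} in its stated form only provides the analogous bound with the local H\"older norms $|\cdot|_{C^2}(y)$, each of which already involves a supremum over a chart of radius $\rho/2$ around $y$; directly inserting those suprema under the weighted integral and exchanging them would require a maximal-function-type argument that is false for general H\"older-regular tensors. The remedy is to observe that the H\"older-norm dressing in \Cref{Pointwise continuity of differential} is incidental: its proof manipulates a local coordinate expression for $(D\Phi)_g(h)-(D\Phi)_{\bar g}(h)$ whose intrinsic content is exactly the covariant product inequality displayed above, and it is this covariant version that pairs naturally with \Cref{alternative hybrid norm} to produce a bound independent of $\vol(M)$. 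Combining it with the $C^{0,\alpha}$ estimate yields $\|(D\Phi)_g(h)-(D\Phi)_{\bar g}(h)\|_0\le C\|g-\bar g\|_2\|h\|_2$, as required.
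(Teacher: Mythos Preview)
Your proof is correct and follows exactly the paper's route: the paper simply says the corollary is immediate from \Cref{Pointwise continuity of differential} and \Cref{alternative hybrid norm}, which is precisely your two-piece decomposition (first pointwise estimate for the $C^{0,\alpha}$ part, second pointwise estimate plus \Cref{alternative hybrid norm} for the weighted $L^2$ part). Your ``main obstacle'' is really a notational worry rather than a mathematical one: in the paper's conventions, $|h|_{C^2}(x)$ in the second estimate of \Cref{Pointwise continuity of differential} already denotes the pointwise covariant quantity $(|h|^2+|\nabla h|^2+|\nabla^2 h|^2)^{1/2}(x)$ (this is how it is used throughout, e.g.\ in the definition of $\|h\|_{H^2(M;\omega_x)}$ in the proof of \Cref{Pinching with inj radius bound - full version} and in \Cref{L^1-estimate for psi}), not a chart-supremum, so the product inequality you need is literally the statement of that lemma and no extraction from its proof is required.
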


We now come to the proof of \Cref{Pinching with inj radius bound - full version}.

\begin{proof}[Proof of \Cref{Pinching with inj radius bound - full version}]
In this proof, \(||\cdot||_2\) resp. \(||\cdot||_0\) shall denote the norms defined in (\ref{Hybrid 2-norm}) and (\ref{Hybrid 0-norm}) with respect to the metric \(\bar{g}\) and the constants \(\varepsilon_0,\delta,r_0\), and \( C^{2,\alpha}\big({\rm Sym}^2(T^*M)\big)\) is understood to be equipped with \(||\cdot||_2\). 
Metric balls $B(h,R)$ of radius $R$ about a section $h$ 
are taken with respect to that norm.

Define the operator
\[
	\Psi: B(0,1) \subseteq C^{2,\alpha}\big({\rm Sym}^2(T^*M)\big) \to  C^{2,\alpha}\big({\rm Sym}^2(T^*M)\big)
\]
by
\[
	 \Psi(h):=h-\mathcal{L}^{-1}\big( \Phi(\bar{g}+h)\big).
\]
Here \(\Phi=\Phi_{\bar{g}}\) is the Einstein operator defined in (\ref{Def of Phi}),
and \(\mathcal{L}=(D\Phi)_{\bar{g}}\). 
 
By \Cref{A-priori estimate for L} and \Cref{global continuity of differential},
there is a constant \(C=C(n,\alpha,\delta,r_0,\Lambda)\) 
such that
 \(||\mathcal{L}^{-1}||_{\rm op} \leq C\) and \(\mathrm{Lip}\big((D\Phi)_{\bullet}\big)\leq C\).
 Thus 
 it follows from \((D\Psi)_h=\mathcal{L}^{-1}\circ\big( \mathcal{L}-(D\Phi)_{\bar g+h}\big)\) that for \(R=R(n,\alpha,\delta,\Lambda)>0\) small enough, 
 the restriction of \(\Psi\) to the closed ball \(\bar{B}(0,R)\) is \(\frac{1}{2}\)-Lipschitz. Moreover, since \(||\cdot||_{C^{0,\alpha}} \leq C ||\cdot||_{C^0}^{1-\alpha}||\cdot||_{C^1}^{\alpha}\) and \(\Phi(\bar{g})=\Ric(\bar{g})+(n-1)\bar{g}\), the assumptions \(i)\) and \(iii)\) imply that \(||\Phi(\bar{g})||_{C^{0,\alpha}} \leq C\varepsilon^{1-\alpha}\). Together with condition \(iv)\), this shows
\[
	||\Phi(\bar{g})||_{0} \leq C \varepsilon^{1-\alpha}
\]
due to the definition of the norm \(||\cdot||_{0}\). 
As a consequence, for \(\varepsilon_0=\varepsilon_0(n,\alpha,\delta,\Lambda)>0\) small enough, we have
\(||\Psi(0)||_2 \leq \frac{R}{2}\) and hence \(\Psi\) restricts to a map \(\bar{B}(0,R) \to \bar{B}(0,R)\). By the Banach fixed point theorem there exists a fixed point \(h_0\) of \(\Psi\). By definition of \(\Psi\) this means \(\Phi(\bar{g}+h_0)=0\), and hence \(g_0=\bar{g}+h_0\) is an Einstein metric due to \Cref{Zeros of Phi are Einstein}. %By \Cref{A-priori estimate for L}, this metric satisfies $\Vert g_0-g\Vert_2\leq C\epsilon^{1-\alpha}$. 
Moreover, as \(\Psi\) is \(\frac{1}{2}\)-Lipschitz, it holds
\[
	||h_0||_2=||\Psi(h_0)||_2 \leq ||\Psi(h_0)-\Psi(0)||_2+||\Psi(0)||_2 \leq \frac{1}{2}||h_0||_2+C\varepsilon^{1-\alpha}.
\]
This implies \(||h_0||_2 \leq C\varepsilon^{1-\alpha}\). 

It remains to show the improved estimate on \(|g_0-\bar{g}|_{C^{2,\alpha}}(x)\). Let \(\beta \leq 2\sqrt{n-2}-\delta\) and \(U \subseteq M\) be as in the statement of \Cref{Pinching with inj radius bound - full version}.

Recall that the pointwise Hölder norm \(|\cdot|_{C^{0,\alpha}}(x)\) is computed in a local chart defined on the ball \(B(x,\rho)\), where \(\rho=\rho(n,\alpha,\Lambda)>0\) is a universal constant (see the proof of \Cref{Schauder for tensors} for more details). In particular, \(||\cdot||_{C^0(B(x,\rho))} \leq C|\cdot|_{C^{0,\alpha}}(x)\). So \Cref{moreover estimate 1} and the pointwise Schauder estimate (\ref{Schauder pointwise - C^2}) show that there is a universal constant \(C_0\) so that for all \(h \in  C^{2,\alpha}\big({\rm Sym}^2(T^*M)\big) \) it holds
\begin{equation}\label{improved estimate - eq 1}
	|h|_{C^{2,\alpha}}(x) \leq C_0 \left( |\mathcal{L}h|_{C^{0,\alpha}}(x)+\left(\int_M e^{-(2\sqrt{n-2}-\delta)r_x(y)}|\mathcal{L}h|^2(y) \, d\vol_{\bar{g}}\right)^{\frac{1}{2}} \right).
\end{equation}
Choose \(C_0\) large enough so that the a priori estimate from \Cref{A-priori estimate for L}, and the weighted integral estimates from Step 1 of the proof of \Cref{A-priori estimate for L} hold, that is, 
\begin{equation}\label{improved estimate - eq 2}
	||h||_{2} \leq C_0||\mathcal{L}h||_0 \quad \text{and} \quad ||h||_{H^2(M;\omega_x)} \leq C_0||\mathcal{L}h||_{L^2(M;\omega_x)} \, \text{ for all }\, x \in M,
\end{equation}
where \(||\cdot||_{H^2(M,\omega_x)}:=\left(\int_M e^{-(2\sqrt{n-2}-\frac{1}{2}\delta)r_x(y)}|\cdot|_{C^2}(y) \, d\vol_{\bar{g}}(y) \right)^{\frac{1}{2}}\) is the weighted \(H^2\)-norm, and analogously \(||\cdot||_{L^2(M;\omega_x)}\) shall denote the weighted \(L^2\)-norm. Moreover, we assume that \(C_0\) is large enough so that \(||\Ric(\bar{g})+(n-1)\bar{g}||_{C^{0,\alpha}(M)} \leq C_0\varepsilon^{1-\alpha}\).

Define \(C_1:=2C_0^2e^{\rho\sqrt{n-2}}+2C_0\), and consider the set
\begin{equation}\label{U - def}
	\mathcal{U}:=\Big\{h \in \mathrm{Dom}(\Psi) \, | \, h \text{ satisfies the inequalities (\ref{U - eq 1}), (\ref{U - eq 2}) for all }x \in M\Big\},
\end{equation}
where the inequalities (\ref{U - eq 1}) and (\ref{U - eq 2}) appearing in the definition of \(\mathcal{U}\) are
\begin{equation}\label{U - eq 1}
	|h|_{C^{2,\alpha}}(x)\leq C_1 \varepsilon^{1-\alpha}e^{-\beta \dist_{\bar{g}}(x,U)} 
\end{equation}
and
\begin{equation}\label{U - eq 2}
	||h||_{H^2(M;\omega_x)}\leq C_1 \varepsilon^{1-\alpha}e^{-\beta \dist_{\bar{g}}(x,U)}.
\end{equation}
We will show that \(\Psi(\mathcal{U}) \subseteq \mathcal{U}\). This implies the desired estimate, because the fixed point \(h_0\) is then necessarily contained in \(\mathcal{U}\).

To prove \(\Psi(\mathcal{U}) \subseteq \mathcal{U}\) we first observe that for all \(h\in \mathrm{Dom}(\Psi)\) it holds
\begin{equation}\label{improved estimate - eq 3}
	\Psi(h)-\Psi(0)=\int_0^1 \mathcal{L}^{-1}\big(\mathcal{L}h-(D\Phi)_{\bar{g}+th}h \big)\, dt
\end{equation}
by the Fundamental Theorem of Calculus. Denote by \(C_2:=\mathrm{Lip}\big((D\Phi)_{\bullet} \big)\) the universal continuity constant given by \Cref{Pointwise continuity of differential}, so that it holds
\begin{equation}\label{improved estimate - eq 4}
	|\mathcal{L}h-(D\Phi)_{\bar{g}+th}h|_{C^{0,\alpha}}(x) \leq C_2\sup_{y \in B(x,\rho)}|h|_{C^{2,\alpha}}^2(y)
\end{equation}
and
\begin{equation}\label{improved estimate - eq 5}
	|\mathcal{L}h-(D\Phi)_{\bar{g}+th}h|_{C^0}(x) \leq C_2|h|_{C^2}^2(x).
\end{equation}

Now let \(h \in \mathcal{U}\) be arbitrary. We start by showing that \(\Psi(h)\) satisfies (\ref{U - eq 2}). Combining the Jensen-inequality, (\ref{improved estimate - eq 2}), (\ref{improved estimate - eq 3}), and (\ref{improved estimate - eq 5}) yields
\begin{align}\label{improved estimate - eq 7}
	||\Psi(h)-\Psi(0)||_{H^2(M;\omega_x)}^2 \stackrel{(\ref{improved estimate - eq 3})}{\leq} & \int_0^1 \left|\left|\mathcal{L}^{-1}\big(\mathcal{L}h-(D\Phi)_{\bar{g}+th}h\big)\right|\right|_{H^2(M;\omega_x)}^2 \, dt\notag \\
	\stackrel{(\ref{improved estimate - eq 2})}{\leq} & C_0^2\int_0^1 \left|\left|\mathcal{L}h-(D\Phi)_{\bar{g}+th}h\right|\right|_{L^2(M;\omega_x)}^2 \, dt \notag \\
	\stackrel{(\ref{improved estimate - eq 5})}{\leq} & C_0^2C_2^2\int_Me^{-(2\sqrt{n-2}-\delta)r_x(y)}|h|_{C^2}^4(y) \, d\vol_{\bar{g}}(y).
\end{align}
Note \(||h||_{C^2(M)}\leq C_1\varepsilon^{1-\alpha}\) by (\ref{U - eq 1}) and since \(h \in \mathcal{U}\). Together with (\ref{U - eq 2}) and (\ref{improved estimate - eq 7}) this implies
\begin{align}\label{improved estimate - eq 6}
	||\Psi(h)-\Psi(0)||_{H^2(M;\omega_x)}^2 \stackrel{(\ref{improved estimate - eq 7})}{\leq} & C_0^2C_2^2\int_M e^{-(2\sqrt{n-2}-\delta)r_x(y)}|h|_{C^2}^4(y) \, d\vol(y) \notag \\
	\stackrel{\quad \, \, \, \, \, \,}{\leq} & C_0^2C_2^2 C_1^2\varepsilon^{2(1-\alpha)}\int_M e^{-(2\sqrt{n-2}-\delta) r_x(y)}|h|_{C^2}^2(y) \, d\vol(y) \notag \\
	\stackrel{(\ref{U - eq 2})}{\leq} & C_0^2C_2^2 C_1^2\varepsilon^{2(1-\alpha)} C_1^2\varepsilon^{2(1-\alpha)}e^{-2\beta \dist_{\bar{g}}(x,U)}.
\end{align}
Note \(\Phi(\bar{g})={\rm Ric}(\bar{g})+(n-1)\bar{g}\). Hence \(||\Phi(\bar{g})||_{L^2(M;\omega_x)} \leq \varepsilon^{1-\alpha} e^{-\beta\dist_{\bar{g}}(x,U)}\) by assumption, so that
\(
	||\Psi(0)||_{H^2(M;\omega_x)}\leq C_0 \varepsilon^{1-\alpha} e^{-\beta \dist_{\bar{g}}(x,U)}
\)
by (\ref{improved estimate - eq 2}). Applying (\ref{improved estimate - eq 6}) and the triangle inequality yields
\[
	||\Psi(h)||_{H^2(M;\omega_x)} \leq \left(C_0+ C_0C_2 C_1^2\varepsilon^{(1-\alpha)}\right)\varepsilon^{(1-\alpha)}e^{-\beta \dist_{\bar{g}}(x,U)}.
\]
As \(C_0+ C_0 C_2 C_1^2\varepsilon^{(1-\alpha)} \leq C_1\) for \(\varepsilon>0\) small enough, we conclude that \(\Psi(h)\) satisfies (\ref{U - eq 2}) for all \(x \in M\).

It remains to show that \(\Psi(h)\) satisfies (\ref{U - eq 1}) for all \(x \in M\), because then \(\Psi(h) \in \mathcal{U}\) by the definition (\ref{U - def}) of \(\mathcal{U}\). Combining (\ref{improved estimate - eq 1}), (\ref{improved estimate - eq 3}), (\ref{improved estimate - eq 4}), and (\ref{improved estimate - eq 5}) shows
\begin{align}\label{improved estimate - eq 8}
	|\Psi(h)-\Psi(0)|_{C^{2,\alpha}}(x) \stackrel{\quad (\ref{improved estimate - eq 3})\quad}{\leq}& \int_0^1 \left|\mathcal{L}^{-1}\big(\mathcal{L}h-(D\Phi)_{\bar{g}+th}h\big)\right|_{C^{2,\alpha}}(x) \, dt \notag \\
	\stackrel{\quad(\ref{improved estimate - eq 1})\quad}{\leq}& \int_0^1 C_0|\mathcal{L}h-(D\Phi)_{\bar{g}+th}h|_{C^{0,\alpha}}(x) \, dt \notag \\
	+&C_0\int_0^1 ||\mathcal{L}h-(D\Phi)_{\bar{g}+th}h||_{L^2(M;\omega_x)} \, dt \notag \\
	\stackrel{(\ref{improved estimate - eq 4}),(\ref{improved estimate - eq 5})}{\leq}&  C_0C_2\sup_{y \in B(x,\rho)}|h|_{C^{2,\alpha}}(y) \notag \\
	+&C_0C_2\left(\int_M e^{-(2\sqrt{n-2}-\delta)r_x(y)}|h|_{C^2}^4(y)\, d\vol_{\bar{g}}(y)\right)^\frac{1}{2}.
\end{align}
Note that the last summand is estimated in (\ref{improved estimate - eq 6}). Using (\ref{U - eq 1}) to estimate \(|h|_{C^{2,\alpha}}(y)\), and remembering \(\beta \leq \sqrt{n-2}\) we get
\begin{align*}
	|\Psi(h)-\Psi(0)|_{C^{0,\alpha}}(x) \stackrel{\quad(\ref{improved estimate - eq 8})\quad}{\leq} & C_0 C_2\sup_{y \in B(x,\rho)}|h|_{C^{2,\alpha}}^2(y) \\
	&+ C_0C_2\left(\int_M e^{-(2\sqrt{n-2}-\delta)r_x(y)}|h|_{C^2}^4(y) \, d\vol_{\bar{g}}(y) \right)^{\frac{1}{2}} \\
	\stackrel{(\ref{U - eq 1}),(\ref{improved estimate - eq 6})}{\leq} & C_0C_2 C_1^2\varepsilon^{2(1-\alpha)}\sup_{y \in B(x,\rho)}e^{-2\beta \dist_{\bar{g}}(y,U)} \\
	&+ C_0C_2C_1^2\varepsilon^{2(1-\alpha)} e^{-\beta \dist_{\bar{g}}(x,U)} \\
	\stackrel{\quad \quad \quad \quad}{\leq} & \left(2C_0C_2 C_1^2e^{2 \rho \sqrt{n-2}}\varepsilon^{(1-\alpha)}\right)\varepsilon^{(1-\alpha)} e^{-\beta \dist_{\bar{g}}(x,U)}.
\end{align*}
Recall \(||\Phi(\bar{g})||_{L^2(M;\omega_x)} \leq \varepsilon^{1-\alpha} e^{-\beta \dist_{\bar{g}}(x,U)}\) and \(\Psi(0)=-\mathcal{L}^{-1}\Phi(\bar{g})\). Thus applying (\ref{improved estimate - eq 1}) shows \(|\Psi(0)|_{C^{2,\alpha}}(x) \leq C_0\big(|\Phi(\bar{g})|_{C^{0,\alpha}}(x)+\varepsilon^{1-\alpha} e^{-\beta\dist_{\bar{g}}(x,U)}\big)\). Since \(|\cdot|_{C^{0,\alpha}}(x)\) is computed in a local chart defined on \(B(x,\rho)\), it holds \(|\Phi(\bar{g})|_{C^{0,\alpha}}(x)=0\) if \(\dist_{\bar{g}}(x,U)> \rho\). For \(x \in M\) with \(\dist_{\bar{g}}(x,U) \leq \rho\) it holds \(|\Phi(\bar{g})|_{C^{0,\alpha}}(x) \leq ||\Phi(\bar{g})||_{C^{0,\alpha}(M)} \leq C_0 \varepsilon^{1-\alpha} \leq C_0e^{\beta \rho} \varepsilon^{1-\alpha}e^{-\beta \dist_{\bar{g}}(x,U)}\). All in all we conclude (again remembering \(\beta \leq \sqrt{n-2}\))
\[
	|\Psi(h)|_{C^{2,\alpha}}(x) \leq \left(2C_0C_2 C_1^2e^{2 \rho \sqrt{n-2}}\varepsilon^{(1-\alpha)}+C_0+C_0^2e^{\rho \sqrt{n-2}} \right)\varepsilon^{1-\alpha}e^{-\beta \dist_{\bar{g}}(x,U)}
\]
for all \(x \in M\). Recall that \(C_1=2C_0^2e^{\rho \sqrt{n-2}}+2C_0\). As \(2C_0C_2 C_1^2e^{2\beta \rho}\varepsilon^{(1-\alpha)} \leq C_0\) for \(\varepsilon>0\) small enough, we conclude that \(\Psi(h)\) satisfies (\ref{U - eq 1}) for all \(x \in M\). Therefore, \(\Psi(h) \in \mathcal{U}\). Since \(h \in \mathcal{U}\) was arbitrary, we obtain \(\Psi(\mathcal{U}) \subseteq \mathcal{U}\). This completes the proof.
\end{proof}

Since the proof of \Cref{Pinching with inj radius bound - full version} is merely an application of the Banach fixed point theorem based on \Cref{Invertibility of L}, the next remark is immediate due to \Cref{Invertibility of L - non-orientable}.

\begin{rem}\label{Pinching with inj rad bound - non-orientable}\normalfont
\Cref{Pinching with inj radius bound - full version} also holds when \(M\) is non-orientable.
\end{rem}
%\newpage

\section{Counterexamples}\label{Sec: Counterexamples}

For our most important applications, we need a version of \Cref{Pinching with inj radius bound - full version} which does not 
require a lower injectivity radius bound. The purpose of this section is to show that at least in dimension 3,
such a result can not be obtained as a straightforward extension of \Cref{Pinching with inj radius bound - full version}
by providing examples which show that such
straightforward extensions do not hold true. The mechanism behind
these examples lies in the fact that hyperbolic metrics on 
\emph{Margulis tubes}  or\emph{ cusps}  admit nontrivial hyperbolic deformations,
and such deformations can be used to construct families of metrics on closed hyperbolic
3-manfolds violating the a priori stability estimates which are essential
for an application of the implicit function theorem.
The geometric feactures of these examples motivate our approach towards our second main result 
\Cref{Pinching without inj radius bound - introduction} which is valid for 3-dimensional manifolds 
without the assumption of a lower injectivity radius bound.

\begin{prop}\label{Counterexample} For any $\varepsilon >0$, $\lambda \in (0,2)$ and any $C>0$ there exists 
a closed $3$-manifold $M_\epsilon$ and a Riemannian metric $g$ on $M_\varepsilon$ with the following properties.
\begin{enumerate}[i)]
\item The sectional curvature of $g$ is contained in the interval $[-1-\varepsilon,-1+\varepsilon]$.
\item For each component $A$ of the thin part of $(M_\varepsilon,g)$, we have
\[\int_A \frac{1}{(\inj)^{2-\lambda}}\vert {\rm Ric}(g)+2g\vert_g^2d{\rm vol}\leq \varepsilon^2.\]
%\item Assumption (3) in Theorem 6.1 of Tian (obsolete by (4))
\item The volume of $(M_\varepsilon,g)$ is bounded from above by a constant independent of $\varepsilon$.
\item There is no constant curvature metric
  \(g_{\rm const}\) on $M_\epsilon$ with the property that the identity
  \(\mathrm{id}_M:(M_\epsilon,g) \to (M_\epsilon,g_{\rm const})\) is a $C$-bilipschitz equivalence.
\end{enumerate}
\end{prop}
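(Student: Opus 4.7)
The plan is to construct the counterexample by Dehn filling a fixed cusped hyperbolic 3-manifold and then deforming the resulting hyperbolic metric inside the Margulis tube around the core of the filling, without altering it on the thick part. Let $N$ be any fixed finite-volume hyperbolic 3-manifold with one cusp (for instance, the figure-8 knot complement). For a Dehn filling slope $\mu$ of meridional length $L$ large, let $M_\varepsilon := N(\mu)$; by Thurston's hyperbolic Dehn surgery theorem $M_\varepsilon$ admits a hyperbolic metric $g_{\rm hyp}$ in which the core $\gamma$ of the filling has length $\ell \sim 2\pi^2/L^2$ and lies in a Margulis tube $T$ of radius $R_{\rm tube} \to \infty$ as $L \to \infty$. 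Volume convergence for Dehn filling yields $\vol(M_\varepsilon, g_{\rm hyp}) \to \vol(N) < \infty$, which handles (iii) uniformly.

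Work in Fermi coordinates $(r, \theta, t) \in [0, R_{\rm tube}] \times (\bbR/2\pi\bbZ) \times (\bbR/\ell\bbZ)$ on $T$, in which
\[
g_{\rm hyp}|_T \;=\; dr^2 + \sinh^2(r)\, d\theta^2 + \cosh^2(r)\, dt^2.
\]
Set $K := 2C^2$, pick a smooth $A \colon [0, R_{\rm tube}] \to [1, K]$ with $A \equiv K$ on $[0, R_1]$, $A \equiv 1$ on $[R_2, R_{\rm tube}]$, and smoothly monotone on $[R_1, R_2]$, and define
\[
g|_T \;:=\; dr^2 + \sinh^2(r)\, d\theta^2 + A(r)^2 \cosh^2(r)\, dt^2,
\]
extending by $g = g_{\rm hyp}$ outside $T$. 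Since $A'(0) = 0$, $g$ is smooth across $\gamma$, and because the doubly-warped product $dr^2 + \sinh^2(r)\, d\theta^2 + c^2 \cosh^2(r)\, dt^2$ is locally isometric to $\mathbb{H}^3$ for any constant $c > 0$, the metric $g$ is hyperbolic except on the transition annulus $T' := \{R_1 \leq r \leq R_2\}$. A direct sectional-curvature computation for the doubly-warped product gives on $T'$ the bound
\[
|\mathrm{sec}(g) + 1| \;\lesssim\; |A'/A| + |A''/A| + |A'/A|^2,
\]
so choosing the interpolation such that $R_2 - R_1 \gtrsim \log K / \varepsilon$ yields $|\mathrm{sec}(g) + 1| \leq \varepsilon$ and a pointwise bound $|\Ric(g) + 2g|_g \lesssim \varepsilon$ supported in $T'$, establishing (i).

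For (ii), the thin part of $(M_\varepsilon, g)$ consists of the closure of $T$, and $\Ric(g) + 2g$ vanishes off $T'$. On $T'$ the injectivity radius is bounded below by a positive quantity depending on $\ell$ and $R_1$ (with inner region $r \leq R_1$ locally isometric to a hyperbolic solid torus with effective core length $K\ell$), while the $g$-volume of $T'$ is of order $\ell \cdot K (R_2 - R_1) e^{2 R_2}$. By choosing $R_1$ appropriately inside the tube and then taking $L$ sufficiently large (equivalently $\ell$ sufficiently small) for the given $\varepsilon$, $\lambda$ and $C$, the weighted integral $\int_T \inj^{-(2-\lambda)} |\Ric(g) + 2g|_g^2\, d\vol_g$ can be pushed below $\varepsilon^2$. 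Finally, for (iv), by Mostow rigidity any constant-curvature metric on the hyperbolic manifold $M_\varepsilon$ is a scalar multiple $\lambda^2 g_{\rm hyp}$ for some $\lambda > 0$. Since $g = g_{\rm hyp}$ on the thick part, a $C$-bilipschitz identification forces $\lambda \in [1/C, C]$. But the core $\gamma$ has length $A(0)\cosh(0)\,\ell = K\ell$ in $g$ and length $\lambda \ell$ in $\lambda^2 g_{\rm hyp}$, so the bilipschitz bound along this curve forces $K/\lambda \leq C$, hence $\lambda \geq K/C = 2C > C$, a contradiction.

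The main technical obstacle is the joint verification of (i) and (ii): the transition annulus $T'$ must be sufficiently wide in $r$ to keep the curvature within $\varepsilon$ of $-1$, yet carry enough small $g$-volume that the weighted integral absorbs the blow-up of $\inj^{-(2-\lambda)}$ for any $\lambda \in (0, 2)$. The parameters are chosen in the order $K = 2C^2$ first, then $R_2 - R_1 \sim \log K / \varepsilon$ to guarantee the curvature bound, then the location $R_1$ of the transition annulus to control $\inj_g$ on $T'$, and finally $L$ (equivalently $\ell$) small enough to bring the $g$-volume of $T'$ below the threshold determined by $\lambda$.
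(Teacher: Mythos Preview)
Your construction is essentially the same as the paper's: Dehn fill a fixed one-cusped hyperbolic manifold, then stretch the longitudinal direction of the resulting Margulis tube by a large factor $K$, interpolating over a transition annulus whose width $\sim \log K/\varepsilon$ controls the curvature. The paper parameterizes by distance from the tube boundary while you use distance from the core, but the underlying deformation is identical, and your integral estimate for (ii) goes through (the point is that $\mathrm{area}(T(r))/\inj^{2-\lambda}$ behaves like $e^{-\lambda\,\mathrm{dist}(\cdot,\partial T)}$, so placing the transition annulus anywhere inside a tube of growing radius makes the weighted integral small).

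There is, however, a genuine gap in your argument for (iv). Mostow rigidity does \emph{not} say that every constant-curvature metric on $M_\varepsilon$ is a scalar multiple of $g_{\rm hyp}$; it says only that $g_{\rm const}=\lambda^2\,\phi^*g_{\rm hyp}$ for some diffeomorphism $\phi$ homotopic to the identity. Your pointwise comparison ``$g=g_{\rm hyp}$ on the thick part forces $\lambda\in[1/C,C]$'' fails once $\phi$ is present, and ``$\gamma$ has length $\lambda\ell$ in $\lambda^2 g_{\rm hyp}$'' is the length of the wrong curve in the wrong metric. The fix (which is exactly the paper's Lemma~6.3) is to compare \emph{infimal lengths in free homotopy classes}, which are preserved by $\phi\simeq\mathrm{id}$: take a closed $g_{\rm hyp}$-geodesic $\beta$ contained in $\{r\geq R_2\}$ (this exists for $L$ large since the complement of a fixed neighborhood of the core converges to the cusped manifold), so that $\beta$ is also a $g$-geodesic; then $\inf_{[\beta]}\ell_{g_{\rm const}}=\lambda\,\ell_{g_{\rm hyp}}(\beta)$ and the bilipschitz bound gives $\lambda\in[1/C,C]$. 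Similarly $\inf_{[\gamma]}\ell_{g_{\rm const}}=\lambda\ell$ while $\inf_{[\gamma]}\ell_g=K\ell$, giving $\lambda\geq K/C=2C$, a contradiction.

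A minor point: for (iii) you only bound $\vol(M_\varepsilon,g_{\rm hyp})$, but the statement concerns $\vol(M_\varepsilon,g)$. Since $g-g_{\rm hyp}=(A^2-1)\cosh^2(r)\,dt^2\geq 0$ is supported in $\{r\leq R_2\}$, the excess volume is at most $(K-1)\vol_{g_{\rm hyp}}(\{r\leq R_2\})\lesssim K\ell e^{2R_2}\to 0$ as $\ell\to 0$, so this is easily repaired.
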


\begin{rem}\normalfont
It will be apparent from the proof that the geometric properties we use for the construction 
of the examples are special to dimension 3 and, by a result of Gromov \cite{G78}, do not extend to 
dimension at least 4. Although we expect a result similar to our second main theorem
to hold true in all dimensions, such an extension may require a new strategy for the proof. 
\end{rem}

For the construction of the manifolds $M_\varepsilon$ we start with an 
orientable hyperbolic $3$-manifold 
$M$ of finite volume, with a single cusp. 
For example, the figure 8 knot complement will do. 
The cusp has a neighborhood $B=\Gamma\backslash H$ which is the quotient of 
a horoball $H$ in $\mathbb{H}^3$ by
an abelian subgroup $\Gamma=\mathbb{Z}^2$ of parabolic isometries. The group 
$\Gamma$ preserves each of the 
horospheres which foliate $H$, and the quotient of each horosphere under 
$\Gamma$ is a flat two-torus $T^2$.

Let us write the cusp neighborhood $B$
and its hyperbolic metric $g$ in the form
\[B=T^2\times [0,\infty), \quad g=e^{-2t}g_0 + dt^2\]
where $g_0$ is a fixed flat metric on $T^2$.
In other words, we have $(T^2,g_0)=\Gamma_0\backslash \mathbb{R}^2$ where
$\Gamma_0$ is a group of translations of $\mathbb{R}^2$ isomorphic
to $\mathbb{Z}^2$. 

We now look at deformations of the metric $g_0$ of the following form.
Let $(e_1,e_2)$ be any orthonormal basis of $\mathbb{R}^2$. For $s\in \mathbb{R}$ 
consider the matrix
\[A(s)=\begin{pmatrix} 1 & 0\\
0 & e^s \end{pmatrix}. \]
It acts as a linear isomorphism on $\mathbb{R}^2$ not preserving the euclidean metric.
Let $g_s$ be the pull-back of the euclidean metric by $A(s)$. This pullback metric
preserves orthogonality of the vectors $e_1,e_2$, preserves the length of $e_1$ and 
scales the length of $e_2$ by the factor $e^s$. 

For numbers $\delta >0,R>0$ let $f_{\delta,R}:[0,\infty)\to [0,\infty)$ be a smooth function with the following 
properties.
\begin{enumerate}
\item $f_{\delta,R}$ is supported in $[0,\frac{R}{\delta}+4]$.
\item $\vert f_{\delta,R}\vert \leq \delta$. 
\item $\vert f^{\prime}_{\delta,R}\vert \leq \delta$.
\item $\vert f^{\prime\prime}_{\delta,R}\vert \leq 1$.
\item $\int_0^\infty f_{\delta,R}(s)ds =R.$
\end{enumerate}

Let $g_s=g(s,\delta,R,e_1,e_2)$ be the pull-back of the euclidean metric on the torus $T^2$ 
by the linear isomorphism $A(\int_0^sf_{\delta,R}(u)du)$.
Then for each $s$, the metric $g_s$ is a  
flat metric on $T$ which is the pullback
of the standard metric by an affine 
automorphism and as such 
determined by the property that
the vectors $e_1,e_2/e^{\int_0^sf_{\delta,R}(u)du}$ are orthonormal. 

Since the curvature of a Riemannian metric is computed by second derivatives of the metric, 
and since furthermore for any of the flat metrics $g_s$ on $T^2$, the metric 
$e^{-2t}g_s+dt^2$ on $B$ is hyperbolic, that is, of constant curvature $-1$, the following is a consequence
of the construction.

\begin{lem}\label{counterexamples - curvature}
For any $\varepsilon >0$ there exists a number $\delta(\varepsilon) >0$ such that for any $\delta <\delta(\varepsilon)$,
$m>0$ 
the curvature of each of the 
metrics 
\[e^{-2t}g(t-m,\delta,R,e_1,e_2)+ dt^2\] on 
$B=T^2\times [0,\infty)$ is contained in the interval $[-1-\varepsilon,-1+\varepsilon]$.
\end{lem}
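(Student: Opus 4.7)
The plan is to realise each metric $\tilde{g} := e^{-2t}g(t-m,\delta,R,e_1,e_2) + dt^2$ as an explicit diagonal doubly-warped product on $T^2\times[0,\infty)$ and then compute its sectional curvatures pointwise. Choose coordinates $(x,y)$ on $T^2 = \Gamma_0\backslash\bbR^2$ in which $e_1 = \partial_x$ and $e_2 = \partial_y$; then $g_0 = dx^2 + dy^2$, and by the definition of $g_s$ as the pull-back of $g_0$ by $A(\phi(s))$ with $\phi(s) := \int_0^s f_{\delta,R}(u)\,du$, one has $g_s = dx^2 + e^{2\phi(s)}dy^2$. Putting $F(t) := \phi(t-m)$ (so $F\equiv 0$ for $t\leq m$, and $|F'|=|f_{\delta,R}|\leq\delta$, $|F''|=|f_{\delta,R}'|\leq\delta$ everywhere on $[0,\infty)$), this yields
\[
\tilde{g} \;=\; dt^2 + e^{-2t}\,dx^2 + e^{-2(t-F(t))}\,dy^2 \;=\; dt^2 + a(t)^2\,dx^2 + b(t)^2\,dy^2
\]
with $a(t) = e^{-t}$ and $b(t) = e^{-t+F(t)}$.

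For a diagonal metric of this form the Koszul formula in the orthonormal frame $e_0 = \partial_t$, $\tilde{e}_1 = a^{-1}\partial_x$, $\tilde{e}_2 = b^{-1}\partial_y$ yields the three coordinate-plane sectional curvatures
\[
K(e_0,\tilde{e}_1) = -\tfrac{a''}{a},\qquad K(e_0,\tilde{e}_2) = -\tfrac{b''}{b},\qquad K(\tilde{e}_1,\tilde{e}_2) = -\tfrac{a' b'}{a b}.
\]
Inserting $a = e^{-t}$, $b = e^{-t+F(t)}$ and writing $f := f_{\delta,R}(t-m)$, $f' := f_{\delta,R}'(t-m)$, these become
\[
K(e_0,\tilde{e}_1) = -1,\qquad K(\tilde{e}_1,\tilde{e}_2) = -(1-f),\qquad K(e_0,\tilde{e}_2) = -(1-f)^2 - f'.
\]
The hypotheses $|f|,|f'|\leq\delta$ then show that each of these lies within $C_0\delta$ of $-1$ on all of $B$, for some universal constant $C_0$.

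To pass from coordinate planes to arbitrary $2$-planes I exploit the fact that for this diagonal metric the frame $(e_0,\tilde{e}_1,\tilde{e}_2)$ diagonalises the Ricci tensor and, in fact, the only non-zero components of the Riemann tensor are the $R_{ijji}$. Indeed, the Koszul computation gives $\nabla_{e_0}\tilde{e}_i = 0$, $\nabla_{\tilde{e}_i}e_0\in\bbR\tilde{e}_i$, $\nabla_{\tilde{e}_i}\tilde{e}_i\in\bbR e_0$, and $\nabla_{\tilde{e}_i}\tilde{e}_j = 0$ for $i\neq j\in\{1,2\}$; consequently $R(e_i,e_j)e_k = 0$ whenever $i,j,k$ are pairwise distinct. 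Therefore, for any plane $\pi\subseteq T_pB$ with orthonormal basis $v = \sum v^i e_i$, $w = \sum w^i e_i$,
\[
\mathrm{sec}(\pi) \;=\; \sum_{i<j} K(e_i,e_j)\,(v^i w^j - v^j w^i)^2,
\]
which is a convex combination of the three $K(e_i,e_j)$, since the weights sum to $|v\wedge w|^2 = 1$. Hence $|\mathrm{sec}(\pi)+1|\leq C_0\delta$ uniformly on $B$, and choosing $\delta(\varepsilon) := \varepsilon/C_0$ finishes the proof. All the work is in the elementary curvature computation; the key structural observation, and the only mild subtlety, is that the doubly-warped product form automatically eliminates the off-diagonal Riemann components, so that sectional curvatures of arbitrary planes are controlled by those of the three coordinate planes.
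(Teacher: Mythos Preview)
Your proof is correct and in fact more explicit than the paper's. The paper argues softly: it localises near a point, identifies the universal cover with a horoball in $\bbH^3$, writes the perturbed metric as $e^{-2u}A(\beta(u))^*h_0 + du^2$ with $|\beta'|,|\beta''|<2\delta$, and simply observes that the Christoffel symbols and their first derivatives are then uniformly $O(\delta)$-close to those of the hyperbolic metric, so the curvatures are as well. No formulas for the sectional curvatures are written down. You instead recognise the metric globally as a diagonal doubly-warped product $dt^2 + a(t)^2dx^2 + b(t)^2dy^2$, compute the three coordinate-plane curvatures exactly, verify that the off-diagonal Riemann components vanish so that every sectional curvature is a convex combination of these three, and read off the bound. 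Your route is slightly longer but fully quantitative (it gives an explicit $C_0$ and hence $\delta(\varepsilon)=\varepsilon/C_0$), while the paper's route is quicker but appeals to continuity of curvature in the metric without tracking constants.
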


\begin{proof}
As curvature computation is local, we can carry this out in the universal covering.
Thus for a fixed point $y\in T^2\times \{t\}\subset B$, we compute in the universal covering 
$\mathbb{R}^2\times [0,\infty)$, which we can identify with the 
horoball $H=\{x_3\geq c\}$ for some $c>0$ 
in hyperbolic $3$-space $\mathbb{H}^3=\{(x_1,x_2,x_3)\in \mathbb{R}^3\mid x_3>0\}$.
We also may assume that 
the horosphere $S=\{x_3=1\}\subset H$ contains
a preimage $\tilde y$ of $y$. Furthermore, we consider the standard hyperbolic metric 
$h$ on $H$, where the normalization is such that the standard flat metric on 
$\{x_3=1\}$ is the preimage of the flat  
metric on the slice $T^2\times \{t\}$ containing $y$. 

For simplicity of notation, write $u=t-s$.
With this description, the hyperbolic metric near $\tilde y$ determined 
by the flat metric on $T^2\times \{s\}$ is the 
warped product metric $h=e^{-2u}h_0+ du^2$ on $H$ where $u=\log x_3$, and the lift
$\hat g$  of the 
metric $e^{-2u}g(s-m,\delta,R,e_1,e_2)$ near $\tilde y$ is of the form 
\[\hat g=e^{-2u}A(\beta(u))^*h_0 + du^2\] 
where $\beta(u)$ is a smooth function on an interval containing $0$
which satisfies $\beta(0)=0$ and 
whose first and second derivatives near $0$ are smaller than $2\delta$ in absolute value. 

Thus in standard coordinates, the Christoffel symbols for the metric $\hat g$ 
and their first derivatives are uniformly
near the Christoffel symbols and their first derivatives for the hyperbolic metric. This implies that 
for any $\varepsilon >0$ we
can find a number $\delta(\varepsilon) >0$ 
so that the statement of the lemma holds true for this $\epsilon$.
\end{proof}

We now use this construction as follows.
Consider as before a finite volume hyperbolic $3$-manifold $M$ with a single 
cusp $B$. We shall Dehn-fill the cusp and use a deformation of the Dehn filled metric 
to achieve our goal. The geometric control we are looking for is obtained
from geometric information of Margulis tube of 
the Dehn filled manifold corresponding to the cusp of $M$.

To set up this construction, note that a Margulis tube in 
a hyperbolic 3-manifold is given as the quotient of a tubular neighborhood 
$N(\tilde \gamma,R)$ of 
radius $R>0$ of a geodesic $\tilde \gamma\subseteq \mathbb{H}^3$ by an infinite cyclic group
$\langle \phi \rangle\subseteq SO(3,1)$ of isometries, 
generated by an element $\phi$ which preserves $\tilde{\gamma}$ and 
acts on $\tilde{\gamma}$ as a translation. Then $\phi$ can be represented as a product of a transvection
preserving $\tilde \gamma$ and an isometry $\psi$ which fixes $\tilde \gamma$ pointwise and
acts as a rotation on the orthogonal complement of $\tilde \gamma^\prime\subseteq T_{\tilde \gamma}{\mathbb H}^3$.

Parameterize $\tilde \gamma$ by arc length and 
write $x_0= \tilde \gamma(0)$. There is a totally geodesic hyperbolic plane $\mathbb{H}^2
\subseteq \mathbb{H}^3$ orthogonal to $\tilde \gamma^\prime$ which passes through $x_0$.
The quotient by $\phi$ of the tubular neighborhood $N(\tilde \gamma,R)$ intersects 
$\mathbb{H}^2$ in a hyperbolic disk whose boundary is an embedded circle in the two-torus 
$T^2=\partial N(\tilde \gamma,R)/\phi$ of length $2\pi \sinh(R)$.
This circle is the meridian of the solid torus 
$B=N(\tilde \gamma,R)/\phi$. 

Choose a totally geodesic hyperbolic plane $H_0\subseteq \mathbb{H}^3$ which contains 
$\tilde \gamma$. If $\tau >0$ is the translation length of $\phi$, then $H_0$ intersects the fundamental 
domain $\{\exp Y\mid Y\in T_{\tilde \gamma(u)}\mathbb{H}^3, 0\leq u\leq \tau,Y\perp \tilde \gamma^\prime\}$ 
for $\phi$ in a strip bounded by two geodesics which are orthogonal to $\tilde \gamma$, and 
the intersection of this strip with $\partial N(\tilde \gamma,R)$ contains an arc which descends to 
a straight line segment on the 
boundary torus $T^2$ of length $\tau \cosh(R)$. In particular, the translation length $\tau$ of 
$\phi$, which equals the length of the closed geodesic in the free homotopy class defined by 
$\phi$ in the quotient manifold $\mathbb{H}^3/\langle \phi \rangle$, 
can explicitly be computed from the length of the meridian
on $T^2$ and the length of a straight line segment orthogonal to the meridian which connects
two points on the meridian and does not contain an intersection point with the meridian in its 
interior.

A Dehn filling of the finite volume hyperbolic manifold 
$M$ is determined by the choice of a simple closed 
geodesic $\zeta$ on the boundary $T^2$ of the cusp, which is a flat torus.
The Dehn filling along $\zeta$ is obtained from $M$ by removal of the
cusp and gluing a solid torus along the boundary whose meridian is glued to 
$\zeta$. If $\zeta$ is sufficiently long in the flat metric on $T^2$,
then the filled manifold is hyperbolic
(see for example \cite{HK08} or \Cref{Sec: drillingand} of this article which does not 
depend on this section). 
Furthermore, as the lengths of such  simple closed curves tend to infinity, 
the Dehn filled manifolds, equipped with their unique hyperbolic metrics, 
 will be almost isometric to $M$ on larger and larger neighborhoods of the
complement of the cusp (see for example \cite{BP92} or \Cref{Sec: drillingand}). 
As a consequence, for each $\ell>0$ we can find such a
Dehn filling with the property that the hyperbolic manifold obtained by this Dehn filling
contains a copy of the $\ell$-neighborhood of $T^2$ in the cusp 
$B\subset M$ up to a change of the metric
in the $C^2$-topology 
which is as close to zero as we wish. 

Fix a number $\lambda \in (0,2)$. 
The modification of the metric on such a Dehn filling of $M$ will be carried out in 
a region of the form $T^2\times [m,m+\frac{R}{\delta}+4]$ in standard coordinates on the cusp where \(\delta < \delta(\varepsilon)\) (for \(\delta(\varepsilon)\) given by \Cref{counterexamples - curvature}), and $m>0$ is a number which is sufficiently large that 
\[
	D_0^2\int_m^\infty e^{-\frac{\lambda}{2} t} dt \leq 1,
\] 
where \(D_0>0\)
is the (intrinsic) diamater of the boundary $T^2$
of the cusp (see \Cref{counterexamples - volume}). %(this quantity is going to control the integral of the function $\frac{1}{({\rm inj})^{2-\delta}}\vert {\rm Ric}(g)+2g\vert^2_g$ of the modified metric over the Margulis tube).

After choosing $m$ with this property, the length of the 
meridian for the Dehn filling is chosen large enough so that the Dehn filled metric is 
arbitrarily near the metric of the cusp on the neighbhood of radius $\ell=m+\frac{R}{\delta}+4$ 
of the thick part of $M$. 
The deformation is chosen so that it stretches the
direction orthogonal to the meridian $\zeta$. %This procedure will increase the volumes of the tori $T^2\times \{r\}$ by a controlled factor, which will be compensated in the integral estimate of the traceless Ricci curvature by increasing the number $m$ and taking into account that for the hyperbolic metric, the areas of the tori $T^2\times \{t\}$ decrease with the factor $e^{-2t}$, and the injectivity radius decreases with the factor $e^{-t}$ (note that this is only true up to a multiplicative error which however can be arranged to be arbitrarily close to one).
For an arbitrarily chosen constant $C>0$, 
the deformed metric $g$ on the Dehn filled hyperbolic manifold $(M_\zeta,g_0)$ 
has the following properties.
\begin{enumerate}
\item 
The metric coincides with the hyperbolic metric $g_0$ on a neighborhood of radius $m$
of the thick part of the hyperbolic metric. 
\item The ratio of the lengths of the closed geodesics for the metrics $g$ and $g_0$
in the filled manifold which are freely homotopic to the core curve of the tube is at least $C$.
\end{enumerate}

\begin{lem}\label{finished}
For all sufficiently large $m$ %,
%the deformed metric $g$ fulfills the requirements in \Cref{Counterexample}, but 
there is no constant curvature 
metric \(g_{\rm const}\) so that
\(\mathrm{id}_M:(M_\zeta,g) \to (M_\zeta,g_{\rm const})\) is a \(\sqrt{C}/2\)-bilipschitz equivalence.
\end{lem}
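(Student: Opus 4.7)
The plan is to argue by contradiction using Mostow rigidity together with the bilipschitz comparisons of both lengths and volumes. Assume such a constant curvature metric \(g_{\rm const}\) exists with bilipschitz constant \(K = \sqrt{C}/2\). First I would rule out \(\kappa \geq 0\) for the curvature \(\kappa\) of \(g_{\rm const}\): since \(M_\zeta\) is a closed hyperbolic 3-manifold its fundamental group is infinite and contains no \(\mathbb{Z}^2\), so \(M_\zeta\) admits neither a spherical nor a flat structure, forcing \(\kappa < 0\). Setting \(\mu := -1/\kappa > 0\), Mostow rigidity for closed hyperbolic 3-manifolds identifies \((M_\zeta, g_{\rm const})\) isometrically with \((M_\zeta, \mu g_0)\). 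Writing \(\beta\) for the core geodesic of the Dehn-filled solid torus and \(V_0 := \mathrm{vol}(M_\zeta, g_0)\), this yields
\[ \ell_{g_{\rm const}}(\beta) = \sqrt{\mu}\,\ell_{g_0}(\beta) \qquad \text{and} \qquad \mathrm{vol}(M_\zeta, g_{\rm const}) = \mu^{3/2}\, V_0. \]

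Next I would play the two bilipschitz estimates against each other. The length inequality \(\ell_{g_{\rm const}}(\beta) \geq \tfrac{1}{K} \ell_g(\beta)\) combined with property (2) of the construction (namely \(\ell_g(\beta) \geq C\,\ell_{g_0}(\beta)\)) and the Mostow identity above gives \(\sqrt{\mu} \geq C/K = 2\sqrt{C}\), hence \(\mu^{3/2} \geq 8\, C^{3/2}\). On the other hand, the volume comparison reads
\[ \mu^{3/2}\, V_0 = \mathrm{vol}(M_\zeta, g_{\rm const}) \leq K^3\, \mathrm{vol}(M_\zeta, g) = \tfrac{C^{3/2}}{8}\, \mathrm{vol}(M_\zeta, g). \]
Combining the two inequalities forces \(\mathrm{vol}(M_\zeta, g) \geq 64\, V_0\), which will be the sought contradiction once the \(g\)-volume is controlled.

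The remaining step is to bound \(\mathrm{vol}(M_\zeta, g)\) by choosing \(m\) large. Since \(g\) agrees with \(g_0\) outside the slab \(T^2 \times [m, m + R/\delta + 4]\), and on this slab the area of the level torus at height \(t\) equals
\[ \mathrm{vol}(T^2, g_0)\, \exp\!\left( \int_0^{t-m} f_{\delta,R}(u)\,du - 2t \right), \]
the uniform bound \(\int_0^s f_{\delta,R}(u)\,du \leq R\) shows that the \(g\)-volume contributed by the slab is at most \(\tfrac{1}{2}\, \mathrm{vol}(T^2, g_0)\, e^{R - 2m}\). Choosing \(m\) large enough (depending on \(R\) and hence on \(C\)) so that this correction is smaller than \(V_0\) yields \(\mathrm{vol}(M_\zeta, g) < 2\, V_0\), well below the threshold \(64\, V_0\) from the previous paragraph, contradicting the assumption. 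The main obstacle is the quantitative balancing of the two bilipschitz chains: the stretching parameter \(R\) is forced by property (2) to grow like \(\log C\), and \(m\) must then be chosen large enough to absorb the exponential factor \(e^R\); this is precisely what the phrase \enquote{for all sufficiently large \(m\)} in the statement permits.
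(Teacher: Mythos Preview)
Your overall strategy---pairing the core-geodesic length bound with a volume comparison---is a genuine alternative to the paper's argument, and the first two steps (deducing $\sqrt{\mu}\geq 2\sqrt{C}$ and then $\mathrm{vol}(M_\zeta,g)\geq 64\,V_0$) are correct. The paper instead avoids volume entirely: it produces a \emph{second} closed geodesic $\beta$ lying in the region where $g=g_0$ (this only needs $m$ large enough that such a geodesic exists in the thick part of $M$), and uses $\ell_g(\beta)=\ell_{g_0}(\beta)$ to force the opposite inequality $c\leq \sqrt{C}/2$ on the scaling factor. That route is shorter and the threshold on $m$ depends only on $M$, not on $R$ or $C$.

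There is however a real gap in your volume bound. You assert that ``$g$ agrees with $g_0$ outside the slab $T^2\times[m,m+R/\delta+4]$'', but this cannot be true: if $g=g_0$ near the core curve $\gamma_0$, then $\gamma_0$ is a $g$-geodesic with $\ell_g(\gamma_0)=\ell_{g_0}(\gamma_0)$, directly contradicting property~(2). What actually happens is that beyond the slab the metric $g$ is a \emph{different} hyperbolic tube metric---the $g_0$-tube stretched by the factor $e^R$ in the longitudinal direction---so $g\neq g_0$ there and the core curve has $g$-length $e^R\ell_{g_0}(\gamma_0)$. This missing region does contribute extra volume, namely $e^R$ times the $g_0$-volume of the deep tube $\{t>\ell\}$. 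Fortunately that $g_0$-volume is of order $A_0\,e^{-2\ell}$ (with $A_0$ the area of the cusp cross-section and $\ell=m+R/\delta+4$), so the extra term is bounded by $A_0\,e^{R-2\ell}\leq A_0\,e^{R-2m}$, of the same order as your slab estimate. With this correction your inequality $\mathrm{vol}(M_\zeta,g)<2V_0$ survives for $m$ large, and the argument closes. But you should be aware that this step needs the additional observation, not just the slab computation you wrote.
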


\begin{proof}
Choose $m$ sufficiently large that there exists a closed geodesic $\beta$ in 
the union of the thick part of $M$ with the $m$-neighborhood of the boundary torus of the cusp.
The length of this geodesic in the Dehn filled manifold $M_\zeta$, equipped with the hyperbolic 
metric $g_0$, is almost identical 
to the length of $\beta$. Furthermore, $\beta$ is a closed geodesic for the 
deformed metric $g$ since $g$ coincides with the hyperbolic metric 
near $\beta$.

Assume there is a constant curvature metric \(g_{\rm const}\) so that \(\mathrm{id}_{M_\zeta}:(M_\zeta,g) \to (M_\zeta,g_{\rm const})\) is a \(\sqrt{C}/2\)-bilipschitz equivalence. 
By Mostow Rigidity, there is then a number \(c > 0\) and a diffeomorphism \(\phi\) homotopic to the identity 
so that \(\phi^\ast g_{\rm const}=c^2g_0\). Define \(\hat{g}:=\phi^\ast g\). Then \(\mathrm{id}_{M_\zeta}:(M_\zeta,\hat{g}) \to (M_\zeta,c^2g_0)\) is a \(\sqrt{C}/2\)-bilipschitz equivalence. Denote by \(\gamma_0\) the core geodesic of the distinguished Margulis tube of the Dehn filled hyperbolic manifold \((M_\zeta,g_0)\). 
Note that \(\gamma_0\) also is the core geodesic for \((M_\zeta,g)\). 
As a consequence, \(\gamma:=\phi^{-1}(\gamma_0)\) is the unique \(\hat{g}\)-geodesic in its free homotopy class. 
Moreover, $\gamma$ and $\gamma_0$ are freely homotopic 
since \(\phi \simeq \mathrm{id}\). Therefore, 
\begin{align*}
 C\ell_{g_0}(\gamma_0)& \leq   \ell_g(\gamma_0) && \text{(by the construction of } g) \\
  & =  \ell_{\hat{g}}(\gamma)&& \text{(by the definition of } \hat{g} \text{ and } \gamma) \\
  & \leq \ell_{\hat{g}}(\gamma_0) && \text{(because } \gamma \text{ is a } \hat{g}\text{-geodesic and }\gamma \simeq \gamma_0) \\
  & \leq \frac{1}{2}\sqrt{C}\ell_{c^2g_0}(\gamma_0) && \text{(by the bilipschitz equivalence)} \\
  & =\frac{1}{2}c\sqrt{C}\ell_{g_0}(\gamma_0). &&
\end{align*}
Hence \(c \geq 2\sqrt{C}\). Similarly, we have
\begin{align*}
 \ell_{g_0}(\beta)& =  \ell_g(\beta) && \text{(by the first paragraph)} \\
  & =  \ell_{\hat{g}}(\phi^{-1}(\beta))&& \text{(by the definition of } \hat{g}) \\
  & \geq \frac{2}{\sqrt{C}}\ell_{c^2g_0}(\phi^{-1}(\beta)) && \text{(by the bilipschitz equivalence)}\\
  & \geq \frac{2c}{\sqrt{C}}\ell_{g_0}(\beta). && \text{(because } \beta \text{ is a } g_0\text{-geodesic and }\phi^{-1}(\beta) \simeq \beta) 
\end{align*}
Thus \(c \leq \frac{1}{2}\sqrt{C}\). This is a contradiction.
\end{proof}

It follows from the following lemma that the constructed manifolds satisfy the curvature assumption \(ii)\) of \Cref{Counterexample}.

\begin{lem}\label{counterexamples - volume}
Let \(M\) be a closed \(3\)-manifold, \(T\) a Margulis tube of \(M\) with core geodesic \(\gamma\), and \(\lambda \in (0,2)\). Assume that \(-1-\varepsilon\leq \sec(M)\leq -1+\varepsilon \) for some \(\varepsilon \leq \frac{1}{8}\lambda\), and that for some \(m>0\) the metric is hyperbolic outside the region
\[
	\Big\{y \in T \, | \, m \leq \dist(y,M_{\rm thick}) \leq {\rm Rad}-1 \Big\},
\]
where \({\rm Rad}\) is the Radius of \(T\). Then for some universal constant \(c>0\) it holds
\[
	\int_M \frac{1}{\inj(y)^{2-\lambda}}|\Ric(g)+2g|^2(y) \, d\vol(y) \leq cD_0^2\varepsilon^2 \int_m^{{\rm Rad}-1} e^{-\frac{\lambda}{2}r} \, dr,
\]
where \(D_0:=\diam(\partial T)\) is the (intrinsic) diameter of \(\partial T\).
\end{lem}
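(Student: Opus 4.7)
The plan is to bound the left-hand side by pointwise control of the Ricci error, combined with comparison-geometric control on the area of equidistants from $\partial T$ and on the injectivity radius. The pointwise curvature bound $|\mathrm{sec}+1|\le\varepsilon$ yields $|\Ric(g)+2g|_g\le C_0\varepsilon$ because in dimension $3$ the Ricci tensor is algebraically determined by the sectional curvatures. Moreover, $\Ric(g)+2g$ vanishes wherever $g$ is hyperbolic, so the integrand is supported in the shell
\[ A \;:=\; \bigl\{\, y\in T : m\le \dist(y,M_{\rm thick})\le \mathrm{Rad}-1\,\bigr\}.\]
After extracting the factor $C_0^2\varepsilon^2$, it suffices to establish
\[ \int_A \inj(y)^{-(2-\lambda)}\,d\vol(y) \;\le\; c\,D_0^2\int_m^{\mathrm{Rad}-1} e^{-\lambda r/2}\,dr.\]

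I would foliate $T$ by the equidistants $\Sigma_s=\{y\in T :\dist(y,M_{\rm thick})=s\}$; these are smooth tori with $|\nabla s|=1$ (since $g$ is hyperbolic on a collar of $\partial T$), so the coarea formula reduces the problem to estimating $\int_{\Sigma_s}\inj^{-(2-\lambda)}\,dA_s$ in $s$. Two comparison-geometric estimates are needed. First, $\partial T$ is a flat torus of intrinsic diameter $D_0$, hence $\mathrm{Area}(\partial T)\le C_1 D_0^2$ for a universal $C_1$; the Riccati equation for the second fundamental form of $\Sigma_s$ together with $|\mathrm{sec}+1|\le\varepsilon$ and $\mathrm{Rad}-s\ge 1$ on $A$ yields an inward mean curvature of at least $2\sqrt{1-\varepsilon}-O(e^{-2})\ge 2-a_1\varepsilon$, so the first variation of area gives
\[\mathrm{Area}(\Sigma_s)\;\le\; C_1 D_0^2\,e^{-(2-a_1\varepsilon)s}.\]
Second, every point of $T$ lies in the thin part, and by the Margulis lemma the shortest homotopically nontrivial loop through any $y\in T$ is the longitude wrapping once around $\gamma$; its length on $\partial T$ is comparable to the Margulis constant $\epsilon_M$ of the curvature bounds, and Rauch-type comparison for a holonomy-adapted Jacobi field along the normal geodesic from $\partial T$ inward shows that the length of this longitude decays by a factor at most $e^{-(1-a_2\varepsilon)s}$. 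Consequently
\[\inj(y)\;\ge\; c_0\,\epsilon_M\,e^{-(1+a_2\varepsilon)s}\quad\text{for all }y\in\Sigma_s.\]

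Inserting both bounds into the coarea formula, the integrand in $s$ is controlled by
\[ C_2 D_0^2\,\exp\!\Bigl(\bigl[-(2-a_1\varepsilon)+(2-\lambda)(1+a_2\varepsilon)\bigr]s\Bigr)\;=\;C_2 D_0^2\,e^{(-\lambda+A\varepsilon)s}\]
for a universal constant $A$ depending on $a_1,a_2$. Under the hypothesis $\varepsilon\le\lambda/8$, the universal constants can be arranged so that $A\varepsilon\le\lambda/2$, which forces the exponent to be at most $-\lambda/2$; integrating in $s$ over $[m,\mathrm{Rad}-1]$ and restoring the factor $C_0^2\varepsilon^2$ yields the claim. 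The main technical obstacle is uniformly tracking the multiplicative $\varepsilon$-errors from curvature comparison across the a priori unbounded range $s\in[m,\mathrm{Rad}-1]$; this is manageable because these corrections enter only in the exponent, and the hypothesis $\varepsilon\le\lambda/8$ is precisely calibrated to absorb them into the residual factor $e^{-\lambda s/2}$.
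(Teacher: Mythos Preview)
Your proposal is correct and follows essentially the same route as the paper: coarea over the equidistant tori, Jacobi-field comparison for both the area decay $\mathrm{Area}(\Sigma_s)\le cD_0^2 e^{-2(1-\varepsilon)s}$ and the injectivity-radius lower bound $\inj(y)\ge c\,e^{-(1+\varepsilon)s}$, then combining exponents. The only imprecision is the phrase ``the universal constants can be arranged'': the comparison constants are not adjustable, but the sharp ones give $A=a_1+(2-\lambda)a_2=2+(2-\lambda)=4-\lambda\le 4$, so $A\varepsilon\le 4\varepsilon\le\lambda/2$ follows directly from $\varepsilon\le\lambda/8$, which is exactly the computation the paper makes.
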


\begin{proof}For \(r \geq 0\) denote by \(T(r)\) the torus in the Margulis tube \(T\) all whose points have distance \(r\) to \(\partial T\). It follows from standard Jacobi field estimates that for some universal constant \(c>0\) it holds
\[
	{\rm area}\big(T(r)\big)\leq c e^{-2(1-\varepsilon)r}{\rm area}(\partial T) \leq cD_0^2e^{-2(1-\varepsilon)r}
\]
for all \(r \in [0,{\rm Rad}-1]\). Similarly, a comparison argument shows that for all \(y \in T(r)\) with \(r \in [0,{\rm Rad}-1]\) it holds
\[
	\frac{1}{\inj(y)} \leq ce^{(1+\varepsilon)r}
\]
for some universal constant \(c>0\) (see the proof of \Cref{counting preimages II} for more details). Thus
\[
	\int_{T(r)}\frac{1}{\inj(y)^{2-\lambda}} \, d\vol_2(y) \leq cD_0^2e^{\left((2-\lambda)(1+\varepsilon)-2(1-\varepsilon)\right)r}
\]
for all \(r \in [0,{\rm Rad}-1]\). Note that \((2-\lambda)(1+\varepsilon)-2(1-\varepsilon)=\varepsilon (4-\lambda)-\lambda \leq -\frac{1}{2}\lambda\) since by assumption \(\varepsilon \leq \frac{1}{8}\lambda\). Therefore, the desired estimate follows from the fact that the curvature assumption \(\sec(M)\in [-1-\varepsilon,-1+\varepsilon]\) implies
\(
	|\Ric(g)+2g|^2\leq 3(2\varepsilon)^2.
\)
\end{proof}

We quickly review the construction of the counterexamples constructed in this section and point out what should be taken away from these examples. 
We started with a hyperbolic metric. The new metric was defined by slowly changing the conformal structure on the horotori (quotients of horospheres). But this change only started 
deep in the thin part of the manifold. More precisely, the change of the conformal structure only starts at horotori 
that have distance at least \(m\) to \(M_{\rm thick}\), where for some arbitrary constant $\lambda \in (0,2)$, the number 
\(m>0\) was chosen so large that \(D_0^2\int_m^\infty e^{-\frac{\lambda}{2} r} dr \leq 1\), where \(D_0\) is the (intrinsic) diameter of the boundary of the filled Margulis tube \(T\). In particular, the change of the conformal structure only occours on tori \(T(r)\) whose diameter is bounded by some universal constant. Here  \(T(r):=\{y \in T \, | \,  d(y,\partial T)=r\}\). Indeed, for \(r \geq m\) it holds
\begin{align*}
	\diam(T(r)) \leq & \, \diam(T(m)) && (\text{monotonicity})\\
	\leq & \, cD_0e^{-m} && (\text{the metric is hyperbolic up to }T(m))\\
	\leq & \, cD_0^2 \int_m^\infty e^{-r}\, dr && (\diam(\partial T)\geq \inj(\partial T)=\mu)\\
	\leq & \, cD_0^2 \int_m^\infty e^{-\frac{1}{2}\lambda r}\, dr && (\lambda < 2)\\
	\leq & \, c &&(\text{definition of }m),
\end{align*}
where \(c>0\) is a universal constant, and \(\mu\) is a Margulis constant. With the terminology of \Cref{Sec:thethin} we can express this by saying that the change of the conformal structure only happens in the \textit{small part} of the Margulis tube.

%The crucial point here is that \(\frac{1}{\inj(y)}\) grows at most like \(e^{(1+\varepsilon)r(y)}\), and \({\rm area}(T(r))\) decays at least like \(e^{-2(1-\varepsilon)r}\), where \(r(y)=d(y,M_{\rm thick})\) and \(T(r):=\{y \in M_{\rm thin} \, | \,  r(y)=r\}\). So the change of the conformal structures of the horotori was performed in a region where \({\rm area}(T(r))\) is much smaller than \(\frac{1}{\inj}\).

These geometric facts (which however are inherent to dimension 3, see \cite{G78}) prevent the establishment of the $C^0$-estimate 
required in the proof of \Cref{Pinching with inj radius bound - full version}, and this problem can not be resolved by enriching the hybrid norms \(||\cdot||_2\) and \(||\cdot||_0\) with weighted \(L^2\)-norms whose weight involves \(\inj(y)\).
%adjusting the constant $\delta >0$ which enters into the integral condition $(iv)$. 

Our second main result \Cref{Pinching without inj radius bound - introduction} overcomes this difficulty by constructing new hybrid Banach spaces and imposing stronger geometric control in the thin 
part of a negatively curved 3-manifold $M$. This is done in two steps. In a first step, carried out in \Cref{Sec:thethin}, we locate the region in the thin 
part of $M$ for which we can obtain $C^0$-estimates sufficient for our goal with a direct adaptation of the proof of 
\Cref{Pinching with inj radius bound - full version}. In a second step, we control its complement, which we call the \emph{small part} of the manifold, with an 
ODE-Ansatz motivated by the work \cite{Bamler2012}.

%\newpage

\section{The thin but not small part of a negatively curved 3-manifold}
\label{Sec:thethin}

In the proof of \Cref{Pinching with inj radius bound - full version}, the assumption
on a lower bound for the injectivity radius was used to establish a
$C^0$-estimate for a symmetric $(0,2)$-tensor field $h$ from knowledge of
${\mathcal L}(h)$. The examples in \Cref{Sec: Counterexamples} show that
without such a bound, we can not expect that such an a priori estimate holds true.

In the remainder of this section, 
\(M\) denotes a finite volume Riemannian 
3-manifold of sectional curvature in $[-4,-1/4]$, 
with universal cover 
\(\tilde{M}\). We know that
each cusp is diffeomorphic to $T^2\times [0,\infty)$ where $T^2$ is the 2-torus.
This is true since by \Cref{convention orientable} we assume that $M$ is orientable.
In \Cref{small} 
we introduce a region $M_{\rm small}$ in $M$, called \emph{small part} of $M$, and 
we establish some of its basic properties. We show in \Cref{co2} that 
if $M$ satisfies the hypothesis on the curvature stated in
\Cref{Pinching with inj radius bound - full version}, then in $M\setminus M_{\rm small}$, 
a modified version of such a $C^0$-estimate   
holds true in spite of the fact
that the injectivity radius may be arbitrarily small. The proof rests on 
a counting result for preimages in $\tilde M$ 
of points in $M-M_{\rm small}$ 
which is proved in \Cref{count}.

\subsection{The small part of $M$}\label{small}

Choose once and for all a  Margulis constant $\mu\leq 1$ for 3-manifolds $M$
with curvature in the interval $[-4,-1/4]$. This constant determines
the thin part 
\[M_{\rm thin}=\{x\in M\mid {\rm inj}(x)<\mu\}\]
of $M$. For a number $\chi\leq \mu$ let $M^{<\chi}\subset M_{\rm thin}$ be
the set of all points $x$ with ${\rm inj}(x)<\chi$.
The set $M_{\rm thin}$ is a disjoint union of \emph{cusps} and 
\emph{Margulis tubes}, 
where a Margulis tube is a tubular neighborhood of a closed geodesic 
of length smaller than $2\mu$.

From now on, we shall only consider Margulis tubes of 
radius $r\geq 3$, where the radius means the distance between 
the core curve of the tube and the boundary, determined by the constant $\mu$. 
Comparison
shows that this only excludes tubes whose core curves have length
bounded from below by a fixed positive constant. In other words, tubes of radius at most three 
can be thought of belonging to the thick part of $M$ for a properly 
adjusted Margulis constant.

Consider a Margulis tube \(T\) of \(M\) 
%for a Margulis constant $\bar{\mu}\leq 
%\min\{\mu,1/2\}$ to be determined later, 
and let \(\gamma\) be its  core geodesic. For \(r>0\) denote by 
\[
	T(r):= \big\{ x \in {M} \, | \, d(x,{\gamma})=r\big\}
\]
the torus of distance \(r\) to \({\gamma}\). The \textit{small part} of the Margulis tube \(T\) is
\[
	T_{\rm small}:= \Big\{x \in T \, | \, d(x,\gamma)\leq 2 \text{ or } 
	\diam\big(T(r_{\gamma}(x))\big) \leq D \Big\},
\]
where \(r_{\gamma}=d(\cdot,\gamma)\), the diameter is with respect to the intrinsic metric on the torus, 
and 
\(D>0\) is a universal constant which will be determined later.
%We call
%\[
%	R:=\max_{x \in T_{\rm small}}r_\gamma(x)
%\]
%the \textit{radius of} \(T_{\rm small}\). 

The \emph{small part} \(C_{\rm small}\) of a rank 2 cusp \(C\) is 
defined similarly. The only difference is that we have to consider Busemann functions (instead of \(r_\gamma(\cdot)\)) to define the level tori \(T(r)\). Fix a rank 2 cusp \(C\), and let \(\xi \in \partial_{\infty} \tilde{M}\) be a point corresponding to \(C\). Choose a Busemann function \(b_\xi:\tilde{M} \to \bbR\) associated to \(\xi\) (see \cite{BGS85} for more information on Busemann functions). This induces a Busemann function \(\bar{b}_\xi:C \to \bbR\). For \(r \in \bbR\) we define \(T(r):=\{x \in C \, | \, \bar{b}_\xi(x)=r\}\).
%The foliation of 
%\(\hat{M}:=M/P\) by the level sets of a Busemann function $b_\xi$ centered at $\xi$ 
%descends to a foliation of $C$ into the projections $T(r)$  of the 
%level tori 
%$b_\xi^{-1}(r)$. We may assume that these tori are embedded in $C$ for all $r\leq 0$.
%The quotient of each horosphere \(\mathcal{H}=b_{\xi}^{-1}(\{r\})\) under \(P\) is an embedded torus \(T(r) \subseteq \hat{M}\). 
As in the case of a tube, we define the \textit{small part} of the cusp \(C\) to be
\[
	C_{\rm small}:= \bigcup_r \Big\{T(r) \mid \diam ( T(r))\leq D\Big\}, 
%	\Big\{x \in M \, | \, \diam\big(T(b_{\xi}(x))\big) \leq D \Big\},
\]
where \(D\) is the same universal constant as before that will be determined later, and 
the diameter is the intrinsic diameter. 

Finally, the \textit{small part} \(M_{\rm small}\) of the manifold \(M\) is the union
\[
	M_{\rm small}:=\bigcup_{T}T_{\rm small} \cup \bigcup_{C}C_{\rm small},
\]
where \(T\) ranges over all Margulis tubes \(T\) of radius at least 3 
and \(C\) ranges over all rank 2 cusps of \(M\).

\begin{rem}\label{nohighdim}
\normalfont
Although the definition of the small part of a negatively curved manifold makes sense in 
all dimensions, it follows from \cite{G78} that the small part of a closed 
negatively curved manifold $M$ of dimension $n\geq 4$ is the union of 
tubular neighborhoods of short geodesics of uniformly bounded radius. 
The fact that this is not true in dimension 3 (see Section \ref{Sec: Counterexamples}) 
is the main reason for introducing the small part of a negatively curved 3-manifold.
\end{rem} 

\begin{rem}\label{smallconnected}\normalfont
Since we use intrinsic diameters for the definition of the small part of $M$,
standard Jacobi field estimates show that the small part of a Margulis tube 
or cusp is a connected subset of the tube or cusp.
\end{rem} 

\begin{rem}\label{small part for non-orientable}\normalfont
If \(M\) is non-orientable, we define \(M_{\rm small}:=\pi(\hat{M}_{\rm small})\), where \(\pi:\hat{M} \to M\) is the orientation cover of \(M\).
\end{rem}

%Before we can present the definition of the constant \(D\) appearing in the definition of \(M_{\rm small}\) , we have to introduce other constants. 
%First,
Choose \[D<\min\{\mu,1/4\}\] sufficiently
small so that the one-neighborhood of the $D$-thin part $M^{D}$ 
is contained in the $\mu/2$-thin part $M^{<\mu/2}$ of $M$. Using comparison of Jacobi fields, one observes that such a 
constant \(D\) only depends on the choice of $\mu$ and the curvature bounds. 
%Define
%\[D=\frac{1}{2}\mu^\prime.\]
%Before we give the precise definition of \(D\), 

%For the following property of \(M_{\rm small}\),  all what matters is that we shall choose \(D \leq \frac{1}{2}\mu^\prime\).

%For the definition of $M_{\rm small}$, we shall use a constant $D\leq \mu^\prime/2$ whose value will be determined in Subsection \ref{count}. For the following property of $M_{\small}$, all what matters is that $D\leq \mu^\prime/2$.

%We now come to the definition of the constant \(\mu^\prime\) appearing in the definition of $M_{\rm small}$. 
%
%\Cref{Counting preimages - general case}.
%Let $T\subset M$ be a Margulis tube of radius at least $2$. Comparison of Jacobi fields shows that the shortest length of a simple closed curve on $T(r)$ which is contractible in $M$ (that is, which is a meridian in the solid torus with core curve $\gamma$ and boundary $T(r)$) is at least $4\pi \sinh(r/2)$. 

%We use this Margulis constant in the definition of $M_{\rm small}$. 

\begin{lem}\label{containedinthin}
$M_{\rm small}\subset M^{<D}$ and hence if  
% to \(M_{\rm thick}\) is at least \(\mu/2\). in particular, if 
\(x \in M_{\rm small}\), 
then the torus $T(r)$ containing $x$ 
is a $C^1$-submanifold of $M$ contained in \({\rm int}(M_{\rm thin})\) whose distance to \(M_{\rm thick}\) is at least \(\mu/2\).
\end{lem}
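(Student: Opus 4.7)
The plan is to prove the lemma in three steps: (i) establish the containment $M_{\rm small}\subset M^{<D}$; (ii) verify that the level torus $T(r)$ through $x$ is a $C^1$-submanifold; (iii) derive the interior statement and the distance bound to $M_{\rm thick}$ from the choice of $D$.

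For step (i), I work component by component in the decomposition of $M_{\rm small}$. Fix a Margulis tube $T$ of radius $R\geq 3$ around a core geodesic $\gamma$ of length $\ell$. If $x\in T(r)$ with $\diam(T(r))\leq D$, then since the inclusion $T(r)\hookrightarrow M$ is $\pi_1$-injective, every generator of $\pi_1(T(r))\cong\mathbb{Z}^2$ is represented at $x$ by a loop of length at most a universal constant times $\diam(T(r))\leq D$ (the ratio of systole to diameter on an almost-flat torus is controlled uniformly under the ambient pinching $\sec\in[-4,-1/4]$); hence $\inj(x)\leq C\cdot D$, and by absorbing the constant $C$ into the initial choice of $D$ one secures $\inj(x)<D$. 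The harder sub-case is $d(x,\gamma)\leq 2$, where I use the radius hypothesis $R\geq 3$ together with Jacobi field comparison. The boundary relation $\inj=\mu$ at $r=R$ forces the core length $\ell$ to be small when $R\geq 3$, and the length of a loop homotopic to $\gamma$ at distance $r\leq 2$ grows at most like $\ell\cdot\cosh(r)$; this yields $\inj(x)<D$ provided $D$ is chosen sufficiently small relative to $\mu$ and the curvature constants. Cusps are handled analogously, replacing $r_\gamma$ by the Busemann function $\bar b_\xi$, with the exponential decay of horotorus diameters into the cusp providing the corresponding bound.

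For step (ii), $r_\gamma$ is smooth on $T\setminus\gamma$, as a distance function from a geodesic in a manifold without focal points, and the Busemann function $\bar b_\xi$ is $C^2$ by \cite{BGS85}. In either case the gradient is a unit vector field on the relevant regular set, so the implicit function theorem identifies $T(r)$ as a $C^1$ hypersurface through $x$.

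For step (iii), step (i) yields $T(r)\subset M^{<D}\subset M_{\rm thin}$. Since $\inj$ is continuous, $M_{\rm thin}=\{\inj<\mu\}$ is open, and therefore $T(r)\subset\mathrm{int}(M_{\rm thin})$. The defining property of $D$, namely $N_1(M^{<D})\subset M^{<\mu/2}$, implies that $M^{\geq\mu}=M_{\rm thick}$ lies at ambient distance at least $1$ from $T(r)$, giving $d(T(r),M_{\rm thick})\geq 1>\mu/2$. The main obstacle is the sub-case $d(x,\gamma)\leq 2$ in step (i): the required strict bound $\inj(x)<D$ rests on a quantitative Jacobi comparison in Margulis tubes of radius at least $3$, combined with a careful a priori selection of $D$ that is simultaneously compatible with the constraints $D<\min\{\mu,1/4\}$ and the $1$-neighborhood condition.
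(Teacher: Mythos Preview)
Your step (i) contains a genuine error in the Margulis tube case. You assert that the inclusion $T(r)\hookrightarrow M$ is $\pi_1$-injective, but this is false: the meridian of the solid torus $T$ is a nontrivial element of $\pi_1(T(r))\cong\mathbb{Z}^2$ that bounds a disk in $T\subset M$ and hence is contractible in $M$. Consequently, a short essential loop on $T(r)$ could a priori be (a multiple of) the meridian, in which case it gives no bound on $\inj_M(x)$, and your conclusion $\inj(x)<D$ does not follow.

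The paper closes exactly this gap by a length estimate for the meridian. Using the lower curvature bound $\sec\leq -1/4$ and Jacobi field comparison, the meridian at distance $r$ from $\gamma$ has length at least $4\pi\sinh(r/2)$; for $r>2$ this exceeds $4\pi\sinh(1)>1>2D$. Since the systole of the intrinsic torus is at most $2\diam(T(r))\leq 2D$, the shortest essential loop on $T(r)$ cannot be the meridian and is therefore essential in $M$, yielding $\inj_M(x)\leq D$. You need this comparison argument (or an equivalent one) to rule out the meridian; $\pi_1$-injectivity is simply unavailable here. Your treatment of cusps is fine, since there the horotorus inclusion \emph{is} $\pi_1$-injective.

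A secondary issue: the ``absorb the constant $C$ into $D$'' manoeuvre is circular as written, because the same constant $D$ appears both in the hypothesis $\diam(T(r))\leq D$ and in the desired conclusion $\inj(x)<D$. In fact no extra constant is needed once you use that the intrinsic systole of a torus is at most $2\cdot\diam$, but you should state this cleanly rather than invoking an unspecified ratio bound. Steps (ii) and (iii) are essentially correct.
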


%Here and in the following we define \(M_{\rm thin}\) with respect to the constant \(\mu\).

\begin{proof}
Let \(x \in M_{\rm small}\) be arbitrary. We only present the case that \(x\) is contained in a Margulis tube \(T\), the case of a cusp being similar. 

Let \(\gamma\) denote the core geodesic of \(T\). We first consider the case that \(r_\gamma(x)=d(x,\gamma) \leq 2\). 
As we only consider Margulis tubes of radius at least three, the distance of \(x\) to \(M_{\rm thick}\) is at least \(1 > \mu/2\). %In particular, the two-neighbourhood of the core geodesic is contained in \(M_{\rm thin}\) and hence \(x \in M_{\rm thin}\). %If $x$ is contained in a Margulis tube $T$ and if $r_\gamma(x)=d(x,\gamma)\leq 1$, then $T(r)\subset M_{\rm thin}$ since $D\leq 1/4$ and the radius of the tube containing $x$ is at least two by assumption.

So we may assume \(r:=r_\gamma(x)>2\). By the 
definition of $M_{\rm small}$, the diameter of the distance torus $T(r)$ containing $x$ is at most $D$. 
Let $\alpha\subseteq T(r)$  be a shortest closed geodesic for the induced metric on $T(r)$ which is not contractible as a curve in $T(r)$. The length of $\alpha$ is at most $2D$. 
If $\alpha$ is contractible in $M$, then 
%the component of $M_{\rm thin}$ containing $x$ is a Margulis tube and 
$\alpha$ is a meridian in $T(r)$. Thus comparison of Jacobi fields shows that the length of \(\alpha\) is at least \(4\pi\sinh(r/2)\). 
As \(r>2\) and \(D \leq \frac{1}{4}\), this length is at least \(4\pi \sinh(1)>1>2D\), which is a contradiction. %But then the length of $\alpha$ is at least $4\pi \sinh(r/2)\geq 2\pi>2D$ by comparison which is a contradiction.

As a consequence, 
$\alpha$ is not contractible in $M$ 
and hence defines an essential loop in $M$
of length at most $2D$. But then $x$ is contained in the $D$-thin part of 
$M$ and hence the $D$-neighborhood of $x$ (which contains 
$T(r)$) is contained in the $\mu/2$-thin part of $M$ by the choice of $D$. 
In particular, $T(r)$ is the projection to $M$ of the level set of
a function in the universal covering $\tilde M$ of $M$ which either
is the distance function to a geodesic line or a Busemann function.
Such functions are known to be of class $C^1$ and non-singular away from 
their minimum
\cite{BGS85}. This completes the proof.
\end{proof}

%For the proof of \Cref{Counting preimages - general case} we need a uniform control on the the intrinsic geometry of the tori $T(r)$ which is a consequence of standard comparison results. We summarize what we need in the following lemma which is well known to the experts. As the statement is local, we formulate it for Hadamard manifolds of bounded negative curvature.

\subsection{The $C^0$-estimate}\label{co2}

The main reason for introducing the small part of $M$ is
that for points in $M_{\rm thin} \setminus M_{\rm small}$,
we can prove a \(C^0\)-estimate which is weaker than the estimate
established in the proof of 
\Cref{Pinching with inj radius bound - full version}
but sufficient for an analogous conclusion. 
To obtain \(C^0\)-estimates for points in \(M_{\rm small}\), we shall use an ODE-Ansatz 
inspired by \cite{Bamler2012} (see \Cref{Subsec: Growth estimates} for more details). 
As before, \(\mathcal{L}\) denotes the elliptic differential operator given by \(\mathcal{L}h=\frac{1}{2}\Delta_Lh+2h\).

\begin{prop}\label{A-priori estimate away from the small part}For all \(\alpha \in (0,1)\), \(\Lambda \geq 0\), \(\delta \in (0,2)\), and \(b > 1\) there exist \(\varepsilon_0=\varepsilon_0(\delta,b)>0\) and \(C=C(\alpha,\Lambda,\delta,b) >0\) with the following property. Let \(M\) be a Riemannian \(3\)-manifold of finite volume so that
\[
	|\sec +1| \leq \varepsilon_0 \quad \text{and} \quad ||\nabla \Ric||_{C^0(M)} \leq \Lambda.
\]
Then for all \(h \in C^{2}\big( {\rm Sym}^2(T^*M)\big)\cap H^2(M)\) and \(x \in M_{\rm thin} \setminus M_{\rm small}\) it holds
\[
	|h|(x) \leq C\left(||\mathcal{L}h||_{C^0(M)}+e^{\frac{b}{2}d(x,M_{\rm thick})}\left(\int_M e^{-(2-\delta)r_x(y)}|\mathcal{L}h|^2(y) \, d\vol(y) \right)^\frac{1}{2} \right).
\]
\end{prop}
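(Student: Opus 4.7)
The strategy is to adapt the $C^0$-argument from Step~2 of the proof of Proposition~\ref{A-priori estimate for L}, which relied essentially on the lower injectivity radius bound, to the present setting by working in the universal cover $\tilde M$ and converting the resulting estimate back down to $M$ via a preimage count. Set $f=\mathcal{L}h$ and fix a preimage $\tilde x\in \tilde M$ of $x$.

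First, the lift $\tilde h$ satisfies $\mathcal{L}\tilde h=\tilde f$, and the pointwise differential inequality $-\Delta|\tilde h|\geq -2|\tilde f|-4|\tilde h|$ derived in~(\ref{differential inequality for |h|}) continues to hold verbatim on $\tilde M$, as its proof used only the curvature pinching together with Lemma~\ref{Estimate for (Ric(h),h)}. Since $\tilde M$ has infinite injectivity radius, Lemma~\ref{Nash-Moser} applied in $\tilde M$ yields, for some fixed small $r>0$,
\[
|h|(x)=|\tilde h|(\tilde x)\leq C\bigl(\|\tilde h\|_{L^2(B_{\tilde M}(\tilde x,r))}+\|f\|_{C^0(M)}\bigr).
\]

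The crucial step is to turn the $L^2$-norm over $B_{\tilde M}(\tilde x,r)$ into an $L^2$-norm over $B_M(x,r)\subseteq M$. With $\pi:\tilde M\to M$ the covering projection, the coarea-type identity
\[
\int_{B_{\tilde M}(\tilde x,r)}|\tilde h|^2\,d\vol=\int_{B_M(x,r)}N(y)\,|h|^2(y)\,d\vol(y),\qquad N(y):=\#\bigl(\pi^{-1}(y)\cap B_{\tilde M}(\tilde x,r)\bigr),
\]
reduces the task to a preimage counting estimate: for every $b>1$ there is $C=C(b)$ such that $N(y)\leq C\,e^{b\,d(x,M_{\rm thick})}$ whenever $x\in M_{\rm thin}\setminus M_{\rm small}$ and $y\in B_M(x,r)$. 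This is where the exclusion of the small part enters essentially: the distance torus through $x$ has intrinsic diameter at least $D$, which provides a lower bound on the transverse displacement scales of the deck-transformation group acting on $\tilde M$, and together with Jacobi-field comparison (using the curvature pinching $|\mathrm{sec}+1|\leq\varepsilon_0$) this bounds the number of $\Gamma$-translates of $\tilde x$ inside $B_{\tilde M}(\tilde x,r)$. Carrying this count out uniformly for both Margulis tubes and rank-two cusps, and sharp enough to accommodate exponents $b$ arbitrarily close to $1$, is the main technical obstacle; it relies on the low-dimensional features of the thin part highlighted in Remark~\ref{nohighdim}, and is not available in higher dimensions.

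Granted this counting estimate, the rest is routine. Since $h\in H^2(M)$ by assumption, the weighted $L^2$-integral inequality~(\ref{int1 - weight}) from Step~1 of the proof of Proposition~\ref{A-priori estimate for L} continues to hold in the present non-compact setting thanks to Remark~\ref{integration by parts works in finite volume case}. A crude local-to-global bound then gives
\[
\int_{B_M(x,r)}|h|^2\,d\vol\leq e^{(2-\delta)r}\int_M e^{-(2-\delta)r_x(y)}|h|^2\,d\vol\leq C\int_M e^{-(2-\delta)r_x(y)}|f|^2\,d\vol,
\]
which combined with the counting identity above yields
\[
|h|^2(x)\leq C\Bigl(\|f\|^2_{C^0(M)}+e^{b\,d(x,M_{\rm thick})}\int_M e^{-(2-\delta)r_x(y)}|f|^2(y)\,d\vol(y)\Bigr).
\]
Taking square roots and absorbing the constant yields the claimed inequality.
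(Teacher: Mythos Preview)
Your proposal is correct and follows essentially the same route as the paper: lift to the universal cover, apply the De Giorgi--Nash--Moser step there (via the differential inequality~(\ref{differential inequality for |h|})), convert the resulting $L^2$-integral back to $M$ using a preimage-counting bound, and then invoke the weighted $L^2$-estimate~(\ref{int1 - weight}) together with Remark~\ref{integration by parts works in finite volume case}. The paper likewise isolates the preimage count as the key technical input (stated separately as Proposition~\ref{Counting preimages - general case} and Corollary~\ref{counting preimages II}), and your identification of the diameter bound on the distance torus together with Jacobi-field comparison as the mechanism behind it is exactly right.
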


The estimate in \Cref{A-priori estimate away from the small part} 
motivates the definition of the norm \(||\cdot||_{0,\lambda}\) that will be introduced 
in \Cref{Subsec: various norms}. The proof of \Cref{A-priori estimate away from the small part}
is based on a counting result for the number of preimages of points
in $M\setminus M_{\rm small}$ 
in the universal 
covering $\tilde M$ of $M$ contained in a ball in $\tilde M$ 
of fixed size which is the main result of \Cref{count}.  %The Margulis constant $\mu^\prime$ is the constant which entered the definition of $M_{\rm small}$. 
We denote by \(N_r(M \setminus M_{\rm small})\) (\(r>0\)) the $r$-neighborhood of $M \setminus M_{\rm small}$. 
%set of all points whose distance to the complement of \(M_{\rm small}\) is less than \(r\).

\begin{prop}[Counting preimages]\label{Counting preimages - general case} There is a constant 
\(C>0\) so that for every 
\(x \in M^{<D}\cap 
N_{1/4}(M \setminus M_{\rm small})\) and every lift \(\tilde{x} \in \tilde{M}\) it holds
\[
	\# \big(\pi^{-1}(x) \cap B(\tilde{x},D) \big) \leq C \frac{1}{\inj(x)},
\]
where \(\pi:\tilde{M} \to M\) is the universal covering projection.
\end{prop}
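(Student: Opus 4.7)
The plan is to reduce to a lattice-point count in the abelian Margulis subgroup through $x$, and to use the hypothesis $x\in N_{1/4}(M\setminus M_{\rm small})$ to secure a uniform lower bound on the \emph{second} successive minimum of that lattice. It is precisely this lower bound that forces the count to be controlled by $1/\inj(x)$ rather than by $1/\inj(x)^2$.

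First, since $D<\mu$, any deck transformation $g$ with $d(\tilde x,g\tilde x)\leq D$ represents a loop based at $x$ of length at most $D<\mu$, so by the Margulis lemma it lies in the almost-nilpotent subgroup $\Gamma_x\leq\pi_1(M,x)$ generated by loops at $x$ of length less than $\mu$. Using Lemma \ref{containedinthin} and the classification of Margulis neighborhoods in $3$-manifolds of pinched negative curvature, $\Gamma_x$ is infinite cyclic, generated by a loxodromic $\phi$, when $x$ lies in a Margulis tube, and free abelian of rank two, generated by two parabolics fixing the associated cusp point at infinity, when $x$ lies in a cusp. The problem therefore reduces to bounding $\#\{g\in \Gamma_x:d(\tilde x,g\tilde x)\leq D\}$.

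In the cusp case, the orbit $\Gamma_x\tilde x$ lies on the horosphere $H$ through $\tilde x$ and forms a lattice $\Lambda$ in the (almost-)flat induced metric. Its shortest vector has euclidean length comparable to $\inj(x)$, and its second successive minimum $e_2$ is comparable, up to universal constants, to the intrinsic diameter of the flat torus $T(r(x))\cong\Lambda\backslash H$. A standard lattice-point count in the euclidean ball of radius $\rho=2\sinh(D/2)$ then gives
\[
\#\bigl\{v\in\Lambda\mid |v|_{\rm eucl}\leq\rho\bigr\}\;\leq\; C\Bigl(\tfrac{\rho}{\inj(x)}+1\Bigr)\Bigl(\tfrac{\rho}{e_2}+1\Bigr).
\]
In the tube case, an analogous computation using cylindrical coordinates around the lifted core geodesic shows that $d(\tilde x,\phi^k\tilde x)$ grows at least linearly in $|k|$ with slope $2\inj(x)$, so the count is directly at most $D/\inj(x)+1$.

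The main obstacle is securing the lower bound $e_2\gtrsim D$ in the cusp case, and this is exactly where the hypothesis $x\in N_{1/4}(M\setminus M_{\rm small})$ enters. By hypothesis there exists $y\in M\setminus M_{\rm small}$ with $d(x,y)\leq 1/4$. Either $y$ lies in the same thin component as $x$, in which case $\diam(T(r(y)))>D$ by definition of $M_{\rm small}$, or $y$ lies in $M_{\rm thick}$, in which case $x$ is within $1/4$ of the boundary of its own thin component, whose cross-sectional torus has intrinsic diameter bounded below by a universal multiple of the Margulis constant $\mu$. Because $T(r(x))$ and the cross-section through the relevant comparison point are level sets of the same Busemann function (respectively distance function to a core geodesic) separated in that direction by at most $1/4$, Jacobi field comparison under the curvature bound $\sec\in[-4,-1/4]$ gives a uniform bilipschitz equivalence of their intrinsic geometries, and hence $e_2\geq c_0 D$ for a universal constant $c_0>0$. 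Substituting into the lattice-point estimate and using the a priori bound $\inj(x)\leq\mu$ then yields $\#\bigl(\pi^{-1}(x)\cap B(\tilde x,D)\bigr)\leq C/\inj(x)$ in both cases.
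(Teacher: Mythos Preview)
Your overall strategy --- reduce to an orbit count on the level torus and use the hypothesis to bound the second successive minimum --- is correct and matches how the paper handles the special case of a hyperbolic thin part.

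The tube case, however, contains a genuine error. The claim that $d(\tilde x,\phi^k\tilde x)$ grows at least linearly in $|k|$ with slope $2\inj(x)$ is false. Take $\phi$ loxodromic in $\bbH^3$ with small translation length $\tau$ and rotation angle $\pi$; then $\phi^2$ is a pure translation, and for $\tilde x$ at distance $r\gg 1$ from the axis one has $d(\tilde x,\phi^2\tilde x)\approx 2\tau\cosh r$ while $d(\tilde x,\phi\tilde x)\approx 2r$. Thus $2\inj(x)=d(\tilde x,\phi^2\tilde x)$, yet $d(\tilde x,\phi^4\tilde x)\approx 4\tau\cosh r=2\cdot 2\inj(x)$, not $4\cdot 2\inj(x)$. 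The underlying reason is that the displacement of $\phi^k$ depends on $k\theta\bmod 2\pi$ as well as on $k\tau$, so the orbit on the distance cylinder behaves like a rank-two lattice, not a rank-one arithmetic progression. The correct argument bounds the count in $\tilde M$ by the $\bbZ^2$ lattice-point count in the universal cover of the distance torus and then runs the \emph{same} successive-minimum argument as in your cusp case. Note that your tube paragraph never invokes the hypothesis on $M_{\rm small}$, yet without it the naive packing bound would give only $C/\inj(x)^2$.

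A secondary issue: for sectional curvature merely in $[-4,-1/4]$ the level tori are not flat and the deck group does not act by Euclidean translations, so the ``standard lattice-point count in the euclidean ball'' requires justification. The paper's general proof (its Steps~2--3) handles this by cutting the torus along the shortest essential loop, proving a uniform lower bound on the width of the resulting cylinder from the diameter hypothesis, and then carrying out an explicit area comparison with rectangular fundamental domains in $\tilde T$, using only the bounded-geometry control of Proposition~\ref{controlgeometry}.
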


%The constant \(\mu^\prime\) appearing in \Cref{Counting preimages - general case} is defined as follows. Jacobi field comparison shows that there exists a constant \(D^\prime \in (0,D)\) so that 
%\begin{equation}\label{def of D'}
%	\diam(T(r)) \geq D^\prime \,\text{ for all }\, x \in N_{1/4}(M \setminus M_{\rm small}),
%\end{equation}
%where \(T(r) \subseteq M_{\rm thin}\) is the level torus containing \(x\). We set
%\begin{equation}\label{def of mu'}
%	\mu^\prime:=\min\left\{\bar{\mu},\frac{D^\prime}{16A} \right\},
%\end{equation}
%where \(A >1\) is given by \Cref{controlgeometry}.

The complete proof of \Cref{Counting preimages - general case} is is a bit technical. For this reason we postpone it to \Cref{count}. However, in the special case that \(\sec \equiv -1\) in \(M_{\rm thin}\), the proof is very simple and it already contains the core ideas for the general case. Moreover, this special case is sufficient for our applications to drilling and filling, and effective hyperbolization.

\begin{proof}[Proof of \Cref{Counting preimages - general case} when the thin part is hyperbolic]Fix some \(x_0 \in M^{<\mu^\prime}\cap 
N_{1/4}(M \setminus M_{\rm small})\), and denote by \(T:=T(r(x_0))\) the distance torus or horotorus containing \(x_0\). Since the intrinsic geometry of \(T\) is uniformly bilipschitz to its extrinsic geometry (see \Cref{controlgeometry} for a detailed formulation), it suffices to show
\[
	\# \big(\pi_T^{-1}(x_0) \cap B(\tilde{x}_0,D) \big) \leq C \frac{1}{\inj(T)},
\]
where \(\pi_T:\bbR^2 \to T\) is the universal covering projection of \(T\), and \(B(\tilde{x}_0,D) \subseteq \bbR^2\).

Since radial projections are uniformly Lipschitz (see \Cref{controlgeometry}), 
it follows from the definition of the small part of $M$ 
that \(\diam(T) \geq D^\prime\) for some universal constant \(D^\prime\). 
Note that \(T\) is a flat torus since \(\sec \equiv -1\) in \(M_{\rm thin}\). Thus 
 by a simple volume counting argument 
for balls in $\mathbb{R}^2$, it suffices to assume that \(\inj(T)\leq \frac{1}{2}D^\prime\).

 By the results of Section 2.24 in \cite{GHL}, there is a fundamental region \(\big\{tv_1+sv_2 \, | \, s,t \in [0,1]\big\} \subseteq \bbR^2\) for \(T\) with 
\[
	|v_1|=2\inj(T) \quad \text{and} \quad \theta:=\measuredangle (v_1,v_2) \in \left[\frac{\pi}{3},\frac{2\pi}{3}\right].
\]
Clearly \(D^\prime \leq \diam(T) \leq \frac{1}{2}\big(|v_1|+|v_2|\big)\), and thus \(|v_2| \geq D^\prime\) since we assume \(\inj(T) \leq \frac{1}{2}D^\prime\). For any \(\tilde{x} \in \pi_{T}^{-1}(x_0)\) consider 
\[
	\mathcal{R}_{\tilde{x}}:=\tilde{x}+\left\{  t v_1 + s\frac{v_2}{|v_2|} \, \Big| \, t \in [0,1/2], s \in [0,D^\prime/2]\right\}, 
\]
and note that they are pairwise disjoint. All of them have area
\[
	{\rm area}(\mathcal{R}_{\tilde{x}})=\sin(\theta)\frac{|v_1|}{2}\frac{D^\prime}{2} \geq \frac{\sqrt{3}D^\prime}{4}\inj(T).
\]
Also \(\bigcup_{\tilde{x} \in B(\tilde{x}_0,D)}\mathcal{R}_{\tilde{x}} \subseteq B(\tilde{x}_0,D+2D^\prime)\) since 
\(
	\diam(\mathcal{R}_{\tilde{x}}) \leq \frac{1}{2}\big(|v_1|+D^\prime\big) < 2D^\prime.
\)
As the sets \(\mathcal{R}_{\tilde{x}}\) are pairwise disjoint, volume counting implies
\[
	\# \big(\pi_T^{-1}(x_0) \cap B(\tilde{x}_0,D) \big) \leq \frac{{\rm area}\big(B(\tilde{x}_0,D+2D^\prime)\big)}{{\rm area}(\mathcal{R})} \leq \frac{C}{\inj(T)}.
\]
This completes the proof of the special case.
\end{proof}

%We postpone the proof of \Cref{Counting preimages - general case} to \Cref{count}. For the proof of \Cref{A-priori estimate away from the small part} we have to replace \(\frac{1}{\inj(x)}\) in \Cref{Counting preimages - general case} by a function that is easier to control. 

In order to deduce \Cref{A-priori estimate away from the small part} from \Cref{Counting preimages - general case} we have to replace \(\frac{1}{\inj(x)}\) by a function that is easier to control. 

\begin{cor}\label{counting preimages II} There is a universal constant \(C>0\) with the following property.
Let \(M\) be a Riemannian 3-manifold with 
\[
	-b^2 \leq \sec_M \leq -1/4
\]
for some \(1\leq b \leq 2\). Then for all \(x \in N_{1/4}(M \setminus M_{\rm small})\) and every lift $\tilde x$ of $x$ to $\tilde M$, it holds
\[
	\#\big(\pi^{-1}(x) \cap B(\tilde{x},D) \big) \leq C e^{b  d(x,M_{\rm thick})}.
\]
\end{cor}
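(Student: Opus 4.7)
The plan is to reduce the corollary to Proposition \ref{Counting preimages - general case} via the pointwise lower bound
\[
\inj(x) \geq c\, e^{-b\,d(x,M_{\rm thick})}
\]
for some universal $c>0$. If $\inj(x)\geq D$, then the preimages of $x$ in $\tilde M$ are pairwise at distance at least $2D$, and a standard volume comparison using the curvature lower bound $\sec\geq -b^{2}\geq -4$ gives $\#(\pi^{-1}(x)\cap B(\tilde x,D))\leq C_{0}$ for a universal constant $C_{0}$; the conclusion then follows since $e^{b\,d(x,M_{\rm thick})}\geq 1$. Otherwise $\inj(x)<D$, so $x\in M^{<D}$ and Proposition \ref{Counting preimages - general case} gives $\#(\pi^{-1}(x)\cap B(\tilde x,D))\leq C/\inj(x)$, reducing the claim to the displayed lower bound on $\inj(x)$.

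To establish this lower bound, fix a lift $\tilde x$ of $x$ and choose $g_{0}\in\pi_{1}(M)\setminus\{1\}$ realizing $d(\tilde x,g_{0}\tilde x)=2\inj(x)$. By the Margulis lemma, $g_{0}$ stabilizes the component $N$ of $M_{\rm thin}$ containing $x$, which is either a Margulis tube or a rank-two cusp, and preserves its core geodesic (resp.\ parabolic fixed point at infinity). Let $\Psi_{t}$ denote the flow of the unit outward normal vector field to the level tori of $N$, i.e., the gradient flow of the distance to the core axis in the tube case, or the corresponding flow pointing away from the cusp tip otherwise. Lifted to the universal cover of $N$, this flow commutes with $g_{0}$, since $g_{0}$ preserves the defining function. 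Setting $t:=d(x,M_{\rm thick})$ and $\tilde y:=\Psi_{t}(\tilde x)$, we obtain $g_{0}\tilde y=\Psi_{t}(g_{0}\tilde x)$, and $\tilde y$ is a lift of a nearest point $y\in\partial M_{\rm thick}$. Since $\inj(y)=\mu$, we have $d(\tilde y,g_{0}\tilde y)\geq 2\mu$.

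It therefore suffices to show $d(\tilde y,g_{0}\tilde y)\leq e^{bt}\cdot d(\tilde x,g_{0}\tilde x)$. The minimizing geodesic from $\tilde x$ to $g_{0}\tilde x$ lies entirely in the convex tubular neighborhood of the core axis in the tube case, or in the horoball corresponding to the cusp, so $\Psi_{t}$ is defined along this geodesic and transports it to a curve from $\tilde y$ to $g_{0}\tilde y$. By the Riccati equation for the shape operators of the level tori combined with the curvature lower bound $\sec\geq -b^{2}$, these shape operators are bounded below by $-b$, which yields the infinitesimal estimate $|(\Psi_{t})_{*}v|\leq e^{bt}|v|$ for tangent vectors $v$ to level tori. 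Integrating along the geodesic gives the required length bound, and hence $\inj(x)\geq \mu\, e^{-b\,d(x,M_{\rm thick})}$, completing the proof. The principal technical point is the Riccati comparison on the (only $C^{1}$, by Lemma \ref{containedinthin}) level tori in the non-constant curvature setting, and it is precisely this step that forces the exponential rate in the corollary to agree with the curvature lower bound exponent $b$.
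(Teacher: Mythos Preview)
Your argument is the same as the paper's: reduce via Proposition~\ref{Counting preimages - general case} to the lower bound $\inj(x)\ge c\,e^{-b\,d(x,M_{\rm thick})}$, then radially transport the shortest essential loop at $x$ outward to $\partial M_{\rm thick}$ and control the length growth by Jacobi/Riccati comparison under $\sec\ge -b^{2}$; the paper simply works in the intermediate cover $\hat M=\tilde M/\Gamma$ and says ``radial projection'' where you use the flow $\Psi_t$ on $\tilde M$. Two cosmetic points worth tightening: in the tube case the Riccati comparison only gives the shape-operator bound $b\coth(br)$ rather than $b$, so your Lipschitz constant for $\Psi_t$ is $Ce^{bt}$ with a universal $C$ (and you need---as does the paper---that $[\tilde x,g_0\tilde x]$ stays a uniform distance from the core axis, which follows from $x\in N_{1/4}(M\setminus M_{\rm small})$ and $D<1/4$); and your estimate on $(\Psi_t)_*v$ is stated only for $v$ tangent to level tori but must be applied to the full tangent of the geodesic $[\tilde x,g_0\tilde x]$, though this is immediate since $(\Psi_t)_*\partial_r=\partial_r$.
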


The proof of \Cref{counting preimages II} is also contained in \Cref{count}. We now show how \Cref{counting preimages II} can be used to prove \Cref{A-priori estimate away from the small part}.

\begin{proof}[Proof of \Cref{A-priori estimate away from the small part}]Abbreviate \(f:=\mathcal{L}h\). 
It holds \(\mathcal{L}\tilde{h}=\tilde{f}\) in the universal cover. By the argument which led to (\ref{C^0 from L^2}), we have
\begin{equation}\label{not small - NashMoser}
	|\tilde{h}|(\tilde{x}) \leq C \Big(||\tilde{h}||_{L^2(B(\tilde{x},D/2))}+||\tilde{f}||_{C^0(\tilde{M})} \Big)
\end{equation}
for a constant \(C=C(n,\alpha,\Lambda)\). Therefore, it suffices to bound \(||\tilde{h}||_{L^2(B(\tilde{x},D/2))}\). 
To this end, we invoke the following basic claim, which states that an integral in the universal cover can be estimated by a weighted integral in the manifold when the weight is an upper bound for the number of preimages.

\begin{claim}Let \(x\in M\) and \(\rho:M \to \bbR\) be a function so that
\[
	\# \left(\pi^{-1}(y) \cap B(\tilde{y},D) \right) \leq \rho(y)
\]
holds for all \(y \in B(x,D/2) \subseteq M\). Let \(u: M \to \bbR_{\geq 0}\) be a non-negative integrable function and denote by \(\tilde{u}:=u \circ \pi\) its lift to the universal cover. Then
we have 
\[
	\int_{B(\tilde{x},D/2)} \tilde{u}(\tilde{y}) \, d\vol_{\tilde{g}}(\tilde{y}) \leq  \int_{B(x,D/2)} \rho(y) u(y) \, d\vol_{g}(y). 
\]
\end{claim}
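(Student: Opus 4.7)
The plan is to rewrite the integral on the left as an integral over $M$ using the standard change-of-variables formula for the covering projection $\pi$, and then to bound the resulting multiplicity function pointwise by $\rho$.

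First I would use the fact that $\pi$ is a local isometry: for any measurable set $A\subseteq \tilde M$ and any nonnegative measurable $w:M\to \bbR_{\geq 0}$ we have
\[
\int_{A}(w\circ\pi)(\tilde y)\, d\vol_{\tilde g}(\tilde y) = \int_{M}\#\bigl(\pi^{-1}(y)\cap A\bigr)\, w(y)\, d\vol_{g}(y).
\]
Applying this identity to $A=B(\tilde x,D/2)$ and $w=u$, and observing that $\pi(B(\tilde x,D/2))\subseteq B(x,D/2)$, I obtain
\[
\int_{B(\tilde x,D/2)}\tilde u(\tilde y)\, d\vol_{\tilde g}(\tilde y) = \int_{B(x,D/2)}\#\bigl(\pi^{-1}(y)\cap B(\tilde x,D/2)\bigr)\, u(y)\, d\vol_{g}(y).
\]

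Second I would control the multiplicity pointwise. Fix $y\in B(x,D/2)$; if $\pi^{-1}(y)\cap B(\tilde x,D/2)=\emptyset$ the multiplicity is zero and there is nothing to check, so choose some $\tilde y\in \pi^{-1}(y)\cap B(\tilde x,D/2)$. For any other preimage $\tilde y'\in \pi^{-1}(y)\cap B(\tilde x,D/2)$, the triangle inequality yields
\[
d_{\tilde g}(\tilde y,\tilde y') \leq d_{\tilde g}(\tilde y,\tilde x) + d_{\tilde g}(\tilde x,\tilde y') < D,
\]
so $\tilde y'\in B(\tilde y,D)$. Hence $\pi^{-1}(y)\cap B(\tilde x,D/2) \subseteq \pi^{-1}(y)\cap B(\tilde y,D)$, and the hypothesis on $\rho$ gives the pointwise bound
\[
\#\bigl(\pi^{-1}(y)\cap B(\tilde x,D/2)\bigr)\leq \#\bigl(\pi^{-1}(y)\cap B(\tilde y,D)\bigr)\leq \rho(y).
\]
Inserting this into the identity from the first step yields the claim.

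I do not expect any real obstacle: the covering projection identity is a standard consequence of partitioning $B(\tilde x,D/2)$ into at most countably many Borel pieces on each of which $\pi$ is an isometric embedding, and the multiplicity estimate is a direct application of the triangle inequality together with the given hypothesis on $\rho$.
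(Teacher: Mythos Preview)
Your proposal is correct and follows essentially the same approach as the paper: both arguments reduce to the pointwise multiplicity bound $\#(\pi^{-1}(y)\cap B(\tilde x,D/2))\leq \rho(y)$ via the triangle inequality, and then integrate. The only cosmetic difference is that you invoke the covering change-of-variables identity directly, whereas the paper builds it up from indicator functions of small open sets via linearity and approximation; these are two standard presentations of the same fact.
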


\begin{proof}[Proof of the claim] By the triangle inequality, if $\tilde y\in B(\tilde x,D/2)$ then 
$ B(\tilde x,D/2) \subseteq B(\tilde y,D)$. Thus by assumption, 
a point $y\in B(x,D/2)$ has at most $\rho(y)$ preimages in $B(\tilde x,D/2)$. 
Hence the claim holds true for the indicator function \(u=\chi_U\) of a small open 
subset \(U \subseteq B(x,D/2)\). 
By linearity and monotonicity the result follows for all non-negative simple functions. A standard approximation argument completes the proof.
\end{proof}

For \(\varepsilon_0=\varepsilon_0(b)>0\) small enough, 
\(|\sec+1| \leq \varepsilon_0\) implies \(-b^2\leq \mathrm{sec} \leq -1/4\). Hence \Cref{counting preimages II} shows that for any \(x \in M \setminus M_{\rm small}\) the function \(\rho(y)=Ce^{bd(y,M_{\rm thick})}\) satisfies the assumption of the claim. Thus
\[
	\int_{B(\tilde{x},D/2)} |\tilde{h}|^2(\tilde{y}) \, d\vol_{\tilde{g}}(\tilde{y}) \leq C \int_{B(x,D/2)} e^{bd(y,M_{\rm thick})}|h|^2(y) \, d\vol_{g}(y).
\]
As \(d(y,M_{\rm thick}) \leq d(x,M_{\rm thick})+D/2\) for \(y \in B(x,D/2)\), 
\begin{equation}\label{not small - int1}
	\int_{B(\tilde{x},D/2)} |\tilde{h}|^2(\tilde{y}) \, d\vol_{\tilde{g}}(\tilde{y}) \leq Ce^{b D/2} e^{bd(x,M_{\rm thick})}\int_{B(x,D/2)}|h|^2(y) \, d\vol_{g}(y).
\end{equation}
Moreover,
\begin{equation}\label{not small - int2}
	\int_{B(x,D/2)}|h|^2(y) \, d\vol_{g}(y) \leq e^{D}\int_{B(x,D/2)}e^{-(2-\delta)r_x(y)}|h|^2(y) \, d\vol_{g}(y) .
\end{equation}
By \Cref{integration by parts works in finite volume case} the integral estimate (\ref{integral1}) from the proof of \Cref{A-priori estimate for L} is still valid. In particular, for \(\varepsilon_0=\varepsilon_0(\delta)>0\) small enough it holds
\begin{equation}\label{not small - int3}
	\int_M e^{-(2-\delta)r_x(y)}|h|^2(y) \, d\vol_g(y) \leq C\int_M e^{-(2-\delta)r_x(y)}|f|^2(y) \, d\vol_g(y)
\end{equation}
for a constant \(C=C(\delta)\). Combining (\ref{not small - NashMoser})-(\ref{not small - int3}) yields the desired estimate.
\end{proof}

\subsection{Counting preimages}\label{count} 
This  subsection is concerned with the proof of \Cref{Counting preimages - general case}. 
Before we come to the more technical details, we begin with a short overview of the proof.
%Note first that since the injectivity radius function is Lipschitz, it suffices to 
%show the proposition for points $x\in M_{\mu^\prime/A}\cap N_{1|4}(M-M_{\rm small}$. 
Let $x\in M^{<\mu^\prime}\cap N_{1/4}(M\setminus M_{\rm small})$ and 
let as before ${\rm inj}(x)$ be the injectivity radius of $M$ at $x$. 
There is a geodesic loop of length at most $2{\rm inj}(x)$ based at $x$. 
%As \(\inj_M(x)\leq \mu^\prime\), there is a short geodesic loop based at \(x\).
This loop can be homotoped with fixed endpoints 
to a loop \(c_1\) lying entirely in the distance torus containing \(x\) of
controlled comparable length. 
 Using the assumption that $x\in N_{1/4}(M\setminus M_{\rm small})$, 
we then show that any closed curve on the 
torus whose homotopy class is not a multiple of 
the class of \(c_1\) has length at least \(\ell\) where
$\ell >0$ is a fixed constant. Namely, we show that otherwise 
the torus has a small diameter, 
contradicting that \(x\) is contained in a small neighbourhood of \(M\setminus M_{\rm small}\). Therefore, in the universal cover of the torus,
a preimage $\tilde x$ of \(x\) either lies on
the lift \(\tilde{c_1}\) of $c_1$ through $\tilde x$, 
or it has distance at least \(\ell\) from $\tilde x$.
A volume counting argument then completes the proof. 

The main step in the implementation  of this argument lies 
in obtaining sufficient geometric control on the tori so that the 
volume counting argument used in the case when the thin part is 
hyperbolic can be applied. 
The following proposition summarizes geometric properties of 
distance tubes and horospheres in simply connected 
manifolds of pinched negative curvature which 
are used in the proof of \Cref{Counting preimages - general case}. 
Note that although a priori Busemann functions are only of class $C^2$,
the Gau\ss{} equations show that their sectional curvature is 
defined and continuous. 
%
%the control on the intrinsic geometry of the level tori \(T(r)\) 
%which is summarized in the following well known lemma. 
%expressed in \Cref{controlgeometry}. 
%For the definition of the constant \(D\) appearing in the definition of the small part we need to introduce one further auxiliary constant. 
%This constant gives uniform control on the intrinsic geometry of the tori $T(r)$ which 
%is used in the proof of \Cref{Counting preimages - general case} in \Cref{count}. We summarize what we need in the 
%following lemma which is well known to the experts. 
%As the statement is 
%local, we formulate it for Hadamard manifolds of bounded negative curvature.

\begin{prop}\label{controlgeometry}
For every $n\geq 2$ 
there exists numbers \(A=A(n)\geq 1\) and \(B=B(n)>0\) such that for any 
simply connected complete $n$-manifold $\tilde M$ of curvature 
${\rm sec}_{\tilde M}\subseteq [-4,-1/4]$ the following holds true.
Let $\tilde \gamma\subseteq \tilde M$ be a geodesic. 
\begin{enumerate}[i)]
\item For
$r\geq 3/4$, the sectional curvature of the level sets
\[
	\{d(\tilde \gamma,\cdot)=r\}
\] 
with respect to the induced metric is contained in $[-A^2,A^2]$, and the injectivity
radius is at least $B$.
\item For
$r\geq 3/4$, the radial 
projection 
\[\{r-1/4 \leq d(\tilde \gamma,\cdot)\leq r+1/4\}\to 
\{d(\tilde \gamma,\cdot )=r\}\] is $A$-Lipschitz.
%\item
%The Gauss curvature of horospheres $b_\xi^{-1}(t)$ 
%in $\tilde M$ for the induced metric is contained in 
%$[-A^2,A^2]$,  and the injectivity radius is at least $B$.
%\item The radial projection
%$\{R-1/4\leq b_\xi\leq R+1/4\}\to b_\xi^{-1}(R)$
%of horospheres is $A$-Lipschitz.
\item For points $x,y$ on distance level sets 
\(\{d(\tilde \gamma,\cdot ) =r\}\) \((r\geq 3/4)\) 
in $\tilde M$, the distance between $x,y$ with respect to 
 the intrinsic metric on the level set 
 is at most $Ad_{\tilde{M}}(x,y)$ provided that 
 $d_{\tilde M}(x,y)\leq 1/4$.
\end{enumerate}
Analogous properties also hold true for horospheres in $\tilde M$, with the same
constants $A>1,B>0$.
\end{prop}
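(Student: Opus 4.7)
The key tool throughout is Jacobi field comparison along the geodesics normal to $\tilde\gamma$ (respectively, the stable geodesics for horospheres). Write $S_r := \{d(\tilde\gamma,\cdot) = r\}$, and let $U_r$ denote the shape operator of $S_r$ with respect to the outward unit normal $\nu := \nabla d(\tilde\gamma,\cdot)$. Along any normal geodesic $\sigma$ emanating from $\tilde\gamma$, the operator $U_r$ evolves by the Riccati equation $U'_r + U_r^2 + R_{\sigma'} = 0$, where $R_{\sigma'} = R(\cdot,\sigma')\sigma'$ restricted to $(\sigma')^\perp$. For the model cases of constant curvature $-1/4$ and $-4$, the principal curvatures of $S_r$ are explicitly $\tfrac{1}{2}\tanh(r/2),\, \tfrac{1}{2}\coth(r/2)$ and $2\tanh(2r),\, 2\coth(2r)$ respectively (corresponding to the $\tilde\gamma$-parallel and the transverse directions). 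Standard Riccati comparison (see e.g.\ Eschenburg's comparison theorem) then shows that the eigenvalues of $U_r$ lie in a fixed interval $[a_1, a_2] \subset (0,\infty)$ for all $r \geq 3/4$, with bounds depending only on $n$. The Gauss equation
\[
K_{S_r}(X,Y) = K_{\tilde M}(X,Y) + II(X,X)\,II(Y,Y) - II(X,Y)^2
\]
then yields $|K_{S_r}| \leq A^2$ for a constant $A = A(n) \geq 1$, establishing the first half of (i).

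For the radial projection in (ii), parametrize $\tilde M$ away from $\tilde\gamma$ via the normal exponential map $F(t,v,s) := \exp_{\tilde\gamma(t)}(s v)$, with $v$ a unit vector in $\tilde\gamma'(t)^\perp$. The radial projection to $S_r$ is $(t,v,s) \mapsto (t,v,r)$, and its differential acts nontrivially only on the tangential directions. On such a vector, the differential is the ratio $|J(r)|/|J(s)|$, where $J$ is the Jacobi field along the normal geodesic with prescribed initial conditions ($J(0)$ equal to $\tilde\gamma'(t)$ or $J'(0)$ orthogonal to $\tilde\gamma'$, respectively). Rauch comparison bounds $|J(r)|/|J(s)|$ uniformly for $s \in [r-1/4, r+1/4]$, $r \geq 3/4$, yielding the $A$-Lipschitz bound. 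Part (iii) follows at once: a minimizing $\tilde M$-geodesic $\sigma$ between $x,y \in S_r$ with $d_{\tilde M}(x,y) \leq 1/4$ remains in $\{r-1/4 \leq d(\tilde\gamma,\cdot) \leq r+1/4\}$ by the $1$-Lipschitz property of $d(\tilde\gamma,\cdot)$; projecting $\sigma$ radially to $S_r$ produces a curve in $S_r$ from $x$ to $y$ of length at most $A\,d_{\tilde M}(x,y)$.

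For the injectivity radius bound in (i), I use the normal parametrization once more: the map $\Phi_r : \mathbb{R} \times S^{n-2} \to S_r$, defined using parallel transport along $\tilde\gamma$ to trivialize the normal sphere bundle, is a diffeomorphism. By the Jacobi-field estimates above, $\Phi_r^* g|_{S_r}$ is bilipschitz equivalent (with constants depending only on $n$ and the curvature bounds) to a fixed product metric on $\mathbb{R}\times S^{n-2}$ whose injectivity radius is positive. Combined with the conjugate radius bound $\pi/A$ coming from the curvature bound $|K_{S_r}| \leq A^2$ and Klingenberg's lemma (in the form: ${\rm inj} \geq \min\{\pi/\sqrt{K_{\max}},\,\tfrac{1}{2}\ell_{\min}\}$, where $\ell_{\min}$ is the length of a shortest geodesic loop), the bilipschitz control forces a lower bound $B = B(n) > 0$ on the length of any closed geodesic in $S_r$ and hence on $\mathrm{inj}(S_r)$.

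The horosphere case is completely parallel. The role of the normal geodesics to $\tilde\gamma$ is played by the unit-speed geodesics asymptotic to the center $\xi \in \partial_\infty \tilde M$ of the horosphere, and the radial projection is the composition $\tilde M \to \xi$-asymptotic geodesic $\to$ horosphere. The Riccati equation and Jacobi field comparison hold verbatim, with the same constants, by taking the limit as $r \to \infty$ of the distance tube setup (i.e.\ by considering Busemann functions as limits of distance functions). The main obstacle in executing the plan is the injectivity radius bound, which requires one to combine the bilipschitz control with a Klingenberg-type argument that applies uniformly across all values of $r \geq 3/4$ and all dimensions $n \geq 2$; the key point is that the bilipschitz constants and hence the length of any short closed geodesic loop are controlled purely by the $\tilde M$-curvature bounds.
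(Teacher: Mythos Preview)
Your treatment of the sectional curvature bound in (i), and of parts (ii) and (iii), is correct and matches the paper's approach (Riccati comparison for the shape operator, Gauss equation, Jacobi/Rauch comparison for the radial projection, then project a minimizing $\tilde M$-geodesic onto $S_r$). The horosphere case is indeed parallel.

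The gap is in your injectivity radius argument. The claim that $\Phi_r^* g|_{S_r}$ is bilipschitz to a \emph{fixed} product metric on $\mathbb{R}\times S^{n-2}$ with constants depending only on $n$ and the curvature bounds is false. Rauch comparison gives, for the $\tilde\gamma$-parallel Jacobi field, $\cosh(r/2)\le |J(r)|\le\cosh(2r)$, and similarly $2\sinh(r/2)\le|J(r)|\le\tfrac12\sinh(2r)$ in the transverse directions. The ratio $\cosh(2r)/\cosh(r/2)\sim e^{3r/2}$ is unbounded in $r$, so no uniform two-sided comparison to any single product metric is available. You do get a uniform \emph{lower} bound $\Phi_r^*g\ge c(n)\,(dt^2+g_{S^{n-2}})$ for $r\ge 3/4$, but a one-sided metric bound does not by itself rule out short contractible geodesic loops in $S_r$ (which is what Klingenberg requires once the conjugate radius is handled), and for $n\ge4$ the space $\mathbb{R}\times S^{n-2}$ is simply connected so there is no systole to invoke either.

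The paper closes this step differently: it uses the $A$-Lipschitz radial projection from (ii) to show that for small $\delta$ the full $\tilde M$-ball $B(\tilde x,\delta/A)$ projects into the intrinsic ball $B^{S_r}(\tilde x,\delta)$, and then a Fubini argument over the radial foliation converts the lower volume bound $\mathrm{vol}(B(\tilde x,\delta/A))\ge C_1(\delta/A)^n$ into a uniform lower bound on $\mathrm{vol}_{S_r}\bigl(B^{S_r}(\tilde x,\delta)\bigr)$. With the curvature bound $|K_{S_r}|\le A^2$ already in hand, the Cheeger--Gromov--Taylor theorem \cite{CGT82} then yields $\mathrm{inj}(S_r)\ge B(n)$. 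This volume-to-injectivity-radius step is exactly what your bilipschitz claim was meant to replace, and it is the point where your argument needs to be rewritten.
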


%\end{proof}  
%Using this constant \(A>0\), define  
%\begin{equation}\label{def of D}
%	D:=\min\{\frac{1}{2}\mu^\prime, \frac{\pi}{2A}\}.
%\end{equation}
%This choice is motivated by the well known fact that the injectivity radius of 
%a simply connected surface whose Gau\ss{} curvature is contained in 
%$[-A^2,A^2]$ is at least $\frac{\pi}{2A}$. 

%We finish this subsection with the proof of \Cref{controlgeometry}

 \begin{proof}[Proof of \Cref{controlgeometry}]
We sketch an argument for the first part of the proposition and 
refer to \cite{E87} and \cite{HIH77} for more information about the remaining parts. 

In simply connected manifolds of constant sectional curvature \(\kappa < 0\),
  the distance cylinders of distance \(r\) about geodesics 
  have principal curvatures \(\sqrt{-\kappa} \tanh(\sqrt{-\kappa}r)\)
  and \(\sqrt{-\kappa} \coth(\sqrt{-\kappa}r)\). By standard comparison results for
  solutions of the Riccati equation, 
  the principal curvatures \(\lambda\) of the distance tori in \(M\) are bounded by
  the maximal resp. minimal principal curvatures of the distance tori in the spaces of constant curvature \(-4\) resp. \(-1/4\).
  Thus \(\frac{1}{2}\tanh\left(\frac{1}{2}r\right) \leq \lambda \leq 2\coth(2r)\). 
   
  There is a constant \(c>1\) so that \(\frac{1}{c}\leq \frac{1}{2}\tanh\left(\frac{1}{2}r\right), 2\coth(2r)\leq c\) for all \(r \geq 1\). So the principal curvatures \(\lambda\) of the distance tori in \(M\)
  are contained in \([1/c,c]\). 
 Thus there are uniform bounds for the shape operator of the level sets, and since the curvature 
 of the ambient manifold is contained in $[-4,-1/4]$ by assumption, the curvature of the level sets 
is uniformly bounded by the Gau\ss{} equations
(see  Chapter 6 of \cite{doCarmoRG}).
 This completes the proof of the 
 curvature control stated in the proposition. 
   
 To establish a uniform lower bound on the injectivity radius of the 
 level sets \(Z=\{d(\tilde \gamma,\cdot)=r\}\) (\(r \geq 3/4\)),  note first 
 that for $\delta\leq 1/4$, the ball $B^Z({\tilde x},\delta)$ of radius $\delta$ about  a point 
$\tilde x\in Z$ for the intrinsic metric contains the radial projection of the 
intersection with 
$Z$ of the ball $B(\tilde x,\delta/A)$ 
of radius $\delta/A$ in $\tilde M$ about $\tilde x$ (this uses \(ii)\)). 
This implies that $B(\tilde x,\delta/A)$ is contained in the preimage of 
$B^Z(\tilde x,\delta)$ under the restriction of the radial projection to 
the $1/4$-neighborhood of $Z$ in $\tilde M$. 

Since the radial projections are uniformly Lipschitz continuous, 
Fubini's theorem implies that the volume of $B^Z(\tilde x,\delta)$ is bounded 
from below by $C_0{\rm vol}(B(\tilde x,\delta/A))$, and the latter is bounded from 
below by $C_1(\delta/A)^{n}$ where $C_0,C_1$ only depend on the curvature
bounds of $\tilde{M}$.

As a consequence, for $\delta=1/4$ fixed, the volume of $B^Z(\tilde x,1/4)$ is bounded 
from below by a universal constant not depending 
on $\tilde x$ or $Z$.  Since the sectional curvature 
of the distance hypersurface $Z$ is bounded from above by a universal constant, 
this implies that its injectivity radius is bounded from below 
by a universal constant $B>0$ by a result of Cheeger, Gromov and Taylor (see \cite[Theorem 4.7]{CGT82}).   
 
All remaining statements follow in a similar way, and their proofs will be omitted.
%For more information, we refer to \cite[Theorem 2.4]{HIH77}) and \cite{Esc87}.
\end{proof}

%Using the constant $B>0$ from Proposition \ref{controlgeometry}, define 
%\[D=B/4.\]

%Also recall the definition of the constants \(D\), \(D^\prime\), and \(\mu^\prime\) in (\ref{def of D}), (\ref{def of D'}), and (\ref{def of mu'}).

\begin{proof}[Proof of \Cref{Counting preimages - general case}]\textbf{Step 1 (Large injectivity radius):} 
%Since the ball of radius $1$ about a point in $M_{\leq \mu^\prime}$ is contained in $M_{\rm thin}$, it suffices to consider the covering $\hat M=\tilde M/\Gamma$ of $M$ where $\Gamma$ is the fundamental group of the component of $M_{\rm thin}$ containing $x$. 
Due to the definition of $D\leq 1/2$,
 %and the fact that \(\mu^\prime \leq \bar{\mu}\), 
 the ball of radius $1$ about a point in $M^{\leq \mu^\prime}$ is contained in $M_{\rm thin}$. Therefore, 
 it suffices to consider the covering $\hat M=\tilde M/\Gamma$ of $M$ where 
 $\Gamma$ is the fundamental group of the component of $M_{\rm thin}$ containing $x$. 

%By the assumption on the curvature of $M$, the function on $M$ which 
%associates to a point $x\in M$ the logarithm of the injectivity radius $i_M(x)$ is a
%uniformly Lipschitz continuous function on $M$. 
%Since preimages of 
%points of large injectivity radius in balls of the universal cover of a manifold of bounded 
%negative sectional curvature 
%can be counted using a standard volume count argument, it suffices to 
%show the proposition for points $x$ with $i_0=i_M(x)<D/8A^2$.  
%Thus 

Let \(A=A(3)>1\) and \(B=B(3)>0\) be as in \Cref{controlgeometry}. 
Assume without loss of generality that $B < D/2A$ where $D>0$ is as in the definition of $M_{\rm small}$.
If the injectivity radius ${\rm inj}(x)$ of $M$ at $x$ is at least $B/8A$, then the ball of 
radius $B/8A$ about $x$ is diffeomorphic to a ball of the same radius about 
a preimage $\tilde x$ of $x$ in $\tilde M$. By the curvature bounds, the volume of this 
ball is bounded from below by a universal positive constant $c_0>0$. Similarly, the volume 
of the ball $B(\tilde x,2D)$ is bounded from above by a universal constant
$c_1>0$. As the balls of radius $B/8A<D$ about the 
preimages of $x$ in $B(\tilde x,D)$ are pairwise disjoint and contained in 
$B(\tilde x,2D)$, the number of preimages of $x$ contained in 
$B(\tilde x,\mu^\prime)$ is at most $c_1/c_0$. Thus in the sequel we 
may always assume that ${\rm inj}(x)< B/8A$.

Let \(x \in M^{< B/8A} \cap  N_{1/4}(M\setminus M_{\rm small}) \),
% be such a point, 
choose a lift $\hat x$ of $x$ to $\hat M$
and let $\tilde x\in \tilde M$ be a lift of $\hat x$.
Let $T$ be the distance torus of $\hat M$ containing $\hat x$. 
%Wrtie \(i_0:=\inj_M(x)\leq \mu^\prime<1/4\). 
%Note \(\mathrm{inj}_{\hat{M}}(\hat{x})=i_0\) by definition of \(\hat{M}\), and \(i_0 \leq \mu^\prime\) by assumption. 
There exists a geodesic loop \(\sigma\) based at \(\hat{x}\) 
with $\ell(\sigma)=2{\rm inj}(x)$.

% where we recall that \(\mu^\prime \leq 1/4\).

%By convexity of horoballs or tubular neighborhoods about closed geodesics, since 
%\(\max_{t}d(\sigma(t),\hat{x}) \leq \mu^\prime \leq \mu_n/4\), the shortest distance projection 
%of $\sigma$ into the distance torus $T(r-\mu^\prime)$ is a closed curve of length at most
%$2i_0$ which is not contractible in $T(r-\mu^\prime)$. Comparison of Jacobi fields then shows that
%there exists an essential closed curve $c_1\subset T$ of length.  

By the definition of $M_{\rm small}$, the distance of $\hat x$ to the core geodesic is at least $3/4$.
Thus by \Cref{controlgeometry}, 
there is a curve \(c_1\) lying entirely in \(T\) that is
homotopic to \(\sigma\) relative endpoints and that satisfies 
\(\ell(c_1) \leq A\ell(\sigma) < B/4\). 
It follows from the definition of \(\sigma\) that
the curve \(c_1\) is not contractible in \(T\). %Hence we have \(\mathrm{inj}_{T}(\hat{x}) \leq D^\prime/16\).

Assume without loss of generality that $c_1\subseteq T$ is the
shortest essential based loop at $\hat x$. 
Then $c_1$ is simple, that is, $c_1$ does not have self-intersections, and it is a geodesic
with at most one breakpoint at $\hat x$. 
%By the choice of $D$, if the length 
%of $c_1$ is at least $D/2$ then the injectivity radius of $T$ at $\hat x$ is at least 
%$D/4$ and the counting estimate follows from a simple area count. Thus we may assumethat $\ell(c_1)\leq D/2$.
Cut $T$ open along $c_1$ and let $Z$ be the 
resulting metric cylinder. Let $\partial^0Z,\partial^1 Z$  be the two distinct boundary components of $Z$.
%
%Without loss of generality we may assume that \(c_1\) has minimal length among all representatives of \([c_1] \in \pi_1(T,\hat{x})\). Let \(Z\) be the (metric) cylinder that is obtained from \(T\) by cutting along \(c_1\). Denote the boundary components of \(Z\) by \(\partial^0Z\) and \(\partial^1 Z\). 

%of the boundary components of \(Z\). We claim that $d>\ell$. This means that every loop in \(\pi_1(T)\)
%that is homotopic to a multiple of $c_1$ 
%will have length at least \(\ell\). As a consequence, in a ball about a preimage
%$\tilde x$ of $x$ of radius comparable to $\ell$, 
%all preimages of \(x\) arise from (multiples) of lifts of \(c_1\).
%This will be the main ingredient for the counting argument in Step 2.

\textbf{Step 2 (Loops independent from \(c_1\) are long):} 
%As a first step, 
%we show that \(d > \ell\). Arguing by contradiction, assume that 
%\(d \leq \ell\). Thus there exists an arc  \(c_2\subset Z\)
%connecting \(\partial^0 Z\) to \(\partial^1 Z\) with \(\ell(c_2) \leq \ell\). Denote the image 
%of $c_2$ in \(T\) again by \(c_2\). Note that \(c_2 \subseteq T\) starts and ends in points lying on \(c_1\). Concatenating \(c_2\) with a subsegment of \(c_1\) 
%gives a closed essential curve \(c_{2^\prime}\subset T\) of length \(\ell(c_{2^\prime}) \leq \ell(c_2)+\ell(c_1)\leq \ell+Li_0 \leq 2\ell\). Note that \(c_{2^\prime}\) can be homotoped so that it intersects \(c_1\) only once. 
The distance $d:=d_Z(\partial^0 Z,\partial^1 Z)$ can be realized by an embedded arc $c_2\subseteq Z$ 
connecting $\partial^0Z$ to $\partial^1Z$. 
Concatenation of $c_2$ with a subarc of $c_1$ gives a closed essential curve 
$c_{2^\prime}\subseteq T$ of length $\ell(c_{2^\prime})\leq d+\ell(c_1)/2 < d+B/4$. 
%Since $\ell(c_1)\leq 2\pi \sinh(1/2)$, and the length of the shortest 
%essential curve on $T$ which is contractible in $M$ is at least $2\pi \sinh(1/2)$, 
%we conclude that if $d\leq L_0=2\pi \sinh(1/2)$ then $c_{2^\prime}\subset T$ is 
%not contractible in $M$ and hence by the choice of $c_1$, we have 
%$d\geq \ell(c_1)/2$. 

By \Cref{controlgeometry}, 
since $T\subseteq N_{1/4}(M-M_{\rm small})$, 
the diameter of $T$ with respect to the intrinsic metric is at least $D/A>2B$.
We use this to show that $d\geq  B/4$. 
To this end we argue by contradiction and we assume otherwise. 
Let \(\gamma_1\subseteq T\) be the closed 
geodesic of minimal length in the free homotopy class of \(c_1\). Its length 
$\ell(\gamma_1)$ is at most $\ell(c_1)< B /4$.

Cut \(T\) along \(\gamma_1\) and denote the resulting cylinder by \(Z^\prime\). 
The connected components of \(c_{2^\prime} \setminus (\gamma_1 \cap c_{2^\prime})\) lift to 
arcs in \(Z^\prime\) whose endpoints lie on the one of the boundary components
\(\partial^0Z^\prime\) or \(\partial^1Z^\prime\) of $Z^\prime$. 
At least one of these lifts must connect \(\partial^0Z^\prime\) and \(\partial^1Z^\prime\).
Namely,
otherwise \(c_{2^\prime}\) is freely homotopic to a multiple of \(\gamma_1\),
which contradicts that $c_1,c_{2^\prime}$ intersect in a single point.
This implies that \(d_{Z^\prime}(\partial^0Z^\prime,\partial^1Z^\prime) \leq \ell(c_{2^\prime}) < B/2\).

Let \(\tau\) be a minimal geodesic in \(Z^\prime\) from \(\partial^0Z^\prime\) to \(\partial^1Z^\prime\).
Since \(\gamma_1\) 
is a periodic geodesic, \(\tau\) intersects \(\partial^0Z^\prime\) and \(\partial^1Z^\prime\) 
perpendicularly. Cutting \(Z^\prime\) open along \(\tau\), we see that \(T\) has a rectangular fundamental region ${\cal R}$ 
in the universal covering $\tilde T$ of $T$ with 
geodesic sides of side lengths \(\ell(\gamma_1)< B/4\) and \(\ell(\tau)< B/2\), intersecting
each other perpendicularly. 

%Since the length of the shortest contractible loop on $T$ is bigger than $D$, 
%the rectangle ${\mathcal R}$ is embedded in the universal covering 
%$\tilde M$ of $M$. 
% $\tilde T$ of $T$. %By the assumption on the curvature of 
%$T$ and the choice of $D$, the restriction of the exponential map $\exp_v$ of $\tilde T$ to the disk of radius
%$D$ about a vertex $v$ of $\mathcal{R}$ is a diffeomorphism onto the disk of the same radius about $v$.
%Recall that the Gau\ss{} curvature of \(T\) is contained in \([-A^2,A^2]\) due to \Cref{controlgeometry}. 
%Thus the injectivity radius of $\tilde T$ is at least $\pi/A$. On the other hand, 
If $v\in \tilde T$ is a vertex of ${\mathcal R}$, then any point in the boundary 
$\partial {\mathcal R}$ of $\mathcal R$ 
is of distance smaller than $3B/4 $ to $v$.
As a consequence, $\partial {\mathcal R}$ is a Jordan curve embedded in 
the ball of radius $B$ about $v$. Since the injectivity radius of 
$\tilde T$ is at least $B$, this ball is diffeomorphic to a disk in $\mathbb{R}^2$.
Then $\partial {\mathcal R}$
encloses a compact disk embedded in this ball. 
On the other hand, since $\tilde T$ is diffeomorphic to $\mathbb{R}^2$, the 
disk ${\mathcal R}$ is the unique disk in $\tilde T$ bounded by 
$\partial {\mathcal R}$. Hence ${\mathcal R}$ is contained in the open disk of radius
$B$ about $v$ which yields that the diameter of ${\mathcal R}$ 
is smaller than $2B$. Consequently the diameter of $T$ is smaller than $2B< D/A$ which is a contradiction
to the assumption that $x\in N_{1/4}(M-M_{\rm small})$.

\textbf{Step 3 (Counting argument):} The main idea in this step is the following. By the result of Step 2, all preimages of \(\hat{x}\) in the universal 
covering $\pi_T:\tilde T\to T$  of the distance torus \(T\)  
that are contained in a ball of radius \(r \leq  B/4\) come from the action of \([c_1] \in \pi_1(T,\hat{x})\), and thus a volume counting argument (similar to the proof of the special case) should complete the proof.

We now make this more precise. Since the deck group of $\tilde T$ is isomorphic to $\mathbb{Z}^2$ and acts freely and isometrically, 
the union of all lifts of the simple geodesic loop $c_1$ from Step 2 above form a 
$\pi_1(T,\hat x)$-invariant countable collection ${\mathcal L}$ 
of disjoint piecewise geodesic lines in 
$\tilde T=\mathbb{R}^2$. By Step 2, 
the distance for the metric on $\tilde T$
 between any two of these lines is at least $B/4$. Furthermore, these lines contain all preimages
 of $\hat x$ in $\tilde T$.

 Now if $c_1$ is \emph{smooth}, that is, if $c_1$ does not have
a breakpoint at $\hat x$, then 
the lines in ${\mathcal L}$ are biinfinite geodesics. Since the injectivity radius of 
$\tilde T$ is at least $B$, this implies that the number of preimages of $\hat x$ which are
contained in the ball of radius $B/4$ about a fixed preimage is at most $B/4 \ell(c_1)$.
As $\ell(c_1)\geq 2{\rm inj}(x)$, we conclude that this number 
is at most $B/4\,{\rm inj}(x)$, completing the proof of the proposition in this case (note that by the same argument as in Step 1, bounds on the number of preimages in a ball of radius \(B/4\) implies bounds on the number of preimages in a ball of radius \(D\)).

In general, we can not hope that $c_1$ is smooth. We use instead 
a volume counting argument. Namely, the lines 
in the family ${\mathcal L}$ divide $\tilde T$ in a union of disjoint strips with 
boundary in ${\mathcal L}$. Let $\alpha$ be a minimal geodesic connecting 
two adjacent lines $L_1,L_2$ from ${\mathcal L}$. This is an embedded geodesic 
arc embedded in one of the strips, say the strip $S$, with endpoints on the two distinct
boundary lines $L_1,L_2$. 
The infinite cyclic subgroup of $\pi_1(T,\hat x)$ which 
is generated by the class $\varphi$ of $c_1$ preserves the lines in ${\mathcal L}$ and the strip $S$, and 
its maps $\alpha$ to a geodesic $\varphi(\alpha)$ disjoint from $\alpha$. %Namely, as $\alpha$ realizes the distance between $L_1,L_2$, otherwise $\varphi$ fixes a point of $\alpha$ which is impossible. 
Namely, if they did intersect, they intersect transversely,
and thus an elementary variational argument yields that one can find a curve connecting \(L_1\) to \(L_2\) of strictly shorter length than \(\alpha\).
This contradicts the minimality of \(\alpha\). As a consequence, the subsegments $a_i$ of $L_i$ connecting 
the endpoints of $\alpha$ and $\varphi(\alpha)$ bound together with $\alpha$ and 
$\varphi(\alpha)$ a rectangular region ${\mathcal R}$ in $\tilde T$ which is a fundamental domain
for the action of the deck group of $T$.

As we assume that $L_1,L_2$ are not smooth, each of the lines $L_1,L_2$ contains countably many
breakpoints of the same breaking angle. For an orientation of $\tilde T$ and the induced orientation of 
$L_1,L_2$ as oriented boundary of $S$, for one of the boundary lines, say the line $L_1$,
all internal angles at the breakpoints are strictly bigger than $\pi$. In other words, $L_1$ is a locally concave
boundary component of $S$. Since $\alpha$ and $\varphi(\alpha)$ are minimal geodesics 
connecting $L_1$ to $L_2$, either they meet $L_1$ at a singular point of $L_1$ and the arc $a_1$ does not
contain a singular point in its interior, or they meet $L_1$ orthogonally at a smooth point, and 
$a_1$ contains a unique singular point in its interior. In case \(\alpha\) and \(\varphi(\alpha)\) meet \(L_1\) at a singular point, the angle they form with the smooth subsegments of \(a_1\) exiting the breakpoint is at least \(\pi/2\).

We shall show that the fundamental region \(\mathcal{R}\) contains embedded rectangles of width at least \(\ell(c_1)/2\) and height \(B/4\).
We only consider the case that \(\alpha\) and \(\varphi(\alpha)\) meet \(L_1\) at a smooth point, the other case being similar (even a bit easier).
Throughout we use the fact that since the injectivity radius
of $\tilde T$ is at least \(B\) (and since \(B<D/2A<\pi/2A\)) the convexity radius is at least \(B/2\) (see \cite[Theorem 5.14]{CheegerEbin}). In particular, this implies the following.
Let \(p \in \tilde{T}\), \(\beta_1,\beta_2\) be two geodesics
segments emanating from \(p\) whose endpoints are connected by a
geodesic segment
\(c\). If $\beta_1\cup \beta_2\cup c\subset B(p,B/2)$ and if $c$
meets $\beta_1$ orthogonally, 
then the interior angle at the endpoint of $\beta_2$ of the triangle
with sides $\beta_1,\beta_2,c$ 
is strictly smaller than \(\pi/2\).

Recall from Step 1 that $\ell(a_1)=\ell(c_1)< B/4$. Let $\hat a_1:[0,\delta]\to \tilde T$ be a subsegment of 
$a_1$ of length at least $\ell(c_1)/2$ which connects the cone point
$\hat a_1(0)\in a_1\cap \pi_T^{-1}(\hat x)$ to the endpoint 
$\hat a_1(\delta)=\alpha\cap L_1$ of 
$a_1$. Let $t\to \nu(t)$ be the unit normal field along $\hat a_1$ pointing inside of the strip $S$. 
We claim that the restriction of the normal exponential 
map to  the set $\{s\nu(t)\mid 0\leq s\leq B/4,0\leq t\leq \delta\}$ is an embedding into ${\mathcal R}$.

%Namely, by the control of the injectivity radius of $\tilde T$, this map is an embedding into $S\subset \tilde T$. 
First, observe that this is an embedding into the strip $S$.
Indeed, if two distinct orthogonal segments \(s \to \exp(s\nu(t_1))\) and \(s \to \exp(s\nu(t_2))\) $(0\leq t_1<t_2\leq \delta)$ 
intersect in a point \(p\),
then they are sides of a triangle with edge opposite to $p$ is the arc 
\(\hat{a}_1|_{[t_1,t_2]}\). This triangle is
contained in \(B(p,B/2)\) and has two right angles, contradicting convexity.

If the image intersects $S-{\mathcal R}$, then since the arc 
$\{\exp(s \nu(\delta))\mid 0\leq s\leq B/4\}$
is contained in the side $\alpha$ of ${\mathcal R}$, the arc 
$\beta:=\{\exp (s\nu(0)) \mid 0\leq s\leq B/4\}$ has to intersect the
geodesic segment $\varphi(\alpha)$ in some point \(p\) (perhaps after replacing $\varphi$ with $\varphi^{-1}$). 
%By the injectivity radius control of $\tilde T$ and the fact that $\phi(\alpha)$ is minimal, if $b\leq B/4$ is such that $\exp(b\nu(0))$ is the first such intersection point, then the subarc $\beta=\{\exp(t\nu(0))\mid 0\leq b\}$ forms together with the subarc of $L_1$ connecting $\tilde x$ to $\tilde y=\phi(\alpha)\cap L_1$ and a subarc of  $\phi(\alpha)$ of length at most $B/2$ starting at $\tilde y$ a geodesic triangle of total side length at most $B/2$. Using once more the fact that the injectivity radius of $\tilde T$ is at least $B$ and hence the convexity radius is at least $B/2$, this triangle is a Jordan curve contained in a metric disk of radius $B/2$ centered at $\tilde x$, and the side of the triangle not adjacent to $\tilde x$ is the subarc of $\phi(\alpha)$. The angle of the triangleat $\tilde x$ is bigger than $\pi/2$. 
Thus we obtain a triangle whose sides are the subarc of
$\beta$ connecting $\hat a_1(0)$ to $p$,
the subarc of $\phi(\alpha)$ connecting $\phi(\alpha)\cap L_1$ to $p$ and
the subarc of $a_1$ connecting $\hat a_1(0)$ to $\phi(\alpha)\cap L_1$
(see \Cref{convexity2}).
This triangle is contained in $B(p,B/2)$, and it has a right angle
at $\phi(\alpha)\cap L_1$ and an angle $\geq \pi/2$ at $\tilde x$
since the interior angle at the breakpoint $\tilde x$ is
strictly bigger than $\pi$. As before, this violates convexity.
This finishes the proof that restriction of the normal exponential 
map to the set $\{s\nu(t)\mid 0\leq s\leq B/4,0\leq t\leq \delta\}$ is an embedding into ${\mathcal R}$.

\begin{figure}
\begin{center}
\centering
	\def\svgwidth{0.75\textwidth}
\[
	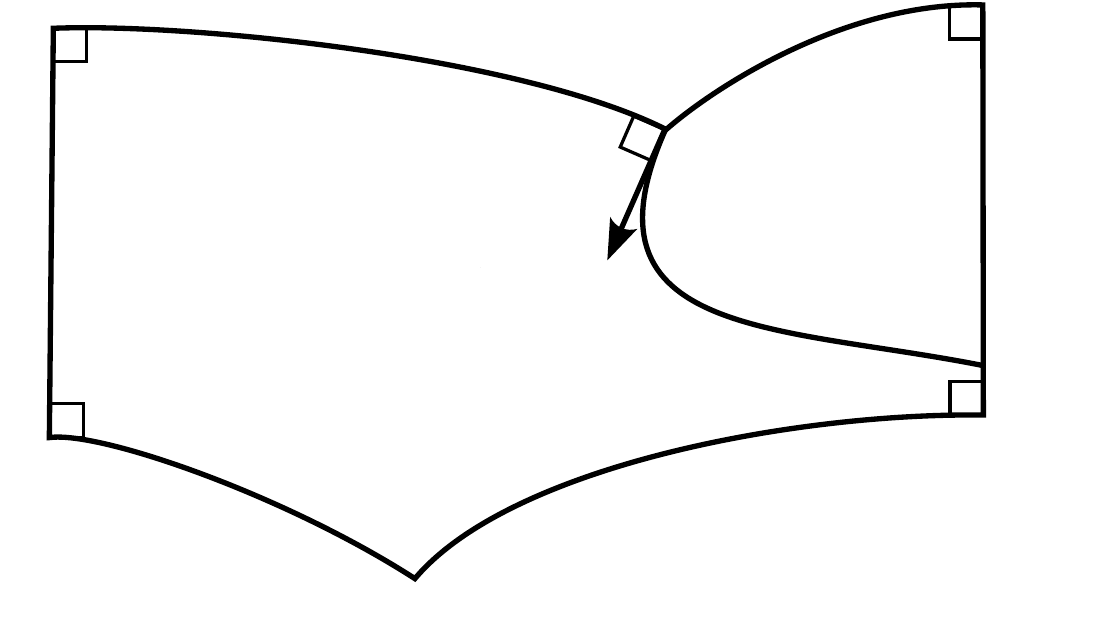
\] 
\caption{The argument by contradiction showing that the image of the normal exponential map is contained in \(\mathcal{R}\).}
\label{convexity2}
\end{center}
\end{figure}

%\begin{figure}
%\begin{center}
%\centering
%	\def\svgwidth{0.75\textwidth}
%\[
%	\input{Zeichnung.pdf_tex}
%\] 
%\caption{The argument by contradiction showing that the image of the normal exponential map is contained in \(\mathcal{R}\).}
%\label{convexity2}
%\end{center}
%\end{figure}

%But then the geodesic segment $\xi$ of length $B/2$ starting at $\tilde x$ which is orthogonal to $\phi(a_1)\subset L$ and points inside $S$ has to intersect $\phi(\alpha)$ since it points into the interior of the triangle. This contradict the fact, again due to the injectivity radius bound, that the quadrangle with sides the subarc of $a_1$ connecting $\tilde x$ to $\phi(\alpha)\cap L_1$, $\xi$ and $\phi(\alpha)$ is embedded in $S$.  

Using once more the lower bound on the injectivity radius of $\tilde T$ and the upper
bound on the Gau\ss{} curvature, we conclude from \(\ell(\hat{a}_1)\geq \ell(c_1)/2\) that
\[
	{\rm area}(\exp\{s\nu(t)\mid 0\leq s\leq B/4, 0\leq t\leq \delta\})\geq \kappa \ell(c_1),
\]
where $\kappa >0$ is a universal constant.

Since the images of the rectangle ${\mathcal R}$ under the action of the 
deck group of $T$ have pairwise disjoint interiors, 
we conclude that the area of the $B/4$-neighborhood of the ball 
of radius $B$ about any point $z\in \pi_T^{-1}(\hat x)$ is at least 
\[
	\# \big(\pi_T^{-1}(\hat x)\cap B^{\tilde T}(z,B)\big) \cdot \kappa \ell(c_1)
      \]
      (here the notation $B^{\tilde T}(\tilde x,B)$ makes is precise
      that we take a ball in $\tilde T$).
      On the other hand, this area is bounded from above by a
  universal constant.
      Since moreover  for any $\tilde x\in \pi_{\hat M}^{-1}(\hat x)$ we have
      \(\pi_M^{-1}(\hat{x})\cap B^{\tilde{M}}(\tilde{x},B/A) \subseteq \pi_T^{-1}(\hat x)\cap B^{\tilde{T}}(\tilde x,B)\) (note that this is an abuse of notation
      since $\pi_{\hat M}^{-1}(T)$ is an infinite cylinder if
      $\Gamma$ is an infinite cyclic group of hyperbolic isometries, that is, if the component of $M_{\rm thin}$ with fundamental group $\Gamma$ is a Margulis
      tube),   
and since $2\ell(c_1)\geq {\rm inj}(x)$, this shows that 
\[
  \#\big(\pi_M^{-1}(\hat x)\cap B^{\tilde{M}}(\tilde x,B/A)\big)
  \leq \kappa^\prime/ {\rm inj}(x).
\]
Again, as in Step 1, estimates on the number of preimages in a ball of radius \(B/A\) implies bounds on the number of preimages in a
ball of radius \(D\). This completes the proof.
\end{proof}

\Cref{counting preimages II} is now an easy consequence of \Cref{Counting preimages - general case}.

\begin{proof}[Proof of \Cref{counting preimages II}]It suffices to prove the estimate for those \(x \in N_{1/4}(M\setminus M_{\rm small})\)
 with \(\inj_M(x) \leq D\). By \Cref{Counting preimages - general case}
this follows if for 
those \(x\) it holds \(\inj(x) \geq C e^{-bd(x,M_{\rm thick})}\) for a universal constant \(C>0\).

Thus let $x\in N_{1/4}(M\setminus M_{\rm small}$ and 
let \(x^\ast\) be the first point on the radial
geodesic through \(x\) that lies in \(\partial M_{\rm thick}\).
Abbreviate \(R=d(x,x^\ast)=d(x,M_{\rm thick})\).

Let \(\Gamma\) be the fundamental group of the component of \(M_{\rm thin}\) containing \(x\). Consider the intermediate cover \(\hat{M}:=M/\Gamma\), and choose lifts \(\hat{x}\) and \(\hat{x}^\ast\) of \(x\) and \(x^\ast\) with \(d(\hat{x},\hat{x}^\ast)=R\).
It holds \(\inj_{\hat{M}}(\hat{x})=\inj_M(x)\) and \(\inj_{\hat{M}}(\hat{x}^\ast)=\inj_M(x^\ast)\). Let $\sigma\subset \hat M$ be an essential based loop at
$\hat x$ of minimal length.
Radially project \(\sigma\) to a curve \(\sigma^\ast\) based at \(\hat{x}^\ast\). Note that if \(x\) is contained in a Margulis tube, then \(x\) has distance at least \(3/4\) from the core geodesic due to the definition of \(M_{\rm small}\). Hence Jacobi field comparison shows \(\ell(\sigma^\ast) \leq C e^{bR}\ell(\sigma)\) for a universal constant \(C\). As \(\inj_{\hat{M}}(\hat{x}^\ast)=\inj_M(x^\ast)=\mu\), we have \(\ell(\sigma^\ast)\geq 2\mu\), where \(\mu\) is the chosen Margulis constant for manifolds with sectional curvature contained in \([-4,-1/4]\). Thus \(Ce^{bR}\ell(\sigma)\geq 2\mu\). The definition of \(\sigma\), and the fact that \(\inj_{\hat{M}}(\hat{x})=\inj_M(x)\), imply \(Ce^{bR}\inj_M(x) \geq \mu\). Since \(R=d(x,M_{\rm thick})\), this completes the proof.
\end{proof}

\subsection{Generalisations} In \Cref{Subsec: a priori estimates in non-compact case} we will prove a global \(C^0\)-estimate in terms of a new hybrid norm. For this we will need a slightly more general version of \Cref{A-priori estimate away from the small part}. This is based on the following more general version of \Cref{Counting preimages - general case}. Recall that for \(r \geq 0\) we denote by \(N_{r}(M \setminus M_{\rm small})\) the \(r\)-neighbourhood of \(M \setminus M_{\rm small}\). 

\begin{lem}\label{Counting preimages - finite distance inside small part}For all \(\bar{R} \geq 0\) there exists a constant \(C(\bar{R})>0\) with the following property. Let \(x \in M^{\leq \mu^\prime} \cap N_{\bar{R}+1/4}(M \setminus M_{\rm small})\), and if \(x\) is contained in a Margulis tube,
assume in addition that \(N_{\bar{R}}(M \setminus M_{\rm small})\) is disjoint from the one-neighbourhood of the core geodesic. Then it holds
\[
	\# \big(\pi^{-1}(x) \cap B(\tilde{x},D) \big) \leq C(\bar{R}) \frac{1}{\mathrm{inj}(x)},
\]
where \(\pi:\tilde{M} \to M\) is the universal covering projection.
\end{lem}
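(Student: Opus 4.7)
The plan is to reduce the statement to \Cref{Counting preimages - general case} by means of a radial projection argument. Let $x$ be as in the hypothesis, let $A$ be the component of $M_{\rm thin}$ containing $x$, and let $T$ be the distance torus (in a tube) or horotorus (in a cusp) through $x$. Following the setup of the proof of \Cref{Counting preimages - general case}, I would first pass to the intermediate cover $\hat M = \tilde M/\pi_1(A)$ and use \Cref{controlgeometry} to reduce the problem to bounding the number of preimages of $\hat x \in T\subset\hat M$ under the universal covering $\pi_T:\tilde T\to T$ inside a ball of fixed radius.

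Since $x\in N_{\bar R + 1/4}(M\setminus M_{\rm small})$, there exists $y\in M\setminus M_{\rm small}$ in the component $A$ with $d(x,y)\leq \bar R + 1/4$. Let $x^\ast$ denote the radial projection of $x$ onto the distance torus (or horotorus) through $y$, so that $x^\ast\in M\setminus M_{\rm small}\subseteq N_{1/4}(M\setminus M_{\rm small})$ and $d(x,x^\ast)\leq \bar R + 1/4$. In the tube case, the additional hypothesis that $N_{\bar R}(M\setminus M_{\rm small})$ is disjoint from the $1$-neighborhood of the core geodesic $\gamma$ implies that the whole radial segment from $x$ to $x^\ast$ remains uniformly bounded away from $\gamma$ (in the cusp case this is automatic). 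Standard Jacobi field comparison based on the curvature bounds then yields that the radial diffeomorphism $T \to T^\ast$ is $\Lambda(\bar R)$-bilipschitz, with $\Lambda(\bar R)$ depending only on $\bar R$ and the curvature bounds. Combined with \Cref{controlgeometry}, this makes the intrinsic injectivity radii of $T$ at $\hat x$ and of $T^\ast$ at $\hat x^\ast$ comparable up to a factor of $\Lambda(\bar R)$, and the same is true for the ambient injectivity radii $\inj(x)$ and $\inj(x^\ast)$.

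Next, fix lifts $\tilde x,\tilde x^\ast\in\tilde M$ along a common radial geodesic with $d(\tilde x,\tilde x^\ast)=d(x,x^\ast)$. For every deck transformation $\phi\in\pi_1(M)$ with $\phi\tilde x\in B(\tilde x,D)$, the triangle inequality yields
\[
  d(\tilde x^\ast,\phi\tilde x^\ast)\leq 2d(\tilde x,\tilde x^\ast)+d(\tilde x,\phi\tilde x)\leq D + 2\bar R + \tfrac{1}{2}.
\]
Hence the map $\phi\tilde x\mapsto \phi\tilde x^\ast$ injects $\pi^{-1}(x)\cap B(\tilde x,D)$ into $\pi^{-1}(x^\ast)\cap B(\tilde x^\ast,D+2\bar R+\tfrac{1}{2})$. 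I would then cover this enlarged ball by a number $K(\bar R)$ of balls of radius $D$ (with $K(\bar R)$ controlled by a standard packing argument based on the upper curvature bound) and apply \Cref{Counting preimages - general case} on each of these balls. Together with the injectivity-radius comparison $\inj(x^\ast)^{-1}\leq C(\bar R)\inj(x)^{-1}$, this produces the claimed bound $C(\bar R)/\inj(x)$.

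The main obstacle will be the careful tracking of the Jacobi field estimates in the Margulis tube case, needed to confirm that the stated hypothesis on the $1$-neighborhood of $\gamma$ genuinely keeps the entire radial segment from $x$ to $x^\ast$ bounded away from the core and to ensure that the bilipschitz constant $\Lambda(\bar R)$ of the radial projection, as well as the comparison of intrinsic and ambient injectivity radii, depend only on $\bar R$ and the curvature bounds. These steps are essentially bookkeeping, but need some care to pin down the constants explicitly as functions of $\bar R$.
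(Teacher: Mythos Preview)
Your approach is correct but differs from the paper's. The paper does not reduce to \Cref{Counting preimages - general case} as a black box; instead it observes that the only place in the proof of that proposition where the hypothesis $x\in N_{1/4}(M\setminus M_{\rm small})$ is used is in Step~2, to guarantee a uniform lower bound $D/A$ on the intrinsic diameter of the level torus $T$ through $x$. For $x\in N_{\bar R+1/4}(M\setminus M_{\rm small})$ (and, in the tube case, away from the $1$-neighbourhood of the core), standard Jacobi field estimates still give a lower bound $\bar D(\bar R)>0$ on $\diam(T)$. The paper then simply re-runs the entire argument of \Cref{Counting preimages - general case} with the constant $B$ replaced by a smaller $B(\bar R)<\min\{B,\bar D(\bar R)/2\}$.

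Your route---radially projecting $x$ to a point $x^\ast\in N_{1/4}(M\setminus M_{\rm small})$, comparing $\inj(x)$ with $\inj(x^\ast)$ via the bilipschitz radial map, and translating preimage counts from $B(\tilde x,D)$ to $B(\tilde x^\ast,D+2\bar R+1/2)$ by the triangle inequality and a packing argument---is more modular: it treats \Cref{Counting preimages - general case} as a black box and avoids reopening its proof. The paper's route is shorter once that proof is understood, and makes the dependence on $\bar R$ more transparent (it enters only through the diameter bound $\bar D(\bar R)$). Both rely on the same Jacobi field control away from the core geodesic, which is exactly what the extra hypothesis on the $1$-neighbourhood of $\gamma$ is designed to secure.
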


\begin{proof}We quickly review the proof of \Cref{Counting preimages - general case}. We choose the constants \(A\) and \(B\) from \Cref{controlgeometry}, and we assumed without loss of generality that \(B < \frac{D}{2A}\). The reason for chosing \(\frac{D}{A}\) is the following. For all \(x \in N_{1/4}(M \setminus M_{\rm small})\) the level torus \(T\) containing \(x\) satisfies \(\diam(T)>D/A\) (see Step 2). 

We now explain how to adjust the argument from the proof of \Cref{Counting preimages - general case}. Standard Jacobi field estimates show that for any \(\bar{R} \geq 0\) there exists \(\bar{D}(\bar{R})>0\) with the following property. For any \(x\) as stated in \Cref{Counting preimages - finite distance inside small part} it holds \(\diam(T) > \bar{D}(\bar{R})\) for the level torus \(T\) containing \(x\). Choose some \(B(\bar{R}) < \min\{B(3),\frac{1}{2}\bar{D}(R)\}\). The proof of \Cref{Counting preimages - general case} goes through without change when replacing \(B\) by \(B(\bar{R})\) (and \(A=A(3)\) still given by \Cref{controlgeometry}).
\end{proof}

The next result is the generalisation of \Cref{A-priori estimate away from the small part} that we need for the global \(C^0\)-estimate in \Cref{Subsec: a priori estimates in non-compact case}. It follows from \Cref{Counting preimages - finite distance inside small part} analogous to how \Cref{A-priori estimate away from the small part} followed from \Cref{Counting preimages - general case}. We omit the details.

\begin{lem}\label{A-priori estimate with finite distance away from the small part}
For all \(\alpha \in (0,1)\), \(\Lambda \geq 0\), \(\delta \in (0,2)\), \(b > 1\), and \(\bar{R}\geq 0\) there exist \(\varepsilon_0=\varepsilon_0(\delta,b)>0\) and \(C(\bar{R})=C(\bar{R},\alpha,\Lambda,\delta,b) >0\) with the following property. Let \(M\) be a Riemannian \(3\)-manifold of finite volume so that
\[
	|\sec +1| \leq \varepsilon_0 \quad \text{and} \quad ||\nabla \Ric||_{C^0(M)} \leq \Lambda.
\]
Let \(x \in N_{\bar{R}}(M \setminus M_{\rm small})\), and if \(x\) is contained in a Margulis tube, assume in addition that \(N_{\bar{R}}(M \setminus M_{\rm small})\) is disjoint from the one-neighbourhood of the core geodesic. Then for all \(h \in C^{2}\big( {\rm Sym}^2(T^*M)\big)\cap H^2(M)\) it holds
\[
	|h|(x) \leq C(\bar{R})\left(||\mathcal{L}h||_{C^0(M)}+e^{\frac{b}{2}d(x,M_{\rm thick})}\left(\int_M e^{-(2-\delta)r_x(y)}|\mathcal{L}h|^2(y) \, d\vol(y) \right)^\frac{1}{2} \right).
\]
\end{lem}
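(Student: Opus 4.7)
The plan is to follow the same four-step strategy as in the proof of Proposition \ref{A-priori estimate away from the small part}, substituting the improved counting result \Cref{Counting preimages - finite distance inside small part} in place of \Cref{counting preimages II}. The $\bar{R}$-dependence of the final constant $C(\bar{R})$ will propagate from the $\bar{R}$-dependent bound on preimages.

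First, I would pass to the universal cover $\tilde M$ and apply the De Giorgi--Nash--Moser estimate as in (\ref{C^0 from L^2}) to get
\[
|\tilde h|(\tilde x)\leq C\Bigl(\|\tilde h\|_{L^2(B(\tilde x,D/2))}+\|\mathcal L h\|_{C^0(M)}\Bigr).
\]
Here $\tilde x$ is a lift of $x$, and since injectivity radius in $\tilde M$ is infinite, the Schauder/Nash--Moser machinery of \Cref{Nash-Moser} applies with universal constants depending only on $\alpha,\Lambda$. The $C^0$-norm of $\tilde f=\mathcal L h\circ \pi$ equals that of $\mathcal L h$.

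Next, I would push the $L^2$-ball in $\tilde M$ down to a weighted integral on $M$. The key input is \Cref{Counting preimages - finite distance inside small part}: under the hypotheses on $x$ (namely $x \in N_{\bar R}(M\setminus M_{\rm small})$, and in the Margulis-tube case the $\bar R$-neighborhood avoids the $1$-neighborhood of the core), the counting lemma applies and, exactly as in the proof of \Cref{counting preimages II}, yields
\[
\#\bigl(\pi^{-1}(y)\cap B(\tilde y,D)\bigr)\leq C(\bar R)\,e^{b\,d(y,M_{\rm thick})}
\]
for every $y$ in a $D/2$-neighborhood of $x$ (the $\bar R$-condition is stable under moving by at most $D/2<1/4$). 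Invoking the same $L^1$ pushforward claim used in the proof of \Cref{A-priori estimate away from the small part}, I obtain
\[
\int_{B(\tilde x,D/2)}|\tilde h|^2\,d\mathrm{vol}_{\tilde g}\leq C(\bar R)\,e^{b\,d(x,M_{\rm thick})}\int_{B(x,D/2)}|h|^2\,d\mathrm{vol}_g,
\]
after absorbing the harmless $e^{bD/2}$ factor.

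Finally, I would insert the weight $e^{-(2-\delta)r_x(y)}$ (bounded below by $e^{-D}$ on $B(x,D/2)$) and apply the weighted $L^2$ estimate (\ref{int1 - weight}) coming from \Cref{weighted integral estimate - lipschitz}, which by \Cref{integration by parts works in finite volume case} remains valid in the finite-volume setting provided $h\in H^2(M)$. This gives
\[
\int_M e^{-(2-\delta)r_x(y)}|h|^2\,d\mathrm{vol}_g\leq C(\delta)\int_M e^{-(2-\delta)r_x(y)}|\mathcal L h|^2\,d\mathrm{vol}_g,
\]
for $\varepsilon_0=\varepsilon_0(\delta,b)$ small enough. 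Combining the three displays yields the desired bound with constant $C(\bar R,\alpha,\Lambda,\delta,b)$.

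The main obstacle is the preimage-counting step, but this has already been resolved by \Cref{Counting preimages - finite distance inside small part}: the constants $A,B$ entering the original counting argument only depend on a lower bound for the intrinsic diameter of the level torus through $x$, and standard Jacobi-field comparison guarantees such a bound $\bar D(\bar R)>0$ whenever $x$ lies within distance $\bar R$ of $M\setminus M_{\rm small}$ (while staying outside the $1$-neighborhood of a core geodesic, which is where diameters can collapse). Once that geometric input is in place, the rest is a line-by-line repetition of the argument already given for \Cref{A-priori estimate away from the small part}, with all dependencies on $\bar R$ tracked through the constant.
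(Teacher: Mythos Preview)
Your proposal is correct and follows exactly the approach the paper intends: the paper's proof simply states that the lemma follows from \Cref{Counting preimages - finite distance inside small part} ``analogous to how \Cref{A-priori estimate away from the small part} followed from \Cref{Counting preimages - general case}'' and omits the details. You have filled in precisely those details, correctly tracking the $\bar{R}$-dependence through the preimage count and noting the stability of the hypothesis under the $D/2<1/4$ shift needed to apply the counting lemma on the whole ball $B(x,D/2)$.
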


%\newpage

\section{Model metrics in tubes and cusps}\label{Sec:model metrics}

%For the proof of \Cref{Pinching without inj radius bound - introduction} we need to analyze the invertibility of the elliptic operator \(\mathcal{L}=\frac{1}{2}\Delta_L+2\mathrm{id}\) in complete Riemannian \(3\)-manifolds of finite volume that do \textit{not} have a lower bound on the injectivity radius. The examples of \Cref{Sec: Counterexamples} show that \Cref{A-priori estimate for L} can no longer hold in this more general situation. Also recall from the discussion at the end of \Cref{Sec: Counterexamples} that the counterexamples were constructed by slowly changing the conformal structure of the tori \(T(r)\) contained in \(M_{\rm small}\). 
%To exclude these examples, we assume that the starting metric in tubes and cusps is asymptotically hyperbolic in a suitable sense. 

The examples of \Cref{Sec: Counterexamples} show that \Cref{Pinching with inj radius bound - introduction} no longer holds 
true without the assumption of a uniform lower bound on the injectivity radius. 
At the end of \Cref{Sec: Counterexamples} we pointed out that the counterexamples model a deformation of hyperbolic structures
on a fixed Margulis tube, obtained  
by slowly changing the conformal structure of the tori \(T(r)\) contained in \(M_{\rm small}\). 
The goal of this section is to formulate a geometric condition for the 
tubes and cusps, controlled asymptotic hyperbolicity, which rules out such examples. 
This section can be skipped by readers who are mainly interested in the applications to 
drilling, filling and hyperbolization. For these applications, it suffices to consider metrics which have constant
curvature in the thin parts of the manifold. 

Asymptotically hyperbolic metrics on non-compact manifolds have been widely studied in the literature, however
mainly in the context of manifolds with flaring ends. We refer to \cite{HQS12} for an overview of some related results.

In the sequel, $\eta >1$ is a constant fixed once and for all. 
%\subsection{Approximating cusp metrics}\label{Subsec: approximating cusp metrics}
Let \(M\) be a complete Riemannian \(3\)-manifold of finite volume that satisfies the following 
curvature decay condition:
% (\ref{curvature decay}) from \Cref{Invertibility of L - non-compact case}, i.e.,
\begin{equation}\label{curvaturedecay}
	\max_{\pi \subseteq T_xM}|\mathrm{sec}(\pi)+1|, \, |\nabla R|(x), \, |\nabla^2R|(x) \leq \varepsilon_0 e^{-\eta d(x,\partial M_{\rm small})} \quad \text{for all } \, x \in M_{\rm small}.
      \end{equation}
Here \(R\) denotes the Riemann curvature endomorphism.      
           
As before, we know that all cusps are diffeomorphic to $T^2\times [0,\infty)$ since by \Cref{convention orientable} we assume that \(M\) is orientable.
%We abbreviate this by saying that $M$ satisfies the cusp condition.
We construct in this section a hyperbolic model metric in the small part of 
cusps and the complements of the $1$-neigbhborhood of the core curves of the small part of tubes. 
These auxiliary metrics are used in \Cref{Subsec: various norms} to construct Banach spaces geared at controlling
solutions of the equation ${\mathcal L}(h)=f$ in the small part of $M$.
 
%The goal of this section is to show that in \(M_{\rm small}\) the given metric is \(C^2\)-close to a hyperbolic metric (even with exponentially decaying error). We will make use of this in \Cref{Subsec: Growth estimates} to reduce the proof of analytic estimates in \(M_{\rm small}\) to the hyperbolic case. Moreover, we need this approximating hyperbolic metric for the definition of the norm \(||\cdot||_{2,\lambda;\ast}\) appearing in \Cref{Invertibility of L - non-compact case} (see \Cref{Subsec: various norms}).

%We now make this more precise. 
Let as before $T^2$ be a two-torus. 
Call a metric \(g\) on \(T^2 \times I\) (where \(I\) is an interval) a \textit{cusp metric} if it is of the form
\[
	g=e^{-2r}g_{Flat}+dr^2,
\]
where \(g_{Flat}\) is some flat metric on \(T^2\) and \(r\) is the \(I\)-coordinate. Let \(T\) be a Margulis tube and \(C\) a rank 2 cusp of \(M\). 
Note that \(C_{\rm small} \cong T^2 \times [0,\infty)\) and \(T_{\rm small} \setminus N_{1}(\gamma) \cong T^2 \times [0,R-1]\), where \(R\) is the 
\emph{radius} of \(T_{\rm small}\), that is, the distance of the boundary of $T_{\rm small}$ to the core curve of $T$, 
and \(\cong\) stands for diffeomorphic.
The given metric on \(M\) will in general not be a cusp metric on these sets. 

%However, we will show that there are cusp metrics \(g_{cusp}\) on \(T_{\rm small}\setminus N_{2}(\gamma)\) and \(C_{\rm small}\) so that the following two estimates hold. 
The following two statements are the main results of this section.

\begin{prop}\label{tubes are almost cusps - pinched curvature}For any \(\eta > 1\) there exists \(\varepsilon_0=\varepsilon_0(\eta)>0\) with the following property. 
Let \(M\) be an Riemannian 3-manifold satisfying the curvature decay condition (\ref{curvaturedecay}) and let \(T\) be a Margulis tube of \(M\) with core geodesic \(\gamma\). Then there exists a cusp metric \(g_{cusp}\) on \(T_{\rm small}\setminus N_{1}(\gamma)\) so that for all \(x \in T_{\rm small}\setminus N_{1}(\gamma)\) it holds
\[
	|g-g_{cusp}|_{C^2}(x)=O\big(e^{-2r_{\gamma}(x)}+\varepsilon_0 e^{-\eta r_{\partial T}(x)}\big),
\]
where \(r_{\partial T}(x)=d(x, \partial T_{\rm small})\), and \(r_{\gamma}(x)=d(x,\gamma)\).
\end{prop}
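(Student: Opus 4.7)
My plan is to work in Fermi coordinates $(r, y) \in [0, R_T] \times T^2$ around the core geodesic $\gamma$, in which the metric takes the warped form $g = dr^2 + h_r$ for a smooth family $h_r$ of metrics on $T^2$. The estimate is then proved by triangulating through an exact hyperbolic tube metric $g_{hyp}$: first I will construct $g_{hyp}$ on $T_{\rm small}\setminus N_1(\gamma)$ and show $|g - g_{hyp}|_{C^2}(x) = O(\varepsilon_0 e^{-\eta r_{\partial T}(x)})$, then I will compare $g_{hyp}$ explicitly to a cusp metric $g_{cusp}$ and obtain $|g_{hyp} - g_{cusp}|_{C^2}(x) = O(e^{-2 r_\gamma(x)})$.

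\textbf{Hyperbolic model.} In Fermi coordinates the shape operator $S_r = \tfrac{1}{2}h_r^{-1}\partial_r h_r$ of the family of distance tori satisfies the matrix Riccati equation $\partial_r S_r + S_r^2 = -\mathcal{R}_r$, where $\mathcal{R}_r$ is the curvature operator in the radial direction. I would define $g_{hyp}$ on $T^2 \times [1, R_T]$ as the warped product $dr^2 + h_r^{hyp}$ obtained by solving the same Riccati equation with $\mathcal{R}_r \equiv -\mathrm{Id}$, prescribing initial data $(h_1^{hyp}, S_1^{hyp}) = (h_1, S_1)$ to match those of $g$ at the inner boundary $r = 1$. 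Since the hyperbolic Riccati equation is explicitly integrable via $\sinh r$ and $\cosh r$, the resulting $g_{hyp}$ is an honest hyperbolic metric on $T^2 \times [1, R_T]$. The difference $D_r = S_r - S_r^{hyp}$ then satisfies
$$\partial_r D_r + (S_r D_r + D_r S_r^{hyp}) = -(\mathcal{R}_r + \mathrm{Id}), \qquad D_1 = 0.$$
Since the sectional curvature is bounded above by $-1/4$ in $T_{\rm small}$, the operators $S_r, S_r^{hyp}$ are uniformly positive for $r \geq 1$, so the linear equation above is damped; Gr\"onwall's inequality combined with the decay bound (\ref{curvaturedecay}) on $|\mathcal{R}_r + \mathrm{Id}|$ yields $|D_r| = O(\varepsilon_0 e^{-\eta r_{\partial T}})$, and integration gives the same bound on $h_r - h_r^{hyp}$. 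The $C^1$ and $C^2$ estimates follow by differentiating the Riccati equation tangentially and invoking the bounds on $\nabla R$ and $\nabla^2 R$ from (\ref{curvaturedecay}).

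\textbf{Hyperbolic tube versus cusp metric.} The hyperbolic tube metric $g_{hyp}$ can be written (up to a twist of the $T^2$-identification, which does not affect pointwise norms) as
$$g_{hyp} = dr^2 + \sinh^2(r)\,\alpha^2 + \cosh^2(r)\,\beta^2$$
for suitable constant $1$-forms $\alpha, \beta$ on $T^2$. I will set
$$g_{cusp} := dr^2 + \tfrac{1}{4}e^{2r}(\alpha^2 + \beta^2),$$
which is a cusp metric with flat model $\tfrac{1}{4}(\alpha^2 + \beta^2)$: after the substitution $\tilde r = -r - \log 2$ it becomes $d\tilde r^2 + e^{-2\tilde r}(\alpha^2 + \beta^2)$. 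The pointwise differences $\sinh^2 r - \tfrac{1}{4}e^{2r}$ and $\cosh^2 r - \tfrac{1}{4}e^{2r}$ are uniformly bounded, and when measured in the $g_{hyp}$-norm (which divides by $\sinh^2 r$ and $\cosh^2 r$) one obtains $|g_{hyp} - g_{cusp}|_{g_{hyp}} = O(e^{-2r}) = O(e^{-2 r_\gamma})$; the corresponding $C^1$ and $C^2$ bounds follow by direct differentiation of the explicit expressions.

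\textbf{Main obstacle.} The hardest point is the Riccati comparison underlying the hyperbolic construction. The equation is genuinely a matrix ODE, so one must track both eigenvalues of $S_r$ (with different asymptotic growth rates $\coth r$ and $\tanh r$) as well as off-diagonal cross terms that appear whenever the coordinate $1$-forms $\alpha, \beta$ are not $g$-orthogonal. The curvature error $\varepsilon_0 e^{-\eta r_{\partial T}}$ grows as one propagates outward from the core, so the damping provided by the positivity of $S_r + S_r^{hyp}$ must absorb this growth; the assumption $\eta > 1$ enters in the control of the differentiated equations required for the $C^1$ and $C^2$ estimates. Executing this Gr\"onwall analysis on each matrix component, and then upgrading the shape-operator estimate into a pointwise $C^2$-estimate on the metric itself, is where most of the technical work lies.
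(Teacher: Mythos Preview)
Your overall architecture---triangulate through an exact hyperbolic model, then compare the model to a cusp metric by elementary asymptotics---is exactly what the paper does. The two executions differ in where the model is anchored and in which ODE is used. The paper fixes $g_{tube}$ by reading off the translation length and rotation angle of the core geodesic $\gamma$ and taking the corresponding quotient $\mathbb{H}^3/\langle\psi\rangle$; it then compares $g$ with $g_{tube}$ by comparing \emph{Jacobi fields} $J_i,\,D_jJ_i,\,D_kD_jJ_i$ along radial geodesics from the core, invoking a general linear ODE stability lemma iteratively. Matching at the core guarantees that $g_{tube}$ has the exact form $dr^2+\sinh^2(r)\,d\theta^2+\cosh^2(r)\,ds^2$, so the cusp comparison is a one-line computation. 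Your model, matched instead at $r=1$ with data $(h_1,S_1)$ taken from $g$, will in general have the form $dr^2+A^2\sinh^2(r+c_1)\,\alpha^2+B^2\cosh^2(r+c_2)\,\beta^2$ with small offsets $c_i$ (determined by how far the eigenvalues of $S_1$ are from $\coth 1$, $\tanh 1$); the cusp comparison still works once you choose the flat metric accordingly, but your displayed form is not literally correct.

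The more substantive point is the passage from the shape-operator estimate to the metric estimate. Your Riccati argument for $D=S-S^{hyp}$ is fine: the damping from $S+S^{hyp}\geq 2\tanh(1)\,\mathrm{Id}$ does absorb the growing source and yields $|D|=O(\varepsilon_0 e^{-\eta r_{\partial T}})$ for all $\eta>0$. But ``integration gives the same bound on $h_r-h_r^{hyp}$'' hides the real difficulty. Writing $E=(h^{hyp})^{-1}h-I$, one finds $\partial_r E = 2D + 2ED + 2[E,S^{hyp}]$; a crude Gr\"onwall using only $\|S^{hyp}\|\leq\coth 1$ would produce amplification at rate $\sim 2\coth 1$ and fail for small $\eta$. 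What actually makes the step work is that, in the eigenframe of $S^{hyp}$, the commutator carries the factor $\coth r-\tanh r=2/\sinh(2r)$, which is \emph{integrable} on $[1,\infty)$, so the total amplification is bounded independently of $r$. The paper's Jacobi-field formulation builds this cancellation in automatically, since $g_{ij}=\langle J_i,J_j\rangle$ is controlled directly by $|J_i-\bar J_i|=O(\varepsilon_0 e^{t}e^{\eta(t-R)})$ together with $|\bar J_i|=O(e^t)$, with no separate integration step. Either route is valid, but you should flag that the positivity of $S$ is \emph{not} what controls the metric step.
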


See \Cref{big O notation} for our convention of the \(O\)-notation. For cusps we have a slightly better estimate.

\begin{prop}\label{Existence of approximate cusp metric}For any \(\eta > 1\) there exists \(\varepsilon_0=\varepsilon_0(\eta)>0\) with the following property. 
Let \(M\) be a Riemannian 3-manifold satisfying %the cusp condition and 
the curvature decay condition (\ref{curvaturedecay}) and let \(C\) be a rank 2 cusp of \(M\). Then there exists a cusp metric \(g_{cusp}\) on \(C_{\rm small}\) so that for all \(x \in C_{\rm small}\) it holds
\[
	|g-g_{cusp}|_{C^2}(x)=O(\varepsilon_0 e^{-\eta r(x)}),
\]
where \(r(x)=d(x,\partial C_{\rm small})\).
\end{prop}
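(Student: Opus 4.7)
The plan is to work in horospherical coordinates associated to the Busemann function on the cusp. Let $\bar{b}$ be the Busemann function normalized so that $\partial C_{\rm small} = \{\bar{b} = 0\}$, so $r(x) = \bar{b}(x)$ on $C_{\rm small}$. The gradient $N = \nabla \bar{b}$ is a unit vector field whose flow foliates $C_{\rm small}$ by the horospheres $T(r)$. Using this flow, identify $C_{\rm small}$ diffeomorphically with $T^2 \times [0, \infty)$ and write $g = g_r + dr^2$ for a one-parameter family $g_r$ of Riemannian metrics on $T^2$.

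The main tool is the Riccati equation for the shape operator $S = -\nabla N$ of the horospheres, namely
\[
\nabla_N S + S^2 + R(\cdot, N) N = 0.
\]
In the hyperbolic model, $R(\cdot, N)N = -\mathrm{Id}$ and $S \equiv -\mathrm{Id}$ is the stable solution. The curvature decay condition (\ref{curvaturedecay}) gives $|R(\cdot, N)N + \mathrm{Id}| = O(\varepsilon_0 e^{-\eta r})$ in $C^0$, together with matching $C^1$- and $C^2$-bounds coming from the hypotheses on $\nabla R$ and $\nabla^2 R$. A standard Riccati comparison argument, using that the stable hyperbolic solution is repelling along the flow and that $S$ stays bounded, then yields $S + \mathrm{Id} = O(\varepsilon_0 e^{-\eta r})$ with analogous $C^2$-estimates on $S$ (where derivatives are taken both along $N$ and tangentially to the horospheres).

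Using the evolution equation $\partial_r g_r(X,Y) = -2 g_r(SX, Y)$, the rescaled family $\hat{g}_r := e^{2r} g_r$ satisfies
\[
\partial_r \hat{g}_r = -2 e^{2r} g_r\bigl((S + \mathrm{Id})\cdot, \cdot\bigr) = O(\varepsilon_0 e^{-\eta r})\, \hat{g}_r.
\]
Since $\eta > 1 > 0$, integrating from $r$ to $\infty$ shows that $\hat{g}_r$ converges to a limit metric $g_\infty$ on $T^2$ with $|\hat{g}_r - g_\infty|_{g_\infty} = O(\varepsilon_0 e^{-\eta r})$, and similar estimates for the first two derivatives after differentiating the evolution equation. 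The limit $g_\infty$ is flat: by the Gauss equation the Gauss curvature of $g_r$ satisfies
\[
K_{g_r} = \mathrm{sec}_M(T T(r)) + \det S\big|_{T T(r)} = (-1) + 1 + O(\varepsilon_0 e^{-\eta r}) = O(\varepsilon_0 e^{-\eta r}),
\]
so by conformal covariance $K_{\hat{g}_r} = e^{-2r} K_{g_r} = O(\varepsilon_0 e^{-(\eta+2)r}) \to 0$, hence $K_{g_\infty} = 0$.

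Set $g_{cusp} := e^{-2r} g_\infty + dr^2$, a cusp metric on $C_{\rm small}$. The difference $g - g_{cusp} = g_r - e^{-2r} g_\infty = e^{-2r}(\hat{g}_r - g_\infty)$ is purely tangential. When measured in the ambient metric $g$, each of the two cotangent contractions with $g^{-1}$ on the horosphere contributes a factor $e^{2r}$, exactly compensating the $e^{-2r}$ prefactor, and one obtains $|g - g_{cusp}|_g = |\hat{g}_r - g_\infty|_{g_\infty} \cdot (1 + O(\varepsilon_0)) = O(\varepsilon_0 e^{-\eta r})$. To upgrade this to the $C^2$-norm of the paper, defined via harmonic charts on the universal cover (see \Cref{Hölder norms without inj rad bound}), one differentiates the Riccati equation twice in both $N$ and tangential directions using the higher curvature bounds of (\ref{curvaturedecay}). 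The main obstacle will be tracking how the exponential rescalings in the horospherical picture interact with derivatives taken in harmonic coordinates of the universal cover, which live at a fixed hyperbolic scale; the assumption $\eta > 1$ is exactly what allows the tangential and radial scalings to be absorbed while still producing a uniform decay rate $\varepsilon_0 e^{-\eta r}$ at the $C^2$-level.
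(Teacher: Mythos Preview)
Your approach is correct and genuinely different from the paper's. You integrate the rescaled horosphere metrics $\hat g_r = e^{2r}g_r$ directly to a limit flat metric $g_\infty$ at infinity, using Riccati stability for the shape operator (the bounded solution, selected by the horosphere comparison bounds, inherits the forcing decay rate $e^{-\eta r}$). The paper instead fixes a deep horotorus $T(r_0)$, invokes an \emph{effective Uniformization Theorem} (Lemma~8.7, which in turn needs a Poincar\'e inequality on $T(r_0)$ and an injectivity radius bound for its universal cover, Lemma~8.8) to produce a flat metric $g_{Flat}^{(r_0)}$ close to the induced metric on $T(r_0)$, and then runs Jacobi field comparison (Lemma~8.4) outward from $T(r_0)$ to the hyperbolic model, finally letting $r_0 \to \infty$. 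Your route is more direct and avoids the uniformization detour entirely; the paper's route has the structural advantage of paralleling the tube case (Proposition~8.2), where the core geodesic serves as the reference set, with the deep horotorus playing the analogous role.

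Two small points. First, the Busemann function is a priori only $C^2$, so differentiating the Riccati equation twice to get $C^2$-control on $S$ needs a regularity input; the paper cites Shcherbakov \cite{Shcherbakov1983} for exactly this, and you should too. Second, there is a sign inconsistency in your setup: with $S = -\nabla N$ and $\partial_r g_r = -2g_r(S\cdot,\cdot)$, the hyperbolic cusp solution is $S = +\mathrm{Id}$, not $-\mathrm{Id}$, and the computation of $\partial_r \hat g_r$ should read $2\hat g_r((\mathrm{Id}-S)\cdot,\cdot)$. This is cosmetic and does not affect the substance of the argument.
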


The remainder of this section is devoted to the proof for \Cref{tubes are almost cusps - pinched curvature} and \Cref{Existence of approximate cusp metric}. The main idea for the proof is to compare the Jacobi equation in \(M\) with the one in the comparison space \(\bar{M}=\bbH^3\). To do so we require the following stability estimate for linear ODEs. 

\begin{lem}\label{Stability of ODEs}Let \(A,\bar{A}:[0,T] \to {\rm End}(\bbR^n)\) and \(b,\bar{b}:[0,T] \to \bbR^n\) be continuous, and assume that the following conditions are satisfied:
\begin{enumerate}[i)]
\item \(||A(t)||_{\rm op}\leq a\) and \(||\bar{A}(t)||_{\rm op}\leq \bar{a}\) for all \(t \in [0,T]\);
\item \(||A(t)-\bar{A}(t)||_{\rm op}=O\big(\varepsilon e^{\eta (t-T)} \big)\) for some \(\eta > a - \bar{a}\);
\item \(|\bar{b}(t)|=O\big(\bar{\beta} e^{\bar{\mu}t}\big)\) for some \(\bar{\mu} > \max\{a,\bar{a}\}\) and \(\bar{\beta} \geq 0\);
\item \(|b(t)-\bar{b}(t)|=O\big(\beta e^{\mu t} \big)\) for some \(\mu > a\) and \(\beta \geq 0\).
\end{enumerate}
Then the solutions \(y,\bar{y}:[0,T] \to \bbR^n\) of the ODEs
\[
	y^\prime(t)=A(t)y(t)+b(t) \quad \text{and} \quad \bar{y}^\prime(t)=\bar{A}(t)\bar{y}(t)+\bar{b}(t)
\]
with initial conditions \(y(0)=y_0\) and \(\bar{y}(0)=\bar{y}_0\) satisfy
\[
	|\bar{y}(t)-y(t)|=O\Big(|\bar{y}_0-y_0|e^{a t}+\varepsilon |\bar{y}_0|e^{\bar{a} t}e^{\eta (t-T)}+\varepsilon \bar{\beta} e^{\bar{\mu} t}e^{\eta (t-T)}+\beta e^{\mu t} \Big).
\]
\end{lem}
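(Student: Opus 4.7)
The plan is a straightforward application of Gronwall/variation of parameters, but careful bookkeeping is needed to match each exponent assumption with exactly the right integral.

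First, I would set $z(t) := \bar y(t) - y(t)$ and compute
\[
z'(t) = A(t) z(t) + F(t), \qquad F(t) := \big(\bar A(t) - A(t)\big)\bar y(t) + \big(\bar b(t) - b(t)\big),
\]
so that $z$ satisfies a linear inhomogeneous ODE driven by $A$. The key is to estimate $|F(t)|$, and for this I first need an a priori bound on $|\bar y(t)|$.

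Second, I would apply variation of parameters (or Gronwall) to the equation for $\bar y$ itself. Using $||\bar A||_{\rm op}\leq \bar a$ and $|\bar b(t)|\leq C\bar\beta e^{\bar\mu t}$ from (i) and (iii),
\[
|\bar y(t)| \leq |\bar y_0| e^{\bar a t} + C\bar\beta \int_0^t e^{\bar a(t-s)} e^{\bar\mu s}\,ds
= O\!\left(|\bar y_0|e^{\bar a t} + \bar\beta e^{\bar\mu t}\right),
\]
where the integral is finite and of the claimed order because $\bar\mu>\bar a$ by (iii).

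Third, I would substitute this into the bound for $F$ using (ii) and (iv):
\[
|F(t)| = O\!\left(\varepsilon e^{\eta(t-T)}\bigl(|\bar y_0|e^{\bar a t} + \bar\beta e^{\bar\mu t}\bigr) + \beta e^{\mu t}\right).
\]
Then variation of parameters for $z$, together with $||A||_{\rm op}\leq a$, yields
\[
|z(t)| \leq |z(0)| e^{a t} + \int_0^t e^{a(t-s)} |F(s)|\,ds,
\]
and the result follows by evaluating the three resulting integrals separately. The hypotheses are tailored precisely to each integral: the integral $\int_0^t e^{a(t-s)} \varepsilon e^{\eta(s-T)}|\bar y_0| e^{\bar a s}\,ds$ is bounded by $C\varepsilon|\bar y_0|e^{\bar a t}e^{\eta(t-T)}$ because $\bar a + \eta - a > 0$ by (ii); the integral involving $\bar\beta$ is bounded by $C\varepsilon\bar\beta e^{\bar\mu t}e^{\eta(t-T)}$ because $\bar\mu + \eta - a > 0$, which follows from (iii) and $\eta>0$; and the last integral $\int_0^t e^{a(t-s)}\beta e^{\mu s}\,ds$ is $O(\beta e^{\mu t})$ because $\mu>a$ by (iv). Adding these three bounds and the homogeneous term $|z_0|e^{a t}$ gives the stated estimate.

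The only conceptual subtlety — and the place where one must resist simplifying — is the separation of the two contributions to $|\bar y|$. One is tempted to absorb $|\bar y_0|e^{\bar a t}$ into $\bar\beta e^{\bar\mu t}$ since $\bar\mu>\bar a$, but this would lose the sharp initial-data dependence in the statement. Keeping them separate throughout is what produces the two distinct $\varepsilon$-terms in the conclusion. No deep obstacle is expected; the entire proof is essentially Gronwall plus three one-line exponential integrals, with the assumptions (ii)--(iv) ensuring convergence of exactly those integrals.
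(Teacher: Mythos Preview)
Your proof is correct and follows essentially the same route as the paper: both derive the equation $(\bar y - y)' = A(\bar y - y) + (\bar A - A)\bar y + (\bar b - b)$, bound $|\bar y|$ first via the same Gronwall/variation-of-parameters estimate, and then feed the resulting exponential bounds on the forcing term back into the same estimate for $\bar y - y$. One minor point: your justification ``$\bar\mu + \eta - a > 0$ follows from (iii) and $\eta>0$'' should instead invoke $\bar\mu > \bar a$ from (iii) together with $\eta > a - \bar a$ from (ii), since $\eta>0$ is not assumed.
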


The same estimates hold for second order linear ODEs \(v^{\prime \prime}(t)=R(t)v(t)\) if \(\bar{y}_0\) is replaced by \(|\bar{v}(0)|+|\bar{v}^\prime(0)|\) (similarly for \(|\bar{y}_0 - y_0|\)), and \(a\) is replaced by \(\max\{1,\max_t ||R(t)||_{\rm op} \}\) (similarly for \(\bar{a}\)). Indeed, substituting \(y=(v,v^\prime)\) the second order ODE is equivalent to
\[
	y^\prime(t)=
	\begin{pmatrix}
	0 & {\rm id}_{\bbR^n} \\
	R(t) & 0
	\end{pmatrix}
	y(t)	
	\quad \text{and} \quad y(0)=(v(0),v^\prime(0)),
\]
and it holds
\[
	\left| \left|\begin{pmatrix}
	0 & {\rm id}_{\bbR^n} \\
	R(t) & 0
	\end{pmatrix} \right| \right|_{\rm op}=\max\{1,||R(t)||_{\rm op}\}.
\]

\begin{proof}Consider a linear ODE
\[
	\chi^\prime(t)=\Sigma(t)\chi(t)+\xi(t),
\]
and assume 
\[
	\max_t||\Sigma(t)||_{\rm op} \leq \sigma \quad \text{ and } \quad |\xi(t)|\leq \sum_{i}\kappa_i e^{\lambda_i t} \, \text{ for some } \lambda_i> \sigma \text{ and }\kappa_i \geq 0.
\]
Then it holds 
\begin{equation}\label{ODE stability 1}
	|\chi(t)| \leq  |\chi(0)|e^{\sigma t}+ \sum_{i}(\lambda_i-\sigma)^{-1}\kappa_i e^{\lambda_i t}.
\end{equation}
Indeed, this is a straightforward consequence of inequality (4.9) on page 56 of \cite{ODEHartman}.

Compute 
\[
	(\bar{y}-y)^\prime(t)=A(t)(\bar{y}-y)(t)+(\bar{A}-A)(t)\bar{y}(t)+(\bar{b}-b)(t).
\]
As \(\bar{\mu} > \bar{a}\) we may apply (\ref{ODE stability 1}) to \(\bar{y}^\prime=\bar{A}\bar{y}+\bar{b}\) to obtain \(|\bar{y}|(t)=\big(|\bar{y}_0|e^{\bar{a}t}+\bar{\beta}e^{\bar{\mu}t} \big)\). Hence \(|(\bar{A}-A)\bar{y}|(t)=O\big(\varepsilon |\bar{y}_0|e^{\bar{a}t}e^{\eta (t-T)}+\varepsilon \bar{\beta} e^{\bar{\mu}t}e^{\eta (t-T)} \big)\) due to condition \(ii)\). Since \(\bar{a}+\eta,\bar{\mu}+\eta,\mu > a\) we can apply (\ref{ODE stability 1}) to the ODE satisfied by \(\bar{y}-y\). Therefore,
\[
	|\bar{y}-y|(t)=O\Big(|\bar{y}_0-y_0|e^{at}+\varepsilon |\bar{y}_0|e^{\bar{a} t}e^{\eta (t-T)}+\varepsilon \bar{\beta} e^{\bar{\mu}t}e^{\eta (t-T)}+\beta e^{\mu t} \Big).
\]
This completes the proof.
\end{proof}

We now come to the construction of \(g_{cusp}\) on \(T_{\rm small} \setminus N_{1}(\gamma)\). 
As an intermediate step we first construct another metric \(g_{tube}\) on \(T_{\rm small}\). 

Let $\gamma$ be the core curve of $T$ and 
let \(\tilde{\gamma} \subseteq \tilde{M}\) be a lift of \(\gamma\). Denote by 
\(\varphi:\tilde{M} \to \tilde{M}\) the deck transformation corresponding to \([\gamma] \in \pi_1(M)\) which preserves \(\tilde{\gamma}\) and acts on it as a translation. 
To the element $\varphi$ we associate its \emph{translation length} which is the length of $\gamma$, and 
the \emph{rotation angle}, defined by parallel transport of the orthogonal complement of $\gamma^\prime$ in $TM\vert \gamma$.  
Let \(\beta \subseteq \mathbb{H}^3\) be a geodesic, 
and let \(\psi:\mathbb{H}^3 \to \mathbb{H}^3\) be an orientation preserving loxodromic isometry, with axis $\beta$ and the 
same translation length and the same 
rotation angle as \(\varphi\). 

Define \(\hat{M}:=\tilde{M}/\langle \varphi \rangle\) and \(\hat{\mathbb{H}}^3:=\mathbb{H}^3/\langle \psi \rangle\).
Using the normal exponential maps for $\tilde \gamma$ in $\tilde M$ and for $\beta$ in 
$\mathbb{H}^3$, we see that there is 
%an induced diffeomorphism \(\hat{M} \xrightarrow{\cong} \hat{\mathbb{H}}^3\), and in particular 
a diffeomorphism \(\hat{M} \supseteq N_R(\hat{\gamma})  \xrightarrow{\cong} N_R(\hat{\beta}) \subseteq \mathbb{H}^3/ \langle \psi \rangle\) of the full distance tori. The projection \(\hat{M} \to M\) also induces a diffeomorphism \(\hat{M} \supseteq N_{R}(\hat{\gamma}) \xrightarrow{\cong} N_{R}(\gamma) \subseteq M\) when \(R\) is the radius of \(T_{\rm small}\) because \(T_{\rm small} \subseteq M_{\rm thin}\) by \Cref{containedinthin}. Note that \(N_R(\gamma)=T_{\rm small}\) by the definition of the radius \(R\) of \(T_{\rm small}\). The \textit{tube metric} \(g_{tube}\) on \(T_{\rm small}\) is the pullback of the hyperbolic metric on \(N_R(\hat{\beta})\) via the diffeomorphism \(T_{\rm small} \xrightarrow{\cong} N_R(\hat{\beta}) \subseteq \hat{\mathbb{H}}^3\).

The \textit{cusp metric} \(g_{cusp}\) on \(T_{\rm small} \setminus N_1(\gamma)\cong \partial T_{\rm small} \times [0,R-1]\) 
is the metric 
\[g_{cusp}=e^{-2r}g_{Flat}+dr^2,\] where \(g_{Flat}\) is the flat metric on \(\partial T_{\rm small}\) induced 
by the tube metric \(g_{tube}\). One can easily check by explicit calculations that
\begin{equation}\label{tubes are almost cusps}
	|g_{tube}-g_{cusp}|_{C^2}(x)=O\big(e^{-2r_{\gamma}(x)} \big)
\end{equation}
for all \(x \in T_{\rm small} \setminus N_1(\gamma)\). 
%Combining (\ref{approximation by tube metric}) and (\ref{tubes are almost cusps}) yields 

We next verify that $g_{cusp}$ has the properties stated in \Cref{tubes are almost cusps - pinched curvature}.

%We now sketch the proof of (\ref{approximation by tube metric}). As the argument is basically just an application of standard ODE estimates, we will not go into too much detail.

\begin{proof}[Proof of \Cref{tubes are almost cusps - pinched curvature}]
By (\ref{tubes are almost cusps}), it suffices to show that 
\begin{equation}\label{approximation by tube metric}
|g-g_{tube}|_{C^2}(x)=O\big(\varepsilon_0 e^{-\eta r_{\partial T}(x)}\big)
\end{equation}
for all \(x \in T_{\rm small}\setminus N_1(\gamma)\). Let \(\gamma\) be the core geodesic of the tube \(T\). It suffices to prove the estimates in the universal cover. 
Let \(\tilde{\gamma} \subseteq \tilde{M}\) be a lift of \(\gamma\). Choose parallel unit vector fields 
 \(\nu_1,\nu_2\) along \(\tilde{\gamma}\) so that \(\tilde{\gamma}^\prime,\nu_1,\nu_2\) is a positively oriented orthonormal frame along \(\tilde{\gamma}\). Define a map \(\varphi:\bbR \times \bbR_{\geq 0} \times \bbR \to \tilde{M}\) by
\[
	\varphi(s,t,\theta):=\exp_{\tilde{\gamma}(s)}\big(t\nu_\theta(s) \big),
\] 
where \(\nu_\theta(s):=\cos(\theta)\nu_1(s)+\sin(\theta)\nu_2(s)\). We think of \(\varphi\) as a \(2\)-dimensional variation of geodesics. The main idea is that the variational fields of \(\varphi\) solve Jacobi equations, and
so one can use \Cref{Stability of ODEs} to compare the situation with the comparison space \(\bbH^3\).

We fix some notation. Let \(V\) be a vector field along \(\varphi\), i.e., a map \(V:\bbR \times \bbR_{\geq 0}\times \bbR \to T\tilde{M}\) so that \(\pi \circ V=\varphi\). 
Denote by \(D_tV\) the vector field whose value at \((s_0,t_0,\theta_0)\) equals the covariant derivative of \(V\) along the curve \(t \to \varphi(s_0,t,\theta_0)\) at \(t=t_0\). The vector fields \(D_s V\) and \(D_\theta V\) are defined analogously. For example, for \(t=0\) the vector \(D_\theta V(s_0,0,\theta_0)\) is just the usual derivative of the curve \(\theta \to V(s_0,0,\theta)\) in \(T_{\tilde{\gamma}(s_0)}\tilde{M}\). We will also write \(()^\prime\) for \(D_t\).

Fix \(s_0\) and \(\theta_0\), and consider the geodesic \(\sigma(t)=\varphi(s_0,t,\theta_0)\) 
with \(\sigma(0)=\tilde{\gamma}(s_0)\) and \(\sigma^\prime(0)=\nu_{\theta_0}(s_0)\). Let \(E_1,E_2,E_3\) be a parallel orthonormal frame 
along \(\sigma\) so that \(E_1(0)=\tilde{\gamma}^\prime(s_0)\), \(E_2(t)=\sigma^\prime(t)\), and \(E_3(0)=\nu_{\theta_0}^\perp(s_0):=-\sin(\theta_0)\nu_1(s_0)+\cos(\theta_0)\nu_2(s_0)\). 

Let \(i\) be either the \(s\)- or the \(\theta\)-coordinate. The restriction of the variational field \(J_i:=\partial_i\varphi\) to \(\sigma\) is a Jacobi field, that is, it solves the Jacobi equation
\[
	J_i^{\prime \prime}(t)+R(t)J_i(t)=0,
\]
where \(R(t)=R(\, \cdot \, ,\sigma^\prime(t))\sigma^\prime(t)\) and \(J_i^{\prime \prime}=D_tD_tJ_i\).

We do the same set-up in the comparison space \(\bar{M}=\bbH^3\). Using the orthonormal frames \((E_i)_{i=1}^3\) we can think of \(J_i\) resp. \(R\) as a curve resp. a family of symmetric matrices in \(\bbR^3\). Similarly, \((\bar{E}_i)_{i=1}^3\) can be used to think of \(\bar{J}_i\) resp. \(\bar{R}\) as a curve resp. a family of symmetric matrices in \(\bbR^3\). Hence it makes sense to write \(J_i(t)-\bar{J}_i(t)\) and \(R(t)-\bar{R}(t)\). The curvature decay condition (\ref{curvaturedecay}) translates to (for \(t \in [0,R]\))
\[
	||R(t)-\bar{R}(t)||_{\rm op}\leq \varepsilon_0e^{\eta (t-R)},
\]
where \(R\) is the radius of \(T_{\rm small}\), i.e., the distance of the core geodesic \(\gamma\) to \(\partial T_{\rm small}\). Observe that \(\bar{a}:=\max\{1,\max_t ||\bar{R}(t)||_{\rm op}\}=1\) and \(a:=\max\{1,\max_t ||R(t)||_{\rm op}\} \leq 1+\varepsilon_0\). So condition \(ii)\) of \Cref{Stability of ODEs} is satisfied if \(\varepsilon_0 < \eta\). 

Note that \(J_i\) and \(\bar{J}_i\) have the same initial conditions. Therefore, invoking \Cref{Stability of ODEs} with \(\bar{\beta}=\beta=0\) yields (for \(t \in [0,R]\))
\begin{equation}\label{variational field estimate - 1st order}
	|J_i(t)-\bar{J}_i(t)|, |J_i^\prime(t)-\bar{J}_i^\prime(t)|=O\big(\varepsilon_0 e^te^{\eta(t-R)}\big).
\end{equation}
Note \(|\bar{J}_i|(t), |\bar{J}_i^\prime|(t)=O( e^t )\) due to (\ref{ODE stability 1}), and by (\ref{variational field estimate - 1st order}) the same holds for \(J_i\).

Let \(j\) be either the \(s\)- or the \(\theta\)-coordinate. Consider the variational fields \(D_jJ_i\) restricted to \(\sigma\). A straightforward calculation shows that \(D_jJ_i\) solves a inhomogeneous Jacobi equation, that is, 
\[
	(D_jJ_i)^{\prime \prime}+R(t)D_jJ_i(t)=b(t)
\]
for a vector field \(b\) along \(\sigma\). In fact, one can show
\begin{align*}
	b(t)=&-(\nabla R)(J_j,\sigma^\prime,J_i,\sigma^\prime)-(\nabla R)(J_i,\sigma^\prime,\sigma^\prime,J_j) \\
	&-R(J_j^\prime,\sigma^\prime)J_i-2R(J_j,\sigma^\prime)J_i^\prime-R(J_i,J_j^\prime)\sigma^\prime-R(J_i,\sigma^\prime)J_j^\prime.
\end{align*}
The analogous statements hold in the comparison space. We again use the parallel orthonormal frames \((E_i)_{i=1}^3\) and \((\bar{E}_i)_{i=1}^3\) to view \(D_jJ_i\), \(\bar{D}_j\bar{J}_i\), \(b\), and \(\bar{b}\) as curves in \(\bbR^3\). Then (\ref{curvaturedecay}), (\ref{variational field estimate - 1st order}) and the growth estimates \(|J_i^{(\prime)}|(t),|\bar{J}_i^{(\prime)}|(t)=O(e^t)\) can be used to show \(|\bar{b}(t)|=O(e^{2t})\) and \(|b(t)-\bar{b}(t)|=O\big(\varepsilon_0 e^{2t}e^{\eta (t-R)}\big)\). One also computes \(D_jJ_i(0)=0=\bar{D}_j\bar{J}_i(0)\), and \(|(D_jJ_i)^\prime(0)-(\bar{D}_j\bar{J}_i)^\prime(0)|=O(e^{- \eta R})\) by using the curvature decay condition (\ref{curvaturedecay}). Then one can again use \Cref{Stability of ODEs} to obtain
\begin{equation}\label{variational field estimate - 2nd order}
	|D_jJ_i(t)-\bar{D}_j\bar{J}_i(t)|, |(D_jJ_i)^\prime(t)-(\bar{D}_j\bar{J}_i)^\prime(t)|=O\big(\varepsilon_0 e^{2t}e^{\eta(t-R)}\big).
\end{equation}
Again note that \(|\bar{D}_j\bar{J}_i|(t),|(\bar{D}_j\bar{J}_i)^\prime|(t)=O(e^{2t})\) due to (\ref{ODE stability 1}) and \(|\bar{b}|(t)=O(e^{2t})\). By (\ref{variational field estimate - 2nd order}) the same estimate holds for \(D_jJ_i\).

Finally, let \(k\) be either the \(s\)- or \(\theta\)-coordinate. Using arguments similar as for \(D_jJ_i\) (that is, inhomogeneous Jacobi equation, and \Cref{Stability of ODEs}) one can show
\begin{equation}\label{variational field estimate - 3rd order}
	|D_kD_jJ_i(t)-\bar{D}_k\bar{D}_j\bar{J}_i(t)|, |(D_kD_jJ_i)^\prime(t)-(\bar{D}_k\bar{D}_j\bar{J}_i)^\prime(t)|=O\big(\varepsilon_0 e^{3t}e^{\eta(t-R)}\big).
\end{equation}

The estimates (\ref{variational field estimate - 1st order}), (\ref{variational field estimate - 2nd order}), and (\ref{variational field estimate - 3rd order}) imply the desired estimate on \(|g-g_{tube}|_{C^2}\) in (\ref{approximation by tube metric}). Indeed, for \(m > 0\) define \(c:\{s,t,\theta\}^m \to \bbN\) as \(c(i_1,...,i_m):=\# \{ u \, | \, i_u \neq t\}\). Then it follows from (\ref{variational field estimate - 1st order}), (\ref{variational field estimate - 2nd order}), and (\ref{variational field estimate - 3rd order}) that
\begin{align}
	\quad \quad \quad \quad |g_{ij}-\bar{g}_{ij}|&=O\big(\varepsilon_0 e^{c(i,j)t}e^{\eta (t-R)}\big) \label{partial derivatives of difference of metrics - 0th order} \\
	\quad \quad |\partial_k g_{ij}-\partial _k \bar{g}_{ij}|&=O\big(\varepsilon_0 e^{c(i,j,k)t}e^{\eta (t-R)}\big) \label{partial derivatives of difference of metrics - 1st order} \\
		\, \, |\partial_l\partial_k g_{ij}-\partial_l\partial _k \bar{g}_{ij}|&=O\big(\varepsilon_0 e^{c(i,j,k,l)t}e^{\eta (t-R)}\big)  \label{partial derivatives of difference of metrics - 2nd order}
\end{align}
for all \(i,j,k,l \in \{s,t,\theta\}\). Note 
\(
	\bar{g}=\cosh^2(t)ds^2+dt^2+\sinh^2(t)d\theta^2
\),
and so in particular the matrix \((\bar{g}_{ij})\) is diagonalised. So for any \((0,m)\)-tensor \(T\) 
\[
	|T|_{\bar{g}}^2=\sum_{i_1,...,i_m}(T_{i_1...i_m})^2(\bar{g}_{i_1i_1})^{-1}\cdot ... \cdot (\bar{g}_{i_mi_m})^{-1}.
\]
For \(t \geq 1\), \(\cosh(t)\) and \(\sinh(t)\) agree with \(e^t\) up to a uniform multiplicative constant. So \((\bar{g}_{i_1i_1})^{-1}\cdot ... \cdot (\bar{g}_{i_mi_m})^{-1}=O\big(e^{-2c(i_1,...,i_m)t}\big)\), and thus (\ref{partial derivatives of difference of metrics - 0th order}), (\ref{partial derivatives of difference of metrics - 1st order}), and (\ref{partial derivatives of difference of metrics - 2nd order}) imply (\ref{approximation by tube metric}).
\end{proof}

We now come to the sketch of proof for \Cref{Existence of approximate cusp metric}. 
This is a bit more involved than the case of a tube. The main idea is to pull back the conformal structure of the distance tori at infinity for the construction of the cusp metric
and use the fact that up to scale, a flat metric on $T^2$ is determined by the conformal structure it defines.
To produce a cusp metric that is close to the given metric, 
we establish an effective version of the Uniformization Theorem. 

%We start with the effective version of the Uniformization Theorem, and then show how this can be used to produce the cusp metric in \Cref{Existence of approximate cusp metric}. 
The classical Uniformization Theorem states 
that for any Riemannian metric \(g\) on the two torus \(T^2\), there exists a flat metric \(\bar{g}\) on \(T^2\) and a function \(\rho:T^2 \to \bbR\) so that \(g=e^{\rho}\bar{g}\). The flat metric is unique only up to a multiplicative constant, and hence \(\rho\) is only defined up to an additive constant. The following definition should be thought of as the choice of a canonical flat metric in the conformal class of \(g\).

\begin{definition}\label{def - associated flat metric} Let \(g\) be a metric on \(T^2\). The \textit{associated flat metric} \(g_{Flat}\) is the unique flat metric conformal to \(g\) 
such that the corresponding function \(\rho\) satisfies
\[
	\int_{T^2} \rho \, d\mathrm{vol}_g=0.
\]
\end{definition}

We are now in a position to state the \textit{effective Uniformization Theorem}. It says that if the given metric is "almost" a flat metric, 
then \(\rho\) is small. Here the function \(\rho\) is understood to define the associated flat metric.

\begin{lem}\label{effective uniformization C^0} There exist constants \(\delta_0>0\) and \(C>0\) with the following property. If \(g\) is a metric on \(T^2\) such that
\[
	\mathrm{diam}(T^2,g) \leq 1 \quad \text{and} \quad |\sec(g)| \leq \delta
\]
for some \(\delta \leq \delta_0\), then
\[
	|\rho(x)| \leq C\delta \quad \text{for all }x \in T^2.
\]
\end{lem}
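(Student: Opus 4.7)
The plan is to exploit the conformal change formula in dimension $2$. Since $g = e^\rho g_{Flat}$ with $K_{g_{Flat}} = 0$, a standard computation yields
\[
\Delta_{g_{Flat}} \rho = -2 e^\rho K_g, \qquad \text{equivalently} \qquad \Delta_g \rho = -2 K_g.
\]
This turns the problem into a linear Poisson equation on $(T^2,g)$ with right-hand side of size $O(\delta)$. By Gauss--Bonnet, $\int_{T^2} K_g\, d\mathrm{vol}_g = 0$, so the RHS has mean zero, consistent with the solvability of the equation for mean-zero $\rho$.

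The proof then proceeds in two stages. First, one establishes a \emph{rough} a priori bound $\|\rho\|_{C^0}\leq C_0$ depending only on $\delta_0$. I would obtain this by a contradiction/compactness argument: suppose there are metrics $g_n$ on $T^2$ with $\diam(T^2,g_n)\leq 1$, $|\sec(g_n)|\leq \delta_n \to 0$, and $\|\rho_n\|_{C^0} \to \infty$. The Cheeger--Gromov compactness for surfaces with bounded diameter and bounded curvature (using harmonic coordinates, as in \cite{CGT82}, lifted to the universal cover to neutralize possible collapse) allows extraction of a pointed subsequential limit that is flat. On the limit, the normalization $\int \rho_n\, d\mathrm{vol}_{g_n} = 0$ together with the fact that $g_{Flat, n}$ is the \emph{unique} flat metric in the conformal class of $g_n$ satisfying this normalization forces $\rho_n \to 0$ along the limit, contradicting the blow-up assumption. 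A key observation here is that although the flat torus $(T^2, g_{Flat})$ may be thin, the uniform bi-Lipschitz comparison of $g$ with a true flat metric (from the compactness argument) still produces a uniform bound on $\rho_n$.

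With the rough bound in hand, $g$ and $g_{Flat}$ are $e^{C_0}$-bi-Lipschitz, so $\diam(T^2, g_{Flat})$ is bounded above and $\vol(T^2, g_{Flat})$ is comparable to $\vol(T^2, g)$. The second stage is then the \emph{effective} bound: since $|e^\rho K_g|\leq e^{C_0}\delta$, elliptic theory on the flat torus applied to $\Delta_{g_{Flat}}\rho = -2e^\rho K_g$ gives control on $\rho$ up to the additive constant. Concretely, write $\rho = \tilde\rho + \bar\rho$ where $\bar\rho = \frac{1}{\vol_{g_{Flat}}(T^2)}\int \rho \, d\mathrm{vol}_{g_{Flat}}$ and $\tilde\rho$ has mean zero w.r.t.\ $g_{Flat}$. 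The Poincaré inequality and Moser iteration on the equation $\Delta_{g_{Flat}}\tilde\rho = -2e^\rho K_g$ (whose RHS is $O(\delta)$) yield $\|\tilde\rho\|_{C^0}\leq C\delta$, where the elliptic constants are uniformly controlled by first lifting to $\bbR^2$ and using the isothermal description on a fundamental domain.

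Finally, $\bar\rho$ is determined by the normalization $\int \rho\, d\mathrm{vol}_g = 0$, which translates to
\[
\bar\rho \int_{T^2} e^{\tilde\rho}\, d\mathrm{vol}_{g_{Flat}} \;=\; -\int_{T^2} \tilde\rho\, e^{\tilde\rho}\, d\mathrm{vol}_{g_{Flat}}.
\]
Since $\|\tilde\rho\|_{C^0}\leq C\delta\leq 1$, the prefactor $\int e^{\tilde\rho}$ is comparable to $\vol_{g_{Flat}}(T^2)$, so $|\bar\rho|\leq C\|\tilde\rho\|_{C^0}\leq C\delta$. Combining yields $|\rho|\leq C\delta$ everywhere. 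The main obstacle is the rough bound in Stage 1: ruling out pathological degeneration of the conformal class (thinness of $g_{Flat}$) while keeping $g$ of bounded diameter and nearly flat. This is what forces the compactness argument to be performed on the universal cover so that no injectivity radius hypothesis is needed.
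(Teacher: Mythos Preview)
Your two-stage strategy is plausible but considerably more circuitous than the paper's argument, and the part you flag as the ``main obstacle'' (Stage 1) is precisely where the paper's approach gains its leverage by avoiding the detour altogether.

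The paper works entirely with the metric $g$, never with $g_{Flat}$. Using the equation $\Delta_g \rho = 2K_g$ (with the convention $\Delta=\nabla^*\nabla$), which is \emph{linear} in $\rho$, and the fact that the normalization $\int_{T^2}\rho\, d\vol_g=0$ is exactly the orthogonality needed for the $g$-Poincar\'e inequality, one gets the effective bound in a single pass. Gromov's lower bound $\lambda_1(g)\geq e^{-2(n-1)D}$ for $\Ric\geq -(n-1)$ and $\diam\leq D$ gives a uniform Poincar\'e constant depending only on $\diam(T^2,g)\leq 1$; testing the equation against $\rho$ then yields directly $\bigl(\strokedint \rho^2\, d\vol_g\bigr)^{1/2}\leq C\delta$. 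For the $C^0$ upgrade, the paper lifts to the universal cover and invokes a separate lemma showing $\inj(\tilde T^2,\tilde g)\geq \pi/\sqrt{\delta}$ (via a Gauss--Bonnet and Jordan curve argument), applies De Giorgi--Nash--Moser there, and compares $\|\tilde\rho\|_{L^2(B(\tilde x_0,1))}$ with the averaged $L^2$-norm on $T^2$ by covering the ball with fundamental regions. No rough a priori bound is ever needed.

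By contrast, your route forces the equation into its nonlinear form $\Delta_{g_{Flat}}\rho=-2e^\rho K_g$ and requires uniform control of the $g_{Flat}$-geometry, which is why Stage 1 becomes necessary. But the compactness argument you sketch for Stage 1 is delicate in the collapsed case: the normalization is a global condition on the torus, not visible on the pointed limit of universal covers, and extracting the contradiction would itself require uniform Poincar\'e and local elliptic estimates --- essentially the paper's ingredients. So your approach is not wrong, but it reintroduces through the back door the work that the direct $g$-based argument performs cleanly in the front.
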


%To not distract from the main ideas, 
We postpone the proof of \Cref{effective uniformization C^0}, and show next 
how to use \Cref{effective uniformization C^0} to obtain \Cref{Existence of approximate cusp metric}. 
%As with the proof of (\ref{approximation by tube metric}), we do not go into detail with the ODE estimates.

\begin{proof}[Proof of \Cref{Existence of approximate cusp metric}]
For each \(r \geq 0\) we denote by \(T(r)\) the torus in \(C_{\rm small}\) all of whose points have distance \(r\) to \(\partial C_{\rm small}\). For the moment fix \(r_0>0\). Note that by the curvature decay condition (\ref{curvature decay}) it holds \(|\sec+1|,|\nabla R|,|\nabla^2 R| \leq \varepsilon_0 e^{-\eta r_0}\) for all points \(x\) that lie further inside \(C_{\rm small}\) than \(T(r_0)\). The results of \cite{Shcherbakov1983} show that under this condition, 
a Busemann function associated to the rank 2 cusp \(C\) is of class \(C^4\), with controlled norm of the derivatives, 
and the shape operator \(\mathcal{H}_{r_0}\) of the horotorus \(T(r_0)\) satisfies \(|\mathcal{H}_{r_0}-{\rm id}|_{C^2} = O\big(\varepsilon_0 e^{-\eta r_0}\big)\). This implies that \(|K|_{C^2} =O\big(\varepsilon_0 e^{-\eta r_0}\big)\) for the Gau\ss{} curvature \(K\) of \(T(r_0)\). As \(T(r_0) \subseteq C_{\rm small}\) we can invoke \Cref{effective uniformization C^0} to conclude \(|\rho|=O(\varepsilon_0 e^{-\eta r_0})\), where \(\rho\) on \(T(r_0)\) is given by \Cref{def - associated flat metric}. By Exercise 2 in Chapter 4.3 of \cite{doCarmoSurfaces} it holds \(\Delta \rho=2K\), where \(\Delta\) is the Laplace operator
with respect to the restriction
of the metric $g$ to \(T(r_0)\).
Therefore, Schauder estimates imply \(||\rho||_{C^2} = O\big(\varepsilon_0 e^{-\eta r_0}\big)\). 

Let \(g_{Flat}^{(r_0)}\) be the flat metric on \(T(r_0)\)
defined by $e^\rho g_{Flat}^{(r_0)}=g\vert T(r_0)$
(see \Cref{def - associated flat metric}).
Let \(\psi:\bbR^2 \to T(r_0)\) be the universal Riemannian covering map for
 \(g_{Flat}^{(r_0)}\). Define \(\varphi:\bbR^2 \times [0,r_0] \to C_{\rm small}\) by
\[
	\varphi(x,t):=\exp_{\psi(x)}(-t\partial_r),
\]
where \(\partial_r\) is the radial vector field in $C_{\rm small}$ 
pointing to infinity. Fix \(x_0 \in \bbR^2\), and consider the geodesic \(\sigma(t)=\varphi(x_0,t)\). 

As in the proof of \Cref{tubes are almost cusps - pinched curvature} consider the variational fields \(J_i:=\partial_{x^i}\varphi\), \(D_jJ_i\), and \(D_kD_jJ_i\) along \(\sigma\) (for the notation see the proof of \Cref{tubes are almost cusps - pinched curvature}). These satisfy (in)homogeneous Jacobi equations.
%By the estimate on \(||\rho||_{C^2}\) and \(|\mathcal{H}_{r_0}-{\rm id}|_{C^2}\)
We claim that 
the initial conditions \(J_i^{(\prime)}(0)\), \((D_jJ_i)^{(\prime)}(0)\), \((D_kD_jJ_i)^{(\prime)}(0)\) are \(\varepsilon_0 e^{-\eta r_0}\)-close to those in the comparison space \(\bar{M}=\bbH^3\),
where we write \(V^{(\prime)}\) to denote \(V\) or \(V^\prime\).
%(recall that in \(\bbH^3\) the shape operator of horospheres \(\bar{\mathcal{H}}\) is the identity).
We show this for \(J_i, J_i^\prime\) and \(D_jJ_i\), the other cases being similar.
%\textcolor{red}{(I also added the following paragraph)}

As in the proof of \Cref{tubes are almost cusps - pinched curvature} we use a parallel orthonormal frame \((E_i)_{i=1}^3\) along \(\sigma\) to view all vector fields as curves in \(\bbR^3\), and all tensors as a family of tensors on \(\bbR^3\). Choose the parallel orhonormal frame so that \(E_1(0)=e^{-\rho/2}\frac{\partial}{\partial \psi^1}\), \(E_2(0)=e^{-\rho/2}\frac{\partial}{\partial \psi^2}\) and \(E_3(0)=\sigma^\prime(0)\), so that \(J_i(0)=e^{\rho/2}E_i(0)\) for \(i=1,2\). In the comparison space \(\bar{M}=\bbH^3\) it holds \(\bar{J}_i(0)=\bar{E}_i(0)\). Using \(|e^{z}-1| \leq 2|z|\) for \(|z|\) small, we obtain \(|J_i-\bar{J}_i|(0) \leq 2 |\rho|=O(e^{-\eta r_0})\). Moreover, \(J_i^\prime(0)=\nabla_{J_i(0)}\nu=\mathcal{H}_{\nu}\big(J_i(0)\big)\), where \(\mathcal{H}_{\nu}\) is the shape operator of \(T(r_0)\)
with respect to the unit normal \(\nu:=-\partial_r\).
Now the shape operator $\bar{\mathcal{H}}$ of horospheres in
$\bbH^3$ is the identity and therefore  
\(|\mathcal{H}-\bar{\mathcal{H}}|=O(e^{-\eta r_0})\)and \(|J_i^\prime-\bar{J}_i^\prime|(0)=O(e^{- \eta r_0})\).
To see that \(D_jJ_i(0)\) is close to \(\bar{D}_j\bar{J}_i(0)\) observe that 
\[
	D_jJ_i(0)={\rm I\!I}_{\nu}\big(J_i(0),J_j(0)\big)+\nabla^{T(r_0)}_{J_i(0)}J_j(0),
\]
where \(\nabla^{T(r_0)}\) is the Levi-Civita connection of \(T(r_0)\).
Note that \(J_i(0)=\partial_{\psi^i}\). The Christoffel symbol \(\Gamma_{ji}^k\) of \(\nabla^{T(r_0)}\) is \(\frac{1}{2}\big(\delta_{kj}\partial_i\rho+\delta_{ik}\partial_j\rho+\delta_{ij}\partial_k \rho \big)\) because \(g=e^{\rho} \psi_{\ast}g_{\bbR^2}\) on \(T(r_0)\). Therefore, the estimates on the shape operator and \(||\rho||_{C^2}\) imply \(|\bar{D}_j\bar{J}_i-D_jJ_i|(0)=O(e^{-\eta r_0})\).

As in the proof of \Cref{tubes are almost cusps - pinched curvature} the curvature decay condition (\ref{curvaturedecay}) reads (for \(t \in [0,r_0]\))
\[
	|R-\bar{R}|(t),\, |\nabla R-\bar{\nabla}\bar{R}|(t),\, |\nabla^2 R-\bar{\nabla}^2\bar{R}|(t) \leq \varepsilon_0 e^{\eta(t-r_0)}.
\]
Since all initial conditions are \(e^{-\eta r_0}\)-close to the ones in the comparison situation, one can, exactly as in the proof of \Cref{tubes are almost cusps - pinched curvature}, iteratively use \Cref{Stability of ODEs} to conclude that the metric \(g\) is close to the metric \(\bar{g}\) in the comparison situation (with error \(O\big(\varepsilon_0 e^{\eta (t-r_0)}\big)\)). Note that in \(\bbH^3\) the hyperbolic metric is of the form \(\bar{g}=e^{-2r}g_{Flat}+dr^2\) for a flat metric \(g_{Flat}\) on a reference horosphere.
As \(d(\sigma(t),\partial C_{\rm small})=r_0-t\) we have
\[
	\big|g-g_{cusp}^{(r_0)}\big|_{C^2}(x)=O\big( \varepsilon_0e^{-\eta r(x)}\big) \quad \text{whenever }\, r(x) \leq r_0,
\] 
where \(r(x)=d(x,\partial C_{\rm small})\), and \(g_{cusp}^{(r_0)}:=e^{-2(r-r_0)}g_{Flat}^{(r_0)}+dr^2\). 

Now choose a sequence \(r_0 \to \infty\). After passing to a subsequence we may assume \(g_{cusp}^{(r_0)} \to g_{cusp}\) in the pointed \(C^2\)-topology. This limit is the desired cusp metric.
\end{proof}

It remains to prove \Cref{effective uniformization C^0}. 
For its proof we will need the fact that for metrics \(g\) on \(T^2\) as stated in \Cref{effective uniformization C^0}, the injectivity radius of the 
universal cover \((\tilde{T}^2,\tilde{g})\) is bounded from below by a universal constant. This follows from the next lemma.

\begin{lem}\label{inj radius bound for universal cover}Let \(v > 0\), and let \(g\) be a Riemannian metric on \(T^2\) so that
\[
	\vol_g(T^2) \leq v \quad \text{and} \quad \sec(g) \leq \delta 
\]
for some \( 0 < \delta \leq \frac{2\pi}{v}\). Then
\(
	\inj(\tilde{T}^2,\tilde{g}) \geq \frac{\pi}{\sqrt{\delta}}.
\)
\end{lem}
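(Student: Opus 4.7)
The plan is to prove $\mathrm{inj}(\tilde{T}^2,\tilde g) \geq \pi/\sqrt\delta$ by combining Rauch's comparison theorem (to bound the conjugate radius) with a Gauss--Bonnet argument in the universal cover (to rule out short geodesic loops). First, since $\sec_{\tilde g} \leq \delta$, the Rauch comparison theorem (equivalently, the Morse--Schoenberg theorem) shows that along every unit-speed geodesic in $\tilde T^2$ there are no conjugate points within distance $\pi/\sqrt\delta$; hence the conjugate radius at every point of $\tilde T^2$ is at least $\pi/\sqrt\delta$, and it remains to rule out geodesic loops shorter than $2\pi/\sqrt\delta$.

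Suppose for contradiction that $\mathrm{inj}(\tilde T^2,\tilde g) = r_0 < \pi/\sqrt\delta$. The injectivity radius is a continuous, $\mathbb Z^2$-invariant function on $\tilde T^2$, so it descends to the compact torus $T^2$ and attains its infimum at some point $\tilde p_0$. Since $r_0 < \pi/\sqrt\delta \leq \mathrm{conj}(\tilde p_0)$, Klingenberg's lemma, applied at a point realising the minimum of $\mathrm{inj}$, produces a smooth closed geodesic $\gamma \subset \tilde T^2$ passing through $\tilde p_0$ of length exactly $2r_0$. If $\gamma$ had a self-intersection at some point $\tilde q$, one of the two sub-loops at $\tilde q$ would have length at most $r_0$, and would give a geodesic loop at $\tilde q$ violating the minimality of $r_0$; hence $\gamma$ is a simple closed curve in $\tilde T^2 \cong \mathbb R^2$, and bounds an embedded topological disk $D$. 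Applying the Gauss--Bonnet theorem to $D$ and using that $\gamma$ is a geodesic yields
\[
2\pi \;=\; 2\pi\chi(D) \;=\; \int_D K\, dA \;\leq\; \delta\cdot\mathrm{Area}(D),
\]
so $\mathrm{Area}(D) \geq 2\pi/\delta \geq v$ by the hypothesis $\delta \leq 2\pi/v$.

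It remains to derive a contradiction from this area bound by descending to $T^2$. Since $\tilde T^2$ is simply connected, $\gamma$ is null-homotopic, and hence so is its projection $\pi(\gamma)$. If $\pi(\gamma)$ is a \emph{simple} closed curve in $T^2$, it bounds an embedded disk $D' \subset T^2$; Gauss--Bonnet on $D'$ (whose boundary is geodesic) again gives $\mathrm{Area}(D') \geq 2\pi/\delta \geq v$, but $T^2 \setminus \overline{D'}$ is a nonempty open subset of positive area, forcing $\mathrm{Area}(D') < \mathrm{Vol}(T^2,g) \leq v$, a contradiction. The hardest part of the argument will be handling the case when $\pi(\gamma)$ is \emph{not} simple: then there exist distinct points $\tilde q_1,\tilde q_2 \in \gamma$ with $\tilde q_2 = \phi(\tilde q_1)$ for some nontrivial $\phi \in \mathbb Z^2$, so that $\gamma$ and its translate $\phi(\gamma)$ are two distinct simple closed geodesics in $\tilde T^2$ of length $< 2\pi/\sqrt\delta$ whose bounded disks overlap. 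My plan for this case is to pass to an innermost bigon bounded by minimising arcs of $\gamma$ and $\phi(\gamma)$ (which must intersect transversely by ODE uniqueness), and apply Gauss--Bonnet to this bigon, exploiting that the two exterior angles at the vertices are strictly positive together with the area and curvature bounds to force a contradiction; making this step rigorous is the principal technical obstacle.
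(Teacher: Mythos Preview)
Your argument matches the paper's exactly through the reduction to a simple closed geodesic $\gamma$ in $\tilde T^2$ and through the case where $\pi(\gamma)$ is simple in $T^2$. The genuine gap is the non-simple case, and your proposed bigon Gauss--Bonnet argument does not obviously close it: Gauss--Bonnet on an innermost bigon $B$ yields $\int_B K = 2\pi - \theta_1 - \theta_2 > 0$ and hence $\mathrm{Area}(B)\geq (2\pi-\theta_1-\theta_2)/\delta$, but this lower bound on the area of a region in $\tilde T^2$ (which has infinite area) does not by itself contradict either $\vol_g(T^2)\leq v$ or the minimality of $r_0$. There is no evident upper bound on $\mathrm{Area}(B)$ to play against it.

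The paper instead \emph{rules out} the non-simple case entirely, and the mechanism is variational rather than Gauss--Bonnet. If $\pi(\gamma)$ is not simple, there is a nontrivial deck transformation $\varphi$ with $\tilde\gamma\cap\varphi(\tilde\gamma)\neq\emptyset$; the two curves meet transversely (else $\varphi$ would preserve the compact set $\tilde\gamma$, impossible for a free proper $\mathbb Z^2$-action), and since both are null-homotopic their mod-$2$ intersection number is zero, so they meet in at least two points $\bar x,\bar y$. Choose subarcs $c\subset\tilde\gamma$ and $c^\varphi\subset\varphi(\tilde\gamma)$ from $\bar x$ to $\bar y$, each of length $\leq \tfrac12\ell(\tilde\gamma)=r_0$. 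If both are strictly shorter than $r_0$, then $\exp_{\bar x}$ fails to be injective on $B(0,r_0)$, contradicting $\inj(\tilde T^2)=r_0$. If one has length exactly $r_0$, the transversality at $\bar y$ furnishes a vector $v$ making acute angles with both $c'$ and $(c^\varphi)'$; varying the endpoints along $v$ through geodesics (possible since $\bar y$ is not conjugate to $\bar x$) and applying the first variation formula strictly shortens both arcs simultaneously, again producing two distinct geodesics from $\bar x$ to a nearby point of length $<r_0$ and the same contradiction. Thus the projection must be simple, and your (correct) simple-case argument finishes the proof.
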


In fact, the same proof applies to all closed orientable surfaces.

\begin{proof}Assume \(\inj(\tilde{T}^2,\tilde{g}) < \frac{\pi}{\sqrt{\delta}}\). Choose \(\tilde{x}_0 \in \tilde{T}^2\) so that \(\inj_{\tilde{g}}(\tilde{x}_0)=\inj(\tilde{T}^2,\tilde{g})\). 
By the curvature assumption, the conjugate radius is not smaller than  \(\frac{\pi}{\sqrt{\delta}}\). 
Thus there exists a periodic geodesic \(\tilde{\gamma}\) through \(\tilde{x}_0\) with \(\ell(\tilde{\gamma})=2\inj(\tilde{T}^2,\tilde{g})\) (see Proposition 2.12 in Chapter 13 of \cite{doCarmoRG}). It suffices to prove the following claim.

\begin{claim}\normalfont The projected geodesic \(\gamma:=\pi \circ \tilde{\gamma}\) has no self-intersections, that is, the restriction of the universal covering projection \(\pi:\tilde{T}^2 \to T^2\) to \(\tilde{\gamma}\) is injective.
\end{claim}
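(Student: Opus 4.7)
The plan is to argue by contradiction: suppose there exist distinct $\tilde p_1, \tilde p_2 \in \tilde\gamma$ with $\pi(\tilde p_1) = \pi(\tilde p_2) =: p$, and let $\sigma$ be the unique nontrivial deck transformation with $\sigma(\tilde p_1) = \tilde p_2$. First I would observe that the shorter arc $\tilde\alpha$ of $\tilde\gamma$ joining $\tilde p_1$ to $\tilde p_2$ has length at most $L/2 := \mathrm{inj}(\tilde T^2, \tilde g) < \pi/\sqrt{\delta}$. Since this bound is strictly below the conjugate radius, $\exp_{\tilde p_1}$ is a diffeomorphism on the ball of radius $L/2$, so $\tilde\alpha$ is in fact the unique minimizing geodesic from $\tilde p_1$ to $\tilde p_2 = \sigma(\tilde p_1)$ in $\tilde T^2$, and projects to an essential geodesic loop $\alpha$ at $p$ representing $\sigma \in \pi_1(T^2,p)$.

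Next I would split into two cases depending on whether $\sigma$ preserves $\tilde\gamma$. If $\sigma(\tilde\gamma) = \tilde\gamma$, then because the action of $\pi_1(T^2)$ on $\tilde T^2$ is free, $\sigma$ acts on $\tilde\gamma \cong S^1$ as a fixed-point-free orientation-preserving isometry, hence as a rotation by some angle $\theta$. A rational $\theta/(2\pi)$ would give a positive power $\sigma^n$ fixing $\tilde p_1$, contradicting the freeness of the action of the nontrivial element $\sigma^n$ of $\pi_1(T^2) = \mathbb{Z}^2$. An irrational $\theta/(2\pi)$ would make the orbit $\{\sigma^k(\tilde p_1)\}_{k \in \mathbb{Z}}$ dense in $\tilde\gamma$; since every orbit point projects to $p$, continuity of $\pi$ would then force $\pi(\tilde\gamma) = \{p\}$, contradicting $\ell(\gamma) = L > 0$.

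In the remaining case $\sigma(\tilde\gamma) \neq \tilde\gamma$, the two distinct closed geodesics $\tilde\gamma$ and $\sigma(\tilde\gamma)$ of the same length $L$ both pass through $\tilde p_2$ with differing tangent directions. Here I would consider the piecewise geodesic path obtained by concatenating $\sigma^{-1}(\tilde\alpha)$ and $\tilde\alpha$, which runs from $\sigma^{-1}(\tilde p_1)$ through $\tilde p_1$ to $\sigma(\tilde p_1) = \tilde p_2$ with a genuine corner at $\tilde p_1$ (guaranteed by the mismatch of tangent directions). Smoothing the corner produces a strictly shorter smooth curve between $\sigma^{-1}(\tilde p_1)$ and $\sigma(\tilde p_1)$, which forces $d(\tilde p_1, \sigma^2 \tilde p_1) < 2\,\ell(\tilde\alpha) \leq L$. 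Iterating this construction, together with the minimality of $\mathrm{inj}_{\tilde g}(\tilde x_0) = L/2$ and the upper conjugate-radius bound which forces uniqueness of short minimizers, should yield the required contradiction via a descent on the displacement function of a well-chosen power of $\sigma$.

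The main obstacle is closing Case B cleanly, that is, converting the smoothing-by-corner construction into a genuine contradiction with the minimality property of $\tilde x_0$. The key technical input is the interplay between the conjugate-radius bound (which guarantees that minimizers of length below $\pi/\sqrt{\delta}$ are unique and vary smoothly with endpoints) and the standard variational principle that a piecewise geodesic with a genuine corner can always be strictly shortened while fixing its endpoints; the challenge is to iterate this carefully so that the shortened configurations remain within the injectivity balls where these sharp statements apply.
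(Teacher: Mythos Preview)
Your Case~A is fine, though the paper dispatches it in one line: if $\sigma(\tilde\gamma)=\tilde\gamma$, then every power $\sigma^n$ satisfies $\sigma^n(\tilde\gamma)\cap\tilde\gamma=\tilde\gamma\neq\emptyset$, and this already contradicts proper discontinuity of the deck action on the compact set $\tilde\gamma$. No rotation-angle analysis is needed.

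Your Case~B, however, has a genuine gap, and the descent idea you sketch does not close it. Smoothing the corner at $\tilde p_1$ only yields $d(\sigma^{-1}\tilde p_1,\sigma\tilde p_1)<L$, i.e.\ $d(\tilde p_1,\sigma^2\tilde p_1)<L$. But this is a statement about distances between \emph{distinct} points of $\tilde T^2$, not about geodesic loops in $\tilde T^2$; it says nothing against $\inj(\tilde T^2,\tilde g)=L/2$. Iterating the construction produces shorter and shorter segments between points in the $\langle\sigma\rangle$-orbit of $\tilde p_1$, but these never become loops in $\tilde T^2$, and there is no lower bound on such orbit-distances to violate. The minimality of $\inj_{\tilde g}(\tilde x_0)$ is simply not engaged by this process.

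The idea you are missing is that $\tilde\gamma$ and $\sigma(\tilde\gamma)$ must intersect in at least \emph{two} points. Both curves are null-homotopic in $\tilde T^2$, so their mod~$2$ intersection number vanishes; since they intersect transversely (Case~B) and nontrivially (by hypothesis), the number of intersections is even and hence $\geq 2$. Choosing two intersection points $\bar x,\bar y$ and the shorter subarcs $c\subset\tilde\gamma$, $c^\varphi\subset\sigma(\tilde\gamma)$ joining them gives two \emph{distinct} geodesics from $\bar x$ to $\bar y$, each of length $\leq L/2$. If both are strictly shorter than $L/2$ this immediately forces $\inj_{\tilde g}(\bar x)<L/2$, a contradiction. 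The borderline case $\ell(c)=L/2$ or $\ell(c^\varphi)=L/2$ is handled by a first-variation argument: the transverse intersection at $\bar y$ lets you perturb the common endpoint along a direction in which both arcs shorten strictly, reducing to the previous case. This is the mechanism that actually connects the configuration to the global injectivity-radius bound.
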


We first show how this claim can be used to finish the proof, and then we prove the claim. As \(\tilde{\gamma}\) is a closed curve in the universal cover, \(\gamma\) is null-homotopic in \(T^2\). Moreover, by the claim \(\gamma\) has no self-intersections. Therefore  by the Jordan curve theorem,
there is a closed disc \(\Omega \subseteq T^2\) with \(\partial \Omega =\gamma\). Using the local Gau\ss{}-Bonnet theorem, the curvature bound, and the volume bound we get
\[
	2\pi=2\pi\chi(\Omega)=\int_{\Omega}\sec \, d\vol_g \leq \delta \vol_g(\Omega) < \delta \vol_g(T^2) \leq \delta v,
\]
which contradicts \(\delta \leq \frac{2\pi}{v}\). For the application of the local Gau\ss{}-Bonnet theorem observe that \(\partial \Omega\) is a geodesic without corners.

So it remains to prove the claim. Arguing by contradiction, we assume that the claim is wrong. Then there exists a non-trivial deck transformation \(\varphi:\tilde{T}^2 \to \tilde{T}^2\) so that for \(\tilde{\gamma}^\varphi:=\varphi(\tilde{\gamma})\) it holds
\[
	\tilde{\gamma} \cap \tilde{\gamma}^\varphi \neq \emptyset.
\] 
Observe that \(\tilde{\gamma}\) and \(\tilde{\gamma}^\varphi\) intersect each other transversely. Indeed, otherwise we have 
\(\varphi(\tilde{\gamma})=\tilde{\gamma}^\varphi=\tilde{\gamma}\) by uniqueness of geodesics. But then all powers of \(\varphi\) fix the compact set \(\tilde{\gamma}\), which contradicts that the Deck group is torsion-free and acts properly on \(\tilde{T}^2\).

It is well known that the \(\mathrm{mod} \, 2\) intersection number \(i_{\bbZ/2\bbZ}(\cdot, \cdot)\) is homotopy-invariant. Hence \(i_{\bbZ/2\bbZ}(\tilde{\gamma},\tilde{\gamma}^\varphi)=0\) because \(\tilde{\gamma}\) and \(\tilde{\gamma}^\varphi\) are both null-homotopic as \(\tilde{T}^2\) is simply connected. So \(\tilde{\gamma}\) and \(\tilde{\gamma}^\varphi\) intersect in an even number of points. By assumption their intersection is non-trivial, and hence they intersect in at least two distinct points.

Choose two distinct points \(\bar{x},\bar{y} \in \tilde{\gamma}\cap \tilde{\gamma}^\varphi\).
Choose subarcs \(c\) of \(\tilde{\gamma}\) and \(c^\varphi\) of \(\tilde{\gamma}^\varphi\) from \(\bar{x}\) to \(\bar{y}\) in such a way that \(\ell(c),\ell(c^\varphi) \leq \frac{1}{2}\ell(\tilde{\gamma})=\frac{1}{2}\ell(\tilde{\gamma}^\varphi)\). If \(c\) and \(c^\varphi\) are both strictly shorter than \(\frac{1}{2}\ell(\tilde{\gamma})\), then \(\exp_{\bar{x}}\) is \textit{not} injective on \(B\big(0,\frac{1}{2}\ell(\tilde{\gamma})\big) \subseteq T_{\bar{x}}\tilde{T}^2\), and so \(\inj(\bar{x}) < \frac{1}{2}\ell(\tilde{\gamma})\). But this contradicts \(\ell(\tilde{\gamma})=2\inj(\tilde{T}^2,\tilde{g})\). The argument in the general case is similar. By a variational argument we construct two geodesics with the same endpoints that have strictly smaller length than \(c\) and \(c^\varphi\). The same reasoning will then lead to a contradiction.

Since \(c\) and \(c^\varphi\) intersect transversely at \(\bar{y}\),
there exists \(v \in T_{\bar{y}}\tilde{T}^2\) with \(\langle v,c^\prime\rangle< 0\)
and \(\langle v, (c^\varphi)^\prime\rangle < 0\). Choose a curve \(c_v:(-\varepsilon,\varepsilon) \to \tilde{T}^2\) with \(c_v(0)=\bar{y}\)
and \(c_v^\prime(0)=v\). Since \(\bar{y}\) is not conjugate to \(\bar{x}\) along \(c\) or \(c^\varphi\), there exist variations through geodesics \(\Gamma,\Gamma^\varphi:(-\varepsilon,\varepsilon)\times [0,1]\) of \(c\) and \(c^\varphi\) with 
\[
	\Gamma(s, 0)=\bar{x}=\Gamma^\varphi(s, 0) \quad \text{and} \quad \Gamma(s, 1)=c_v(s)=\Gamma^\varphi(s,1)
\]
for all \(s \in (-\varepsilon,\varepsilon)\) (here we reparametrize \(c\) and \(c^\varphi\) to be defined on \([0,1]\)). As \(c\) is a geodesic, the first variation formula shows
\[
  \left.\frac{d}{ds}\right|_{s=0}\ell(\Gamma(s, \cdot))=
\langle \partial_s \Gamma(0,1),\partial_t \Gamma (0,1)\rangle =\langle v,c^\prime \rangle < 0.
\]
Similarly, \(\left.\frac{d}{ds}\right|_{s=0}\ell(\Gamma^\varphi(s, \cdot))< 0\). So
\(\ell(\Gamma(s_0,\cdot)) < \ell(c)\) and \(\ell(\Gamma^\varphi(s_0,\cdot)) < \ell(c^\varphi)\) for \(s_0 > 0\) sufficiently small. Then \(\Gamma(s_0,\cdot)\) and \(\Gamma^\varphi(s_0,\cdot)\) are geodesics with the same starting point \(\bar{x}\) and the same endpoint \(c_v(s_0)\), and both are strictly shorter than \(\frac{1}{2}\ell(\tilde{\gamma})\). Therefore, \(\exp_{\bar{x}}\) is not injective on \(B\big(0,\frac{1}{2}\ell(\tilde{\gamma})\big) \subseteq T_{\bar{x}}\tilde{T}^2\), and hence \(\inj(\bar{x}) < \frac{1}{2}\ell(\tilde{\gamma})=\inj(\tilde{T}^2,\tilde{g})\) which is a contradiction. This finishes the proof of the claim.
\end{proof}

We remainder of this section is devoted to the proof of the effective Uniformization Theorem.

\begin{proof}[Proof of \Cref{effective uniformization C^0}]\textbf{Step 1 (averaged \(L^2\)-estimate):} Inequality (\(\ast \ast\)) on p. 520 of \cite{GromovMetricStructures07} states the following. If \(M\) is a closed Riemannian \(n\)-manifold 
of diameter \(D\) satisfying \(\Ric \geq -(n-1)\), then the smallest positive eigenvalue \(\lambda_1\) of the Laplace operator satisfies \(\lambda_1 \geq e^{-2(n-1)D}\). 

By definition, we have ${\rm diam}(T^2,g)\leq 1,\, \vert {\rm sec}(g)\vert \leq \delta \leq 1$ and  
 \(\int_{T^2}\rho \, d\mathrm{vol}_g=0\). So \(\rho\) is perpendicular to the space of constant functions in \(H^1(T^2)\). Thus we have the Poincaré-inequality
\[
	\int_{T^2}\rho^2 d\vol_g \leq C\int_{T^2}|\nabla \rho|^2 \, d\vol_g
\]
for \(C=e^2\). It follows from Exercise 2 in Chapter 4.3 of \cite{doCarmoSurfaces} that
\[
	\Delta \rho =2K,
\]
where \(K\) is the Gau\ss{} curvature of \(g\) and \(\Delta\) is the Laplace operator of \(g\). 
Recall that by our sign convention \(\Delta=-{\rm tr}(\nabla^2)\). 
We denote the average integral \(\frac{1}{\vol_g(T^2)}\int_{T^2} d\vol_g\) by \(\strokedint_{T^2} d\vol_g\). Testing \(\Delta \rho =2K\) with \(\rho\) gives
\begin{align*}
	\strokedint_{T^2}|\nabla \rho|^2 d\vol_g=& 2\strokedint_{T^2}\rho K d\vol_g \\
	\leq & 2\delta \strokedint_{T^2}|\rho| d\vol_g \\
	\leq & 2\delta \left(\strokedint_{T^2}|\rho|^2 d\vol_g \right)^{\frac{1}{2}} \\
	\leq & 2\delta C^{\frac{1}{2}} \left(\strokedint_{T^2}|\nabla \rho|^2 d\vol_g \right)^{\frac{1}{2}},
\end{align*}
where we used the curvature assumption, the Cauchy-Schwarz and the Poincaré inequality. Thus
\begin{equation}\label{effective unif - eq1}
	\left(\strokedint_{T^2}|\rho|^2 d\vol_g \right)^{\frac{1}{2}} \leq C^{\frac{1}{2}}\left(\strokedint_{T^2}|\nabla \rho|^2 d\vol_g \right)^{\frac{1}{2}} \leq 2\delta C.
\end{equation}

\textbf{Step 2 (\(C^0\)-estimate):} Lifting \(\Delta \rho=2K\) to the universal cover we get \(\Delta \tilde{\rho}=2\tilde{K}\), where \(\tilde{\rho}\) and \(\tilde{K}\) are the lifts of \(\rho\) and \(K\) to \(\tilde{T}^2\). Curvature bounds and diameter bounds imply upper 
volume bounds by the Bishop-Gromov volume comparison theorem. Hence it follows from \Cref{inj radius bound for universal cover} that for \(\delta_0\) small enough it holds \(\inj(\tilde{T}^2,\tilde{g}) \geq i_0\) for a universal constant \(i_0 > 0\). So we can apply \Cref{Nash-Moser}, and conclude that for all \(\tilde{x}_0 \in \tilde{T}^2\) it holds
\begin{equation}\label{effective unif - eq2}
	|\tilde{\rho}|(\tilde{x}_0) \leq C \Big(||\tilde{\rho}||_{L^2(B(\tilde{x}_0,1))}+||\tilde{K}||_{C^0(\tilde{T}^2)} \Big)
\end{equation}
for a universal constant \(C\). Choose a partition of \(\tilde{T}^2\) into fundamental regions for the 
action of the fundamental group of $T^2$ whose diameters do not exceed  $2{\rm diam}(T^2,g)$.
For each \(\tilde{x} \in \tilde{T}^2\) denote by \(\mathcal{F}_{\tilde{x}}\) the element of the partition containing \(\tilde{x}\). Fix \(\tilde{x}_0 \in \tilde{T}^2\). Define
\[
	\hat{\mathcal{F}}:=\bigcup_{\tilde{x} \in B(\tilde{x}_0,1)}\mathcal{F}_{\tilde{x}}.
\]
Observe that \(\hat{\mathcal{F}}\) is a finite union of fundamental regions, and thus
\begin{equation}\label{effective unif - eq3}
	\strokedint_{\hat{\mathcal{F}}} \tilde{\rho}^2 \, d\vol_{\tilde{g}}=\strokedint_{T^2}\rho^2 \, d\vol_g,
\end{equation}
where \(\strokedint_U\) denotes the averaged integral \(\frac{1}{\vol(U)}\int_U\). Since \(\diam(\mathcal{F}_{\tilde{x}}) \leq 2\diam(T^2,g)\leq 2\) we also have 
\(B(\tilde{x}_0,1) \subseteq \hat{\mathcal{F}} \subseteq B(\tilde{x}_0,3)\). 
From the lower bound $i_0$ for the injectivity radius of $\tilde T$ we deduce that 
\(\vol\left(\hat{\mathcal{F}}\right)\) is bounded from below and from above by a universal constant.  Remembering \(B(\tilde{x}_0,1) \subseteq \hat{\mathcal{F}}\) this implies
\begin{equation}\label{effective unif - eq4}
	||\tilde{\rho}||_{L^2(B(\tilde{x}_0,1))}^2 \leq ||\tilde{\rho}||_{L^2(\hat{\mathcal{F}})}^2=O\left(\strokedint_{\hat{\mathcal{F}}}\tilde{\rho}^2 \, d\vol_{\tilde{g}} \right).
\end{equation}
Combining (\ref{effective unif - eq1})-(\ref{effective unif - eq4}) and using the curvature assumption \(|K| \leq \delta\) yields
\[
	|\tilde{\rho}|(\tilde{x}_0) \leq C\delta
\]
for a universal constant \(C\). This finishes the proof.
\end{proof}

%\newpage

\section{Invertibility of \(\mathcal{L}\) without a lower injectivity radius bound}\label{Sec: L without bound on inj}

%\subsection{The small part \(M_{small}\)}\label{Subsec: small part}

%At the end of \Cref{Sec: Counterexamples} we gave a recap of the construction of the counterexamples from \Cref{Counterexample}. The crucial point is that we changed the conformal structure of the horotori in a region where these horotori have extremely small diameter. In this section we give a precise definition of this region and we call it the \textit{small part} of the manifold. The norm \(||\cdot||_{2,\lambda;\ast}\) that will be defined in \Cref{Subsec: various norms} is designed to detect such conformal changes of the horotori in the small part of the manifold. We will also see that outside of \(M_{small}\) we can obtain \(C^0\)-estimates (see \Cref{A-priori estimate away from the small part}).

\subsection{Statement and overview}\label{Subsec: L without bound on inj - statement}

%From the counterexamples presented in \Cref{Sec: Counterexamples} we know that \Cref{A-priori estimate for L} will no longer hold exactly as stated when there is no lower bound on \(\inj(M)\). These counterexamples have \(C^0\)-pinched sectional curvature, but very far out in \(M_{thin}\) the sectional curvature is not constant \(-1\). One possibility to rule out these counterexamples is to ask for an exponential decay of \(|\sec +1|\) when traveling inside \(M_{thin}\). In fact, under such an assumption the operator \(\mathcal{L}=\frac{1}{2}\Delta_L+(n-1)\mathrm{id}\) can be inverted. This is expressed in the following proposition.

For the proof of \Cref{Pinching without inj radius bound - introduction} we need to analyze the invertibility of the elliptic operator \(\mathcal{L}=\frac{1}{2}\Delta_L+2\,\mathrm{id}\) in complete Riemannian \(3\)-manifolds of finite volume that do \textit{not} have a positive lower bound on the injectivity radius. The examples of \Cref{Sec: Counterexamples} show that \Cref{A-priori estimate for L} can no longer hold in this more general situation. Also recall from the discussion at the end of \Cref{Sec: Counterexamples} that the counterexamples were constructed by slowly changing the 
conformal structure of the level tori \(T(r)\) contained in \(M_{\rm small}\). 
To exclude these examples, we shall use the geometric control of tubes and cusps established in \Cref{Sec:model metrics} for manifolds $M$ whose sectional curvature approaches 
constant curvature $-1$ exponentially fast in $M_{\rm small}$.  

We introduce new norms $\Vert \cdot \Vert_{2,\lambda;*}$ and $\Vert \cdot \Vert_{0,\lambda}$ 
for smooth sections of the bundle ${\rm Sym}^2(T^*M)={\rm Sym}(T^*M\otimes T^*M)$ 
which are inspired by 
the work of Bamler \cite{Bamler2012}, and we use these norms to prove the following invertibility result. At this point we only mention that these norms depend on certain parameters \(\alpha,\lambda,\delta,r_0,b,\bar{\epsilon}\). Recall that \(R\) denotes the Riemann curvature endomorphism.

\begin{prop}\label{Invertibility of L - non-compact case}For all \(\alpha \in (0,1)\), \(\Lambda \geq 0\), \(\lambda \in (0,1)\), \(\delta \in (0,2)\), \(r_0 \geq 1\), \(b > 1\) and \(\eta \geq 2+\lambda\) there exist constants \(\varepsilon_0\), \(\bar{\epsilon}_0\) and \(C >0\) with the following property. Let \(M\) be a Riemannian \(3\)-manifold of finite volume that satisfies 
\[
	|\sec+1|\leq \varepsilon_0, \quad ||\nabla \Ric||_{C^0(M)} \leq \Lambda ,
\]
and
\begin{equation}\label{curvature decay}
	\max_{\pi \subseteq T_xM}|\mathrm{sec}(\pi)+1|, \, |\nabla R|(x), \, |\nabla^2R|(x) \leq \varepsilon_0 e^{-\eta d(x,\partial M_{\rm small})} \quad \text{for all } \, x \in M_{\rm small}.
\end{equation}
Then the operator
\[
	\mathcal{L}: \Big( C_{\lambda}^{2,\alpha}\big( {\rm Sym}^2(T^*M)\big), ||\cdot||_{2 ,\lambda;\ast}\Big) \longrightarrow \Big( C_{\lambda}^{0,\alpha}
	\big( {\rm Sym}^2(T^*M)\big), ||\cdot||_{0,\lambda} \Big)
\]
is invertible and 
\[
	||\mathcal{L}||_{\rm op}, ||\mathcal{L}^{-1}||_{\rm op} \leq C,
\]
where \(||\cdot||_{2,\lambda;\ast}\) and \(||\cdot||_{0,\lambda}\) are the norms defined in (\ref{Hybrid 2-norm - exponential}) and (\ref{Hybrid 0-norm - exponential}) with respect to any \(\bar{\epsilon} \leq \bar{\epsilon}_0\), and $C_\lambda^{k,\alpha}\big({\rm Sym}^2(T^*M)\big)$ is the corresponding Banach space of sections of ${\rm Sym}^2(T^*M)$.
\end{prop}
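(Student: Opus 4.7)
The strategy mirrors that of \Cref{Invertibility of L}: first establish an a priori estimate $\|h\|_{2,\lambda;\ast} \leq C\|\mathcal{L}h\|_{0,\lambda}$, then deduce surjectivity by an approximation and limiting argument. The a priori estimate itself splits into two regimes, corresponding to the two components of the hybrid norms.

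Away from $M_{\rm small}$, the analysis from \Cref{Section - Invertibiliy of L} essentially carries over. The weighted integral estimate (\ref{integral1}) remains valid in the finite-volume setting by \Cref{integration by parts works in finite volume case} (the required integration by parts being justified by Gaffney's theorem). \Cref{A-priori estimate away from the small part}, and its generalization \Cref{A-priori estimate with finite distance away from the small part}, supply the pointwise $C^0$-bound that in the proof of \Cref{A-priori estimate for L} was obtained from the Nash--Moser estimate via the injectivity radius hypothesis; the extra factor $e^{bd(x,M_{\rm thick})/2}$ there is precisely what the norm $\|\cdot\|_{0,\lambda}$ is designed to absorb. Combining this $C^0$-control with the local Schauder estimate of \Cref{Schauder for tensors} (applied in the universal cover, cf.\ \Cref{Hölder norms without inj rad bound}) then controls the Hölder part of $\|h\|_{2,\lambda;\ast}$ on $M\setminus M_{\rm small}$.

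The novel part of the argument concerns the small part of $M$, where the counterexamples of \Cref{Sec: Counterexamples} live and where no $C^0$-bound of the type used in \Cref{A-priori estimate for L} is available. Here I adopt an ODE approach, following \cite{Bamler2012}. The curvature decay assumption $\eta \geq 2+\lambda$ together with \Cref{tubes are almost cusps - pinched curvature} and \Cref{Existence of approximate cusp metric} guarantees that on each component of $M_{\rm small}$ (with a unit neighborhood of the core geodesic removed, in the tube case) the metric $\bar g$ is $C^2$-close to a model hyperbolic cusp metric $g_{\rm cusp} = e^{-2r}g_{\rm Flat} + dr^2$, with error of order $\varepsilon_0 e^{-\eta r}$. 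Writing $\mathcal{L} = \mathcal{L}_{\rm cusp} + \mathcal{E}$, where $\mathcal{E}$ has coefficients of order $e^{-\eta r}$, and decomposing $h$ into Fourier modes in the flat torus directions, the equation $\mathcal{L}_{\rm cusp}h = f$ reduces to a family of second-order linear ODEs in $r$ whose fundamental solutions grow or decay at explicit exponential rates. The condition $\lambda<1$ selects a growth regime that is strictly subexponential compared with the fastest growing mode, so a weighted $e^{\lambda r}$ estimate with constants independent of the cusp length can be obtained. The stability lemma \Cref{Stability of ODEs} (applied with $\eta > 2\lambda$) then shows that the perturbation $\mathcal{E}$ is absorbable. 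Matching these interior bounds with the control on $\partial M_{\rm small}$ coming from the first regime yields the full a priori estimate.

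Surjectivity is then deduced via the standard scheme. One exhausts $M$ by manifolds $M_R$ obtained by truncating each component of $M_{\rm small}$ along the level set $\{d(\cdot,\partial M_{\rm small}) = R\}$, solves $\mathcal{L}h = f$ on $M_R$ with suitable boundary conditions by the Lax--Milgram argument used in the proof of \Cref{Invertibility of L} (noting that the Poincaré inequality of \Cref{Poincare inequality} and the coercivity estimates are purely local), and passes to the limit $R\to\infty$ using the a priori bound together with standard elliptic compactness; the weighted structure of the norms guarantees that the limit lies in $C^{2,\alpha}_\lambda$. The main obstacle is the ODE analysis in $M_{\rm small}$: separation of variables produces a coupled system rather than a scalar equation because the Lichnerowicz Laplacian mixes the tensor components, and one must additionally handle the interpolation between the tube and cusp regimes as well as the zero Fourier mode, where the characteristic exponents of the ODE are genuinely critical with respect to the weight $\lambda$.
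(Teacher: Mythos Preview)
Your outline correctly identifies the two regimes and that the ODE analysis in $M_{\rm small}$ is the heart of the matter, but there is a genuine gap precisely at the point you flag as an ``obstacle'': the zero Fourier mode and its critical characteristic exponent. The system (\ref{Lh=f in a cusp in coordinates}) for the $e^{2r}h_{ij}$ components has roots $0$ and $2$, so constant $e^{2r}h_{ij}$ (that is, trivial Einstein variations) are bounded solutions of $\mathcal L_{\rm cusp}h=0$ which do \emph{not} decay with the weight $e^{-\lambda r}$. Consequently no estimate of the form $\|h\|_{C^0_\lambda}\le C\|\mathcal L h\|_{0,\lambda}$ can hold, and your claim that ``a weighted $e^{\lambda r}$ estimate with constants independent of the cusp length can be obtained'' fails as stated. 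The resolution is not an ODE trick but rather the structure of the $*$-norm on the source side: one explicitly subtracts a canonical trivial Einstein variation (\Cref{Canonical choice of trivial Einstein variation}, \Cref{trivial Einstein variation and a priori estimate in a cusp}) and proves the a~priori bound for the remainder $\bar h$. The paper carries this out by a contradiction argument (\Cref{C^0 estimate from exponential hybrid norm} and the proof in \Cref{Subsec: a priori estimates in non-compact case}), taking rescaled limits in covers and invoking \Cref{Einstein variations with decay are trivial} and \Cref{Einstein variations with growth estimate} to force limiting tensors to be trivial Einstein variations, which then contradicts the minimality built into the canonical decomposition. Your proposal nowhere uses the $*$-structure, and without it the argument cannot close.

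Two further points of comparison. First, the paper does not do a full Fourier decomposition on the torus; it uses only the averaging operator $\hat{\cdot}$ (\Cref{Properties of averaging operator}) and controls $h-\hat h$ by the exponential decay of the torus diameter (property $v)$), which sidesteps the coupled system you anticipate for higher modes. Second, for surjectivity the paper does not truncate to compact pieces $M_R$ with boundary conditions; it applies Lax--Milgram directly on the finite-volume manifold to obtain an $H^1_0$ weak solution, and then proves a regularity statement (\Cref{Regularity of L}) showing such a solution automatically lies in $C^{2,\alpha}_\lambda$. Your exhaustion scheme could perhaps be made to work, but controlling the boundary conditions uniformly in $R$ and showing the limit lands in $C^{2,\alpha}_\lambda$ would in any case require the same $*$-norm growth estimates that are missing from your a~priori step.
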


The proof of the existence of \(\varepsilon_0\) and \(\bar{\epsilon}_0\) is constructive. This is not the case for the constant 
\(C\) which involves an argument by contradiction. 
%We also mention that a slighlty more general version of \Cref{Invertibility of L - non-compact case} holds.

\begin{rem}\label{Invertbility of L - eta > 1}\normalfont \Cref{Invertibility of L - non-compact case} is also valid for all \(\lambda \in (0,1)\) and \(\eta > 1\). 
The proof we found is technically more involved, 
but it does not require any new insights. As we are not aware of additional applications, 
we restrict ourselves to the case \(\eta \geq 2+\lambda\) and postpone the presentation of the stronger statement
to forthcoming work.
\end{rem}

Analogous to \Cref{Invertibility of L - non-orientable} the following holds (also see \Cref{small part for non-orientable}).

\begin{rem}\label{Invertibility of L - non-compact case - non-orientable}\normalfont
\Cref{Invertibility of L - non-compact case} also holds when \(M\) is non-orientable.
\end{rem}

%It is the goal of this section to prove \Cref{Invertibility of L - non-compact case}.
This section is structured as follows. 
%In \Cref{Subsec: approximating cusp metrics} we show that if a metric satisfies the curvature decay condition (\ref{curvature decay}), then in \(M_{\rm small}\) it is exponentially close to a certain hyperbolic metric. The norms \(||\cdot||_{2,\lambda;\ast}\) and \(||\cdot||_{0,\lambda}\) appearing in \Cref{Invertibility of L - non-compact case} are a slight variation of norms first introduced in \cite{Bamler2012}. 
The definition of the norms which appear in the statement of \Cref{Invertibility of L - non-compact case}
is presented in \Cref{Subsec: various norms}. In \Cref{Subsec: Growth estimates} we prove that solutions \(h\) of the equation \(\mathcal{L}h=f\) satisfy certain growth estimates in \(M_{\rm small}\). These will be used in \Cref{Subsec: a priori estimates in non-compact case} to show that \(\mathcal{L}\) satisfies an a priori estimate \(||h||_{2,\lambda;\ast}\leq C||\mathcal{L}h||_{0,\lambda}\). Finally, the surjectivity of \(\mathcal{L}\) will be established in \Cref{Subsec: Surjectivity of L}.

%\newpage

\subsection{Various norms}\label{Subsec: various norms}

In this subsection we present the definition of the norms \(||\cdot||_{2,\lambda;\ast}\) and \(||\cdot||_{0,\lambda}\) appearing in \Cref{Invertibility of L - non-compact case}, and state some of their properties. 
To this end we first 
define \textit{exponential norms} \(||\cdot||_{C_\lambda^{?}}\). Second, adapting a construction in \cite{Bamler2012}, 
we present \textit{decomposition norms} \(||\cdot||_{C_\lambda^{?};\ast}\). 
The \textit{exponential hybrid norms} \(||\cdot||_{2,\lambda;\ast}\) and \(||\cdot||_{0,\lambda}\) are then a combination of the decomposition norms \(||\cdot||_{C_\lambda^{?};\ast}\) and the hybrid norms \(||\cdot||_2\) and \(||\cdot||_0\) defined in \Cref{subsection - hybrid norm}.

Throughout this section we assume that \(M\) satisfies the assumptions from \Cref{Invertibility of L - non-compact case}, 
that is, \(M\) is a complete Riemannian \(3\)-manifold of finite volume
satisfying %the cusp condition, with
\(|\sec+1|\leq \varepsilon_0\), \(||\nabla \Ric||_{C^0(M)} \leq \Lambda\), and 
\[
	\max_{\pi \subseteq T_xM}|\mathrm{sec}(\pi)+1|, \, |\nabla R|(x), \, |\nabla^2R|(x) \leq \varepsilon_0 e^{-\eta d(x,\partial M_{\rm small})} \quad \text{for all } \, x \in M_{\rm small}
\]
for some \(\eta \geq 2+\lambda\). As before, \(\mathcal{L}\) denotes the elliptic operator \(\frac{1}{2}\Delta_L+2\mathrm{id}\).

We begin with the definition of the \textit{exponential norms}. Let \(C_1,...,C_{q}\) be the cusps of $M$, and 
let \(T_1,...,T_p\) be the Margulis tubes of \(M\) of radius at least 3. For any \(k=1,...,p\) let \(R_k\) be the radius of \((T_k)_{\rm small}\), 
that is, the distance of the core geodesic \(\gamma_k\) to \(\partial (T_k)_{\rm small}\). Denote by 
\[
	r(x):=d(x,M \setminus M_{\rm small} )
\]
the distance of \(x\) to the complement of \(M_{\rm small}\). For \(\lambda \in (0,1)\) define the \textit{inverse weight function} \(W_{\lambda}: M \to \bbR\) by
\[
	W_{\lambda}(x):= \begin{cases}
	e^{-\lambda r(x)} & \, \text{if } \, x \in \bigcup_{i=1}^{q}(C_i)_{\rm small} \\
	e^{-\lambda r(x)}+e^{\lambda (r(x)-R_k)} & \, \text{if } \, x \in (T_k)_{\rm small} \\
	1  & \text{otherwise } 
	\end{cases}.
\]
For any \(C^k\) or \(C^{k,\alpha}\) norm \(||\cdot||_{C^{?}}\) we define the corresponding \textit{exponential \(C^{?}\) norm} by
\begin{equation}\label{exponential norm - general definition}
	||\cdot||_{C_{\lambda}^{?}(M)}:=\sup_{x \in M}\left(\frac{1}{W_{\lambda}(x)}|\cdot|_{C^{?}}(x) \right).
\end{equation}
Observe that \(\mathrm{Lip}\left(\log \left(\frac{1}{W_{\lambda}}\right) \right) \leq \lambda\) in \(M_{\rm small}\), and that outside \(M_{\rm small}\) it holds \(W_{\lambda}(x)=1\). 
So the pointwise Schauder estimates (\ref{Schauder pointwise - C^2}) and (\ref{Schauder pointwise - C^1}) imply that there are Schauder estimates for the exponential norms, that is, there is a constant \(C=C(\alpha,\Lambda,\lambda)\) so that
\begin{equation}\label{Schauder estimate - exponential norm}
	||h||_{C_{\lambda}^{2,\alpha}(M)} \leq C \Big(||\mathcal{L}h||_{C_{\lambda}^{0,\alpha}(M)}+||h||_{C_{\lambda}^{0}(M)} \Big)
\end{equation}
and
\begin{equation}\label{Schauder estimate C^1 - exponential norm}
	||h||_{C_{\lambda}^{1,\alpha}(M)} \leq C \Big(||\mathcal{L}h||_{C_{\lambda}^{0}(M)}+||h||_{C_{\lambda}^{0}(M)} \Big).
\end{equation}
Similarly, it follows from (\ref{Continuity of elliptic operator - pointwise}) that \(||\mathcal{L}h||_{C_\lambda^{0,\alpha}(M)} \leq C||h||_{C_\lambda^{2,\alpha}(M)}\).

We continue with the definition of the \textit{decomposition norms}. Following Section 3.2 of \cite{Bamler2012}, 
we first introduce \textit{trivial Einstein variations}. In Definition \ref{def:trivialeinstein} below, 
\(T^2\) denotes a flat torus (with a fixed flat metric), and \(I \subseteq \bbR\) is an interval. Even though \(T^2\) is equipped with a metric, the product 
\(T^2 \times I\) is only meant as a topological product. Moreover, in the definition we take coordinates \((x^1,x^2,r)\) on \(T^2 \times I\), 
where \((x^1,x^2)\) are flat coordinates for \(T^2\), and \(r\) is the standard coordinate for \(I \subseteq \bbR\).

\begin{definition}\label{def:trivialeinstein}
A \((0,2)\)-tensor \(u\) on \(T^2 \times I\) is called an \textit{Einstein variation} if it is of the form
\[
	u=e^{-2r}u_{ij}dx^idx^j
\]
for some constants \(u_{ij} \in \bbR\). Moreover, \(u\) is called a \textit{trivial Einstein variation} if the trace of \(u\) with respect to 
the flat metric on \(T^2\) vanishes everywhere, that is, if \(\sum_{i}u_{ii}=0\).
\end{definition}

Here \(dx^i\) and \(dx^j\) are understood to be either \(dx^1\) or \(dx^2\), but not \(dr\). Our definition looks slightly different than that in \cite{Bamler2012}, but this difference is only due to a change of coordinates.

\begin{rem}\label{Lu=0}\normalfont The trace free condition guarantees that  \(\mathcal{L}_{cusp}u=0\) for a trivial 
Einstein variation \(u\) (see (\ref{Lh=f in a cusp in coordinates})). Here \(\mathcal{L}_{cusp}\) denotes the operator \(\frac{1}{2}\Delta_L+2\mathrm{id}\) with respect to 
the hyperbolic cusp metric \(g_{cusp}=e^{-2r}g_{Flat}+dr^2\), where \(g_{Flat}\) is the given flat metric on \(T^2\).
\end{rem}

An Einstein variation should be thought of as an infinitesimal change in the conformal structure of the torus \(T^2\), 
and a trivial Einstein variation is an infinitesimal change of the conformal structure that can \textit{not} be detected by the operator \(\mathcal{L}\). 
Therefore, to ensure that \(\mathcal{L}\) is invertible, we have to work with a norm in the source space that isolates trivial Einstein variations. Also recall from the discussion at the end of \Cref{Sec: Counterexamples}, that the counterexamples of \Cref{Counterexample} are constructed by changing the conformal structure of the horotori that are contained in the small part of the manifold. This further justifies the use of a norm that is sensitive to changes in the conformal structures. A more precise explanation for the necessity of a norm that isolates trivial Einstein variations will be given in \Cref{Use of *-norm}. %the paragraph after (\ref{Lh=f in a cusp in coordinates}) and (\ref{PDE for trace as ODE}).

For each \(k=1,...,p\) let \(\gamma_k\) be the core geodesic of \(T_k\). Choose cutoff functions \(\rho_k\) so that
\[
	\rho_k=0 \, \text{ in } \, \big(M \setminus (T_k)_{\rm small}\big) \cup N_1(\gamma_k) \quad \text{and} \quad \rho_k=1 \, \text{ in } \,   N_{R_k-1/4}(\gamma_k)\setminus  N_{5/4}(\gamma_k).
\]
Similarly, choose cutoff functions \(\varrho_l\) so that
\[
	\varrho_l=0 \, \text{ in } \, M \setminus (C_l)_{\rm small} \quad \text{and} \quad \varrho_l=1 \, \text{ outside} \,   N_{1}\big(M \setminus (C_l)_{\rm small} \big).
\]
Here for any subset \(X \subseteq M\), and any \(r>0\), \(N_r(X)\) denotes the set of all points that have distance less than \(r\) to \(X\). The cutoff functions 
are chosen in such a way that the Hölder norms \(||\rho_k||_{C^{2,\alpha}},||\varrho_l||_{C^{2,\alpha}}\) are bounded from above 
by a universal constant. Recall that by \Cref{containedinthin} the boundary \(\partial (T_k)_{\rm small}\) is a smooth torus, and \((T_k)_{\rm small} \setminus N_1(\gamma_k)\) is diffeomorphic to \(\partial (T_k)_{\rm small}\times [0,R_k-1]\). 
By \Cref{tubes are almost cusps - pinched curvature}, 
there is a natural choice for a flat metric on \(\partial (T_k)_{\rm small}\), namely the flat metric induced by the model metric \(g_{cusp}\) of \Cref{tubes are almost cusps - pinched curvature}. Hence it makes sense to speak of trivial Einstein variations on \((T_k)_{\rm small} \setminus N_1(\gamma_k)\). Similarly, it makes sense to speak of trivial Einstein variations on \((C_l)_{\rm small}\). 

For a continuous symmetric $(0,2)$-tensor field $h$, consider decompositions
\begin{equation}\label{decomposition with trivial Einstein variations}
	h=\bar{h}+\sum_{k=1}^p\rho_k u_k+\sum_{l=1}^{q}\varrho_l v_{l},
\end{equation}
where \(u_k\) is a trivial Einstein variation in \((T_k)_{\rm small} \setminus N_1(\gamma_k)\), and \(v_l\) is a trivial Einstein variation on \((C_l)_{\rm small}\). 
Following p. 896 of \cite{Bamler2012}, 
for any \(C^k\) or \(C^{k,\alpha}\) norm \(||\cdot||_{C^{?}}\) define the corresponding \textit{decomposition norm} by
\begin{equation}\label{Decomposition norm}
	||h||_{C_{\lambda}^{?}(M); \ast}:= \inf \Big( ||\bar{h}||_{C_{\lambda}^{?}(M)}+\max_{k=1,...,p}|u_k|+\max_{l=1,...,q}|v_l|\Big),
\end{equation}
where the infimum is taken over all decompositions as in (\ref{decomposition with trivial Einstein variations}) and \(|\cdot|\) is a norm on the finite dimensional space of trivial Einstein variations, e.g., \(|\cdot|:=||\cdot||_{C^0}\). 
%This definition is also due to R. Bamler (see \cite[p. 896]{Bamler2012}). 
We point out that our notation differs from that in \cite{Bamler2012}. 
%We denote the exponential norms by \(||\cdot||_{C_\lambda^?}\), whereas Bamler denotes them by \(||\cdot||_{?;\ast}\), and we write \(||\cdot||_{C_\lambda^?;\ast}\) for decomposition norm, while Bamler denotes it by \(||\cdot||_{?;\ast\ast}\).

We now state some properties of the decomposition norm. We start with a basic inequality.

\begin{lem}\label{C^0 is smaller that decomposition C^0}\normalfont It holds
\[
	||h||_{C^0} \leq 2||h||_{C_{\lambda}^0;\ast}.
\]
\end{lem}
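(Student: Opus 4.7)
The plan is to bound $|h|(x)$ pointwise from any admissible decomposition $h = \bar{h} + \sum_{k=1}^p \rho_k u_k + \sum_{l=1}^q \varrho_l v_l$ as in \eqref{decomposition with trivial Einstein variations}, and then to pass to the infimum over decompositions. The key observation is that the inverse weight function $W_\lambda$ satisfies $W_\lambda \leq 2$ everywhere on $M$. Indeed, outside $M_{\rm small}$ we have $W_\lambda \equiv 1$; in the small part of a cusp, $W_\lambda(x) = e^{-\lambda r(x)} \leq 1$; and in the small part of a Margulis tube $(T_k)_{\rm small}$ we have $W_\lambda(x) = e^{-\lambda r(x)} + e^{\lambda(r(x)-R_k)}$ where both summands lie in $(0,1]$, since $0 \leq r(x) \leq R_k$ by definition of $R_k$ as the distance from the core geodesic $\gamma_k$ to $\partial (T_k)_{\rm small}$. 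Consequently $|\bar{h}|(x) \leq W_\lambda(x)\, \|\bar{h}\|_{C_\lambda^0(M)} \leq 2\|\bar{h}\|_{C_\lambda^0(M)}$.

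Next I will exploit that the cutoff functions $\rho_k$ and $\varrho_l$ are supported in pairwise disjoint subsets of $M$, being concentrated in different Margulis tubes and cusps. Hence at any fixed $x \in M$ at most one of the terms $\rho_k u_k(x)$ and $\varrho_l v_l(x)$ is nonzero, and in that case $|\rho_k u_k|(x) \leq |u_k|_g(x)$ since $|\rho_k| \leq 1$. Taking the norm $|\cdot|$ on trivial Einstein variations to be the ambient $C^0$-norm on the relevant cusp or tube region (as explicitly suggested after \eqref{Decomposition norm}), one has $|u_k|_g(x) \leq |u_k|$ and $|v_l|_g(x) \leq |v_l|$ automatically. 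Combining these observations with the triangle inequality yields
\[
|h|(x) \leq |\bar{h}|(x) + \max_{k} |u_k| + \max_{l} |v_l| \leq 2\Big(\|\bar{h}\|_{C_\lambda^0(M)} + \max_{k} |u_k| + \max_{l} |v_l|\Big)
\]
for every $x \in M$. Taking the supremum over $x$ and then the infimum over all decompositions of $h$ of the form \eqref{decomposition with trivial Einstein variations} produces the desired inequality $\|h\|_{C^0} \leq 2\|h\|_{C_\lambda^0;\ast}$.

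There is no serious obstacle in this argument; the only points that require even minimal care are the elementary verification of the pointwise bound $W_\lambda \leq 2$ and the observation that disjointness of tubes and cusps replaces the summation $\sum_k + \sum_l$ by a maximum in the pointwise estimate. Both are immediate from the definitions.
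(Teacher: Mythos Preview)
Your proof is correct and follows essentially the same approach as the paper's: the paper's terse argument (``Note $W_\lambda \leq 2$, so that $\|\bar{h}\|_{C^0} \leq 2\|\bar{h}\|_{C_\lambda^0}$; now the desired inequality follows from the triangle inequality and the definition of $\|\cdot\|_{C_\lambda^0;\ast}$'') is exactly what you have spelled out, with the disjointness of the supports of $\rho_k,\varrho_l$ and the verification $W_\lambda\le 2$ made explicit. There is nothing to add.
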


\begin{proof}Note \(W_\lambda \leq 2\), so that \(||\bar{h}||_{C^0}\leq 2||\bar{h}||_{C_\lambda^0}\). Now the desired inequality follows from the triangle inequality and the definition of \(||\cdot||_{C_\lambda^0;\ast}\).
\end{proof}

Schauder estimates also hold for the decomposition norm (see \cite[Lemma 4.1]{Bamler2012}).

\begin{lem}\label{Schauder estimates for decomposition norm} There is a universal constant \(C\) so that 
\[
	||h||_{C_{\lambda}^{2,\alpha};\ast}\leq C\Big( ||\mathcal{L} h||_{C_{\lambda}^{0,\alpha}}+||h||_{C_{\lambda}^{0};\ast}\Big)
\]
for all \(h \in C^{2,\alpha}\big( {\rm Sym}^2(T^*M)\big)\).
\end{lem}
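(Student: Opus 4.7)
The strategy is to take a near-optimal decomposition for the $C_\lambda^0;\ast$ norm and to show, by controlling each summand separately under $\mathcal{L}$, that this decomposition can also be used to bound the $C_\lambda^{2,\alpha};\ast$ norm. Concretely, given $h\in C^{2,\alpha}\big({\rm Sym}^2(T^\ast M)\big)$, first choose a decomposition
\[
h=\bar h+\sum_{k=1}^p\rho_k u_k+\sum_{l=1}^q\varrho_l v_l
\]
with $\|\bar h\|_{C_\lambda^0}+\max_k|u_k|+\max_l|v_l|\leq 2\|h\|_{C_\lambda^0;\ast}$. The Schauder estimate (\ref{Schauder estimate - exponential norm}) for the exponential norm gives
\[
\|\bar h\|_{C_\lambda^{2,\alpha}}\leq C\bigl(\|\mathcal{L}\bar h\|_{C_\lambda^{0,\alpha}}+\|\bar h\|_{C_\lambda^0}\bigr),
\]
and since $\mathcal{L}\bar h=\mathcal{L}h-\sum_k\mathcal{L}(\rho_k u_k)-\sum_l\mathcal{L}(\varrho_l v_l)$, the proof reduces to showing
\begin{equation}\label{planned-key}
\|\mathcal{L}(\rho_k u_k)\|_{C_\lambda^{0,\alpha}}\leq C|u_k|\quad\text{and}\quad \|\mathcal{L}(\varrho_l v_l)\|_{C_\lambda^{0,\alpha}}\leq C|v_l|,
\end{equation}
for a universal constant $C$. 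Once (\ref{planned-key}) is established, the bound on $\|\bar h\|_{C_\lambda^{2,\alpha}}+\max_k|u_k|+\max_l|v_l|$ in terms of $\|\mathcal{L}h\|_{C_\lambda^{0,\alpha}}+\|h\|_{C_\lambda^0;\ast}$ follows by linearity, and taking the infimum over decompositions yields the desired inequality.

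To prove (\ref{planned-key}) for a tube $T_k$, I would split the support of $\rho_k u_k$ into two regions. In the cutoff region, where $\nabla\rho_k\neq 0$, the distance $r(x)=d(x,M\setminus M_{\rm small})$ lies either in $[0,1/4]$ (near $\partial(T_k)_{\rm small}$) or in $[R_k-5/4,R_k-1]$ (near $\gamma_k$), and in both cases one checks directly that $W_\lambda(x)$ is bounded below by a universal constant depending only on $\lambda$. Since the $C^{2,\alpha}$-norm of $u_k$ with respect to $g$ is, by \Cref{tubes are almost cusps - pinched curvature}, comparable to $|u_k|$, and $\rho_k$ has universally bounded $C^{2,\alpha}$-norm, this produces the required estimate on the cutoff region. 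In the complementary region where $\rho_k\equiv 1$, the key input is \Cref{Lu=0}: $\mathcal{L}_{cusp}u_k=0$, so
\[
\mathcal{L}(\rho_k u_k)=\mathcal{L}u_k=(\mathcal{L}-\mathcal{L}_{cusp})u_k,
\]
which is a first-and-second-order differential expression in $u_k$ whose coefficients are $O(|g-g_{cusp}|_{C^2})$. By \Cref{tubes are almost cusps - pinched curvature}, using $r_\gamma(x)+r_{\partial T}(x)\approx R_k$ and $r(x)\approx r_{\partial T}(x)$ in the tube, this gives
\[
|g-g_{cusp}|_{C^2}(x)=O\bigl(e^{2r(x)-2R_k}+\varepsilon_0 e^{-\eta r(x)}\bigr).
\]
Since $\lambda\leq 2$ one has $e^{2r(x)-2R_k}\leq e^{\lambda(r(x)-R_k)}$, and since $\eta\geq \lambda$ one has $\varepsilon_0 e^{-\eta r(x)}\leq \varepsilon_0 e^{-\lambda r(x)}$, so both terms are bounded by $CW_\lambda(x)$. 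Combined with the observation that $|u_k|_{C^{2,\alpha}}$ with respect to the cusp (or ambient) metric is comparable to $|u_k|$, this yields (\ref{planned-key}) for tubes.

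The cusp case is analogous but cleaner: \Cref{Existence of approximate cusp metric} gives $|g-g_{cusp}|_{C^2}(x)=O(\varepsilon_0 e^{-\eta r(x)})$ on $(C_l)_{\rm small}$, and with $W_\lambda(x)=e^{-\lambda r(x)}$ there the hypothesis $\eta\geq 2+\lambda\geq\lambda$ forces $|\mathcal{L}(\varrho_l v_l)|/W_\lambda\leq C|v_l|$ in the region $\varrho_l\equiv 1$, while in the thin cutoff layer $\{r\leq 1\}$ the weight $W_\lambda$ is bounded below by $e^{-\lambda}$ and the standard commutator estimate applies. The $C^{0,\alpha}$ part of (\ref{planned-key}) follows from the same pointwise arguments applied on the local harmonic charts of radius $\rho$ used to define the H\"older norm, since on each such chart the relevant quantities vary by a bounded factor and the coefficients of $\mathcal{L}-\mathcal{L}_{cusp}$ are uniformly H\"older continuous with norms controlled by $|g-g_{cusp}|_{C^2}$ on a slightly larger ball.

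The main obstacle is the weighted bookkeeping in the second paragraph: one must verify that the term $e^{-2r_\gamma(x)}$ coming from the tube-to-cusp comparison does not violate the weight $W_\lambda$, and this is exactly what forces $\lambda<2$ in the definition of the norms. Once this point is understood, the rest of the argument consists of routine commutator estimates with $\rho_k,\varrho_l$ and a direct verification that trivial Einstein variations are essentially ``eigenvectors of norm one'' in the cusp model, so the universal constant $C$ in (\ref{planned-key}) depends only on $\alpha,\Lambda,\lambda,\delta,b$ and the cutoffs, not on the geometry of the individual cusps and tubes.
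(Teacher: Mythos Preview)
Your proposal is correct and follows essentially the same approach as the paper: reduce to the key estimates $\|\mathcal{L}(\rho_k u_k)\|_{C_\lambda^{0,\alpha}}\leq C|u_k|$ and $\|\mathcal{L}(\varrho_l v_l)\|_{C_\lambda^{0,\alpha}}\leq C|v_l|$, use $\mathcal{L}_{cusp}u_k=0=\mathcal{L}_{cusp}v_l$ (\Cref{Lu=0}) together with the comparison bounds on $|g-g_{cusp}|_{C^2}$ from \Cref{tubes are almost cusps - pinched curvature} and \Cref{Existence of approximate cusp metric}, and then absorb the resulting decay rates into the weight $W_\lambda$. One small remark: the relevant hypothesis is $\lambda\in(0,1)$ (so in particular $\lambda<2$ and $\lambda<\eta$), which is what the paper invokes as ``$\lambda<1<\eta$''; your condition ``$\lambda\leq 2$'' is what the tube estimate $e^{2(r-R_k)}\leq e^{\lambda(r-R_k)}$ actually needs, but the standing assumption is stronger.
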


\begin{proof}In this proof we use \Cref{big O notation}. Let \(\mathcal{L}_{cusp}\) denote the differential operator \(\frac{1}{2}\Delta_L+2\mathrm{id}\) with respect to 
the model metrics \(g_{cusp}\)  given by \Cref{tubes are almost cusps - pinched curvature} and \Cref{Existence of approximate cusp metric}. 
By \Cref{Lu=0}, it holds \(\mathcal{L}_{cusp}u_k=0\) and \(\mathcal{L}_{cusp}v_l=0\). Hence \(||u_k||_{C^{2,\alpha}}=O(|u_k|)\) and \(||v_l||_{C^{2,\alpha}}=O(|v_l|)\) due to Schauder estimates.

The estimates on \(|g-g_{cusp}|_{C^2}\) given by \Cref{Existence of approximate cusp metric} and \Cref{tubes are almost cusps - pinched curvature} yield that 
\(|\mathcal{L}v_l|_{C^{0,\alpha}}(x)=O\big(\varepsilon_0e^{-\eta r(x)}|v_l| \big)\) and \(|\mathcal{L}u_k|_{C^{0,\alpha}}(x)=O\Big(\big(e^{2(r(x)-R_k)}+\varepsilon_0e^{-\eta r(x)}\big)|u_k| \Big)\), 
where \(\eta \) is the decay rate in the curvature decay condition (\ref{curvature decay}), and \(r(x)=d(x,M\setminus M_{\rm small})\). We 
then get \(||\mathcal{L}u_k||_{C_{\lambda}^{0,\alpha}} =O(|u_k|)\) and \(||\mathcal{L}v_l||_{C_{\lambda}^{0,\alpha}} =O(|v_l|)\) since \(\lambda < 1< \eta\). The rest of the argument carries over from \cite[Lemma 4.1]{Bamler2012}
\end{proof}

The next result and its proof is analogous to Lemma 4.2 in \cite{Bamler2012}. It gives the canonical choice of a trivial Einstein variation in a Margulis tube. For rank 2 cusps the canonical choice of a trivial Einstein variation will be given by \Cref{trivial Einstein variation and a priori estimate in a cusp}.

\begin{lem}\label{Canonical choice of trivial Einstein variation} Let \(h \in C^{0}\big({\rm Sym}^2(T^*M)\big)\). Choose points \(c_k \in T_k\) with \(r(c_k)=\frac{R_k}{2}\), where \(r=d(\cdot,M \setminus M_{\rm small})\). For each \(k\) let \(u_k\) be the trivial Einstein variation in \((T_k)_{\rm small}\setminus N_1(\gamma_k)\) such that \(|h-u_k|(c_k)\) is minimal among all trivial Einstein variations in  \((T_k)_{\rm small}\setminus N_1(\gamma_k)\). Then for some universal constant \(C\) it holds
\[
	||h||_{C_{\lambda}^{0}(M^\prime); \ast} \leq ||\bar{h}||_{C_{\lambda}^{0}(M^\prime)}+\max_{k}|u_k| \leq C ||h||_{C_{\lambda}^{0}(M^\prime); \ast},
\]
where \(\bar{h}:=h-\sum_{k}\rho_ku_k\) and \(M^\prime=M \setminus \bigcup_{l=1}^q (C_l)_{\rm small}\).
\end{lem}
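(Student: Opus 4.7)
The first inequality follows tautologically: the decomposition $h=\bar h + \sum_k \rho_k u_k + \sum_l \varrho_l\cdot 0$ is a decomposition of the form permitted by the definition of $\|\cdot\|_{C_\lambda^0(M');*}$ (the cusps are deleted from $M'$), so the left-hand side is the value of this particular decomposition, which dominates the infimum.

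For the reverse inequality, fix an arbitrary decomposition $h=\tilde h+\sum_k \rho_k\tilde u_k+\sum_l \varrho_l\tilde v_l$, and compare the coefficients $u_k,\tilde u_k$ via their values at $c_k$. Because $c_k$ lies strictly inside $(T_k)_{\rm small}\setminus N_{5/4}(\gamma_k)$ and outside every other tube or small cusp region, the cutoffs satisfy $\rho_j(c_k)=\delta_{jk}$ and $\varrho_l(c_k)=0$. Hence $(h-\tilde u_k)(c_k)=\tilde h(c_k)$, and by the minimality that defines $u_k$,
\[
|u_k-\tilde u_k|(c_k)\le|h-u_k|(c_k)+|h-\tilde u_k|(c_k)\le 2|\tilde h|(c_k)\le 2W_\lambda(c_k)\,\|\tilde h\|_{C_\lambda^0(M')}.
\]

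The main geometric input is that trivial Einstein variations have pointwise norm that is comparable to a constant on $(T_k)_{\rm small}\setminus N_1(\gamma_k)$, uniformly in $k$. In the model metric $g_{cusp}$ a direct computation using $u=e^{-2r}u_{ij}dx^idx^j$ and $g_{cusp}^{ij}=e^{2r}g_{Flat}^{ij}$ in the spatial directions gives $|u|_{g_{cusp}}^2=|u_{ij}|^2_{g_{Flat}}$, which does not depend on $r$. Invoking the $C^2$-closeness $|g-g_{cusp}|_{C^2}(x)=O\bigl(e^{-2r_\gamma(x)}+\varepsilon_0 e^{-\eta r_{\partial T}(x)}\bigr)$ from \Cref{tubes are almost cusps - pinched curvature} and taking $\varepsilon_0$ small, we conclude that $|u|_g$ is comparable to $|u|_{g_{cusp}}$, and hence to $|u|_g(c_k)$, at every point of the tube, with a universal constant. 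Applied to the trivial Einstein variation $u_k-\tilde u_k$, this upgrades the pointwise bound at $c_k$ to
\[
|u_k-\tilde u_k|(x)\le C\,W_\lambda(c_k)\,\|\tilde h\|_{C_\lambda^0(M')}\quad\text{for all }x\in(T_k)_{\rm small}\setminus N_1(\gamma_k).
\]

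Now the weight $W_\lambda(x)=e^{-\lambda r(x)}+e^{\lambda(r(x)-R_k)}$ on $(T_k)_{\rm small}$ attains its minimum $W_\lambda(c_k)=2e^{-\lambda R_k/2}$ precisely at $r=R_k/2$, so $|u_k-\tilde u_k|(x)/W_\lambda(x)\le C\|\tilde h\|_{C_\lambda^0(M')}$ throughout the tube. Since the tubes are pairwise disjoint, at any point of $M'$ at most one $\rho_k$ is nonzero, and on $M'$ we have $\bar h-\tilde h=\sum_k\rho_k(\tilde u_k-u_k)$; consequently $\|\bar h-\tilde h\|_{C_\lambda^0(M')}\le C\|\tilde h\|_{C_\lambda^0(M')}$. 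Combined with $|u_k|\le|\tilde u_k|+|u_k-\tilde u_k|\le\max_j|\tilde u_j|+C\|\tilde h\|_{C_\lambda^0(M')}$ (using $W_\lambda(c_k)\le 2$), this yields
\[
\|\bar h\|_{C_\lambda^0(M')}+\max_k|u_k|\le C\bigl(\|\tilde h\|_{C_\lambda^0(M')}+\max_k|\tilde u_k|+\max_l|\tilde v_l|\bigr),
\]
and taking the infimum over all decompositions gives the second inequality. The main obstacle is the second paragraph's geometric assertion that trivial Einstein variations have quasi-constant norm in $g$; without the $C^2$-control from \Cref{tubes are almost cusps - pinched curvature}, one could only compare at $c_k$ and not propagate the estimate across the tube.
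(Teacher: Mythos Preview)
Your proof is correct and follows the same approach as the paper, which simply cites \cite[Lemma 4.2]{Bamler2012} and records the key facts you use: that $u_k$ is the orthogonal projection of $h(c_k)$ onto the space of trivial Einstein variations (equivalent to your minimality), that trivial Einstein variations have constant norm with respect to $g_{cusp}$, and that $W_\lambda$ attains its minimum at $c_k$. You have essentially reconstructed Bamler's argument with the necessary adjustment that the norm comparison uses \Cref{tubes are almost cusps - pinched curvature} to pass from $g_{cusp}$ to $g$.
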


Note that on \((T_k)_{\rm small}\), the weight \(\frac{1}{W_{\lambda}(r)}\) is maximal at \(r=\frac{R_k}{2}\).

\begin{proof}The proof of \cite[Lemma 4.2]{Bamler2012} goes through without modification.
  For later purpose we point out that \(|u_k|(c_k) \leq |h|(c_k)\), and \(|\bar{h}|(c_k)\leq |\bar{h}-u|(c_k)\) for any trivial Einstein variation \(u\) on \((T_k)_{\rm small}\setminus N_1(\gamma_k)\). This is because by its definition, \(u_k\) 
is the image of the orthogonal projection from \({\rm Sym}^2(T_{c_k}^*M)\) to the space of trivial Einstein variations on  \((T_k)_{\rm small}\setminus N_1(\gamma_k)\). Also note that trivial Einstein variations have constant norm (if the norm is taken with respect to the cusp metric \(g_{cusp}\)). In particular, we have \(|u_k| \leq ||h||_{C^0(M)}\).
\end{proof}

Finally, we come to the definition of the \textit{exponential hybrid norms} \(||\cdot||_{2,\lambda;\ast}\) and \(||\cdot||_{0,\lambda}\) appearing in \Cref{Invertibility of L - non-compact case}. Recall that for \(k=0\) and \(k=2\), in Definition \ref{hybridnorm}  we defined the hybrid norms (when \(n=3\)) by
\[
	||\cdot||_{k}:=\max \left\{ ||\cdot||_{C^{k,\alpha}(M)}, \, \sup_{x \nin E}\left(\int_M e^{-(2-\delta)r_x(y)}|\cdot|_{C^k}^2(y)\, d\vol(y) \right)^{\frac{1}{2}}\right\},
\]
where \(E \subseteq M\) is a subset defined by a volume growth condition and $\vert \cdot \vert_{C^k}(y)$ denotes the $C^k$-norm at the point $y$. We refer to 
\Cref{subsection - hybrid norm} for more details. For ease of notation we abbreviate
\[
	||h||_{H^2(M; \omega_x)}:=\left(\int_M e^{-(2-\delta)r_x(y)}|h|_{C^2}^2(y)\, d{\rm vol}(y) \right)^{\frac{1}{2}}
\]
and
\[
	||f||_{L^2(M; \omega_x)}:=\left(\int_M e^{-(2-\delta)r_x(y)}|f|_{C^0}^2(y)\, d{\rm vol}(y) \right)^{\frac{1}{2}}.
\]
Here \(\omega_x\) should indicate that there is a weight function involved that depends on \(x \in M\). 

\begin{definition}\label{exponentialhybrid}
For \(\alpha \in (0,1)\), \(\lambda \in (0,1)\), \(b>1\), \(\bar{\epsilon}>0\), \(\delta \in (0,2)\) and \(r_0 \geq 1\) the \textit{exponential hybrid norms} \(||\cdot||_{2,\lambda;\ast}\) and \(||\cdot||_{0,\lambda}\) are defined by
\begin{equation}\label{Hybrid 2-norm - exponential}
	||h||_{2,\lambda; \ast}:= \max \left\{ ||h||_{C_{\lambda}^{2,\alpha}(M);\ast}, \,\sup_{x \nin E}||h||_{H^2(M; \omega_x)}, \, \sup_{x \in M_{\rm thin} \setminus M_{\rm small}} e^{\frac{b}{2}d(x,M_{\rm thick})}||h||_{H^2(M; \omega_x)}\right\}
\end{equation}
and
\begin{equation}\label{Hybrid 0-norm - exponential}
	||f||_{0,\lambda}:= \max \left\{ ||f||_{C_{\lambda}^{0,\alpha}(M)}, \,\sup_{x \nin E}||f||_{L^2(M; \omega_x)}, \, \sup_{x \in M_{\rm thin} \setminus M_{\rm small}} e^{\frac{b}{2}d(x,M_{\rm thick})}||f||_{L^2(M; \omega_x)}\right\},
\end{equation}
where \(E=E(M;\bar{\epsilon},\delta,r_0)\) is the set defined in (\ref{def of set E}).
\end{definition}

In the source space we use \(||\cdot||_{C_{\lambda}^{2,\alpha}(M);\ast}\) instead of just \(||\cdot||_{C_{\lambda}^{2,\alpha}(M)}\) so that the norm is sensitive to trivial Einstein variations. We refer to \Cref{Use of *-norm} and the discussion after \Cref{Lu=0} as to why this is necessary. The last integral terms in the definition of the norms are included so that we can employ the \(C^0\)-estimate from \Cref{A-priori estimate away from the small part}. As was the case with the previously defined hybrid norms \(||\cdot||_2\) and \(||\cdot||_0\), we suppress most of the constants in the notation for the norms \(||\cdot||_{2,\lambda; \ast}\) and \(||\cdot||_{0,\lambda}\). 

Define the spaces \(C_\lambda^{0,\alpha}\big({\rm Sym}^2(T^*M)\big)\) and \(C_\lambda^{2,\alpha}\big({\rm Sym}^2(T^*M)\big)\) as
\begin{equation}\label{Def of the exponential target space}
	C_\lambda^{0,\alpha}\big({\rm Sym}^2(T^*M)\big):=\left\{\left. f \in C^{0,\alpha}\big({\rm Sym}^2(T^*M)\big) \, \, \right| \,\, ||f||_{C_\lambda^{0,\alpha}(M)} < \infty \right\}
\end{equation}
and
\begin{equation}\label{Def of the exponential source space}
	C_\lambda^{2,\alpha}\big({\rm Sym}^2(T^*M)\big):=\left\{\left. h \in C^{2,\alpha}\big({\rm Sym}^2(T^*M)\big) \, \, \right| \,\, ||h||_{C_\lambda^{2 ,\alpha}(M);\ast} < \infty \right\}.
\end{equation}
The latter is the space of sections $h$ with the property that 
\(\mathcal{L}h \in C_\lambda^{0,\alpha}\big({\rm Sym}^2(T^*M)\big)\) (see \Cref{Regularity of L}).

%\newpage

\subsection{Growth estimates}\label{Subsec: Growth estimates}

In \Cref{Subsec: a priori estimates in non-compact case} we shall prove the a priori estimate of \Cref{Invertibility of L - non-compact case}. An intermediate step towards this goal is \Cref{C^0 estimate from exponential hybrid norm}, in which we prove a global \(C^0\)-estimate \(||h||_{C^0(M)}\leq C||\mathcal{L}h||_{0,\lambda}\). For points in \(M_{\rm thick}\) we can use the arguments from the proof of \Cref{A-priori estimate for L} to obtain such an estimate (see \Cref{pointwise C^0 estimate given inj rad bound}). Moreover, \Cref{A-priori estimate away from the small part} provides the desired estimate for points in \(M_{\rm thin} \setminus M_{\rm small}\). Therefore, it remains to obtain \(C^0\)-estimates in \(M_{\rm small}\). The main ingredient to obtain \(C^0\)-estimates in \(M_{\rm small}\) are certain \textit{growth estimates} that solutions of \(\mathcal{L}h=f\) satisfy. These estimates are contained in the following \Cref{trivial Einstein variation and a priori estimate in a cusp} and \Cref{Growth estimate in a tube - pinched curvature} which are the main results of this section.

We begin with the growth estimate in a cusp. Besides the growth estimate, this result also states that there is a canonical choice of trivial Einstein variation inside a cusp (for tubes the canonical choice of trivial Einstein variation was given by \Cref{Canonical choice of trivial Einstein variation}).

\begin{prop}[Growth estimate in a cusp]\label{trivial Einstein variation and a priori estimate in a cusp} For all \(\alpha \in (0,1)\), \(\Lambda \geq 0\), \(\lambda \in (0,1)\), \(b > 1\), \(\delta \in (0,2)\), and \(\eta \geq 2+\lambda\) there exists \(\varepsilon_0>0\) with the following property. 

Let \(M\) be a finite volume \(3\)-manifold that satisfies 
\[
	|\sec+1| \leq \varepsilon_0, \quad  ||\Ric(g)||_{C^0(M)} \leq \Lambda,
\]
and
\[
	\max_{\pi \subseteq T_xM}|\mathrm{sec}(\pi)+1|, \, |\nabla R|(x), \, |\nabla^2R|(x) \leq \varepsilon_0 e^{-\eta d(x,\partial M_{\rm small})} \quad \text{for all } \, x \in M_{\rm small}.
\] 
Let \(f \in C_\lambda^{0,\alpha}\big({\rm Sym}^2(T^*M)\big)\), and let \(h \in C^{2}\big({\rm Sym}^2(T^*M)\big)\) with \(||h||_{C^0(M)} < \infty\) be a solution of 
\[
	\mathcal{L}h=f.
\]
Fix a cusp \(C\) of \(M\). Then there exists a unique trivial Einstein variation \(v\) in \(C_{\rm small}\) satisfying
\[
	||h-v||_{C_{\lambda}^0(C_{\rm small})} < \infty,
\]
and we have
\[
	|v| =O(||f||_{0,\lambda}).
\]
Moreover, if \(||h||_{C^0(M)},||f||_{C^{0,\alpha}(M)} \leq 1\), then for all \(x \in C_{\rm small}\) it holds
\begin{equation}\label{growth estimate in a cusp - eq}
	|h|(x) = O\Big(||f||_{0,\lambda}+e^{-r(x)}\Big)
\end{equation}
and
\begin{equation}\label{exponential growth estimate for h-v in cup}
	e^{\lambda r(x)}|h-v|(x) =  O\Big(||f||_{0,\lambda}+e^{-(1-\lambda)r(x)}\Big),
\end{equation}
where \(r(x)=d(x,\partial C_{\rm small})\).
\end{prop}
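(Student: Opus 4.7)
The plan is to replace $\mathcal{L}$ by the model operator $\mathcal{L}_{cusp}$ of the hyperbolic cusp metric $g_{cusp}=e^{-2r}g_{Flat}+dr^2$ supplied by \Cref{Existence of approximate cusp metric}, and reduce the problem to a linear ODE system in $r$ via Fourier analysis on the flat torus $T^2$. The estimate $|g-g_{cusp}|_{C^2}(x)=O(\varepsilon_0 e^{-\eta r(x)})$ rewrites the equation as
\[
  \mathcal{L}_{cusp}h = f + E(h), \qquad |E(h)|(x)=O\bigl(\varepsilon_0 e^{-\eta r(x)}|h|_{C^2}(x)\bigr).
\]
The hypothesis $\eta\geq 2+\lambda>2$ makes $E(h)$ decay strictly faster than every exponential rate appearing below, so it is treatable as a perturbation. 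Because $g_{cusp}$ has bounded geometry, interior Schauder estimates on balls of fixed $g_{cusp}$-radius in the universal cover of $C_{\rm small}$ convert any exponentially weighted $C^0$-bound on $h$ into a $C^2$-bound with the same weight, which is what allows us to feed $|h|_{C^2}$ back into $E(h)$.

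\textbf{Fourier decomposition and ODE analysis.} Expanding the coefficients of $h$ in coordinates $(x^1,x^2,r)$ into Fourier series on $T^2$, translation invariance of $\mathcal{L}_{cusp}$ along $T^2$ decouples $\mathcal{L}_{cusp}h = f + E(h)$ into linear ODEs in $r$, one for each Fourier mode. On the non-zero Fourier modes the spectral gap of $-\Delta_{T^2}$ produces exponential damping of rate at least $e^{-r}$ in the $g_{cusp}$-norm. On the zero mode one is left with a finite-dimensional linear ODE system in $r$ whose fundamental solutions split into three groups: a two-dimensional kernel consisting of the trivial Einstein variations $u=e^{-2r}u_{ij}dx^idx^j$ with $\sum u_{ii}=0$ (of constant $g_{cusp}$-norm, and in $\ker \mathcal{L}_{cusp}$ by \Cref{Lu=0}), a collection of fundamental solutions decaying at rate $e^{-r}$ or faster, and exponentially growing modes.

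\textbf{Extraction of $v$ and the estimates.} The assumption $\|h\|_{C^0(M)}<\infty$ rules out the growing fundamental solutions, so the zero-mode part of $h$ splits uniquely as $v+w$ with $v$ a trivial Einstein variation and $w$ decaying like $e^{-r}$; the non-zero Fourier part also decays at rate $e^{-r}$. To estimate $v$ and $h-v$, I would apply the stability \Cref{Stability of ODEs} to the variation-of-constants formula on $[r,\infty)$ for the inhomogeneity $f+E(h)$. The exponential weight $e^{-(2-\delta)r_x(y)}$ built into $\|f\|_{0,\lambda}$ is precisely matched to the natural growth rates of the homogeneous ODE system: paired with the fundamental solutions it yields $|v|=O(\|f\|_{0,\lambda})$ as the value at $r\to\infty$ and the refined estimate (\ref{exponential growth estimate for h-v in cup}) on the decaying part, whose sum is (\ref{growth estimate in a cusp - eq}). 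Uniqueness of $v$ is immediate: two candidates differ by a trivial Einstein variation whose pointwise $g_{cusp}$-norm is constant, which together with $C^0_\lambda$-decay forces it to vanish.

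\textbf{Main obstacle.} The principal technical difficulty is the self-referential character of $E(h)$, which depends on $|h|_{C^2}$ rather than on the quantities being estimated. I expect to resolve this by a one-step bootstrap: first use $\|h\|_{C^0(M)}\leq 1$ together with interior Schauder on the universal cover to obtain a uniform bound on $|h|_{C^2}$; then insert this into $E(h)$ and observe that the strict inequality $\eta\geq 2+\lambda$ makes the $E(h)$-contribution to the weighted ODE integrals genuinely subleading relative to every rate coming from the homogeneous system, so the ODE estimates close without further iteration and without the need for a continuation-in-$\varepsilon_0$ argument.
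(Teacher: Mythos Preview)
Your overall architecture---pass to the hyperbolic model $g_{cusp}$, reduce to an ODE in $r$, identify the trivial Einstein variation as the constant mode---matches the paper's. The paper, however, does not use a full Fourier expansion: it takes only the zero mode via an averaging operator $\hat h$ and controls the oscillatory remainder by the elementary mean-value inequality $|h-\hat h|(x)\le c\,\mathrm{diam}(T(r))\max_{T(r)}|h|_{C^1}=O(e^{-r})$. Your spectral-gap claim for the non-zero Fourier modes would require analysing the singular ODEs $u_\xi''-2u_\xi'-e^{2r}|\xi|^2 u_\xi=\cdots$ uniformly in $\xi$; this can be done but is more work than the one-line averaging argument.

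There is a genuine gap in how you obtain $|v|=O(\|f\|_{0,\lambda})$. Your bootstrap gives only $|h|_{C^2}=O(1)$, so $E(h)=O(\varepsilon_0 e^{-\eta r})$, and feeding this into variation-of-constants on the zero-mode ODE produces a contribution of size $O(\varepsilon_0)$ to the coefficient of the constant fundamental solution, not $O(\|f\|_{0,\lambda})$. Worse, the initial data at $r=0$ are controlled only by $\|h\|_{C^0}\le 1$, which again gives $O(1)$. You would end up with $|v|=O(1)$, which is useless for the later contradiction argument where one needs $|v_l^i|\to 0$ as $\|f^i\|_{0,\lambda}\to 0$.

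The paper closes this by invoking two outside ingredients you have not mentioned. First, the a priori estimate on $M_{\rm thin}\setminus M_{\rm small}$ (\Cref{A-priori estimate away from the small part}) gives $|h|=O(\|f\|_{0,\lambda})$ on $\partial C_{\rm small}$, so the ODE initial data are already of the right size. Second, the perturbation $E(h)$ is controlled not through a $C^0$ bound on $|h|_{C^2}$ but through an integrated quantity $\psi(r)=\int_{T(r)}|h|_{C^2}\,d\mathrm{vol}_2$, for which one proves $\|\psi\|_{L^1}=O(\|f\|_{0,\lambda})$ via weighted $H^2$ estimates (ultimately \Cref{weighted integral estimate - smooth} and again the boundary control). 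Both inputs use the weighted-$L^2$ part of $\|f\|_{0,\lambda}$, which your sketch never touches.
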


We refer to \Cref{big O notation} for our convention of the \(O\)-notation. The component \(\sup_{x \nin E}||f||_{L^2(M;\omega_x)}\) of the norm \(||f||_{0,\lambda}\) is \textit{not} needed for this estimate. For this reason we don't have to include constants \(\bar{\epsilon}>0\) and \(r_0 \geq 1\) in the formulation of \Cref{trivial Einstein variation and a priori estimate in a cusp} as these only enter the definition of the set \(E\). 

The estimate in a tube is very similar, but it additionally involves the distance to the core geodesic.

\begin{prop}[Growth estimate in a tube]\label{Growth estimate in a tube - pinched curvature}Let all the constants and the manifold \(M\) be as in \Cref{trivial Einstein variation and a priori estimate in a cusp}. Let \(f \in C_\lambda^{0,\alpha}\big({\rm Sym}^2(T^*M)\big)\) with \(||f||_{0,\lambda} \leq 1\), and 
let \(h \in C^{2,\alpha}\big({\rm Sym}^2(T^*M)\big)\) with \(||h||_{C^0(M)} \leq 1\) be a solution of 
\[
	\mathcal{L}h=f.
\]
Fix a Margulis tube \(T\) of \(M\), and denote its core geodesic by \(\gamma\). For all \(x \in T_{\rm small} \setminus N_1(\gamma)\) it then holds
\begin{equation}\label{Growth estimate in small part}
	|h|(x) = O \left(||f||_{0,\lambda}+ e^{- r_{\partial T}(x)}+e^{-\frac{3}{2}r_{\gamma}(x)} \right),
\end{equation}
where \(r_{\partial T}(x)=d(x,\partial T_{\rm small})\), and \(r_{\gamma}(x)=d(x,\gamma)\).
\end{prop}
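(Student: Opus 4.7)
The plan is to extend the proof of \Cref{trivial Einstein variation and a priori estimate in a cusp} to the tube setting, where the essential new feature is that the region \(T_{\rm small}\setminus N_1(\gamma)\cong T^2\times[0,R_k-1]\) is bounded in the radial direction on both sides rather than semi-infinite. First I would use \Cref{tubes are almost cusps - pinched curvature} to approximate \(g\) on this region by the hyperbolic cusp model \(g_{cusp}=e^{-2r}g_{Flat}+dr^2\), writing \(\mathcal L=\mathcal L_{cusp}+\mathcal E\) with the pointwise bound
\[
|\mathcal E(h)|(x)=O\bigl((e^{-2r_\gamma(x)}+\varepsilon_0 e^{-\eta r_{\partial T}(x)})|h|_{C^2}(x)\bigr),
\]
so that the equation becomes \(\mathcal L_{cusp}h=f-\mathcal E(h)\). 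Since \(\eta\geq 2+\lambda\) and \(\|h\|_{C^0(M)}\leq 1\), lifting to the universal cover and applying the Schauder control of \Cref{Schauder for tensors} (in the sense of \Cref{Hölder norms without inj rad bound}) gives an effective \(C^2\)-bound on \(h\), so that \(\mathcal E(h)\) is small enough to be absorbed perturbatively through a bootstrap.

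The central step is a Fourier/mode decomposition of \(h\) along the torus factor \(T^2\), using the flat metric induced by \(g_{cusp}\). For the warped-product metric \(g_{cusp}\), the operator \(\mathcal L_{cusp}\) respects this decomposition and reduces, on each Fourier mode, to an ODE system in the radial variable \(r=r_{\partial T}\) with explicit indicial exponents. Nonzero Fourier modes are rapidly controlled because their radial solutions grow or decay double-exponentially; they contribute only to the \(e^{-r_{\partial T}(x)}\) term near the outer boundary. The zero mode (constant on \(T^2\)), in which the trivial Einstein variations appear as homogeneous solutions \(\mathcal L_{cusp}u=0\), has ODE exponents whose slowest decaying solution from the outer boundary is \(e^{-r_{\partial T}(x)}\), while the slowest decaying solution from the inner boundary near the core geodesic is \(e^{-\frac{3}{2}r_\gamma(x)}\); the exponent \(\tfrac32\) is intrinsic to the indicial analysis of the Lichnerowicz operator on symmetric \((0,2)\)-tensors in a hyperbolic tube and reflects the regular-singular behavior of \(\mathcal L_{cusp}\) in the coordinate \(r_\gamma\) near \(\gamma\).

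With the modal solutions in hand, each Fourier mode of \(h\) can be expressed by variation of parameters / Green's function representation in terms of the two-sided boundary data and the forcing \(f-\mathcal E(h)\): at \(r_{\partial T}=0\) the bound \(\|h\|_{C^0(M)}\leq 1\) controls the outer boundary values, and at \(r_\gamma=1\) the same bound together with interior Schauder estimates controls the inner ones. Integrating the Green's function against the \(C_\lambda^{0,\alpha}\)-weighted bound for \(f\) produces the \(\|f\|_{0,\lambda}\) contribution, and the \(\mathcal E(h)\) piece closes perturbatively thanks to the decay rate \(\eta\geq 2+\lambda\). The main obstacle is precisely the zero-mode analysis: unlike the cusp case of \Cref{trivial Einstein variation and a priori estimate in a cusp}, where a unique trivial Einstein variation at infinity can be extracted and subtracted before estimating, here both ends are at finite distance and one must instead show directly that the two-sided boundary conditions, combined with the smallness of \(f\), force any trivial Einstein variation component of \(h\) to be controlled by \(\|f\|_{0,\lambda}+e^{-r_{\partial T}(x)}+e^{-\frac{3}{2}r_\gamma(x)}\). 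Verifying this zero-mode bound, and closing the bootstrap for \(\mathcal E(h)\) in the appropriate weighted norms, constitutes the core technical content of the argument.
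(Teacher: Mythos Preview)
Your overall strategy---reduce to the cusp model and analyze a radial ODE---matches the paper's, but you have a genuine conceptual error about the origin of the $e^{-\frac{3}{2}r_\gamma}$ term. It does \emph{not} come from indicial exponents or regular--singular behavior of $\mathcal L_{cusp}$ near the core geodesic. The model metric $g_{cusp}$ of \Cref{tubes are almost cusps - pinched curvature} is an ordinary warped-product cusp metric on the finite cylinder $T^2\times[0,R-1]$; it has no singular point, and its ODE exponents from (\ref{Lh=f in a cusp in coordinates}) are $\{1\pm\sqrt5\}$, $\{-1,3\}$, $\{0,2\}$. The slowest-decaying homogeneous solution from the inner end is therefore $e^{2(r-R)}=e^{-2r_\gamma}$, not $e^{-\frac{3}{2}r_\gamma}$. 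The exponent $\tfrac32$ enters through the \emph{forcing}: the approximation error $|g-g_{cusp}|_{C^2}=O(e^{-2r_\gamma}+\varepsilon_0 e^{-\eta r_{\partial T}})$ contributes $e^{2(r-R)}$ to $\widehat{f_c}$, and since $2$ is a root of one of the ODEs, the paper deliberately weakens this to $e^{\frac{3}{2}(r-R)}$ to avoid resonance (compare (\ref{growth of hat(f_c) - tube preversion}) with (\ref{growth of hat(f_c) - tube}), and the hypothesis $\mu_k\notin\{\lambda_1,\lambda_2\}$ in \Cref{Growth estimate for ODEs}). If you carried out your indicial analysis you would find $2$, not $\tfrac32$, and be unable to account for the stated estimate.

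The paper also avoids the full Fourier decomposition you propose. It projects onto the zero mode via the averaging operator $\hat\cdot$ (\Cref{Properties of averaging operator}) and disposes of \emph{all} nonzero modes at once through property $v)$, which gives $|h-\hat h|(x)=O(e^{-r_{\partial T}(x)})$ directly from $\|h\|_{C^1}=O(1)$ and the exponentially shrinking tori. For $\hat h$ the equation $\mathcal L_{cusp}\hat h=\widehat{f_c}$ is the explicit ODE system (\ref{Lh=f in a cusp in coordinates}) on $[0,R-1]$, and the two-sided boundary problem is handled by \Cref{Growth estimate for ODEs}, a direct variation-of-parameters estimate using $|y|\leq 1$ near $r=R-1$ together with $|y|(0)$. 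No bootstrap on the error term is needed: the paper controls $\widehat{f_c}$ once, via Schauder estimates giving $\|h\|_{C^2}=O(1)$ and the $L^1$ bound on $\psi$ from \Cref{L^1-estimate for psi}.
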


The main idea to obtain these growth estimates is as follows. On \(M_{\rm small}\) consider the model metric \(g_{cusp}\) obtained in \Cref{Sec:model metrics}. 
Following \cite[p. 901]{Bamler2012}, we define
with respect to this model metric an \textit{averaging operator} that assigns to each tensor \(h\) another tensor \(\hat{h}\) that only depends on \(r=d(\cdot,M \setminus M_{\rm small})\) (we will say in a moment what this means exactly). This averaging operator commutes with the differential operator \(\mathcal{L}\) (if \(\mathcal{L}\) is taken with respect to 
the model metric). So \(\mathcal{L}\hat{h}=\hat{f}\). A key point is that as \(\hat{h}\) and \(\hat{f}\) both only depend on \(r\), the equation 
\(\mathcal{L}\hat{h}=\hat{f}\) actually just is a linear system of ODEs with constant coefficients whose fundamental solutions can be written down explicitly. Using standard ODE arguments we thus obtain growth estimates for \(\hat{h}\). These will yield growth estimates for \(h\) since \(|h-\hat{h}|(x)\) decays exponentially in \(r(x)\). 
%The idea to use an averaging operator to reduce the PDE \(\mathcal{L}h=f\) to a system of ODEs is extracted from \cite[p. 901]{Bamler2012}.

We now explain these ideas in more detail. We start with some terminology. Recall from \Cref{Sec:model metrics} that we call a metric \(g\) on \(T^2 \times I\) (where \(I\) is an interval) a \textit{cusp metric} if it is of the form
\[
	g=e^{-2r}g_{Flat}+dr^2,
\]
where \(g_{Flat}\) is some flat metric on \(T^2\), and \(r\) is the standard coordinate on \(I \subseteq \bbR\). %The cusp metrics that will be important for us are of course the model metrics \(g_{cusp}\) on \(C_{\rm small} \cong T^2 \times [0,\infty)\) and \(T_{\rm small} \setminus N_1(\gamma) \cong [0,R-1]\) obtained in \Cref{tubes are almost cusps - pinched curvature} and \Cref{Existence of approximate cusp metric} (here \(R\) is the radius of \(T_{\rm small}\), i.e., the distance of the core geodesic to \(\partial T_{\rm small}\)). 
We call a covering \(\varphi:\bbR^2 \times I \to T^2 \times I\) \textit{cusp coordinates} if it is of the form \(\varphi(x_1,x_2,r)=(\psi(x_1,x_2),r)\) for some local isometry \(\psi: \bbR^2 \to (T^2,g_{Flat})\). We say that a tensor \(h\) on \(T^2\times I\) \textit{only depends on \(r\)} if its coefficients \(h_{ij}\) in cusp coordinates only depend on \(r\). This can also be stated without reference to local coordinates as follows. Note that there is an isometric \(\bbR^2\)-action on \((T^2\times I,g)\) preserving the level tori \(\{r=\mathrm{const}\}\). Then \(h\) only depends on \(r\) if it is invariant under this isometric \(\bbR^2\)-action. 

Next we explain the averaging operator that assigns to each tensor \(h\) on \(T^2 \times I\) another tensor \(\hat{h}\) that only depends on \(r\). %Here \(T^2 \times I\) is understood to be equipped with a cusp metric. 
The average \(\hat{u}\) of a function \(u:T^2 \times I \to \bbR\) is
\[
	\hat{u}(x):=\frac{1}{\vol_2(T(r))}\int_{T(r)}u(y) \, d\vol_2(y),
\]
where \(r=r(x)\), and \(T(r):=T^2\times \{r\}\). For a \((0,2)\)-tensor \(h\) we define \(\hat{h}\) componentwise, that is,
\[
	(\hat{h})_{ij}(x):=\widehat{h_{ij}}(x)=\frac{1}{\vol_2(T(r))}\int_{T(r)}h_{ij}(y) \, d\vol_2(y),
\]
where the coefficients are with respect to cusp coordinates.
It is clear that this definition is independent of the choice
of cusp coordinates. The averaging
for tensors of any type is defined in exactly the same way.
We collect the properties of this averaging operation in the following lemma. 

\begin{lem}\label{Properties of averaging operator} Let \(T^2\times I\) be equipped with a cusp metric. The averaging operation \(\hat{\cdot}\) has the following properties:
\begin{enumerate}[i)]
\item \(\hat{h}\) only depends on \(r\);
\item There is a universal constant \(c>0\) so that 
\[
	|\hat{h}|(x) \leq c \frac{1}{\vol_2(T(r))}\int_{T(r)}|h|(y) \, d\vol_2(y).
\]
In particular,  \(|\hat{h}|(x)\leq  c\max_{T(r(x))}|h|\) for a universal constant \(c\);
\item If \(h\) is of class \(C^1\), then the same holds true for \(\hat{h}\), and \(\widehat{\nabla h}=\nabla \hat{h}\);
\item \(\hat{\cdot}\) commutes with taking the trace,
 that is, \({\rm tr}(\hat{h})=\widehat{{\rm tr}(h)}\);
\item If \(h\) is \(C^1\), then 
\[
	|h-\hat{h}|(x)\leq cDe^{- r(x)}\max_{T(r(x))}|h|_{C^1},
\]
where \(D:=\diam(T^2,g_{Flat})\) and \(c\) is a universal constant.
\end{enumerate}
\end{lem}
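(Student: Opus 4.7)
The plan is to work in cusp coordinates $(x_1, x_2, r)$ coming from a local isometry $\psi:\bbR^2 \to (T^2, g_{Flat})$, and systematically exploit the fact that the cusp metric $g = e^{-2r}g_{Flat} + dr^2$ is invariant under the isometric $\bbR^2$-action by translation in the torus factor, which preserves each slice $T(r)$. In these coordinates the metric components $g_{ij}$, the inverse $g^{ij}$, the volume factor $e^{-2r}$, and all Christoffel symbols $\Gamma^k_{ij}$ depend only on $r$. This single observation drives everything.

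I will dispatch (i), (ii) and (iv) first, as they are essentially unpackings of the definition. Rewriting the average using $d\vol_2 = e^{-2r}dx_1 dx_2$ on $T(r)$ gives $\hat{h}_{ij}(r) = V^{-1}\int_{T^2} h_{ij}(\cdot, \cdot, r)\, dx_1 dx_2$ with $V = \vol_2(T^2, g_{Flat})$, which settles (i). For (iv), since $g^{ij}$ is constant on each slice, it pulls out of the torus integration to yield $\widehat{g^{ij}h_{ij}} = g^{ij}\hat{h}_{ij}$. For (ii), I would introduce the $g$-orthonormal frame $e_i = e^r \partial_{x_i}$ $(i=1,2)$ and $e_3 = \partial_r$; a short calculation using $d\vol_2 = e^{-2r}dx_1 dx_2$ shows $\hat h(e_a, e_b) = \widehat{h(e_a, e_b)}$ for each $a, b$, and the pointwise inequality $|h(e_a, e_b)| \leq |h|_g$ together with summing over the finitely many frame components delivers both the averaged $L^1$-bound and its $\max$-consequence.

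For (iii), the key observation is that averaging commutes with $\partial_r$ by differentiation under the integral sign, while $\widehat{\partial_{x_k}h_{ij}} = 0 = \partial_{x_k}\hat h_{ij}$ for $k=1,2$ by periodicity on $T^2$. Writing $(\nabla h)_{kij} = \partial_k h_{ij} - \Gamma^l_{ki}h_{lj} - \Gamma^l_{kj}h_{il}$ and pulling the purely $r$-dependent Christoffel factors outside the average then yields $\widehat{\nabla h} = \nabla \hat h$ componentwise.

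The one step that requires genuine work is (v), which I expect to be the main obstacle — though only a bookkeeping obstacle, not a conceptual one. For $x \in T(r)$ and each $y \in T^2$, I would bound $|h_{ij}(y, r) - h_{ij}(x)|$ by integrating $|\partial_k h_{ij}|$ along the coordinate-straight segment in $T(r)$ from $x$ to $(y,r)$, whose $(x_1,x_2)$-coordinate length is at most $D := \diam(T^2, g_{Flat})$. Using the explicit Christoffel symbols ($\Gamma^r_{kk} = e^{-2r}$ and $\Gamma^i_{ri} = -1$ are the only nonzero ones in the relevant cases) and converting coordinate components of $h$ and $\nabla h$ back to the orthonormal frame above, one obtains index-dependent estimates of the form $|\partial_k h_{ij}|(y) \leq C\,e^{-a_{ij}r}\max_{T(r)}|h|_{C^1}$. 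These decay exponents cancel exactly with the inverse exponential factors appearing in the explicit expression
\[
|h - \hat h|_g^2(x) \;=\; e^{4r}\!\!\sum_{i,j \leq 2}(h_{ij}(x)-\hat h_{ij})^2 \;+\; 2e^{2r}\!\!\sum_{i\leq 2}(h_{ir}(x)-\hat h_{ir})^2 \;+\; (h_{rr}(x)-\hat h_{rr})^2,
\]
so that after assembling, every term is bounded by $c\,D^2 e^{-2r}\max_{T(r)}|h|_{C^1}^2$. The hard part is purely keeping the four cases for the index-pair $(i,j)$ straight and verifying the exact cancellation of the exponentials; no new ideas are required beyond this.
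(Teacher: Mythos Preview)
Your proposal is correct and follows exactly the approach the paper has in mind: the paper's proof consists of the single sentence that in cusp coordinates the metric components and hence the Christoffel symbols depend only on $r$, so they can be pulled outside the torus integral, and declares the rest straightforward. You have simply filled in the details the paper omits, including the explicit exponent bookkeeping for part (v), which checks out.
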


Property \(v)\) is what allows us to deduce estimates on \(|h|\) from estimates on \(|\hat{h}|\).

\begin{proof}These properties are all straighforward to check. The main fact to observe is that in cusp coordinates the \(g_{ij}\) (and hence the Christoffel symbols) only depend on \(r\), and so one can take \(g_{ij}(y)\) inside the integral \(\int_{T(r)}\) out of the integral.
\end{proof}

Another crucial point is that if \(h\) and \(f\) are \((0,2)\)-tensors that only depend on \(r\), then the equation \(\mathcal{L}h=f\) is a linear system of ODEs with constant coefficients (here \(\mathcal{L}\) is taken with respect to  the cusp metric). Namely, it holds
\begin{align*}
	-2(\mathcal{L}h)(\partial_3,\partial_3)&=(h_{33})^{\prime \prime}-2(h_{33})^{\prime}-4h_{33} ; \\
	-2(\mathcal{L}h)(\partial_i,\partial_3)&=(h_{i3})^{\prime \prime}-4h_{i3}; \\
	-2(\mathcal{L}h)(\partial_i,\partial_j)&=(h_{ij})^{\prime \prime}+2(h_{ij})^{\prime} -2\delta_{ij}\sum_{k=1}^{2}h_{kk},
\end{align*}
where \((\cdot)^{\prime}\) denotes \(\frac{d}{dr}\) and $\partial_3=\frac{\partial}{\partial r}$. 
This can be checked by a straightforward calculation. Note that
\(
	|h|^2=(h_{33})^2+2\sum_{i=1}^{2}(e^rh_{i3})^2+\sum_{i,j=1}^{2}(e^{2r}h_{ij})^2.
\)
Thus we are interested in equations for \(h_{33},e^rh_{i3},e^{2r}h_{ij}\), rather than for \(h_{33},h_{i3},h_{ij}\). Using the above, 
it is straightforward to check that if $h,f$ only depend on $r$, then 
the equation \(\mathcal{L}h=f\) is equivalent to
\begin{equation}\label{Lh=f in a cusp in coordinates}
	\begin{cases}
	(h_{33})^{\prime \prime}\, \, \, \, \, \, \,-2(h_{33})^{\prime}\,\,\,\,\,-4h_{33}&=-2f_{33}  \\
	(e^{r}h_{i3})^{\prime \prime}\,\,-2(e^{r}h_{i3})^{\prime}-3e^{r}h_{i3}&=-2e^{r}f_{i3} \\
	(e^{2r}h_{ij})^{\prime \prime}-2(e^{2r}h_{ij})^{\prime}&=-2e^{2r}f_{ij}+2\delta_{ij}({\rm tr}(h)-h_{33}).
	\end{cases}
\end{equation}
The set of roots of the polynomials associated to these ODEs are \(\{1-\sqrt{5} ,1+\sqrt{5} \}\), \(\{-1 ,  3 \}\) and \(\{ 0 ,  2 \}\). The exact form of this linear system of ODEs is \textit{not} important. All what matters is that it is \textit{some} system of ODEs whose fundamental solutions we can write down explicitly. 
Moreover, tracing the equation \(\mathcal{L}h=f\) yields \(\frac{1}{2}\Delta {\rm tr}(h)+2 {\rm tr}(h)={\rm tr}(f)\). For a function \(u\) that only depends on \(r\) it holds \(-\Delta u=u^{\prime \prime}-2u^\prime\). Thus
\begin{equation}\label{PDE for trace as ODE}
	{\rm tr}(h)^{\prime \prime}-2{\rm tr}(h)^\prime-4{\rm tr}(h)=-2{\rm tr}(f).
\end{equation}
The roots of the polynomial \(Q(X)=X^2-2X-4\) associated to this ODE are \(1 \pm \sqrt{5}\).

At this point we make another important comment as to why we work with the decomposition norm \(||\cdot||_{C_\lambda^{2,\alpha};\ast}\) instead of just the exponential norm \(||\cdot||_{C_\lambda^{2,\alpha}}\) (see \Cref{Subsec: various norms} for the definition of these norms). 

\begin{rem}\label{Use of *-norm}\normalfont
As mentioned previously, the counterexamples of \Cref{Sec: Counterexamples} show that working with the hybrid norms of \Cref{subsection - hybrid norm} will no longer be sufficient in the absence of a positive lower bound on the injectivity radius. A natural condition to rule out the counterexamples of \Cref{Sec: Counterexamples} is to require that the sectional curvatures converge to \(-1\) exponentially fast inside \(M_{\rm small}\),
and thus it is natural to work with weighted Hölder norms. However, if we used \(||\cdot||_{C_\lambda^{2,\alpha}}\) instead of \(||\cdot||_{C_\lambda^{2,\alpha};\ast}\) for definining the hybrid norm in the source space \(C_\lambda^{2,\alpha}\big({\rm Sym}^2(T^*M)\big)\), then the operator \(\mathcal{L}:C_\lambda^{2,\alpha}\big({\rm Sym}^2(T^*M)\big) \to C_\lambda^{0,\alpha}\big({\rm Sym}^2(T^*M)\big)\) would \textit{not} be invertible (with universal constants).
This is because the system (\ref{Lh=f in a cusp in coordinates})
has some constant fundamental solution (coming from the root \(0\)). Therefore, in \(M_{\rm small}\) there are bounded solutions of \(\mathcal{L}h=0\) that are \textit{not} decaying.
 %and so their weighted \(C^0\)-norm would become arbitrarily large. 
 The fundamental solutions corresponding to the root \(0\)
are the tensors with \(e^{2r}h_{ij}=\mathrm{const.}\), but these are exactly the trivial Einstein variations (see \Cref{def:trivialeinstein}). This explains why in the source space we have to work with a weighted norm that isolates trivial Einstein variations and only considers their unweighted \(C^0\)-norms.
\end{rem}

Now we explain in detail how to use the averaging operator and the linear system of ODEs in (\ref{Lh=f in a cusp in coordinates}) to obtain growth estimates in \(M_{\rm small}\). Let \(f \in C_\lambda^{0,\alpha}\big( {\rm Sym}^2(T^*M)\big)\) be arbitrary, and let \(h\) be a solution of 
\[
	\mathcal{L}h=f,
\]
where \(\mathcal{L}\) is the elliptic operator given by \(\mathcal{L}h= \frac{1}{2}\Delta_Lh+2h\) with respect to the given metric \(g\) of \(M\). We start with considering a rank \(2\) cusp \(C\) of \(M\). Note that \(C_{\rm small}\cong T^2 \times [0,\infty)\). Let \(g_{cusp}\) be the model metric on \(C_{\rm small}\) given by \Cref{Existence of approximate cusp metric}. This satisfies
\[
	|g-g_{cusp}|_{C^2}(x)=O(\varepsilon_0e^{-\eta r(x)}),
\]
where \(r(x)=d(x,\partial C_{\rm small})\), and \(\varepsilon_0\), \(\eta\) are the constants appearing in the curvature decay condition (\ref{curvature decay}). Let \(\mathcal{L}_{cusp}\)
be the elliptic operator \(\mathcal{L}_{cusp}h=\frac{1}{2}\Delta_Lh+2h\) with respect to the model metric \(g_{cusp}\). Set \(f_c:=\mathcal{L}_{cusp}h\). Then \(|f-f_c|(x)=O(\varepsilon_0|h|_{C^2}(x)e^{-\eta r(x)})\) by the above estimate on \(|g-g_{cusp}|_{C^2}\). Thus
\begin{align*}
	|f_c|(x) \leq & |f|(x)+O\big(\varepsilon_0|h|_{C^2}(x)e^{-\eta r(x)}\big) \\
	=& O\big( ||f||_{0,\lambda}e^{-\lambda r(x)}+\varepsilon_0|h|_{C^2}(x)e^{-\eta r(x)}\big).
\end{align*}

Let \(\hat{\cdot}\) be the averaging operator with respect to the model metric \(g_{cusp}\). Using \(ii)\) of \Cref{Properties of averaging operator} we get
\begin{align*}
	|\widehat{f_c}|(r) =O\left(||f||_{0,\lambda}e^{-\lambda r}+\varepsilon_0 e^{-(\eta-2) r}\int_{T(r)}|h|_{C^2}(y) \, d\vol_2(y) \right),
\end{align*}
where we used that 
\[\vol_2(T(r))=e^{-2r}\vol_2(\partial C_{\rm small})=O(e^{-2r})\] since by definition \(\diam (\partial C_{\rm small})\) is bounded by a universal constant. Define the function \(\psi:\bbR_{\geq 0} \to \bbR\) by
\begin{equation}\label{def of psi}
	 \psi(r):=\int_{T(r)}|h|_{C^2}(y) \, d\vol_2(y).
\end{equation}
Hence
\begin{equation}\label{growth of hat(f_c)}
	|\widehat{f_c}|(r)=O\left(||f||_{0,\lambda}e^{-\lambda r}+\varepsilon_0 \psi(r)e^{-\lambda r} \right)
\end{equation}
since \(\eta \geq 2+\lambda\). 

By $iii)$ and $iv)$ of \Cref{Properties of averaging operator} and the identity
\(f_c=\mathcal{L}_{cusp}h\),  it holds 
\[
	\mathcal{L}_{cusp}\hat{h}=\widehat{f_c}.
\]
As \(\hat{h}\) and \(\widehat{f_c}\) only depend on \(r\), \(\mathcal{L}_{cusp}\hat{h}=\widehat{f_c}\) is the linear system of ODEs given by (\ref{Lh=f in a cusp in coordinates}). Due to the growth estimate (\ref{growth of hat(f_c)}), we can invoke the following two basic ODE results to obtain a growth estimate for \(\hat{h}\). 

In the formulation of these ODE results, we denote by \(I\) either  \(\bbR_{\geq 0}\) or an interval of the form \([0,R-1]\) for some \(R \geq 2\). Moreover, for a polynomial \(Q=\sum_ma_mX^m\) we write \(Q(\frac{d}{dr})\) for the differential operator \(\sum_ma_m\frac{d^m}{dr^m}\).

\begin{lem}\label{Transfer of exponential rates} Let \(Q \in \bbR[X]\) be a quadratic polynomial with two distinct real roots \(\lambda_1\), \(\lambda_2\). Let \(y:I \to \bbR\) be a solution of the ODE
\[
	Q\left(\frac{d}{dr} \right)(y)=u,
\]
where \(u:I \to \bbR\) is a function satisyfing \(u(r)=\sum_{k=1}^m O\big( \beta_k e^{\mu_k r}\big)\) for some \(\beta_k \in \bbR_{\geq 0}\), and \(\mu_k \in \bbR\setminus \{\lambda_1,\lambda_2\}\). Then
\[
	y(r)=A_1e^{\lambda_1 r}+A_2e^{\lambda_2 r}+\sum_{k=1}^mO\big(\beta_k e^{\mu_k r}\big)
\]
for some constants \(A_1,A_2 \in \bbR\). %Moreover, the universal constants in \(O\big( \beta_ke^{\mu_k r}\big)\) of \(y\) only depend on \(\lambda_1,\lambda_2\) and the distance \(|\mu_k-\lambda_{1,2}|\).
\end{lem}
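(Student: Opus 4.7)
The plan is to argue by superposition. Since $Q$ has two distinct real roots, the homogeneous equation $Q(d/dr)y=0$ has general solution $A_1e^{\lambda_1r}+A_2e^{\lambda_2r}$; so it suffices to produce \emph{some} particular solution $y_P$ of $Q(d/dr)y=u$ that satisfies $|y_P(r)|\leq\sum_{k=1}^m C_k\beta_ke^{\mu_kr}$ for constants $C_k=C_k(Q,\mu_k)$, because then $y-y_P$ solves the homogeneous equation. By linearity of the ODE and the assumed bound $|u(r)|\leq\sum_k c_k\beta_ke^{\mu_kr}$, it is enough to treat each summand separately and add. Thus I reduce to the case where the forcing $u$ satisfies the pointwise bound $|u(r)|\leq\beta e^{\mu r}$ for a single exponent $\mu\notin\{\lambda_1,\lambda_2\}$, and I must produce a solution bounded by $O(\beta e^{\mu r})$.

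The second step is to factor $Q(d/dr)=(d/dr-\lambda_1)(d/dr-\lambda_2)$ and invoke twice the following first-order statement: if $f:I\to\mathbb{R}$ satisfies $|f(r)|\leq\gamma e^{\mu r}$ on $I\in\{[0,\infty),[0,R-1]\}$ with $\mu\neq\lambda$, then there exists a solution $y$ of $y'-\lambda y=f$ with $|y(r)|\leq\frac{\gamma}{|\mu-\lambda|}e^{\mu r}$. To prove this first-order claim, I use the integrating factor $e^{-\lambda r}$, which reduces the ODE to $(e^{-\lambda r}y)'=e^{-\lambda r}f$, and then I choose the base point of integration depending on the sign of $\mu-\lambda$: if $\mu>\lambda$ I integrate from $0$, giving
\[
|y(r)|\leq e^{\lambda r}\int_0^r e^{-\lambda s}|f(s)|\,ds\leq\gamma\,\frac{e^{\mu r}-e^{\lambda r}}{\mu-\lambda}\leq\frac{\gamma}{\mu-\lambda}e^{\mu r};
\]
if $\mu<\lambda$ I integrate from the right endpoint (either $\infty$ when $I=[0,\infty)$, where the integral converges because $e^{(\mu-\lambda)s}$ decays, or $R-1$ when $I=[0,R-1]$), giving the analogous bound $|y(r)|\leq\frac{\gamma}{\lambda-\mu}e^{\mu r}$. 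In either case the constant depends only on $|\mu-\lambda|^{-1}$.

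The third step assembles everything: setting $z:=(d/dr-\lambda_2)y_P$, I first solve $(d/dr-\lambda_1)z=u$ by the first-order lemma, obtaining $|z(r)|\leq\frac{\beta}{|\mu-\lambda_1|}e^{\mu r}$, and then I solve $(d/dr-\lambda_2)y_P=z$ by the same lemma, obtaining $|y_P(r)|\leq\frac{\beta}{|\mu-\lambda_1|\,|\mu-\lambda_2|}e^{\mu r}$. Summing the resulting particular solutions over $k$ (which is where I use $\mu_k\notin\{\lambda_1,\lambda_2\}$, ensuring each denominator is nonzero) produces a particular solution $y_P$ with the desired bound; then $y-y_P$ solves the homogeneous equation and hence equals $A_1e^{\lambda_1r}+A_2e^{\lambda_2r}$ for suitable $A_1,A_2$, which yields the claimed formula.

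There is no serious obstacle: the only point requiring a moment of care is the choice of base point for the integrating-factor representation, since naively integrating from $0$ in the case $\mu<\lambda$ would produce a spurious $e^{\lambda r}$ term that is not absorbed into the $O(e^{\mu r})$ bound. On a finite interval $[0,R-1]$ this is remedied by integrating from the right endpoint, and on $[0,\infty)$ by integrating from $+\infty$ using convergence of the improper integral; in both cases the constant obtained is uniform in $R$ and depends only on $|\mu-\lambda|^{-1}$, as required for the $O$-notation.
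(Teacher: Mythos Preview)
Your proof is correct and is precisely the explicit integral-formula argument the paper alludes to in its one-line proof (``Both of these lemmas follow easily from the explicit integral formulas for solutions of linear ODEs''). The only step that deserves a word of justification is the reduction to a single exponent: since $u$ is not literally given as a sum but only satisfies $|u|\le\sum_k c_k\beta_k e^{\mu_k r}$, you should note that one may decompose $u=\sum_k u_k$ with $|u_k|\le c_k\beta_k e^{\mu_k r}$ (for instance via $u_k:=u\cdot c_k\beta_k e^{\mu_k r}\big/\sum_j c_j\beta_j e^{\mu_j r}$), after which linearity and your single-exponent argument apply exactly as written.
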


\begin{lem}\label{Transfer of exponential rates - L^1 condition} Let \(Q \in \bbR[X]\) be a quadratic polynomial with two distinct real roots \(\lambda_1\), \(\lambda_2\). Let \(y:I \to \bbR\) be a solution of 
\[
	Q\left(\frac{d}{dr}\right)(y)=u,
\]
where \(u\) satisfies \(|u(r)|\leq e^{a r}\psi(r)\) for some \(a \in \bbR\) and \(\psi \in L^1(\bbR_{\geq 0})\). Then
\[
	y(r)=A_1e^{\lambda_1 r}+A_2e^{\lambda_2 r}+O\big(||\psi||_{L^1(\bbR_{\geq 0})}e^{ar} \big)
\]
for some \(A_1,A_2 \in \bbR\).
\end{lem}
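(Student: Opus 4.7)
The plan is to exploit the factorization $Q(d/dr)=(d/dr-\lambda_1)(d/dr-\lambda_2)$ and to construct an explicit particular solution via variation of parameters, choosing the integration direction carefully so as to keep the exponential growth under control. Since any two particular solutions of $Q(d/dr)y=u$ differ by an element of the two-dimensional homogeneous kernel $\mathrm{span}\{e^{\lambda_1 r},e^{\lambda_2 r}\}$, and this kernel will be absorbed into the term $A_1 e^{\lambda_1 r}+A_2 e^{\lambda_2 r}$, it suffices to exhibit some particular solution $\tilde y_p$ with the pointwise bound $|\tilde y_p(r)|=O(||\psi||_{L^1}e^{ar})$.

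First I would start from the Green-kernel representation
\[
y_p(r)=\frac{1}{\lambda_1-\lambda_2}\int_0^r\bigl(e^{\lambda_1(r-s)}-e^{\lambda_2(r-s)}\bigr)u(s)\,ds,
\]
which one checks directly solves $Q(d/dr)y_p=u$, and then split it into its two summands. For each summand I would choose the direction of integration according to the sign of $a-\lambda_i$. If $a\geq\lambda_i$, I keep the integral from $0$ to $r$ and estimate
\[
\left|e^{\lambda_i r}\int_0^r e^{(a-\lambda_i)s}\psi(s)\,ds\right|\leq e^{\lambda_i r}\cdot e^{(a-\lambda_i)r}||\psi||_{L^1}=e^{ar}||\psi||_{L^1},
\]
using $e^{(a-\lambda_i)s}\leq e^{(a-\lambda_i)r}$ for $s\leq r$. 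If instead $a<\lambda_i$, the forward integral genuinely grows like $e^{\lambda_i r}$, so I would replace $\int_0^r e^{\lambda_i(r-s)}u(s)\,ds$ by $-\int_r^{\infty}e^{\lambda_i(r-s)}u(s)\,ds$. The difference between these two expressions equals $e^{\lambda_i r}\int_0^{\infty}e^{-\lambda_i s}u(s)\,ds$, whose absolute convergence follows from the bound $|e^{-\lambda_i s}u(s)|\leq e^{(a-\lambda_i)s}\psi(s)\leq\psi(s)\in L^1(\mathbb{R}_{\geq 0})$; this difference lies in the homogeneous kernel and so only modifies the constants $A_1,A_2$. The resulting backward integral is then bounded by
\[
e^{\lambda_i r}\int_r^{\infty}e^{(a-\lambda_i)s}\psi(s)\,ds\leq e^{\lambda_i r}\cdot e^{(a-\lambda_i)r}||\psi||_{L^1}=e^{ar}||\psi||_{L^1},
\]
using $a-\lambda_i<0$ and $s\geq r$.

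If the interval $I=[0,R-1]$ is bounded, I would extend $u$ by zero past $R-1$; this preserves the pointwise bound $|u|\leq e^{ar}\psi$ and makes the improper integrals above well-defined, while leaving the ODE on $I$ unchanged. The only genuinely delicate point is the flip of integration direction whenever $a<\lambda_i$: integrating from $0$ on the wrong side of the threshold $a=\lambda_i$ produces a term proportional to $e^{\lambda_i r}$ that cannot be absorbed into $O(||\psi||_{L^1}e^{ar})$, whereas integrating from $\infty$ cleanly shifts that unwanted term into the homogeneous part of the solution. Once this dichotomy is set up, the remainder is direct computation, and combining the two modified summands yields the representation $y(r)=A_1 e^{\lambda_1 r}+A_2 e^{\lambda_2 r}+\tilde y_p(r)$ with $|\tilde y_p(r)|=O(||\psi||_{L^1}e^{ar})$, as claimed.
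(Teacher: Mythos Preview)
Your proof is correct and is precisely what the paper has in mind: the paper's proof consists of the single sentence ``Both of these lemmas follow easily from the explicit integral formulas for solutions of linear ODEs,'' and your variation-of-parameters argument with the case split on the sign of $a-\lambda_i$ is the natural way to unpack that sentence. The only detail worth noting is that the constant hidden in the $O(\cdot)$ also absorbs the leading coefficient of $Q$ (not just $\lambda_1,\lambda_2,a$), but this is harmless since in all applications the polynomials are explicit.
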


In \Cref{Transfer of exponential rates} and \Cref{Transfer of exponential rates - L^1 condition}, the universal constant absorbed by \(O(...)\) is allowed to depend on \(\lambda_1\), \(\lambda_2\), and \(a\), but \textit{not} on \(R\) (in case \(I=[0,R-1]\)). We again refer to \Cref{big O notation} for our convention of the \(O\)-notation.

\begin{proof}[Proof of \Cref{Transfer of exponential rates} and \Cref{Transfer of exponential rates - L^1 condition}] Both of these lemmas follow easily from the explicit integral formulas for solutions of linear ODEs.
\end{proof}

In order to successfully apply \Cref{Transfer of exponential rates - L^1 condition}, we need to control the \(L^1\)-norm of the function \(\psi\) defined in (\ref{def of psi}). This is expressed in the following lemma. 

\begin{lem}\label{L^1-estimate for psi} Let the constants and the manifold \(M\) be as in \Cref{trivial Einstein variation and a priori estimate in a cusp}. Let \(f \in C^{0,\alpha}\big({\rm Sym}^2(T^*M)\big)\) with \(||f||_{C^{0,\alpha}(M)} < \infty\), and let \(h \in C^2({\rm Sym}^2(T^*M))\cap H_0^1(M)\) be a solution of
\[
	\mathcal{L}h=f.
\] 
Fix a rank 2 cusp \(C\) of \(M\), and let \(\psi:\bbR_{\geq 0} \to \bbR\) be given by
\[
	\psi(r)=\int_{T(r)}|h|_{C^2}(y) \, d\vol_2(y),
\]
where \(T(r) \subseteq C_{\rm small}\) is the torus
of all points of distance \(r\) to \(\partial C_{\rm small}\). Then \(\psi \in L^1(\bbR_{\geq 0})\), and
\[
	||\psi||_{L^1(\bbR_{\geq 0})}=O(||f||_{0,\lambda}),
\]
where \(||\cdot||_{0,\lambda}\) is the norm defined in (\ref{Hybrid 0-norm - exponential}).
\end{lem}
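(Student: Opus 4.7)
The starting point is the co-area formula applied to the distance function $r=d(\cdot,\partial C_{\rm small})$, which is smooth on $C_{\rm small}$ with $|\nabla r|=1$ by \Cref{containedinthin}. This gives
\[
\|\psi\|_{L^1(\mathbb{R}_{\geq 0})} \;=\; \int_{C_{\rm small}} |h|_{C^2}(y)\,dv(y).
\]
I would bound this by Cauchy--Schwarz with the weight $e^{\pm(2-\delta)r/2}$:
\[
\int_{C_{\rm small}} |h|_{C^2}\,dv \;\leq\; \Bigl(\int_{C_{\rm small}} e^{(2-\delta)r(y)}\,dv\Bigr)^{1/2} \Bigl(\int_{C_{\rm small}} e^{-(2-\delta)r(y)}|h|_{C^2}^2\,dv\Bigr)^{1/2}.
\]
The first factor is bounded by a constant depending only on $\delta$ and the curvature bounds: the curvature decay hypothesis, \(\diam(\partial C_{\rm small})\leq D\), and Jacobi field comparison yield $\mathrm{vol}_2(T(r))\leq Ce^{-2r}$ uniformly, so the integrand equals $e^{(2-\delta)r}\mathrm{vol}_2(T(r))\leq Ce^{-\delta r}$, integrable over $r\geq 0$.

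For the second factor I would combine Schauder and De Giorgi--Nash--Moser, both applied in the universal cover $\tilde M$ where the injectivity radius is infinite. The pointwise Schauder estimate (\ref{Schauder pointwise - C^2}) gives
\[
|h|_{C^2}^2(y) \;\leq\; C\bigl(|f|_{C^{0,\alpha}}^2(y)+\sup_{B(\tilde y,\rho)}|\tilde h|^2\bigr),
\]
and \Cref{Nash-Moser} in $\tilde M$ further bounds the sup by $C\bigl(\|\tilde h\|_{L^2(B(\tilde y,2\rho))}^2+\|f\|_{C^0(B(y,2\rho))}^2\bigr)$. The contributions involving $f$ are controlled by $\|f\|_{0,\lambda}$: since $|f|(y),|f|_{C^{0,\alpha}}(y)\leq \|f\|_{C_\lambda^{0,\alpha}}W_\lambda(y) = \|f\|_{0,\lambda}e^{-\lambda r(y)}$ in the cusp, integrating against $e^{-(2-\delta)r}\mathrm{vol}_2(T(r))$ leaves an integrable tail of the form $C(\delta,\lambda)\|f\|_{0,\lambda}^2$.

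The remaining term $\int_{C_{\rm small}} e^{-(2-\delta)r(y)}\|\tilde h\|_{L^2(B(\tilde y,2\rho))}^2\,dv(y)$ is the heart of the argument. Here I would translate from $\tilde M$ back to $M$ via the covering $\pi\colon\tilde M\to M$. Deep in the cusp, $\pi$ folds balls of fixed $\tilde M$-radius $2\rho$ onto the horotori with multiplicity of order $e^{2r(y)}$, while the $M$-volume of the corresponding $M$-neighborhood $N_{2\rho}(y)$ is of order $\rho^3 e^{-2r(y)}$; moreover, for $z$ near $T(r)$ one has $\mathrm{vol}_2(T(r)\cap B(z,2\rho))\leq C e^{-2r(z)}$ because the whole (small) horotorus is swallowed by the ball. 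A Fubini interchange then shows that these $e^{\pm 2r}$ factors cancel exactly, reducing the above quantity to
\[
C\int_{C_{\rm small}} e^{-(2-\delta)r(z)}|h|^2(z)\,dv(z),
\]
with no surviving exponential loss in $r$.

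To close the estimate I would pick a base point $x\in M_{\rm thin}\setminus M_{\rm small}$ within bounded distance of $\partial C_{\rm small}$ (such an $x$ exists because just outside the small part the horotori have diameter slightly exceeding $D$). For $y\in C_{\rm small}$ one has $|d(x,y)-r(y)|\leq C$, so $e^{-(2-\delta)r(z)}\leq Ce^{-(2-\delta)r_x(z)}$. The weighted integral estimate (\ref{integral1}), valid in our finite-volume setting by \Cref{integration by parts works in finite volume case} (note that $h\in H^2(M)$ follows from $h\in H^1_0(M)\cap C^2$ and $f\in L^\infty\cap L^2(M)$ via interior elliptic regularity), yields
\[
\int_M e^{-(2-\delta)r_x}|h|^2\,dv \;\leq\; C\int_M e^{-(2-\delta)r_x}|f|^2\,dv \;\leq\; e^{-b d(x,M_{\rm thick})}\|f\|_{0,\lambda}^2 \;\leq\; \|f\|_{0,\lambda}^2,
\]
the penultimate inequality being the third component of the hybrid norm applied at $x\in M_{\rm thin}\setminus M_{\rm small}$. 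Combining everything yields $\|\psi\|_{L^1(\mathbb{R}_{\geq 0})}\leq C\|f\|_{0,\lambda}$. The main obstacle is the covering-space bookkeeping of the third paragraph: making precise the cancellation between the sheet count of $\pi$ and the intrinsic $M$-volume of $\rho$-neighborhoods, both of which behave like $e^{\pm 2r}$ in the deep cusp, so that the final weight exactly matches the one appearing in the weighted integral estimate.
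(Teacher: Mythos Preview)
Your argument is correct, but the paper takes a considerably shorter route. The key simplification you miss is that $\vol(C_{\rm small})$ is uniformly bounded (since $\diam(\partial C_{\rm small})\leq D$ and the horotori shrink like $e^{-2r}$), so no weight is needed in the Cauchy--Schwarz step: one has directly
\[
\|\psi\|_{L^1(\mathbb R_{\geq 0})}=\int_{C_{\rm small}}|h|_{C^2}\,d\mathrm{vol}\;\le\;C\Bigl(\int_{C_{\rm small}}|h|_{C^2}^2\,d\mathrm{vol}\Bigr)^{1/2},
\]
reducing the problem to an $H^2$-bound for $h$ on $C_{\rm small}$. The paper obtains this $H^2$-bound without any universal-cover bookkeeping: it first invokes \Cref{A-priori estimate with finite distance away from the small part} to get $|h|_{C^2}(x)=O(\|f\|_{0,\lambda})$ on the collar $\{r(x)\le 1\}$, and then applies the weighted integral estimate of \Cref{weighted integral estimate - smooth} with a cutoff $\varphi$ supported in $C_{\rm small}$ (equal to $1$ for $r\geq 1$). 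This yields $\|h\|_{L^2(C_{\rm small})}^2\leq C\|f\|_{L^2(C_{\rm small})}^2+O(\|f\|_{0,\lambda}^2)$, and the upgrade to $H^2$ follows from the same elementary integration-by-parts identities as in Step~1 of \Cref{A-priori estimate for L}, with the boundary terms controlled by the collar estimate. Finally $\|f\|_{L^2(C_{\rm small})}\le C\|f\|_{C^0}\le C\|f\|_{0,\lambda}$ again by finite volume.

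So both arguments work and both ultimately feed into an $L^2$-type estimate coming from \Cref{weighted integral estimate - smooth}; the difference is that you pass through pointwise Schauder and De~Giorgi--Nash--Moser in $\tilde M$ plus a Fubini/multiplicity cancellation, whereas the paper stays entirely in $M$ and uses the already-established pointwise $C^2$-bound near $\partial C_{\rm small}$ as boundary data for a localized energy estimate. Your route has the virtue of being self-contained (it does not call \Cref{A-priori estimate with finite distance away from the small part}), but the covering-space accounting you flag as ``the main obstacle'' is genuinely delicate to write out cleanly, while the paper's argument avoids it altogether.
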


Not all components of the hybrid norm \(||f||_{0,\lambda}\) are needed for this estimate. In fact, we only need \(||f||_{C^0}\) and \(e^{\frac{b}{2}d(x,M_{\rm thick})}||f||_{L^2(M;\omega_x)}\) (\(x \in M_{\rm thin} \setminus M_{\rm small}\)). Moreover, the curvature decay condition (\ref{curvature decay})
is not needed for this estimate.

\begin{proof}By the co-area formula it holds
\begin{equation}\label{L^1 norm of psi=weighted L^1-norm of h}
	\int_0^\infty \psi(r) \, dr=\int_{C_{\rm small}}|h|_{C^2}(x) \, d\vol(x).
\end{equation}
Since by definition \(\diam(\partial C_{\rm small})\) is bounded by a universal constant, it is easy to see that \(\vol(C_{\rm small})\) is also bounded by a universal constant. Combining this with (\ref{L^1 norm of psi=weighted L^1-norm of h}), and using the Cauchy-Schwarz inequality yields
\begin{equation}\label{L^1 norm of psi=weighted L^2-norm of h 2}
	\int_0^\infty \psi(r) \, dr \leq C \left(\int_{C_{\rm small}}|h|_{C^2}^2(x) \, d\vol(x)\right)^{\frac{1}{2}}
\end{equation}
for some universal constant \(C\). Therefore, it suffices to obtain \(H^2\)-estimates of \(h\) in \(C_{\rm small}\).

For \(\varepsilon_0>0\) small enough it follows from \Cref{A-priori estimate with finite distance away from the small part} and Schauder estimates that
\begin{equation}\label{C^2 control at the beginning of C_small}
	|h|_{C^2}(x)=O(||f||_{0,\lambda}) \, \text{ for all }\, x \, \text{ with }\, r(x) \leq 1.
\end{equation}
Choose a smooth bump function \(\varphi:M \to [0,1]\) satisfying \(\varphi=0\) on \(M \setminus C_{\rm small}\), \(\varphi(x)=1\) for \(x \in C_{\rm small}\) with \(r(x) \geq 1\), and \(||\nabla \varphi||_{C^0(M)} \leq 2\). Applying \Cref{weighted integral estimate - smooth} with this \(\varphi\), and using (\ref{C^2 control at the beginning of C_small}) we obtain (for \(\varepsilon_0>0\) small enough)
\[
	\int_{C_{\rm small}}|h|^2 \, d\vol \leq C\int_{C_{\rm small}}|f|^2 \, d\vol + O(||f||_{0,\lambda}^2).
\]
Analogous to Step 1 in the proof of \Cref{A-priori estimate for L} we get \(H^2\)-estimates for the solutions \(h\) of \(\mathcal{L}h=f\) in terms of \(||h||_{L^2}\) and \(||f||_{L^2}\) by standard computations using integration by parts. Because of (\ref{C^2 control at the beginning of C_small}) we can control the boundary terms when invoking integration by parts on \(C_{\rm small}\). Therefore, for some universal constant \(C>0\) it holds
\begin{equation}\label{L^2 estimate inside C small}
	\int_{C_{\rm small}}|h|_{C^2}^2 \, d\vol \leq C\int_{C_{\rm small}}|f|^2 \, d\vol+O(||f||_{0,\lambda}^2).
\end{equation}
Note \(||f||_{L^2(C_{\rm small})}=O(||f||_{C^0})\) since \(\vol(C_{\rm small})\) is bounded by a universal constant. Thus the desired inequality follows from (\ref{L^1 norm of psi=weighted L^2-norm of h 2}) and (\ref{L^2 estimate inside C small}).
\end{proof}

We are now finally in a position to prove the growth estimate in a cusp.

\begin{proof}[Proof of \Cref{trivial Einstein variation and a priori estimate in a cusp}]Later when we prove that \(\mathcal{L}\) is surjective, it will be important to have an argument that only assumes \(h \in C^2\big({\rm Sym}^2(T^*M)\big)\cap H_0^1(M)\), but that does \textit{not} assume \(||h||_{C^0(M)}<\infty\). For this reason we will as long as possible only assume \(h \in C^2\big({\rm Sym}^2(T^*M)\big)\cap H_0^1(M)\), and point out from which point on we really need the assumption \(||h||_{C^0}<\infty\).

Let \(g_{cusp}\) be the model metric in the rank 2 cusp \(C\) given by \Cref{Existence of approximate cusp metric}. Because of the estimate on \(|g-g_{cusp}|_{C^2}\) in \Cref{Existence of approximate cusp metric}, it is irrelevant in \(C_{\rm small}\) whether quantities such as the \(L^2\)-norm are taken w.r.t. \(g\) or \(g_{cusp}\), and we will compute all quantities w.r.t. \(g_{cusp}\). Denote by \(\mathcal{L}_{cusp}\) the elliptic operator \(\frac{1}{2}\Delta_L+2\mathrm{id}\) with respect to the model metric \(g_{cusp}\). We define \(f_c:=\mathcal{L}_{cusp}h\). Moreover, let \(\hat{\cdot}\) denote the averaging operator with respect to the model metric \(g_{cusp}\) (see \Cref{Properties of averaging operator}). In the discussion after \Cref{Properties of averaging operator} we showed (see (\ref{growth of hat(f_c)}))
\[
	\mathcal{L}_{cusp}\hat{h}=\widehat{f_c} \quad \text{and} \quad  |\widehat{f_c}|(r)=O(||f||_{0,\lambda}e^{-\lambda r}+\varepsilon_0 \psi(r)e^{-\lambda r}),
\]
where \(\psi\) was defined in (\ref{def of psi}). Moreover, as \(\hat{h}\), \(\widehat{f_c}\) only depend on \(r\), \(\mathcal{L}_{cusp}\hat{h}=\widehat{f_c}\) is the linear system of ODEs given
by (\ref{Lh=f in a cusp in coordinates}). Namely, by (\ref{PDE for trace as ODE}), and the first two equations in (\ref{Lh=f in a cusp in coordinates}) we have
\[
	\begin{cases}
	Q_1(\frac{d}{dr})({\rm tr}(\hat{h}))&=-2{\rm tr}(\widehat{f_c}) \\
	Q_1(\frac{d}{dr})(\hat{h}_{33})&=-2(\widehat{f_c})_{33} \\
	Q_2(\frac{d}{dr})(e^r \hat{h}_{i3})&=-2e^r (\widehat{f_c})_{i3} 
	\end{cases}
\]
for some quadratic polynomials \(Q_1\) and \(Q_2\) with roots \(\{1-\sqrt{5},1+\sqrt{5}\}\) and \(\{-1,3\}\). As \(|\widehat{f_c}|\) satisfies the above growth estimate, and since \(-\lambda \nin \{1\pm \sqrt{5},-1,3\}\) we can apply \Cref{Transfer of exponential rates} and
\Cref{Transfer of exponential rates - L^1 condition}.

We know \(||\psi||_{L^1(\bbR_{\geq 0})}=O(||f||_{0,\lambda})\) from \Cref{L^1-estimate for psi} (note that \Cref{L^1-estimate for psi} does \textit{not} assume \(||h||_{C^0}<\infty\)). Thus we get from \Cref{Transfer of exponential rates} and \Cref{Transfer of exponential rates - L^1 condition} 
\begin{equation}\label{growth in cusp - growth of coefficients 1}
\begin{cases}
	{\rm tr}(\hat{h})(r)&=a_1 e^{(1-\sqrt{5})r}+a_2e^{(1+\sqrt{5})r}+O\big(||f||_{0,\lambda}e^{-\lambda r} \big); \\
	\hat{h}_{33}(r)&=b_1 e^{(1-\sqrt{5})r}+b_2e^{(1+\sqrt{5})r}+O\big(||f||_{0,\lambda}e^{-\lambda r} \big); \\
	e^r\hat{h}_{i3}(r)&=c_1^{(i)}e^{-r}\, \, \, \, \, \, \, \, \, \, \, +c_2^{(i)}e^{3r}\,\,\,\,\,\,\,\,\,\,\,+O\big(||f||_{0,\lambda}e^{-\lambda r} \big);
	\end{cases}
\end{equation}
for some constants \(a_1,a_2,b_1,b_2,c_1^{(i)},c_2^{(i)} \in \bbR\). Note that \(h \in L^2(C_{\rm small}) \subseteq L^1(C_{\rm small})\) since \(C_{\rm small}\) has finite volume, and \(\vol(T(r))=O(e^{-2r})\), where \(T(r)\subseteq C_{\rm small}\) is the torus all whose points have distance \(r\) to \(\partial C_{\rm small}\). 
Hence \(e^{-2r}|\hat{h}|(r) \in L^1(\bbR_{\geq 0})\).
In particular, \(e^{-2r} {\rm tr}(\hat{h}) (r), e^{-2r}\hat{h}_{33}(r), e^{-2r}\big(e^{r}\hat{h}_{i3}(r)\big) \in L^1(\bbR_{\geq 0})\)
$(i=1,2)$, 
and thus \(a_2=b_2=c_2^{(i)}=0\). 

We know that \(\max_{\partial C_{\rm small}}|h|=O(||f||_{0,\lambda})\) due to \Cref{A-priori estimate away from the small part}, and 
we have \(|\hat{h}|(0)=O\big( \max_{\partial C_{\rm small}}|h|\big)\) by \(ii)\) of \Cref{Properties of averaging operator}. Hence evaluating at \(r=0\) yields
\[
	a_1,b_1,c_1^{(i)}=O\big(||f||_{0,\lambda} \big).
\]
As \(\lambda < 1 < \sqrt{5}-1\) this implies
\begin{equation}\label{growth in cusp - bound on coefficients 1}
	|{\rm tr}(\hat{h})(r)|, |\hat{h}_{33}(r)|, |e^r\hat{h}_{i3}(r)|=O\big(||f||_{0,\lambda} e^{-\lambda r}\big).
\end{equation}

By the last equation in (\ref{Lh=f in a cusp in coordinates}) there is a quadratic polynomial \(Q_3\) with roots \(0\) and \(2\) so that
\begin{align*}
	Q_3\left(\frac{d}{dr}\right)(e^{2r}\hat{h}_{ij})=& 2\delta_{ij}({\rm tr}(\hat{h})-\hat{h}_{33})-2e^{2r}(\widehat{f_c})_{ij} \\
	=& O\big(||f||_{0,\lambda}e^{-\lambda r}+\varepsilon_0\psi(r)e^{-\lambda r} \big),
\end{align*}
where we used the growth rate of \(|\hat{f_c}|\) in (\ref{growth of hat(f_c)}), and the one for \({\rm tr}(\hat{h}), \hat{h}_{33}\) in (\ref{growth in cusp - bound on coefficients 1}). Again invoking \Cref{Transfer of exponential rates} and \Cref{Transfer of exponential rates - L^1 condition}, and using \Cref{L^1-estimate for psi} to estimate \(||\psi||_{L^1(\bbR_{\geq 0})}\), we conclude
\begin{equation}\label{growth in cusp - growth of coefficients 2}
e^{2r}\hat{h}_{ij}=d_1^{(i,j)}+d_2^{(i,j)}e^{2r}+O\big(||f||_{0,\lambda}e^{-\lambda r} \big)
\end{equation}
for some constants \(d_1^{(i,j)},d_2^{(i,j)} \in \bbR\). As before, \(e^{-2r}|\hat{h}|(r) \in L^1(\bbR_{\geq 0})\) implies \(d_2^{(i,j)}=0\). Exactly as before, evaluating at \(r=0\) we obtain
\begin{equation}\label{growth in cusp - bound on coefficients 2}
	d_1^{(i,j)}=O\big(||f||_{0,\lambda} \big).
\end{equation}

Define an Einstein variation \(v\) in \(C_{\rm small}\) by \(v_{ij}(r)=d_1^{(i,j)}e^{-2r}\). Note that \({\rm tr}(\hat{h})=\hat{h}_{33}+{\rm tr}(v)\), and that \({\rm tr}(v)\) is constant. Hence \(\hat{h}_{33}(r),{\rm tr}(\hat{h})(r) \xrightarrow{r \to \infty}0\) implies that \({\rm tr}(v)=0\). Therefore, \(v\) is indeed a trivial Einstein variation. From (\ref{growth in cusp - bound on coefficients 2}) we know 
\[
	|v|=O(||f||_{0,\lambda}).
\]
Moreover, (\ref{growth in cusp - bound on coefficients 1}), (\ref{growth in cusp - growth of coefficients 2}), the fact that \(d_2^{(i,j)}=0\), and the definition of \(v\) imply
\begin{equation}\label{hat(h)-v exponential estimate}
	||\hat{h}-v||_{C_\lambda^0}=O\big(||f||_{0,\lambda}\big).
\end{equation}
In particular, \(|v|=O(||f||_{0,\lambda})\) and (\ref{hat(h)-v exponential estimate}) yield
\begin{equation}\label{bound on hat(h) in C_small}
	\sup_{C_{\rm small}}|\hat{h}|=O(||f||_{0,\lambda}).
\end{equation}
We will need this fact in the proof of the surjectivity of \(\mathcal{L}\). Until this point we did \textit{not} need \(||h||_{C^0}<\infty\), but only \(h \in C^2\big({\rm Sym}^2(T^*M)\big) \cap H_0^1(M)\).

From now on we use the assumption \(||h||_{C^0}<\infty\), which implies \(||h||_{C^1}<\infty\) due to Schauder estimates. 
Since $\lambda <1$, we deduce from \(v)\) of \Cref{Properties of averaging operator} that 
 \(||h-\hat{h}||_{C_\lambda^0}<\infty\) and  hence \(||h-v||_{C_\lambda^0}<\infty\). This proves the existence of a trivial Einstein variation as stated in \Cref{trivial Einstein variation and a priori estimate in a cusp}. Uniqueness of such a trivial Einstein variation is clear because trivial Einstein variations have constant \(C^0\)-norm (with respect to \(g_{cusp}\)).

If we assume \(||h||_{C^0}, ||f||_{C^{0,\alpha}} \leq 1\), then these last considerations can be made more quantitative. Indeed, under this assumption we have \(||h||_{C^1}=O(1)\) by Schauder estimates. Recall that \(\diam (\partial C_{\rm small})\) is bounded by a universal constant due to the definition of the small part. Thus \(|h-\hat{h}|(x)=O\big( e^{-r(x)}\big)\) by \(v)\) of \Cref{Properties of averaging operator}. Together with (\ref{hat(h)-v exponential estimate}) this implies
\[
	e^{\lambda r(x)}|h-v|(x)=O(||f||_{0,\lambda}+e^{-(1-\lambda)r(x)}).
\]
This finishes the proof of (\ref{exponential growth estimate for h-v in cup}). As we already showed \(|v|=O(||f||_{0,\lambda})\), this also yields (\ref{growth estimate in a cusp - eq}). This completes the proof.
\end{proof}

The proof \Cref{Growth estimate in a tube - pinched curvature} is very similar to that of \Cref{trivial Einstein variation and a priori estimate in a cusp}, so we only highlight the differences. 

Let \(T\) be a Margulis tube of \(M\) with core geodesic \(\gamma\). Denote by \(R\) the radius of \(T_{\rm small}\), that is,
the distance from \(\gamma\) to \(\partial T_{\rm small}\). The model metric \(g_{cusp}\) given by \Cref{tubes are almost cusps - pinched curvature} is only defined on \(T_{\rm small} \setminus N_1(\gamma)\), and it satisfies \(|g-g_{cusp}|_{C^2}(x)=O\big(\varepsilon_0 e^{-\eta r(x)}+e^{2(r(x)-R) }\big)\), where \(r(x)=d(x, \partial T_{\rm small})\). Define \(\mathcal{L}_{cusp}\), \(f_c\), and \(\psi\) exactly as in the case of a cusp (the only difference being that now \(\psi\) is only defined on \([0,R-1]\)). As \(||f||_{0,\lambda},||h||_{C^0}\leq 1\) by assumption, it holds \(||h||_{C^2}=O(1)\) due to Schauder estimates. Thus we can estimate \(|\widehat{f_c}-\hat{f}|(x)=O\big(\varepsilon_0\psi(r)e^{-\lambda r}+e^{2(r-R)}\big)\). So \(|\widehat{f_c}|\) satisfies the growth estimate
\begin{equation}\label{growth of hat(f_c) - tube preversion}
	|\widehat{f_c}|(r)=O\big(||f||_{0,\lambda}W_{\lambda}(r)+\varepsilon_0\psi(r)e^{-\lambda r}+e^{2(r-R)}\big),
\end{equation}
where \(W_\lambda\) is the inverse weight entering the definition of \(||\cdot||_{C_\lambda^0}\). In \(T_{\rm small}\) it is given by \(W_\lambda(r)=e^{-\lambda r}+e^{\lambda (r-R)}\). In contrast to the definition of the cusp, \(r\) now only takes values in a bounded interval. For this reason we can \textit{not} get rid of the exponentially growing fundamental solutions as easily as in the case of a cusp. To remedy this, we evoke the following basic ODE lemma.

\begin{lem}\label{Growth estimate for ODEs} Let \(Q \in \bbR[X]\) be a quadratic polynomial with distinct real roots \(\lambda_2 \leq 0 < \lambda_1\), \(R \geq 2\), \(\psi \in L^1([0,R-1])\) so that \(||\psi||_{L^1([0,R-1])}\) is bounded by a universal constant, and let \(\mathcal{W}:[0,R-1] \to \bbR\) be a function of the form 
\[
	\mathcal{W}(r)=\sum_{k=1}^m\beta_k e^{\mu_k r}
\]
for some \(\beta_k \in \bbR_{\geq 0}\), \(\mu_k \nin \{\lambda_1,\lambda_2\}\), so that \(\mathcal{W}(r)\) is bounded by a universal constant for \(r \in [R-2,R-1]\). Let \(y:[0,R-1] \to \bbR\) be a \(C^2\) function with \(|y| \leq 1\) and
\[
	Q\left(\frac{d}{dr}\right)(y)=O\big(\mathcal{W}(r)+\psi(r)e^{-ar}\big)
\]
for some \(a \geq 0\). Then there is a universal constant \(C\) so that
\begin{equation}\label{Growth estimate for ODEs - eq1}
	|y|(r) \leq C \Big( e^{\lambda_2 r} \big(|y|(0)+\mathcal{W}(0)+||\psi||_{L^1([0,R-1])}\big)+e^{\lambda_1(r-R)}+\mathcal{W}(r)+||\psi||_{L^1}e^{-ar}\Big)
\end{equation}
for all \(r \in [0,R-1]\). In particular, 
\begin{equation}\label{Growth estimate for ODEs - eq2}
	|y|(r) \leq C\Big( |y|(0) +\mathcal{W}(0)+||\psi||_{L^1([0,R-1])} + e^{\lambda_1(r-R)}+\mathcal{W}(r)   \Big).
\end{equation}
\end{lem}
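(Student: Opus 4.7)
The plan is to use the factorization $Q(D) = (D-\lambda_1)(D-\lambda_2)$ (where $D=\frac{d}{dr}$) together with variation of parameters, decomposing $y = y_h + y_p^{(1)} + y_p^{(2)}$, where $y_h(r) = A e^{\lambda_1 r} + B e^{\lambda_2 r}$ is a homogeneous solution and $y_p^{(1)}$, $y_p^{(2)}$ are particular solutions for the $\mathcal{W}$ and the $\psi(r)e^{-ar}$ parts of the right-hand side, respectively. For $y_p^{(1)}$, the hypothesis $\mu_k \notin \{\lambda_1,\lambda_2\}$ gives $Q(\mu_k) \neq 0$, so the explicit choice $y_p^{(1)}(r) := \sum_k \frac{\beta_k}{Q(\mu_k)} e^{\mu_k r}$ is a particular solution satisfying $|y_p^{(1)}(r)| = O(\mathcal{W}(r))$ with constant depending only on $\lambda_1,\lambda_2$ and $\{\mu_k\}$.

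The heart of the argument is the choice of $y_p^{(2)}$. The idea, inspired by the standard Green's-function construction, is to use the bounded (forward) fundamental solution $e^{\lambda_2 r}$ from the left and the bounded (backward) fundamental solution $e^{\lambda_1 r}$ from the right:
\[
y_p^{(2)}(r) := \frac{1}{\lambda_2 - \lambda_1}\left[\int_0^r e^{\lambda_2(r-s)} \psi(s) e^{-as}\, ds + \int_r^{R-1} e^{\lambda_1(r-s)} \psi(s) e^{-as}\, ds\right].
\]
A direct computation confirms this is a particular solution of $Q(D) y = \psi(r) e^{-ar}$. Since $e^{\lambda_2(r-s)} \leq 1$ on $[0,r]$ and $e^{\lambda_1(r-s)} \leq 1$ on $[r,R-1]$, factoring $e^{-as}$ appropriately and splitting according to the sign of $\lambda_2+a$ yields the key estimate $|y_p^{(2)}(r)| \leq C(\lambda_1,\lambda_2,a)\, \|\psi\|_{L^1} (e^{\lambda_2 r} + e^{-ar})$; in particular $|y_p^{(2)}(0)| = O(\|\psi\|_{L^1})$.

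Having pinned down these two particular solutions, we determine the constants $A,B$ in $y_h$ from the two-sided boundary data. Setting $r=0$ in $y = y_h + y_p^{(1)} + y_p^{(2)}$ and using $y_p^{(1)}(0) = O(\mathcal{W}(0))$ together with the estimate on $y_p^{(2)}(0)$ gives
\[
A + B = y(0) + O\bigl(\mathcal{W}(0) + \|\psi\|_{L^1}\bigr).
\]
At $r = R-1$ the hypothesis $|y|\leq 1$, the assumption that $\mathcal{W}$ is bounded on $[R-2,R-1]$, and the bound on $y_p^{(2)}(R-1)$ yield $|A e^{\lambda_1(R-1)} + B e^{\lambda_2(R-1)}| \leq C$. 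Solving this $2\times 2$ system (where $e^{\lambda_1(R-1)}$ dominates $e^{\lambda_2(R-1)}$ by $\lambda_1 > 0 \geq \lambda_2$), we extract $|A| \leq C e^{-\lambda_1 R}$ and $|B| \leq C\bigl(|y(0)| + \mathcal{W}(0) + \|\psi\|_{L^1}\bigr)$, so the homogeneous contribution is bounded by $C\bigl(e^{\lambda_2 r}(|y(0)| + \mathcal{W}(0) + \|\psi\|_{L^1}) + e^{\lambda_1(r-R)}\bigr)$. Combining with the bounds on $y_p^{(1)}$ and $y_p^{(2)}$ yields (\ref{Growth estimate for ODEs - eq1}), and (\ref{Growth estimate for ODEs - eq2}) follows by using $e^{\lambda_2 r} \leq 1$ and $e^{-ar} \leq 1$ on $[0,R-1]$.

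The main technical obstacle is the careful bookkeeping in the variation-of-parameters step for $y_p^{(2)}$: to produce the required $\|\psi\|_{L^1} e^{-ar}$ term rather than a spurious factor growing with $R$, one must commit to the forward/backward splitting dictated by the signs of $\lambda_1$ and $\lambda_2$, and then handle the two subcases $\lambda_2 + a \geq 0$ and $\lambda_2 + a < 0$ separately when estimating the forward integral. Everything else is routine linear-ODE bookkeeping, and at no step does the constant depend on $R$, only on $\lambda_1, \lambda_2, a$, and the exponents $\mu_k$.
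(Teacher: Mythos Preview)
Your overall strategy matches the paper's: write $y$ as a homogeneous solution plus a particular solution of controlled size, then pin down the homogeneous coefficients using the bound $|y|\le 1$ near the right endpoint together with the value at $r=0$. The paper packages the particular-solution step as an appeal to the two preceding transfer-of-exponential-rates lemmas, then evaluates at the \emph{two} points $r=R-1$ and $r=R-2$ to obtain a $2\times 2$ system with coefficient matrix $\bigl(\begin{smallmatrix}1&1\\ e^{-\lambda_1}&e^{-\lambda_2}\end{smallmatrix}\bigr)$ (manifestly $R$-independent) for $A_1 e^{\lambda_1(R-1)}$ and $A_2 e^{\lambda_2(R-1)}$, before bringing in $r=0$. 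Your use of the single pair $r=0$, $r=R-1$ also works and is arguably more direct; your explicit forward/backward Green's function for the $\psi$-piece is a nice concrete realization of what the paper's Lemma~\ref{Transfer of exponential rates - L^1 condition} asserts.

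There is, however, one genuine slip. The hypothesis is $Q(D)y = g$ with $|g(r)|\le C\bigl(\mathcal W(r)+|\psi(r)|e^{-ar}\bigr)$, not $Q(D)y=\mathcal W(r)+\psi(r)e^{-ar}$ exactly. Your explicit choice $y_p^{(1)}(r)=\sum_k \beta_k e^{\mu_k r}/Q(\mu_k)$ solves $Q(D)y=\mathcal W$, so $y-y_p^{(1)}-y_p^{(2)}$ is \emph{not} a homogeneous solution and the decomposition $y=y_h+y_p^{(1)}+y_p^{(2)}$ is not valid as written. The fix is immediate: apply your forward/backward Green's-function formula to the actual inhomogeneity $g$,
\[
y_p(r)=\frac{1}{\lambda_2-\lambda_1}\Bigl[\int_0^r e^{\lambda_2(r-s)}g(s)\,ds+\int_r^{R-1}e^{\lambda_1(r-s)}g(s)\,ds\Bigr],
\]
and then insert the bound $|g(s)|\le C\bigl(\mathcal W(s)+|\psi(s)|e^{-as}\bigr)$ \emph{inside} the integrals. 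Your estimate for the $\psi$-piece goes through verbatim; for the $\mathcal W$-piece, computing each exponential integral explicitly (using $\mu_k\notin\{\lambda_1,\lambda_2\}$) produces $O(\mathcal W(r))$ plus terms of the form $O\bigl(\mathcal W(0)\,e^{\lambda_2 r}\bigr)$ from the forward integral and $O\bigl(\mathcal W(R-1)\,e^{\lambda_1(r-R)}\bigr)=O\bigl(e^{\lambda_1(r-R)}\bigr)$ from the backward integral, all of which already appear in the target bound. With this correction the rest of your argument stands.
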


As in \Cref{Transfer of exponential rates} and \Cref{Transfer of exponential rates - L^1 condition} the universal constant \(C\) is allowed to depend on \(\lambda_1\), \(\lambda_2\), and \(a\), but \textit{not} on \(R\).

\begin{proof}As \(\mu_k \nin \{\lambda_1,\lambda_2\}\), and \(\psi \in L^1([0,R-1])\), \Cref{Transfer of exponential rates} and \ref{Transfer of exponential rates - L^1 condition} imply
\begin{equation}\label{Growth estimate for ODEs - eq3}
	y(r)=A_1e^{\lambda_1 r}+A_2e^{\lambda_2 r}+O\big( \mathcal{W}(r)+||\psi||_{L^1}e^{-ar} \big)
\end{equation}
for some constants \(A_1,A_2 \in \bbR\). For ease of notation we abbreviate \(R^\prime:=R-1\). Since \(|y|,\mathcal{W},||\psi||_{L^1}e^{-ar}=O(1)\) on \([R^\prime-1,R^\prime]\) (this uses \(a \geq 0\)), also \(A_1e^{\lambda_1 r}+A_2e^{\lambda_2 r}=O(1)\) on \([R^\prime-1,R^\prime]\). Using this at \(r=R^\prime\) and \(r=R^\prime-1\) gives the linear system of equations
\[
	\begin{pmatrix}
1 & 1 \\
e^{-\lambda_1} & e^{-\lambda_2}
\end{pmatrix}
\begin{pmatrix}
A_1e^{\lambda_1 R^\prime} \\
A_2 e^{\lambda_2 R^\prime}
\end{pmatrix}		
=
\begin{pmatrix}
O(1) \\
O(1)
\end{pmatrix}	.
\]
The operator norm \(||L^{-1}||_{\rm op}\) of the inverse of the linear operator \(L=\begin{pmatrix}
1 & 1 \\
e^{-\lambda_1} & e^{-\lambda_2}
\end{pmatrix}\) only depends on \(\lambda_1,\lambda_2\). Hence \(A_1e^{\lambda_1 R^\prime}\) and \(A_2e^{\lambda_2 R^\prime}\) are bounded by a universal constant. We know \(y(0)=A_1+A_2+O\big( \mathcal{W}(0)+||\psi||_{L^1} \big)\) due to (\ref{Growth estimate for ODEs - eq3}). Invoking (\ref{Growth estimate for ODEs - eq3}) once more we obtain
\begin{align*}
	y(r)=&A_1e^{\lambda_1 r}+A_2e^{\lambda_2 r} \\
	&+O\big(\mathcal{W}(r)+||\psi||_{L^1}e^{-ar} \big) \\
	=& e^{\lambda_1 (r-R^\prime)} \big(A_1e^{\lambda_1 R^\prime} \big) + e^{\lambda_2 r} \Big(y(0)-e^{-\lambda_1 R^\prime}\big(A_1e^{\lambda_1 R^\prime} \big)+O\big(\mathcal{W}(0)+||\psi||_{L^1}\big) \Big) \\
	 &+ O\big( \mathcal{W}(r)+||\psi||_{L^1}e^{-ar} \big) \\
	=&  e^{\lambda_1 (r-R^\prime)} O(1) + e^{\lambda_2 r} \Big(y(0)+e^{-\lambda_1 R^\prime}O(1)+O\big(\mathcal{W}(0)+||\psi||_{L^1}\big) \Big) \\
	&+ O\big( \mathcal{W}(r)+||\psi||_{L^1}e^{-ar} \big),
\end{align*}
that is, there is a universal constant \(C\) so that
\[
	|y|(r) \leq C \Big(e^{\lambda_1 (r-R^\prime)}+e^{\lambda_2 r} \big(|y|(0)+e^{-\lambda_1 R^\prime}+\mathcal{W}(0)+||\psi||_{L^1} \big)+\mathcal{W}(r)+||\psi||_{L^1}e^{-ar}\Big)
\]
for all \(0 \leq r \leq R^\prime\). As \(\lambda_2 < \lambda_1\), so that \(e^{\lambda_2 r}e^{-\lambda_1 R^\prime}\leq e^{\lambda_1(r-R^\prime)}\), and \(R^\prime=R-1\), this completes the proof of (\ref{Growth estimate for ODEs - eq1}).
\end{proof}

To apply \Cref{Growth estimate for ODEs} we need to make sure that the growth rates of \(|\widehat{f_c}|\) are different from the fundamental solutions of the system of ODEs in (\ref{Lh=f in a cusp in coordinates}). For this reason, we weaken the growth control on \(|\widehat{f_c}|\) in (\ref{growth of hat(f_c) - tube preversion}) to
\begin{equation}\label{growth of hat(f_c) - tube}
	|\widehat{f_c}|(r)=O\big(||f||_{0,\lambda}W_{\lambda}(r)+\varepsilon_0\psi(r)e^{-\lambda r}+e^{\frac{3}{2}(r-R)}\big).
\end{equation}
Analogous to the case of a cusp, it still holds \(||\psi||_{L^1([0,R-1])}=O(||f||_{0,\lambda})\). Indeed, the proof of \Cref{L^1-estimate for psi} goes through without modification. Now the proof of \Cref{Growth estimate in a tube - pinched curvature} follows by applying \Cref{Growth estimate for ODEs} with 
\[
	\mathcal{W}(r)=||f||_{0,\lambda}W_\lambda(r)+e^{\frac{3}{2}(r-R)}=||f||_{0,\lambda}\big(e^{-\lambda r}+e^{\lambda (r-R)} \big)+e^{\frac{3}{2}(r-R)}
\]
and \(\psi\) defined in (\ref{def of psi}) componentwise to the linear system of ODEs \(\mathcal{L}_{cusp}\hat{h}=\hat{f_c}\) given in (\ref{Lh=f in a cusp in coordinates}). (To be precise: Similar to the proof of \Cref{Growth estimate in a tube - pinched curvature}, one first applies \Cref{Growth estimate for ODEs} to the equations for \({\rm tr}(\hat{h}),\hat{h}_{33},e^r\hat{h}_{i3}\), and then for \(e^{2r}\hat{h}_{ij}\) one applies \Cref{Growth estimate for ODEs} to the above \(\mathcal{W}\) \(+\) the growth of \({\rm tr}(\hat{h}),\hat{h}_{33}\) obtained from (\ref{Growth estimate for ODEs - eq1}).) This gives growth estimates for \(\hat{h}\). By \(v)\) of \Cref{Properties of averaging operator} we again know \(|h-\hat{h}|(x)=O(e^{-r(x)})\). Remembering that \(r_\gamma=R-r\) will yield the estimate (\ref{Growth estimate in small part}), thus finishing the proof of \Cref{Growth estimate in a tube - pinched curvature}.

%\newpage

\subsection{A priori estimates}\label{Subsec: a priori estimates in non-compact case}

The goal of this section is to prove the a priori estimate of \Cref{Invertibility of L - non-compact case}, that is, that there exists a universal constant \(C\) so that
\[
	||h||_{2,\lambda;\ast} \leq C||\mathcal{L}h||_{0,\lambda} 
\]
for all \(h \in C_\lambda^{2,\alpha}\big({\rm Sym}^2(T^*M)\big)\). The norm \(||\cdot||_{2,\lambda;\ast}\) is a mixture of weighted Sobolev norms and the decomposition norm \(||\cdot||_{C_\lambda^{2,\alpha};\ast}\). By \Cref{integration by parts works in finite volume case}, the weighted Sobolev-estimate \(||h||_{H^2(M;\omega_x)} \leq C||f||_{L^2(M;\omega_x)}\) 
still holds if \(M\) is non-compact but has finite volume. Therefore, it suffices to prove an a priori estimate 
\(
	||h||_{C_\lambda^{2,\alpha}(M);\ast} \leq C||\mathcal{L}h||_{0,\lambda}.
\)
We do this in two steps. First we establish global \(C^{2,\alpha}\)-estimates, and then prove the estimate for the \(\ast\)-norm.

For the proof of a global \(C^{2,\alpha}\)-estimate, 
we adapt the arguments in \cite[Lemma 6.1]{Bamler2012}. 

\begin{prop}\label{C^0 estimate from exponential hybrid norm}For all \(\alpha \in (0,1)\), \(\Lambda \geq 0\), \(\lambda \in (0,1)\), \(b>1\), \(\delta \in (0,2)\), \(r_0 \geq 1\), and \(\eta \geq 2+\lambda\) there 
exist constants \(\varepsilon_0\), \(\bar{\epsilon}_0\) and \(C >0\) with the following property. Let \(M\) be a Riemannian \(3\)-manifold of finite volume that satisfies 
\[
	|\sec+1|\leq \varepsilon_0, \quad ||\nabla \Ric||_{C^0(M)} \leq \Lambda ,
\]
and
\[
	\max_{\pi \subseteq T_xM}|\mathrm{sec}(\pi)+1|, \, |\nabla R|(x), \, |\nabla^2R|(x) \leq \varepsilon_0 e^{-\eta d(x,\partial M_{\rm small})} \quad \text{for all } \, x \in M_{\rm small}.
\]
Then for all \(h \in C_\lambda^{2,\alpha}\big( Sym^2(TM)\big)\) it holds
\[
	||h||_{C^{2,\alpha}(M)} \leq C ||\mathcal{L}h||_{0,\lambda},
\]
where \(||\cdot||_{0,\lambda}\) is the norm defined in (\ref{Hybrid 0-norm - exponential}) with respect to any \(\bar{\epsilon}\leq \bar{\epsilon}_0\).
\end{prop}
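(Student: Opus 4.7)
The plan is to reduce, via Schauder, to a $C^0$-bound on $h$ and then to prove the $C^0$-bound by an argument by contradiction combined with a case analysis based on the location of a near-maximizing point $x_0$ of $|h|$. Since $W_\lambda \leq 2$, the unweighted Hölder norms are dominated by the exponentially weighted ones up to a factor of two, so the exponential Schauder estimate \eqref{Schauder estimate - exponential norm} together with $\|\mathcal{L}h\|_{C^{0,\alpha}_\lambda(M)}\leq \|\mathcal{L}h\|_{0,\lambda}$ reduces the proposition to establishing $\|h\|_{C^0(M)} \leq C\|\mathcal{L}h\|_{0,\lambda}$.

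I would argue this last bound by contradiction. Supposing it fails, there is a sequence of admissible manifolds $(M_n, g_n)$ and tensors $h_n \in C_\lambda^{2,\alpha}\big({\rm Sym}^2(T^*M_n)\big)$ with $\|h_n\|_{C^0(M_n)} = 1$ and $\|\mathcal{L}_n h_n\|_{0,\lambda} \to 0$. Choosing $x_n\in M_n$ with $|h_n|(x_n) \geq 3/4$, the cases $x_n \in (M_n)_{\rm thick}$ and $x_n \in (M_n)_{\rm thin} \setminus (M_n)_{\rm small}$ are handled directly by \Cref{pointwise C^0 estimate given inj rad bound} and \Cref{A-priori estimate away from the small part}, respectively; both give $|h_n|(x_n) \leq C\|\mathcal{L}_n h_n\|_{0,\lambda}\to 0$, contradicting $|h_n|(x_n) \geq 3/4$. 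After passing to a subsequence I may therefore assume $x_n \in (M_n)_{\rm small}$.

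In the remaining small-part case I would invoke the growth estimates. For a cusp, \Cref{trivial Einstein variation and a priori estimate in a cusp} gives $3/4 \leq K(\|\mathcal{L}_n h_n\|_{0,\lambda}+e^{-r_n(x_n)})$, forcing $r_n(x_n):=d(x_n,\partial(C^{(n)})_{\rm small})$ to stay bounded by a universal constant $R_0$; the analogous estimate for tubes via \Cref{Growth estimate in a tube - pinched curvature} forces $x_n$ to stay within universal distance of $\partial(T^{(n)})_{\rm small} \cup N_1(\gamma_n)$. I would then lift to an intermediate cover of $M_n$ that unwraps the cusp (resp.\ tube) subgroup, so that $\widehat M_n$ has uniformly bounded geometry in a large neighbourhood of $\widehat x_n$ by the curvature decay hypothesis \eqref{curvature decay} together with the model-metric comparisons of \Cref{Existence of approximate cusp metric} and \Cref{tubes are almost cusps - pinched curvature}. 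A further subsequence then converges in the pointed Cheeger--Gromov sense to a hyperbolic cusp (resp.\ tube) limit, and the lifts $\widehat h_n$, uniformly $C^{2,\alpha}$-bounded by Schauder, converge in $C^2_{\rm loc}$ to a nontrivial $\widehat h_\infty$ satisfying $\mathcal{L}_\infty \widehat h_\infty = 0$, $\|\widehat h_\infty\|_{C^0} \leq 1$ and $|\widehat h_\infty|(\widehat x_\infty) \geq 3/4$. Since \Cref{A-priori estimate away from the small part} passes to the limit on $\partial M_{\rm small}^{(\infty)}$, we obtain $\widehat h_\infty \equiv 0$ there. The explicit ODE analysis behind the growth estimates, applied on the exact hyperbolic model, then forces $\widehat h_\infty \equiv 0$ globally, as the only bounded solutions decaying at infinity (resp.\ satisfying the tube boundary constraints) are the trivial Einstein variations, which have nonvanishing boundary values; this contradicts $|\widehat h_\infty|(\widehat x_\infty) \geq 3/4$ and completes the proof.

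The main obstacle is the small-part case, where a naive attempt to propagate the boundary bound from $\partial M_{\rm small}$ a bounded distance inward by Schauder loses a multiplicative constant depending on $\|h\|_{C^1(M)} \lesssim \|h\|_{C^0(M)} + \|\mathcal{L}h\|_{C^0(M)}$, creating a circularity that cannot be closed directly. The compactness/rigidity argument outlined above avoids this circularity by replacing the pointwise propagation with a limiting uniqueness statement, but it requires care in the selection of the intermediate cover so that the relevant neighbourhood of $\widehat x_n$, large enough to reach $\partial M_{\rm small}^{(n)}$, has bounded geometry. This is where the curvature decay condition \eqref{curvature decay} with rate $\eta \geq 2+\lambda$ enters essentially, as it ensures that the model-metric approximations of \Cref{Sec:model metrics} converge uniformly along the sequence and that the ODE rigidity on the limit is precisely the one already analyzed in \Cref{Subsec: Growth estimates}.
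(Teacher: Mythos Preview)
Your overall architecture---Schauder reduction to a $C^0$-bound, contradiction with a normalized sequence, case split on the location of the near-maximizer $x_n$, and the use of the growth estimates to constrain $x_n$ inside $M_{\rm small}$---matches the paper exactly, including the handling of the thick and thin-not-small cases.

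The divergence is in how the small-part sub-cases are closed. You propose a uniform mechanism (pass to a cover, extract a pointed limit, kill $\widehat h_\infty$ by ``ODE rigidity''), but the paper is more granular:
\begin{itemize}
\item For cusps, and for tubes with $r_{\partial T}(x_n)$ bounded, no limit is taken. Instead the paper applies \Cref{A-priori estimate with finite distance away from the small part}: since $x_n\in N_{\bar R}(M\setminus M_{\rm small})$ with a universal $\bar R$, that lemma gives $|h_n|(x_n)\le C(\bar R)\|f_n\|_{0,\lambda}\to 0$ directly. This is precisely the non-circular ``propagation from $\partial M_{\rm small}$'' that you flagged as the obstacle; it comes from preimage counting, not Schauder.
\item For tubes, the paper first proves $R_{k_n}\to\infty$ by a separate compactness step (finite cover of the tube with $\ell(\hat\gamma)\in[1,10]$, a limit tube of finite length on which $h^\infty$ is compactly supported via \Cref{A-priori estimate away from the small part}, then \Cref{Einstein variations with decay are trivial}). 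You do not isolate this case; without it the near-core and near-boundary alternatives are not mutually exclusive.
\item Only in the near-core case with $R_{k_n}\to\infty$ does the paper argue as you do. But the rigidity input there is \Cref{Einstein variations with decay are trivial}, a PDE argument (Poincar\'e inequality plus integration by parts) applied to a solution with $e^{-3r_\gamma/2}$ decay. The ODE analysis of \Cref{Subsec: Growth estimates} governs only the torus-averaged tensor and cannot by itself kill the full $h^\infty$; and the bounded decaying solutions on the infinite tube are zero, not trivial Einstein variations.
\end{itemize}
Finally, ``an intermediate cover of $M_n$ that unwraps the cusp subgroup'' is ambiguous: the cover $\tilde M_n/\pi_1(C_n)$ leaves the cusp geometry unchanged, while the universal cover loses the boundary structure. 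What the paper actually uses in its limit steps is a finite cover of the tube itself, not of $M_n$.
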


\begin{proof}\textbf{Step 1 (Reducing the problem):} Let \(\varepsilon_0>0\) be small enough so that one can apply \Cref{trivial Einstein variation and a priori estimate in a cusp}, \Cref{Growth estimate in a tube - pinched curvature}, and \Cref{Einstein variations with decay are trivial} below, and choose \(\bar{\epsilon}_0 > 0\) so that \Cref{pointwise C^0 estimate given inj rad bound} applies. Due to Schauder estimates (\Cref{Schauder for tensors}) it suffices to prove a \(C^0\)-estimate. 

Arguing by contradiction, assume that such a $C^0$-estimate does not hold.  
Then there exist a sequence of counterexamples, given by a sequence of finite volume Riemannian \(3\)-manifolds \((M^i,g^i)\) satisfying the curvature assumptions stated in \Cref{C^0 estimate from exponential hybrid norm}, and tensors \(h^i \in C_\lambda^{2,\alpha}\big({\rm Sym}^2(T^*M)\big)\) so that
\[
	||h^i||_{C^0(M^i)}=1 \quad \text{and} \quad ||\mathcal{L}_{g^i}h^i||_{0,\lambda} \xrightarrow{i \to \infty}0 ,
\]
where \(||\cdot||_{0,\lambda}\) is the norm defined in (\ref{Hybrid 0-norm - exponential}) with respect to some \(\bar{\epsilon}^i \leq \bar{\epsilon}_0\). Abbreviate \(f^i:=\mathcal{L}_{g^i}h^i\). Choose points \(x^i \in M^i\) with \(|h^i|(x^i) \geq \frac{1}{2}\).  

If \(x^i \in M^i_{\rm thick}\), then \Cref{pointwise C^0 estimate given inj rad bound} shows \(||h^i||_{C^0}\leq C||f^i||_{0} \to 0\), where \(||\cdot||_0\) is the norm defined in (\ref{Hybrid 0-norm}) with respect to \(\bar{\epsilon}^i \leq \bar{\epsilon}_0\). This is a contradiction.

Hence \(x^i \in M_{\rm thin}^i\) for all \(i\) large enough. By \Cref{A-priori estimate away from the small part}, and the definition (\ref{Hybrid 0-norm - exponential}) 
of \(||\cdot||_{0,\lambda}\), it holds \(|h^i|(x)\leq C||f^i||_{0,\lambda}\) for all \(x \in M_{\rm thin}^i\setminus M_{\rm small}^i\). Thus \(x^i \in M^i_{\rm small}\) for all \(i\) large enough. After passing to a subsequence, either \(x^i\) is contained in a cusp \(C_{l^i} \subseteq M^i\) for all \(i\), or \(x^i\) is contained in a Margulis tube \(T_{k^i} \subseteq M^i\) for all \(i\). We distinguish these two cases.

\textbf{Step 2 (Cusps):} First we consider the case that \(x^i\) is contained in a cusp \(C_{l^i}\). Abbreviate \(r^i:=d(x^i,\partial(C_{l^i})_{\rm small})\). By the growth estimate (\ref{growth estimate in a cusp - eq}) in \Cref{trivial Einstein variation and a priori estimate in a cusp} we know that
\[
	|h^i|(x^i) \leq C\big(||f^i||_{0,\lambda}+e^{-r^i} \big)
\]
for a universal constant \(C\). As \(||f^i||_{0,\lambda} \to 0\), this shows that for \(i\) large enough it holds
\[
	\frac{1}{2}\leq |h^i|(x^i)\leq \frac{1}{4}+Ce^{-r^i},
\]
and thus \(r^i \leq \log(4C)\), that is, \(r^i\) is bounded by the universal constant \(\bar{R}=\log(4C)\). By \Cref{A-priori estimate with finite distance away from the small part} there is some \(C(\bar{R})\) so that
\[
	|h^i|(x) \leq C(\bar{R})||f^i||_{0,\lambda}
\]
for all \(x \in N_{\bar{R}}(C_{l^i}\setminus (C_{l^i})_{\rm small})\). In particular, \(|h^i|(x^i)\leq C(\bar{R})||f^i||_{0,\lambda}\). However, this contradicts \(|h^i|(x^i) \geq \frac{1}{2}\) and \(||f^i||_{0,\lambda} \to 0\).

\textbf{Step 3.1 (Radius of \((T_{k^i})_{\rm small}\)):} The goal of this step is to show 
\[
	R_{k^i} \to \infty \quad \text{as }i \to \infty,
\]
where \(R_{k^i}\) is the radius of the small part \((T_{k^i})_{\rm small}\), that is, the distance of the core geodesic \(\gamma_{k^i}\) to \(\partial (T_{k^i})_{\rm small}\). Assume that this is wrong. Then, after passing to a subsequence, it holds \(R_{k^i} \leq \bar{R}\) for all \(i\) for some \(\bar{R} > 0\). Choose a finite cover \(\hat{T}_{k^i} \to T_{k^i}\) so that \(\ell(\hat{\gamma}_{k^i}) \in [1,10]\). Note that the \(\hat{T}_{k^i}\) have uniform lower bounds on the injectivity radius, uniform bounds on the sectional curvature and the covariant derivative of the Ricci tensor. Moreover, \(d(\hat{x}^i,\hat{\gamma}_{k^i})\leq R_{k^i} \leq \bar{R}\) where $\hat x^i$ is a preimage of $x^i$ in $\hat T_{k^i}$. 
Therefore, after passing to a subsequence, it holds
\[
	(\hat{T}_{k^i},\hat{x}^i) \xrightarrow{\text{pointed }\, C^{2,\beta}} (T^\infty,x^\infty),
\]
where the convergence is pointed \(C^{2,\beta}\)-convergence for some \(\beta \in (0,\alpha)\). Here \(T^\infty\) is a negatively curved tube of possibly finite length (see the discussion before \Cref{Einstein variations with decay are trivial} for our definition of a negatively curved tube). After passing to a subsequence we may assume that the limit \(R_{\infty}=\lim_i R_{k^i} \in [0,\bar{R}]\) exists. Since the distance of \((T_{k^i})_{\rm small}\) to \(M_{\rm thick}\) is at least \(\mu/2\) by \Cref{containedinthin}, the length of \(T^\infty\), that is \(\sup_{x \in T^\infty}d(x,\gamma_{\infty})\), is at least \(R_{\infty}+\mu/2\).

Denote the lifts of \(h^i\) and \(f^i\) to \(\hat{T}_{k^i}\) by  \(\hat{h}^i\) and \(\hat{f}^i\). Since \(||\hat{h}^i||_{C^{2,\alpha}(M^i)}\) is uniformly bounded (due to Schauder estimates),
after passing to a subsequence we may assume that \(\hat{h}^i \to h^\infty\) in the pointed \(C^{2,\beta}\)-sense. From \(|h^i(x^i)|\geq \frac{1}{2}\) it follows \(|h^\infty(x^\infty)|\geq \frac{1}{2}\). Similarly, \(\hat{f}^i \to 0\) in the pointed \(C^{0,\beta}\)-sense. Due to the stability of elliptic PDEs, we get \(\mathcal{L}^\infty h^\infty=0\). 

\Cref{A-priori estimate away from the small part} shows that 
\[
	\max_{T_{k^i}\setminus (T_{k^i})_{\rm small}}|h^i|\leq C||f^i||_{0,\lambda}.
\]
The same still holds for \(\hat{h}^i\). As \(R_{k^i} \to R_{\infty}\), and \(||f^i||_{0,\lambda} \to 0\), this implies \(h^\infty=0\) outside of \(N_{R_{\infty}}(\gamma^\infty)\). In particular, this shows that \(h^\infty\) has compact support because the length of \(T^\infty\) is at least \(R_{\infty}+\mu/2\). Note that \(|\sec(M^i)+1|\leq \varepsilon_0\) implies \(|\sec(T^\infty)+1|\leq \varepsilon_0\). By our choice of \(\varepsilon_0\) we can apply \Cref{Einstein variations with decay are trivial} (or rather the comment following it) and conclude \(h^\infty=0\). This contradicts \(|h^\infty(x^\infty)|\geq \frac{1}{2}\). Therefore, \(R_{k^i} \to \infty\). %as \(i \to \infty\)

\textbf{Step 3.2 (Estimate in a tube):} 
By the growth estimate (\ref{Growth estimate in small part}), we know that for all \(x \in (T_{k^i})_{\rm small}\setminus N_1(\gamma_{k^i})\) it holds
\begin{equation}\label{growth estimate in small part 2}
	|h^i|(x) \leq C \Big(||f^i||_{0,\lambda}+e^{- r_{\partial T^i}(x)}+e^{-\frac{3}{2} r_{\gamma_{k^i}}(x) }\Big),
\end{equation}
where \(r_{\partial T^i}(x)=d\big(x,\partial (T_{k^i})_{\rm small}\big)\) and \(r_{\gamma_{k^i}}(x)=d(x,\gamma_{k^i})\). Recall that \(|h^i(x^i)|\geq \frac{1}{2}\), and \(||f^i||_{0,\lambda} \to 0\). Thus there is a subsequence so that either \(r_{\gamma_{k^i}}(x^i)\) or \(r_{\partial T^i}(x^i)\) stays bounded. We show that both these cases lead to a contradiction, thus completing the proof. Note that these two cases exclude each other because of Step 3.1.

\textit{Case 1: \(r_{\partial T^i}(x^i)\) is bounded}

Let \(\bar{R} \in \bbR\) be so that \(r_{\partial T^i}(x^i) \leq \bar{R}\) for all \(i \in \bbN\). In Step 3.1 we showed that the radius \(R_{k^i}\) of \((T_{k^i})_{\rm small}\) goes to infinity. In particular, \(N_{\bar{R}}(M^i\setminus M^i_{\rm small})\) is disjoint from the \(1\)-neighbourhood of the core geodesic \(\gamma_{k^i}\) for all \(i\) large enough. 
Thus by applying \Cref{A-priori estimate with finite distance away from the small part} to the Margulis tube \(T_{k^i}\), we obtain that there is some constant \(C(\bar{R})\) so that
\[
	|h^i|(x) \leq C(\bar{R})||f^i||_{0,\lambda}
\]
for all \(x \in N_{\bar{R}}(T_{k^i}\setminus (T_{k^i})_{\rm small})\). In particular, \(|h^i|(x^i) \leq C(\bar{R})||f^i||_{0,\lambda} \to 0\). But this contradicts \(|h^i|(x^i) \geq \frac{1}{2}\).

\textit{Case 2: \(r_{\gamma_i}(x^i)\) is bounded}

Analogous to Step 3.1 (that is, going to appropriate covers and taking a convergent subsequence) we can construct a tensor \(h^\infty\) on a tube \(T^\infty\) with the following properties: it solves \(\mathcal{L}^\infty h^\infty=0\), and there is a point \(x^\infty \in T^\infty\) so that \(|h^\infty(x^\infty)|\geq \frac{1}{2}\). Moreover, since \(R_{k^i} \to \infty\) the tube \(T^\infty\) is complete and of infinite length. As in Step 3.1 it holds \(|\sec(T^\infty)+1|\leq \varepsilon_0\). Moreover, (\ref{growth estimate in small part 2}), \(||f^i||_{0,\lambda}\to 0\), and \(r_{\partial T^i}(x^i) \to\infty\) imply 
\[
	|h^\infty|(x) \leq Ce^{-\frac{3}{2} r_{\gamma^\infty}(x)}
\]
for all \(x \in T^\infty \setminus N_1(\gamma_\infty)\). So we can apply \Cref{Einstein variations with decay are trivial} and conclude \(h^\infty=0\). However, this contradicts \(|h^\infty(x^\infty)|\geq \frac{1}{2}\).
\end{proof}

In the proof of \Cref{C^0 estimate from exponential hybrid norm} we used the next lemma several times. It is the analog of Proposition 8.3 in \cite{Bamler2012}. However, since we are in the presence of pinched negative curvature there is a much shorter proof than that in \cite{Bamler2012}. In its formulation, a complete negatively curved solid $3$-torus 
is the quotient $T=\Gamma\backslash \tilde{M}$ of a simply connected negatively curved \(3\)-manifold \(\tilde{M}\) by an infinite cyclic group $\Gamma$ of loxodromic isometries. 
The core geodesic of such a solid torus is the projection of the axis \(\tilde{\gamma}\) of the elements of $\Gamma$.

\begin{lem}\label{Einstein variations with decay are trivial}There exists \(\varepsilon_0 > 0\) with the following property. Let \(T\) be a complete solid 
\(3\)-torus with \(|\sec(T)+1| \leq \varepsilon_0\), and let \(h\) be a symmetric \(C^2\)-tensor that solves \(\mathcal{L}h=0\). Assume that for  some constant \(C>0\)
\[
	|h|(x) \leq C e^{-\frac{3}{2} r_{\gamma}(x)}
\]
for all \(x \in T\setminus N_1(\gamma)\), where \(\gamma\) is the core geodesic. Then \(h\) vanishes identically. 
\end{lem}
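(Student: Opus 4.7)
The plan is to use the compactly-supported case of the weighted integral identity (the content of Proposition \ref{weighted integral estimate - smooth}, whose proof for $\varphi\in C_c^\infty$ does not require finite volume of the ambient manifold) applied to the equation $\tfrac{1}{2}\Delta_L h+2h=0$ with a sequence of cutoffs $\varphi_R$ depending only on the distance $r_\gamma$ to the core. Because $T$ has pinched curvature close to $-1$, the curvature error $|Rm-Rm^{\mathrm{hyp}}|$ is uniformly at most $c_0\varepsilon_0$ by (\ref{Rm-Rm^k small iff sec almost k}), and this allows us to absorb the $L^2$-mass of $h$ into itself once the cutoff gradient terms are sent to zero. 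The $3/2$-exponential decay is the sharpest critical rate that matches the volume growth of level distance tori, and this is the reason why it is exactly the hypothesis of the lemma.

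First I would verify that $h\in L^2(T)$. Since $T$ is the quotient of a simply connected manifold of pinched negative curvature by an infinite cyclic loxodromic group, the core curve $\gamma$ is a \emph{closed} geodesic, so $N_1(\gamma)$ is compact and $\int_{N_1(\gamma)}|h|^2\,d\mathrm{vol}<\infty$ by continuity. For $x\in T\setminus N_1(\gamma)$, the hypothesis gives $|h|^2(x)\leq C^2 e^{-3 r_\gamma(x)}$, while Jacobi field comparison with the bounds $|\sec+1|\leq\varepsilon_0$ yields $\mathrm{area}(T(r))\leq C' e^{2r}$ for the distance torus $T(r)=\{r_\gamma=r\}$. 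The co-area formula then gives
\[
\int_{T\setminus N_1(\gamma)}|h|^2\,d\mathrm{vol}\;\leq\;\int_1^\infty C^2 e^{-3r}\cdot C'e^{2r}\,dr\;<\;\infty.
\]

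Next I would choose a smooth cutoff $\chi:\mathbb{R}_{\geq 0}\to[0,1]$ with $\chi=1$ on $[0,1]$, $\chi=0$ on $[2,\infty)$, $|\chi'|\leq 2$, and set $\varphi_R:=\chi(r_\gamma/R)$, approximating $r_\gamma$ by smooth functions of Lipschitz constant close to $1$ as in the proof of Corollary \ref{weighted integral estimate - lipschitz} so that $|\nabla\varphi_R|\leq 3/R$. Applying the compactly-supported case of Proposition \ref{weighted integral estimate - smooth} with $n=3$ and $f=0$ yields
\[
\int_T\varphi_R^2|h|^2\,d\mathrm{vol}\;\leq\;\int_T|\nabla\varphi_R|^2|h|^2\,d\mathrm{vol}+(1+\sqrt{3})\int_T\varphi_R^2|Rm-Rm^{\mathrm{hyp}}||h|^2\,d\mathrm{vol}.
\]
Using $|Rm-Rm^{\mathrm{hyp}}|\leq c_0\varepsilon_0$ from (\ref{Rm-Rm^k small iff sec almost k}) and choosing $\varepsilon_0$ small enough that $(1+\sqrt{3})c_0\varepsilon_0\leq 1/2$, I can absorb the curvature term to obtain
\[
\tfrac{1}{2}\int_T\varphi_R^2|h|^2\,d\mathrm{vol}\;\leq\;\frac{9}{R^2}\int_T|h|^2\,d\mathrm{vol}.
\]
Since $\int_T|h|^2$ is finite, letting $R\to\infty$ the right-hand side tends to $0$ while the left-hand side tends to $\tfrac{1}{2}\int_T|h|^2$ by monotone convergence; hence $h\equiv 0$. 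The only nontrivial step is the $L^2$-integrability of $h$, and this is precisely where the exponent $3/2$ in the decay hypothesis is used to beat the hyperbolic volume growth $e^{2r}$.
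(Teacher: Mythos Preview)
Your argument is correct and is in fact more direct than the paper's proof. The paper proceeds in three steps: first it shows ${\rm tr}(h)=0$ via the maximum principle applied to the scalar equation $\Delta({\rm tr}(h))+2\,{\rm tr}(h)=0$ (which is needed because the Poincar\'e inequality of \Cref{Poincare inequality} applies only to trace-free tensors); second it invokes Schauder estimates to propagate the $e^{-3r_\gamma/2}$-decay to all $\nabla^k h$, ensuring $h\in H^k(T)$ for every $k$; third it uses Gaffney's integration-by-parts theorem together with \Cref{Poincare inequality} and \Cref{Estimate for (Ric(h),h)} to derive a coercivity inequality $0=(\mathcal{L}h,h)_{L^2}\geq c\,\|h\|_{L^2}^2$. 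Your route bypasses all of this by applying the compactly-supported case of \Cref{weighted integral estimate - smooth} with cutoffs $\varphi_R$, which does \emph{not} require ${\rm tr}(h)=0$ (the $-{\rm tr}(\tilde h)^2$ term in \Cref{Estimate for (Ric(h),h)} is simply dropped in that proof), does not require finite volume of the ambient manifold, and only needs $h\in L^2(T)$---which you obtain directly from the decay hypothesis and Jacobi-field area growth. One small imprecision: with $|\sec+1|\leq\varepsilon_0$ the area of $T(r)$ is bounded by $C'e^{2\sqrt{1+\varepsilon_0}\,r}$ rather than $C'e^{2r}$, but for $\varepsilon_0$ small this exponent is still below $3$ (the paper itself records it as $O(e^{5r/2})$), so your integrability conclusion stands. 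The paper's approach extracts more structural information (trace-freeness, decay of all derivatives), but for the lemma as stated your cutoff argument is cleaner.
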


The statement also holds if \(T\) is of finite length (that is, \(T=\Gamma \backslash N_{R}(\tilde{\gamma})\) for some \(R>0\)) if we assume that \(h\) has compact support. Indeed, the same proof will go through since the the argument mostly relies on integration by parts.

\begin{proof}We first argue that \({\rm tr}(h)=0\). Taking the trace in \(\mathcal{L}h=0\) yields 
\[\Delta \big({\rm tr}(h) \big)+2{\rm tr}(h)=0.\] 
By assumption, \(|{\rm tr}(h)|(x) \to 0\) as \(r_{\gamma}(x) \to \infty\). 
So there is some \(x_0 \in T\) with \(|{\rm tr}(h)|(x_0)=||{\rm tr}(h)||_{C^0(T)}\). By possibly replacing \(h\) with \(-h\) we may assume that \({\rm tr}(h)(x_0) \geq 0\). 
Recall that by our sign convention \(-\Delta u={\rm tr}(\nabla^2 u)\). As \({\rm tr}(h)\) assumes its maximum at \(x_0\), we have \(-\Delta \big({\rm tr}(h) \big)(x_0)\leq 0\). Hence
\[
	||{\rm tr}(h)||_{C^0(T)}={\rm tr}(h)(x_0)=-\frac{1}{2}\Delta \big({\rm tr}(h) \big)(x_0) \leq 0
\]
and consequently \({\rm tr}(h)=0\).

By regularity of solutions of elliptic equations and Schauder theory, 
 \(h\) is smooth and 
 all covariant derivatives \(\nabla^kh\) satisfy the same kind of estimate, that is, \(|\nabla^kh|(x)\leq C_k e^{-\frac{3}{2} r_\gamma(x)}\). Jacobi field comparison shows that \(\mathrm{area}(\partial N_R(\gamma))=O(e^{\frac{5}{2}R})\) if \(\varepsilon_0>0\) is small enough. Hence \(\nabla^kh \in L^2(T)\) for all \(k \in \bbN\). Therefore,  \cite{Gaffney1954} shows that one can apply integration by parts to \(h\) and all its derivatives.

The proof of the Poincaré inequality (\Cref{Poincare inequality}) only needed integration by parts and some tensor calculus. Hence the Poincaré inequality also holds in the present situation, and we may apply it to \(h\) since \(h\) has vanishing trace. Thus if \(\varepsilon_0>0\) is small enough it holds
\[
	||h||_{L^2(T)}\leq \frac{1}{3-c\,\varepsilon_0}||\nabla h||_{L^2(T)},
\]
where \(c=c(3)\) is the constant from \Cref{Poincare inequality}. Recall that \(\mathcal{L}h=\frac{1}{2}\Delta h +\frac{1}{2}\Ric(h)+2h\). Since \({\rm tr}(h)=0\), it follows from Lemma \ref{Estimate for (Ric(h),h)} that \(\frac{1}{2}\langle \Ric(h),h\rangle \geq -\big( 3+c^\prime \varepsilon_0\big)|h|^2\)  for a constant \(c^\prime > 0\). Therefore, applying \((\cdot,h)_{L^2(T)}\) to the equation \(\mathcal{L}h=0\) yields (remembering that we are allowed to integrate by parts)
\[
	0=\big(\mathcal{L}h,h \big)_{L^2(T)} \geq \frac{1}{2}||\nabla h||_{L^2(T)}-(1+c^\prime \varepsilon_0)||h||_{L^2(T)} \geq \left(\frac{3-c\,\varepsilon_0}{2}-(1+c^\prime \varepsilon_0) \right)||h||_{L^2(T)}.
\]
This implies \(h=0\) if we choose \(\varepsilon_0>0\) small enough so that \(\left(\frac{3-c\, \varepsilon_0}{2}-(1+c^\prime \varepsilon_0) \right)>0\).
\end{proof}

Our proof of the a priori estimate in \Cref{Invertibility of L - non-compact case} is a variation 
of that for \cite[Proposition 5.1]{Bamler2012}. We explain the central ideas first before presenting the complete proof. It suffices to prove an a priori estimate for the \(||\cdot||_{C_\lambda^0;\ast}\)-norm. Similar to the proof of \Cref{C^0 estimate from exponential hybrid norm} we assume that such an estimate does not hold. So there is a sequence \((M^i,g^i)\) of Riemannian manifolds satisyfing the assumptions of \Cref{Invertibility of L - non-compact case}, and \(h^i \in C_{\lambda}^0\big({\rm Sym}^2(T^*M)\big)\) so that
\[
	||h^i||_{C_\lambda^0;\ast}=1 \quad \text{and} \quad ||f^i||_{0,\lambda} \to 0,
\]
where \(f^i:=\mathcal{L}_{g^i}h^i\). Consider the canonical decompositions
\[
	h^i=\bar{h}^i+\sum_k \rho_k u_k^i+\sum_l \varrho_lv_l^i,
\]
given by \Cref{Canonical choice of trivial Einstein variation} resp. \Cref{trivial Einstein variation and a priori estimate in a cusp}, so that (up to a multiplicative universal constant) it holds
\[
	||h^i||_{C_\lambda^0;\ast}=||\bar{h}^i||_{C_\lambda^0}+\max_{k}|u_k^i|+\max_{l}|v_l^i|.
\]
In this outline we only consider tubes. From \Cref{Canonical choice of trivial Einstein variation} (or rather its proof) we know that \(|u_k^i|=O(||h^i||_{C^0})\), and hence \(|u_k^i| \to 0\) by \Cref{C^0 estimate from exponential hybrid norm}. For simplicity assume that \(||\bar{h}^i||_{C_\lambda^0}=1\), and that there exists a point \(x^i\) in some Margulis tube \(T_{k^i}\) so that \(|\bar{h}^i|_{C_\lambda^0}(x^i)=\frac{1}{W_\lambda(x^i)}|\bar{h}^i|_{C^0}(x^i)=1\). Note that \(x^i \in (T_{k^i})_{\rm small}\),
\[
	r_{\partial T}(x^i) \to \infty \quad \text{and} \quad r_{\gamma_{k^i}}(x^i)\to \infty,
\]
where \(r_{\partial}(x)=d(x,\partial (T_{k^i})_{\rm small})\) and \(r_{\gamma_{k^i}}(x)=d(x,\gamma_{k^i})\) for the core geodesic \(\gamma_{k^i}\) of \(T_{k^i}\). Indeed, otherwise the weight \(\frac{1}{W_\lambda(x^i)}\) is bounded from above by some constant, and so \Cref{C^0 estimate from exponential hybrid norm} would yield a contradiction. 

Abbreviate \(r:=r_{\partial T}\). For simplicity assume that \(r(x^i)=\frac{R_{k^i}}{2}\), 
where \(R_{k^i}\) is the radius of \((T_{k^i})_{\rm small}\), so that the weight \(\frac{1}{W_\lambda}\) is maximal at \(x^i\). 
Let 
%Morally, away from \(x^i\) the inverse weight \(\frac{1}{W_\lambda}(x)\) decays like \(e^{-\lambda |s(x)|}\), where 
\(s(x)=r(x)-\frac{R_{k^i}}{2}\) be the radial function centered at \(x^i\). 
By construction, 
%This has the consequence that \(|\bar{h}^i|(x)\) can grow like \(e^{\lambda |s(x)|}\) (and still satisfy \(||\bar{h}^i||_{C_\lambda^0}=1\)). More precisely, a priori 
the rescaled tensors \(\underline{h}^i:=\frac{1}{W_\lambda(x^i)}\bar{h}^i\) satisfy a growth estimate
\begin{equation}\label{growthofh}
	|\underline{h}^i|(x) \leq C \big(e^{-\lambda s(x)}+e^{\lambda s(x)} \big)
\end{equation}
for some constant \(C\) (independent of \(i\)). 
As in the proof \Cref{C^0 estimate from exponential hybrid norm}, after passing to suitable covers and taking a convergent subsequence, we obtain a two sided infinite hyperbolic cusp \((T^2 \times \bbR,g_{cusp})\) and 
a tensor field \(\underline{h}^\infty\) only depending on the radial coordinate \(s\) so that \(|\underline{h}^\infty|(x^\infty)=1\), \(\mathcal{L}^\infty\underline{h}^\infty=0\), and 
\[
	|\underline{h}^\infty|(x)\leq C\big(e^{-\lambda s(x)}+e^{\lambda s(x)} \big).
\]
As \(\underline{h}^\infty\) only depends on \(s\), \(\mathcal{L}^\infty\underline{h}^\infty=0\) is the linear system of ODEs in (\ref{Lh=f in a cusp in coordinates}). If \(|\lambda|\) is smaller than the absolute value of the non-zero exponential growth rates of the fundamental solutions of (\ref{Lh=f in a cusp in coordinates}), the growth condition (\ref{growthofh}) 
implies that \(\underline{h}^\infty\) is a trivial Einstein variation (see \Cref{Einstein variations with growth estimate}). 

On the other hand, by definition, \(\bar{h}^i\) satisfies
\[
	|\bar{h}^i|(x^i) \leq |\bar{h}^i-u|(x^i)
\]
for any trivial Einstein variation \(u\) on \((T_{k^i})_{\rm small}\) (see the proof of \Cref{Canonical choice of trivial Einstein variation}). This implies 
\[
	|\underline{h}^\infty|(x^\infty) \leq |\underline{h}^\infty-u|(x^\infty)
\]
for any trivial Einstein variation \(u\) on \((T^2\times \bbR,g_{cusp})\). But as \(\underline{h}^\infty\) is itself a trivial Einstein variation, choosing \(u=\underline{h}^\infty\) implies \(|\underline{h}^\infty|(x^\infty)=0\). But this contradicts \(|\underline{h}^\infty|(x^\infty)=1\) (which we know from \(|\underline{h}^i|(x^i)=1\)).

From this outline the following is apparant. 
Inside the small part of a tube, the key requirement on the weight \(\frac{1}{W_\lambda}\) is that away from its maximum, the decay rate 
of the weight is strictly smaller than the absolute value of the non-zero exponential growth rates of the fundamental solutions of the linear system of ODEs in (\ref{Lh=f in a cusp in coordinates}).

Now we present the complete proof.

\begin{proof}[Proof of the a priori estimate in \Cref{Invertibility of L - non-compact case}]

\textbf{Step 1 (Set-Up):}  As mentioned in the beginning of this section, we know that the weighted integral estimates hold, 
and so we only have to show \(||h||_{C_\lambda^{2,\alpha};\ast}\leq C||\mathcal{L}h||_{0,\lambda}\) for a universal constant \(C\). 
Because of the Schauder estimates for the \(\ast\)-norm (\Cref{Schauder estimates for decomposition norm}), it suffices to prove 
\begin{equation}\label{starestimate}
||h||_{C_{\lambda}^{0}(M);\ast} \leq C ||\mathcal{L}h||_{0,\lambda}.\end{equation}
Denote by \(\varepsilon_0,\bar{\epsilon}_0\) the constants obtained in \Cref{C^0 estimate from exponential hybrid norm}.

Arguing by contradiction, we assume that a constant $C$ as in (\ref{starestimate}) does not exist.
Then there exists a sequence of finite volume Riemannian \(3\)-manifolds \((M^i,g^i)\) satisfying the curvature assumptions in \Cref{Invertibility of L - non-compact case}, and tensor fields \(h^i\) such that
\[
	||h^i||_{C_{\lambda}^{0}(M^i);\ast}=1 \, \, \text{for all } \, i \in \bbN \quad \text{and} \quad ||f^i||_{0,\lambda} \to 0, \, \text{as} \,\, i \to \infty,
\]
where as before we abbreviate \(f^i:=\mathcal{L}_{g^i}h^i\).

Consider the canonical decomposition 
\[
	h^i=\bar{h}^i+\sum_{k}\rho_k^i u_k^i+\sum_{l}\varrho_{l}^iv_{l}^i,
\]
where \(u_k^i\) and \(v_l^i\) are the canonical choices of trivial Einstein variations on a Margulis tube and a cusp, respectively,  
given by \Cref{Canonical choice of trivial Einstein variation} and \Cref{trivial Einstein variation and a priori estimate in a cusp}. So it holds
\[
	||h^i||_{C_{\lambda}^{0}(M^i); \ast} \leq ||\bar{h}^i||_{C_{\lambda}^{0}(M)}+\max_{k}|u_k^i| +\max_{l}|v_l^i| \leq C \big(||h^i||_{C_{\lambda}^{0}(M^i); \ast}+||f^i||_{0,\lambda}\big)
\]
for a universal constant \(C\). We know \(|u_k^i|\leq C||h^i||_{C^{0}(M)}\) from \Cref{Canonical choice of trivial Einstein variation} (or rather its proof) and \(||h^i||_{C^0}\leq C||f^i||_{0,\lambda}\) by \Cref{C^0 estimate from exponential hybrid norm}, so that \(\max_{k}|u_k^i| \to 0\). Also \(\max_{l}|v_l^i| \to 0\) as we have \(|v_l^i|=O(||f^i||_{0,\lambda})\) by \Cref{trivial Einstein variation and a priori estimate in a cusp}. 
  Together with \Cref{C^0 estimate from exponential hybrid norm}, this shows that for all \(i\) large enough it holds
\[
	\frac{3}{4}\leq ||\bar{h}^i||_{C_{\lambda}^{0}} \leq C \quad \text{and} \quad ||\bar{h}^i||_{C^{0}(M)} \to 0, \, \text{as} \,\, i \to \infty.
\]

Choose \(x^i \in M^i\) with \(|\bar{h}^i|_{C_{\lambda}^0}(x^i)=\frac{1}{W_\lambda(x^i)}|\bar{h}^i|(x^i)\geq \frac{1}{2}\). Since the inverse weight function \(W_\lambda\) 
is constant to \(1\) outside \(M_{\rm small}\), it holds \(|\bar{h}^i|_{C_{\lambda}^0}(x)=|\bar{h}^i|_{C^0}(x)\) for \(x \nin M_{\rm small}^i\) and hence 
\(x^i \in M_{\rm small}^i\). 
After passing to a subsequence, either \(x^i\) is contained 
in a cusp \(C_{l^i}\) for all \(i\), or \(x^i\) is contained in a Margulis tube \(T_{k^i}\) for all \(i\). We distinguish these two cases.

\textbf{Step 2 (Cusps):} We start by considering the case that \(x^i\) is contained in a cusp \(C_{l^i}\). Abbreviate \(r^i=d(x^i,\partial (C_{l^i})_{\rm small})\). The growth estimate (\ref{exponential growth estimate for h-v in cup}) of \Cref{trivial Einstein variation and a priori estimate in a cusp} shows that for some universal constant \(C\), we have 
\[
	\frac{1}{2}\leq |\bar{h}^i|_{C_{\lambda}^0}(x^i)\leq C \big(||f^i||_{0,\lambda}+e^{-(1-\lambda)r^i} \big),
\]
and thus 
\[
	\frac{1}{2}\leq \frac{1}{4}+Ce^{-(1-\lambda)r^i}
\]
for large enough \(i\). Hence \(r^i \leq (1-\lambda)^{-1}\log(4C)\), that is, \(r^i\) is bounded by the universal constant \(\bar{R}:=(1-\lambda)^{-1}\log(4C)\). But then \(\frac{1}{2} \leq |\bar{h}^i|_{C_\lambda^0}(x^i) \leq e^{\lambda \bar{R}}||\bar{h}^i||_{C^0} \to 0\), which is a contradiction.

\textbf{Step 3 (Tubes):} It remains to consider the case that \(x^i\) is contained in a Margulis tube \(T_{k^i}\). Denote by \(\gamma_{k^i}\) the core geodesic of \(T_{k^i}\). Set \(\bar{f}^i:=\mathcal{L}_{g^i}\bar{h}^i\). Since \(||\rho_k^i||_{C^{2,\alpha}}\) is uniformly bounded, it follows from the proof of \Cref{Schauder estimates for decomposition norm} that \(||\mathcal{L}_{g^i}(\sum_k \rho_k^iu_k^i)||_{C_{\lambda}^0}\leq C\max_{k}|u_k^i| \to 0\), and the analogous statement holds for \(v_l^i\). Thus
\[
	||\bar{f}^i||_{C_{\lambda}^0} \to 0, \,  \text{as} \,\, i \to \infty.
\]
Recall that \(||\bar{h}^i||_{C^{0}(M)} \to 0\), and \(\frac{1}{W_\lambda(x^i)}|\bar{h}^i|_{C^0}(x^i)\geq \frac{1}{2}\) from the definition of \(x^i\). Thus \(W_{\lambda}(x^i) \to 0\). Hence, for \(i\) large enough, \(x^i \in M_{\rm small}^i\) and 
\[
	r_{\partial T^i}(x^i),r_{\gamma_{k^i}}(x^i) \to \infty,
\] 
where \(r_{\partial T^i}(x)=d\big(x,\partial (T_{k^i})_{\rm small}\big)\) and \(r_{\gamma_{k^i}}(x)=d(x,\gamma_{k^i})\). Abbreviate \(r^i:=r_{\partial T^i}(x^i)\).

Consider the torus \(T^i(r^i):=\{x \in T_{k^i} \, | \, r_{\partial T^i}(x)=r^i\}\) in \(T_{k^i}\) containing \(x^i\). Take a covering \(\hat{T}^i \to (T_{k^i})_{\rm small}\) such that
\[
	\mathrm{diam}\big( \hat{T}_{k^i}(r^i)\big) \leq 10  \quad \text{and} \quad \mathrm{inj}(\hat{x}^i) \geq 1,
\]
where \(\mathrm{diam}\big( \hat{T}^i(r^i)\big)\) is the diameter with respect to the intrinsic metric of \(\hat{T}_{k^i}(r^i)\).
Therefore, after passing to a subsequence, 
\[
	\big( \hat{T}^i,\hat{x}^i\big) \xrightarrow{\text{pointed }C^{2,\beta}} \big( T^\infty,x^\infty\big),
\]
where $\hat x^ i$ is a preimage of $x^i$ in $\hat T^i$ and 
where the convergence is pointed \(C^{2,\beta}\)-convergence for some \(\beta \in (0,\alpha)\). From the curvature decay condition (\ref{curvature decay}), and the fact that \(r_{\partial T^i}(x^i), r_{\gamma_{k^i}}(x^i) \to \infty\), it follows that the limit manifold \(T^\infty\) is a hyperbolic cusp which is two-sided infinite, that is, \(T^\infty=T^2 \times \bbR\) and 
\[
	g^\infty=e^{-2r}g_{Flat}+dr^2
\]
for some flat metric \(g_{Flat}\) on the torus \(T^2\). Denote by \(\hat{h}^i\) the pullback of \(\bar{h}^i\) to \(\hat{T}^i\), and analogously \(\hat{f}^i\) shall denote the pullback of \(\bar{f}^i\). Since going to covers does not change Hölder or \(C^k\)-norms, all the estimates on \(\bar{h}^i\) and \(\bar{f}^i\) still hold for \(\hat{h}^i\) and \(\hat{f}^i\). 

Set \(s^i(x):=r_{\partial T^i}(x)-r^i\). 
Then $s^i(x^i)=0$ and 
\(s^i \to s^\infty\), where \(s^\infty\) is the \(\bbR\)-coordinate in \(T^\infty=T^2 \times \bbR\) with \(s^\infty(x^\infty)=0\).

Abbreviate \(r(\cdot):=r_{\partial T^i}(\cdot)\), and write \(R^i:=R_{k^i}\) for the radius of \((T_{k^i})_{\rm small}\), that is, the distance of the core geodesic to \(\partial (T_{k^i})_{\rm small}\). After passing to a subsequence, we may distinguish between the following three cases. We will show that each case leads to a contradiction, thus completing the proof.

\textit{Case 1: \(R^i-2r^i \to -\infty\)}

Note
\[
	e^{\lambda(R^i-r^i)}W_{\lambda}(x)=e^{\lambda(R^i-r^i)} \left(e^{-\lambda r(x)}+e^{\lambda (r(x)-R^i)} \right)=e^{\lambda (R^i-2r^i)}e^{-\lambda s^i(x)}+e^{\lambda s^i(x)}.
\]
Define \(\underline{h}^i\) to be the rescaled tensor \(e^{\lambda (R^i-r^i)}\hat{h}^i\). As \(||\hat{h}^i||_{C_{\lambda}^{0}} \leq C\)
\[
	|\underline{h}^i|(x)=e^{\lambda(R^i-r^i)}W_{\lambda}(x) \left|\frac{1}{W_{\lambda}(x)}|\hat{h}^i|(x) \right| \leq C \big( e^{\lambda (R^i-2r^i)}e^{-\lambda s^i(x)}+e^{\lambda s^i(x)}\big).
\]
Thus \(\underline{h}^i\) is locally uniformly bounded near \(x^i\). By Schauder-estimates, the same holds true for its derivatives. 
As a consequence, after passing to a subsequence we obtain that 
\(\underline{h}^i \to \underline{h}^\infty\) in the pointed \(C^{2,\beta^\prime}\)-sense for some \(\beta^\prime \in (0,\beta)\) 
where the symmetric \((0,2)\)-tensor \(\underline{h}^\infty\) on \(T^\infty\) satisfies
\[
	|\underline{h}^\infty|(x) \leq Ce^{\lambda s^\infty(x)}.
\]
Note \(|\underline{h}^i|(\hat{x}^i)=\frac{1}{W_{\lambda}(\hat{x}^i)}|\hat{h}^i|(\hat{x}^i) \geq \frac{1}{2}\), and hence \(|\underline{h}^\infty|(x^\infty) \geq \frac{1}{2}\). In particular, \(\underline{h}^\infty\) is non-zero.

The same calculation as above shows that \(\underline{f}^i=e^{\lambda (R^i-r^i)}\hat{f}^i\) satisfies
\[
	|\underline{f}^i|(x) \leq ||f^i||_{C_{\lambda}^{0}}\big( e^{\lambda (R^i-2r^i)}e^{-\lambda s^i(x)}+e^{\lambda s^i(x)}\big).
\]
As \(||\hat{f}^i||_{C_{\lambda}^{0}} \to 0\), this implies \(\underline{f}^i \to 0\). By stability of elliptic equations the limit tensor \(\underline{h}^\infty\) therefore solves \(\mathcal{L}^\infty \underline{h}^\infty=0\). 

Note that the tensor \(\underline{h}^\infty\) is obtained as a limit of tensors that are lifts of tensors on \((T_{k^i})_{\rm small}\), and since the tubes \((T_{k^i})_{\rm small}\) converge to a line in the pointed Gromov-Hausdorff topology, the limit tensor \(\underline{h}^\infty\) only depends on \(s^\infty\) (we refer to \Cref{Subsec: Growth estimates} for the definition of what it means for a tensor to only depend on the \(\bbR\)-coordinate \(s\)). \Cref{Einstein variations with growth estimate} below then shows 
that \(\underline{h}^\infty=0\), which  contradicts
\(|\underline{h}^\infty|(x^\infty) \geq \frac{1}{2}\).

\textit{Case 2: \(R^i-2r^i \to \infty\)}

Compute
\[
	e^{\lambda r^i}W_{\lambda}(x)=e^{\lambda r^i} \left(e^{-\lambda r(x)}+e^{\lambda (r(x)-R^i)} \right)=e^{-\lambda s^i(x)}+e^{-\lambda (R^i-2r^i)}e^{\lambda s^i(x)}.
\]
By the same arguments as in \textit{Case 1}, after passing to a subsequence, the rescaled 
tensors \(\underline{h}^i:=e^{\lambda r^i}\hat{h}^i\) converge to some \textit{non-zero} \(\underline{h}^\infty\) which only depends on \(s^\infty\) and that satisfies
\[
	\mathcal{L}^\infty \underline{h}^\infty = 0 \quad \text{and} \quad |\underline{h}^\infty|(x) \leq Ce^{-\lambda s^\infty(x)} 
\]
for some \(C>0\). Again by
\Cref{Einstein variations with growth estimate} below, this implies \(\underline{h}^\infty=0\), a contradiction.

\textit{Case 3: \(R^i-2r^i \to A \in \bbR\)}

The same calculation as in \textit{Case 1} shows that the rescaled tensors \(\underline{h}^i:=e^{\lambda (R^i-r^i)}\hat{h}^i\) converge to some \textit{non-zero} \(\underline{h}^\infty\) that only depends on \(s^\infty\), satisfies the growth estimate
\[
	|\underline{h}^\infty|(x)\leq C \big( e^{A}e^{-\lambda s^\infty(x)}+e^{\lambda s^\infty (x)}\big),
\]
and solves \(\mathcal{L}^\infty\underline{h}^\infty=0\). \Cref{Einstein variations with growth estimate} shows that \(\underline{h}^\infty\) is a trivial Einstein variation. 

By the proof of \Cref{Canonical choice of trivial Einstein variation}, we know that \(\bar{h}^i\) satisfies \(|\bar{h}^i|(c_{k^i}^i) \leq |\bar{h}^i-u|(c_{k^i}^i)\) for any trivial Einstein variation \(u\) on \((T_{k^i})_{\rm small}\). Here \(c_{k^i}^i \in (T_{k^i})_{\rm small})\) is a chosen point with \(r(c_{k^i}^i)=\frac{R^i}{2}\). Since \(C^0\)-norms and the space of trivial Einstein variations are 
invariant under passing to covers and scaling we also have 
\[
	|\underline{h}^i|(\hat{c}_k^i) \leq |\underline{h}^i-u|(\hat{c}_k^i),
\]
for every trivial Einstein variation \(u\) on \(\hat{T}^i\) (notations are as above). 
By assumption, \(r^i-\frac{R_k^i}{2}\) is bounded. Hence \(d(\hat{x}^i,\hat{c}_{k^i}^i)\) is also bounded. Therefore, there is a limit point \(c^\infty \in T^\infty\). Note that any trivial Einstein variation on \(T^\infty\) is the limit of trivial Einstein variations on \(\hat{T}^i\). Thus
\[
	|\underline{h}^\infty|(c^\infty) \leq |\underline{h}^\infty-u|(c^\infty)
\]
for every trivial Einstein variation \(u\) on \(T^\infty\). Because \(\underline{h}^\infty\) is itself a trivial Einstein variation, choosing \(u=\underline{h}^\infty\) shows \(\underline{h}^\infty(c^\infty)=0\). As trivial Einstein variations have constant norm (with respect to a cusp metric), this implies \(\underline{h}^\infty=0\) everywhere. This is a contradiction.
\end{proof}

The following lemma was used at the end of the above proof to show that each of the three cases leads to a contradiction. The lemma plays the role of Proposition 7.1 in \cite{Bamler2012}. Our proof is the same as that in \cite{Bamler2012}, but adapted to our context. Concerning terminology, we refer to \Cref{Subsec: Growth estimates} for the notion of a cusp metric, and what it means for a tensor to only depend on \(r\).

\begin{lem}\label{Einstein variations with growth estimate} Assume \(T^2\times \bbR\) is equipped with a hyperbolic cusp metric. Let \(h\) be a tensor that only depends on the \(\bbR\)-coordinate \(r\), and that solves \(\mathcal{L}h=0\). If \(h\) satisfies 
\[
	|h|(r) \leq C \left( e^{-\lambda r}+e^{\lambda r}\right)
\]
for some \(C>0\) and \(\lambda \in (0,1)\), then \(h\) is a trivial Einstein variation. If moreover \(h\) satisfies
\[
	|h|(r) \leq C e^{-\lambda r} \quad \text{for all } \, \, r \in \bbR \quad \text{or} \quad |h|(r) \leq C e^{\lambda r} \quad \text{for all } \, \, r \in \bbR,
\]
then \(h=0\).
\end{lem}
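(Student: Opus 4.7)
The plan is to exploit the fact that, in cusp coordinates, the equation $\mathcal{L}h=0$ for a tensor depending only on $r$ reduces to the decoupled linear system of constant-coefficient ODEs in (\ref{Lh=f in a cusp in coordinates}) with $f=0$, together with the scalar ODE (\ref{PDE for trace as ODE}) for $\mathrm{tr}(h)$. The strategy is to show that each nonzero fundamental solution of these ODEs grows strictly faster than $e^{\pm\lambda r}$, with the sole exception of the constant solutions of the equation governing $e^{2r}h_{ij}$. The latter exception is precisely the space of Einstein variations.

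More concretely, I would first read off from (\ref{Lh=f in a cusp in coordinates}) and (\ref{PDE for trace as ODE}) that the exponents appearing in the general solutions are $\{1-\sqrt{5},\,1+\sqrt{5}\}$ for $h_{33}$ and $\mathrm{tr}(h)$, $\{-1,\,3\}$ for $e^{r}h_{i3}$, and $\{0,\,2\}$ for $e^{2r}h_{ij}$. Since $\lambda\in(0,1)$, every nonzero exponent has absolute value strictly larger than $\lambda$. Combined with the pointwise identity
\[
|h|^2=(h_{33})^2+2\sum_{i}(e^{r}h_{i3})^2+\sum_{i,j}(e^{2r}h_{ij})^2,
\]
the two-sided bound $|h|(r)\leq C(e^{-\lambda r}+e^{\lambda r})$ forces the coefficients in front of every exponential in the general solutions for $h_{33}$, $\mathrm{tr}(h)$ and $e^{r}h_{i3}$ to vanish, and forces the coefficient of $e^{2r}$ in the general solution for $e^{2r}h_{ij}$ to vanish as well. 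Hence $h_{33}=0$, $h_{i3}=0$, $\mathrm{tr}(h)=0$, and there exist constants $c_{ij}$ with $e^{2r}h_{ij}=c_{ij}$, i.e.\ $h_{ij}=c_{ij}e^{-2r}$. Note that once $\mathrm{tr}(h)-h_{33}=0$, the inhomogeneous term in the third equation of (\ref{Lh=f in a cusp in coordinates}) drops out, justifying the use of the plain exponents $\{0,2\}$ for that equation. The vanishing of $\mathrm{tr}(h)$ together with $h_{33}=0$ gives $\sum_{i}c_{ii}=0$, so $h$ is a trivial Einstein variation in the sense of \Cref{def:trivialeinstein}.

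For the second statement, I would use the elementary fact that any trivial Einstein variation $h=c_{ij}e^{-2r}dx^i dx^j$ satisfies $|h|_g^2=\sum_{i,j}c_{ij}^2$, i.e.\ $|h|$ is constant along $\mathbb{R}$. Therefore each of the stronger bounds $|h|(r)\leq Ce^{-\lambda r}$ for all $r\in\mathbb{R}$ (taking $r\to+\infty$) or $|h|(r)\leq Ce^{\lambda r}$ for all $r\in\mathbb{R}$ (taking $r\to-\infty$) forces this constant to be zero, so $h\equiv 0$.

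The only part that requires any care is the bookkeeping in the first step: one has to verify, using the explicit form of the ODEs in (\ref{Lh=f in a cusp in coordinates}) and the observation that $|h|$ is governed by $h_{33},\,e^{r}h_{i3},\,e^{2r}h_{ij}$ (not by $h_{i3},h_{ij}$ directly), that the two-sided bound $|h|(r)\leq C(e^{-\lambda r}+e^{\lambda r})$ translates into two-sided exponential bounds on precisely these quantities. Once this translation is made, the argument is a matter of comparing $\lambda\in(0,1)$ with the exponents $|1\pm\sqrt{5}|,\,1,\,3,\,2$, all of which are strictly greater than $\lambda$, leaving only the constant mode as a possible nontrivial solution.
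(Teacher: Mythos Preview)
Your proposal is correct and follows essentially the same route as the paper: reduce $\mathcal{L}h=0$ to the constant-coefficient ODEs (\ref{Lh=f in a cusp in coordinates}) and (\ref{PDE for trace as ODE}), compare the exponents $1\pm\sqrt{5},\,-1,\,3,\,0,\,2$ against $\lambda\in(0,1)$ to kill all nonconstant modes (handling $e^{2r}h_{ij}$ last, after the inhomogeneous term $\mathrm{tr}(h)-h_{33}$ has been shown to vanish), and then use that trivial Einstein variations have constant norm to conclude the second statement. The paper phrases the vanishing of the coefficients via Lemma~\ref{Transfer of exponential rates} and limits $r\to\pm\infty$, but the content is identical to your comparison of growth rates.
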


\begin{proof}We first show \({\rm tr}(h)=0\). From (\ref{PDE for trace as ODE}) we have
\(Q_1(\frac{\partial}{\partial r})\big({\rm tr}(h)\big)(r)=0\) for a quadratic polynomial \(Q_1\) with roots \(1\pm \sqrt{5}\). Thus by applying \Cref{Transfer of exponential rates} we get
\[
	{\rm tr}(h)(r)=A_{+}e^{(1+\sqrt{5})r}+A_{-}e^{(1-\sqrt{5})r}
\]
for some \(A_{+},A_{-} \in \bbR\). By assumption \(|{\rm tr}(h)|(r) \leq C(e^{\lambda r}+e^{-\lambda r})\). Since \(\lambda < 1+\sqrt{5}\), taking \(r \to \infty\) implies \(A_{+}=0\). Similarly, since \(\lambda < \sqrt{5}-1\), taking \(r \to -\infty\) shows \(A_{-}=0\). Thus \({\rm tr}(h)=0\) everywhere.

Using (\ref{Lh=f in a cusp in coordinates}), the same argument shows \(h_{33}=0\) and \(h_{i3}=0\) everywhere. For \(h_{i3}\) this uses the assumption \(\lambda < 1\).

By (\ref{Lh=f in a cusp in coordinates})
we have \(Q_3(\frac{\partial}{\partial r})\big(e^{2r}h_{ij}\big)=0\) for a quadratic polynomial \(Q_3\) with roots \(0\) and \(2\). So invoking \Cref{Transfer of exponential rates} yields
\[
	e^{2r}h_{ij}=A+Be^{2r}
\]
for some \(A, B \in \bbR\). As \(|e^{2r}h_{ij}| \leq |h|(r) \leq C(e^{\lambda r}+e^{-\lambda r})\) and \(\lambda < 2\), taking \( r \to \infty\) implies \(B=0\). So \(h_{ij}=Ae^{-2r}\) for some \(A \in \bbR\). 

With everything up to now we have shown that \(h\) is a trivial Einstein variation. Now assume that \(h\) either satisfies \(|h|(r) \leq C^{\lambda r}\) or  \(|h|(r) \leq C^{-\lambda r}\). Note that trivial Einstein variations have constant norm (with respect to a cusp metric). Thus taking \(r \to -\infty\) or \(r \to \infty\) implies \(|h|=0\).
\end{proof}

%\newpage

\subsection{Surjectivity of \(\mathcal{L}\)}\label{Subsec: Surjectivity of L}

In order to establish \Cref{Invertibility of L - non-compact case} we have to show that
\[
	\mathcal{L}: \Big( C_{\lambda}^{2,\alpha}\big({\rm Sym}^2(TM)\big), ||\cdot||_{2 ,\lambda;\ast}\Big) \longrightarrow \Big( C_{\lambda}^{0,\alpha}\big({\rm Sym}^2(TM)\big), ||\cdot||_{0,\lambda} \Big)
\]
is an invertible operator, and that \(||\mathcal{L}^{-1}||_{\rm op}\) is bounded by a universal constant. In \Cref{Subsec: a priori estimates in non-compact case} we proved that \(\mathcal{L}\) satisfies an a priori estimate \(||h||_{2,\lambda;\ast} \leq C||\mathcal{L}h||_{0,\lambda}\). Therefore, to complete the proof of \Cref{Invertibility of L - non-compact case}, we have to show that \(\mathcal{L}\) is surjective. This will be done with the strategy used in the proof of \Cref{Invertibility of L}, which had two main ingredients:
\begin{itemize}
\item (Weak solutions exist)
  For any $f$ in the target space, the equation \({\mathcal L}(h)=f\) has a weak
solution. 
\item (Regularity) If \(f\) is a smooth tensor in the target space and \(h\) is a solution of \(\mathcal{L}h=f\), then \(h\) is contained in the source space.
\item (Approximation) For any \(f\) in the target space, there is a sequence of smooth tensors \((f_i)_{i \in \bbN}\) converging to \(f\);
\end{itemize}

In the setting of \Cref{Section - Invertibiliy of L}, the regularity part is immediate due to local (euclidean) regularity theory of elliptic PDEs. In our present non-compact setting this is no longer the case. Local regularity theory only shows that \(h\) is of a certain regularity in local coordiantes, but it does \textit{not} necessarily mean that the globally defined norm \(||h||_{2,\lambda;\ast}\) is finite. 
That this is indeed the case is shown in the following lemma. Here we assume that \(M\) satisfies the assumptions stated in \Cref{Invertibility of L - non-compact case}. Also, we refer to \Cref{big O notation} for our convention of the \(O\)-notation.

\begin{lem}\label{Regularity of L}Let \(f \in C_{\lambda}^{0,\alpha}\big({\rm Sym}^2(T^*M)\big) \) and \(h \in C^2\big({\rm Sym}^2(T^*M)\big) \cap H_0^1(M)\) be a solution of
\[
	\mathcal{L} h=f.
\]
Let \(C\) be a cusp of \(M\), and \(i_C \in (0,1)\) so that \(i_C \leq \inj(x)\) for all \(x \in \partial C_{\rm small}\). Then it holds
\[
	 \sup_{C_{\rm small}}|h|=O\Big(\frac{1}{i_C}||f||_{0,\lambda}\Big).
\]
In particular, \(\sup_M|h| < \infty\) and \(h \in C_{\lambda}^{2,\alpha}\big({\rm Sym}^2(T^*M)\big)\).
\end{lem}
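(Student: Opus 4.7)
The plan is to combine the averaging analysis from the proof of \Cref{trivial Einstein variation and a priori estimate in a cusp} with a pointwise boundary estimate supplied by the injectivity radius bound $i_C$. The crucial observation is that inequality (\ref{bound on hat(h) in C_small}), namely $\sup_{C_{\rm small}}|\hat h| = O(\|f\|_{0,\lambda})$, together with the existence and bound $|v| = O(\|f\|_{0,\lambda})$ of the trivial Einstein variation $v$, were established in that proof under the hypotheses $h \in C^2 \cap H_0^1$ alone; the stronger assumption $\|h\|_{C^0(M)}<\infty$ entered there only afterwards, to upgrade control of $\hat h$ to control of $h$ via property (v) of \Cref{Properties of averaging operator} and Schauder. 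Our remaining task is therefore to furnish a pointwise bound on $|h|$ in $C_{\rm small}$.

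First I would establish the boundary estimate. The function $|h|$ satisfies the sub-inequality (\ref{differential inequality for |h|}), and at points of $\partial C_{\rm small}$ the injectivity radius is at least $i_C$; the De Giorgi--Nash--Moser estimate of \Cref{Nash-Moser}, applied in harmonic charts of size comparable to $i_C$, therefore gives
\[|h|(x) \leq C(i_C)\bigl(\|h\|_{L^2(B(x, c\, i_C))} + \|f\|_{L^{q/2}(B(x, c\, i_C))}\bigr),\]
with $C(i_C) = O(1/i_C)$ after tracking the dependence on chart size. Combining with the weighted $L^2$-bound $\|h\|_{L^2(M)} \leq C\|f\|_{0,\lambda}$, valid here by \Cref{integration by parts works in finite volume case}, and with $\|f\|_{L^{q/2}(M)} \leq C\|f\|_{0,\lambda}$ from the finite volume of $M$, yields $\sup_{\partial C_{\rm small}} |h| \leq (C/i_C)\|f\|_{0,\lambda}$. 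Jacobi field comparison in the cusp propagates this to a universal-width strip $\{0 \leq r(\cdot) \leq r_*\}$ about $\partial C_{\rm small}$, with a possibly larger constant but unchanged $1/i_C$-dependence.

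To push the bound deeper into $C_{\rm small}$, set $\Phi(r) := \sup_{T(r)} |h|$, which is locally finite since $h \in C^2(M)$. Uniform Schauder estimates on the simply-connected $\tilde M$ (\Cref{Hölder norms without inj rad bound}) give $\max_{T(r)} |h|_{C^1} \leq C(\|f\|_{0,\lambda} + \sup_{s \in [r-\rho, r+\rho]}\Phi(s))$, and combining with $\sup_{C_{\rm small}}|\hat h| \leq C\|f\|_{0,\lambda}$ and property (v) of \Cref{Properties of averaging operator} produces the recursive inequality
\[\Phi(r) \leq C\|f\|_{0,\lambda} + Ce^{-r}\sup_{s \in [r-\rho, r+\rho]}\Phi(s).\]
Choosing $r_*$ so that $Ce^{-r_*} \leq 1/2$ and using the initial bound of the previous paragraph on $\{r \leq r_*\}$, an induction on successive strips of width $\rho$ closes the bootstrap and yields $\Phi(r) \leq (C/i_C)\|f\|_{0,\lambda}$ throughout $C_{\rm small}$.

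With $\sup_{C_{\rm small}} |h| < \infty$ established, the full conclusion of \Cref{trivial Einstein variation and a priori estimate in a cusp} applies and produces $\|h - v\|_{C^0_\lambda(C_{\rm small})} < \infty$, whence $h \in C^0_\lambda$; Schauder estimates in the form (\ref{Schauder estimate - exponential norm}) upgrade this to $h \in C^{2,\alpha}_\lambda$. The parallel argument in Margulis tubes, using \Cref{Growth estimate in a tube - pinched curvature} in place of \Cref{trivial Einstein variation and a priori estimate in a cusp}, together with a direct Nash--Moser estimate in the thick part (where $\inj \geq \mu$), yields the global conclusions $\sup_M|h|<\infty$ and $h \in C^{2,\alpha}_\lambda$. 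The main difficulty lies in the bootstrap step: one must verify that the iterative inequality truly closes despite the right-hand side involving $\Phi$ at both past and future depths, and track carefully the $1/i_C$-dependence of the Nash--Moser constant.
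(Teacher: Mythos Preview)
Your outline correctly identifies and reuses the key input $\sup_{C_{\rm small}}|\hat h|=O(\|f\|_{0,\lambda})$, and your instinct to close the gap between $|h|$ and $|\hat h|$ is right. But the bootstrap in your third paragraph does not close, and this is an essential gap rather than a technicality. The recursive inequality
\[
\Phi(r)\;\leq\; C\|f\|_{0,\lambda}+Ce^{-r}\sup_{s\in[r-\rho,\,r+\rho]}\Phi(s)
\]
involves $\Phi$ at depths $s>r$ that you have not yet controlled. Concretely, a sequence $S_n:=\sup_{[r_*+(n-1)\rho,\,r_*+n\rho]}\Phi$ growing like $3^n$ is perfectly compatible with $S_n\leq B+\tfrac12\max(S_{n-1},S_n,S_{n+1})$, so local finiteness of $\Phi$ together with bounded initial data gives you nothing. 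No compact-exhaustion trick helps, because the inequality always looks one step past any finite window you fix, and $h\in H^1_0$ by itself supplies no pointwise decay at infinity. You correctly flag this as the main difficulty, but the sketch does not resolve it.

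The paper's argument avoids the circularity by estimating $|h-\hat h|$ directly, with no iteration. One applies Nash--Moser to $h-\hat h$ in the universal cover on balls of the \emph{fixed} radius $\rho$ (so the Nash--Moser constant is universal), and then controls $\|\widetilde{h-\hat h}\|_{L^2(B(\tilde x,\rho))}^2$ by preimage counting: since $\#\bigl(\pi^{-1}(y)\cap B(\tilde y,2\rho)\bigr)\leq C\,\mathrm{inj}(y)^{-2}$ and $\mathrm{inj}(y)\geq i_C\,e^{-r(y)}$ in the cusp, this $L^2$-norm is at most $C\,i_C^{-2}\int_{B(x,\rho)}e^{2r}|h-\hat h|^2$. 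The weight $e^{2r}$ is then exactly absorbed by the Poincar\'e inequality on the flat tori $T(r)$ (whose diameter is $O(e^{-r})$), yielding $\int_{C_{\rm small}}e^{2r}|h-\hat h|^2\leq C\|h\|_{H^1(C_{\rm small})}^2=O(\|f\|_{0,\lambda}^2)$. This gives $|h-\hat h|\leq (C/i_C)\|f\|_{0,\lambda}$ in one stroke. Note also that your boundary Nash--Moser on balls of radius $\sim i_C$ would, by scaling in dimension three, produce an $i_C^{-3/2}$ prefactor rather than the stated $i_C^{-1}$; the paper's route gets the sharper power precisely because the ball radius in the universal cover stays fixed and the $i_C$-dependence enters only through the (quadratic) preimage count.
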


Because the boundary tori \(\partial C\) can have arbitrary large diameter, there can be no universal lower bound on \(i_C\). The point of \Cref{Regularity of L} is \textit{not} to obtain a universal bound for \(||h||_{C^0}\), but just that \(h \in  C_{\lambda}^{2,\alpha}\big({\rm Sym}^2(T^*M)\big)\). Once this is known, one obtains universal estimates by the results of \Cref{Subsec: a priori estimates in non-compact case}.

\begin{proof}Fix a cusp \(C\), and let \(g_{cusp}\) be the hyperbolic cusp metric on \(C_{\rm small}\) given by \Cref{Existence of approximate cusp metric}. This satisfies \(|g-g_{cusp}|_{C^2}(x)=O\big(\varepsilon_0 e^{-\eta r(x)} \big)\), where \(r(x)=d(x,\partial C_{\rm small})\), and \(\eta \) is the decay rate in the curvature decay condition (\ref{curvature decay}). 
In particular, up to universal constants, the \(C^{k}\)-norms (\(k \leq 2\))  with respect to \(g\) and \(g_{cusp}\) agree.  Let \(\hat{\cdot}\) be the averaging operation of the cusp metric \(g_{cusp}\) (see \Cref{Properties of averaging operator}). First, we recall an estimate for \(|\hat{h}|\) that was established in the proof of \Cref{trivial Einstein variation and a priori estimate in a cusp}. In a second step we obtain estimates on \(|h-\hat{h}|\). We do this by using De Giorgi–Nash–Moser estimates in the universal cover to reduce this problem to bounding \(L^2\)-norms in the universal cover \(\tilde{M}\), which we obtain from weighted \(L^2\)-estimates in \(M\). Throughout this proof, for \(r\geq 0\) we denote by \(T(r)\) the torus in \(C_{\rm small}\) all of  whose points have distance \(r\) to \(\partial C_{\rm small}\).

\textbf{Step 1 (Estimating \(|\hat{h}|\)):} In the proof of \Cref{trivial Einstein variation and a priori estimate in a cusp} we established (\ref{bound on hat(h) in C_small}), which states
\begin{equation}\label{bound on hat(h) in C_small - v2}
	\sup_{C_{\rm small}}|\hat{h}| =O(||f||_{0,\lambda}).
\end{equation}
Even though in the formulation of \Cref{trivial Einstein variation and a priori estimate in a cusp} we assume \(||h||_{C^0}<\infty\), we pointed out that for (\ref{bound on hat(h) in C_small}) only the assumption \(h \in C^2\big({\rm Sym}^2(T^*M)\big)\cap H_0^1(M)\) is needed.

We also mention the following estimates that will be needed in Step 2.
Denote by \(\mathcal{L}_{cusp}\) the operator \(\frac{1}{2}\Delta_L +2\mathrm{id}\) with respect to the metric \(g_{cusp}\). From \(|g-g_{cusp}|_{C^2}(x)=O\big(\varepsilon_0 e^{-\eta r(x)} \big)\) it follows that
\begin{equation}\label{L surjective - eq0}
	|\mathcal{L}_{cusp}h-f|_{C^0}(x) \leq O\big( \varepsilon_0 e^{-\eta r(x)}|h|_{C^2}(x)\big).
\end{equation}
Now \(ii)\) of \Cref{Properties of averaging operator} and (\ref{L surjective - eq0}) together yield
\begin{equation}\label{L surjective - eq1}
	\left|\widehat{\mathcal{L}_{cusp}h}-\hat{f}\right|_{C^0}(x) =O\big( \varepsilon_0 \psi(r)e^{-(\eta -2)r}\big)
\end{equation}
where \(r=r(x)\), and \(\psi \in L^1(\bbR_{\geq 0})\) is the function defined in (\ref{def of psi}).

\textbf{Step 2 (Estimating \(|h-\hat{h}|\)):} Since \(\mathrm{dim}(M)=3\), 
we can apply the estimates from \Cref{Nash-Moser} with \(q=4\). 
Hence the same argument that led to (\ref{C^0 from L^2}) shows that for all \(x \in C_{\rm small}\), and any lift \(\tilde{x} \in \tilde{M}\) of \(x\), it holds
\begin{equation}\label{Nash-Moser for h-hat(h)}
	|\tilde{h}-\tilde{\hat{h}}|_{C^0}(\tilde{x}) \leq C\Big(||\tilde{h}-\tilde{\hat{h}}||_{L^2(B(\tilde{x},\rho))}+||\mathcal{L}_{cusp}(\tilde{h}-\tilde{\hat{h}})||_{L^2(B(\tilde{x},\rho))} \Big)
\end{equation}
for some universal constant \(C\). Here \(\rho>0\) is the universal radius appearing in the definition of the Hölder norms. 

We want to bound these \(L^2\)-norms in \(\tilde{M}\) by weighted \(L^2\)-norms in \(M\). Towards this goal, note that for any \(y \in M\) and lift \(\tilde{y} \in \tilde{M}\) of \(y\) it holds \(\#\big(\pi^{-1}(y) \cap B(\tilde{y},2\rho) \big) \leq C\frac{1}{\inj(y)^2}\), where \(C\) is a universal constant, and \(\pi:\tilde{M}\to M\) is the universal covering projection. Indeed, this follows by a simple area counting argument.
Note that (up to universal constant) it is irrelevant whether 
the injectivity radius is taken with respect to \(g\) or \(g_{cusp}\). Choose \(i_C \in (0,1)\) so that \( \inj(x) \geq i_C\) for all \(x \in \partial C_{\rm small}\). Since \(g_{cusp}\) is hyperbolic, the argument from the proof of \Cref{counting preimages II} shows 
that \(\inj(y) \geq e^{-d(y,\partial C_{\rm small})}i_C\) for all \(y \in C_{\rm small}\). Therefore, there is a universal constant \(C\) so that for any function \(u:M \to \bbR_{\geq 0}\), \(x \in C_{\rm small}\), and lift \(\tilde{x} \in \tilde{M}\) of \(x\) it holds
\begin{equation}\label{L surjective - eq4}
	\int_{B(\tilde{x},\rho)}\tilde{u} \, d\vol_{\tilde{g}_{cusp}} \leq C \frac{1}{i_C^2}\int_{B(x,\rho)}e^{2r(y)}u \, d\vol_{g_{cusp}},
\end{equation}
where \(\tilde{u}=u \circ \pi\) and \(r(y)=d(y,\partial C_{\rm small})\). See the claim in the proof of \Cref{A-priori estimate away from the small part} for more details as to why this integral estimate follows from the preimage counting.

We now show how (\ref{L surjective - eq4}) can be used to estimate the \(L^2\)-norms in (\ref{Nash-Moser for h-hat(h)}). We start with the first term.
Inequality (\(\ast \ast\)) on p. 520 of \cite{GromovMetricStructures07}
shows that for any flat 2-torus $T^2$ of diameter $1$, we have
$\lambda_1(T^2)\geq e^{-2}$. Together with a scaling argument, this implies that
if $T^2$ is a flat $2$-torus of 
\(\diam(T^2) \leq 1\),
then \(\lambda_1(T^2) \geq \frac{1}{\diam(T^2)^2}e^{-2}\). Therefore, for any function \(u\) it holds
\[
	\int_{T(r)}|u-\hat{u}|^2 \, d\vol_{cusp} \leq e^2 \diam(T(r),g_{cusp})^2\int_{T(r)}|\nabla u|^2 \, d\vol_{cusp},
\]
where \(\hat{u}\) is the average of \(u\) over \(T(r)\) (see the discussion before \Cref{Properties of averaging operator}). Applying this componentwise and multiplying by \(e^{2r}\) implies
\[
	e^{2r}\int_{T(r)}|h-\hat{h}|_{C^0}^2 \, d\vol_{cusp} \leq e^2 D^2\int_{T(r)}|h|_{C^1}^2 \, d\vol_{cusp},
\]
where \(D\) is the universal constant appearing in the definition of the small part. Thus
\begin{equation}\label{L surjective - eq5}
	\int_{C_{\rm small}}e^{2r}|h-\hat{h}|_{C^0}^2 \, d\vol_{g_{cusp}} \leq e^2 D^2\int_{C_{\rm small}}|h|_{C^1}^2 \, d\vol_{cusp} =O(||h||_{H^2(C_{\rm small})}^2).
\end{equation}

We know from the proof of \Cref{L^1-estimate for psi} that \(||h||_{H^1(C_{\rm small})}=O(||f||_{0,\lambda})\) (in fact, we even showed \(||h||_{H^2(C_{\rm small})}=O(||f||_{0,\lambda})\)). Thus (\ref{L surjective - eq4}) and (\ref{L surjective - eq5}) yield
\begin{equation}\label{L^2 estimate in universal cover of h-hat(h)}
	\int_{B(\tilde{x},\rho)}|\tilde{h}-\tilde{\hat{h}}|^2 \, d\vol_{\tilde{g}_{cusp}} =O\Big(\frac{1}{i_C^2}||f||_{0,\lambda}^2\Big).
\end{equation}
This completes the bound of the first \(L^2\)-norm in (\ref{Nash-Moser for h-hat(h)}).

Towards bounding the second \(L^2\)-norm in (\ref{Nash-Moser for h-hat(h)}), we use the triangle inequality, (\ref{L surjective - eq0}), \(ii)\) of \Cref{Properties of averaging operator}, and (\ref{L surjective - eq1}) to estimate
\begin{equation}\label{upper bound for L(h-hat(h)) in universal cover}
\begin{split}
	|\mathcal{L}_{cusp}\tilde{h}-\mathcal{L}_{cusp}\tilde{\hat{h}}|(\tilde{x}) \leq & |\mathcal{L}_{cusp}\tilde{h}-\tilde{f}|(\tilde{x}) +|\tilde{f}|(\tilde{x}) +|\tilde{\hat{f}}|(\tilde{x}) +|\mathcal{L}_{cusp}\tilde{\hat{h}}-\tilde{\hat{f}}|(\tilde{x})  \\
	= & O\Big(\varepsilon_0e^{-\eta \tilde{r}}|\tilde{h}|_{C^2}(\tilde{x}) +||f||_{0,\lambda}e^{-\lambda \tilde{r}}+\varepsilon_0 \psi(r)e^{-(\eta -2)\tilde{r}}\Big),
\end{split}
\end{equation}
where \(\tilde{r}=r \circ \pi\), and \(\psi \in L^1(\bbR_{\geq 0})\) was defined in (\ref{def of psi}).
Invoking (\ref{L surjective - eq4}), and using \(\eta > 1\), we obtain
\begin{align}\label{L^2 bound for L(h-hat(h)) - part 1}
	\int_{B(\tilde{x},\rho)}\big(e^{-\eta \tilde r}|\tilde{h}|_{C^2}\big)^2 d\vol_{\tilde{g}_{cusp}}\leq & C\frac{1}{i_C^2}\int_{B(x,\rho)}e^{-2(\eta-1) r}|h|_{C^2}^2 d\vol_{g_{cusp}} \notag \\
	\leq & C\frac{1}{i_C^2}\int_{C_{\rm small}}|h|_{C^2}^2 d\vol_{g_{cusp}} \notag \\
	= & O\Big(\frac{1}{i_C^2}||f||_{0,\lambda}^2 \Big),
\end{align}
where for the last inequality we used \(||h||_{H^2(C_{\rm small})}=O(||f||_{0,\lambda})\) (this was established in the proof of \Cref{L^1-estimate for psi}).
Also
\begin{equation}\label{L^2 bound for L(h-hat(h)) - part 2}
	\int_{B(\tilde{x},\rho)}e^{-2\lambda \tilde{r}} \, d\vol_{\tilde{g}_{cusp}} 
	\leq \mathrm{vol}(B(\tilde{x},\rho)) 
	=O\big(1\big).
\end{equation}

Recall that the function \(\psi\) is defined by \(\psi(r):=\int_{T(r)} |h|_{C^2} \). Denote by \(\strokedint_{T(r)}\) the average integral \(\frac{1}{\vol_2(T(r))}\int_{T(r)}\). By the Jensen inequality
\(
	\left( \strokedint_{T(r)}|h|_{C^2} \right)^2 \leq \strokedint_{T(r)}|h|_{C^2}^2,
\)
which implies \(\big(e^r\psi(r)\big)^2 \leq \int_{T(r)}|h|_{C^2}^2\). So \(e^r\psi(r) \in L^2(\bbR_{\geq 0})\), and \(||e^r\psi||_{L^2(\bbR_{\geq 0})} \leq ||h||_{H^2(C_{\rm small})}=O(||f||_{0,\lambda})\). Because \(\rho\) is universal, \(\mathrm{area}(B(\tilde{x},\rho)\cap\{\tilde{y} \, | \, \tilde r(\tilde{y})=s\})\) is bounded by a universal constant \(C\) for all \(s\). Thus, as \(\eta > 1\),
\begin{align}\label{L^2 bound for L(h-hat(h)) - part 3}
	\int_{B(\tilde{x},\rho)}\left(e^{-(\eta -2)\tilde r}\psi\right)^2 \leq &  
	 C \int_{r(x)-\rho}^{r(x)+\rho}e^{-2(\eta -1)r}(e^r\psi)^2 \, dr \notag \\
	\leq & C\int_{r(x)-\rho}^{r(x)+\rho}(e^r\psi)^2 \, dr \notag \\
	= & O\big(||f||_{0,\lambda}^2 \big).
\end{align} 
Combining (\ref{Nash-Moser for h-hat(h)}), (\ref{L^2 estimate in universal cover of h-hat(h)})-(\ref{L^2 bound for L(h-hat(h)) - part 3}) yields 
\[
	\sup_{C_{\rm small}}|h-\hat{h}|=O\Big(\frac{1}{i_C}||f||_{0,\lambda}\Big).
\]
Together with (\ref{bound on hat(h) in C_small - v2}) this implies \(\sup_{C_{\rm small}}|h|=O\left(\frac{1}{i_C}||f||_{0,\lambda} \right)\).

For the last assertions, note that \(M\) has only finitely many cusps because \(M\) has finite volume. The proven estimate and the compactness of \(M \setminus \bigcup_{C}C_{\rm small}\) immediately imply that \(\sup_{M}|h| < \infty\). To conclude \(h \in C_\lambda^{2,\alpha}\big( {\rm Sym}^2(T^*M)\big)\) we have to show that \(||h||_{C_\lambda^{2,\alpha};\ast}< \infty\). As \(||\mathcal{L}h||_{C_\lambda^{0,\alpha}}<\infty\) by assumption, the Schauder estimates for the \(\ast\)-norm (\Cref{Schauder estimates for decomposition norm}) reduce the problem to showing \(||h||_{C_\lambda^0;\ast}<\infty\). Due to the compactness of \(M \setminus \bigcup_{C}C_{\rm small}\), it suffices to show \(||h||_{C_\lambda^0;\ast}<\infty\) in each rank 2 cusp of \(M\). But this follows from \Cref{trivial Einstein variation and a priori estimate in a cusp} since \(||h||_{C^0}<\infty\).
\end{proof}

Recall from the introduction of this section that establishing surjectivity of \(\mathcal{L}\) requires an approximation and a regularity result. 
\Cref{Regularity of L} is the regularity statement. The approximation result is given by the next lemma.

\begin{lem}\label{approximation of Hölder tensors by smooth ones - exponential}Let \(f \in C_{\lambda}^{0,\alpha}\big({\rm Sym}^2(T^*M) \big)\) and \(\beta \in (0,\alpha)\). Then there is a sequence \((f_{\varepsilon})_{\varepsilon>0} \subseteq C^{\infty}\big({\rm Sym}^2(T^*M) \big)\) so that \(\lim_{\varepsilon \to 0}||f-f_{\varepsilon}||_{C_{\lambda}^{0,\beta}(M)}=0\).
\end{lem}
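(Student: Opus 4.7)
The plan is to construct $f_\varepsilon$ by a standard mollification procedure in harmonic charts, controlling everything using a partition of unity and the fact that the inverse weight $W_\lambda$ is logarithmically Lipschitz with constant at most $\lambda$. The main input is the classical fact that mollification approximates a compactly supported $C^{0,\alpha}$ function on $\mathbb{R}^n$ in the $C^{0,\beta}$-norm with rate $O(\varepsilon^{\alpha-\beta})$ and constant proportional to the original $C^{0,\alpha}$-norm.

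Since the pointwise H\"older norm is defined via harmonic charts in the universal covering $\tilde{M}$ (see Remark \ref{Hölder norms without inj rad bound}), I would first lift $f$ to the tensor $\tilde{f}$ on $\tilde{M}$, which has infinite injectivity radius. Using the uniform bounded-geometry input from Proposition \ref{Schauder for tensors}, choose a countable, $\pi_1(M)$-equivariant collection $\{\tilde{\varphi}_i \colon B(\tilde{x}_i, 2\rho)\to \mathbb{R}^n\}_{i\in I}$ of harmonic charts with the following properties: the balls $B(\tilde{x}_i, \rho/4)$ still cover $\tilde M$; each ball $B(\tilde{x}_i, 2\rho)$ meets at most $N$ other such balls, where $N$ depends only on $n$, $\Lambda$, and $\rho$; and if $\gamma\in \pi_1(M)$, then $\gamma\cdot B(\tilde{x}_i,2\rho)=B(\tilde{x}_j,2\rho)$ with $\tilde{\varphi}_j\circ \gamma=\tilde{\varphi}_i$. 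Pick a $\pi_1(M)$-equivariant partition of unity $\{\chi_i\}$ subordinate to $\{B(\tilde{x}_i, \rho/2)\}$ with uniformly bounded $C^2$-norms. In each chart, convolve the coefficients of $\chi_i \tilde{f}$ with a standard mollifier $\eta_\varepsilon$ on $\mathbb{R}^n$, obtaining smooth compactly supported tensors $(\chi_i \tilde{f})_\varepsilon$. Using the same mollifier in every chart and the equivariance of the chart system, the sum $\tilde{f}_\varepsilon := \sum_i (\chi_i \tilde{f})_\varepsilon$ is $\pi_1(M)$-invariant and therefore descends to a smooth tensor field $f_\varepsilon$ on $M$.

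For the convergence, the Euclidean mollification estimate applied in each chart gives the pointwise bound
\[
|\tilde f-\tilde f_\varepsilon|_{C^{0,\beta}}(\tilde{x})\le C\,\varepsilon^{\alpha-\beta}\sup_{\tilde{y}\in B(\tilde{x},3\rho)}|\tilde f|_{C^{0,\alpha}}(\tilde{y}),
\]
with $C=C(n,\alpha,\beta,N)$, where the bounded overlap ensures that only finitely many terms $(\chi_j\tilde f)_\varepsilon$ contribute at any point. The inverse weight $W_\lambda$ is $\pi_1(M)$-invariant and $\mathrm{Lip}(\log (1/W_\lambda))\le \lambda$ in $M_{\rm small}$ while being constant outside, so $W_\lambda(\tilde y)\le e^{3\lambda\rho}W_\lambda(\tilde x)$ for $\tilde y\in B(\tilde x,3\rho)$. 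Dividing through and taking suprema yields
\[
\|f-f_\varepsilon\|_{C_\lambda^{0,\beta}(M)}\le C\,e^{3\lambda\rho}\,\varepsilon^{\alpha-\beta}\,\|f\|_{C_\lambda^{0,\alpha}(M)}\xrightarrow{\varepsilon\to 0}0.
\]

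The main obstacle is setting up the $\pi_1(M)$-equivariant cover with bounded overlap, since one needs enough geometric control to ensure the combinatorial bound $N$ depends only on universal constants and not on the particular geometry of $M$. Fortunately this is routine given the uniform lower bound $\rho=\rho(n,\alpha,\Lambda)$ on the size of harmonic charts in $\tilde M$: one may start from a maximal $\rho/8$-separated subset of $\tilde{M}$, then replace it by its $\pi_1(M)$-orbit (which remains locally finite by volume counting in $\tilde M$ using the Ricci bound) and take the harmonic charts centered at these points. Once the cover is in place the remainder of the argument is the direct computation sketched above.
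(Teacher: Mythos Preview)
Your approach has a genuine gap in the construction of the $\pi_1(M)$-equivariant cover with uniformly bounded overlap. The bounded-overlap property for a maximal $\rho/8$-separated subset of $\tilde M$ comes from volume counting, which requires that the centers remain $\rho/8$-separated. Once you pass to the $\pi_1(M)$-orbit this separation is destroyed: if $x\in M$ has injectivity radius much smaller than $\rho$, then any lift $\tilde x$ has many $\pi_1(M)$-translates inside $B(\tilde x,2\rho)$, and by equivariance each such translate is itself a center of a chart ball. Volume counting in $\tilde M$ bounds the number of centers \emph{per orbit} in a $2\rho$-ball, but not the number of contributing deck transformations; the latter is comparable to $\#\bigl(\pi^{-1}(x)\cap B(\tilde x,2\rho)\bigr)$, which (see e.g.\ the discussion around \Cref{Counting preimages - general case} and \Cref{counting preimages II}) grows like $1/\inj(x)$ or faster and is unbounded in the cusps. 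Consequently the constant $C=C(n,\alpha,\beta,N)$ in your pointwise estimate is not uniform, and after dividing by $W_\lambda$ the right hand side blows up as $r(x)\to\infty$. The same issue prevents the existence of an equivariant partition of unity with uniformly bounded $C^2$-norms.

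The paper sidesteps this by not attempting a uniform mollification at all. It partitions $M$ along the radial coordinate $r=d(\cdot,M\setminus M_{\rm small})$ into the shells $U_k=r^{-1}\bigl((k-1,k+1)\bigr)$, so that at any point exactly two terms of the subordinate partition of unity $(\eta_k)$ are nonzero; this replaces your overlap constant $N$ by the fixed number $2$. Each $U_k$ is compact, so $\eta_k f$ can be approximated in $C^{0,\beta}$ by a smooth tensor $f_\varepsilon^{(k)}$ supported in $U_k$ with error at most $\tfrac{\varepsilon}{2}e^{-\lambda(k+1)}$; the non-uniform geometry of $U_k$ is absorbed into how fine this approximation needs to be for each individual $k$. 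Summing gives $|f-f_\varepsilon|_{C^{0,\beta}}(x)\le \varepsilon e^{-\lambda r(x)}\le \varepsilon W_\lambda(x)$, which is the required weighted convergence.
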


\begin{proof}It is well known that for \(u \in C^{0,\alpha}(\bbR^n)\) there is a sequence \((u_\varepsilon)_{\varepsilon>0} \subseteq C^\infty(\bbR^n)\) so that \(||u-u_\varepsilon||_{C^{0,\beta}} \to 0\) as \(\varepsilon \to 0\). Moreover, if \(u\) has compact support inside some open set \(\Omega \subseteq \bbR^n\), the \(u_\varepsilon\) can be chosen to have compact support in \(\Omega\) too. 

Denote by \(r\) the distance function to \(M \setminus M_{\rm small}\). For \(k \geq 0\) define \(U_k:=r^{-1}\big((k-1,k+1)\big)\). Choose a partition of unity \((\eta_k)_{k \geq 0}\) subordinate to the cover \(\{U_k\}_{k \geq 0}\). By applying the above approximation result locally, we see that for each \(k \) there is \(f_\varepsilon^{(k)}\) so that \(\mathrm{supp}(f_\varepsilon^{(k)}) \subseteq U_k\) and \(||(\eta_kf)-f_\varepsilon^{(k)}||_{C^{0,\beta}} \leq \frac{\varepsilon}{2} e^{-\lambda (k+1)}\). Then \(f_\varepsilon:=\sum_{k=0}^\infty f_\varepsilon^{(k)}\) has the desired property. Indeed, let \(x \in M\) be arbitrary and and choose \(k_0 \in \bbN\) so that \(k_0 \leq r(x) < k_0+1\). Then \(U_{k_0}\) and \(U_{k_0+1}\) are the only sets of the cover \(\{U_k\}_{k \geq 0}\) which may  contain \(x\). Thus
\begin{align*}
	|f-f_{\varepsilon}|_{C^{0,\beta}}(x) & \leq |(\eta_{k_0}f)-f_\varepsilon^{(k_0)}|_{C^{0,\beta}}(x)+ |(\eta_{k_0+1}f)-f_\varepsilon^{(k_0+1)}|_{C^{0,\beta}}(x) \\
	&\leq \frac{\varepsilon}{2} e^{-\lambda (k_0+1)}+\frac{\varepsilon}{2} e^{-\lambda (k_0+2)} \\
	& \leq \varepsilon e^{-\lambda r(x)} \\
	&\leq \varepsilon W_\lambda(x),
\end{align*}
and hence \(||f-f_{\varepsilon}||_{C_{\lambda}^{0,\beta}(M)}=\sup_{x \in M}\frac{1}{W_\lambda (x)}|f-f_{\varepsilon}|_{C^{0,\beta}}(x) \leq \varepsilon\).
\end{proof}

Now we are ready to present the proof of \Cref{Invertibility of L - non-compact case}.

\begin{proof}[Proof of \Cref{Invertibility of L - non-compact case}] As the a priori estimate was established in \Cref{Subsec: a priori estimates in non-compact case}, it remains to show that \(\mathcal{L}\) is surjective. The same argument as in \Cref{Invertibility of L} shows that for any \(f \in L^2\big({\rm Sym}^2(T^*M) \big)\), there is a weak solution \(h \in H_0^1\big({\rm Sym}^2(T^*M) \big)\) of \(\mathcal{L}f=h\). Fix \(f \in C_{\lambda}^{0,\alpha}\big({\rm Sym}^2(T^*M) \big)\). Invoking Lemma \ref{approximation of Hölder tensors by smooth ones - exponential} we obtain a sequence \((f_i)_{i \in \bbN} \subseteq C^\infty\big({\rm Sym}^2(T^*M) \big)\) so that \(||f-f_i||_{C_{\lambda}^{0,\frac{\alpha}{2}}} \to 0\) as \(i \to \infty\). Choose weak solutions \(h_i \in H_0^1\big({\rm Sym}^2(T^*M) \big)\) of \(\mathcal{L}h_i=f_i\). Then \(h_i \in C^{\infty}\big({\rm Sym}^2(T^*M) \big)\) and \(\mathcal{L}h_i=f_i\) holds in the classical sense. Moreover, \Cref{Regularity of L} implies \(h_i \in C_{\lambda}^{0,\frac{\alpha}{2}}\big({\rm Sym}^2(T^*M) \big)\). Note that the norms \(||\cdot||_{C_{\lambda}^{0,\frac{\alpha}{2}}}\) and \(||\cdot||_{0,\lambda}\) are equivalent on \(C_{\lambda}^{0,\frac{\alpha}{2}}\big({\rm Sym}^2(T^*M) \big)\) (but with a non-universal constant). Therefore, the a priori estimate from \Cref{C^0 estimate from exponential hybrid norm} gives
\[
	||h_i-h_j||_{C^{2}} \leq C ||f_i-f_j||_{C_{\lambda}^{0,\frac{\alpha}{2}}} \to 0 \quad \text{ as }\, i,j \to \infty
\]
for some (non-universal) constant \(C\). So \((h_i)_{i \in \bbN} \subseteq C^2\big({\rm Sym}^2(T^*M) \big)\) is a Cauchy sequence. Let \(h \in C^2\big({\rm Sym}^2(T^*M) \big)\) 
be the limit tensor field. As \(M\) has finite volume, \(C^2\)-convergence implies \(H^1\)-convergence. Thus \(h \in H_0^1\big({\rm Sym}^2(T^*M) \big)\) and \(\mathcal{L}h=f \in C_{\lambda}^{0,\alpha}\big({\rm Sym}^2(T^*M) \big)\). Invoking \Cref{Regularity of L} once more yields \(h \in C_{\lambda}^{2,\alpha}\big({\rm Sym}^2(T^*M) \big)\). Therefore, \(\mathcal{L}\) is a surjective mapping from \(C_{\lambda}^{2,\alpha}\big({\rm Sym}^2(T^*M) \big)\) to \(C_{\lambda}^{0,\alpha}\big({\rm Sym}^2(T^*M) \big)\).
\end{proof}

%\newpage

\section{Proof of the pinching theorem without lower injectvity radius bound}\label{Sec: Proof of pinching without inj radius bound}

We can now finally state and prove the full version of \Cref{Pinching without inj radius bound - introduction}.

\begin{thm}\label{Pinching without inj radius bound - full version}For all \(\alpha \in (0,1)\), \(\Lambda \geq 0\),  \(\lambda \in (0,1)\), \(\delta \in (0,2)\), \(r_0 \geq 1\), \(b > 1\), and \(\eta \geq 2+\lambda\) there exist constants 
\(\varepsilon_0=\varepsilon(\alpha,\Lambda,\lambda,\delta,r_0,b,\eta)>0\) and 
\(C=C(\alpha,\Lambda,\lambda,\delta,r_0,b,\eta)>0 \) 
with the following property. Let \(M\) be a \(3\)-manifold that admits a complete Riemannian metric \(\bar{g}\) satisfying the following conditions for some \(\varepsilon \leq \varepsilon_0\):
\begin{enumerate}[i)]
\item \(\vol(M,\bar{g}) < \infty\);
\item \(-1-\varepsilon \leq \mathrm{sec}_{(M,\bar{g})} \leq -1+\varepsilon\);
\item It holds
\[
	\max_{\pi \subseteq T_xM}|\mathrm{sec}(\pi)+1|, \, |\nabla R|(x), \, |\nabla^2R|(x) \leq \varepsilon e^{-\eta d(x,\partial M_{\rm small})}
\]
for all \( x \in M_{\rm small}\);
\item \(|| {\nabla} \Ric(\bar{g})||_{C^0(M,\bar{g})} \leq \Lambda\);
\item It holds
\[
	\int_Me^{-(2-\delta)r_x(y)}|\Ric(\bar{g})+2\bar{g}|_{\bar{g}}^2 \, d\vol_{\bar{g}}(y) \leq \varepsilon^2
\]
for all \(x \in M\) with 
\[\int_{B(x,2r_0)\setminus B(x,r_0)}e^{-(2-\delta)r_x(y)} \, d\vol_{\bar{g}}(y)>\varepsilon_0,\] where \(r_x(y)=d_{\bar{g}}(x,y)\);
\item It holds
\[
	e^{bd(x,M_{\rm thick})}\int_Me^{-(2-\delta)r_x(y)}|\Ric(\bar{g})+2\bar{g}|_{\bar{g}}^2 \, d\vol_{\bar{g}}(y) \leq \varepsilon^2
\]
for all \(x \in M_{\rm thin} \setminus M_{\rm small}\).
\end{enumerate}
Then there exists a hyperbolic metric \(g_{\rm hyp}\) on \(M\) so that
\[
	||g_{\rm hyp}-\bar{g}||_{2,\lambda;\ast} \leq C \varepsilon^{1-\alpha},
\]
where \(||\cdot||_{2,\lambda;\ast}\) is the norm defined in (\ref{Hybrid 2-norm - exponential}) with respect to 
the metric \(\bar{g}\) and the constants \(\alpha,\lambda,b,\varepsilon_0,\delta,r_0\).
%\textcolor{red}{(I added the more general version.)}

Moreover, if for some \(\beta \leq 1-\frac{1}{2}\delta\) and \(U \subseteq M\) it holds
\[
	\int_Me^{-(2-\delta)r_x(y)}|\Ric(\bar{g})+2\bar{g}|_{\bar{g}}^2 \, d\vol_{\bar{g}}(y) \leq \varepsilon^{2(1-\alpha)} e^{-2 \beta \dist_{\bar{g}}(x,U \cup \partial M_{\rm rhick})} \quad \text{for all }x \in M_{\rm thick},
\]
then
\[
	|g_{\rm hyp}-\bar{g}|_{C^{2,\alpha}}(x) \leq C\varepsilon^{1-\alpha}e^{-\beta\mathrm{dist}_{\bar{g}}(x,\,U \, \cup \, \partial M_{\rm thick})} \quad \text{for all }x \in M_{\rm thick}.
\]
In particular, if \(\bar{g}\) is already hyperbolic outside a region \(U \subseteq M\), and if 
\[
	\int_{U}|\Ric(\bar{g})+2\bar{g}|_{\bar{g}}^2 \, d\vol_{\bar{g}} \leq \varepsilon^2,
\]
then it holds
\[
	|g_{\rm hyp}-\bar{g}|_{C^{2,\alpha}}(x) \leq C\varepsilon^{1-\alpha}e^{-(1-\frac{1}{2}\delta)\mathrm{dist}_{\bar{g}}(x,\,U \, \cup \, \partial M_{\rm thick})} \quad \text{for all }x \in M_{\rm thick}.
\]
\end{thm}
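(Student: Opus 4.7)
The plan is to run an implicit function theorem argument for the Einstein operator $\Phi = \Phi_{\bar g}$ defined in (\ref{Def of Phi}), entirely analogous to the proof of \Cref{Pinching with inj radius bound - full version}, but with the new Banach spaces $C^{2,\alpha}_\lambda$, $C^{0,\alpha}_\lambda$ equipped with the hybrid norms $\|\cdot\|_{2,\lambda;\ast}$, $\|\cdot\|_{0,\lambda}$ from \Cref{exponentialhybrid}. The crucial input is \Cref{Invertibility of L - non-compact case}, which provides invertibility of $\mathcal{L} = (D\Phi)_{\bar g}$ with a universal bound on $\|\mathcal{L}^{-1}\|_{\rm op}$ between these spaces. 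Since assumption $iii)$ matches the curvature decay condition (\ref{curvature decay}), and assumptions $v),vi)$ together ensure that the $\|\cdot\|_{0,\lambda}$-norm of $\Phi(\bar g) = \mathrm{Ric}(\bar g) + 2\bar g$ is controlled by $C\varepsilon^{1-\alpha}$ (using once again $\|\cdot\|_{C^{0,\alpha}}\leq C\|\cdot\|_{C^0}^{1-\alpha}\|\cdot\|_{C^1}^\alpha$ together with $iv)$), all the hypotheses required to invert $\mathcal{L}$ and to control $\|\Phi(\bar g)\|_{0,\lambda}$ are in place.

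First, I would establish the analogue of \Cref{global continuity of differential} for the exponential hybrid norms: for metrics $g$ with $\|g-\bar g\|_{2,\lambda;\ast}$ small enough, we have
\[
\|(D\Phi)_g(h) - (D\Phi)_{\bar g}(h)\|_{0,\lambda} \leq C \, \|g-\bar g\|_{2,\lambda;\ast}\,\|h\|_{2,\lambda;\ast}.
\]
The pointwise estimates of \Cref{Pointwise continuity of differential} translate immediately into pointwise $C^{0,\alpha}_\lambda$-control. For the weighted $L^2$-components of $\|\cdot\|_{0,\lambda}$ one observes that these components appear with the same exponential weights $e^{-(2-\delta)r_x}$ and $e^{bd(x,M_{\rm thick})}$ on both sides of the inequality, and the pointwise quadratic estimate transfers directly under the integral. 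For the decomposition component of $\|\cdot\|_{2,\lambda;\ast}$, one uses that $\|\rho_k u_k\|_{C^{2,\alpha}_\lambda}, \|\varrho_l v_l\|_{C^{2,\alpha}_\lambda}$ are bounded by the trivial Einstein coefficients (as in the proof of \Cref{Schauder estimates for decomposition norm}). Once this is done, define
\[
\Psi(h) := h - \mathcal{L}^{-1}(\Phi(\bar g + h)),
\]
and observe as in the proof of \Cref{Pinching with inj radius bound - full version} that for $R>0$ sufficiently small, $\Psi$ restricts to a $\tfrac12$-Lipschitz self-map of the closed ball $\bar B(0,R) \subseteq C^{2,\alpha}_\lambda$, since $\|\Psi(0)\|_{2,\lambda;\ast} = \|\mathcal{L}^{-1}\Phi(\bar g)\|_{2,\lambda;\ast} \leq C\varepsilon^{1-\alpha} \leq R/2$ for $\varepsilon$ small. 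The Banach fixed point theorem then produces $h_0$ with $\Phi(\bar g + h_0) = 0$, and \Cref{Zeros of Phi are Einstein} yields that $g_{\rm hyp} := \bar g + h_0$ is Einstein with $\mathrm{Ric}(g_{\rm hyp}) = -2 g_{\rm hyp}$; in dimension 3 this means $g_{\rm hyp}$ is hyperbolic. The contraction property gives $\|h_0\|_{2,\lambda;\ast} \leq C\varepsilon^{1-\alpha}$.

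For the improved exponential decay estimate on $M_{\rm thick}$, the strategy exactly mirrors Step 2 of the proof of \Cref{Pinching with inj radius bound - full version}. I would introduce the set
\[
\mathcal{U} := \Big\{ h \in \bar B(0,R) \;\Big|\; |h|_{C^{2,\alpha}}(x) \leq C_1 \varepsilon^{1-\alpha} e^{-\beta \dist_{\bar g}(x, U\cup \partial M_{\rm thick})}\text{ for all } x\in M_{\rm thick},
\]
\[
\text{ and } \|h\|_{H^2(M;\omega_x)} \leq C_1 \varepsilon^{1-\alpha} e^{-\beta \dist_{\bar g}(x, U\cup \partial M_{\rm thick})} \text{ for all } x\in M_{\rm thick} \Big\},
\]
and show that $\Psi(\mathcal{U}) \subseteq \mathcal{U}$ using the pointwise a priori estimate \eqref{improved estimate - eq 1} (which remains valid on $M_{\rm thick}$ by \Cref{pointwise C^0 estimate given inj rad bound} combined with the pointwise Schauder bound), together with the quadratic pointwise Lipschitz estimate from \Cref{Pointwise continuity of differential} applied to $\mathcal{L}h - (D\Phi)_{\bar g + th}h$. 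The fixed point $h_0$ then lies in $\mathcal{U}$ and inherits the desired decay bound.

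The main obstacle I expect is not the fixed point argument itself but the verification of the continuity estimate for $(D\Phi)_g$ in the decomposition norm $\|\cdot\|_{C^{2,\alpha}_\lambda;\ast}$: the perturbed metric $\bar g + h$ will in general not preserve the trivial-Einstein-variation splitting used to define the $\ast$-norm, so one has to verify that the nonlinear correction terms $\mathcal{L}_{(\beta_{\bar g}(g))^\sharp}(g)$ and $\tfrac12 \Ric(h)$ evaluated on a trivial Einstein variation are themselves controlled in $\|\cdot\|_{C^{0,\alpha}_\lambda}$ by the trivial Einstein coefficients; this is exactly the kind of estimate proved in \Cref{Schauder estimates for decomposition norm} using the decay of $|g-g_{cusp}|_{C^2}$ from \Cref{tubes are almost cusps - pinched curvature} and \Cref{Existence of approximate cusp metric}, and under the curvature decay hypothesis $iii)$ it carries over to $\bar g$. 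The restriction of the decay statement to $M_{\rm thick}$ (rather than all of $M$, as in \Cref{Pinching with inj radius bound - full version}) is forced by the behavior in the small part, where the fundamental solutions of the model ODE (\ref{Lh=f in a cusp in coordinates}) prevent any pointwise decay better than the inverse weight $W_\lambda$.
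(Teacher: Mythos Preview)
Your proposal is correct and follows essentially the same approach as the paper: Banach fixed point for $\Psi(h)=h-\mathcal{L}^{-1}\Phi(\bar g+h)$ using \Cref{Invertibility of L - non-compact case}, control of $\|\Phi(\bar g)\|_{0,\lambda}$ via conditions $ii)$--$vi)$, and the $\Psi$-invariant set $\mathcal{U}$ for the improved decay estimate. The paper defers the continuity estimate $\|(D\Phi)_{\bar g+h}(h')-(D\Phi)_{\bar g}(h')\|_{0,\lambda}\leq c(R)\|h'\|_{2,\lambda;\ast}$ entirely to \cite[pp.~898--899]{Bamler2012} rather than spelling it out, and in the definition of $\mathcal{U}$ it imposes the weighted $H^2$ bound (\ref{improved estimate v2 - eq4}) for \emph{all} $x\in M$ (not only $x\in M_{\rm thick}$), and adds one observation you omitted: the pointwise bound (\ref{improved estimate v2 - eq3}) automatically extends from $M_{\rm thick}$ to its $\rho$-neighborhood $N_\rho(M_{\rm thick})$ because there $\dist_{\bar g}(\cdot,\partial M_{\rm thick})\leq\rho$ and $|h|_{C^{2,\alpha}}\leq\|h\|_{2,\lambda;\ast}$, which is needed since the term $\max_{y\in B(x,\rho)}|h|_{C^{2,\alpha}}^2(y)$ from (\ref{improved estimate - eq 4}) can probe slightly outside $M_{\rm thick}$.
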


The following slight generalisation follows from \Cref{Invertbility of L - eta > 1}.

\begin{rem}\normalfont
\Cref{Pinching without inj radius bound - full version} holds for all \(\lambda \in (0,1)\) and \(\eta > 1\).
\end{rem}

\begin{proof}Since the proof is basically identical to that of \Cref{Pinching with inj radius bound - full version} we only sketch it and highlight differences. For \(R>0\) we denote by \(\bar{B}(0,R) \subseteq C_\lambda^{2,\alpha}\big({\rm Sym}^2(T^*M)\big)\) the closed ball of radius \(R\) around the \(0\)-section with respect to 
the norm \(||\cdot||_{2,\lambda;\ast}\). Analogous to the proof of \Cref{Pinching with inj radius bound - full version} we consider 
\[
	\Psi: \bar{B}(0,R) \to C_\lambda^{2,\alpha}\big({\rm Sym}^2(T^*M)\big), h \mapsto h -\mathcal{L}^{-1}\Phi(\bar{g}+h),
\]
where \(\Phi\) is the Einstein operator defined in (\ref{Def of Phi}), and \(\mathcal{L}=(D\Phi)_{\bar{g}}\). We want to show that for \(R>0\) small enough, 
\(\Psi\) is \(\frac{1}{2}\)-Lipschitz. Since \(||\mathcal{L}^{-1}||_{\rm op}\) is bounded by a universal constant (\Cref{Invertibility of L - non-compact case}), it suffices to show that for any \(h \in B(0,R)\) and any \(h^\prime \in C_\lambda^{2,\alpha}\big({\rm Sym}^2(T^*M)\big)\) it holds
\[
	||(D\Phi)_{\bar{g}+h}(h^\prime)-(D\Phi)_{\bar{g}}(h^\prime)||_{0,\lambda} \leq c(R)||h^\prime||_{0,\lambda;\ast}
\]
for some universal \(c(R)\) with \(c(R) \to 0\) as \(R \to 0\). Indeed, this can be shown by the exact same argument as in \cite[pages 898 and 899]{Bamler2012}.

Fix \(R>0\) such that \(\Psi\) is \(\frac{1}{2}\)-Lipschitz. Then it follows from the definition (\ref{Hybrid 2-norm - exponential}) of \(||\cdot||_{2,\lambda;\ast}\), the conditions \(ii)-v)\) and \Cref{Invertibility of L - non-compact case} that \(||\Psi(0)||_{2,\lambda;\ast} \leq C\varepsilon^{1-\alpha}\), which is at most \(\frac{R}{2}\) if  \(\varepsilon_0>0\) is small enough. %Thus as in the proof of \Cref{Pinching with inj radius bound - full version} the Banach Fixed Point Theorem, the fact that \(\Psi\) is \(\frac{1}{2}\)-Lipschitz and \Cref{Zeros of Phi are Einstein} show that there is a hyperbolic metric \(g_{\rm hyp}\) on \(M\), and \(||g_{\rm hyp}-\bar{g}||_{2,\lambda;\ast} \leq C\varepsilon^{1-\alpha}\) (here we use that in dimension three Einstein metrics have constant sectional curvature).
Remembering that in dimension three Einstein metrics have constant sectional curvature, the existence of a hyperbolic metric \(g_{\rm hyp}\) now follows exactly as in the proof of \Cref{Pinching with inj radius bound - full version}.

To show the improved estimate, note that (\ref{improved estimate - eq 1}) still holds for points in the thick part of $M$, 
that is, for all \(h \in C_\lambda^{2,\alpha}\big({\rm Sym}^2(T^*M)\big)\) and all \(x \in M_{\rm thick}\) it holds
\begin{equation*}\label{improved estimate v2 - eq1}
	|h|_{C^{2,\alpha}}(x)\leq C_0 \Big(|\mathcal{L}h|_{C^{0,\alpha}}(x)+||\mathcal{L}h||_{L^2(M;\omega_x)} \Big)
\end{equation*}
for a universal constant \(C_0\). Also choose \(C_0\) large enough so that \(||\Psi(0)||_{2,\lambda;\ast}\leq \frac{1}{2}C_0 \varepsilon^{1-\alpha}\). Abbreviate \(R_{\varepsilon}=C_0\varepsilon^{1-\alpha}\), so that \(\Psi\) restricts to a \(\frac{1}{2}\)-Lipschitz endomorphism of \(\bar{B}(0,R_\varepsilon)\). For an appropriatly chosen \(C_1 \gg C_0\) define
\begin{equation*}\label{improved estimate v2 - eq2}
	\mathcal{U}:=\left\{h \in \bar{B}(0,R_\varepsilon) \, | \, h \text{ satisfies (\ref{improved estimate v2 - eq3}) for all }x \in M_{\rm thick}  \text{ and (\ref{improved estimate v2 - eq4}) for all }x \in M \right\},
\end{equation*}
where the inequalities (\ref{improved estimate v2 - eq3}) and (\ref{improved estimate v2 - eq4}) appearing in the definition of \(\mathcal{U}\) are
\begin{equation}\label{improved estimate v2 - eq3}
	|h|_{C^{2,\alpha}}(x)\leq C_1 \varepsilon^{1-\alpha}e^{-\beta \dist_{\bar{g}}(x, \, U \, \cup \, \partial M_{\rm thick} )}
\end{equation}
and
\begin{equation}\label{improved estimate v2 - eq4}
		||h||_{H^2(M;\omega_x)}\leq C_1 \varepsilon^{1-\alpha}e^{-\beta \dist_{\bar{g}}(x, \, U \, \cup \, \partial M_{\rm thick} )}
\end{equation}
The rest of the proof of \Cref{Pinching with inj radius bound - full version} carries over with only one small additional observation. For \(h \in \mathcal{U}\) the estimate (\ref{improved estimate v2 - eq3}) even holds for all \(y \in N_\rho(M_{\rm thick})\). Here \(\rho > 0\) is the radius appearing in the definition of the Hölder norms. Indeed, if \(y \in N_{\rho}(M_{\rm thick})\setminus M_{\rm thick}\), then \(d(y,\partial M_{\rm thick}) \leq \rho\), and thus
\[
	|h|_{C^{2,\alpha}}(y)\leq ||h||_{2,\lambda;\ast} \leq C_0\varepsilon^{1-\alpha} \leq \big(C_0e^{\rho}\big)\varepsilon^{1-\alpha}e^{-(1-\frac{1}{2}\delta)d(y, \, U \, \cup \, \partial M_{\rm thick} )}
\]
So \(h \in \mathcal{U}\) satisfies (\ref{improved estimate v2 - eq3}) for all \(y \in N_\rho(M_{\rm thick})\) if we choose \(C_1 > C_0e^\rho\). We really need the estimate (\ref{improved estimate v2 - eq3}) in an enlarged region because in order to check that \(\mathcal{U}\) is \(\Psi\)-invariant, it is necessary to control \(\max_{y \in B(x,\rho)}|h|_{C^{2,\alpha}}^2(y)\) for \(x \in M_{\rm thick}\) (see (\ref{improved estimate - eq 4})).
\end{proof}

Analogous to \Cref{Pinching with inj rad bound - non-orientable}, \Cref{Invertibility of L - non-compact case - non-orientable} implies the following.

\begin{rem}\normalfont
\Cref{Pinching without inj radius bound - full version} also holds when \(M\) is non-orientable.
\end{rem}

%\newpage

\section{Drilling and filling}\label{Sec: drillingand} 

In the first part of this section we consider 
a hyperbolic 3-manifold $M$ of finite volume and
a collection $T_1,\dots,T_k$ of Margulis tubes in $M$.
Each $T_i$ is a solid 3-torus whose boundary 
$\partial T_i$ is a flat
2-torus which is  locally isometrically 
embedded in $M$. The \emph{meridian} of the tube $T_i$ is an 
essential simple closed curve $\alpha_i$ on $\partial T_i$ 
which is homotopically trivial in $M$. It is unique up to free homotopy. 
Up to homotopy equivalence
and hence isometry (by the Mostow rigidity theorem), the 
manifold $M$ is uniquely determined by the homotopy type of 
$M-\cup_iT_i$ and the choice of these meridians. The core curve of the tube
$T_i$ is a primitive closed geodesic $\beta_i\subset T_i$. 

\emph{Drilling} of the geodesics $\beta_i$, 
that is, removal of the geodesic $\beta_i$ 
for each $i$,  defines a new manifold $\hat M$. Brock and Bromberg
(Theorem 6.1 of \cite{BB04}) showed that if the sum of the
lengths of the geodesics $\beta_i$ is sufficiently small,
then the manifold $\hat M$ admits a complete hyperbolic
metric of finite volume for which 
each Margulis tube $T_i$ about one of the geodesics
$\beta_i$ has been replaced by a rank two cusp $C_i$. Furthermore,
the hyperbolic metric on 
$\hat M-\cup_i C_i$ is $L$-bilipschitz to the 
hyperbolic metric on $M-\cup_iT_i$ for a number
$L$ which tends to one as the sum of the lengths of the
geodesics $\beta_i$ tends to zero.

The work \cite{BB04} does not give an effective upper bound for the total
length of the geodesics $\beta_i$ for which the drilling result holds true, 
nor is the dependence of the bilipschitz constant $L$ 
on this total length explicit. Such effective bounds were recently
obtained by Futer, Purcell and Schleimer
\cite{FPS21}. 

The first main goal of this section is to establish a version of
the drilling result of Brock and Bromberg \cite{BB04} as an application
of our main theorem. As in \cite{BB04}, our result is not effective, 
but it allows the drilling
of an arbitrary number of geodesics, with only a universal
upper length bound for each of them, 
provided that these geodesics
are sufficiently sparsely distributed in $M$. 

For an application of our 
methods, it is more convenient to control a Margulis tube via the
length of its meridian on the boundary torus and not via
the length of the core geodesic. Thus we begin with 
comparing the information on meridional length with the information
on the length of the core geodesic. 

Let us consider for the moment
an arbitrary Margulis tube $T$ with core geodesic $\beta$ of length $\ell >0$
and boundary $\partial T$ 
in some hyperbolic 3-manifold $M$. If $R>0$ is the radius of the tube, 
that is, the distance of the core geodesic $\beta$ to the boundary torus $\partial T$, 
then
the meridian of $T$ is a simple closed geodesic on the flat torus $\partial T$ of length
$2\pi \sinh R$.
In particular, since the
injectivity radius of $\partial T$ roughly equals the Margulis constant for hyperbolic 
3-manifolds, 
the radius $R$ is bounded from below by a universal positive constant.
Cutting $\partial T$ open along a meridian
yields a flat cylinder with boundary length $2\pi \sinh R$ and height
$\ell \cosh R$. The area of $\partial T$ equals
$2\pi \ell \sinh R \cosh R$. 
 
In general, the relation between the length $\ell$ of the
core geodesic of a Margulis tube and the radius $R$ of the tube
is delicate. The following effective bound is a special case of
Theorem 1.1 of \cite{FPS19}.

\begin{thm}[Futer, Purcell and Schleimer]\label{futer}
  Let $\epsilon \leq 0.3$ be a Margulis constant for hyperbolic
  3-manifolds. 
  Let $M$ be a hyperbolic 3-manifold  and let
  $N\subset M$ be a Margulis tube whose core geodesic has
  length $\ell< 8\epsilon^2$.  Then the radius of the tube is at least
 $\operatorname{arcosh} \frac{\epsilon}{\sqrt{8\ell}}$. 
\end{thm}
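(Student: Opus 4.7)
The plan is to pass to the universal cover $\mathbb{H}^3$, where the Margulis tube $N$ lifts to a tubular neighborhood of the axis $A$ of a primitive loxodromic isometry $\phi$ generating the cyclic Margulis subgroup. Write $\phi$ as having translation length $\ell$ along $A$ and rotation angle $\theta$ about $A$. The tube is characterized as the $\langle\phi\rangle$-quotient of the union $\bigcup_{n\neq 0}\{x\in\mathbb{H}^3 : d(x,\phi^n x)<\epsilon\}$, each of whose components is an open tubular neighborhood of $A$ of some radius $\rho_n$; consequently the tube radius equals $R=\sup_{n\neq 0}\rho_n$, and it suffices to produce a \emph{single} integer $n$ for which $\rho_n\geq \operatorname{arcosh}(\epsilon/\sqrt{8\ell})$.

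For the explicit computation, I would invoke the standard displacement formula for a loxodromic isometry: for $x$ at distance $\rho$ from $A$,
\[
\sinh^2\!\bigl(d(x,\phi^n x)/2\bigr)=\sinh^2(n\ell/2)\cosh^2(\rho)+\sin^2(n\theta/2)\sinh^2(\rho).
\]
Setting the left-hand side equal to $\sinh^2(\epsilon/2)$ and solving yields, whenever $\sinh(n\ell/2)<\sinh(\epsilon/2)$,
\[
\sinh^2(\rho_n)=\frac{\sinh^2(\epsilon/2)-\sinh^2(n\ell/2)}{\sinh^2(n\ell/2)+\sin^2(n\theta/2)}.
\]
Thus to bound $R$ from below it is enough to find $n$ for which the denominator is small relative to $\ell$ while the numerator retains most of $\sinh^2(\epsilon/2)$. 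The natural tool for the denominator is a Dirichlet-type pigeonhole argument: for any integer $N\geq 1$ there exists $1\leq n\leq N$ with $|\sin(n\theta/2)|\leq \pi/N$ (via pigeonholing the values $k\theta/2\bmod\pi$). For such $n$ the denominator is at most $\sinh^2(N\ell/2)+\pi^2/N^2$, which for $N\ell\lesssim 1$ is comparable to $N^2\ell^2/4+\pi^2/N^2$. Optimizing in $N$ (balancing the two terms) gives the choice $N\sim 1/\sqrt{\ell}$ and a denominator of order $\ell$; meanwhile the numerator is at least $\sinh^2(\epsilon/2)/2$ because $n\ell\lesssim\sqrt{\ell}\ll\epsilon$ under the hypothesis $\ell<8\epsilon^2$. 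Combining gives $\sinh^2(\rho_n)\gtrsim \sinh^2(\epsilon/2)/\ell\sim \epsilon^2/\ell$, hence $\cosh(\rho_n)\gtrsim \epsilon/\sqrt{\ell}$, as required.

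The principal obstacle is pinning down the sharp constant $8$ that appears in the statement; the pigeonhole step as stated above yields a denominator bound of order $\pi\ell$ rather than $2\ell$, so extracting exactly $\epsilon/\sqrt{8\ell}$ requires either a more refined approximation argument (for instance the three-distance theorem, or choosing $N$ as the nearest integer to a specific optimizer), careful use of the convexity inequalities $\sinh^2(x)\leq x^2\cosh^2(x)$ and $\sinh(\epsilon/2)\geq \epsilon/2\cdot\cosh(\epsilon/2)^{-1/2}$ taking into account the hypothesis $\epsilon\leq 0.3$, or a direct case analysis separating small rotation angles from generic ones. The remaining steps (existence of the optimizing $n$, validity of the formula in the prescribed range $n\ell<\epsilon$, and passage back to the quotient) are routine once the pigeonhole estimate has been arranged to give the correct multiplicative constant.
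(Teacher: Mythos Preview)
The paper does not prove this theorem at all: it is quoted verbatim as ``a special case of Theorem 1.1 of \cite{FPS19}'' and used as a black box. There is therefore no proof in the paper to compare your proposal against.

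That said, your outline is the standard approach to such tube-radius estimates (and is indeed close in spirit to the argument in the cited Futer--Purcell--Schleimer paper): lift to $\mathbb{H}^3$, use the explicit displacement formula for a loxodromic element at distance $\rho$ from its axis, and then run a Dirichlet/pigeonhole argument on the rotation angle to find an iterate $\phi^n$ whose displacement stays below $\epsilon$ out to large $\rho$. You correctly identify the main difficulty, namely that the naive pigeonhole bound $|\sin(n\theta/2)|\leq \pi/N$ gives a constant of order $\pi$ rather than the $\sqrt{8}$ appearing in the statement; obtaining the sharp constant does require a more careful argument along the lines you suggest. For the purposes of the present paper, however, only the qualitative statement ``short core geodesic implies large tube radius'' is used, so any effective constant would suffice.
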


Theorem \ref{futer} makes the idea effective
that a very short core geodesic is contained in
a Margulis tube of very large radius. As a consequence,
finding non-effective upper
length bounds on closed geodesics which we can drill from a hyperbolic
manifold, with effective control on the geometry of the resulting
manifold with cusps, is equivalent to finding non-effective
lower bounds on tube radii about closed geodesics
which allow for geometrically controlled drilling.

%In the formulation of our drilling theorem, 
%for a point $x\in M$ and $i\leq k$ let 
%$\delta_i(x)$ be the minimum of the distance
%between $x$ and any point on $\partial T_i$. We show
We show

%The \emph{normalized length}
%of a closed geodesic $\alpha$ on $\partial T_i$ is the length of
%$\alpha$ for the metric obtained from the flat metric on $T_i$ by
%rescaling in such a way that the area of the resulting metric equals one.

\begin{thm}[The drilling theorem]\label{drilling}
For any \(\varepsilon >0\), \(\kappa \in (0,1)\) and \(m>0\) there exists
a number $R=R(\varepsilon,\kappa,m) >0$
with the following property. 
Let $M$ be a finite volume hyperbolic 3-mani\-fold, and let 
$T_1,\dots,T_k$ be a family of Margulis tubes in $M$. Let $R_i>0$ be the radius of the 
tube $T_i$, and let $\beta_i$ be its core geodesic. 
If for each $r>0$ and each $x\in M$ we have
$\# \{i\mid {\rm dist}(x,T_i)\leq r\}\leq me^{\kappa r }$ and if $R_i\geq R$ for all $i$, 
then the manifold $\hat M$ 
obtained from $M$ by drilling each of the 
geodesics $\beta_i$ admits a complete hyperbolic metric of 
finite volume, and the restriction of this hyperbolic metric to the complement
of the cusps $C_i$ obtained from the drilling is $\varepsilon$-close 
in the $C^2$-topology to the metric on $M-\cup_{i\leq k}T_i$.
\end{thm}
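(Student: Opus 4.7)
The strategy is to construct an explicit model metric $\bar g$ on the drilled manifold $\hat M$ that is already very close to hyperbolic, verify that $\bar g$ satisfies the hypotheses of Theorem \ref{Pinching without inj radius bound - full version}, and then read off the conclusion from the output of that theorem.

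First I will construct $\bar g$. Topologically, $\hat M$ is obtained from $M$ by removing each core geodesic $\beta_i$ and replacing each Margulis tube $T_i$ with a rank two cusp $C_i$ glued to $\partial T_i$. Using the description of a Margulis tube as a quotient of a tubular neighborhood of a geodesic in $\mathbb H^3$ by a loxodromic isometry, the boundary $\partial T_i$ is a flat two-torus of diameter $O(1)$ once $R_i\geq R$ is sufficiently large. Following the construction of the cusp model metric on a tube in \Cref{Sec:model metrics}, fix a hyperbolic cusp metric $g_{cusp}^{(i)}=e^{-2r}g_{Flat}^{(i)}+dr^2$ on $C_i\cong \partial T_i\times[0,\infty)$ whose boundary flat structure matches that of $\partial T_i$. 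Choose a collar $A_i\subset T_i$ of width $1$ adjacent to $\partial T_i$, and interpolate between the hyperbolic metric on $M-\bigcup_i T_i$ extended through $A_i$ and $g_{cusp}^{(i)}$ on $C_i$ using a fixed bump function of bounded $C^2$-norm. Define $\bar g$ to be the resulting smooth metric on $\hat M$. By (\ref{tubes are almost cusps}) and Proposition \ref{tubes are almost cusps - pinched curvature}, the two hyperbolic metrics being glued differ in $C^2$ by $O(e^{-2R})$ on $A_i$, so
\[
|\Ric(\bar g)+2\bar g|_{\bar g}(x)=O(e^{-2R})\quad\text{for }x\in A_i,
\]
and $\bar g$ is exactly hyperbolic elsewhere. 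In particular $\bar g$ is hyperbolic outside the region $U:=\bigcup_i A_i$, and $\hat M_{\rm small}$ coincides with the interior of the cusps $C_i$, so hypothesis $iii)$ is trivial.

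Next I will verify the integral hypotheses $v)$ and $vi)$ of \Cref{Pinching without inj radius bound - full version} for any fixed $\delta\in(\kappa,1)$, $b\in(1,2-\delta)$ and $\lambda\in(0,1)$. The support of $\Ric(\bar g)+2\bar g$ is contained in $U$, and each $A_i\subset M_{\rm thick}$ has bounded volume. Hence for any $x\in\hat M$,
\[
\int_{\hat M} e^{-(2-\delta)r_x(y)}|\Ric(\bar g)+2\bar g|^2_{\bar g}(y)\,d\vol(y)
\leq C e^{-4R}\sum_i e^{-(2-\delta)\dist(x,A_i)}.
\]
The sparsity assumption $\#\{i:\dist(x,T_i)\leq r\}\leq me^{\kappa r}$ combined with $\kappa<2-\delta$ bounds the sum by a universal constant depending only on $m,\kappa,\delta$. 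For $x\in \hat M_{\rm thin}\setminus\hat M_{\rm small}$ the additional weight $e^{bd(x,\hat M_{\rm thick})}$ is universally bounded since such $x$ lie within bounded distance of $\hat M_{\rm thick}$. The $C^1$ bound on $\Ric(\bar g)$ in $iv)$ follows from the explicit construction. Choosing $R$ sufficiently large makes the right hand side at most $\varepsilon_0^2$ for any prescribed $\varepsilon_0=\varepsilon_0(\alpha,\Lambda,\lambda,\delta,r_0,b,\eta)>0$.

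With all hypotheses of \Cref{Pinching without inj radius bound - full version} in place, the theorem produces a hyperbolic metric $g_{\rm hyp}$ on $\hat M$ with $\|g_{\rm hyp}-\bar g\|_{2,\lambda;\ast}\leq C e^{-2(1-\alpha)R}$. The moreover clause, applied with $U=\bigcup_i A_i$ and $\beta=1-\tfrac12\delta$, gives the pointwise $C^{2,\alpha}$ estimate
\[
|g_{\rm hyp}-\bar g|_{C^{2,\alpha}}(x)\leq C e^{-2(1-\alpha)R}e^{-(1-\delta/2)\dist_{\bar g}(x,U\cup\partial \hat M_{\rm thick})}
\]
for $x\in\hat M_{\rm thick}$. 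Since $\bar g$ equals the original hyperbolic metric on $M-\bigcup_i T_i\subset \hat M-\bigcup_i C_i$, and the points of this region lie at distance at least $1$ from $U$, choosing $R$ large enough ensures $|g_{\rm hyp}-\bar g|_{C^2}\leq\varepsilon$ there, which is the desired conclusion. The main obstacle in executing this plan is the delicate bookkeeping in the $L^2$ integral: the weight $e^{-(2-\delta)r_x(y)}$ must beat the tube counting function $me^{\kappa r}$ uniformly in $x$, which is exactly why the hypothesis $\kappa<1$ (allowing $\delta$ and $b$ with $\kappa<\delta$ and $1<b<2-\delta$) cannot be relaxed with the present method.
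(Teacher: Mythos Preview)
Your overall strategy matches the paper's exactly: build a model metric by interpolating between tube and cusp geometry in a width-$1$ collar near each $\partial T_i$, then feed it into \Cref{Pinching without inj radius bound - full version}. However, two of your intermediate claims are false and would make the verification of hypotheses $v)$ and $vi)$ collapse as written.

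First, $\partial T_i$ does \emph{not} have diameter $O(1)$, and the collar $A_i$ does \emph{not} have bounded volume. The meridian on $\partial T_i$ has length $2\pi\sinh(R_i)$, so the area of $\partial T_i$ is $2\pi\ell(\beta_i)\sinh(R_i)\cosh(R_i)$; using that the shortest closed geodesic on $\partial T_i$ has length $2\mu$ and is not a meridian once $R_i$ is large, one gets $\ell(\beta_i)\cosh(R_i)\le 2\mu$ and hence $\vol(A_i)=O(e^{R_i})$. The correct local bound is therefore $\int_{A_i}|\Ric(\bar g)+2\bar g|^2\,d\vol=O(e^{-3R_i})$, not $O(e^{-4R})$. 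This is harmless in the end but you must use the right power.

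Second, and more seriously, your verification of hypothesis $vi)$ is wrong. For $x\in\hat M_{\rm thin}\setminus\hat M_{\rm small}$ the distance $d(x,\hat M_{\rm thick})$ is \emph{not} universally bounded: the thin-but-not-small part of a cusp or tube can extend arbitrarily far from the thick part. The paper's argument instead exploits that the support $U=\bigcup_i A_i$ sits within distance $1$ of $\partial T_i\subset\hat M_{\rm thick}$, so for $y\in A_i$ one has $r_x(y)\ge \delta_i(x)-1$ with $\delta_i(x)=\dist(x,\partial T_i)$, and $d(x,\hat M_{\rm thick})\le\min_i\delta_i(x)$. Combining this with the sparsity bound gives $e^{bd(x,\hat M_{\rm thick})}\sum_i e^{-(2-\delta)\delta_i(x)}\le C$ provided $b+\delta+\kappa'<2$ for some $\kappa'\in(\kappa,1)$. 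Your parameter choice $\delta\in(\kappa,1)$, $b\in(1,2-\delta)$ does not guarantee this (take $\kappa=0.5$, $\delta=0.9$, $b=1.05$); you must instead choose $\delta$ and $b>1$ small enough that $\delta+\kappa'+b<2$.
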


\begin{rem}\normalfont
Our drilling theorem is weaker than Theorem 6.1 of \cite{BB04}
as we require that the 
manifold $M$ is of finite volume rather than just geometrically finite.
Furthermore, in contrast to the work \cite{FPS21}, our estimates
are not effective. But it is also stronger than the results obtained
in \cite{BB04,FPS21} 
as it allows for drilling of an arbitrary number of closed geodesics,
contained in Margulis tubes of tube radius  
larger than a fixed constant, provided that the tubes are 
sufficiently sparsely distributed in the manifold $M$, and it gives better
geometric control on the drilled manifold.
%\textcolor{red}{(Adjusted the comment.)}

In fact, the improved estimate in \Cref{Pinching without inj radius bound - full version} and the estimate (\ref{bound sum}), which will be obtained during the proof of \Cref{drilling}, immediately imply the following. For any \(\delta >0 \) there exists \(R=R(\delta,\varepsilon,\kappa,m) >0\) so that on the thick part of the drilled manifold, the hyperbolic metric is \(\varepsilon e^{-(1-\frac{1}{2}\kappa-\delta)\dist(\cdot,M_{\rm thin})}\)-close to the original metric. In particular, by choosing \(\delta \leq \frac{1}{2}(1-\kappa)\) one can always arrange that on the thick part of the drilled manifold, the hyperbolic metric is \(\varepsilon e^{-\frac{1}{2}\dist(\cdot,M_{\rm thin})}\)-close to the original metric.
%If $k=1$, that is, if we only drill a single geodesic, then on the thick part of the drilled manifold, the hyperbolic metric is \(\varepsilon e^{-(1-\delta)\dist(\cdot,M_{\rm thin})}\)-close to the original metric (for \(\delta > 0\) arbitrary small). An extension of this statement to an arbitrary number of tubes can also be established; as it requires more care, we postpone this to forthcoming work. 

%With more care on estimating the norms which enter the statement of Theorem \Cref{pinching without inj radius bound - introduction} one can also show that the metric on the drilled manifold is in fact $Ce^{-\kappa d(\cdot , {\mathcal C})}$-close to the metric on $M$. As this requires additional estimates, we do not state or prove such a stronger statement.  
%We shall use this version of the drilling theorem in our proof of effective 
%hyperbolization of
%a combinatorially defined class of 3-manifolds.
\end{rem}

\begin{proof}[Proof of \Cref{drilling}]We split the proof into several steps.

\textbf{Step 1 (Construction of an approximating metric):}
  Let us consider the boundary $\partial T$ of a Margulis tube $T$ 
  in a hyperbolic $3$-manifold of finite volume. 
This is a flat torus, and the meridian of the tube $T$ 
defines a foliation of $\partial T$ by closed geodesics. 
In polar coordinates $(r,\theta,y)$ about the
core geodesic $\beta$ of the Margulis tube, 
the hyperbolic metric can be written as 
\[g=dr^2 +(\sinh r)^2d\theta^2+(\cosh r)^2dy^2\]
where $dy$ is a one-form on $T$ which vanishes on the immersed 
totally geodesics hyperbolic planes which intersect the core geodesic 
of the tube orthogonally and where 
$r>0$ is the radial distance from the core curve of the tube $T$.  
The bilinear form
$(\sinh r)^2 d\theta^2 +(\cosh r)^2dy^2$ defines the flat metric on the level
tori $\{r={\rm const}\}$ 
of the distance function from 
the core geodesic.

%Recalling that $\sinh(t)=\frac{1}{2}(e^{t}-e^{-t})$, define a new
%metric on $M$ as follows.
Choose a smooth function $\sigma:\mathbb{R}\to [0,1]$
such that $\sigma\equiv 1$ in a neighbourhood of $(-\infty,-1]$
and $\sigma\equiv 0$ in a neighbourhood of $[0,\infty)$. Let $\hat R$
be the radius of the Margulis 
tube $T$. In the coordinates
$(r,\theta,y)$, define a new metric $\hat g$
on a tubular neighborhood $C\subset T$
about $\partial T$ of radius $1$ by 
\begin{align*}\hat g= & dr^2+\left(\big(1-\sigma(r-\hat R)\big)\sinh(r)+\sigma(r-\hat R)\frac{1}{2}e^{r} \right)^2d\theta^2  \\
& +\left(\big(1-\sigma(r-\hat R)\big)\cosh(r)+\sigma(r-\hat R)\frac{1}{2}e^{r} \right)^2dy^2.
\end{align*}
Since the first and second derivatives of $\sigma$ are uniformly bounded and
the derivatives of the function $e^r/2-\sinh(r)=e^{-r}/2$ equal $\pm e^{-r}/2$ and similarly for
$e^{r}/2-\cosh(r)$, the metric $\hat g$ is $a_1e^{-2\hat R}$-close to
$g$ in the $C^2$-topology 
on $C$ where $a_1>0$ is a universal constant.
Furthermore, as the metric $\hat g$ coincides with the hyperbolic metric 
$g$ of $M$ in the complement of the
Margulis tube $T$, it can be viewed as a metric on $M-\hat T$ which is 
$a_1 e^{-2\hat R}$-close to $g$ in the $C^2$-topology, where $\hat T=T-C$.
The sectional curvature of $\hat g$ equals $-1$ outside of the collar $C$, and it is 
contained in the interval $[-1-a_2 e^{-2\hat R},-1+a_2 e^{-2\hat R}]$ for a universal 
constant $a_2>0$ not depending on $T$ or $M$. 
The same argument shows that \(||{\nabla}\Ric(\hat{g})||_{C^0} \leq \Lambda\) for a universal constant \(\Lambda > 0\).

Near the boundary component of $C$ which is contained in the
interior of the tube $T$, the 
metric $\hat g$ is of the form $dr^2+ e^{2r} g_0$ where $g_0$
is a fixed flat metric on the 
distance tori to $\partial T$ 
for the metric $g$. Such a warped product metric is the local model for 
a hyperbolic metric on a rank two cusp. Thus we can glue a hyperbolic 
rank two cusp to the interior boundary component
of the collar $C$ in such a way that the resulting manifold $\hat M$ 
is obtained from $M$ by drilling the closed geodesic $\beta$, and 
$\hat M$ is equipped with a complete Riemannian metric $\bar{g}$ whose restriction to 
$\hat M-\hat T$ coincides with $\hat g$ and is hyperbolic in the complement of the
collar $C$. In particular, $\hat{g}$ coincides with $g$ on 
$M-T$ (using the natural embedding of $M-T$ into $\hat M$).

The same construction can be done for all the Margulis tubes \(T_1,...,T_k\) simultaneously. 
That is, the manifold \(\hat{M}\) which is obtained from \(M\) by drilling the closed geodesics \(\beta_1,...,\beta_k\) admits a complete finite volume Riemannian metric \(\bar{g}\) such that
\begin{itemize}
\item \(\bar{g}=g\) on \(M \setminus \bigcup_{i=1}^kT_i\);
\item \(|\bar{g}-g|_{C^2} \leq ae^{-2R}\) in \(M \setminus \bigcup_{i=1}^k\hat{T_i}\);
\item \(|\sec(\bar{g})+1| \leq ae^{-2R_i}\) inside the cusp that is obtained by drilling \(\beta_i\);
\item \(\bar{g}\) is hyperbolic outside the union of the collars \(\bigcup_{i=1}^kC_i\);
\item \(||{\nabla}\Ric(\bar{g})||_{C^0(M,\bar{g})}\leq \Lambda\).
\end{itemize}
Here \(a\) and \(\Lambda\) are both universal constants.

\textbf{Step 2 (Reducing the problem):} For sufficiently large $R_i$, 
all the conditions in \Cref{Pinching without inj radius bound - introduction}
besides the integral estimate are clearly satisfied. Thus it remains to prove the integral estimate.
Since 
$\mathrm{area}(\partial T_i)=2\pi \ell(\beta_i)\cosh(R_i)\sinh(R_i)$ 
where \(R_i\) is the radius of the tube \(T_i\), we have
\[\vol_{\bar{g}}(C_i) \leq ce^{R_i}\] for a universal constant \(c\), where \(C_i\) is the collar around \(\partial T_i\). 
Namely, if $R$ is large, then the same holds true for $\sinh(R)$. Thus a shortest closed geodesic on $T_i$, whose length equals twice the 
Margulis constant $\mu$ and hence roughly equals one, is different from a meridian and 
has to cross through the cylinder of height $\ell(\beta_i)\cosh(R_i)$ obtained by
cutting $T_i$ open along a meridian. As a consequence, its length is at least $\ell(\beta_i)\cosh(R_i)$, that is, we have $\ell(\beta_i)\cosh(R_i)\leq 2\mu$. 
Together with the curvature estimate, this implies
\begin{equation}\label{L^2 estimate in single collar}
	\int_{C_i}|\Ric(\bar{g})+2\bar{g}|_{\bar{g}}^2 \, d\vol_{\bar{g}}\leq C\vol(C_i)\left(e^{-2R_i}\right)^2 \leq Ce^{-3R_i} \leq Ce^{-3R},
\end{equation}
where in the last inequality we used the assumption that the radius \(R_i\) of \(T_i\) is at least \(R\). 

Write \(\delta_i(x):=\dist(x,\partial T_i)\) and $r_x(y)=d(x,y)$. 
As \(C_i\) has width \(1\), 
for all $x\in M$, all $i\leq k$ and all \(y \in C_i\) it holds \(\delta_i(x) \leq d(x,y)+1=
r_x(y)+1\), whence \(\lceil \delta_i(x) \rceil \leq r_x(y)+2 \). Therefore, we deduce from (\ref{L^2 estimate in single collar})
\begin{equation}\label{reduction to estimating a sum}
	\int_{\hat{M}}e^{-(2-\delta)r_x(y)}|\Ric(\bar{g})+2\bar{g}|_{\bar{g}}^2 \, d\vol_{\bar{g}}(y) \leq C \left(\sum_{i=1}^ke^{-(2-\delta)\lceil \delta_i(x) \rceil} \right)e^{-3R}.
\end{equation}
Here \(\delta \in (0,2)\) is a small constant that will be determined later.

\textbf{Step 3 (Estimating the sum)}: By assumption, we have \(\#\{i \, | \, \delta_i(x)  \leq r\} \leq me^{\kappa r}\).
Observe that by choosing $\kappa^\prime>\kappa$, if $\bar R=\bar{R}(\kappa,\kappa^\prime)$ is big enough, then 
by the second bullet point at the end of Step 1, after replacing $\kappa$ by $\kappa^\prime$, this estimate also holds true in $\hat M$. 
Choose once and for all such a number $\kappa^\prime\in (\kappa,1)$.
From now on all distances are taken in \(\hat{M}\) and with respect to  \(\bar{g}\). 

To estimate the sum in (\ref{reduction to estimating a sum}) we need to analyze the functions \(\lceil \delta_i(x) \rceil\). 
Observe that \(\lceil \delta_i(x) \rceil\) also satisfies the growth condition. Indeed, since for all \(u \in \bbR\) and \(r \in \bbN\) it holds \(u \leq r\) if and only if \(\lceil u \rceil \leq r\), we have
\[
	\#\{i \, | \, \lceil \delta_i(x) \rceil \leq r\}=\#\{i \, | \, \delta_i(x)  \leq r\} \leq me^{\kappa^\prime r}
\]
for all \(r \in \bbN\). Fix \(x \in M\). Let \(0 < r_0 < ... < r_l\) be an enumeration of \(\{\lceil \delta_i(x) \rceil  \, | \, i=1,...,k\}\). So
\begin{equation}\label{sum replaced by other sum}
	\sum_{i=1}^ke^{-(2-\delta)\lceil \delta_i(x) \rceil} =\sum_{j=0}^l \#\{i \, | \, \lceil \delta_i(x) \rceil =r_j\}e^{-(2-\delta)r_j} \leq m\sum_{j=0}^le^{-(2-\delta - \kappa^\prime)r_j}.
\end{equation}
As \((r_j)_{j=0,...,l}\) is an increasing sequence of natural numbers and since \(r \mapsto e^{-(2-\delta-\kappa^\prime)r}\) is decreasing (assuming \(\delta+\kappa^\prime <2\)), we get
\begin{equation}\label{bound sum}
	\sum_{j=0}^le^{-(2-\delta - \kappa^\prime)r_j} \leq \int_{r_0-1}^\infty e^{-(2-\delta-\kappa^\prime)} \, dr=\frac{1}{2-\delta-\kappa^\prime}e^{-(2-\delta-\kappa^\prime)(r_0-1)}.
\end{equation}
Finally note that \(d(x,\hat{M}_{\rm thick}) \leq d(x,\partial T_i)\) since \(\partial T_i \subseteq \hat{M}_{\rm thick}\), and hence
 \(d(x,\hat{M}_{\rm thick}) \leq r_0\). Therefore, if we choose \(\delta=\delta(\kappa) \in (0,2)\) and \(b=b(\kappa)>1\) small enough so that \(\delta+\kappa^\prime+b < 2\), then
\[
	e^{bd(x,\hat{M}_{\rm thick})}\int_{\hat{M}}e^{-(2-\delta)r_x(y)}|\Ric(\bar{g})+2\bar{g}|_{\bar{g}}^2 \, d\vol_{\bar{g}}(y) \leq Ce^{-3R}
\]
by combining (\ref{reduction to estimating a sum}), (\ref{sum replaced by other sum}) and (\ref{bound sum}). For \(R\) large enough, this will be at most \(\varepsilon^2\). 
Therefore, we can evoke \Cref{Pinching without inj radius bound - introduction} to complete the proof.
\end{proof}

Controlled Dehn filling can be done with precisely the same argument. The following is 
a version of a theorem of Hodgson and Kerckhoff \cite{HK08}. 
In its formulation, we start with a finite volume hyperbolic $3$-manifold $M$ and 
a collection of cusps $C_1,\dots,C_k$ whose boundaries $\partial C_i$  are 2-tori.
Our goal is to fill these cusps by replacing them by solid tori, and show that if 
the lengths on $\partial C_i$ of the meridians of the solid tori are sufficiently
large, then the filled in manifold admits a hyperbolic metric which is close to the 
metric of $M$ on the complement of the cusps. Our result 
is weaker than the result in \cite{HK08} as the filling
constants we find are not effective. 
On the other hand, similarly to Theorem \ref{drilling} and 
unlike the results of \cite{HK08}, the filling constants do not depend on 
the number of cusps to be filled but only on sparsity of these cusps in $M$.
Furthermore, the lower length bound on the meridional 
geodesic $\gamma$ we use for the 
filling  is the actual length of $\gamma$ on the flat torus $T$ and not its 
\emph{normalized length}, defined to be the length
of $\gamma$ on the flat torus $T^\prime$ which is 
obtained from $T$ by rescaling the metric so
that the volume of $T^\prime$ 
equals one. 
%For a point $x\in M$ and $i\leq k$ let $\delta_i(x)$ 
%be the distance between
%$x$ and the boundary torus $\partial C_i$ of the torus cusp $C_i$. 

\begin{thm}[The filling theorem]\label{filling}
For any \(\varepsilon >0\), \(\kappa \in (0,1)\) and \(m>0\) there exists
a number $L=L(\varepsilon,\kappa,m) >0$
with the following property. Let $M$ be a finite volume hyperbolic 3-mani\-fold, $C_1,\dots,C_k\subseteq M$ be a finite collection of torus 
cusps, and assume that  
for each $r>0$ and each $x\in M$ we have
$\# \{i\mid {\rm dist}(x,C_i)\leq r\}\leq me^{\kappa r}$.
For each $i\leq k$ let $\alpha_i$ be a flat simple closed geodesic in  
$\partial C_i$ of length $L_i\geq L$. 
Then the manifold obtained from $M$ by filling the cusps $C_i$, 
with meridian $\alpha_i$,  
is hyperbolic, and the restriction of its metric to the complement
of the Margulis tubes obtained from the filling is $\varepsilon$-close to the metric on 
$M-\cup_iC_i$. 
\end{thm}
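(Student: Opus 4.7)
The plan is to mirror the three-step strategy of the proof of Theorem \ref{drilling}, replacing the local model near a boundary of a Margulis tube with a local model near the boundary of a cusp, and to apply Theorem \ref{Pinching without inj radius bound - full version} to the resulting approximating metric on the filled manifold.

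\textbf{Step 1 (Construction of the approximating metric).} Fix a cusp $C_i$ with meridian $\alpha_i$ of length $L_i \geq L$ on $\partial C_i$. Since $L_i = 2\pi \sinh R_i$ defines a unique $R_i > 0$, and since large $L$ forces large $R_i$, I will construct a smooth metric $\bar g$ on the Dehn-filled manifold $\hat M$ which equals the hyperbolic metric $g$ outside a collar of width $1$ inside each cusp, and which near the core of the filled solid torus coincides with the hyperbolic tube metric of meridional length $L_i$. In polar coordinates $(r,\theta,y)$ centred on the core curve $\beta_i$ of the to-be-filled solid torus, the hyperbolic tube metric reads $dr^2+\sinh^2(r)d\theta^2+\cosh^2(r)dy^2$, while the cusp metric (written in outward radial coordinate, say $r$ measured from the core so that $r=R_i$ corresponds to the original cusp boundary) has the asymptotic form $dr^2+\tfrac14 e^{2r}(d\theta^2+dy^2)$ after a suitable identification. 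Exactly as in Step 1 of the proof of Theorem \ref{drilling}, I interpolate between $\sinh(r), \cosh(r)$ and $\tfrac12 e^r$ on a collar of width $1$ about $\{r=R_i\}$ via a fixed smooth cutoff. The resulting metric $\bar g$ is $C^2$-close to $g$ by $a e^{-2R_i}$ on the collar, hyperbolic outside the collars, and satisfies $|\mathrm{sec}(\bar g)+1|\leq a e^{-2R_i}$ on the collars; moreover $\|\nabla\mathrm{Ric}(\bar g)\|_{C^0}\leq \Lambda$ for universal $a,\Lambda$. Performing this construction in all cusps $C_1,\dots,C_k$ simultaneously yields a complete finite-volume metric $\bar g$ on the filled manifold $\hat M$.

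\textbf{Step 2 (Verifying the hypotheses of Theorem \ref{Pinching without inj radius bound - full version}).} The curvature pinching and the $C^1$-bound on $\mathrm{Ric}$ are immediate from Step 1 once $L$, and hence $R_i$, is sufficiently large. The exponential decay condition (iii) on $M_{\mathrm{small}}$ holds trivially: the approximating metric $\bar g$ is hyperbolic throughout the newly created Margulis tubes except in the collars, which for large $L$ lie far inside $M_{\mathrm{small}}$, so the curvature quantities on $M_{\mathrm{small}}$ are bounded by $a e^{-2R_i}$, easily beating any exponential prescribed in (iii). The only nontrivial condition to verify is the weighted $L^2$-integral estimate (v)--(vi).

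\textbf{Step 3 (Integral estimate via sparsity).} Exactly as in the drilling argument, each collar $C_i^{\mathrm{col}}$ around $\partial C_i$ has volume bounded by $c e^{R_i}$ (since the area of $\partial C_i$ is comparable to $L_i \cosh R_i$ times a universal factor involving the second primitive length on $\partial C_i$, which is bounded by a Margulis constant), and on $C_i^{\mathrm{col}}$ we have $|\mathrm{Ric}(\bar g)+2\bar g|_{\bar g}^2 \leq C e^{-4R_i}$. Hence
\begin{equation*}
\int_{C_i^{\mathrm{col}}} |\mathrm{Ric}(\bar g)+2\bar g|_{\bar g}^2\,d\mathrm{vol}_{\bar g} \leq C e^{-3R_i} \leq C e^{-3R},
\end{equation*}
where $R = R(L) \to \infty$ as $L\to\infty$. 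Writing $\delta_i(x)=\mathrm{dist}_{\bar g}(x,\partial C_i)$ and $r_x(y)=d(x,y)$, the bound $\lceil \delta_i(x)\rceil \leq r_x(y)+2$ for $y \in C_i^{\mathrm{col}}$ reduces the weighted integral to
\begin{equation*}
\int_{\hat M} e^{-(2-\delta)r_x(y)}|\mathrm{Ric}(\bar g)+2\bar g|_{\bar g}^2\,d\mathrm{vol}_{\bar g}(y) \leq C e^{-3R}\sum_{i=1}^{k} e^{-(2-\delta)\lceil \delta_i(x)\rceil}.
\end{equation*}
The sparsity hypothesis $\#\{i \mid \mathrm{dist}(x,C_i)\leq r\}\leq m e^{\kappa r}$ transfers (after enlarging $\kappa$ slightly to $\kappa'\in(\kappa,1)$, which is legitimate for $R$ large by the $C^2$-closeness of $\bar g$ to $g$) to the same growth bound in $(\hat M,\bar g)$ for the collared cusps, and a level-set rearrangement exactly as in Step 3 of Theorem \ref{drilling} gives
\begin{equation*}
\sum_{i=1}^k e^{-(2-\delta)\lceil \delta_i(x)\rceil} \leq \frac{m}{2-\delta-\kappa'} e^{-(2-\delta-\kappa')(d(x,\hat M_{\mathrm{thick}})-1)}.
\end{equation*}
Choosing $\delta=\delta(\kappa)\in(0,2)$ and $b=b(\kappa)>1$ with $\delta+\kappa'+b<2$ then yields $e^{bd(x,\hat M_{\mathrm{thick}})}\int e^{-(2-\delta)r_x}|\mathrm{Ric}(\bar g)+2\bar g|^2 \leq C e^{-3R}$, which is $\leq \varepsilon^2$ for $L$, and hence $R$, sufficiently large. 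Theorem \ref{Pinching without inj radius bound - full version} then produces a hyperbolic metric $g_{\mathrm{hyp}}$ on $\hat M$ with $\|g_{\mathrm{hyp}}-\bar g\|_{C^{2,\alpha}}$ as small as desired, and $\bar g$ is itself $C^2$-close to $g$ on the complement of the newly created Margulis tubes, completing the proof.

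\textbf{Main obstacle.} The only real point of care is the bookkeeping in Step 1: we must ensure that the collar-interpolated metric $\bar g$ extends smoothly across the core geodesic of the filled solid torus into a genuine hyperbolic tube metric with the prescribed meridian $\alpha_i$. This is where the hypothesis that $\alpha_i$ is a \emph{simple closed geodesic} on the flat torus $\partial C_i$ enters: it determines unambiguously the isometry identifying $\partial C_i$ with the boundary of a hyperbolic tube of radius $R_i = \mathrm{arsinh}(L_i/2\pi)$, so that the cusp parametrization and the tube parametrization agree on the overlap and the interpolation formulas from Step 1 of Theorem \ref{drilling} apply verbatim. Once this identification is set up, the remainder of the proof is a near-transcription of the drilling argument.
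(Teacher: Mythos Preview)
Your proposal is correct and follows essentially the same route as the paper's proof, which is itself deliberately terse (``the proof is analogous to the proof of Theorem \ref{drilling}'') and only spells out the modified interpolation in horospherical coordinates before declaring that the remaining estimates carry over verbatim. Two small points of bookkeeping are worth cleaning up, though neither affects the argument:

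\begin{itemize}
\item In Step~2 you write that the collars ``lie far inside $M_{\mathrm{small}}$''. In fact the opposite is true: the collar sits around $\partial C_i$, where the injectivity radius of the original metric is $\mu$, and since $\bar g$ is $C^2$-close to $g$ there, the injectivity radius of $\bar g$ on the collar is close to $\mu>D$. By Lemma~\ref{containedinthin}, $M_{\mathrm{small}}\subset M^{<D}$, so the collars are \emph{outside} $M_{\mathrm{small}}$. This makes condition~(iii) even easier: $\bar g$ is exactly hyperbolic on $M_{\mathrm{small}}$, so the decay condition is vacuous there.
\item In Step~3 your parenthetical justification for $\mathrm{vol}(C_i^{\mathrm{col}})\leq ce^{R_i}$ mixes in a $\cosh R_i$ factor that belongs to the drilling picture. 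The correct reasoning in the filling case is simpler: $\partial C_i$ is a flat torus with systole $2\mu$, hence $\mathrm{area}(\partial C_i)\leq 2\mu\, L_i$ (cut along the systole and note that $\alpha_i$ crosses the resulting cylinder at least once), and $L_i=2\pi\sinh R_i\sim\pi e^{R_i}$, giving the same bound $\mathrm{vol}(C_i^{\mathrm{col}})\leq c e^{R_i}$.
\end{itemize}

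With these corrections your proof matches the paper's.
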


\begin{proof}
  The proof is analogous to the proof of Theorem \ref{drilling}.
Namely, let $\alpha_i\subset \partial C_i$
be a closed geodesic. Then $\alpha_i$ defines a foliation of $\partial C_i$ by closed 
geodesics, and there is a dual orthogonal foliation by geodesics (which are not necessarily closed).
Let us assume that the length of $\alpha_i$ equals $e^{R_i}/2$ for a number $R_i>0$. 
Reversing the argument in the proof of Theorem \ref{drilling},
define a new metric on the tubular
neighborhood $N(\partial C_i,1)$ 
of radius one about $\partial C_i$ in the cusp $C_i$
as follows. Write the hyperbolic metric 
in horospherical coordinates in the form 
$dt^2 + e^{2t} dx^2 +e^{2t}dy^2$ where the euclidean coordinates $(x,y)$ on the flat torus
are such that the horizontal lines are the geodesics parallel to $\alpha_i$ and the vertical 
lines define the orthogonal foliation. 
 
Let $\sigma:\mathbb{R}\to [0,1]$ be a smooth function 
which vanishes on $[0,\infty)$ and equals $1$ on $(-\infty,-1]$.
Define a metric $\hat g_i$ on 
$C_i$ by 
\[\hat g_i= g -(\sigma(t)e^{-R_i-t}/2)dx^2+(\sigma(t)e^{-R_i-t}/2) dy^2.\]
Then the metric $\hat g_i$ on the distance tori of distance
bigger than one can be written in orthogonal coordinates as $(\sinh (R_i-t))^2 dx^2+(\cosh (R_i-t))^2dy^2$. 
In particular, if $\ell_i >0$ is such that 
the height of the cylinder obtained by cutting the boundary component of $N(\partial C_i,1)$ distinct from 
$\partial C_i$ open along $\alpha_i$
equals $\ell_i \cosh (R_i-1)$, then we can glue a hyperbolic tube
to this boundary with core 
curve of radius $\ell_i$ and meridian $\alpha_i$. 

The resulting manifold $\hat M$ is obtained from $M$ by
Dehn filling of $\alpha_i$. Furthermore, it 
is equipped with a Riemannian metric of curvature in the interval $[-1-\epsilon,-1+\epsilon]$ which 
is of constant curvature $-1$ outside of the sets $N(\partial C_i,1)$,  and 
the integral of the traceless Ricci curvature fulfills the assumptions in Theorem \ref{drilling}. 

Thus the arguments in the proof of Theorem \ref{drilling} apply without change 
and show the theorem.
\end{proof}

%\newpage

\section{Effective hyperbolization I}\label{effective}

A \emph{handlebody of genus $g\geq 1$} is a
compact 3-manifold $H$
which is diffeomorphic to the connected sum of $g$ solid tori and 
whose boundary
is a closed oriented surface $\partial H=\Sigma$ of genus $g$. 
It is characterized up to marked homotopy equivalence by 
its \emph{disk set}, which can be thought of as 
the collection of all essential simple closed
curves on $\partial H$ which bound embedded disks in $H$.
Equivalently, the disk set is the set of all essential
simple closed curves
in $\partial H$ which are
homotopic to zero in $H$.

If we glue two
handlebodies $H_1,H_2$ along their boundaries with 
an orientation reversing diffeomorphism $f:\partial H_1\to \partial H_2$,
then the resulting 3-manifold is closed and oriented. 
Up to homotopy and
hence diffeomorphism, it only depends on the isotopy class of $f$, 
in fact, only on the double coset of this isotopy class in the mapping 
class group of $\partial H$ which allows for 
precomposition of $f$ with an element of the \emph{handlebody group} of 
$H_1$ and postcomposition of $f$ with an element of the handlebody group of $H_2$.

The \emph{curve graph} ${\cal C\cal G}(\Sigma)$ 
of $\Sigma=\partial H$ is the graph whose
vertices are isotopy classes of simple closed curves on $\Sigma$
and where two such curves can be connected by an edge of length
one if they can be realized disjointly. The curve graph of
$\Sigma$ is known to be a hyperbolic geodesic metric graph.
The disk set of $H$ determines a full subgraph 
${\cal D}$ of ${\cal C\cal G}(\Sigma)$ whose vertex set is the disk set of $H$. 
This subgraph is
uniformly \emph{quasi-convex} \cite{MM04} in ${\cal C\cal G}(\Sigma)$.
This means that there
exists a number $k>0$ only depending on the genus of the
surface $\Sigma$ 
such that for any two disks $a,b\in {\cal D}$,
any geodesic in ${\cal C\cal G}(\Sigma)$ connecting $a$ to $b$ is contained
in the $k$-neighborhood of ${\cal D}$.

The \emph{Hempel distance} of the Heegaard splitting $f$ is defined
to be the distance in ${\cal C\cal G}(\Sigma)$ between
the disk set 
${\cal D}_2$ of $H_2$ and the image
${\cal D}_1$ under the gluing map $f$ of the disk set
of $H_1$. If the Hempel distance 
is at least three,  then the manifold $M_f$ 
is known to be aspherical and atoroidal \cite{He01}
and hence by the geometrization
theorem, it admits a hyperbolic metric. The goal of this section is to give
an effective construction of such a metric not depending on any 
earlier hyperbolization result provided that the gluing map $f$ fulfills some
combinatorial requirement which for example is satisfied for random 
3-manifolds. We refer to \cite{HV22} for a detailed account on the geometry
of random 3-manifolds.

To introduce the combinatorial condition, note that 
since ${\cal D}_i$ is a quasi-convex subset of the curve graph
and the curve graph is hyperbolic \cite{MM99}, if the distance between
${\cal D}_1$ and ${\cal D}_2$ is larger than a constant $b>0$ only
depending on the hyperbolicity constant of ${\cal C\cal G}(\Sigma)$
(which does not depend on $\Sigma$) and 
of the quasi-convexity constant for the embedding
${\cal D}_i\to {\cal C\cal G}(\Sigma)$ (which only
depends on $\Sigma$), then there is a coarsely well defined
shortest geodesic $\zeta$ in ${\cal C\cal G}(\Sigma)$ connecting
${\cal D}_1$ to ${\cal D}_2$. This means that $\zeta$ is a
geodesic in ${\cal C\cal G}(\Sigma)$ which connects 
a point $c_1\in {\cal D}_1$ to a point $c_2\in {\cal D}_2$, and
if $\nu$ is any geodesic in ${\cal C\cal G}(\Sigma)$ 
connecting a point in ${\cal D}_1$ to a
point in ${\cal D}_2$, then $\zeta$ is entirely contained in the
$r$-neighborhood of $\nu$ where $r>0$ is a constant only
depending on $\Sigma$. For the remainder of this section, we always assume
that $d_{\cal C\cal G}({\cal D}_1,{\cal D}_2)\geq b$.

For a proper essential connected subsurface $S$ of $\Sigma$
different from a pair of pants and an annulus,
define the \emph{arc and curve graph} ${\cal C\cal G}(S)$ of 
$S$ to be the graph whose vertices are essential simple closed curves in 
$S$ or essential arcs with endpoints in the boundary 
$\partial S$ of $S$. Two such arcs or curves are connected by 
an edge of length 1 if they can be realized disjointly.
If $S$ is an annulus then this construction has to be modified.
As we do not need more precise information here,
we omit a more detailed discussion which can be found in \cite{MM00}. 
For a simple closed curve $c\in {\cal C\cal G}(\Sigma)$
with an essential intersection
with $S$, the \emph{subsurface projection} of $c$ into 
${\cal C\cal G}(S)$ is the union of all intersection components 
of $c$ with $S$ (properly interpreted if $S$ is an annulus).

By the above discussion and Theorem 3.1 of \cite{MM00}, there exists 
a number $p>0$ with the following property. 
Let as before $\zeta$ be a shortest
geodesic in ${\cal C\cal G}(\Sigma)$ connecting ${\cal D}_1$ to ${\cal D}_2$. 
Let us assume that  
there exists a proper essential connected 
subsurface $S\subset \Sigma$
whose boundary $\partial S$ consists of a collection of
simple closed curves whose distance to each of the endpoints of $\zeta$ is
at least $p$. Let us also assume that 
the diameter of the subsurface projection of 
the endpoints of $\zeta$ into $S$ equals $k\geq 2p$. Then for any 
pair $(a_1,a_2)\in {\cal D}_1\times {\cal D}_2$, the diameter of the 
subsurface projection of $a_1,a_2$ into $S$ is at least $k-p\geq p$. 
Furthermore, any geodesic in ${\cal C\cal G}(\Sigma)$ 
connecting $a_1$ to 
$a_2$ passes through a simple closed curve which is 
disjoint from $S$.

The following is the main result of this section. For its
formulation, we define an  \emph{$\varepsilon$-model metric} on 
a closed ashperical atoroidal 3-manifold $M$ to be a metric which
fulfills the assumptions in \Cref{Pinching without inj radius bound - full version} 
for the control constant $\varepsilon$.

\begin{thm}\label{relative}
For every $k\geq 2p,\varepsilon >0$ 
there exists a number $b=b(\Sigma,k,\varepsilon)>0$ with the following property.
Let ${\cal D}_1,{\cal D}_2$ be the disk sets of the manifold $M_f$.  
Assume that a minimal geodesic $\zeta$ 
in ${\cal C\cal G}(\Sigma)$  
connecting ${\cal D}_1$ to ${\cal D}_2$ contains a subsegment $\hat \zeta$ 
of length at least $b$ whose endpoints do not have
any subsurface projection of diameter at least $k$ into any
subsurface of $\Sigma$.
% which is different from an annulus.
Then $M_f$ admits an explicit $\varepsilon$-model metric which is
$\varepsilon$-close in the $C^2$-topology 
to a  hyperbolic metric.
\end{thm}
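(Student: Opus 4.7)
The plan is to construct an explicit model metric on $M_f$ by gluing convex cocompact hyperbolic metrics on $H_1$ and $H_2$ along a product region that corresponds to the bounded combinatorics subsegment $\hat\zeta$, and then apply \Cref{Pinching without inj radius bound - full version} to deform this model to an honest hyperbolic metric.

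First I would use the bounded combinatorics assumption on the two endpoints $c_1, c_2$ of $\hat\zeta$ to produce, for each $i = 1,2$, a convex cocompact hyperbolic metric $g_i$ on the interior of $H_i$ whose conformal structure at infinity has bounded geometry determined by $c_i$. Here the relevant mechanism is the Minsky bi-Lipschitz model theorem together with quasi-conformal deformation on the Schottky side: since the endpoint $c_i$ is close (in ${\cal C\cal G}(\Sigma)$) to the disk set ${\cal D}_i$ and has no large subsurface projection, the convex core of $(H_i, g_i)$ has uniformly bounded diameter and its boundary $\partial H_i$ carries a hyperbolic metric in a fixed compact subset of Teichm\"uller space. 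In particular, on a collar neighborhood of $\partial H_i$ inside the convex core, the metric $g_i$ is uniformly close (in the $C^k$-topology for any $k$) to a standard model $g_i^{\mathrm{coll}}$ which is a product of a hyperbolic surface metric on $\Sigma$ with an interval, up to a Weil--Petersson/Fenchel--Nielsen path of length comparable to the length of $\hat\zeta$.

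Next, since the two endpoints of $\hat\zeta$ are both of bounded combinatorics, the results of \cite{HV22} (specifically the gluing construction referenced in the introduction) provide an interpolating metric on $\Sigma \times [0, L]$ of length $L$ comparable to $\ell(\hat\zeta)$ which at its two ends agrees with the two collar models $g_1^{\mathrm{coll}}$ and $g_2^{\mathrm{coll}}$ (after identifying $\Sigma$ with $\partial H_1$ and $\partial H_2$ via $f$), whose sectional curvature is pinched arbitrarily close to $-1$, and which is already hyperbolic outside a set $U$ of universally bounded diameter with $\int_U |\mathrm{Ric}(\bar g) + 2\bar g|^2_{\bar g}\, d\mathrm{vol}_{\bar g}$ arbitrarily small once $b$ is large. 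Gluing this product region to the two convex cores of $(H_1, g_1)$ and $(H_2, g_2)$ yields a complete (in fact compact) Riemannian metric $\bar g$ on $M_f$.

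The next step is to verify the hypotheses of \Cref{Pinching without inj radius bound - full version} for this $\bar g$. Conditions (i)--(iv) follow immediately from the construction: curvature pinching, finite volume (since $M_f$ is closed), exponential curvature decay in $M_{\rm small}$ (which here is either empty or consists of uniformly controlled Margulis tubes inside the convex cores), and the $C^0$-bound on $\nabla \mathrm{Ric}(\bar g)$. The integral condition (v) follows from the fact that $\bar g$ is hyperbolic outside the small region $U \subset \Sigma \times [0, L]$ where the gluing interpolation takes place, and the $L^2$-norm of $\mathrm{Ric}(\bar g) + 2 \bar g$ on $U$ is $O(\varepsilon_0)$ by choosing $b = b(\Sigma, k, \varepsilon)$ sufficiently large in the construction of the interpolation. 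Applying \Cref{Pinching without inj radius bound - full version} produces a hyperbolic metric $g_{\rm hyp}$ on $M_f$ with $\|g_{\rm hyp} - \bar g\|_{2,\lambda;\ast} \leq C\varepsilon_0^{1-\alpha}$, which in particular is $\varepsilon$-close to $\bar g$ in the $C^2$-topology once $\varepsilon_0$ is chosen small enough in terms of $\varepsilon$.

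The main obstacle will be step two, namely producing the gluing model with the required sharp integral bounds on $\mathrm{Ric}(\bar g) + 2\bar g$ using bounded combinatorics alone; the length $L$ of the interpolation grows with $b$, so the construction must concentrate all the non-hyperbolicity in a region of universally bounded diameter regardless of $L$, which is precisely what the gluing theorem from \cite{HV22} is designed to do and why the quantitative form of bounded combinatorics (uniform bound $k$ on all subsurface projections of the endpoints) is used in an essential way.
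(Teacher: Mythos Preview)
Your overall strategy—build a model metric by gluing convex cocompact handlebodies along a region coming from the bounded-combinatorics subsegment and then apply \Cref{Pinching without inj radius bound - full version}—is the paper's strategy as well. But your description of the gluing step contains a genuine error that would make the argument fail as written.

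The endpoints $c_1,c_2$ of $\hat\zeta$ need \emph{not} be close to ${\cal D}_1,{\cal D}_2$ in ${\cal C\cal G}(\Sigma)$; the subsegment $\hat\zeta$ may sit anywhere along the minimal geodesic $\zeta$. Consequently your claim that the convex cores of $(H_i,g_i)$ have uniformly bounded diameter is false: those diameters grow with $d_{\cal C\cal G}({\cal D}_i,c_i)$, which is unbounded. Your three-piece scheme (bounded convex core, separate interpolating product $\Sigma\times[0,L]$, bounded convex core) is therefore not well-posed, and the collar models $g_i^{\rm coll}$ you want to interpolate between are not determined by $c_i$ alone.

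The paper's route is as follows. From the bounded combinatorics of the endpoints of $\hat\zeta$ one extracts, via the distance formula of \cite{MM00} and results of \cite{H10,HV22}, a Teichm\"uller geodesic segment $[Y_0,X_0]\subset {\cal T}_\sigma(\Sigma)$ in the thick part whose image under the systole map $\Upsilon$ shadows $\hat\zeta$. One then checks that $(Y_0,X_0)$ has \emph{relative $\sigma$-bounded combinatorics} with respect to both handlebodies in the sense of Definition~\ref{relativedef}, and that the height $d_{\cal T}(Y_0,X_0)$ exceeds $h_{\rm gluing}(\sigma,\xi)$ once $b$ is large. With this in hand, the gluing theorem (Theorem~\ref{gluinghv}, i.e.\ Theorem~5.12 of \cite{HV22}) applies as a black box: it locates product regions $U_j$ of uniformly controlled size \emph{inside} the (possibly large) convex cores of $N_1=H(Y_0)$ and $N_2=H(f^{-1}X_0)$, together with a $\xi$-almost-isometry $U_1\to U_2$ in the homotopy class of $f$. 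The gluing is thus two-piece, identifying $U_1$ with $U_2$; everything outside the gluing region $\Omega$ is already hyperbolic, $\Omega$ has diameter at most $2D(\sigma)$ and injectivity radius at least $\iota(\sigma)$, and the resulting model metric satisfies the hypotheses of \Cref{Pinching without inj radius bound - introduction} for the chosen $\varepsilon$. The long product region you were trying to build by hand is already present inside the convex cocompact handlebodies once the conformal boundaries are placed at the two ends of the thick Teichm\"uller segment.
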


We begin the proof of Theorem \ref{relative}
by recalling some results from \cite{HV22}. 

The geometry of the curve graph of the surface $\Sigma$ 
is coarsely tied to the geometry
of the \emph{Teichm\"uller space} ${\cal T}(\Sigma)$ of
$\Sigma$. Namely, there is a (coarsely well-defined)
${\rm Mod}(\Sigma)$-equivariant Lipschitz map 
$\Upsilon:{\cal T}(\Sigma)\to {\cal C\cal G}(\Sigma)$, called the
{\em systole map}, that associates to every marked
hyperbolic structure $X\in{\cal T}(\Sigma)$ a shortest geodesic
$\Upsilon(X)$ on it. It follows from
Masur-Minsky \cite{MM99} (see Lemma 3.3 of \cite{MM00} for a precise
account, and note that small extremal length of a closed curve on a
Riemann surface is equivalent to small hyperbolic length) 
that there exists a
constant $L>1$ only depending on $\Sigma$ such that
for every Teichm\"uller geodesic $\gamma:I\to{\cal T}(\Sigma)$
(here $I$ is a connected subset of $\mathbb{R}$),
the composition $\Upsilon \circ \gamma:I\to{\cal C\cal G}(\Sigma)$ is an
{\em unparameterized} $L$-{\em quasi-geodesic}.
This means that there exists a homeomorphism
$\rho:J\to I$ such that the composition
$\Upsilon\circ \gamma\circ \rho$ is an $L$-quasi-geodesic in
${\cal C\cal G}(\Sigma)$.
Moreover, if we restrict our attention to the
\emph{$\delta$-thick part} ${\cal T}_\delta(\Sigma)$ of Teichm\"uller space
of all hyperbolic metrics whose \emph{systole}, that is, the length of a shortest
closed geodesic, is at least $\delta$, 
then the situation improves: In \cite{H10} it is shown that
for every $\delta>0$ there exist $L_\delta>1$
such that if $\gamma$ is parameterized by arc length on an
interval $I$ of length at least $L_\delta$ and if
$\gamma(I)\subset {\cal T}_\delta(\Sigma)$, then
$\Upsilon \circ \gamma$ is
a {\em parameterized} $L_\delta$-quasi-geodesic.

Using the notations from Theorem \ref{relative}, 
let us now assume that for some $k\geq 2p$, 
a shortest 
geodesic $\zeta$ 
connecting ${\cal D}_1$ to ${\cal D}_2$  
contains a subsegment $\hat \zeta$ with 
the  property that
there does not exist \emph{any} proper essential subsurface $S$ of $\Sigma$
for which the diameter of the subsurface projection of the endpoints of
$\hat \zeta$ into $S$ is larger than $k$. If the length of $\hat \zeta$ is 
sufficiently large, then we shall construct from this segment  
two convex cocompact hyperbolic
metrics on a handlebody which contain large almost isometric 
regions whose injectivity radius is bounded from 
below by a universal constant. 
We then glue the handlebodies along these regions with a map which
is close to an isometry for these metrics and construct
a closed 3-manifold diffeomorphic to $M_f$ with an $\epsilon$-model metric
which can be deformed to a hyperbolic metric using
\Cref{Pinching without inj radius bound - introduction}.
 
To implement this program, we follow  
\cite{HV22} and introduce a notion of 
{\em relative bounded combinatorics} and {\em height}.
Fix a sufficiently small threshold $\delta>0$. 
Denote by $d_{\cal T}$ the distance on
${\cal T}(\Sigma)$ for the Teichm\"uller metric.

\begin{definition}[Relative Bounded Combinatorics]\label{relativedef}
  Consider $Y,X\in{\cal T}(\Sigma)$. We say that $(Y,X)$ has
  {\em relative $\delta$-bounded combinatorics} with respect to the
  handlebody $H$ with disk set ${\cal D}$ if
  the Teichm\"uller geodesic $[Y,X]$ connecting $Y$ to $X$ is contained in
  ${\cal T}_\delta(\Sigma)$ and if
\[
d_{\cal C\cal G}({\cal D},\Upsilon(Y))+d_{\cal C\cal G}(\Upsilon(Y),\Upsilon(X))\le d_{\cal C\cal G}({\cal D},\Upsilon(X))+\frac{1}{\delta}.
\]
The {\em height} of the pair $(Y,X)$ is $d_{\cal T}(Y,X)$. 
\end{definition}

A \emph{convex cocompact} metric on a handlebody $H$ is a complete
hyperbolic metric on the interior of $H$ with the
following property. The hyperbolic metric determines
up to conjugacy an embedding
of the fundamental group of $H$ (which is the free group with $g$ generators)
into $PSL(2,\mathbb{C})$. The image group $\Gamma$ acts on the boundary
$\partial \mathbb{H}^3$ of hyperbolic 3-space, preserving a decomposition
of $\partial \mathbb{H}^3$ into the \emph{limit set} $\Lambda(\Gamma)$
and the \emph{domain of discontinuity} $\Omega(\Gamma)$.

The quotient $\mathbb{H}^3\cup \Omega(\Gamma)/\Gamma$ is compact
and homeomorphic to the handlebody $H$. Moreover,
as the action of $\Gamma$ on $\Omega(\Gamma)$ preserves the conformal
structure, the quotient $\Omega(\Gamma)/\Gamma$ is the surface
$\Sigma$ equipped with a conformal structure $X\in {\cal T}(\Sigma)$.
Up to isometry, the convex cocompact metric on $H$ is determined by
$X$, and the corresponding hyperbolic handlebody will be denoted by 
$H(X)$.
The \emph{convex core} ${\cal C\cal C}(H(X))$ of $H(X)$ 
is the
quotient of the convex hull of $\Lambda(\Gamma)$ in $\mathbb{H}^3$ 
by the action of
$\Gamma$, with boundary $\partial {\cal C\cal C}(H(X))$.
The convex core ${\cal C\cal C}(H(X))$ is homeomorphic to the handlebody $H$.

A \emph{product region} in 
a convex cocompact hyperbolic handlebody $H(X)$ with boundary 
surface $\Sigma$ is 
a codimension $0$ submanifold $U\subset H(X)$ contained in the 
convex core ${\cal C\cal C}(H(X))$ of $H(X)$ which is homeomorphic
to $\Sigma\times [0,1]$ with a homeomorphism whose restriction to 
each surface $\Sigma\times \{s\}$  is homotopic to the inclusion
$\Sigma\to \partial {\cal C\cal C}(H(X))\subset H(X)$. 
If $U$ is such a product region then we can
define the \emph{width} ${\rm width}(U)=\inf\{d(x,y)\mid x\in \partial ^+U,y\in \partial U^-\}$
where $\partial^{\pm}U$ are the two boundary components of $U$. 
If the width of the product region is at
least $D$ and the diameter is at most $2D$ then
we say that the product region has \emph{size} $D$ (see Section 5 of \cite{HV22}).

A product region $U\subset H(X)$ can be used to 
decompose the handlebody $H(X)$ into two connected components. 
The first component is the closed subset
of $H(X)-U$ containing 
$H(X)-{\cal C\cal C}(H(X))$ 
(it is straightforward that there are no choices made in this
construction).   
The second component is its complement, which is an open subset
of $H$ containing $U$. 
We define the \emph{gluing block} $H_U$ of $H(X)$ to be the
component containing $U$.

%For two points $X,Y\in {\cal T}(\Sigma)$ we denote
%by $Q(X,Y)$ the \emph{quasi-fuchsian manifold} defined by the pair
%$(X,Y)$. This is the convex cocompact hyperbolic three-manifold diffeomorphic
%to $\Sigma\times \mathbb{R}$ with \emph{conformal boundary} $(X,Y)$.

Fix a number $\alpha\in (0,1)$. 
For a number $\xi>0$, define a \emph{$\xi$-almost isometry} 
between two Riemannian manifolds $(M_1,\rho_1),(M_2,\rho_2)$ to be a smooth map
$\Phi:M_1\to M_2$ such that $\Vert \rho_1-\Phi^*\rho_2\Vert_{C^{2}}<\xi$.
Our main technical tool is the following
Theorem 5.12 of \cite{HV22}. In its formulation,
$d_{\cal C\cal G}$ denotes as before the distance in the curve graph of 
$\Sigma=\partial H_i$. 

\begin{thm}[The gluing theorem]\label{gluinghv}
For $\delta >0$ there exists $\iota=\iota(\delta)>0, D=D(\delta)>0$, 
 and for $\xi>0$ there exists 
$h_{\rm gluing}(\delta,\xi)>0$ such that the following holds true.
Let $H_1,H_2$ be two handlebodies of genus $g$ and let 
$f:\partial H_1\to \partial H_2$ be a gluing map.
Let $[Y,X]\subset {\cal T}_\delta(\Sigma)$ be a geodesic segment 
%and suppose that there exist $Y<X<X^\prime <Y^\prime$ 
satisfying the following relative bounded combinatorics and large heights properties:
\begin{itemize}
\item{
%We have $[Y,X], [Y^\prime,X^\prime]\subset{\cal T}_\delta(\Sigma)$ and 
$d_{\cal T}(Y,X)
%,d_T(Y^\prime,X^\prime)
\in[h,2h]$ for some $h>h_{\rm gluing}(\delta,\xi)$.}
\item{ If ${\cal D}_1$ denotes the disk set of the handlebody $H_1$, then the pair $(Y,X)$ satisfies 
\[
  d_{\cal C\cal G}(\Upsilon(X),\Upsilon(Y)) +
  d_{\cal C\cal G}(\Upsilon(Y),{\cal D}_1)\leq 
d_{\cal C\cal G}(\Upsilon(X),{\cal D}_1)+\frac{1}{\delta}. 
\]
The same holds true for the pair $(f^{-1}X,f^{-1}Y)$ and the disk set ${\cal D}_2$ of $H_2$.}
%\item{We have $d_{\cal C\cal G}(\Upsilon(Y),\Upsilon(Y'))\ge h$.}
\end{itemize}
Consider $N_1=H(Y),N_2=H(f^{-1}X)$.
%,Q=Q(Y,Y^\prime)$. 
Then there exist: 
\begin{itemize}
%\item{Disjoint product regions $V_1,V_2$ in the convex core of $Q$ with $V_1$ of 
%size $D$. 
%We denote by $Q_0\subset Q$ the gluing block bounded by $V_1,V_2$ 
%for which $V_1$ and $V_2$ are respectively the bottom and top boundary.}
\item{Product regions $U_j\subset{\cal C\cal C}(N_j)$ of size $D$ for $j=1,2$. 
We denote by $N_j^0\subset{\cal C\cal C}(N_j)$ the gluing blocks they define.}
\item{An orientation reversing $\xi$-almost isometric diffeomorphism
$\Phi:U_1\to U_2$ for $j=1,2$ in the homotopy class of $f$.}
%where $k_1$ is in the homotopy class of the identity, while $k_2$ is in the homotopy class of $f$. } 
\end{itemize}
In particular, we can form the 3-manifold
\[
X_f=N_1^0\cup_{\Phi:U_1\to U_2}N_2^0
%X_f=N_0^1\cup_{k_1:V_1\to U_1}Q_0 \cup_{k_2:V_2\rightarrow U_2}N_0^2
\] 
obtained from the disjoint union of $N_1^0,N_2^0$ by identifying a point $x\in N_1^0$ 
with its image under $\Phi$ in $N_2^0$. 
This manifold is diffeomorphic to $M_f=H_1\cup_f H_2$. 
Denote by $\Omega$ the image in $X_f$ of  $U_1\cup U_2$.  
The manifold $X_f$
% is diffeomorphic to
%$M_f=H_1\cup_fH_2$ and 
comes equipped with a Riemannian metric $\rho$ with the following properties:
\begin{enumerate}[i)]
\item{The sectional curvature of $\rho$ is contained in the interval $(-1-\xi,-1+\xi)$, and it is constant $-1$ on $X_f-\Omega$.}
\item{The diameter of $\Omega$ is at most $2D$, and the injectivity radius on $\Omega$ is at least $\iota$.} 
%\item{Each connected component of $\Omega$ has diameter at most $2D$ and uniformly bounded injectivity radius.} 
%\item The $I$-bundle piece $Q_0$ is isometric to the complement in the convex core of $Q$ of a collar 
%neighborhood of the boundary of the convex core whose volume is uniformly bounded, only depending on 
%$h$ and hence on $\xi,\delta$.
\item{The two components of $X_f-\Omega$ are isometric to the complement in ${\cal C\cal C}(N_j)$ 
of collar neighborhoods about the boundary  of ${\cal C\cal C}(N_j)$ 
of uniformly bounded diameter
(depending on $h$ and hence on $\xi,\delta$).}
 \end{enumerate}
\end{thm}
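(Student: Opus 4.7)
The proof proceeds in three stages. First, I would use the relative bounded combinatorics hypothesis together with the quasi-convexity of the disk sets $\mathcal{D}_1, \mathcal{D}_2\subset\mathcal{CG}(\Sigma)$ to select two intermediate Riemann surfaces $Z_1,Z_2\in[Y,X]\cap\mathcal{T}_\delta(\Sigma)$ sitting at Teichmüller distance roughly $h/4$ from $Y$ and from $X$ respectively. Because $[Y,X]$ lies in the $\delta$-thick part, the restriction of the systole map $\Upsilon$ to $[Y,X]$ is a parameterized $L_\delta$-quasi-geodesic, and the relative bounded combinatorics inequality forces any minimal geodesic in $\mathcal{CG}(\Sigma)$ from $\mathcal{D}_1$ (resp.~$f(\mathcal{D}_2)$) to $\Upsilon(X)$ (resp.~to $\Upsilon(Y)$) to fellow-travel $\Upsilon([Y,X])$. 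By Masur--Minsky this places $Z_1, Z_2$ deep in the convex cores of $N_1=H(Y)$ and $N_2=H(f^{-1}X)$, in the sense that no simple closed curve of bounded hyperbolic length on the corresponding ``level surface'' is close in the curve graph to the relevant disk set.

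Second, I would appeal to Minsky's model manifold machinery for Kleinian surface groups (the surface-group part of the ending lamination theorem of Brock--Canary--Minsky, or equivalently Minsky's \emph{a priori bounds} combined with his \emph{product region theorem}) to construct, for each $j=1,2$, a bilipschitz model of the convex-cocompact metric on $N_j$ in a tubular neighborhood of the level surface corresponding to $Z_j$. Relative $\delta$-bounded combinatorics implies that along the central portion of the Teichmüller segment $[Y,X]$ the subsurface projections of every marking visible deep in $N_j$ are uniformly bounded. Minsky's bounds then give that this tubular neighborhood is, up to an error tending to zero as $h\to\infty$, isometric to the ``almost-Fuchsian'' (singular-)hyperbolic product over a central subsegment of $[Y,X]$. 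This is the product region $U_j$ of controlled size $D=D(\delta)$ whose injectivity radius is bounded below by $\iota(\delta)>0$ coming from $\delta$-thickness.

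Third, since both models are built from the \emph{same} Teichmüller segment (with the orientation reversal absorbed by $f$), their central quasi-Fuchsian blocks are pointed $C^2$-close as $h\to\infty$. I would choose an orientation-reversing diffeomorphism $\Phi:U_1\to U_2$ in the homotopy class of $f$ realizing this closeness, then form $X_f=N_1^0\cup_\Phi N_2^0$ and interpolate between the two pulled-back hyperbolic metrics on a collar inside $U_1$ using a smooth cutoff, exactly as in the model-metric construction of \Cref{Sec: drillingand}. The resulting metric $\rho$ is hyperbolic off $\Omega$, has curvature in $(-1-\xi,-1+\xi)$ where $\xi\to 0$ as $h_{\rm gluing}\to\infty$, and inherits all the required diameter, injectivity radius, and flatness estimates from the Minsky model.

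The main obstacle is the second step. Handlebody geometry near the disk set is topologically non-product, and the curve graph projection estimates that underpin Minsky's surface-group model theorem must be upgraded to the handlebody setting via the quasi-convexity of the disk set in $\mathcal{CG}(\Sigma)$ and the equivalence (from $\delta$-thickness) between Teichmüller and curve-graph distance along $[Y,X]$. Rather than invoking the full ending lamination theorem as a black box, the cleanest route — and presumably the one taken in \cite{HV22} — is to extract the needed uniform bilipschitz control directly from Minsky's hierarchy machinery and the relative bounded combinatorics hypothesis, since large heights force the relevant portion of the hierarchy to be combinatorially indistinguishable from the quasi-Fuchsian case.
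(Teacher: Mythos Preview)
The paper does not prove this theorem: it is quoted verbatim as Theorem 5.12 of \cite{HV22} and used as a black box (the paper only remarks that the injectivity radius bound on $\Omega$ is discussed in Section 5.3 of \cite{HV22}, and later invokes Proposition 4.1 of \cite{HV22} for the almost-isometric embedding of quasi-Fuchsian blocks). So there is no proof here to compare your proposal against.

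That said, your three-stage outline is broadly in the right spirit for what \cite{HV22} does, and your identification of the main difficulty is accurate: the product region and bilipschitz model machinery of Minsky and Brock--Canary--Minsky is set up for incompressible surface groups, and the handlebody end is compressible. The actual argument in \cite{HV22} (see their Sections 4 and 5) does not run Minsky's hierarchy directly on the handlebody; instead it uses geometric limits. One shows that as the height $h\to\infty$ with $\delta$-bounded combinatorics, the convex cocompact handlebodies $H(Y)$, pointed at a suitable level surface, converge geometrically to a doubly degenerate surface group manifold (this is where the relative bounded combinatorics and quasi-convexity of the disk set enter, to rule out accumulation of short curves near the compressible end). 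The product region $U_j$ and the $\xi$-almost isometry $\Phi$ are then extracted from this geometric convergence and the model manifold theorem applied to the \emph{limit}, not to the handlebody itself. Your sketch implicitly assumes one can run the model theorem directly inside $N_j$; that step would need justification, and the geometric-limit route is how \cite{HV22} sidesteps it.
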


We call the metric constructed in Theorem \ref{gluinghv} from the convex cocompact 
handlebodies $H(Y),H(f^{-1}X)$
% the quasi-fuchsian manifold $Q(Y,Y^\prime)$  
and the gluing map $f$ 
a \emph{$\xi$-model metric with $\delta$-bounded combinatorics}. 
Theorem \ref{gluinghv} 
then can be restated as saying that if
$M_f$ fulfills the assumption stated in Theorem \ref{gluinghv},
then it admits a $\xi$-model metric with $\delta$-bounded combinatorics. 
Note that the lower injectivity radius bound on $\Omega$ is not explicitly stated in 
Theorem 5.12 of \cite{HV22}, but is discussed in Section 5.3 of \cite{HV22}. 
The metric has constant curvature $-1$ outside of the open subset $\Omega$, called 
the \emph{gluing region} in the sequel. Furthermore, since it is constructed by gluing 
two almost isometric hyperbolic metrics with a gluing function all of whose derivatives
are uniformly bounded (and in fact small depending on the geometric data which enter
in the construction), the covariant derivative of ${\rm Ric}$ is pointwise uniformly bounded
by a constant only depending on $\delta$.

By construction, the gluing region 
contains an open subset diffeomorphic to $\Sigma\times (0,1)$ whose diameter is 
bounded from above by a constant $2D>0$ only depending on $\delta$,
whose injectivity radius is bounded from below by a constant $\iota$ 
only depending on $\delta$ and is such that the distance between the two boundary 
surfaces of this set is at least $D$. This implies that its 
volume is contained in the interval $[v^{-1},v]$ for a number 
$v>0$ only depending on the genus $g$ of the handlebody and on $\delta$. 

%Proposition 4.1 of \cite{HV19} yields 
%that furthermore, the gluing region is $\xi$-almost isometric to a subset of 
%the quasi-fuchsian manifold $Q(Y,X)$ which is a convex cocompact 
%hyperbolic 3-manifold diffeomorphic to $\Sigma\times \mathbb{R}$ with conformal
%boundary $(X,Y)$. 

%%%%%%%%%%%

Theorem \ref{Pinching without inj radius bound - introduction}
from the introduction
can be used to promote a model metric to a 
hyperbolic metric which is close to the model metric in the
$C^2$-topology. We summarize this as follows. 

\begin{prop}\label{hyperbolization}
For all  $\varepsilon >0,k >0$ there
exist numbers $b=b(\varepsilon,k)>0$,
$\xi=\xi(\varepsilon,k)>0$, $\delta=\delta(\varepsilon,k)>0$ and $v=v(\varepsilon,k)>0$ 
with the following properties.
Let $f:\partial H_1\to \partial H_2$ be a gluing map und use this to define
the disk sets ${\cal D}_1,{\cal D}_2$. 
Assume that a shortest geodesic $\zeta$ connecting ${\cal D}_1$ to ${\cal D}_2$
contains a subsegment $\hat \zeta$ of length at least $b$ such
that for any proper essential subsurface $S$ of 
$\Sigma$, the diameter of the subsurface projection of the endpoints 
of $\hat \zeta$ into $S$ is at most $k$. 
Then the manifold 
$M_f$ admits a $\xi$-model metric with $\delta$-bounded combinatorics
which is $\varepsilon$-close in the $C^2$-topology to a hyperbolic metric.
If $\tilde \zeta$ is another subsegment of $\zeta$ of length at least $b$ 
which has the same properties as $\hat \zeta$ and is disjoint from $\hat \zeta$,
then
these two segments determine a submanifold of $M_f$ diffeomorphic to 
$\Sigma\times [0,1]$ whose 
volume for the hyperbolic metric on 
$M_f$ is at least $v(d_{\cal C\cal G}(\hat \zeta,\tilde \zeta))$.
\end{prop}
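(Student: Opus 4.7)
The plan is to reduce \Cref{hyperbolization} to a combination of the gluing theorem \Cref{gluinghv} and the pinching theorem \Cref{Pinching without inj radius bound - full version}. The main work lies in translating the combinatorial hypothesis on $\hat\zeta$ (no large subsurface projections between its endpoints) into the Teichmüller-geodesic relative bounded combinatorics assumption required to apply \Cref{gluinghv}.

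First I would invoke Masur--Minsky subsurface projection machinery together with the distance formula (or more precisely Rafi's thick--thin decomposition of Teichm\"uller geodesics) to choose $\delta=\delta(k)>0$ with the following property: any two points $Y,X\in {\cal T}(\Sigma)$ whose systoles lie within bounded curve-graph distance of the endpoints of $\hat\zeta$ and which have all subsurface projection distances bounded by some $k'=k'(k)$ can be joined by a Teichm\"uller geodesic $[Y,X]\subseteq {\cal T}_\delta(\Sigma)$. The bounded combinatorics hypothesis for $\hat\zeta$ in the curve graph, together with the Lipschitz systole map $\Upsilon$ and the fact that the restriction of $\Upsilon$ to thick Teichm\"uller geodesics is a parameterized quasi-geodesic, produces such a pair $(Y,X)$ whose Teichm\"uller distance is comparable to (and can be made as large as desired by increasing) $b$. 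Concretely, one trims off the two ends of $\hat\zeta$ (of length growing with $k$) to absorb the constants coming from the choice of threshold $\delta$ and the quasi-convexity of the disk sets.

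Next I would verify the two relative bounded combinatorics inequalities in \Cref{relativedef}. Because $\zeta$ is a shortest geodesic from ${\cal D}_1$ to ${\cal D}_2$ in ${\cal C\cal G}(\Sigma)$ and $\hat\zeta\subseteq \zeta$, and because the systoles $\Upsilon(Y),\Upsilon(X)$ track the endpoints of $\hat\zeta$ up to universally bounded error, the triangle inequality combined with the quasi-convexity constants of ${\cal D}_1,{\cal D}_2$ yields
\[
d_{\cal C\cal G}({\cal D}_1,\Upsilon(Y))+d_{\cal C\cal G}(\Upsilon(Y),\Upsilon(X))\leq d_{\cal C\cal G}({\cal D}_1,\Upsilon(X))+\tfrac{1}{\delta},
\]
and the analogous estimate for $(f^{-1}X,f^{-1}Y)$ and ${\cal D}_2$. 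Choosing $b$ large so that the resulting height $h=d_{\cal T}(Y,X)$ exceeds $h_{\mathrm{gluing}}(\delta,\xi)$, \Cref{gluinghv} produces a $\xi$-model metric $\bar g$ on $M_f$ whose curvature deviates from $-1$ only on a gluing region $\Omega$ of diameter $\leq 2D(\delta)$, injectivity radius $\geq \iota(\delta)$, and hence of volume uniformly bounded above in terms of $\delta$ and the genus.

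Finally I would verify the six hypotheses of \Cref{Pinching without inj radius bound - full version}. Conditions i)--iv) are immediate from \Cref{gluinghv}, since $\bar g$ is complete of finite volume, has curvature in $(-1-\xi,-1+\xi)$, is hyperbolic in a neighborhood of its thin part (so the curvature decay condition iii) is trivial because $M_{\mathrm{small}}$ is empty or already hyperbolic), and has $\|\nabla\mathrm{Ric}(\bar g)\|_{C^0}$ controlled by the construction. The main obstacle is the weighted $L^2$ integrability conditions v) and vi): since $\bar g$ is hyperbolic outside $\Omega$, the relevant integrand $|\mathrm{Ric}(\bar g)+2\bar g|^2$ is supported on $\Omega$ and bounded pointwise by a constant times $\xi^2$, so the full integral is at most $C(\delta)\xi^2$, uniformly in $x$. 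Choosing $\xi=\xi(\varepsilon,\delta)$ small enough ensures all integrals are bounded by the required $\varepsilon_0^2$, and \Cref{Pinching without inj radius bound - full version} yields a hyperbolic metric $\varepsilon$-close to $\bar g$ in $C^2$. For the volume statement, two disjoint subsegments $\hat\zeta,\tilde\zeta$ of $\zeta$ each with bounded combinatorics produce, by the construction above, two Teichm\"uller geodesic subsegments in ${\cal T}_\delta(\Sigma)$ joined by a thick Teichm\"uller segment of length comparable to $d_{\cal C\cal G}(\hat\zeta,\tilde\zeta)$; the corresponding part of the $\xi$-model metric is a $\Sigma\times[0,1]$-like region on which the injectivity radius is bounded below by $\iota$ and the length is linear in $d_{\cal C\cal G}(\hat\zeta,\tilde\zeta)$, giving a linear lower bound on its volume, which is transferred to the $C^2$-close hyperbolic metric up to a multiplicative constant.
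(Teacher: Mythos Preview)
Your argument for the first part---constructing the Teichm\"uller segment $[Y_0,X_0]\subset {\cal T}_\delta(\Sigma)$, verifying relative bounded combinatorics, applying \Cref{gluinghv}, and then checking the hypotheses of \Cref{Pinching without inj radius bound - full version}---matches the paper's proof closely. The paper cites the distance formula together with Lemmas 6.7 and 6.8 of \cite{HV22} for the first step, and then argues exactly as you do: the gluing region $\Omega$ has bounded diameter and volume depending only on $\delta$, the metric is hyperbolic outside $\Omega$, and hence all curvature integrals are $O(\xi^2)$ so that a sufficiently small $\xi$ permits the pinching theorem.

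Your volume argument, however, has a genuine gap. You assert that the two bounded-combinatorics subsegments $\hat\zeta,\tilde\zeta$ are ``joined by a thick Teichm\"uller segment of length comparable to $d_{\cal C\cal G}(\hat\zeta,\tilde\zeta)$'' and that the corresponding region in the model metric has injectivity radius $\geq\iota$. Neither claim is justified: the hypothesis only controls subsurface projections between the endpoints of $\hat\zeta$ and between those of $\tilde\zeta$ separately, not along the portion of $\zeta$ between them, so the associated Teichm\"uller geodesic may well enter the thin part. Correspondingly, the region of $M_f$ lying between the two gluing zones is isometric to a piece of a convex cocompact handlebody and can contain arbitrarily thin Margulis tubes; the lower bound $\iota$ from \Cref{gluinghv} applies only on $\Omega$. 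The paper circumvents this by invoking Proposition~4.1 of \cite{HV22} to embed (up to $\xi$-almost isometry) the complement of a bounded collar in the convex core of the quasi-fuchsian manifold $Q(V_0,Y_0)$ into $M_f$, and then applying Brock's theorem \cite{Br03} that ${\rm vol}\bigl(Q(V_0,Y_0)\bigr)\geq \rho\, d_{WP}(V_0,Y_0)\geq \rho C\, d_{\cal C\cal G}(\Upsilon(V_0),\Upsilon(Y_0))$. Brock's bound requires no thickness assumption, which is precisely what makes it work here.
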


%Note that in the last formula, $d_{\cal C\cal G}(\hat \zeta,\tilde \zeta)$ is the shortest
%distance between the subsegments $\hat \zeta, \tilde \zeta$, so 
%$d_{\cal C\cal G}(\hat \zeta,\tilde \zeta)\geq 1$ if $\hat \zeta, \tilde \zeta$ are disjoint. \textcolor{red}{Here I find the part "if ... are disjoint" a bit confusing since in the statement of the proposition they are assumed to be disjoint, and so saying "if" seems a bit unnecessary.}

%\textcolor{red}{Also: I feel a lower bound of the form \(v d_{\cal C \cal G}\) seems more natural (since it is linear, and not affine) than \(v (d_{\cal C \cal G}+1)\), but they are equivalent (for different \(v\)) because \(v d_{\cal C \cal G} \geq \frac{v}{2}(d_{\cal C \cal G}+1)\) and \(v (d_{\cal C \cal G}+1) \geq v d_{\cal C \cal G}\) since by assumption \(d_{\cal C \cal G}\geq 1\). (As a last comment: In the proof it is not clear to me how the \(+1\) appears, I only see why \(-1\) would hold; I add a more precise comment in the proof.)}

\begin{proof} The distance formula Theorem 6.12 of \cite{MM00} 
and its variation for the Teichm\"uller metric together with 
the main result of \cite{H10} and Lemma 6.7 and Lemma 6.8 of \cite{HV22} shows that 
for every $k>0$ 
there are numbers $m_0=m_0(k)$,
$\sigma=\sigma(k)>0$ and $L=L(k)>1$ 
with the following property.

Let $m\geq 3m_0$ and let 
$\eta:[0,m]\to {\cal C\cal G}(\Sigma)$ be a geodesic 
with the property that there exists no proper essential 
subsurface $S$ of $\Sigma$ such that the diameter of 
the projection of the endpoints $\eta(0),\eta(m)$ of $\eta$
into the arc and  curve graph of $S$ is 
larger than $k$.  Let $X,Y\in {\cal T}(\Sigma)$ be such that the $Y$-length of the 
curve $\eta(0)$ is not larger than a Bers constant for $\Sigma$, and that the same holds
true for the $X$-length of the 
curve $\eta(m)$.  Let
$[Y,X]$ be the Teichm\"uller geodesic connecting $Y$ to $X$. 
Then there exists a subsegment 
$[Y_0,X_0]\subset [Y,X]$ entirely contained in ${\cal T}_\sigma(\Sigma)$ 
with the property that $\Upsilon\vert [Y_0,X_0]$ is an $L$-quasi-geodesic 
connecting a point in the $L$-neighborhood of $\eta(m_0)$ to a point in the 
$L$-neighborhood of $\eta(m-m_0)$.
In particular, we have 
\[d_{\cal C\cal G}(\eta(m_0),\Upsilon(Y_0))+d_{\cal C\cal G}(\Upsilon(Y_0),\Upsilon(X_0))\leq 
d_{\cal C\cal G}(\eta(m_0),\Upsilon(X_0))+L\]
and similarly for $\eta(m-m_0)$ and $\Upsilon(X_0)$.

It follows from the construction in the previous paragraph that 
up to a uniform adjustment of constants, 
if $\eta$ is a subsegment of a minimal geodesic connecting 
${\cal D}_1$ to ${\cal D}_2$, then the pair $(Y_0,X_0)$ has relative 
$\sigma$-bounded combinatorics with respect to the handlebody $H_1$ with disk
set ${\cal D}_1$, and $(X_0,Y_0)$  has relative $\sigma$-bounded combinatorics with respect to 
the handlebody $H_2$ with disk set ${\cal D}_2$ in the sense of Definition \ref{relativedef}. 
The height $d_{\cal T}(Y_0,X_0)$
is bounded from below by $(m-2m_0-2L)/c-c$ for a universal constant $c>0$ 
by the fact that the image of 
$[Y_0,X_0]$ under $\Upsilon$ is an $L$-quasi-geodesic connecting two points in 
${\cal C\cal G}(\Sigma)$ of distance at least $m-2m_0-2L$ and the fact that $\Upsilon$ is coarsely
$c$-Lipschitz.
As a consequence, for any 
$\xi >0$, if $h=h_{\rm gluing}(\sigma,\xi)>0$ is as in Theorem \ref{gluinghv} and if 
$m>ch+2m_0+2L+c^2$, then this height is at least  $h$.

Recall that the diameter $D$ and hence the volume
of the gluing region $\Omega$ in the statement of 
Theorem \ref{gluinghv} for 
$\delta=\sigma$ and $\xi$ is bounded from above by a constant which 
only depends on $\sigma$ but not on $\xi$. 
%Then the volume of $\Omega$ for the $\xi$-model metric with
%$\delta$-bounded combinatorics also is bounded from above by a constant not 
%depending on $\xi$.
Since 
the sectional curvature of the model metric is contained in 
the interval $[-1-C\xi,-1+C\xi]$, we know that for a given number $\varepsilon >0$
and the fixed number $\sigma$ which only depends on $k$, 
there exists a number $\xi_0=\xi_0(\epsilon,\sigma)>0$ such that if $\xi<\xi_0$, then the 
$\xi$-model metric with relative $\sigma$-bounded combinatorics on $M_f$ 
fulfills the assumptions in \Cref{Pinching without inj radius bound - introduction}
for this number $\varepsilon$.
An application of Theorem \ref{Pinching without inj radius bound - introduction}
then shows that there is a hyperbolic metric
on $M_f$ in the $\varepsilon$-neighborhood of the model metric in the 
$C^2$-topology.

We are left with the volume estimate. To this end note that 
the construction of the Teichm\"uller segment $[Y_0,X_0]$ which gave
rise to the gluing region $\Omega$ only used a sufficiently long
subsegment $\hat \zeta$ of a minimal geodesic in ${\cal C\cal G}(\Sigma)$ connecting
${\cal D}_1$ to ${\cal D}_2$. Let us now assume that $\tilde \zeta$ is a
second such subsegment which is disjoint from 
$\hat \zeta$, and let us assume that
it is contained in the subsegment of $\zeta$ connecting ${\cal D}_1$ to 
$\hat \zeta$. Let $[W,V]$ be a Teichm\"uller geodesic segment
constructed from $\tilde \zeta$ as in the first paragraph of this proof, and let
$[W_0,V_0]\subset [W,V]$ be the subsegment in ${\mathcal T}_\sigma(\Sigma)$ 
found with the argument in the second paragraph of this proof. 
By Proposition 4.1 of \cite{HV22}, the convex cocompact handlebody
$H(Y)$ which entered the above construction 
contains a submanifold $N$ which is $\xi$-almost isometric to 
a submanifold $N_0$ of the quasi-fuchsian manifold $Q(V_0,Y_0)$ defined by the marked hyperbolic
surfaces $V_0,Y_0$,  and this
submanifold contains the complement 
of a collar of uniformly bounded height about the boundary 
in its convex core ${\cal C\cal C}(Q(V_0,Y_0))$. 
By Theorem \ref{gluinghv}, 
the submanifold $N$ of $H(Y)$ 
is isometrically embedded in $M_f$, equipped with the model metric 
constructed as above from the segment $\hat \zeta$.
% \textcolor{red}{(do we not need that \(N\) is contained in the block region [or at least the convex core \({\cal C \cal C}(Q(X_0,Y_0))\)]? Since if we do not know/say this it could potentially happen that most of the volume of \(N\) is contained in a region that will not have a copy in \(X_f \cong M_f\))}. 

The model manifold theorem \cite{M10} or earlier work of Brock \cite{Br03} shows that there exists
a constant $\rho >0$ such that 
the volume of the manifold $N_0$ and hence the volume of $N$ is 
bounded from below by 
$\rho d_{WP}(V_0,Y_0)$,
where $d_{WP}$ is the distance in ${\cal T}(\Sigma)$ induced by the Weil Petersson metric.
Namely, by \cite{Br03}, the volume of the quasi-fuchsian manifold $Q(V_0,Y_0)$ is 
bounded from below by $\rho^\prime d_{WP}(V_0,Y_0)$ for a constant $\rho^\prime=
\rho^\prime(\Sigma)$, and ${\rm vol}(Q(V_0,Y_0)-N_0)$ is uniformly bounded. 
As by bounded combinatorics and the explicit construction, 
the Weil Petersson distance $d_{WP}(V_0,W_0)$ is large, the volume estimate for 
$N$ follows from an adjustment of constants. 

Now there exist a number $C>0$ such that  
$d_{WP}(V_0,Y_0)\geq  Cd_{\cal C\cal G}(\Upsilon(V_0),\Upsilon(Y_0))$ \cite{Br03} and hence
the volume of the submanifold $N$ of $M_f$ with respect to the model
metric is bounded from below by 
$\rho Cd_{\cal C\cal G}(\Upsilon(V_0),\Upsilon(Y_0))\geq \rho Cd_{\cal C\cal G}(\hat \zeta, \tilde \zeta)$.
%
%\textcolor{red}{(Here I don't understand how \(+1\) appears. I know that \(d(\Upsilon(Y_0),{\rm start} (\hat{\zeta})) \leq L\) and  \(d(\Upsilon(V_0),{\rm end} (\tilde{\zeta})) \leq L\), and hence \(d(\Upsilon(V_0),\Upsilon(Y_0)) \geq d(\tilde{\zeta},\hat{\zeta})-2L\). So I see how one would get \(-1\) [or \(-2L/\rho C\)], but \(+1\) I don't get.)}. 
Thus \Cref{Pinching without inj radius bound - introduction} 
 implies that the same holds true for $M_f$, equipped with the hyperbolic metric
 (recall the convention of adjusting constants). 
This completes the proof of the proposition.
\end{proof}

\begin{rem}\label{volumesharp}\normalfont
The volume estimate in Proposition \ref{hyperbolization} is far from being sharp. Namely,
in the proof, we used the fact that under suitable assumptions on the gluing map $f$, 
the hyperbolic manifold $M_f$ contains an embedded subset which is almost 
isometric to the complement of a collar of uniformly bounded height 
about the boundary in ${\cal C\cal C}(Q(V_0,Y_0))$ where
$Q(V_0,Y_0)$ is a quasi-fuchsian manifold whose conformal boundaries $V_0,Y_0$ are contained in the 
thick part of Teichm\"uller space. By a result of Brock \cite{Br03}, the volume of 
$Q(V_0,Y_0)$ is proportional to the Weil-Petersson distance $d_{WP}(V_0,Y_0)$ 
between $V_0,Y_0$ in ${\cal T}(\Sigma)$, and the ratio 
$d_{\cal C\cal G}(\Upsilon(V_0),\Upsilon(Y_0))/d_{WP}(V_0,Y_0)$
can be arbitrarily small.

Since the Weil-Petersson distance between $V_0,Y_0\in {\cal T}(\Sigma)$ is proportional to the distance
in the pants graph between shortest pants decompositions for $V_0,Y_0$ \cite{Br03},
this leads us to conjecture that the volume of a hyperbolic 3-manifold $M_f$ with Heegaard surface $\Sigma$
of minimal genus is proportional to the minimal distance in the pants graph 
between two pants decompositions $P_1\subset {\cal D}_1$ and $P_2\subset {\cal D}_2$, with constants
only depending on $\Sigma$.
\end{rem}

\section{A priori geometric bounds for closed
  hyperbolic manifolds}
\label{lengthbounds}

The goal of this section is to obtain some geometric control on a closed hyperbolic 
$3$-manifold $M_f$ 
constructed by gluing two handlebodies $H_1,H_2$ with
boundary $\partial H_1=\partial H_2=\Sigma$ with 
a gluing map $f$ which does not fulfill the combinatorial condition
in Theorem \ref{relative}. This leads to the proof of Theorem \ref{lengthbound - introduction}
from the introduction. We always assume that the Hempel distance of 
the Heegaard splitting is at least $3$. This rules out the existence of trivial handles in the 
Heegaard surface.

As before, denote by ${\cal D}_1,{\cal D}_2$
the disk sets of $M_f$, viewed as subsets of the curve graph ${\cal C\cal G}(\Sigma)$
of $\Sigma$.
Call a proper essential subsurface $Y$ of 
$\Sigma$ 
\emph{strongly incompressible}
in $M_f$ if the distance in ${\cal C\cal G}(\Sigma)$ between 
$\partial Y$ and ${\cal D}_1\cup {\cal D}_2$ is at least three.
This implies 
that the boundary $\partial Y$ of $Y$ consists of simple closed curves
in $\Sigma$ which are not homotopic to zero in $M_f$. 
More concretely, we have

\begin{lem}\label{incompressiblesub}
Let $Y\subset \Sigma$ be a strongly incompressible subsurface.
\begin{enumerate}[i)]
\item 
For any boundary component $\gamma$ of $Y$,
the inclusion $\Sigma\setminus \gamma\to M_f\setminus \gamma$ is $\pi_1$-injective. 
\item 
If $\alpha\subset Y$ is an embedded essential 
arc with endpoints on $\partial Y$, then
$\alpha$ is not homotopic in $M_f$ into $\partial Y$ keeping the endpoints in 
$\partial Y$.
\item If $\alpha,\beta$ are two disjoint non-homotopic essential arcs
in $Y$, then $\alpha,\beta$ are not homotopic in 
$M_f$ keeping the endpoints in 
$\partial Y$. 
\end{enumerate}
\end{lem}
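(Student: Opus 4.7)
My plan is to prove all three parts by a uniform strategy: argue by contradiction and produce an essential simple closed curve $\ell\subset\Sigma$ which is null-homotopic in $M_f$ (respectively in $M_f\setminus\gamma$ for part (i)) and which lies close to $\partial Y$ in ${\cal C\cal G}(\Sigma)$. A standard innermost-disk reduction will then give an element of ${\cal D}_1\cup{\cal D}_2$ equally close to $\ell$, and the triangle inequality will contradict $d_{\cal C\cal G}(\partial Y,{\cal D}_1\cup{\cal D}_2)\geq 3$.

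The foundation is the following key fact, which I would establish first: \emph{if $c\subset\Sigma$ is an essential simple closed curve null-homotopic in $M_f$, then there is $c'\in{\cal D}_1\cup{\cal D}_2$ with $d_{\cal C\cal G}(c,c')\leq 1$.} By Dehn's Lemma, $c$ bounds an embedded disk $D\subset M_f$; make $D$ transverse to $\Sigma$ and iteratively reduce the interior circles of $D\cap\Sigma$ by innermost-disk surgery. Each innermost circle bounds a subdisk lying in one of $H_1,H_2$; if its boundary is essential in $\Sigma$ it provides the desired $c'\in{\cal D}_i$ disjoint from $c$, and otherwise it bounds a disk in $\Sigma$ which, together with the subdisk, cobounds a ball in $H_i$ (by irreducibility), allowing us to isotope $D$ and reduce $|D\cap\Sigma|$. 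In the terminal case $D\cap\Sigma=\partial D$, the disk $D$ itself lies in some $H_i$ and $c\in{\cal D}_i$.

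For part (i), I would assume $\pi_1$-injectivity fails and apply the loop theorem to the pair $(M_f\setminus N(\gamma),\Sigma_\gamma)$, where $N(\gamma)$ is a small regular neighborhood of $\gamma$ and $\Sigma_\gamma$ is $\Sigma$ cut along $\gamma$. This produces an essential simple closed curve $c\subset\Sigma\setminus\gamma$ bounding an embedded disk in $M_f\setminus N(\gamma)$; since $\gamma$ is essential in $\Sigma$, any disk in $\Sigma$ bounded by $c$ would have to contain $\gamma$, which is impossible, so $c$ is essential in $\Sigma$ itself. Running the innermost reduction of the key fact entirely inside $M_f\setminus N(\gamma)$ -- the reduction balls cannot contain $\gamma$ because $\gamma$ is not null-homotopic in $M_f$, by the key fact applied to $\gamma$ -- yields $c^*\in{\cal D}_i$ disjoint from $\gamma$, contradicting $d_{\cal C\cal G}(\gamma,{\cal D}_i)\geq 3$. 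For parts (ii) and (iii), the hypothesized homotopy produces a disk $D\subset M_f$ whose boundary lies in $Y\cup\partial Y$: in (ii), since $\partial Y$ is a disjoint union of circles, the endpoints of $\alpha$ must lie on a single component $\gamma$, and after rearrangement $\partial D = \alpha\cup\tilde\beta^{-1}$ with $\tilde\beta\subset\gamma$; in (iii) one gets a rectangle $\alpha\cdot\tau_1\cdot\beta^{-1}\cdot\tau_0^{-1}$ with $\tau_j\subset\partial Y$ tracking the endpoints along the homotopy. Pushing the $\partial Y$-portions of $\partial D$ slightly into the interior of $Y$ produces a simple closed curve $\ell\subset Y\setminus\partial Y$ still null-homotopic in $M_f$. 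The essentiality of $\alpha$ (respectively the non-parallelism of $\alpha,\beta$) prevents $\ell$ from bounding a disk in $Y$ or being boundary-parallel in $Y$: a bounding disk would exhibit $\alpha$ as rel-endpoint-isotopic within $Y$ to an arc in $\partial Y$, and a boundary-parallel annulus in $Y$ would realize the same homotopy. Hence $\ell$ is essential and non-boundary-parallel in $Y$, and by $\pi_1$-injectivity of the essential subsurface $Y\subset\Sigma$ the same holds in $\Sigma$. The key fact produces $c\in{\cal D}_i$ with $d_{\cal C\cal G}(\ell,c)\leq 1$, and since $\ell$ is disjoint from $\partial Y$ while representing a distinct vertex of ${\cal C\cal G}(\Sigma)$, we have $d_{\cal C\cal G}(\ell,\partial Y)=1$ and hence $d_{\cal C\cal G}(\partial Y,{\cal D}_i)\leq 2<3$.

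The main obstacle will be the surface-topology verification that the curves $\ell$ built in (ii) and (iii) are essential and non-boundary-parallel in $Y$: this needs careful case analysis based on how the arcs $\alpha$ (and $\beta$) sit relative to $\partial Y$, together with ruling out the degenerate possibility that $\ell$ is accidentally isotopic in $\Sigma$ to a component of $\partial Y$. A secondary technical point is the appeal to the loop theorem for the interior Heegaard surface $\Sigma\subset M_f$; I would handle this by cutting $M_f$ along $\Sigma$ (and, in (i), additionally along $\partial N(\gamma)$) and invoking the classical form of Papakyriakopoulos' loop theorem in each of the resulting compact pieces, each of which is a handlebody or a handlebody with a boundary-adjacent solid torus removed.
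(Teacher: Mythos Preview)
Your approach is correct and is essentially the same as the paper's: both arguments reduce each part to producing an essential simple closed curve in $\Sigma$ disjoint from $\partial Y$ and null-homotopic in $M_f$, then use the Heegaard splitting to find a nearby element of ${\cal D}_1\cup{\cal D}_2$, contradicting $d_{\cal C\cal G}(\partial Y,{\cal D}_1\cup{\cal D}_2)\geq 3$. The only notable difference is that for part~(i) the paper applies the loop theorem directly to the two-sided separating surface $\Sigma\setminus N(\gamma)\subset M_f\setminus N(\gamma)$, so the resulting compressing disk automatically lies in one of the handlebody pieces $H_i\setminus N(\gamma)$ and its boundary is already in ${\cal D}_i$; your subsequent innermost-disk reduction is therefore unnecessary, though harmless.
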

\begin{proof}
The first statement of the lemma follows from Dehn's lemma \cite{He76},
applied to the complement
of a small open tubular neighborhood $N$ of $\gamma$ in $M_f$.  
Namely, $\Sigma-N$ is a properly embedded bordered surface in $M_f-N$,  and hence if 
 there is an essential closed curve in
$\Sigma-\gamma$ which is contractible in $M_f-\gamma$, then 
there is an essential 
simple closed curve in $\Sigma-\gamma$ which bounds a disk in $M_f$.
 But this contradicts the fact that
any diskbounding simple closed 
curve in $\Sigma$ has essential intersections with $\gamma$.

Now let $\alpha\subset Y$ be an embedded essential arc with endpoints on the same component $\zeta$ of $\partial Y$.
Then each component of the boundary of a small neighborhood of $\alpha\cup \zeta$ in $Y$ 
is an essential simple 
closed curve in $Y$. This curve is not homotopic to zero in $M_f$ by the discussion in the previous 
paragraph. But this means that $\alpha$ is not homotopic in $M_f$  
into $\partial Y$ keeping the endpoints in $\partial Y$.

Similarly, if $\alpha_1,\alpha_2$ are two disjoint non-homotopic arcs in $Y$ with endpoints on the 
same (not necessarily distinct) boundary components $\zeta_1,\zeta_2$ of $Y$,
then their union with suitable
chosen subarcs of $\zeta_1\cup \zeta_2$ defines an essential simple closed curve in $Y$ to which the above 
discussion applies. Thus such arcs can not be homotopic in $M_f$ keeping the endpoints
in $\partial Y$.
\end{proof}

Denote by $d_{\cal C\cal G}$ the distance in the curve graph of
$\Sigma$.
Lemma \ref{incompressiblesub} does not state that
for a subsurface $Y\subset \Sigma$ with $d_{\cal C\cal G}(\partial Y,
{\cal D}_1\cup {\cal D}_2)\geq 3$, the inclusion
$Y\to M_f$ is $\pi_1$-injective. However we have the following
weaker statement. 

For its formulation, 
for a proper essential subsurface $Y$ of $\Sigma$ denote by 
$d_Y$ the distance in the arc and curve graph of
$Y$, and  
${\rm diam}_Y$ denotes the diameter of subsets of this graph.
Furthermore, if $\alpha_1,\alpha_2$ are simple closed curves in $\Sigma$ which have
an essential intersection with $Y$,  then we write 
$d_Y(\alpha_1,\alpha_2)$ to denote the distance in the arc and curve graph of 
$Y$ between the \emph{subsurface projections} of $\alpha_1,\alpha_2$, that is, 
the components of $\alpha_i\cap Y$.

\begin{lem}\label{homotopyidentity}
  There exists a number $p=p(\Sigma)>4$
  with the following property.
Let $Y\subset \Sigma$ be a strongly incompressible subsurface whose 
boundary $\partial Y$, as a geodesic multicurve in $M_f$, 
 fulfills
 $d_{\cal C\cal G}(\partial Y,{\cal D}_1\cup {\cal D}_2)\geq p$.
 If $h:\Sigma\times [0,1]\to M_f$ is any homotopy of the inclusion
 which preserves $\partial Y$ and if
 $h_1:\Sigma\to \Sigma$ is a homotopy equivalence, then $h_1$ 
induces the identity on $\pi_1(\Sigma)$.
\end{lem}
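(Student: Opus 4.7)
The approach is to translate the geometric hypothesis into an algebraic statement and reduce to showing that the induced automorphism $\phi=(h_1)_\ast$ on $\pi_1(\Sigma)$ is trivial. Choose a basepoint $x_0$ on a component of $\partial Y$; since the homotopy $h$ preserves $\partial Y$ setwise, its trace on $x_0$ stays in $\partial Y$, and the composition $\iota\circ h_1\simeq\iota$ then gives $\iota_\ast\circ\phi=c_g\circ\iota_\ast$ for some $g\in\pi_1(M_f)$. After absorbing $c_g$ into the choice of basepoint we may assume $\iota_\ast\circ\phi=\iota_\ast$, and moreover $\phi$ permutes the conjugacy classes of the cyclic subgroups generated by the components $\gamma_1,\dots,\gamma_r$ of $\partial Y$ by some permutation $\sigma$.

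The first step is to prove $\sigma={\rm id}$. The identity $\iota_\ast\phi=\iota_\ast$ forces $\gamma_i$ and $\gamma_{\sigma(i)}$ to be freely homotopic in $M_f$. The plan is to invoke the subsurface-projection machinery of Masur--Minsky together with the theorem of Johnson--Minsky--Moriah \cite{JMM10}, already cited by the paper, to argue that if $p$ exceeds a threshold depending only on $\Sigma$ (chosen larger than the bounded-geodesic-image constant and larger than the distance bound ensuring the Heegaard surface is the unique one containing such a subsurface), then any two disjoint essential simple closed curves of $\Sigma$ whose $\mathcal{CG}(\Sigma)$-distance from $\mathcal D_1\cup\mathcal D_2$ is at least $p-1$ and which are freely homotopic in $M_f$ must be isotopic on $\Sigma$. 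This will rule out $\sigma\neq{\rm id}$.

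Once each peripheral conjugacy class is fixed by $\phi$, a further inner adjustment at $x_0$ makes $\phi$ act as the identity on the peripheral subgroup through $x_0$. Cutting $\Sigma$ along $\partial Y$ writes $\pi_1(\Sigma)$ as a graph of groups whose vertex groups are $\pi_1(V)$ for the components $V$ of $\Sigma\setminus\partial Y$ and whose edge groups are the $\pi_1(\gamma_i)$. Then $\phi$ restricts to automorphisms $\phi_V$ of $\pi_1(V)$ which are the identity on each peripheral subgroup and satisfy $(\iota_V)_\ast\circ\phi_V=(\iota_V)_\ast$ as maps into $\pi_1(M_f\setminus\partial Y)$. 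Iterating Lemma~\ref{incompressiblesub}(i) one boundary component at a time shows that $\pi_1(\Sigma\setminus\partial Y)\to\pi_1(M_f\setminus\partial Y)$ is injective, hence each $(\iota_V)_\ast$ is injective and $\phi_V={\rm id}$. Combining this with the trivial action on edge groups through Bass--Serre theory gives $\phi={\rm id}$ on $\pi_1(\Sigma)$.

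The main obstacle is the first step. Lemma~\ref{incompressiblesub}(i) by itself does not give that freely homotopic boundary components of $\partial Y$ must coincide on $\Sigma$; one needs the global curve-graph rigidity coming from the Hempel distance hypothesis through \cite{JMM10}, and the precise value of the threshold $p=p(\Sigma)$ is determined by the constants in that result together with the hyperbolicity constants of $\mathcal{CG}(\Sigma)$. After Step 1 the remaining pieces are standard 3-manifold topology, so the heart of the argument is controlling which peripheral permutations can appear, and this is where the strong incompressibility hypothesis is essential.
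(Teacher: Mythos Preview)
Your approach is genuinely different from the paper's, but Step~3 contains a real gap. You want to conclude $\phi_V={\rm id}$ from the equation $(\iota_V)_\ast\circ\phi_V=(\iota_V)_\ast$ together with injectivity of $(\iota_V)_\ast$. The injectivity you can obtain (via the loop theorem, as in Lemma~\ref{incompressiblesub}) is for the map $\pi_1(V)\to\pi_1(M_f\setminus\partial Y)$. But the homotopy $h$ only takes place in $M_f$: the hypothesis ``$h$ preserves $\partial Y$'' says $h(\partial Y\times[0,1])\subset\partial Y$, not that $h\big((\Sigma\setminus\partial Y)\times[0,1]\big)$ avoids $\partial Y$. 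So the relation you actually get is $(\iota_V)_\ast\circ\phi_V=(\iota_V)_\ast$ as maps into $\pi_1(M_f)$, and there $(\iota_V)_\ast$ need \emph{not} be injective --- the paper says this explicitly just before the lemma. Promoting the homotopy to one in $M_f\setminus\partial Y$ is a nontrivial transversality problem (the trace of a loop under $h$ is a $2$--dimensional annulus which can link the $1$--complex $\partial Y$), and you give no argument for it. Step~1 is also not solid: it is unclear how \cite{JMM10} yields that two disjoint boundary components of $Y$ which are freely homotopic in $M_f$ must be isotopic on $\Sigma$; that result concerns subsurfaces with large projection distance, which is not what is assumed here.

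The paper avoids both issues by working entirely with the disk sets. Since $h$ is a homotopy of the inclusion $\Sigma\hookrightarrow M_f$, the homotopy equivalence $h_1$ (viewed as a mapping class $\varphi$) must preserve ${\cal D}_1\cup{\cal D}_2$. If $\varphi$ were nontrivial on $\pi_1$, the Nielsen--Thurston decomposition gives a subsurface $Z$ (possibly an annulus) disjoint from $\partial Y$ on which $\varphi$ acts as a pseudo-Anosov or a Dehn twist; iterating, ${\rm diam}_Z\big({\cal D}_1\cup{\cal D}_2,\varphi^k({\cal D}_1\cup{\cal D}_2)\big)\to\infty$, contradicting $\varphi^k({\cal D}_1\cup{\cal D}_2)={\cal D}_1\cup{\cal D}_2$ and the uniform bound ${\rm diam}_Z({\cal D}_i)\le p$ coming from quasiconvexity and the bounded geodesic image theorem. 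This argument never needs to lift anything to $M_f\setminus\partial Y$ and never needs to control permutations of boundary components.
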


\begin{proof} By \cite{MM00}, there exists a
number $p=p(\Sigma)>4$ with the following 
property. Let $\alpha,\beta$ be simple closed curves on $\Sigma$ 
and let $Y\subset \Sigma$ be a subsurface
which has an essential intersection with $\alpha,\beta$ and  
such that
$d_Y(\alpha,\beta)\geq p-1$; then any geodesic in ${\cal C\cal G}(\Sigma)$ connecting
$\alpha$ to $\beta$ has to pass through a curve disjoint from $Y$.

Since by \cite{MM04}, the disk sets ${\cal D}_1,{\cal D}_2$ are 
uniformly quasi-convex subsets of ${\cal C\cal G}(\Sigma)$, this implies that
up to increasing $p$, the following holds true.
Let $Y\subset \Sigma$ be 
any proper essential subsurface such that 
$d_{\cal C\cal G}(\partial Y,{\cal D}_i)\geq p$ $(i=1,2)$;
then ${\rm diam}_Y({\cal D}_i)\leq p$.

Let $Y\subset \Sigma$ be such a subsurface and let
$h:\Sigma\times [0,1]\to M$ be a homotopy of the inclusion
$h_0:\Sigma \to M$ which fixes $\partial Y$. Assume that
$h_1$ is a homotopy equivalence of $\Sigma$ 
onto $h_0(\Sigma)$. Then
$h_1$ defines a mapping class
$\varphi\in {\rm Mod}(W)$, the mapping class group of the component $W$.
We claim that $\varphi$ induces the identity on $\pi_1(W)$.

Namely, as $W$ is a surface with non-empty boundary, the group
${\rm Mod}(W)$ does not have elements of finite order. Thus
if $\varphi$ is not trivial, then either
$\varphi$ is a pseudo-Anosov mapping class of $W$, or
$\varphi$ preserves a non-trivial multicurve
$\beta\subset W$. Furthermore, there exists 
a subsurface $Z$ of $W$ which is preserved by
$\varphi$, and if $Z$ is not an annulus, then the restriction of
$\varphi$ to $Z$ is a pseudo-Anosov mapping class, and if
$Z$ is an annulus, then the restriction of $\varphi$ to $Z$ is a
Dehn twist. The mapping class $\varphi$ induces the identity on
$\pi_1(W)$ if and only if it is a composition of Dehn twists
about the boundary curves of $W$ or is trivial.

To see that this is the case, note that by composition, 
for each $k\geq 1$, the
mapping class $\varphi^k$ can also be represented by a homotopy of
$\Sigma$ which preserves $\partial Y$.
But $d_{\cal C\cal G}(\partial Y,{\cal D}_1\cup {\cal D}_2)\geq p\geq 4$
and hence since $\partial W$ is disjoint fromm $\partial Y$,
we have $d_{\cal C\cal G}(\partial W,{\cal D}_1\cup {\cal D}_2)\geq p-1$.
In particular, 
any diskbounding simple closed curve in $\Sigma$ has an essential
intersection with $Z$. Furthermore, by the choice of
$p$, we have ${\rm diam}_Y({\cal D}_i)\leq p$ $(i=1,2)$.

Since $\varphi$ is induced by a homotopy of $\Sigma$ in $M$,
it has to preserve the diskbounding curves in $\Sigma$ as this set
is determined by the topology of $M$.
Now if $\varphi$ preserves the non-peripheral subsurface $Z\subset W$ and acts
on $Z$ as a pseudo-Anosov mapping class (here we include the case
that $Z$ is an annulus and $\varphi$ is a Dehn twist) then
\[{\rm diam}_Z({\cal D}_1\cup {\cal D}_2,
 \varphi^k({\cal D}_1\cup {\cal D}_2))\to \infty\quad (k\to \infty).\]
As $\phi^k({\cal D}_1\cup {\cal D}_2)={\cal D}_1\cup {\cal D}_2$, 
and as ${\rm diam}_Z({\cal D}_1\cup {\cal D}_2)<\infty$, this 
is a contradiction which shows the lemma.
\end{proof}

Using  an idea of Minsky \cite{M00}, we establish  
an a priori upper bound for the total length of the boundary $\partial Y$ 
of $Y$ for the hyperbolic metric on $M_f$ 
in terms of 
the diameter of the subsurface projection of ${\cal D}_1,{\cal D}_2$ into $Y$.

More precisely, for a simple closed multi-curve $\gamma$ 
on $\Sigma$, 
denote by $\ell_f(\gamma)$ the sum of the 
minimal lengths
of representatives of the free homotopy classes of the components of $\gamma$
in the hyperbolic manifold $M_f$.
By convention, we have $\ell_f(c)=0$ for any curve on $\Sigma$ which is 
homotopically trivial in $M_f$.
In the statement of the following result and later on, $p\geq 4$ is the
constant from Lemma \ref{homotopyidentity}.

\begin{thm}[A priori length bounds]\label{minsky}
There exists a number $p=p(\Sigma)\geq 3$, and for every $\epsilon >0$ there exists a number 
$k=k(\Sigma,\epsilon)>0$ with the following property. Let $M_f$ be a hyperbolic 3-manifold with 
Heegaard surface $\Sigma$, Hempel distance at least $4$ and disk sets ${\cal D}_1,{\cal D}_2$.
If $Y\subset \Sigma$ is a proper
essential subsurface of $\Sigma$, with $d_{\cal C\cal G}(\partial Y,{\cal D}_1\cup {\cal D}_2)\geq p$ 
and ${\rm diam}_Y({\cal D}_1\cup {\cal D}_2)\geq k$, then $\ell_f(\partial Y)\leq \epsilon$.
\end{thm}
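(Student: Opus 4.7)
The proof will proceed by contradiction. Fix $\varepsilon>0$, and suppose that for arbitrarily large $k$ there is a hyperbolic Heegaard manifold $M_f$ with disk sets $\mathcal{D}_1,\mathcal{D}_2$ and a proper essential subsurface $Y\subset\Sigma$ satisfying $d_{\mathcal{CG}}(\partial Y,\mathcal{D}_1\cup\mathcal{D}_2)\geq p$ and $\mathrm{diam}_Y(\mathcal{D}_1\cup\mathcal{D}_2)\geq k$, yet $\ell_f(\partial Y)>\varepsilon$. The goal is to show that the second hypothesis combined with the length lower bound on $\partial Y$ forces $\mathrm{diam}_Y(\mathcal{D}_1\cup\mathcal{D}_2)\leq k_0(\Sigma,\varepsilon)$, contradicting the choice of $k$.

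The main tool, adapted from \cite{M00}, is a construction of \emph{simplicial hyperbolic surfaces} in $M_f$ that realize $\partial Y$ as a closed geodesic and simultaneously encode the $Y$-projection of a chosen disk-bounding curve. Fix $\alpha\in\mathcal{D}_i$; the distance assumption $d_{\mathcal{CG}}(\partial Y,\alpha)\geq p\geq 3$ forces every component of $\alpha\cap Y$ to be an essential arc in $Y$. The difficulty is that $\alpha$ is null-homotopic in $M_f$, so $\alpha$ itself cannot be realized as a geodesic of a standard pleated surface. The plan is to modify $\Sigma$ along a compressing disk $D_\alpha\subset H_i$ bounded by $\alpha$: cutting $\Sigma$ along $\alpha$ and capping with two copies of $D_\alpha$ produces a surface $\Sigma_\alpha$ of lower genus, together with a map $\phi\colon \Sigma_\alpha\to M_f$ which agrees with the inclusion on $\Sigma\setminus\alpha$. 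By Lemma \ref{homotopyidentity} together with Lemma \ref{incompressiblesub}, the restriction of $\phi$ to $Y$ (which embeds into $\Sigma_\alpha$) remains $\pi_1$-injective relative to $\partial Y$, so we can pleat $\Sigma_\alpha$ along a triangulation containing $\partial Y$ and a complete arc system of $Y$ extending an essential component of $\alpha\cap Y$. This yields a simplicial hyperbolic surface of uniformly bounded topological complexity realizing $\partial Y$ geodesically and carrying an arc whose $Y$-projection lies boundedly close to $\pi_Y(\alpha)$.

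Once such surfaces are produced, the conclusion follows by a standard bounded-geometry argument. By Gauss--Bonnet, the area of $\Sigma_\alpha$ with its pulled-back singular hyperbolic metric is bounded by a constant depending only on $\Sigma$. The assumption $\ell_f(\partial Y)>\varepsilon$ places the geodesic $\partial Y$ in the $\delta(\varepsilon)$-thick part of $M_f$, whence by the Margulis lemma $\partial Y$ admits an embedded tubular neighborhood in $M_f$ of radius $\geq\delta'(\varepsilon)$. Together with the area bound, this forces the intrinsic diameter of $\phi^{-1}(M_f^{\mathrm{thick}})$ in $\Sigma_\alpha$ to be bounded in terms of $\varepsilon$ and $\Sigma$, and hence any two essential simple arcs or curves on $\Sigma_\alpha$ whose images meet this region have $Y$-projections at bounded $\mathcal{CG}(Y)$-distance. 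Running the construction for $\alpha\in\mathcal{D}_1$ and $\beta\in\mathcal{D}_2$ and comparing via the common geodesic $\partial Y$ gives $d_Y(\alpha,\beta)\leq k_0(\Sigma,\varepsilon)$, and since this holds for all such pairs, $\mathrm{diam}_Y(\mathcal{D}_1\cup\mathcal{D}_2)\leq k_0$, the desired contradiction.

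The principal obstacle is the construction in the second paragraph: verifying that the pleated-surface machinery can be pushed through despite the compressibility of $\alpha$. Concretely, one must check that (i) the modified surface $\Sigma_\alpha$ carries a bona fide (possibly non-complete) hyperbolic metric making $\phi$ a simplicial hyperbolic surface with $\partial Y$ as a geodesic leaf, (ii) the map $\phi$ is $\pi_1$-injective on the arc system used, so that straightening produces geodesic segments of controlled length, and (iii) the arc system can be chosen to track $\alpha\cap Y$. All three points rely essentially on the strong incompressibility hypothesis through Lemma \ref{incompressiblesub} and Lemma \ref{homotopyidentity}, which is why the threshold $p$ must be taken at least as large as the constant furnished by those lemmas.
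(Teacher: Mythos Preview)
Your approach diverges from the paper's in two places, and the second divergence is a genuine gap.

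First, the paper does \emph{not} compress along $\alpha$ to a lower-genus surface. Instead, following Minsky, it \emph{spins} the diskbounding curve $\gamma\in\mathcal{D}_i$ about $\partial Y$: since $d_{\mathcal{CG}}(\gamma,\partial Y)\geq 3$, the complement of $\gamma\cup\partial Y$ in $\Sigma$ is simply connected, so the resulting finite-leaved lamination $\lambda$ has only ideal-polygon complementary regions and can be realized by an honest pleated surface $g_\gamma:\Sigma\to M_f$ in the homotopy class of the inclusion, with $\partial Y$ mapped geodesically. The fact that $\gamma$ is null-homotopic is harmless here; it just means the corresponding leaf has length zero in Minsky's efficiency estimate. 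This gives Lemma~\ref{projectionbound1}: a minimal proper arc $\tau$ for $\sigma(g_\gamma)$ satisfies $d_Y(\gamma,\tau)\leq D_1$. Your compression construction may also be made to work, but it is more awkward and introduces the complications you flag in your last paragraph; spinning sidesteps all of them.

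Second, and more seriously, your bounded-geometry paragraph contains a false step: $\ell_f(\partial Y)>\varepsilon$ does \emph{not} place $\partial Y$ in the $\delta(\varepsilon)$-thick part of $M_f$. A closed geodesic of length $100$ can wander arbitrarily deep into Margulis tubes about other short geodesics. This is precisely the difficulty that occupies most of Section~\ref{lengthbounds}. The paper's proof of Theorem~\ref{minsky} is a one-paragraph reduction to Proposition~\ref{projectionbound2}, which asserts $d_Y(\tau_1,\tau_2)\leq D_3(\Sigma,\varepsilon)$ for the minimal proper arcs of the two pleated surfaces; but establishing Proposition~\ref{projectionbound2} requires a long chain of lemmas handling separately the cases where $\partial Y$ has bounded length (Lemma~\ref{length}), enters a thin part of $M_f$ (Lemma~\ref{nothinpart}), has a component of large diameter (Lemma~\ref{largediameter}), or has a long component of bounded diameter (Proposition~\ref{diameter}, via a relative uniform-injectivity theorem, Proposition~\ref{thurstonrelative2}). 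In each case one interpolates between $g_{\gamma_1}$ and $g_{\gamma_2}$ by a simplicial path in $\mathcal{L}(\partial Y)$ (Lemma~\ref{homotopy}) and tracks how the minimal proper arc moves along this path, exploiting either short diskbounding curves appearing in the thin part of $\sigma(g_s)$ or the uniform-injectivity argument to control the motion. Your sketch supplies none of this, and the single thick-part/area argument you propose cannot substitute for it.
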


Theorem \ref{minsky} can be thought of as a version of
Theorem B of \cite{M00} in a different setting.
The fact that Heegaard surfaces in $M_f$ are compressible requires
however a substantial modification of the proof. 
 
Following \cite{M00},  
the main tool for the proof of Theorem \ref{minsky} 
are \emph{pleated surfaces}. The pleated surfaces we are interested in 
are maps $g:\Sigma\to M_f$ in the homotopy class of the inclusion
$\Sigma\to M_f$ 
together with a hyperbolic metric $\sigma$ on $\Sigma$
satisfying the following two conditions.
\begin{itemize}
\item $g$ is \emph{path-isometric} with respect to $\sigma$. 
\item There exists a $\sigma$-geodesic lamination $\lambda$ on $\Sigma$
whose leaves are mapped to geodesics by $g$. In the complement of $\lambda$,
$g$ is totally geodesic.
\end{itemize}
The geodesic lamination $\lambda$ is called
the \emph{pleating lamination} of $g$. 
We refer to \cite{M00} for more details on
pleated surfaces as used in our context. 
We call the hyperbolic metric $\sigma$ on $\Sigma$ which has the above
properties the metric \emph{induced} by the map $g$. Note that this makes
sense since $\sigma$ is indeed the pull-back of the hyperbolic metric on $M_f$ 
by $g$ (properly
interpreted on the pleating locus). 

We fix now once and for all a constant $\kappa_0$ 
which is smaller than a Margulis constant for hyperbolic surfaces and  
with the following properties (see 
p.139 of \cite{M00} for details).  
\begin{enumerate}
\item[(P1)] 
For any hyperbolic surface $S$ with geodesic boundary
$\partial S$, 
any two essential properly embedded arcs $\tau,\tau^\prime$ 
in $S$ with endpoints on $\partial S$ 
whose lengths are at most $\kappa_0$
are either homotopic keeping endpoints in $\partial S$, or they 
are disjoint. 
\item[(P2)] 
If $\alpha$ is a simple closed geodesic
on a hyperbolic surface $S$ and
if $\alpha$ contains a point $x\in S$ of injectivity
radius smaller than $\kappa_0$, then $x$ is contained in a Margulis
tube $A$ of $S$, and 
either $\alpha$ equals 
the core curve of $A$, or the subarc of $A\cap \alpha$ containing $x$ 
crosses through $A$, that is, it connects the two distinct boundary components of $A$.
\end{enumerate}

The second property follows from the fact that
a closed geodesic in a hyperbolic surface which enters
sufficiently deeply into a
Margulis tube but is not entirely contained in the tube 
either crosses through the tube, 
or it has self-intersections.

A \emph{bridge arc} for an essential proper non-annular 
subsurface $Y\subset \Sigma$ is
an embedded arc $\alpha\subset Y$ 
with both endpoints on $\partial Y$ which is not homotopic in $M_f$ into 
$\partial Y$ keeping the endpoints in $\partial Y$. 
%By the second part of Lemma \ref{incompressiblesub}, 
%if $Y$ is strongly incompressible in $M_f$ then this property can be checked 
%by only considering homotopies 
%of $\alpha$ which are entirely contained in $Y$.
% 
For a hyperbolic metric $\sigma$ on $\Sigma$, define a \emph{minimal proper arc}
to be a bridge arc $\tau$ for $Y$  which is minimal
in $\sigma$-length among all such arcs. 
The following is a version of Lemma 4.1 of 
\cite{M00}. 
%Denote 
%by $\iota(\alpha,\beta)$ the geometric intersection number between 
%closed curves $\alpha,\beta$ on $\Sigma$. 

\begin{lem}[Lemma 4.1 of \cite{M00}]\label{projectionbound1}
There exists a number $D_1=D_1(\Sigma)>0$ with the following property. 
Let $Y\subset \Sigma$ be a proper essential non-annular subsurface which is strongly
incompressible 
for the hyperbolic 3-manifold $M_f$. Then for every $\gamma\in {\cal D}_1\cup
{\cal D}_2$ there exists a pleated surface
$g_\gamma$ in the homotopy class of the inclusion $\Sigma\to M_f$ mapping
$\partial Y$ geodesically, with induced metric $\sigma(g_\gamma)$, such that for 
any minimal proper arc $\tau$ in $(Y,\sigma(g_\gamma))$ we have 
\[d_Y(\gamma,\tau)\leq D_1.\]
\end{lem}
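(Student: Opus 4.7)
The plan is to adapt the pleated surface argument of \cite[Lemma 4.1]{M00} from the geometrically finite setting to the closed Heegaard-splitting setting. The core idea is to associate to each $\gamma \in {\cal D}_1 \cup {\cal D}_2$ an essential simple closed curve $\beta = \beta(\gamma)$ on $\Sigma$ that is disjoint from $\partial Y$ and whose subsurface projection to $Y$ is uniformly close to that of $\gamma$, and then to pleat $\Sigma$ along $\partial Y \cup \beta$. The resulting pleated surface will carry $\beta$ as a closed geodesic in $\sigma(g_\gamma)$ whose projection to $Y$ controls both $d_Y(\gamma,\cdot)$ and $d_Y(\tau,\cdot)$ for any minimal proper arc $\tau$ in the induced metric.

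\textbf{Construction of $\beta$.} Using the hypothesis $d_{\cal C\cal G}(\partial Y,\gamma) \geq p$, the intersection $\gamma \cap Y$ is a nonempty union of essential arcs with endpoints on $\partial Y$; by choosing $p = p(\Sigma)$ large enough via the Masur--Minsky subsurface projection formula (and using the fact that disk sets have bounded diameter projections to $Y$ under the distance hypothesis), I may arrange that $\gamma \cap Y$ contains at least two arc components which are non-isotopic in $Y$. Band-summing two such arcs along a subarc of $\partial Y$ produces a simple closed curve $\beta \subset \operatorname{int}(Y)$, disjoint from $\partial Y$, with $d_Y(\gamma, \beta) \leq 2$ by construction. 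By Lemma \ref{incompressiblesub}(iii), the strong incompressibility of $Y$ guarantees that two such non-homotopic disjoint bridge arcs cannot be homotopic in $M_f$ rel endpoints, so $\beta$ is homotopically essential in $M_f$.

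\textbf{Pleated surface and projection bound.} Applying Thurston's pleated surface existence theorem to the disjoint essential multicurve $\partial Y \cup \beta$ in $M_f$, using Lemma \ref{incompressiblesub}(i) to realize $\partial Y$ geodesically, produces a pleated map $g_\gamma : \Sigma \to M_f$ homotopic to the inclusion with pleating locus containing $\partial Y \cup \beta$ and induced hyperbolic metric $\sigma(g_\gamma)$ on $\Sigma$. On $(Y, \sigma(g_\gamma))$ both $\partial Y$ and $\beta$ are closed geodesics, so a minimal proper arc $\tau$ meets $\partial Y$ only at its endpoints. Combining property (P1) of $\kappa_0$ with the standard Masur--Minsky estimate relating geometric intersection number to subsurface projection distance yields a bound $d_Y(\tau, \beta) \leq D_1'(\Sigma)$, and together with $d_Y(\gamma,\beta) \leq 2$ this gives the lemma with $D_1 := D_1' + 2$.

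The main obstacle will be the uniform bound $d_Y(\tau, \beta) \leq D_1'$, since a priori the curve $\beta$ has no length bound on $(Y, \sigma(g_\gamma))$: its $\sigma$-length equals its $M_f$-length, over which we have no control. The workaround is the classical surgery argument: if the geometric intersection $i(\tau,\beta)$ exceeded a topological constant of $Y$, one could shorten $\tau$ by cutting along $\beta$ and reconnecting along subarcs of $\beta$, contradicting the minimality of $\tau$; this shortening is made effective precisely by the choice of $\kappa_0$ via properties (P1) and (P2), and yields a universal intersection bound from which the required subsurface projection bound follows. This is the heart of the argument in \cite[Lemma 4.1]{M00}, adapted here by replacing Minsky's end invariant with the disk-bounding curve $\gamma$.
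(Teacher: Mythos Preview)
Your approach diverges from the paper's and contains a genuine gap at exactly the point you flag as the main obstacle. The surgery argument for $d_Y(\tau,\beta)\leq D_1'$ does not work: to shorten $\tau$ by cutting at intersections with $\beta$ and reconnecting along subarcs of $\beta$, you would need those subarcs of $\beta$ to be short, but you have already correctly observed that $\beta$ has no length bound in $\sigma(g_\gamma)$. Pigeonholing on intersections along the short arc $\tau$ only tells you that consecutive crossings are close \emph{along $\tau$}; the connecting subarcs of $\beta$ can still be arbitrarily long, so the surgered arc need not be shorter than $\tau$. Properties (P1) and (P2) of $\kappa_0$ do not help here: they control arcs of length $\leq\kappa_0$, not arcs on a long geodesic $\beta$. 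Nor can you surger $\beta$ instead, since $\beta$ is already a closed geodesic and any surgery changes its homotopy class without producing a useful lower bound.

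The paper (following \cite{M00}) takes a different route that avoids introducing $\beta$ altogether. It \emph{spins} $\gamma$ about $\partial Y$ via iterated Dehn twists ${\cal T}_{\partial Y}^n(\gamma)$, producing in the limit a finite-leaved lamination $\lambda$ whose closed leaves are exactly $\partial Y$ and whose complementary regions are simply connected (this uses $d_{\cal C\cal G}(\gamma,\partial Y)\geq 3$). After completing $\lambda$ to a maximal lamination, Lemma~\ref{incompressiblesub} allows one to realize it geodesically in $M_f$, yielding the pleated surface $g_\gamma$. The crucial step is then Minsky's \emph{efficiency of pleated surfaces} (Theorem~3.5 and formula~(4.3) of \cite{M00}), which bounds the $\sigma(g_\gamma)$-length of $\gamma$ in the complement $R_\lambda$ of the $\kappa_0$-thin parts about $\partial Y$ by $2C\,\iota(\gamma,\partial Y)$. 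Since $\gamma\cap Y$ has $\iota(\gamma,\partial Y)/2$ arcs, at least one of them has length at most $4C$ in $R_\lambda$; Lemma~2.1 of \cite{M00} then bounds its distance to any minimal proper arc. This efficiency estimate is the missing ingredient: it is what converts the pleating construction into a length bound on the \emph{original} curve $\gamma$, and there is no shortcut around it via an auxiliary curve of uncontrolled length.
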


\begin{proof}
The proof of Lemma 4.1 of \cite{M00} carries over with no essential modification.
Namely, let $\gamma\subset \Sigma$ be
  a simple closed curve which defines an element of ${\cal D}_1\cup {\cal D}_2$. 
 By assumption on $Y$, the curve $\gamma$ has an
  essential intersection with $\partial Y$. 
  
Modify $\gamma$ by
spinning it about $\partial Y$.
That is, let ${\cal T}_{\partial Y}$ be the mapping class that performs
  one positive Dehn twist about each component of $\partial Y$. The sequence of curves
  ${\cal T}_{\partial Y}^n(\gamma)$ converge,
  as $n\to \infty$, to a finite-leaved lamination
 $\lambda$ whose non-compact leaves spiral about $\partial Y$  and whose closed 
 leaves are precisely $\partial Y$.
 Since the distance in ${\cal C\cal G}(\Sigma)$ between $\gamma$ and
  $\partial Y$ is at least three, the complement $\Sigma-(\partial Y\cup \gamma)$ 
  is a union of simply connected components and hence
  the complementary regions of $\lambda$ are 
  simply connected as well. 
  Add finitely many leaves to $\lambda$ so that the resulting lamination
  $\lambda^\prime$ is maximal. To simplify notations, we identify $\lambda$ with 
  $\lambda^\prime$. 
  
The lamination $\lambda$ is the pleating lamination of a
  pleated surface $g_\lambda$ in $M_f$ mapping $\lambda$ geodesically, with 
  induced metric $\sigma_\lambda$ (compare \cite{M00}).
 Namely, let $\eta$ be a component of $\partial Y$. Then $\eta$ is not homotopic 
 to zero in $M_f$ and hence it can be represented by a unique closed geodesic $\hat \eta$.
 Let $\hat M_f$ be the covering of $M_f$ whose fundamental group is infinitely cyclic and
 generated by the loop $\hat \eta$. Then $\hat M_f$ is a solid torus with core curve the geodesic $\hat \eta$, 
 and $\eta$ lifts to a closed curve in $\hat M_f$. Mapping a point $p$ on $\eta\subset \hat M_f$ 
 to its shortest distance
 projection to $\hat \eta$ and connecting $p$ to its image by a geodesic arc determines 
 a canonical homotopy of $\eta$ to $\hat \eta$ which projects to a homotopy in $M_f$.
 Using this homotopy, any essential arc in $Y$ with endpoints on $\eta$ can be extended to an arc 
 with endpoints on $\hat \eta$.  
 
 Thus the intersection arcs of the simple closed curve 
 $\gamma$ with $\partial Y$, assumed without loss of generality to be essential,  
 define a collection of arcs in $M_f$ with boundary on the 
 collection $\hat \partial Y$ of geodesics in $M_f$ 
 homotopic to $\partial Y$. By the 
 second part of Lemma \ref{incompressiblesub}, such an extended arc is not 
 homotopic into $\hat \partial Y$ keeping the endpoints in $\hat \partial Y$, and by the 
 third part of 
 Lemma \ref{incompressiblesub}, two such extended arcs are homotopic 
 in $M_f$ keeping the endpoints in $\hat \partial Y$ only if the corresponding arcs in $Y$ are homotopic
 keeping the endpoints in $\partial Y$. Namely, two distinct such arcs are disjoint up to homotopy. 
 Using the above homotopy which deforms $\eta$ to $\hat \eta$, this yields that 
 each such arc can be represented by a unique nontrivial geodesic arc in $M_f$ 
 with endpoints in $\hat \partial Y$ which meets $\hat \partial Y$ orthogonally at the endpoints 
 
 Now spinning $\gamma$ about the boundary components of $\partial Y$ corresponds to 
 turning the endpoints of the geodesic arcs with boundary on $\hat \partial Y$ about the components of 
 $\hat \partial Y$. 
Taking a limit as the number of turns goes to infinity results in replacing the arcs by
infinite geodesics which spiral about the components of $\partial \hat Y$. These geodesics define the geometric 
realization of the lamination $\lambda$ in $M_f$. After adding finitely many leaves, the lamination 
$\lambda$ decomposes $\Sigma$ into finitely many ideal triangles. These triangles bound totally geodesic 
immersed ideal triangles in $M_f$ whose union defines the pleated surface $g_\lambda$.

  Denote by $R_\lambda$ the complement in 
  $(\Sigma,\sigma_\lambda)$ of the $\kappa_0$-Margulis tubes whose 
  cores are components of $\lambda$ (and hence of $\partial Y$), where $\kappa_0$ is the 
  constant chosen above with properties (P1) and (P2).  
  Realize the diskbounding curve $\gamma\subset \Sigma$ 
  by its geodesic representative for $\sigma_\lambda$. Denoting by 
  $\ell_{\sigma_\lambda}(\alpha)$
  the length of a geodesic arc $\alpha$ for the hyperbolic metric $\sigma_\lambda$, Theorem 3.5 
  and formula (4.3) of \cite{M00} show that 
  \[\ell_{\sigma_\lambda}(\gamma\cap R_\lambda)\leq 2C \iota(\gamma,\partial Y)\]
  for a universal constant $C=C(\Sigma,\kappa_0)>0$ where $\iota(\gamma,\partial Y)$ is the geometric
  intersection number. 
Note that Theorem 3.5 of \cite{M00} holds true as stated for homotopically trivial curves in 
$M_f$, that is, for curves of vanishing length. 

As on p.139 of \cite{M00}, it now follows that there exists at least one 
component arc of $\gamma\cap  Y\cap R_\lambda$ of length at most $4C$. 
Given a minimal proper arc $\tau$ for $(Y,\sigma_\lambda)$, 
Lemma 2.1 of \cite{M00} 
then bounds $d_Y(\gamma,\tau)$ from above by a universal constant.
This is what we wanted to show.
\end{proof}

%for any component $\gamma$ of $X$ the following holds. 
%\begin{enumerate}
%\item[(a)]  The inclusion $\Sigma-\gamma\to M_f$ 
%is $\pi_1$-injective. 
%\item[(b)] Every essential map $(A,\partial A)\to
%(M_f,N(\gamma))$ is homotopic as a map of 
%pairs into $N(\gamma)$. Here $N(\gamma)$ is a tubular neighborhood of $\gamma$, and 
%$A$ is an annulus.
%\item[(c)] Every essential map
%  $(A,\partial A)\to (M_f,\Sigma-N(\gamma))$ is homotopic as a 
%map of pairs into $\Sigma$.
%\end{enumerate}
%Lemma 4.1 of \cite{FSV19} shows that the 
%boundary $\partial Y$ 
%of a proper essential strongly incompressible subsurface
%$Y$ of $\Sigma$ is pared acylindrical in this sense. 

%\begin{lem}\label{pleatedexist}
%Let $X\subset \Sigma$ be a strongly incompressible curve system
%and let $g\in {\cal P}(X)$. If $A\subset \Sigma$ is a system of arcs with 
%endpoints on $X$ whose $\sigma(g)$-length is bounded from above 
%by a universal constant, then the point in Teichm\"uller space determined by
%the pleated surface obtained from spinning $A$ about $X$ is uniformly close
%to $\sigma(g)$.
%\end{lem}

Following once more \cite{M00}, 
we next turn to the proof of an analogue of Lemma \ref{projectionbound1}
for essential incompressible
annuli. For its formulation, define a \emph{bridge arc} for
a simple closed curve $\alpha$ in $\Sigma$ to be an embedded arc in $\Sigma$
with both endpoints in $\alpha$ which meets $\alpha$ only at its endpoints and 
is not homotopic into $\alpha$ keeping the endpoints in $\alpha$.
Let $\sigma$ be
a hyperbolic metric on $\Sigma$ and let $\alpha$
be a simple closed geodesic for this metric. Define a
\emph{mininmal curve crossing $\alpha$} to be a simple closed curve constructed
in the following way. Pick one side of $\alpha$ in $\Sigma$ and let 
$\tau$ be a minimal length primitive bridge arc for $\alpha$, that is, a bridge arc
whose interior is disjoint from $\alpha$, that is incident to $\alpha$ on this side.
Let $\tau^\prime$ be a minimal length primitive bridge arc for $\alpha$ that is 
incident to $\alpha$ on the other side. If one of these arcs meets 
$\alpha$ on both sides then put $\tau=\tau^\prime$. Choose $\beta$ to 
be a minimal length shortest simple closed curve that can be represented
as a concatenation of $\tau,\tau^\prime$ (if they are different) and arcs on $\alpha$.
Thus $\beta$ crosses through $\alpha$ once or twice.
%
%a curve $\beta$ constructed as follows. Pick one side of
%$\alpha$ in $\Sigma$ and let $\tau$ be a minimal length
%bridge arc for $\alpha$ that is incident to $\alpha$ on this
%side. Let $\tau^\prime$ be a second such arc 
%that is incident to $\alpha$ on the other side.
%If $\alpha$ is non-separating and $\tau$ meets it on both sides
%then we let $\tau^\prime=\tau$. The curve $\beta$ is defined to be
%a minimal length shortest simple closed curve that can be
%represented as a concatenation of $\tau,\tau^\prime$ (if they
%are different), and arcs on $\alpha$. In particular, $\beta$ crosses
%$\alpha$ once or twice.

\begin{lem}[Lemma 4.3 of \cite{M00}]\label{projectionbound3}
  Given $\epsilon >0$ there exists $D_2=D_2(\Sigma,\epsilon)>0$ such
that for a closed hyperbolic 3-manifold $M_f$ the following holds.
  Let $Y\subset \Sigma$ be a proper essential strongly incompressible 
  annulus with core curve $\alpha$ so that $\ell_f(\alpha)\geq \epsilon$.
  Then for every $\gamma\in {\cal D}_1\cup {\cal D}_2$, there exists
  a pleated surface $g_\gamma$, with induced metric $\sigma(g_\gamma)$,
  mapping $\alpha$ geodesically 
  and the property that a minimal curve $\beta$ for $\sigma(g_\gamma)$ crossing $\alpha$
  satisfies
\[d_Y(\gamma,\beta)\leq D_2.\]  
\end{lem}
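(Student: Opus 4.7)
The strategy parallels that of Lemma \ref{projectionbound1}, but adapted to an annular subsurface. The plan is to construct the desired pleated surface by spinning $\gamma$ about the core curve $\alpha$, to realise the resulting lamination geodesically in $M_f$, and then to extract a short bridge arc meeting $\alpha$ whose subsurface projection lies uniformly close to that of $\gamma$. The assumption $\ell_f(\alpha)\geq \varepsilon$ enters precisely to control the geometry near $\alpha$ in the pleated surface.

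First, I would build the pleated surface. Since $d_{\mathcal C\mathcal G}(\alpha,\mathcal D_1\cup \mathcal D_2)\geq p$ by strong incompressibility, Lemma \ref{incompressiblesub} guarantees that $\alpha$ is homotopically essential in $M_f$ and that bridge arcs for $\alpha$ in $\Sigma$ cannot be homotoped in $M_f$ into $\alpha$. The essential intersection $\gamma\cap \alpha$ is non-empty, so spinning $\gamma$ about $\alpha$ (applying ${\cal T}_\alpha^n$ and passing to a limit) produces a geodesic lamination $\lambda_0$ whose closed leaf is $\alpha$ and whose non-compact leaves spiral into $\alpha$. Because $d_{\mathcal C\mathcal G}(\alpha,\gamma)\geq 3$, the complement of $\alpha\cup \gamma$ in $\Sigma$ is a union of disks, so complementary regions of $\lambda_0$ are simply connected; after adding finitely many leaves we obtain a maximal lamination $\lambda$ with closed leaf $\alpha$. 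As in the proof of Lemma \ref{projectionbound1}, $\lambda$ can be realised by a pleated surface $g_\gamma:\Sigma\to M_f$ in the homotopy class of the inclusion, with induced hyperbolic metric $\sigma_\gamma:=\sigma(g_\gamma)$; the existence of the realisation relies on Lemma \ref{incompressiblesub} to rule out that the infinite spiralling arcs become homotopic to points in $M_f$.

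Next I would extract a short bridge arc. Because $g_\gamma$ is path-isometric and maps $\alpha$ to its geodesic representative in $M_f$, the $\sigma_\gamma$-length of $\alpha$ equals $\ell_f(\alpha)\geq \varepsilon$. This bounds the radius of the $\kappa_0$-Margulis tube $A$ around $\alpha$ in $(\Sigma,\sigma_\gamma)$ from above by a constant $R_0=R_0(\varepsilon,\kappa_0)$; let $R_\lambda$ be the complement of the $\kappa_0$-Margulis tubes around the closed leaves of $\lambda$ (these components are either $\alpha$ itself or boundary-incompressible by the spinning construction). Realise $\gamma$ as its $\sigma_\gamma$-geodesic; by Theorem 3.5 and formula (4.3) of \cite{M00}, adapted to the closed setting exactly as in Lemma \ref{projectionbound1}, we obtain
\[
\ell_{\sigma_\gamma}(\gamma \cap R_\lambda) \leq 2C\,\iota(\gamma,\alpha)
\]
for a universal $C=C(\Sigma,\kappa_0)$. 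Since $\gamma$ crosses $\alpha$ precisely $\iota(\gamma,\alpha)$ times, pigeonholing over the crossings produces at least one primitive bridge subarc $\tau_0$ of $\gamma\cap R_\lambda$ with endpoints on $\partial A$ and of $\sigma_\gamma$-length at most $4C$. Capping $\tau_0$ off by geodesic segments in $A$ running to $\alpha$ (of length at most $R_0$) yields a bridge arc $\tau$ for $\alpha$ in $\sigma_\gamma$ whose length is bounded by $4C+2R_0=:L_0(\varepsilon)$.

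Finally I would compare to a minimal crossing curve. Choose minimal primitive bridge arcs $\tau_\pm$ on the two sides of $\alpha$, each of $\sigma_\gamma$-length at most that of $\tau$, and form the minimal crossing curve $\beta$ by concatenating $\tau_\pm$ with the shortest subarcs of $\alpha$ that close them up; then $\beta$ crosses $\alpha$ once or twice with $\ell_{\sigma_\gamma}(\beta)\leq 2L_0(\varepsilon)+\ell_{\sigma_\gamma}(\alpha)$. The standard annular-projection analogue of Lemma 2.1 of \cite{M00} (lifting $\gamma$, $\tau$ and $\beta$ to the annular cover of $\Sigma$ associated with $\alpha$ and bounding the number of twists each makes relative to $\tau_0$) then shows
\[
d_Y(\gamma,\tau_0),\;d_Y(\tau_0,\beta)\leq D_2^{\prime}(\Sigma,\varepsilon),
\]
so that $d_Y(\gamma,\beta)\leq D_2$ for a constant $D_2=D_2(\Sigma,\varepsilon)$, as required.

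The main obstacle I anticipate is the control of the Margulis tube around $\alpha$ inside the pleated surface. In the non-annular case of Lemma \ref{projectionbound1} the spiralling leaves of $\lambda$ cluster on $\partial Y$, but the pleated-surface geometry near $\partial Y$ does not need a priori curvature-type hypotheses on the boundary curve. Here, by contrast, if $\ell_f(\alpha)$ were allowed to approach zero the induced metric $\sigma_\gamma$ would develop a Margulis tube of unbounded radius around $\alpha$, and the short component of $\gamma\cap R_\lambda$ produced by the averaging argument would no longer give rise to a primitive bridge arc for $\alpha$ of controlled length. The hypothesis $\ell_f(\alpha)\geq \varepsilon$ is exactly what is needed to trade the lower bound on hyperbolic length in $M_f$ for an upper bound on the tube radius in $\sigma_\gamma$, and it is the quantitative dependence of $R_0$ and hence $D_2$ on $\varepsilon$ that forces the constant to depend on $\varepsilon$.
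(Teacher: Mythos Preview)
Your proposal is correct and follows essentially the same route as the paper: spin $\gamma$ about $\alpha$ to obtain a maximal lamination and its pleated surface $g_\gamma$, invoke Theorem 3.5 of \cite{M00} to bound $\ell_{\sigma_\gamma}(\gamma)$ (or $\gamma\cap R_\lambda$) by $2C\,\iota(\gamma,\alpha)$, and then finish by purely two-dimensional reasoning in $(\Sigma,\sigma_\gamma)$. The paper simply declares that the proof of Lemma 4.3 of \cite{M00} ``is valid without any change'' after recording the length bound, whereas you spell out the surface-geometry step (extracting a short bridge arc, building the minimal crossing curve, comparing annular projections); your version is a faithful expansion of what \cite{M00} does. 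One cosmetic point: strong incompressibility only gives $d_{\mathcal C\mathcal G}(\alpha,\mathcal D_1\cup\mathcal D_2)\geq 3$, not $\geq p$, but $\geq 3$ is all that is needed for the complementary regions of $\alpha\cup\gamma$ to be simply connected. Also, the displayed bound $\ell_{\sigma_\gamma}(\beta)\leq 2L_0(\varepsilon)+\ell_{\sigma_\gamma}(\alpha)$ is true but not itself useful (since $\ell_{\sigma_\gamma}(\alpha)$ has no upper bound); fortunately you do not actually use it, and the annular-projection comparison you invoke only needs the short bridge arcs $\tau_\pm$.
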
 
 
\begin{proof}
  The proof of Lemma 4.3 of \cite{M00} is valid without any change.
Construct a lamination $\lambda$ from $\alpha$ and 
  $\gamma\in {\cal D}_1\cup {\cal D}_2$ by spinning
  $\gamma$ about $\alpha$. As in the proof of Lemma \ref{projectionbound1}, 
  by the assumption on the distance in
  ${\cal C\cal G}(\Sigma)$ between $\alpha$ and $\gamma$, 
 all complementary components of $\lambda$
  are simply connected. Adding finitely many leaves to $\lambda$
  yields a maximal lamination and hence a pleated surface $g_\lambda$,
  with metric $\sigma_\lambda$. 
 As in Lemma \ref{projectionbound1}, it follows from  
 Theorem 3.5 of \cite{M00} that 
  \[\ell_{\sigma_\lambda}(\gamma)\leq
    2C\iota(\alpha,\gamma)\]
  for a universal constant $C=C(\tau,\epsilon)>0$.

  The remainder of the argument in the proof of Lemma 4.3 of \cite{M00}
  only uses the geometry of $\sigma_\lambda$ on $\Sigma$ and does not
  use any information on the hyperbolic 3-manifold containing
  the pleated surface $g_\lambda$. It is thus valid without any adjustment.
\end{proof}  

In \cite{M00}, the proof of a version of 
Theorem \ref{minsky}
is completed by proving a universal upper length
bound for minimal bridge arcs for 
pleated surfaces constructed from 
proper essential strongly incompressible
subsurfaces $Y\subset \Sigma$ with big boundary length
(this is the core of the proof of Lemma 4.2 and Lemma 4.4 of \cite{M00}). 
This step requires a substantial modification for Heegaard surfaces.
We formulate what we need in the following proposition. For the remainder of this
section, $M_f$ always denotes a hyperbolic 3-manifold with Heegaard surface $\Sigma$,
Hempel distance at least $4$ and disk sets ${\cal D}_1,{\cal D}_2$. Recall that for any 
proper incompressible subsurface $Y\subset \Sigma$, any simple closed curve 
$c$ on $\Sigma$ with $d_{\cal C\cal G}(c,\partial Y)\geq 3$ gives rise to a pleated surface
containing $\partial Y$ in its pleating lamination. The number $p>4$ is as in 
Lemma \ref{homotopyidentity}.

\begin{prop}
\label{projectionbound2}
For any $\epsilon >0$ there exists a number
$D_3=D_3(\Sigma,\epsilon)>0$ with the following property. 
Let $Y\subset \Sigma$ be a proper essential subsurface with 
$d_{\cal C\cal G}(\partial Y,{\cal D}_1\cup {\cal D}_2)\geq p$, 
and let $g_1,g_2$ be a pair of pleated surfaces in the homotopy class of the 
inclusion $\Sigma\to M_f$ mapping $\partial Y$ geodesically which 
are constructed from diskbounding simple closed curves $\gamma_1\in {\cal D}_1,
\gamma_2\in {\cal D}_2$.
Let $\sigma(g_1),\sigma(g_2)$
be the induced hyperbolic metrics on $\Sigma$ and let 
$\tau_1,\tau_2$ be minimal proper arcs for $\sigma(g_1),\sigma(g_2)$ if $Y$ is not an annulus,
or minimal curves crossing $\alpha$ for $\sigma(g_1),\sigma(g_2)$ if $Y$ is an annulus.
If $\ell_f(\partial Y)\geq
\epsilon$ then 
\[d_Y(\tau_1,\tau_2)\leq D_3.\] 
\end{prop}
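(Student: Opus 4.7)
The plan is to adapt the pleated-surface interpolation argument from the proofs of Lemma 4.2 and Lemma 4.4 of \cite{M00}, using \Cref{homotopyidentity} as a substitute for the $\pi_1$-injectivity that is available in Minsky's original incompressible setting.

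First I would note that both $g_1$ and $g_2$ map $\partial Y$ geodesically, so the induced length $\ell_{\sigma(g_i)}(\partial Y)$ equals $\ell_f(\partial Y)\geq\epsilon$. Hence a uniform tubular neighbourhood of the geodesic representative of $\partial Y$ in $M_f$ is ``thick'', with injectivity radius bounded below by a function of $\epsilon$, and each induced metric $\sigma(g_i)$ has bounded geometry in a definite neighbourhood of $\partial Y$. In the annular case this alone pins down the behaviour of a minimal curve crossing $\alpha$ up to bounded combinatorics; in the non-annular case it controls the lengths and geometry of minimal proper arcs meeting a fixed component of $\partial Y$.

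Next, I would construct a finite sequence $g_1=h_0,h_1,\dots,h_N=g_2$ of pleated surfaces in the homotopy class of the inclusion $\Sigma\to M_f$, all mapping $\partial Y$ geodesically, with the property that consecutive pleating laminations differ by a single elementary move supported in an ideal triangle disjoint from $\partial Y$. Such sequences are produced in a standard way, by extending $\partial Y$ to a maximal lamination in two different ways (realizing $g_1$ and $g_2$) and interpolating through the space of maximal laminations extending $\partial Y$. A bound $N\leq N_0=N_0(\Sigma,\epsilon)$ is forced by the bounded-geometry picture near $\partial Y$, which constrains the combinatorial distance between the two extensions. For each consecutive pair $h_i,h_{i+1}$ the induced metrics $\sigma(h_i),\sigma(h_{i+1})$ differ only in a bounded region, so any minimal proper arc $\tau_i^\prime$ for $\sigma(h_i)$ can be directly compared with one for $\sigma(h_{i+1})$ to yield $d_Y(\tau_i^\prime,\tau_{i+1}^\prime)\leq C_0$ for a universal $C_0$. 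Summing gives $d_Y(\tau_1,\tau_2)\leq N_0 C_0=:D_3$. The annular case is handled identically, with minimal curves crossing $\alpha$ replacing minimal proper arcs and \Cref{projectionbound3} replacing \Cref{projectionbound1}.

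The main obstacle, and the step where the Heegaard setting departs from \cite{M00}, is to show that the per-step arc-graph comparison is actually meaningful: the subsurface projections of $\tau_i^\prime$ and $\tau_{i+1}^\prime$ must be taken with respect to a common marking of $Y$. In Minsky's incompressible setting this is automatic from $\pi_1$-injectivity of the pleated maps restricted to $Y$. In our setting one exploits instead the standing hypothesis $d_{\mathcal{CG}}(\partial Y,\mathcal{D}_1\cup \mathcal{D}_2)\geq p$: the interpolating homotopy between $h_i$ and $h_{i+1}$ preserves $\partial Y$, so by \Cref{homotopyidentity} it induces the identity on $\pi_1(\Sigma)$, and therefore on $\pi_1(Y)$. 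This eliminates any hidden Dehn-twist ambiguity in relating $\tau_i^\prime$ to $\tau_{i+1}^\prime$, and the per-step bound $d_Y(\tau_i^\prime,\tau_{i+1}^\prime)\leq C_0$ is valid. Verifying that the elementary-move interpolation can always be arranged to remain in the homotopy class of the inclusion, and that the bounded-geometry assumption near $\partial Y$ really forces $N\leq N_0(\Sigma,\epsilon)$, is the main technical content to carry out.
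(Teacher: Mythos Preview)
Your proposal has a genuine gap at the step where you claim a bound $N\leq N_0(\Sigma,\epsilon)$ on the number of elementary moves in the interpolating sequence. There is no reason for the two maximal laminations extending $\partial Y$ (one built from $\gamma_1\in\mathcal{D}_1$, the other from $\gamma_2\in\mathcal{D}_2$) to be combinatorially close: they may differ by arbitrarily many diagonal moves, since $\gamma_1,\gamma_2$ can be far apart in every sense. The lower bound $\ell_f(\partial Y)\geq\epsilon$ gives bounded geometry \emph{near} the geodesic $\partial Y$ in $M_f$, but it says nothing about the complexity of the laminations in the interior of $Y$ or in $\Sigma\setminus Y$, which is what would govern the number of moves. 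Indeed, the whole content of the proposition is that $d_Y(\tau_1,\tau_2)$ is bounded even though the pleating laminations themselves are not; if the per-step bound were as cheap as you suggest, the result would be nearly trivial.

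The paper's route is substantially different and rests on a case analysis according to the size of $\ell_f(\partial Y)$. When $\ell_f(\partial Y)\leq R$ for a suitable constant $R$ (\Cref{length}), one connects $g_1,g_2$ by a continuous path $g_t$ in $\mathcal{L}(\partial Y)$ (\Cref{homotopy}) and tracks the minimal proper arcs $\tau_t$; consecutive arcs along the path intersect in at most one point, so the number of \emph{distinct homotopy classes} of arcs appearing controls $d_Y(\tau_1,\tau_2)$. Each such arc, together with a segment of $\partial Y$, yields a loop in $M_f$ of length at most $3R+O(L(\epsilon))$ based on $\partial Y$, and the Margulis lemma bounds the number of pairwise non-commuting such elements of $\pi_1(M_f)$. \Cref{homotopyidentity} (via \Cref{pleatedhomotopy}) is used here, but not as you describe: it guarantees that arcs homotopic in $M_f$ relative to $\partial Y$ are already homotopic in $\Sigma$, so that the Margulis count in $\pi_1(M_f)$ transfers back to a bound on arcs in $Y$. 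When $\ell_f(\partial Y)$ is large the Margulis count degenerates, and the paper splits further: large diameter of $\partial Y$ in $M_f$ forces crossings of diskbounding Margulis tubes (\Cref{largediameter}, \Cref{nothinpart}), while large length with bounded diameter requires a relative version of Thurston's uniform injectivity theorem (\Cref{thurstonrelative2}, \Cref{shortdisjoint}) together with a compactness analysis of limits of pleated surfaces (\Cref{allconverge}, \Cref{diameter}). None of this machinery is replaced by an elementary-move count.
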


We are now ready to 
deduce Theorem \ref{minsky}  from Lemma \ref{projectionbound1}, 
Lemma \ref{projectionbound3} and Proposition \ref{projectionbound2}. 

\begin{proof}[Proof of Theorem \ref{minsky}]
  Let $Y\subset \Sigma$ be a proper essential 
  subsurface with $d_{\cal C\cal G}(\partial Y,{\cal D}_1\cup {\cal D}_2)\geq p$. 
  Let $\epsilon >0$ and assume that $\ell_f(\partial Y)\geq \epsilon$.
 Let $\gamma_1\in {\cal D}_1,\gamma_2\in {\cal D}_2$ be two diskbounding
  simple closed curves in $\Sigma$. 
  
  If $Y$ is non-annular then  
  apply Lemma \ref{projectionbound1}
  to obtain two pleated surfaces mapping $\partial Y$ geodesically, and
  minimal proper arcs $\tau_1,\tau_2$ in $Y$ with respect to the
  two induced metrics on $\Sigma$,
  with $d_Y(\gamma_i,\tau_i)\leq D_1$ $(i=1,2)$. 
  Proposition \ref{projectionbound2}
  then implies that $d_Y(\gamma_1,\gamma_2)\leq 2D_1+D_3(\Sigma,\epsilon)$.
  
  If $Y$ is an annulus, then apply Lemma \ref{projectionbound3} to obtain
  two pleated surfaces mapping the core curve $\alpha$ of 
  $Y$ to a geodesic,  and minimal curves $\beta_1,\beta_2$ crossing $\alpha$
  with respect to the two induced metrics on $\Sigma$, with 
  $d_Y(\beta_i,\gamma_i)\leq D_2=D_2(\Sigma,\epsilon)$ for $i=1,2$. 
  An application of Proposition \ref{projectionbound2} shows as before that
  $d_Y(\gamma_1,\gamma_2)\leq 2D_2+D_3(\Sigma,\epsilon)$. 
 This completes the proof of Theorem \ref{minsky}.
\end{proof}

We are left with proving Proposition \ref{projectionbound2} which
is the main technical result of this section.

To facilitate notations, call a system $X\subset \Sigma$
of nontrivial homotopically distinct disjoint simple
closed curves in $\Sigma$ 
\emph{stongly incompressible in $M_f$} if
$d_{\cal C\cal G}(X,{\cal D}_1\cup {\cal D}_2)\geq 3$. This notion is
compatible with the notion of a strongly incompressible subsurface of
$\Sigma$. Note that $X$ is strongly incompressible if and only if
the same holds true for each of its components.

If $X\subset \Sigma$ is a strongly incompressible curve system, then we
denote by ${\cal P}(X)$ the collection of all pleated surfaces
in $M_f$ in the homotopy class of the inclusion $\Sigma\to M_f$, 
with pleating lamination a complete (that is, maximal and approximable
in the Hausdorff topology by simple closed geodesics) 
finite geodesic lamination whose
minimal components are precisely the components of $X$.

An important fact is that for any strongly incompressible 
curve system $X\subset \Sigma$, any two
pleated surfaces $g,h\in {\cal P}(X)$ can be deformed into each other 
with a homotopy consisting of surfaces with controlled geometry. To make this precise, 
we define ${\cal L}(X)$ to be the collection of all maps
$g:\Sigma\to M_f$ 
in the homotopy class of the inclusion
with the following additional property. There exists a
hyperbolic metric $\sigma(g)$ on $\Sigma$ such that
for this metric, the map $g$ is one-Lipschitz and maps each component of 
$X$ isometrically onto its geodesic representative in $M_f$. 
Note that the metric $\sigma(g)$ on $\Sigma$ is part of 
the data which define a point in ${\cal L}(X)$ although it may not be unique.
Note also that we have ${\cal P}(X)\subset {\cal L}(X)$.
If $g$ is a pleated surface then $\sigma(g)$ is assumed to be 
the hyperbolic metric on $\Sigma$ defined by $g$.

A \emph{path} in ${\cal L}(X)$ is a continuous map 
$h:\Sigma\times [a,b]\to M_f$ for some interval 
$[a,b]\subset \mathbb{R}$ 
such that for each $s\in [a,b]$, there is a marked hyperbolic metric 
$\sigma(s)$ on $\Sigma$ depending continuously on $s$ and such that 
the map $h_s:x\in \Sigma\to h_s(x)=h(x,s)\in M$ is a point in ${\cal L}(X)$
for the metric $\sigma(s)$. If a point of the path,
say the point $h_a$, is a pleated
surface, then we require that $\sigma(a)=\sigma(h_a)$. 

Following Section 3 of \cite{M00}, define two pleated surfaces 
$f,g:\Sigma\to M_f$ to be \emph{homotopic relative to a common pleating
lamination} $\mu$ if $\mu$ is a sublamination of the pleating lamination
of $f,g$ and if $f$ and $g$ are homotopic by a family of maps which fixes
$\mu$ pointwise. 

The following is a slight strengthening of a well know construction which goes
back to Thurston (see
p.140 of \cite{M00} and \cite{C93} for more on earlier accounts). Recall from 
Lemma \ref{incompressiblesub} 
that
for an incompressible curve system $X\subset \Sigma$, an essential arc $\alpha$ 
in $\Sigma$ with endpoints on 
$\partial X$ lifts to an arc in the universal covering $\mathbb{H}^3$ of $M_f$ which 
connects two distinct lifts of the components of $ X$ containing the endpoints
of $\alpha$. The second part of the lemma extends Lemma 3.3 of \cite{M00}. 

\begin{lem}\label{homotopy}
Let $X\subset \Sigma$ be a 
strongly incompressible curve system. Then 
any $g,h\in {\cal P}(X)$ can be connected by a path in ${\cal L}(X)$. Furthermore,
$g,h$ are homotopic relative to any  common pleating lamination $\mu\subset X$.
\end{lem}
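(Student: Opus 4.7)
The plan is to adapt Thurston's classical interpolation for pleated surfaces, using strong incompressibility to substitute for the usual assumption that $\Sigma$ embeds $\pi_1$-injectively in the ambient 3-manifold.

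First I would lift to the universal cover and establish agreement along $X$. Since $g$ and $h$ are both homotopic to the inclusion, choose $\pi_1(\Sigma)$-equivariant lifts $\tilde g,\tilde h:\tilde\Sigma\to\mathbb{H}^3$ for the \emph{same} representation $\rho:\pi_1(\Sigma)\to\pi_1(M_f)$. Let $c$ be a component of $X$ and $\tilde c$ a chosen lift. By Lemma \ref{incompressiblesub} applied to $c$, together with $d_{\cal C\cal G}(c,{\cal D}_1\cup{\cal D}_2)\geq 3$, the element $\rho(c)$ is non-trivial and indeed loxodromic (a parabolic image would imply $c$ compresses against curves close in the curve graph). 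Both $\tilde g(\tilde c)$ and $\tilde h(\tilde c)$ are $\rho(c)$-invariant geodesic lines in $\mathbb{H}^3$, hence equal to the axis of $\rho(c)$. Being $\rho(c)$-equivariant isometric parametrizations of this axis in the same homotopy class, they must coincide. By equivariance they coincide on the full preimage $\tilde X\subset\tilde\Sigma$, and by projection, $g$ and $h$ agree pointwise on $X$, sending it to the canonical geodesic realization of $X$ in $M_f$.

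Next I would build the path by cutting along $X$. Each component $S$ of $\Sigma\setminus X$ inherits two pleated structures $g|S,h|S$ whose boundaries map isometrically to the \emph{same} geodesic multicurve. Extend the pleating laminations of $g|S$ and $h|S$ to maximal laminations $\lambda_g^S,\lambda_h^S$ of $S$ (all complementary pieces becoming ideal triangles or once-punctured monogons ending on $\partial S$); these induce ideal triangulations $\mathcal{T}_g^S,\mathcal{T}_h^S$. Any two such triangulations are connected by a finite sequence of elementary flips, and for each intermediate triangulation one gets a pleated surface with boundary the same fixed geodesic multicurve in $M_f$. Consecutive pleated surfaces in this sequence share a pleating sublamination containing $\partial S$, and the standard Thurston interpolation between pleated surfaces with common sublamination — defined on each ideal triangle in $\tilde\Sigma$ by the convex-averaging construction $\tilde h_t(x)=\exp_{\tilde g(x)}(t\,V(x))$ composed with a fibre-wise geodesic straightening in $\mathbb{H}^3$ — yields a continuous family of 1-Lipschitz maps. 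Because the target $\mathbb{H}^3$ is CAT$(-1)$, the convex combination of isometric edge-maps extends to a 1-Lipschitz map on each triangle, and the construction is equivariant so it descends to $M_f$. The homotopies on different components $S$ agree on $\partial S\subset X$ (they are constant there) and therefore glue to a path $(h_t)_{t\in[0,1]}$ in $\mathcal{L}(X)$ from $g$ to $h$.

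The furthermore statement is then immediate: by construction the path fixes all of $X$, hence in particular any common pleating sublamination $\mu\subset X$. The main obstacle is the 1-Lipschitz property of the interpolation across a flip of an ideal triangulation; I would handle this by realizing the flip as a one-parameter family of \emph{bent} ideal triangulations in $\mathbb{H}^3$ with fixed boundary edges and verifying pointwise that the corresponding maps of $\tilde\Sigma\to\mathbb{H}^3$ are 1-Lipschitz, a calculation carried out in Canary \cite{C93} in the $\pi_1$-injective setting which transfers verbatim here since, after cutting along $X$, the relevant boundary data are exactly those of an incompressible bordered surface mapped with geodesic boundary.
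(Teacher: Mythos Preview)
Your overall strategy matches the paper's: reduce to a sequence of elementary flips (Hatcher) and interpolate across each flip. Two points deserve attention.

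First, your direct argument that $g$ and $h$ agree pointwise on $X$ is not adequate. Two $\rho(c)$-equivariant isometric parametrizations of the axis of $\rho(c)$ can differ by a translation along the axis, and ``being in the same homotopy class'' gives no further constraint, since any two $\rho$-equivariant maps $\tilde\Sigma\to\mathbb{H}^3$ are equivariantly homotopic. The paper does not argue this directly; rather, agreement on $X$ falls out as a \emph{consequence} of the flip construction, because each flip leaves the common sublamination $\lambda'\supset X$ pointwise fixed.

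Second, and more importantly, your description of the interpolation across a single flip has a gap. The straight-line homotopy $\tilde h_t(x)=\exp_{\tilde g(x)}(tV(x))$ combined with CAT$(-1)$ convexity does give a $1$-Lipschitz map for a \emph{fixed} domain metric, but a path in $\mathcal L(X)$ requires a continuously varying hyperbolic metric $\sigma(t)$ with $\sigma(0)=\sigma(g_0)$ and $\sigma(1)=\sigma(g_1)$ at pleated endpoints. These two metrics genuinely differ on the ideal quadrilateral $Q$ (they are the intrinsic metrics of the two pairs of faces of the ideal tetrahedron $T$, bent along different diagonals), and neither dominates the other, so one cannot simply use one of them throughout. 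You do flag this as ``the main obstacle'' and propose a family of bent quadrilaterals, which is exactly the right idea; the paper carries this out explicitly by moving a finite vertex $\gamma(t)$ along a geodesic through $T$, forming four triangles $A_i(t)$ with total cone angle $\geq 2\pi$ at $\gamma(t)$, embedding these in a genuine hyperbolic ideal quadrilateral $B(t)$ with cone angle exactly $2\pi$, and defining a $1$-Lipschitz collapse $B(t)\to\tilde Q_t\subset\mathbb{H}^3$. This produces both the varying metric $\sigma(t)$ and the $1$-Lipschitz map simultaneously. Your reference to \cite{C93} for this step is slightly off: that paper is about algebraic convergence of Schottky groups, and the paper cites it (together with \cite{M00}) only for the flip-connectivity statement; the interpolation itself extends Lemma~3.3 of \cite{M00}.
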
 

\begin{proof}
Let us first consider two pleated surfaces $g_0,g_1\in {\cal P}(X)$ which are related by 
a \emph{diagonal move}. By this we mean the following. The pleating lamination 
$\lambda$ of $g_0$ is an extension of $X$. It 
decomposes $\Sigma$ into ideal triangles whose sides spiral 
about $X$. Isolated leaves of $\lambda$ do not belong to $X$. 
Removal of such an isolated leaf $\alpha$ 
results in a geodesic lamination $\lambda^\prime$ whose complementary components
are ideal triangles and one ideal quadrangle $Q$.  The leaf $\alpha$ connects 
two opposite vertices of $Q$ and subdivides $Q$ 
into two ideal triangles. 
We assume that the pleating lamination for $g_1$ is obtained from 
$\lambda$ by replacing $\alpha$ by the diagonal $\beta$ 
of $Q$ connecting the other two opposite
vertices. 

Our goal is to construct a path in ${\cal L}(X)$
connecting $g_0$ to $g_1$. To this end  
let $\tilde g_0:\mathbb{H}^2\to \mathbb{H}^3$
be a lift of $g_0$ to the universal coverings  
$\mathbb{H}^2$ of $\Sigma$ and $\mathbb{H}^3$ of $M_f$.
Let $\tilde Q\subset \mathbb{H}^2$ be a lift of the  
ideal quadrangle $Q$.  The image $\tilde g_0(\tilde Q)$ of 
$\tilde Q$ under the map $\tilde g_0$ is the union of two ideal
triangles which are 
glued along a common side. Let 
$(a_1,a_2,a_3,a_4)\subset \partial \mathbb{H}^3$ be the ordered collection 
of points in the ideal boundary $\partial \mathbb{H}^3$ of 
$\mathbb{H}^3$ which are the images of the ordered vertices of $\tilde Q$.
This ordered quadruple of points spans an
ideal tetrahedron $T\subset \mathbb{H}^3$.  Note that by 
the third part of Lemma \ref{incompressiblesub}, this 
tetrahedron is non-degenerate. 
The map $\tilde g_0$ maps
 $\tilde Q$ onto 
the union $\tilde Q_0$ of two adjacent sides of $T$.
The image of $\tilde Q$ under a suitably chosen lift $\tilde g_1$ of $g_1$ equals 
the union $\tilde Q_1$ of the remaining two adjacent sides of $T$. 
Four of the six edges of $T$ are the sides of $\tilde g_0(\tilde Q)$, and the remaining two 
edges are the images under $\tilde g_0,\tilde g_1$ 
of the lifts $\tilde \alpha,\tilde \beta$ of the diagonals $\alpha,\beta$ of $Q$ to $\tilde Q$. 
The restriction of $\tilde g_0,\tilde g_1$ to $\tilde Q$ is a path isometry onto $\tilde Q_0,\tilde Q_1$, 
respectively. 

The piecewise totally geodesic quadrangle $\tilde Q_0$ is equipped with an intrinsic 
hyperbolic metric. Let $\tilde \beta_0\subset \tilde Q_0$ be the intrinsic geodesic 
which connects the 2 ideal vertices of $\tilde Q_0$ which
are different from the 
endpoints of $\tilde g_0(\tilde \alpha)$. Then $\tilde \beta_0$ is a  piecewise geodesic line 
in $\mathbb{H}^3$ which 
 intersects the geodesic 
$\tilde g_0(\tilde \alpha)$ in a single point $x_0$.
The point $x_0$ is the finite vertex of a partition of $\tilde Q_0$ into
4 totally geodesic triangles with one vertex at 
$x_0$ and two ideal vertices. 
The total cone angle, that is, the sum of the angles at $x_0$ of these
triangles, equals $2\pi$.
Construct in the same way a point $x_1\in \tilde Q_1$ as the intersection point
between the two 
intrinsic geodesics connecting the two pairs of opposite ideal vertices of $\tilde Q_1$. 
As before, $x_1$ is the finite vertex of a partition of 
$\tilde Q_1$ into 4 totally geodesic
triangles with total cone angle $2\pi$ at $x_1$. 

 Connect $x_0$ to $x_1$ by a geodesic arc $\gamma:[0,1]\to T\subset \mathbb{H}^3$ parameterized
proportional to arc length on $[0,1]$. For each $t\in [0,1]$ consider the union $\tilde Q_t$ of the 
4 totally geodesic triangles $A_i(t)$ $(i=1,\dots,4)$ 
with one vertex at $\gamma(t)$
which have the same ideal vertices as the 
triangles which subdivide $\tilde Q_0$. Note that this notation is consistent with the 
above definition of $\tilde Q_0,\tilde Q_1$. 
Each of the 4 
boundary geodesics 
of $\tilde Q_0$ is contained in precisely one of the triangles from
the collection $\tilde Q_t$.  
If we choose the labels of the triangles 
$A_i(t)$ in such a way that for each $i$, the triangles $A_i(t)$ contain the 
same boundary geodesic of $\tilde Q_0$ for all $t$,  then these triangles depend
continuously on $t$. Since $T$ is the convex hull of its ideal vertices and 
$\gamma\subset T$, the total cone angle at $\gamma(t)$ 
of the union of these triangles is
at least $2\pi$, and it is $2\pi$ at the endpoints $x_0=\gamma(0)$, 
$x_1=\gamma(1)$ of $\gamma$.

For $t\in [0,1]$ let 
$q(t)\geq 0$ be such that the total cone angle of $\tilde Q_t$ 
at $\gamma(t)$ equals $2\pi(1+q(t))$.
Denote by $\nu_i(t)$ the angle of the triangle $A_i(t)\subset \tilde Q_t$ 
at $\gamma(t)$. 
Let $\hat \nu_i(t)=\nu_i(t)/(1 +q(t))\leq \nu_i(t)$ $(i=1,2,3,4)$; we have
$\sum_{i}\hat \nu_i(t)=2\pi$ for all $t$. 
Let $B_i(t)$ be the hyperbolic triangle with 
two ideal vertices and one vertex of angle $\hat \nu_i(t)$. 
Note that there exists a natural isometric embedding of $A_i(t)$ into 
$B_i(t)$ so that the image  contains the 
biinfinite side of $B_i(t)$. This embedding is unique if 
we require that
the finite vertex of $A_i(t)$ is contained in the minimal 
geodesic $\xi_i(t)$ of $B_i(t)$ which connects the finite vertex of $B_i(t)$ 
to the opposite side.
Denote the image of $A_i(t)$ under this embedding again by $A_i(t)$.

By the choice of the angles $\hat \nu_i(t)$, the triangles $B_i(t)$ can be 
glued along their sides which are adjacent to the finite vertex cyclically in the order
prescribed by the order of the triangles $A_i(t)$ in the polygon $\tilde Q_t$ 
to a hyperbolic ideal quadrangle $B(t)$ with a distinguished vertex
$q(t)$. The ideal quadrangle $B(t)$ contains the 
union $A(t)$ of the triangles $A_i(t)$. 
This construction does not depend on choices and hence depends continuously  
on $t$. Moreover, $B(t)-A(t)$ is a region which is star shaped with 
respect to the point $q(t)$. This region consists of the interior of 
an embedded relatively compact quadrangle $C(t)$, with 
an ideal triangle attached to each of its sides. 

By invariance under the action of $\pi_1(\Sigma)$, the hyperbolic quadrangle $B(t)$ determines
a hyperbolic metric $\sigma(t)$ on $\Sigma$ depending continuously on $t$, and
$\sigma(0)=\sigma(g_0),\sigma(1)=\sigma(g_1)$. 
Thus we are left with constructing a continuous map 
$h:\Sigma\times [0,1]\to M_f$ such that for each $t$, the restriction of 
$h$ to $\Sigma\times \{t\}$ is a one-Lipschitz map $(\Sigma,\sigma(t))\to M_f$ mapping 
$\lambda^\prime$ geodesically.

There is a natural 1-Lipschitz map $B(t)\to \tilde Q_t$ which maps 
each of the triangles $A_i(t)$ isometrically, collapses the complementary
quadrangle $C(t)$ to a point and collapses the ideal triangles 
attached to the sides of $C(t)$ to one of its infinite length sides by collapsing a 
geodesic arc contained in one of these triangles with endpoints on the two distinct
infinite sides of the triangle 
to a point if its endpoints are 
identified in $\tilde Q_t$. This construction 
defines a one-Lipschitz map $(\Sigma,\sigma(t))\to 
M_f$ depending continuously on $t$ and mapping $X$ isometrically. 
As for $t=0$ and $t=1$ the collapsing map
equals the identity, we obtain a 
path in ${\cal L}(X)$ connecting $g_0$ to 
$g_1$ provided that $g_0$ and $g_1$ are related by a diagonal move. 

Note that this construction does not use any information on the pleating
locus of $g_0,g_1$ beyond the information that 
these pleating loci differ by a diagonal
move. Moreover, it yields a path $g_s\in {\cal L}(X)$ whose restriction to the
intersection of the pleating loci of $g_0,g_1$ is the identity. 
In particular,
the pleated surfaces $g_0,g_1$ are homotopic relative to the pleating 
lamination $X$.

To complete the proof of the lemma we are left with showing that 
any two pleated surfaces $g_0,g_1\in {\cal P}(X)$ 
can be connected by a finite
chain of pleated surfaces in ${\cal P}(X)$ 
so that any two consecutive pleated surfaces in
the chain are related by a diagonal move. 
That this is possible is an immediate consequence of a result
of Hatcher \cite{H91} (compare \cite{C93} and \cite{M00} for more details
about this fact). By concatenation, this shows that 
any two pleated surfaces in ${\cal P}(X)$ 
can be connected by a path in ${\cal L}(X)$, and $g_0,g_1$ are homotopic
relative to the pleating lamination $X$.
\end{proof}

\begin{rem}\label{pleatedhomotopy}\normalfont
  The proof of Lemma \ref{projectionbound1}
  together with Lemma \ref{homotopy}
and Lemma \ref{homotopyidentity} yield
additional information on $M_f$. 
Namely, let as before $Y\subset \Sigma$ be a strongly incompressible subsurface
and let $\alpha\subset \Sigma-\partial Y$ be any system of pairwise 
disjoint non-homotopic arcs 
with endpoints on $\partial Y$ which 
decompose $Y$ into the maximal possible number of 
simply connected regions. 
Then $\alpha$ determines a pleated surface
$g$ in $M_f$ in the homotopy class of the inclusion 
$\Sigma\to M_f$ 
whose pleating lamination contains $\partial Y$ as the union of its minimal components. 
This pleated surface only depends on $\partial Y$ and the homotopy classes of 
the components of $\alpha$ as arcs in $M_f$ with boundary on $\partial Y$. 
If $\alpha_1,\alpha_2$ are two such arc systems, and if 
$\alpha_1$ contains an arc $\zeta_1$ which is homotopic in $M_f$ 
relative to $\partial Y$
to an arc $\zeta_2$ from $\alpha_2$, 
then $\zeta_1$ and $\zeta_2$ determine the same isolated leaf  
of the pleating lamination of the pleated surface in $M_f$ 
constructed from  $\alpha_1,\alpha_2$. Since by the proof of
Lemma \ref{projectionbound1}
the pleated surfaces constructed in this way 
are naturally homotopic to the inclusion
$\Sigma\to M_f$, Lemma \ref{homotopyidentity} yields that
the arcs $\zeta_1,\zeta_2$ are in fact homotopic in $\Sigma$.
\end{rem}

%\begin{rem}\normalfont\label{keeppleating}
%The proof of Lemma \ref{homotopy} shows more precisely the following.
%If $g,h\in {\cal P}(X)$ and if the pleating laminations $\lambda,\mu$ of $g,h$ 
%are obtained from each other by a diagonal move, that is, if 
%$\lambda\cap \mu$ is a geodesic lamination which decomposes 
%$\Sigma$ into ideal triangles and one ideal quadrangle, then 
%$g,h$ can be connected by a path $h_s\subset {\cal L}(X)$ with the additional
%property that for each $s$, the map $h_s$ maps $\lambda\cap \mu$ geodesically. 
%\end{rem}

The strategy is now to obtain geometric information on minimal proper arcs 
or minimal curves for a pleated surface $g\in {\cal P}(\partial Y)$ from information on 
the geodesic representative $\partial Y\subset M_f$. In the following 
elementary observation, $\ell_f(c)$ denotes as before the length in $M_f$ of a geodesic 
representative of a multicurve $c$ in $\Sigma$.

\begin{lem}\label{lengthbound}
For every $\epsilon >0$ there exists a number $L=L(\epsilon)>0$ with the following 
property. 
Let $Y\subset \Sigma$ be a proper essential strongly incompressible 
non-annular subsurface 
with $\ell_f(\partial Y)\geq \epsilon$; then for any $g\in {\cal L}(\partial Y)$, the
$\sigma(g)$-length of a minimal proper arc for $Y$ is at most $L$.
Moreover, for any $\kappa_1>0$ there exists a number $R_1=R_1(\kappa_1)>0$ with 
the following property. If $\beta\subset \partial Y$ is a component with $\ell_f(\beta)\geq R_1$
then there exists a bridge arc for $Y$ with one endpoint on $\beta$ and of 
$\sigma(g)$-length at most $\kappa_1$.
\end{lem}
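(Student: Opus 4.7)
The plan is to produce a short bridge arc by a maximal-collar argument in the induced hyperbolic surface $(Y,\sigma(g))$. The first step is to observe that for any $g\in\mathcal{L}(\partial Y)$, the metric $\sigma(g)$ makes $\partial Y$ into a geodesic multicurve whose components have $\sigma(g)$-length equal to their lengths $\ell_f(\cdot)$ in $M_f$. Indeed, for each component $\beta\subset\partial Y$, let $\beta^*$ be the geodesic representative of $\beta$ in $(\Sigma,\sigma(g))$; the 1-Lipschitz map $g$ sends $\beta^*$ to a closed curve freely homotopic to $g(\beta)$ of length at most $\ell_{\sigma(g)}(\beta^*)\le \ell_{\sigma(g)}(\beta)=\ell_f(\beta)$, and since $g(\beta)$ is already a closed geodesic in $M_f$ this forces equality and hence $\beta=\beta^*$. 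Moreover, Gauss--Bonnet gives $\mathrm{area}(\Sigma,\sigma(g))=A_0:=-2\pi\chi(\Sigma)$, a constant depending only on $\Sigma$.

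Next, fix a component $\beta\subset\partial Y$ and consider the one-sided collar of $\beta$ pointing into $Y$, taken maximal so that the normal exponential map from $\beta$ is still an embedding. If $w$ denotes its width, then the collar has $\sigma(g)$-area equal to $\ell_f(\beta)\sinh(w)$, and this is at most $A_0$, so
\[
w\le \sinh^{-1}\bigl(A_0/\ell_f(\beta)\bigr).
\]
At the boundary of this maximal collar the exponential map fails to be an embedding, so either two distinct normal geodesics from $\beta$ meet at an interior point of $Y$, or a single normal geodesic returns to $\beta$. In either case one obtains an embedded arc $\alpha\subset Y$ with both endpoints on $\beta$ and $\sigma(g)$-length at most $2w$, and maximality of the collar forces $\alpha$ not to be homotopic into $\partial Y$ within $Y$. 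By the strong incompressibility of $Y$ and \Cref{incompressiblesub}(ii), the arc $\alpha$ is therefore not homotopic into $\partial Y$ in $M_f$ either, so it is a genuine bridge arc for $Y$.

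Both statements now follow directly from this collar estimate. For the first: $\partial Y$ has at most some number $k_0=k_0(\Sigma)$ of components, so the hypothesis $\ell_f(\partial Y)\ge\epsilon$ forces some component $\beta$ to satisfy $\ell_f(\beta)\ge \epsilon/k_0$, and the collar argument applied to $\beta$ produces a bridge arc of $\sigma(g)$-length at most $L(\epsilon):=2\sinh^{-1}(A_0 k_0/\epsilon)$. For the second: given $\kappa_1>0$, choose $R_1=R_1(\kappa_1)$ so that $2\sinh^{-1}(A_0/R_1)\le \kappa_1$; then any component $\beta$ with $\ell_f(\beta)\ge R_1$ yields via the same argument a bridge arc of length at most $\kappa_1$ with both (hence one) endpoints on $\beta$. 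The only real subtlety is the first step---verifying that $\partial Y$ is a $\sigma(g)$-geodesic for an arbitrary $g\in\mathcal{L}(\partial Y)$, and not only for pleated surfaces---together with the use of strong incompressibility to promote essentialness of $\alpha$ in $Y$ to essentialness in $M_f$.
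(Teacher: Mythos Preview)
Your argument is essentially the same as the paper's: bound the width of a one-sided half-collar about a long boundary component $\beta$ by an area estimate (the paper phrases this as ``the collar theorem'', you use Gauss--Bonnet directly), and read off a short essential arc where the collar closes up. You are in fact more careful than the paper in two places: you spell out why $\partial Y$ is a $\sigma(g)$-geodesic of length $\ell_f(\partial Y)$ for arbitrary $g\in\mathcal{L}(\partial Y)$, and you explicitly invoke \Cref{incompressiblesub}(ii) to upgrade ``essential in $Y$'' to ``not homotopic into $\partial Y$ in $M_f$'', which is the actual definition of a bridge arc.

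There is one small imprecision. You assert that when the one-sided normal exponential map first fails to embed, the resulting arc $\alpha$ lies in $Y$ and meets ``at an interior point of $Y$''. This need not be true: the half-collar (in $\Sigma$) on the $Y$-side of $\beta$ may cross another component of $\partial Y$ before the exponential map loses injectivity, so the arc joining the two normal rays can leave $Y$. The fix is trivial and is exactly what the paper does: take the initial subarc of $\alpha$ from $\beta$ to its first intersection with $\partial Y$. This subarc lies in $Y$, has length at most $2w$, has at least one endpoint on $\beta$, and is essential (if the other endpoint is on a different component of $\partial Y$ this is automatic; if both are on $\beta$ your perpendicular-rays argument applies). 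With this adjustment your proof is complete.
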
 
\begin{proof} 
Let $m\in [1,3g-3]$ be the number of components of $\partial Y$. 
By the collar theorem for hyperbolic metrics on $\Sigma$, 
if $\beta$ is a component of $\partial Y$ of length 
  at least $\epsilon/m$,  
then the supremum $\rho$ 
of the $\sigma(g)$-heights of a half-collar about $\beta$
 is bounded from above by a number $L/2$   
 only depending on $\epsilon/m$.
 By the choice of $\rho$, the boundary of a half-collar of radius $\rho$ 
 about $\beta$ can not be retracted into $\beta$. Hence there exists a radial geodesic
segment of length $\rho$ 
emanating from the side of $\beta$ determined by the half-collar
whose endpoint either is the endpoint of another such segment or
is contained in $\beta$. In both cases, we find an  
essential arc $\tau$ with endpoints on $\beta$ whose length does not exceed 
$2\rho$.

Using the half-collar
at the side of $\beta$ contained in $Y$,  
we can assume that a neighborhood of at least one
endpoint of the essential arc $\tau$ is contained in $Y$.
Since the second endpoint of $\tau$ is contained in $\beta$, we conclude
that there is a (possibly proper) subarc of $\tau$ which
is a bridge arc for $Y$, and the length of this arc is at most $2\rho\leq L$
as claimed. 

Using again the collar lemma for hyperbolic metrics on $\Sigma$ (or a standard area
estimate), for a given number 
$\kappa_1>0$, 
if $R_1>0$ is sufficiently large then the height 
of a half-collar about a simple closed geodesic of length at least $R_1$ for any 
hyperbolic metric on $\Sigma$ is smaller than $\kappa_1/2$. By the above discussion,
this implies that if $\partial Y$ contains a component $\beta$ of length at least $R_1$, then 
there exists a bridge arc for $Y$ of length at most $\kappa_1$ with one 
endpoint on $\beta$. This completes the proof of the lemma.
\end{proof}

We use Lemma \ref{homotopy} and Lemma \ref{lengthbound} to establish the
following version of Lemma 4.2 of \cite{M00}.
In its formulation, 
$\ell_f(\partial Y)$ denotes as before the length of $\partial Y$ 
with respect to the hyperbolic metric on $M_f$.

\begin{lem}\label{length}
For all $\epsilon >0,R>0$ there exists a number $k_0=k_0(\epsilon,R)>0$ with the following property.
Let $Y\subset \Sigma$ be a proper essential strongly incompressible 
subsurface and assume that $\ell_f(\partial Y)\leq R$. If 
${\rm diam}_Y({\cal D}_1\cup {\cal D}_2)\geq k_0$, then
\[\ell_f(\partial Y)<\epsilon.\]
\end{lem}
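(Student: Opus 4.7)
The plan is to argue by contradiction. Suppose $\ell_f(\partial Y)\geq \epsilon$, so that by hypothesis $\epsilon\leq \ell_f(\partial Y)\leq R$. It suffices to produce a bound $d_Y(\gamma_1,\gamma_2)\leq k_0-1$ for all $\gamma_1\in\mathcal{D}_1$ and $\gamma_2\in\mathcal{D}_2$, for some constant $k_0=k_0(\epsilon,R,\Sigma)$. For each such $\gamma_i$, Lemma \ref{projectionbound1} (if $Y$ is non-annular) or Lemma \ref{projectionbound3} (if $Y$ is annular) produces a pleated surface $g_i\in\mathcal{P}(\partial Y)$ mapping $\partial Y$ geodesically, together with a minimal proper arc (or minimal curve crossing the core of $Y$) $\tau_i$ for the induced metric $\sigma(g_i)$ satisfying $d_Y(\gamma_i,\tau_i)\leq D$ for some $D=D(\Sigma,\epsilon)$. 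By Lemma \ref{lengthbound} these arcs have $\sigma(g_i)$-length at most $L(\epsilon)$. The problem thus reduces to producing a bound on $d_Y(\tau_1,\tau_2)$ depending only on $\epsilon,R$ and $\Sigma$.

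To compare $\tau_1$ and $\tau_2$ combinatorially, I would invoke Lemma \ref{homotopy} to connect $g_1$ to $g_2$ by a chain $g_1=h_0,h_1,\dots,h_N=g_2\in\mathcal{P}(\partial Y)$ in which consecutive pleated surfaces differ by a single diagonal move of the pleating lamination, interpolated by paths in $\mathcal{L}(\partial Y)$. For each $h_j$ choose a minimal proper arc $\tau_j$ for $\sigma(h_j)$; by Lemma \ref{lengthbound} these have $\sigma(h_j)$-length at most $L(\epsilon)$. A diagonal move alters the pleating lamination by replacing a single leaf by another, and one can use property (P1) of $\kappa_0$ together with the bounded length of $\tau_j,\tau_{j+1}$ to show that the subsurface projection distance $d_Y(\tau_j,\tau_{j+1})$ is bounded by a universal constant $C$. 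Composing these bounds would yield $d_Y(\tau_1,\tau_2)\leq N\cdot C$.

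The principal obstacle is bounding the number $N$ of diagonal moves in the chain by a constant $N_0=N_0(\epsilon,R,\Sigma)$. Here the hypothesis $\ell_f(\partial Y)\leq R$ enters decisively: each metric $\sigma(h_j)$ has $\partial Y$ of length in $[\epsilon,R]$ and bridge arcs of length at most $L(\epsilon)$, so the $\sigma(h_j)$ lie in a compact subset of the moduli space of hyperbolic surfaces with a marked multicurve of controlled length. Together with Lemma \ref{homotopyidentity}, which rules out large mapping-class group translation between the $h_j$ induced by homotopies fixing $\partial Y$, this compactness bounds $N$ by a function of $\epsilon,R$ and the topology of $\Sigma$. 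One then obtains $d_Y(\gamma_1,\gamma_2)\leq 2D+N_0 C=:k_0-1$, contradicting the assumption $\mathrm{diam}_Y(\mathcal{D}_1\cup\mathcal{D}_2)\geq k_0$. The delicate step in extracting the bound on $N$ is ensuring that each diagonal move corresponds to a bounded change in the marked hyperbolic structure $\sigma(h_j)$, so that the trajectory of markings cannot accumulate without forcing a nontrivial mapping class excluded by Lemma \ref{homotopyidentity}.
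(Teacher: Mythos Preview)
Your setup through the point of connecting $g_1$ to $g_2$ by a chain in $\mathcal{P}(\partial Y)$ is fine, and the observation that consecutive minimal proper arcs have bounded $d_Y$-distance is correct (indeed the paper uses a continuous version of exactly this). The genuine gap is your proposed bound on $N$.

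The compactness argument you sketch does not control $N$. Even granting that all the $\sigma(h_j)$ lie in a compact region of moduli space, and even granting Lemma~\ref{homotopyidentity} promotes this to a compact region of Teichm\"uller space, a path can have arbitrary length inside a compact set. Worse, $N$ is essentially a flip distance between the two pleating laminations $\lambda_0,\lambda_N$, and these are obtained by spinning $\gamma_1\in\mathcal{D}_1$ and $\gamma_2\in\mathcal{D}_2$ about $\partial Y$. Since each diagonal move changes the subsurface projection into $Y$ by a bounded amount, one has $N\gtrsim d_Y(\gamma_1,\gamma_2)$ --- precisely the quantity you are trying to bound. So the inequality $d_Y(\tau_1,\tau_2)\leq NC$ is circular.

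The paper avoids this by not bounding the length of the path at all. Instead it counts how many \emph{distinct homotopy classes in $M_f$} of bridge arcs occur as minimal proper arcs along the path $g_t$. Each such arc, together with a subarc of $\partial Y$, yields a based loop in $M_f$ of length at most $3R+(2m+2)L(\epsilon)$ through a fixed basepoint on $\partial Y$; by Remark~\ref{pleatedhomotopy} distinct arcs give distinct, hence non-commuting, elements of $\pi_1(M_f)$. The Margulis lemma then bounds the number $q$ of such elements by a constant $M=M(\epsilon,R,\Sigma)$, and since consecutive arcs along the path satisfy $d_Y(\tau_i,\tau_{i+1})\leq 2$, this gives $d_Y(\mathcal{D}_1,\mathcal{D}_2)\leq 2q+4D_1\leq 2M+4D_1$. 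The point is that the bound comes from the geometry of the target $M_f$, not from any combinatorial bound on the interpolating path.
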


\begin{proof} 
  Let $R>0,\epsilon <R$ be fixed and assume that $\ell_f(\partial Y)\in  
[\epsilon,R]$.  
If $Y$ is not an annulus, then 
by Lemma \ref{lengthbound}, there exists a number 
$L=L(\epsilon)>0$  such that for every $g\in {\cal L}(\partial Y)$, the 
$\sigma(g)$-length of a minimal proper arc for $Y$ is at most $L$.

We claim that there also is a uniform upper length bound 
$L^\prime=L^\prime(\epsilon,R)$ 
for a minimal curve for $Y$ if $Y$ is an annulus.
Namely, let $Y$ be an annulus with core curve $c$.
If $\sigma$ is a hyperbolic metric on $\Sigma$ such that 
the $\sigma$-length of $c$ is contained in the interval 
$[\epsilon,R]$, then 
the $\sigma$-height of a half-collar about $c$ is 
bounded from above by a constant $\rho >0$ only depending on 
$\epsilon$. Thus there exists a proper arc $\tau$ 
for $\sigma$ of length at most $2\rho$ with both endpoints on $c$.
If $\tau$ 
leaves and returns at the two different sides of $c$, then
the endpoints of $\tau$ can be connected by a subarc of $c$ 
of length at most $R/2$ to yield
a simple closed curve crossing through $c$ of length at most
$2\rho +R/2$. If all proper arcs $\tau$ for $c$ of length at most 
$2\rho$ leave and return to the 
same side of $c$, then we can find such a proper arc $\tau$
leaving and returning to a fixed side of $c$, and a second arc 
$\tau^\prime$ leaving and returning to the other side of $c$.
Furthermore, we may assume that $\tau$ and $\tau^\prime$ are disjoint. 
The endpoints of
$\tau$ and $\tau^\prime$ can be connected by disjoint 
subarcs of $c$
to yield a simple closed curve crossing through $c$ of length
at most $4\rho+R$. 

We first show the lemma in the case that $Y$ is not an annulus. 
Following the proof of Lemma 4.2 of \cite{M00},
let $g_0,g_1\in {\cal P}(\partial Y)$
be two pleated surfaces and assume that $g_0$ is constructed
from $Y$ and a maximal system $A$ of arcs with endpoints in $\partial Y$ 
which contain the 
intersection arcs with $Y$ of a disk from the disk set ${\cal D}_2$, and 
that $g_1$ is constructed from $Y$ and a maximal system $B$ of arcs
which contain the intersection arcs with $Y$ of a disk from the disk
set ${\cal D}_1$. By Lemma \ref{homotopy}, these pleated surfaces
can be connected in ${\cal L}(\partial Y)$ by a path
$g_t$ $(t\in [0,1])$. Let $\sigma(g_t)$ be the corresponding path in the 
Teichm\"uller space 
${\cal T}(\Sigma)$ 
of $\Sigma$ connecting $\sigma(g_0)$ to $\sigma(g_1)$. 
Given any bridge arc $\tau$ for $Y$, 
let $E_\tau\subset [0,1]$ denote the set of $t$-values for
which $\tau$ is 
homotopic rel $\partial Y$ to a minimal proper arc
with respect to $\sigma(g_t)$. 
Continuity of the metrics $\sigma(g_t)$ in $t$ 
implies that $E_\tau$ is closed, and the
family $\{E_\tau\}$ covers $[0,1]$. 
The path $g_t$ determines a coarsely well defined 
map $\Psi$ from the interval $[0,1]$ into the arc and curve graph of $Y$.
This map associates to $t\in [0,1]$ the set of all $\tau$ with $t\in E_\tau$.

Following p.141 of \cite{M00}, we
observe that if $E_\tau\cap E_{\tau^\prime}\not=\emptyset$
then up to homotopy, $\tau$ intersects
$\tau^\prime$ in at most one point and hence the distance
in the arc and curve graph of $Y$ between $\tau,\tau^\prime$ is at most $2$.
Thus the coarsely well defined map $\Psi$ has 
the following property. 
If $\tau\in \Psi[0,1]$, and if $\Psi[0,1]$ consists of  
more than one point, then any
$\tau^\prime\in \Psi[0,1]-\{\tau\}$ fulfills $d_Y(\tau,\tau^\prime)\leq 2$.
Together with 
Lemma \ref{projectionbound1}, this shows that 
$\Psi[0,1]$ contains a sequence of 
arcs $\tau_0,\tau_1,\cdots, \tau_n$ so that 
$d_Y(\tau_0,A)\leq D_1$, $d_Y(\tau_n,B)\leq D_1$ and 
$1\leq d_Y(\tau_i,\tau_{i+1})\leq 2$ for all $i$.
As a consequence, it holds $n\geq d_Y({\cal D}_1,{\cal D}_2)/2-2D_1$.  
We also may assume that the arcs $\tau_i$ are pairwise non-homotopic as
arcs in $Y$ with endpoints in $\partial Y$.

By Remark \ref{pleatedhomotopy}, if two such arcs $\tau_i,\tau_j$ are
homotpic in $M_f$ keeping the endpoints in $\partial Y$, then
$\tau_i,\tau_j$ are homotopic in $\Sigma$ keeping the endpoints in
$\partial Y$.
Together this implies the following. Among 
the bridge arcs $\tau_i$ of $Y$,  
there are at least $d_Y({\cal D}_1,{\cal D}_2)/2-2D_1=q$ arcs which are 
pairwise non-homotopic in $M_f$ keeping the endpoints in $\partial Y$.

Since the maps $g_t\in {\cal L}(\partial Y)$ are one-Lipschitz, 
the union of $\partial Y$ with the homotopy classes 
of minimal proper arcs for the metrics $\sigma(g_t)$, 
viewed as arcs in $M_f$ with boundary in $\partial Y$ via the 1-Lipschitz maps
$g_t:\Sigma\to M_f$, 
can be represented in $M_f$ by a 1-complex $V$ with at most $m$ components
where $m\leq 3\vert \chi(\Sigma)\vert/2$ is
the number of components of $\partial Y$. 
The diameter in $M_f$ of each component of $V$ is at most 
$R+mL$. Each such minimal proper arc 
$\tau$ together with one or two segments of $\partial Y-\tau$ gives rise to 
a loop in this one-complex $V$ of length at most $2R+2L$. 
Up to homotopy rel $\partial Y$,  
these based loops are images by the inclusion $\Sigma\hookrightarrow M_f$ of  
\emph{simple} closed curves contained in $Y$. Such a simple closed 
curve is a component
of the boundary of a small neighborhood of the union of $\tau$
with the components of $\partial Y$ containing the endpoints of $\tau$.

Given a component $V_0$ of $V$, choose a basepoint $x$ for $V_0$ 
in a component of $\partial Y$ contained in $V_0$. Connecting each of the loops in $V_0$ 
constructed in the previous paragraph to $x$ determines a collection of 
based loops in $M_f$ which up to homotopy are images of
based simple loops contained
in $Y$. The length of each such loop is at most $3R+(2m+2)L$. 
By Lemma \ref{incompressiblesub}, 
no two distinct of these loops are homotopic in $M_f$.

As bridge arcs 
$\tau,\tau^\prime$ for $Y$ which are homotopically distinct in $M_f$ 
give rise to homotopy classes which do not have a common power and hence
which do not commute,  
 a standard application  of the Margulis lemma gives an upper bound 
 $M=M(3R+(2m+2)L)$ for the number of such elements of $\pi_1(M_f)$ which 
 can translate any point a distance $3R+(2m+2)L$ or less (see p.141 of \cite{M00} for more
 details). As a consequence, the number $q$ of homotopy classes of arcs obtained
 from the above construction is at most $M$. Together we conclude that 
 \[d_Y({\cal D}_1,{\cal D}_2)\leq 2q+4D_1\leq 2M+4D_1.\]
 This complete the proof of 
the lemma in the case that $Y\subset \Sigma$ is not an annulus. 

If $Y$ is an annulus then the above argument carries over in
the same way, where the arc and curve graph is now the arc graph of the 
annular cover of $\Sigma$ whose fundamental group equals the 
fundamental group of $Y$. We refer to Lemma 4.4 of \cite{M00} for 
more details of this argument which is valid in our context with as only
addition the above counting estimates for homotopy classes of arcs in 
$M_f$ with endpoints in the core curve of $Y$. 
An application of Lemma \ref{projectionbound3} then completes
the proof of the lemma. 
\end{proof}

Let $\kappa_0>0$ be a constant which has properties (P1) and (P2) from the beginning
of this section.
By possibly decreasing $\kappa_0$, we may assume that it is a Margulis constant
for hyperbolic surfaces and hyperbolic 3-manifolds. 
In the sequel we always assume that the thin part of 
a hyperbolic 3-manifold is determined by such a constant $\kappa_0$.

We next investigate strongly incompressible surfaces $Y\subset \Sigma$ with the 
property that the geodesic representatives of their 
boundaries $\partial Y$ 
enter deeply into a Margulis tube of $M_f$ away from the core
curve of the tube.
For a number $\nu <\kappa_0$ and a Margulis tube $T$ for $M_f$, we call the 
set $T^{<\nu}$ of all points in $T$ of injectivity radius smaller than $\nu$ the 
\emph{$\nu$-thin part of $T$}.

\begin{lem}\label{pleatedmargulis}
  There exists a number $\nu_0 <\kappa_0$ with the following property.
 Let $T\subset M_f$ be a Margulis tube and 
let $Y\subset \Sigma$ be a strongly incompressible subsurface
  whose boundary $\partial Y$, as a geodesic multicurve in $M_f$,  
  intersects $T^{<\nu_0}$ in the complement of the core geodesic of $T$.
Then for every map $g\in {\cal L}(\partial Y)$, and any choice $\sigma(g)$ of a 
corresponding hyperbolic 
metric, there exists
a Margulis tube for $\sigma(g)$ whose core curve 
$\alpha\subset \Sigma$  intersects $Y$ in 
a bridge arc $\tau$ of length smaller than $\kappa_0$ if $Y$ is not 
an annulus, or which is a simple closed curve crossing through $\partial Y$ if 
$Y$ is an annulus. 
Moreover, 
%$g(\alpha)\subset T$ and that moreover 
one of the following not mutually exclusive 
possibilities is
satisfied.
\begin{enumerate}[i)]
\item Up to homotopy, $g(\alpha)$ bounds a disk $D\subset T\subset M_f$. 
\item $g(\alpha)$ is homotopic to a nontrivial multiple of the core curve of $T$.
  Furthermore, there exists a diskbounding 
  simple closed curve $\beta$ on $\Sigma$ which is 
  disjoint from $\alpha$.
%  and from the core curve of any
%Margulis tube for $\sigma(g)$,  and up to homotopy, $g(\beta)$ bounds a disk
%$D\subset T\subset M_f$. 
\item Up to homotopy, 
any component of the intersection of $g(\Sigma)$ with the 
1-neigh\-bor\-hood of 
$T^{<\nu_0}$ is an annulus, 
and this annulus is the image under $g$ of a Margulis tube for $\sigma(g)$. 
The core curve of each such tube is mapped by $g$ to a curve homotopic 
to a nontrivial multiple of the core curve of $T$.
\end{enumerate}
%If the image under $g$ of the 
%$\sigma(g)$-thick part of $\Sigma$ intersects $T^{<\nu_0}$,
%then the first or second possibility if fulfilled. 
\end{lem}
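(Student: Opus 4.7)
The plan is to first produce the $\sigma(g)$-Margulis tube whose core is the curve $\alpha$, then analyze the homotopy class of $g(\alpha)$ in $M_f$, and finally separate cases (ii) and (iii) by studying the connected components of $g^{-1}\bigl(N_1(T^{<\nu_0})\bigr) \subset \Sigma$.

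The first step constructs $\alpha$. By hypothesis the geodesic multicurve $\partial Y \subset M_f$ meets $T^{<\nu_0}$ off the core geodesic, so applying property (P2) to the hyperbolic $3$-manifold $M_f$ shows that some component of $\partial Y$ contains a geodesic arc $\alpha_0$ that crosses through $T^{<\nu_0}$ with length at least $2R_0$, where $R_0$ is the radius of $T^{<\nu_0}$; moreover $R_0 \to \infty$ as $\nu_0 \to 0$. Because $g \in \mathcal{L}(\partial Y)$ restricts to $\partial Y$ as an isometry onto its geodesic representative in $M_f$, the same component of $\partial Y$, viewed as a closed geodesic in $(\Sigma,\sigma(g))$, has length at least $2R_0$. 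Now Gauss--Bonnet bounds the area of $(\Sigma,\sigma(g))$ by $2\pi|\chi(\Sigma)|$, and any simple closed geodesic lying entirely in the $\kappa_0$-thick part carries an embedded collar of width comparable to $\kappa_0$. Hence for $\nu_0$ sufficiently small in terms of $\Sigma$ and $\kappa_0$ (this is the definition of $\nu_0$), the length $2R_0$ exceeds this maximal thick-part length, forcing the relevant component of $\partial Y$ to enter the $\kappa_0$-thin part of $\sigma(g)$. Applying property (P2) inside $(\Sigma,\sigma(g))$ then yields a Margulis tube whose core $\alpha$ is either $\partial Y$ itself (the annular case, when $Y$ is an annulus with core $\partial Y$) or a distinct simple closed geodesic crossed by $\partial Y$. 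In the non-annular case, an essential subarc of $\alpha \cap Y$ crossing the Margulis collar of $\alpha$ yields a bridge arc of length smaller than $\kappa_0$.

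The second step analyzes $g(\alpha)$. The $\sigma(g)$-length of $\alpha$ is at most $2\kappa_0$, so $g(\alpha)$ is a loop in $M_f$ of length at most $2\kappa_0$ based at a point close to $\partial Y \cap T^{<\nu_0}$; for $\nu_0$ small this forces $g(\alpha) \subset T$. Since $\pi_1(T) = \mathbb{Z}$ is generated by the core of $T$, the loop $g(\alpha)$ is either contractible in $M_f$ or homotopic to a non-zero power of that core. If $g(\alpha)$ is contractible, then $\alpha$, being a simple closed curve on the Heegaard surface $\Sigma$ that is null-homotopic in $M_f$, bounds an embedded disk in one of the handlebodies by Dehn's lemma, so $\alpha \in {\cal D}_1 \cup {\cal D}_2$; since $g(\alpha) \subset T$ is contractible in the solid torus $T$ it bounds an embedded disk $D\subset T$, yielding case~(i). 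Otherwise $g(\alpha)$ represents a non-zero power of the core of $T$, placing us in case~(ii) or~(iii).

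To separate (ii) from (iii), examine the homotopy types of the components of $g^{-1}\bigl(N_1(T^{<\nu_0})\bigr)$ together with the restriction of $g$. If every such component is an annulus whose core maps under $g$ to a non-zero power of the core of $T$, then each image annulus is an essential incompressible annulus inside $T$, which together with the isometry along $\partial Y$ forces these annuli to be images of $\sigma(g)$-Margulis tubes, giving case~(iii). Otherwise some component $V$ either has non-annular topology or is an annulus whose core maps trivially into $\pi_1(T)$; in either situation the restriction $g|_V \colon V \to T$ is not $\pi_1$-injective, so the loop theorem applied to $g|_V$ produces an essential simple closed curve $\beta\subset \Sigma$ bounding an embedded disk in $M_f$. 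One arranges $\beta$ to be disjoint from $\alpha$ either by choosing $\beta$ in a component of $g^{-1}(N_1(T^{<\nu_0}))$ different from the one containing $\alpha$, or by isotoping $\beta$ out of the Margulis collar of $\alpha$; this is case~(ii). The main obstacle is this last step: enumerating the possible topological types of the components of $g^{-1}\bigl(N_1(T^{<\nu_0})\bigr)$ and in every non-annular alternative producing a disk-bounding curve $\beta$ which is genuinely disjoint from $\alpha$. The construction of $\alpha$ itself, by contrast, reduces cleanly to the collar lemma and Gauss--Bonnet once the isometric behaviour of $g$ along $\partial Y$ is exploited.
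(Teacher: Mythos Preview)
There is a genuine gap at the hinge between your Step~1 and Step~2. Your Step~1 produces a $\sigma(g)$-Margulis tube with core $\alpha$ crossed by $\partial Y$ by arguing that the component of $\partial Y$ through $T^{<\nu_0}$ is long in $(\Sigma,\sigma(g))$ and hence must enter the $\kappa_0$-thin part somewhere. But this length argument gives no control on \emph{where} along $\partial Y$ the thin part is entered: a long simple geodesic can wind for a long time inside a single thick component (which has bounded diameter but not bounded geodesic length), and the thin crossing you produce may occur on a portion of $\partial Y$ whose $g$-image lies in a completely different Margulis tube $T'\subset M_f$, or in the thick part of $M_f$. Consequently the assertion in Step~2 that ``$g(\alpha)$ is a loop \dots\ based at a point close to $\partial Y\cap T^{<\nu_0}$'' and hence $g(\alpha)\subset T$ is unjustified, and the trichotomy (i)--(iii), which is formulated relative to the specific tube $T$, does not follow.

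The paper's argument avoids this by replacing the length estimate with a locality argument based on the bounded diameter $\ell$ of thick components of $(\Sigma,\sigma(g))$: one chooses $\nu_0$ so that the $2\ell$-neighbourhood of $T^{<\nu_0}$ lies in $T$, and then any thick component whose $g$-image meets $T^{<\nu_0}$ is entirely mapped into $T$. One then takes $W\subset\Sigma$ to be the union of thick components and Margulis tubes whose images miss $T^{<\nu_0}$, and works with the complementary subsurface $Z=\overline{\Sigma\setminus W}$, each of whose thick pieces maps into $T$. This guarantees that the curve $\alpha$ one eventually singles out (a boundary component of $Z$, i.e.\ a Margulis-tube core) has $g(\alpha)\subset T$. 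The case analysis is then: either $Z$ contains a thick component $Z_0$, in which case the loop theorem is applied to the properly embedded surface $\hat Z_0\subset M_f$ cut open along $W$ to produce a diskbounding simple curve $c\subset\hat Z_0$, and one lands in case~(i) if $c$ is a Margulis-tube core and in case~(ii) otherwise (with $\alpha$ a peripheral core of $\hat Z_0$ and $\beta=c$); or $Z$ consists only of Margulis tubes, and one is in case~(i) or~(iii) directly. Your Step~3 gestures at this but omits the key device of cutting along $W$ to obtain a \emph{properly embedded} two-sided surface to which the loop theorem applies, and the disjointness of $\beta$ from $\alpha$ then comes for free since $\alpha$ is peripheral in $\hat Z_0$.
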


\begin{proof} For the fixed choice of a Margulis constant
 $\kappa_0>0$ for  hyperbolic surfaces, 
there exists a number $\ell>0$ only depending on $\Sigma$ and
$\kappa_0$ such that for any hyperbolic metric on $\Sigma$, 
the diameter of any component of the $\kappa_0$-thick part of $\Sigma$ 
is at most $\ell$.
 Let $\nu_0>0$ be sufficiently small that the $2\ell$-neighborhood of 
the $\nu_0$-thin part $T^{<\nu_0}$ 
of a Margulis tube $T$ for $M_f$ is entirely 
contained in $T$. 
%Choose $\nu_0<\nu_1$ suffciently small that 
%the $2\ell$-neighborhood of $T^{<\nu_0}$ is contained in 
%$T^{<\nu_1}$.

Let $Y\subset \Sigma$ be a strongly incompressible surface with
boundary $\partial Y$ and let 
$g\in {\cal L}(\partial Y)$.
There exists a decomposition of $\Sigma$
into thick and thin components for the metric $\sigma(g)$.
The thin components are Margulis tubes about closed
$\sigma(g)$-geodesics of length
smaller than $2\kappa_0$. By the choice of $\ell$ and $\nu_0$, for any 
Margulis tube $T\subset M_f$, 
any component of the thick part of $\Sigma$ for the metric $\sigma(g)$ 
whose image under $g$ intersects 
$T^{<\nu_0}$ is mapped by $g$ into $T$.

Assume that the geodesic multicurve $\partial Y\subset M_f$ 
intersects the $\nu_0$-thin part $T^{<\nu_0}$ 
  of a Margulis tube $T\subset M_f$ in the complement of the core 
  curve of the tube. 
 Since $\partial Y$ is a union of closed geodesics in $M_f$ and the only
  closed geodesic in $M_f$ which is entirely contained in the tube $T$ is the core curve of the tube,
$\partial Y$ intersects the boundary 
$\partial T$ of $T$, which is a torus smoothly embedded in $M_f$. 

%Since 
%$T$ is a convex subset of $M_f$, up to replacing $\partial T$ by a 
%different nearby level surface for the distance from the core curve, we may assume
%that $\partial Y\subset M_f$ intersects $\partial T$ transversely.
%Transversality then yields that with a small 
%deformation of the one-Lipschitz map $g$ near the preimage of $\partial T$
%which leaves $\partial Y$ pointwise fixed, 
%we may assume that $g(\Sigma)$ intersects
%$\partial T$ transversely as well. Then this intersection consists
%of a finite union of simple closed curves. 

Let $W_0\subset \Sigma$ be the union of all components
of the $\sigma(g)$-thick part of $\Sigma$
whose images under $g$ do \emph{not} intersect
$T^{<\nu_0}$, and let $W\subset \Sigma$ be the union of $W_0$ with all
Margulis tubes for $\sigma(g)$ whose images under $g$ 
do not intersect $T^{<\nu_0}$. Then the closure $Z$ of 
$\Sigma-W$ is a closed nonempty essential subsurface of $\Sigma$. The surface 
$Z$ is a union of components of the thick part of $\Sigma$ and some Margulis tubes.
Each component of the thick part of $Z$ is mapped by $g$ into the tube $T$. 
Since the fundamental group of $T$ is cyclic, and the map 
$g$ induces a surjection $g_*:\pi_1(\Sigma)\to \pi_1(M_f)$, 
this implies that the subsurface  
$Z$ of $\Sigma$ is proper. Note that by assumption, the surface $Z$ is intersected
by $\partial Y$.

Now $g(W)$ is disjoint from the core curve of $T$, and the complement of 
the core curve of $T$ deformation retracts onto the boundary 
$\partial T$ of $T$. Thus up to 
modifying $g$ with a homotopy and replacing $T$ by 
the complement in $T$ of a suitably chosen collar about 
$\partial T$, we may assume that
$g(W)\cap T=\emptyset$.

There are now two possibilities. In the first case,
$Z$ contains a component $Z_0$ 
of the $\sigma(g)$-thick part of $\Sigma$. Then we have $g(Z_0)\subset T$.

%With a homotopy of $g$ which preserves $Z$ pointwise but does not
%have any other constraints, 
%we may assume that the boundary $\partial Z_0$ of a small collar neighborhood  
%$Z_0$ of $Z$ is mapped by $g$ into $\partial T$. We may also assume that
%$g\vert \partial Z_0$ is an embedding.

Let $x_0\in \partial Z_0$ and 
consider the map $g_*^{Z_0}:\pi_1(Z_0,x_0)\to \pi_1(M_f,g(x_0))$. 
Since $Z_0\subset \Sigma$ is a properly embeded
connected surface different from
an annulus, its fundamental group 
$\pi_1(Z_0)$ is a 
free group with at least two generators.  As $\pi_1(T)$ is infinite cyclic, 
the kernel of $g_*^{Z_0}$ is nontrival.
Now $Z_0\subset \Sigma$ is a proper subsurface
of the embedded Heegaard surface $\Sigma$. Therefore the loop theorem 
Theorem 4.2 of \cite{AR04} (see also Theorem 4.2 of \cite{He76}) 
shows that there exists a 
simple closed curve $c\subset Z_0$ such that $g(c)$ is homotopically 
trivial in $M_f$ and hence in $T$. 

To be more precise, 
since $g(Z_0)\subset T$, if $\alpha\subset Z_0$ is 
any closed curve such that $g(\alpha)$ is contractible in $M_f$,
then $g(\alpha)$  is contractible in $T$. Thus there exists a homotopy of 
$g(\alpha)$ to the trivial curve which does not intersect $g(W)\subset M_f-T$, and,
consequently, there exists a homotopy of $\alpha\subset 
Z_0\subset \Sigma\subset M_f$ to 
the trivial curve which does not intersect $W\subset M_f$.

Cutting $M_f$ open along $W\subset \Sigma$ yields a manifold 
$N$ whose boundary $\partial N$ consists of two
copies of $W$, glued along the boundary. Up 
to homotopy, each component of
$\Sigma -W=Z$ is a properly embedded surface in $N$.
In particular, this holds true for the component $\hat Z_0$ of
$\Sigma -W$ containing $Z_0$ (a priori,
$Z_0$ may be a proper subsurface of $\hat Z_0$). Note that
$\hat Z_0$ is an  oriented, two-sided properly embedded 
subsurface of $N$ which is   
different from a disk and a 2-sphere.  

Since a loop in $Z_0\subset \hat Z_0$ which is contractible in
$M_f$ is contractible in $M_f-W$, it is contractible in $N$. 
Therefore the homomorphism $\pi_1(\hat Z_0)\to \pi_1(N)$ induced by 
the inclusion $\hat Z_0\to N$ is not injective. The loop theorem
Theorem 4.2 of \cite{AR04} then shows that there is a simple closed curve
$c\subset \hat Z_0$ which bounds an embedded disk in $N$ and
hence in $M_f$.

If $c$ is either peripheral in $\hat Z_0$ or the core curve of a Margulis
tube for $\sigma(g)$ contained in $\hat Z_0$,
then $c$ is 
a core curve of a Margulis tube for $\sigma(g)$ 
which is diskbounding in $M_f$. Identify $c$ with its geodesic representative 
for $\sigma(g)$. 
As the subsurface 
$Y\subset \Sigma$  is strongly incompressible by assumption, its boundary
$\partial Y$ has to cross
through the diskbounding simple closed curve $c$. 
If $Y$ is not an annulus and if 
$\xi\subset \partial Y$ is an embedded arc crossing through
$c$, then a subarc of $c$ connecting $\xi\cap c$ with
the point in $c\cap \partial Y$ which is closest along $c$ and leaves
$\xi$ at the side of $\xi$ contained in $Y$ 
is a bridge arc for $Y$ of $\sigma(g)$-length at most $\kappa_0$.
If $Y$ is an annulus, $c$ is a simple closed curve of 
$\sigma(g)$-length less than $\kappa_0$ which crosses through 
$\partial Y$. This shows that the first 
possibility stated in the lemma is fulfilled. 
Note that in the case that $\hat Z_0$ is a 3-holed sphere, 
the only simple closed curves in $\hat Z_0$ are the boundary curves and hence
the simple closed curve $c$ in $\hat Z_0$ is automatically peripheral.  

On the other hand, if no diskbounding simple closed curve
$c\subset \hat Z_0$ is the core curve of a Margulis tube for
$\sigma(g)$, then the second case in statement of
the lemma holds true. Namely, in this case a peripheral curve
$d\subset \hat Z_0$ is the core curve of a Margulis tube and
disjoint from $c$. Furthermore, the curve $d$ is mapped by $g$ into $T$ and hence
it is homotopic to a nontrivial multiple of the core curve of $T$.
Since 
$d_{\cal C\cal G}(\partial Y,{\cal D}_1\cup
{\cal D}_2)\geq 3$ by assumption, the multicurve
$\partial Y$ has to cross through $d$. We then find a bridge
arc for $Y$ 
of $\sigma(g)$-length smaller than $\kappa_0$ which is a subarc of $d$, or,
if $Y$ is an annulus, choose $d$ as a simple curve crossing through
$\partial Y$ of length smaller than $\kappa_0$. Thus the second possibility in the 
statement of the lemma is fulfilled. 
This completes the analysis of the case when the image under $g$ of the 
$\sigma(g)$-thick part of $\Sigma$ intersects $T^{<\nu_0}$.

If the image under $g$ of the $\sigma(g)$-thick part of $\Sigma$ does not
intersect $T^{<\nu_0}$, then each intersection 
point of $g(\Sigma)$ with $T^{<\nu_0}$ 
is contained in the image of a Margulis tube for $\sigma(g)$.
Since $\partial Y$ intersects
$T^{<\nu_0}$ in the complement of the core curve of $T$, there is  simple closed 
curve $\alpha\subset \Sigma$ which is freely homotopic to the core curve of 
one of these Margulis tubes, of $\sigma(g)$-length at most $\kappa_0$, which intersects
$\partial Y$ and which is mapped by $g$ into $T$. As before, 
if $g(\alpha)$ is contractible in $M_f$ then we are in the situation described in the 
first case of the lemma.
Otherwise $g(\alpha)$ is homotopic in $T$ to a multiple of the core curve of $T$, and 
we conclude that 
the third case described in the lemma is fulfilled. 
This completes the proof of the lemma.
\end{proof}

%Denote as before by $d_{\cal C\cal G}$ the distance in the curve graph of 
%$\Sigma$.
In the following lemma, the constant $p>4$ is as in Lemma \ref{homotopyidentity}.

\begin{lem}\label{nothinpart}
There exist numbers $k_1=k_1(\Sigma)>0$, 
and $\nu_1<\nu_0$ 
with the following properties. Assume that the Hempel distance
$d_{\cal C\cal G}({\cal D}_1,{\cal D}_2)$  for 
$M_f$ is at least $4$. 
Let $Y\subset \Sigma$ be a strongly incompressible subsurface whose 
boundary $\partial Y$, as a geodesic multicurve in $M_f$, 
 intersects the $\nu_1$-thin part $T^{<\nu_1}$ of a Margulis tube $T\subset 
M_f$ in the complement of the core geodesic of $T$ and fulfills
$d_{\cal C\cal G}(\partial Y,{\cal D}_1\cup {\cal D}_2)\geq p$.  
Then we have 
\[{\rm diam}_Y({\cal D}_1\cup {\cal D}_2)\leq  k_1.\]
\end{lem}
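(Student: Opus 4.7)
The plan is to combine the case analysis provided by Lemma \ref{pleatedmargulis} with the pleated surface constructions from Lemmas \ref{projectionbound1} and \ref{projectionbound3} to show that, if $\partial Y$ enters $T^{<\nu_1}$ away from the core, then the subsurface projections of $\mathcal{D}_1$ and $\mathcal{D}_2$ into $Y$ are both uniformly close in the arc and curve graph of $Y$ to a common uniformly bounded set of curves in $\Sigma$, which bounds the diameter.

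First I would choose diskbounding curves $\gamma_i \in \mathcal{D}_i$ $(i=1,2)$ and, as in the proofs of Lemmas \ref{projectionbound1} and \ref{projectionbound3}, construct pleated surfaces $g_i \in \mathcal{P}(\partial Y)$ whose pleating laminations arise by spinning $\gamma_i$ about $\partial Y$; these provide minimal proper arcs (or minimal crossing curves, if $Y$ is an annulus) $\tau_i$ in $(Y,\sigma(g_i))$ with $d_Y(\gamma_i,\tau_i)\le \max(D_1,D_2)$. Since $\nu_1\le \nu_0$, Lemma \ref{pleatedmargulis} applies to each $g_i$ and produces a simple closed curve $\alpha_i\subset \Sigma$, the core of some Margulis tube for $\sigma(g_i)$, meeting $Y$ in a bridge arc (or crossing curve) of $\sigma(g_i)$-length at most $\kappa_0$. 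By property $(\mathrm{P}1)$ of $\kappa_0$, this short bridge arc is either disjoint from or homotopic to $\tau_i$ in $Y$, so $d_Y(\alpha_i,\tau_i)\le 2$.

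The core of the proof is to bound $d_Y(\alpha_i,\mathcal{D}_1\cup \mathcal{D}_2)$ by a universal constant in each of the three alternatives offered by Lemma \ref{pleatedmargulis}. In case (i), $g_i(\alpha_i)$ is null-homotopic in $M_f$; since $g_i$ is homotopic to the inclusion, $\alpha_i$ itself is a simple closed curve in $\Sigma$ representing the trivial element of $\pi_1(M_f)$, and the Hempel distance hypothesis $d_{\mathcal{CG}}(\mathcal{D}_1,\mathcal{D}_2)\ge 4$ together with the loop theorem applied inside one of the handlebodies $H_1,H_2$ forces $\alpha_i\in \mathcal{D}_1\cup\mathcal{D}_2$. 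In case (ii), the diskbounding curve $\beta\subset \Sigma$ is disjoint from $\alpha_i$, so $d_{\mathcal{CG}(\Sigma)}(\alpha_i,\beta)\le 1$ and hence $d_Y(\alpha_i,\beta)\le 2$. Case (iii) is the delicate point: here I would exploit that, for $\nu_1$ chosen sufficiently smaller than $\nu_0$, the annular components of $g_i(\Sigma)\cap N_1(T^{<\nu_0})$ would each provide an essential map of an annulus into a handlebody $H_j$ whose boundary is a $\sigma(g_i)$-short curve with nontrivial image on the core of $T$, contradicting the strong incompressibility of $\partial Y$ unless that core curve is disjoint (in $\Sigma$) from some curve of $\mathcal{D}_1\cup\mathcal{D}_2$, which again yields $d_Y(\alpha_i,\mathcal{D}_1\cup\mathcal{D}_2)\le 2$.

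Combining the three case bounds gives $d_Y(\tau_i,\mathcal{D}_1\cup\mathcal{D}_2)\le 4$ for $i=1,2$, and hence $d_Y(\gamma_i,\mathcal{D}_1\cup\mathcal{D}_2)\le \max(D_1,D_2)+4$; since $\gamma_i$ is arbitrary in $\mathcal{D}_i$ realizing (up to a uniform additive error) the subsurface projection, we obtain the uniform bound $k_1=2\max(D_1,D_2)+10$ on $\mathrm{diam}_Y(\mathcal{D}_1\cup\mathcal{D}_2)$. The main obstacle I anticipate is case (iii) of Lemma \ref{pleatedmargulis}: identifying the correct value of $\nu_1<\nu_0$ so that the annular structure of the pleated surface near $T^{<\nu_0}$ can be leveraged, via Lemma \ref{incompressiblesub} and the loop theorem, to extract an honest diskbounding curve on $\Sigma$ at bounded distance from $\alpha_i$ in the curve graph.
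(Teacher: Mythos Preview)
Your proposal has a genuine gap in the concluding step that makes the argument vacuous. You show that each $\alpha_i$ (and hence each $\tau_i$, and hence each $\gamma_i$) is close in $Y$-projection to \emph{some} element of $\mathcal{D}_1\cup\mathcal{D}_2$. But $\gamma_i\in\mathcal{D}_i$ already, so $d_Y(\gamma_i,\mathcal{D}_1\cup\mathcal{D}_2)=0$ trivially; this says nothing about $\mathrm{diam}_Y(\mathcal{D}_1\cup\mathcal{D}_2)$. The problem is that your ``common uniformly bounded set'' is $\mathcal{D}_1\cup\mathcal{D}_2$ itself, whose $Y$-diameter is exactly the quantity you are trying to bound. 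For the argument to work you would need both $\alpha_1$ and $\alpha_2$ to be close to the \emph{same} disk set $\mathcal{D}_j$, and nothing in your endpoint analysis guarantees this.

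The paper supplies precisely this missing link via a connectivity argument: it joins $g_1$ to $g_2$ by a simplicial path $h_s$ in $\mathcal{L}(\partial Y)$ (Lemma~\ref{homotopy}) and tracks the short curves $\alpha_s$ along the entire path. A covering/partition argument gives finitely many curves $\alpha_{s_0},\dots,\alpha_{s_n}$ with consecutive ones disjoint. The key step, which also explains the role of $\nu_1<\nu_0$, is that whenever the curve $\alpha_{s_i}$ ceases to persist as a Margulis tube core, the \emph{thick part} of $\sigma(h_s)$ must intersect $T^{<\nu_0}$ (since $\partial Y$ itself passes through $T^{<\nu_1}$), forcing case (i) or (ii) of Lemma~\ref{pleatedmargulis} and hence producing a diskbounding curve $c_i$ disjoint from $\alpha_{s_i}$. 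Now $d_{\mathcal{CG}}(c_i,c_{i+1})\le 3$, and since the Hempel distance is at least $4$, all the $c_i$ lie in a single disk set $\mathcal{D}_j$. The bound then follows from $\mathrm{diam}_Y(\mathcal{D}_j)\le p$. Note that case~(iii) does \emph{not} directly yield a diskbounding curve --- your proposed handling of it is incorrect --- but in the path argument case~(iii) is exactly the situation where $\alpha_s$ persists, so no transition occurs and no diskbounding curve is needed at that moment.
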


\begin{proof} The number $p>4$ was chosen so that the following holds true
 \cite{MM00}.
 Let $\alpha,\beta$ be simple closed curves on $\Sigma$ 
and let $Y\subset \Sigma$ be a subsurface
which has an essential intersection with $\alpha,\beta$ and    0
such that
$d_Y(\alpha,\beta)\geq p$; then any geodesic in ${\cal C\cal G}(\Sigma)$ connecting
$\alpha$ to $\beta$ has to pass through a curve disjoint from $Y$.
Furthermore, 
if $Y\subset \Sigma$ is
any proper essential subsurface such that 
$d_{\cal C\cal G}(\partial Y,{\cal D}_i)\geq p$ $(i=1,2)$;
then ${\rm diam}_Y({\cal D}_i)\leq p$.

Let $\ell>0$ be an upper bound for the diameter of a component of the thick
part of a hyperbolic metric on $\Sigma$ for a Margulis constant $\kappa_0>0$ as in 
(P1),(P2).  
For $\nu_0>0$ as in Lemma \ref{pleatedmargulis}, 
let $\nu_1<\nu_0$ be sufficiently small that 
the neighborhood of radius $\ell$ of the $\nu_1$-thin part $M_f^{<\nu_1}$ of $M_f$ is contained in the 
$\nu_0$-thin part $M_f^{<\nu_0}$ of $M_f$. Let us assume that 
$Y\subset \Sigma$ is a proper essential subsurface with $d_{\cal C\cal G}(\partial Y,{\cal D}_i)\geq p$ for 
$i=1,2$ and such that 
$\partial Y$ intersects the
$\nu_1$-thin part $T^{<\nu_1}$ of a Margulis tube $T$ 
in the complement of the core curve of $T$.

Consider first the case that $Y$ is not an annulus. 
Let $\gamma_i\in {\cal D}_i$ for $i=1,2$ and 
$g,h\in {\cal P}(\partial Y)$ be pleated surfaces for $\gamma_1,\gamma_2$
as in Lemma \ref{projectionbound1} . 
By Lemma \ref{homotopy}, we can 
connect $g,h$ by a path $h_s$ $(s\in [0,1], h_0=g,h_1=h)$ 
in ${\cal L}(\partial Y)$. Let $\sigma(h_s)$ be a corresponding 
path in Teichm\"uller space connecting $\sigma(g)$ to $\sigma(h)$.
We showed in Lemma \ref{pleatedmargulis}  
that for each $s$, there exists a bridge arc $\tau_s$ for $Y$ 
of $\sigma(h_s)$-length smaller than $\kappa_0$ 
which is contained in a simple closed curve
$\alpha_s$ on $\Sigma$ of $\sigma(h_s)$-length at most 
$\kappa_0$, and $\alpha_s$ is homotopic 
to the core curve of a Margulis tube for $\sigma(h_s)$ and crossed through by $\partial Y$.
Furthermore, up to homotopy, 
we may assume that $h_s(\alpha_s)\subset T^{<\nu_0}$. By the choice of $\kappa_0$, 
the bridge arc $\tau_s$ is disjoint from a minimal proper arc for $Y$ and 
the metric $\sigma(h_s)$ (see p. 139 of \cite{M00} for more details). 

For each $s$ there exists a connected open neighborhood $V_s$ 
of $s$ in $[0,1]$ so that the curve $\alpha_s$ has the properties stated
in the previous paragraph for each 
$t\in V_s$. By compactness, the interval $[0,1]$ can be covered by finitely many
of the sets $V_s$. Thus we may assume that there is a partition 
$0=s_0< \dots <s_n=1$ such that for each $i<n$, the interval
$[s_i,s_{i+1}]$ is contained in $V_i=V_{s_i}$. Then for each $s\in [s_i,s_{i+1}]$
the curve $\alpha_{s_i}$ is of length smaller than $\kappa_0$
for the metric $\sigma(h_s)$. In particular, by the choice of 
$\kappa_0$, the curves $\alpha_{s_i}$ and $\alpha_{s_{i+1}}$ are disjoint. 
%By perhaps modifying the partition 
%we may assume that the number $m$ of partition points is minimal with these
%properties. 

Assume that the number $n$ of partition points of $[0,1]$ is minimal with the above 
property. This then implies that for all $i$, the curve $\alpha_i$ is not 
homotopic to $\alpha_{i+1}$.
If $n=1$, or, equivalently, if 
$\alpha_{s_i}=\alpha_{s_j}$ for all $i,j$, then there exists a
bridge arc $\tau$ for $Y$ 
which up to homotopy is of length at most $\kappa_0$
for each of the metrics $h_s$. Since this bridge arc is of distance at most 1 
in the arc and curve graph of $Y$ 
to a $\sigma(h_s)$-minimal proper arc for $Y$,  it then follows from 
the choice of $g,h$ and 
Lemma \ref{projectionbound1} that 
the diameter of the subsurface projection of $\gamma_1\cup \gamma_2$
into $Y$ is at most $2D_1+2$.
%Then ${\rm diam}_Y({\cal D}_1\cup {\cal D}_2)\leq 2(D_1+D_2+k+1)$ which
%is the control we were aiming at.

If $n\geq 2$ then by minimality, 
we can not find a simple closed curve in $\Sigma$ which 
is the core curve of a Margulis tube for $\sigma(h_s)$ with the properties
stated above for all $s\in [s_i,s_{i+2}]$ and all $i$. In particular, we have 
$\alpha_{s_i}\not=\alpha_{s_{i+1}}$ for all $i$. Furthermore, there is at least
one $s\in [s_{i+1},s_{i+2}]$ such that for the metric $\sigma(h_s)$, the curve 
$\alpha_{s_i}$ is not the core curve of a Margulis tube with the properties stated
above. 
Now $\alpha_{s_{i}}$ is crossed through by $\partial Y$ and is 
mapped by $h_{s_{i}}$ into $T^{<\nu_0}$, furthermore we may assume that 
the restrictions to $\partial Y$ of the maps $h_s$ coincide. 
As a consequence, there is some $s\in [s_{i+1},s_{i+2}]$ and 
a component 
of the (closure of the)
thick part of $\sigma(h_{s})$ whose image under the map
$h_s$ intersects $T^{\leq \nu_0}$. Let $s\geq s_{i+1}$ be the smallest number 
with this property. By continuity, the curve $\alpha_{s_i}$ is the core curve
of a Margulis tube for $\sigma(h_s)$. 
 
By the choice of $\nu_0$, by Lemma \ref{pleatedmargulis} and the 
definition of the set $V_i$, there exists a 
diskbounding simple closed curve $c_i$ on $\Sigma$ 
which is disjoint from the core curve of any Margulis tube for $\sigma(h_s)$ and hence which is 
disjoint from $\alpha_{s_i}$. Note that we may have $c_i=\alpha_{s_i}$. 
This curve then belongs to one of the disk sets
${\cal D}_1,{\cal D}_2$. Moreover, 
property (1) or (2) in Lemma \ref{pleatedmargulis} holds true for $\sigma(h_s)$.

Using this argument inductively, we conclude that either 
$n=1$ and ${\rm diam}_Y({\cal D}_1\cup {\cal D}_2)\leq 2D_1+2$ 
by the beginning of the proof, or 
for each $i\geq 1$, the curve
$\alpha_{s_i}$ and hence the bridge arc $\tau_{s_i}$ for $Y$
is disjoint from a diskbounding simple closed curve
$c_i$ on $\Sigma$. 

Since $\alpha_{s_i},\alpha_{s_{i+1}}$ are disjoint for all $i$, we have
$d_{\cal C\cal G}(c_i,c_{i+1})\leq 3$. 
But $d_{\cal C\cal G}({\cal D}_1,{\cal D}_2)\geq 4$ 
by assumption, 
and therefore if $c_{i}\in {\cal D}_j$ for $j=1$ or $j=2$ 
then the same holds true for $c_{i+1}$. 
By induction on $i$, we deduce that up to renaming, we have 
$c_i\in {\cal D}_1$ for all $i$.

As a consequence, 
by the choice of $p$, we have ${\rm diam}_Y(\cup_ic_i)\leq p$.
Lemma \ref{projectionbound1} 
%and Lemma \ref{projectionbound3} 
then shows
that ${\rm diam}_Y(\gamma_1 \cup \gamma_2)\leq 
p+2(D_1+2)$. Since $\gamma_i\in {\cal D}_i$ for $i=1,2$ were
arbitrarily chosen and ${\rm diam}_Y({\cal D}_j)\leq p$, 
this yields that 
${\rm diam}_Y({\cal D}_1\cup {\cal D}_2)\leq p+2(D_1+2)$ which is
what we wanted to show.

The argument in the case that $Y$ is an annulus is identical to the 
above discussion, with the only difference that in each step, the 
bridge arc $\tau$ for $Y$ is replaced by the simple closed curve 
$\alpha$ crossing through $\partial Y$. Additional details will be
left to the reader.  
\end{proof}

From now on we always assume that the Hempel distance of the manifold 
$M_f$ is at least $4$, and we let $p>0$ be the number from
Lemma \ref{homotopyidentity}.
We use Lemma \ref{nothinpart} to control the diameters of the 
subsurface projections
of ${\cal D}_1\cup {\cal D}_2$ into subsurfaces $Y$ whose boundaries
have large diameter in $M_f$. 
%The constant $\nu_0>0$ is the constant from
%Lemma \ref{pleatedmargulis}.

\begin{lem}\label{largediameter}
There exist numbers $R_2=R_2(\Sigma)>0,k_2=k_2(\Sigma)>0$ with the following property.
Let $Y\subset \Sigma$ be a strongly incompressible subsurface with 
$d_{\cal C\cal G}(\partial Y,{\cal D}_1\cup {\cal D}_2)\geq p$. 
If  
$\partial Y$ contains a component $\beta$ whose diameter 
in $M_f$ is at least $R_2$, then 
\[{\rm diam}_Y({\cal D}_1\cup {\cal D}_2)\leq k_2.\]
\end{lem}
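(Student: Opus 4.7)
The plan is to adapt the proof of Lemma \ref{nothinpart}, with short bridge arcs on $Y$ playing the role of short closed curves crossing $\partial Y$. First I observe that for any closed curve $\beta\subseteq M_f$ its diameter is bounded above by $\ell_f(\beta)/2$, so the hypothesis that the diameter of $\beta$ in $M_f$ is at least $R_2$ gives $\ell_f(\beta)\geq 2R_2$. I will choose $R_2\geq R_1(\kappa_0/4)$, where $R_1$ is the constant from the second part of Lemma \ref{lengthbound}; then for every $g\in {\cal L}(\partial Y)$ there exists a bridge arc for $Y$ (or, if $Y$ is an annulus, a simple closed curve crossing $\partial Y$) with one endpoint on $\beta$ and of $\sigma(g)$-length at most $\kappa_0/4$.

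Next, fix disks $\gamma_i\in {\cal D}_i$, construct pleated surfaces $g_0, g_1\in {\cal P}(\partial Y)$ from $\gamma_1,\gamma_2$ via Lemma \ref{projectionbound1}, with associated minimal proper arcs $\tau_0^\prime,\tau_1^\prime$ satisfying $d_Y(\gamma_i,\tau_i^\prime)\leq D_1$, and connect $g_0$ to $g_1$ by a path $h_s\in {\cal L}(\partial Y)$ via Lemma \ref{homotopy}. Since each $h_s$ maps $\beta$ isometrically onto its geodesic representative in $M_f$, $\ell_{\sigma(h_s)}(\beta)\geq 2R_2$ for every $s$, which yields a short bridge arc $\tau_s$ with endpoint on $\beta$ at every $s\in [0,1]$. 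Using property (P1) together with compactness of $[0,1]$, I will choose a minimal partition $0=s_0<\cdots<s_n=1$ so that each $\tau_{s_i}$ has $\sigma(h_s)$-length at most $\kappa_0/2$ throughout $[s_i,s_{i+1}]$; then consecutive $\tau_{s_i},\tau_{s_{i+1}}$ are disjoint but non-homotopic in $Y$, giving $d_Y(\tau_{s_i},\tau_{s_{i+1}})\leq 2$. By (P1) one also obtains $d_Y(\tau_i^\prime,\tau_{s_i})\leq 2$ at $i=0,n$, so concatenating yields $d_Y(\gamma_1,\gamma_2)\leq 2n+2D_1+4$; since $\gamma_i\in{\cal D}_i$ were arbitrary, it suffices to bound $n$ by a universal constant depending only on $\Sigma$.

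The main obstacle is precisely to bound $n$. By Lemma \ref{incompressiblesub} the arcs $\tau_{s_i}$ are pairwise non-homotopic in $M_f$ keeping endpoints in $\partial Y$, and each image $h_{s_i}(\tau_{s_i})$ is an arc of length at most $\kappa_0/2$ in $M_f$ with one endpoint on the geodesic $\beta$. The plan is to pass to the cyclic cover $\widehat M=\mathbb{H}^3/\langle[\beta]\rangle$, a solid torus whose core is the lift of $\beta$; the images of the short bridge arcs lift to bounded-length arcs in $\widehat M$ with one endpoint on the core, representing distinct classes in the appropriate double-coset space. A Margulis-type counting argument in $\widehat M$, analogous in spirit to the counting on p.~141 of \cite{M00} used in the proof of Lemma \ref{length} above, then produces an absolute upper bound $n\leq k_2(\Sigma)$. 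The annular case of $Y$ is handled in the same way, with simple closed curves crossing $\beta$ of length at most $\kappa_0/2$ in place of bridge arcs, mirroring the annular argument in Lemma \ref{nothinpart}. The difficulty avoided by passing to the cyclic cover is that closing a short bridge arc into a loop in $M_f$ by a subarc of $\beta$ produces a loop whose length may grow with $\ell_f(\beta)$ and is therefore unbounded under our hypothesis; the cyclic cover unwinds precisely this spiraling and allows the standard Margulis argument to apply.
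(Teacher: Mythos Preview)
Your proposal has a genuine gap: the counting argument in the cyclic cover is not made precise, and in fact cannot be made to work in the way you suggest. More importantly, you have discarded the key hypothesis. You only use that the diameter of $\beta$ in $M_f$ is at least $R_2$ to deduce $\ell_f(\beta)\geq 2R_2$, and from that point on your argument uses only the length. But the conclusion ${\rm diam}_Y({\cal D}_1\cup {\cal D}_2)\leq k_2$ does \emph{not} follow from a length bound alone via an elementary argument; this is precisely why the paper needs the much more delicate Proposition~\ref{diameter} (built on Proposition~\ref{thurstonrelative2}, Corollary~\ref{shortdisjoint}, and Lemma~\ref{allconverge}) to handle the case where $\beta$ is long but has \emph{small} diameter in $M_f$. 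Concretely, if $\beta$ is long but spirals inside a Margulis tube of $M_f$, it can have unboundedly many short self-approaches, each giving a bridge arc of length at most $\kappa_0/4$ that is non-homotopic to the others rel $\partial Y$; your chain $(\tau_{s_i})$ can therefore be arbitrarily long, and passing to the cyclic cover $\widehat M=\mathbb{H}^3/\langle[\beta]\rangle$ does not repair this, since the spiraling is transverse to $[\beta]$ and persists in $\widehat M$.

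The paper's proof exploits the diameter hypothesis directly. Since $g\in{\cal L}(\partial Y)$ is $1$-Lipschitz and maps $\beta$ isometrically, two points on $\beta\subset(\Sigma,\sigma(g))$ whose images are far apart in $M_f$ are also far apart in $\Sigma$; choosing $R_2$ large relative to the thick-part diameter of $\Sigma$ then forces $\beta$ to cross a $\sigma(g)$-Margulis tube with core curve $\alpha$ of length $<\nu_1$. Now the crucial dichotomy: either $\beta$ enters the $\nu_1$-thin part of $M_f$, in which case Lemma~\ref{nothinpart} already gives the bound; or the injectivity radius of $M_f$ along $\beta$ is at least $\nu_1/2$, which forces $g(\alpha)$ (a loop of length $<\nu_1$ through a thick point) to be contractible, so $\alpha$ is diskbounding. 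The chain argument then runs through \emph{diskbounding} short curves, consecutive ones disjoint, hence all lying in a single disk set ${\cal D}_j$; the bound ${\rm diam}_Y({\cal D}_j)\leq p$ finishes immediately. Your short bridge arcs from Lemma~\ref{lengthbound} arise only from the collar geometry of $\beta$ and carry no such structural information.
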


\begin{proof} Let $\nu_1<\kappa_0$ be as in Lemma \ref{nothinpart}. 
Choose $R_2>0$ sufficiently large that
the following holds true. Consider 
a hyperbolic metric $\sigma$ on 
$\Sigma$, and let $x,y\in \Sigma$ be two points of distance
at least $R_2$; then any path in $\Sigma$
connecting $x$ to $y$ crosses through a Margulis tube whose
core curve has length smaller than $\nu_1$.

Assume 
that $\partial Y$ has a component
$\beta$ whose diameter in $M_f$ is at least $R_2$.
By Lemma  \ref{nothinpart}, it suffices to consider the case that 
on $\beta$, 
the injectivity radius of $M_f$ is bounded from below
by $\nu_1/2$.

By the choice of $R_2$,
for every $g\in {\cal L}(\partial Y)$ and corresponding
hyperbolic metric $\sigma(g)$, there exists 
a simple closed curve $\alpha$ on $\Sigma$ of
$\sigma(g)$-length smaller than
$\nu_1$ which is crossed through by $\beta$.
Since the injectivity radius of $M_f$ on $\beta$ 
is at least $\nu_1/2$, the curve $g(\alpha)$
bounds a disk in $M_f$. In other words, $\alpha$ is contained 
in one of the disk sets for $M_f$, say the disk set ${\cal D}_1$. 

As in the proof of Lemma \ref{nothinpart}, we find that 
if $Y$ is not an annulus, then a subarc of
$\alpha$ is a bridge arc for $Y$. 
In other words, 
there exists a bridge arc for $Y$ of $\sigma(g)$-length
smaller than $\nu_1<\kappa_0$
which is a subarc of the diskbounding
simple closed curve $\alpha$. If $Y$ is an annulus then we choose 
$\alpha$ as a simple closed curve of length smaller than $\kappa_0$ 
which 
crosses through $\partial Y$.

We argue now as in the proof of Lemma \ref{nothinpart}. 
Let $\gamma_i\in {\cal D}_i$ $(i=1,2)$ and let
$g,h\in {\cal P}(\partial Y)$ be pleated surfaces for $\gamma_1,\gamma_2$ 
as in Lemma \ref{projectionbound1} or Lemma \ref{projectionbound3}. 
Connect $g,h$ by a path $h_s$ $(0\leq s\leq 1)$ in 
${\cal L}(\partial Y)$.
For each $s$ choose a diskbounding simple closed curve
$\alpha_s$ for $h_s$ of $\sigma(h_s)$-length smaller than $\nu_1$ 
which is crossed through
by $\beta$. The curve $\alpha_s$ contains a 
bridge arc for $Y$ of $\sigma(h_s)$-length smaller than $\kappa_0$.
By continuity, 
there exists an open neighborhood $V_s$ of $s$ in $[0,1]$ so that
for every $t\in V_s$, the $\sigma(h_t)$-length of $\alpha_s$ is smaller
than $\kappa_0$.

Cover the interval $[0,1]$ by finitely many of the sets $V_s$.
This covering can be used to find a partition
$0=s_0<\dots < s_n=1$ such that for all $i$, we have
$[s_i,s_{i+1}]\subset V_{s_i}$.
Now for each $i$, if $\alpha_{s_i}$ is different from $\alpha_{s_{i+1}}$,
then the curves $\alpha_{s_i},\alpha_{s_{i+1}}$ are core curves of
Margulis tubes for the same metric $h_{s_{i+1}}$ and hence they
are disjoint. This implies that if $\alpha_{s_i}\in {\cal D}_j$
$(j\in \{1,2\})$ then
the same holds true for $\alpha_{s_{i+1}}$. 
As a consequence, if $\alpha_0\in {\cal D}_j$ then so is
$\alpha_1$.

By assumption on $Y$, we have ${\rm diam}_Y({\cal D}_j)\leq p$ 
(see the proof of Lemma \ref{nothinpart}).
Using once more Lemma \ref{projectionbound1}
and Lemma \ref{projectionbound3}, this implies as in the proof of Lemma \ref{nothinpart}
that 
$d_Y(\gamma_1,\gamma_2)\leq p+2(D_1+1)$ if $Y$ is not an annulus, and
$d_Y(\gamma_1,\gamma_2)\leq p+2(D_2+1)$ otherwise. 
This is what we wanted to show.
\end{proof} 

%\begin{rem}\normalfont\label{intrinsicdiam}
%Let $R_2>0$ be as in Lemma \ref{largediameter}.
%If the diameter of $\partial Y$ in $M_f$ is bigger than
%$(3g-3)R_2$ but no component of $\partial Y$ has diameter at least
%$R_2$ then there are two components $\beta_1,\beta_2$ of $\partial Y$
%whose distance in $M_f$ is at least $R_2$. Then for any
%$g\in {\cal L}(\partial Y)$, the components $\beta_1,\beta_2$ are
%contained in disjoint subsurfaces $W_1,W_2$ of $\Sigma$, and
%any path connecting $W_1$ to $W_2$ crosses through a Margulis tube
%for $\sigma(g)$ 
%whose core curve $\alpha(g)$ has length at most $\nu_1$ and which is disjoint
%from $\beta_1$ and $\beta_2$. Moreover, a minimal proper arc
%for $\sigma(g)$ is disjoint from this curve, and it does not
%connect $\beta_1$ to $\beta_2$. Thus for the analysis of
%minimal proper arcs, it suffices to study pleated surfaces
%determined by the union of components of $\partial Y$ whose
%mutual distance in $M_f$ is at most $R_2$. In particular,
%it suffices to assume that the diameter of $\partial Y$ in
%$M_f$ is uniformly bounded. 
%\end{rem}

%\begin{rem}\normalfont\label{controlpleated}
 The proof of Lemma \ref{largediameter} uses the assumption that 
the diameter of a component $\beta$ of $\partial Y$ in $M_f$ 
is large to conclude that
for any $g\in {\cal L}(\partial Y)$, a component of
$\partial Y$ crosses through a Margulis tube for
$\sigma(g)$ whose core curve is diskbounding.
It is not used elsewhere in the proof. 
Thus the statement of the lemma can be extended in the following 
way. For a multicurve $\partial Y\subset \Sigma$, define
a \emph{simplicial path} $g_s\subset {\cal L}(\partial Y)$ 
between two pleated surfaces $(\Sigma,g_0),(\Sigma,g_1)$ to be a path 
which consists of pleated surfaces connected by a diagonal exchange path as
in Lemma \ref{homotopy}. We assume for convenience that such a path 
is parameterized on the interval $[0,1]$, but there are no other requirements
for the parameterization.
Let as before $\kappa_0>0$ be a constant with 
properties (P1),(P2). 
We say that such a simplicial path $g_s\subset {\cal L}(\partial Y)$  is \emph{thick-thin incompatible} if 
for every $s$ there exists a simple closed curve 
in $(\Sigma,\sigma(g_s))$ of length smaller than $\kappa_0/10$ 
whose image under $g_s$ is contractible in $M$. 
The constants $R_2>0,p>0$ in the
following lemma are as in Lemma \ref{largediameter}.

\begin{lem}\label{thin}
  Let $Y\subset \Sigma$ be an essential subsurface with
  $d_{\cal C\cal G}(\partial Y,
{\cal D}_1\cup {\cal D}_2)\geq p$ 
and let 
$s\in [0,1]\to
g_s\in {\cal L}(\partial Y)$ be a thick-thin incompatible path.
Then for $\ell=1$ or $\ell=2$ and each $s$, there exists
a bridge arc $\tau_s$ for $Y$ of $\sigma(g_s)$-length at most
$\kappa_0$ which is the subsurface projection of a diskbounding
curve in ${\cal D}_\ell$, or there is a diskbounding curve in ${\cal D}_\ell$
of $\sigma(g_s)$-length at most $\kappa_0$
crossing through $\partial Y$ if $Y$ is an annulus. 
In particular, the 
distance in the arc and curve graph of $Y$
between $\tau_0,\tau_1$ 
is at most $p$.
\end{lem}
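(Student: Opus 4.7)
The plan is to mimic the proof of Lemma \ref{largediameter}, with the thick--thin incompatibility hypothesis replacing the large diameter assumption used there to produce a short diskbounding curve at each parameter $s$. The improvement is that the short diskbounding curve is now provided directly by the hypothesis rather than constructed from a geometric crossing argument.

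First, for each $s \in [0,1]$ I will use the thick--thin incompatibility to choose a simple closed curve $\alpha_s \subset \Sigma$ of $\sigma(g_s)$-length smaller than $\kappa_0/10$ such that $g_s(\alpha_s)$ is contractible in $M_f$. Then $\alpha_s \in {\cal D}_1 \cup {\cal D}_2$. Since $d_{\cal C\cal G}(\partial Y, {\cal D}_1 \cup {\cal D}_2) \geq p \geq 3$, the curve $\alpha_s$ has essential intersection with $\partial Y$. If $Y$ is not an annulus, arguing as in the proof of Lemma \ref{pleatedmargulis}, I extract from $\alpha_s$ a subarc $\tau_s$ which is a bridge arc for $Y$ of $\sigma(g_s)$-length at most $\kappa_0/10 < \kappa_0$; if $Y$ is an annulus I simply take $\tau_s = \alpha_s$, which is a closed curve of $\sigma(g_s)$-length $< \kappa_0$ crossing through $\partial Y$.

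Next I would run the partition argument from Lemma \ref{largediameter}. By continuity of $\sigma(g_s)$, for each $s$ there is an open neighborhood $V_s \subset [0,1]$ of $s$ so that the $\sigma(g_t)$-length of $\alpha_s$ stays below $\kappa_0$ for every $t \in V_s$. By compactness I extract a finite partition $0 = s_0 < s_1 < \cdots < s_n = 1$ with $[s_i,s_{i+1}] \subset V_{s_i}$. For each $i$, either $\alpha_{s_i} = \alpha_{s_{i+1}}$, or both are simple closed curves of $\sigma(g_{s_{i+1}})$-length smaller than $\kappa_0$, hence by property (P1) of $\kappa_0$ they are disjoint, so $d_{\cal C\cal G}(\alpha_{s_i},\alpha_{s_{i+1}}) \leq 1$. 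Since $d_{\cal C\cal G}({\cal D}_1,{\cal D}_2) \geq 4$ and each $\alpha_{s_j}$ lies in ${\cal D}_1 \cup {\cal D}_2$, an induction on $i$ shows that there exists a single index $\ell \in \{1,2\}$ with $\alpha_{s_i} \in {\cal D}_\ell$ for all $i$. Moreover, for any $s \in [s_i,s_{i+1}]$ the curve $\alpha_{s_i} \in {\cal D}_\ell$ has $\sigma(g_s)$-length $< \kappa_0$ and gives rise to a bridge arc for $Y$ (or crossing curve, in the annular case) which we take as $\tau_s$.

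For the final claim, $\tau_0$ and $\tau_1$ are both components of subsurface projections of diskbounding curves in the same disk set ${\cal D}_\ell$, so they lie in the image $\pi_Y({\cal D}_\ell)$ in the arc and curve graph of $Y$. By the choice of $p$ (recalled in the proof of Lemma \ref{nothinpart}), ${\rm diam}_Y({\cal D}_\ell) \leq p$, and therefore $d_Y(\tau_0,\tau_1) \leq p$. The main subtlety is ensuring that the induction correctly transfers the disk-set label across the partition points in the annular case, where $\tau_{s_i} = \alpha_{s_i}$ and the transversality of $\alpha_{s_i}$ with $\partial Y$ has to be maintained along the path; this is handled automatically since the boundary $\partial Y$ is fixed as a geodesic multicurve in $M_f$ and the maps $g_s$ agree on $\partial Y$ throughout the path.
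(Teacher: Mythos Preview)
Your proposal is correct and follows essentially the same route as the paper: extract a short diskbounding curve $\alpha_s$ from the thick--thin incompatibility hypothesis, run the partition/continuity argument from Lemma~\ref{largediameter} to conclude that consecutive $\alpha_{s_i}$ are disjoint and hence all lie in a single disk set ${\cal D}_\ell$, and then bound $d_Y(\tau_0,\tau_1)$ by ${\rm diam}_Y({\cal D}_\ell)\leq p$. One small remark: the disjointness of $\alpha_{s_i}$ and $\alpha_{s_{i+1}}$ when both are $\sigma(g_{s_{i+1}})$-short is not property~(P1) (which is about arcs), but rather the Margulis property of $\kappa_0$ --- two simple closed geodesics of length $<\kappa_0$ are core curves of distinct Margulis tubes --- so you should cite that instead.
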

\begin{proof}
By uniform quasi-convexity of the
  disk set ${\cal D}_\ell$ in ${\cal C\cal G}(\Sigma)$ $(\ell=1,2)$ and
  the lower bound $p$ on the distance $d_{\cal C\cal G}(\partial Y,
  {\cal D}_1\cup {\cal D}_2)$, the diameter of the subsurface projection
  of ${\cal D}_i$ into $Y$ is bounded from above by $p$
  (see the proof of Lemma \ref{largediameter}).

  Now if $Y$ is not an annulus, if 
  $g\in {\cal L}(\partial Y)$ and if there exists a simple
  closed curve $c$ in $\Sigma$ of $\sigma(g)$-length at most $\kappa_0$
  whose image under the map $g$ is contractible in $M$, then
  $\partial Y$ crosses through $c$ and hence there exists a bridge
  arc for $Y$ of $\sigma(g)$-length
  at most $\kappa_0$ which is a subarc of
  a curve in ${\cal D}_\ell$ for $\ell=1$ or $\ell=2$. The path $g_s$ then
  coarsely determines in this way a sequence of elements of
  ${\cal D}_1\cup {\cal D}_2$ containing such short bridge arcs 
  where we may assume that two consecutive
  of these elements are disjoint. Thus this path is entirely contained
  in ${\cal D}_\ell$ for $\ell=1$ or $\ell=2$.

As a consequence, 
  there are bridge arcs for $\sigma(g_0),\sigma(g_1)$ of length at most
  $\kappa_0$ which are projections of curves in ${\cal D}_\ell$.
Then their distance in the arc and curve graph of $Y$ is at most $p$.
\end{proof}

Using what we established so far, we are left with analyzing
subsurfaces $Y$ whose boundary $\partial Y$ have
a component $\beta$ with $\ell_f(\beta)\geq R_2$
and diameter in $M$ which is smaller than $R_2$ together
with maps $g\in {\cal L}(\partial Y)$
whose thin part is mapped to the thin part of $M$.
Compressibility of the Heegaard surface $\Sigma\subset M$
causes considerable difficulties, and the remainder of this section
is devoted to overcoming this problem.
%There are once more two geometrically distinct cases 
%for surfaces in ${\cal L}(\partial Y)$. We begin with
%analyzing such 
%surfaces $(\Sigma,g)$ with the additional property that
%there exists a simple closed curve $c\subset \Sigma$
%which is disjoint from $\partial Y$ and whose
%$\sigma(g)$-length is bounded from above by a universal constant.
The strategy was laid out by Thurston \cite{T86} and is based on an 
argument by contradiction.

To set up the proof, 
consider a sequence
$(M_n,x_n)$ $(n\geq 1)$ of pointed hyperbolic 
$3$-manifolds so that the injectivity radius
${\rm inj}(x_n)$ of $M_n$ at $x_n$ is bounded 
from below by a positive constant not depending on $n$.
We say that the sequence
\emph{converges} 
to a pointed hyperbolic $3$-manifold $(M,x)$ \emph{in
the pointed geometric topology}  
if for every $R>0,\xi>0$ there is a number 
$n(R,\xi)>0$, and for every $n\geq n(R,\xi)$ there exists a smooth embedding,
the \emph{approximation map} $k_n:U_n\subset M\to M_n$, such that $k_n$
is defined on the ball $B_M(x,R)\subset U_n$ of
radius $R$ centered at $x\in M$,
it sends $k_n(x)=x_n$,
and the restriction of $k_n$ to $B_M(x,R)$ satisfies
$\Vert \rho_M-k_n^*\rho_{M_n}\Vert_{C^2(B_M(x,R))}\leq \xi$
where $\rho_M,\rho_{M_n}$ are 
the metric tensors on $M,M_n$. 
We then say 
that the restriction of $k_n$ to
$B(x,R)$ is \emph{$\xi$-almost isometric}. 

The following is well known (see e.g. Chapter E of \cite{BP92} for details).

\begin{prop}\label{convergence}
If $(M_n,x_n)$ is a sequence of pointed hyperbolic $3$-manifolds
such that ${\rm inj}(x_n)\geq \chi>0$ for all $n$, then up to passing to a
subsequence, the sequence $(M_n,x_n)$ converges in the pointed geometric
topology to a pointed hyperbolic $3$-manifold $(M,x)$.
\end{prop}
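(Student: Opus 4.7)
The plan is to establish Proposition \ref{convergence} as a special case of the Cheeger-Gromov compactness theorem, exploiting the fact that for hyperbolic manifolds the sectional curvature is constant, so the only non-trivial geometric control we need is on the injectivity radius.

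First I would propagate the lower bound on the injectivity radius at the basepoint. Fix $R>0$. Using the Margulis lemma together with the explicit geometry of Margulis tubes in constant curvature $-1$, if a point $y \in B_{M_n}(x_n, R)$ has injectivity radius smaller than some threshold $\chi' = \chi'(R,\chi) > 0$, then $y$ lies in a Margulis tube whose core geodesic has length bounded in terms of $\chi$, and a short closed loop at $y$ based at $y$ would, when radially projected to $x_n$ along a minimizing geodesic, produce a loop at $x_n$ whose length is comparable to a multiple of $\chi$, contradicting $\mathrm{inj}(x_n) \geq \chi$. This gives a uniform lower bound $\mathrm{inj}(y) \geq \chi'(R,\chi)$ on $B_{M_n}(x_n, R)$, independent of $n$.

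Next I would apply Anderson's harmonic coordinate construction (recalled in the proof of \Cref{Schauder for tensors}): since $\mathrm{Ric} = -2g$ on each $M_n$, we have $\|{\rm Ric}\|_{C^k}$ controlled for every $k$, and combined with the injectivity radius bound we obtain, for any $k, \alpha$, harmonic charts $\varphi: B(y, 2\rho) \to \mathbb{R}^3$ at every $y \in B_{M_n}(x_n, R)$ on which the metric coefficients $g_{ij}^\varphi$ are uniformly bounded in $C^{k,\alpha}$, with the bound depending only on $R, \chi, k, \alpha$. Now choose, for each $n$, a maximal $\rho/2$-separated net in $B_{M_n}(x_n, R+1)$ containing $x_n$; its cardinality is uniformly bounded, so after passing to a subsequence the combinatorics of which charts overlap stabilizes. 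The Arzelà-Ascoli theorem applied to the metric coefficients and the transition functions (which are controlled in $C^{k+1,\alpha}$ by the same argument as in the proof of the equivalence of Hölder norms for different harmonic charts in \Cref{Subsec: various norms}) yields a subsequence converging in $C^{k,\alpha}$ to a limit collection of charts, glued by limit transition maps, defining a Riemannian 3-manifold $(U_R, g_R)$ with a distinguished basepoint $x$ and diffeomorphisms $k_n^{(R)}: U_R \to B_{M_n}(x_n, R)$ such that $\|g_R - (k_n^{(R)})^* \rho_{M_n}\|_{C^{k,\alpha}} \to 0$.

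Finally I would take $R \to \infty$ along a sequence and diagonalize: the limits $(U_R, g_R, x)$ are compatible (by uniqueness of limits) and assemble to a pointed Riemannian 3-manifold $(M, x)$ of constant sectional curvature $-1$. Completeness of $M$ follows because each closed ball $\overline{B_M(x, R)}$ is a smooth limit of the closed balls $\overline{B_{M_n}(x_n, R)}$, which are compact by completeness of $M_n$; hence closed balls in $M$ are compact, so $M$ is complete by Hopf-Rinow. The approximation maps $k_n = k_n^{(R)}$ satisfy the required property by construction. The main technical obstacle in this outline is the propagation of the injectivity radius bound from $x_n$ to all of $B_{M_n}(x_n, R)$; once this is in hand, the rest reduces to a standard harmonic-coordinates compactness argument in which the $C^\infty$-boundedness of the metric coefficients is automatic from $\mathrm{Ric} = -2g$.
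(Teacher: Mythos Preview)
The paper does not actually prove this proposition; it records it as well known and refers to Chapter~E of \cite{BP92}, where the argument goes through the Kleinian-groups picture: one writes $M_n=\mathbb{H}^3/\Gamma_n$, invokes compactness of the Chabauty space of closed subgroups of $\mathrm{Isom}(\mathbb{H}^3)$, and uses the basepoint injectivity-radius bound to ensure that any limit group is discrete and torsion-free. Your Cheeger--Gromov route via harmonic coordinates is a correct alternative with the virtue of working verbatim for variable bounded curvature, at the price of more analytic input. One step in your sketch deserves tightening, however: the injectivity-radius propagation as you phrase it (``radially projected to $x_n$ along a minimizing geodesic \ldots\ a loop at $x_n$ whose length is comparable to a multiple of~$\chi$'') is not literally right, since conjugating a loop of length $2\chi'$ at $y$ by a path of length $R$ only yields a loop of length $\leq 2R+2\chi'$ at $x_n$, which says nothing once $R>\chi$. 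The correct mechanism in curvature $-1$ is the exponential control of the injectivity radius inside Margulis tubes and cusps (exactly as in the proof of \Cref{counting preimages II}), giving $\mathrm{inj}(y)\geq Ce^{-R}\chi$ for all $y\in B_{M_n}(x_n,R)$; with this in hand the remainder of your outline is standard. As a minor correction, your cross-reference for the transition-function bounds should point to the argument establishing~(\ref{Continuity of elliptic operator - pointwise}) in \Cref{Section - Hölder Norms}, not to \Cref{Subsec: various norms}.
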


We can also consider convergence of pleated surfaces in the pointed 
geometric topology. The following lemma establishes
a first control on such pleated surfaces.

\begin{lem}\label{convergence2}
Let $M_n$ be a sequence of closed hyperbolic $3$-manifolds
with Heegaard surface $\Sigma$ of Hempel distance at least $4$. 
Assume that 
there exists a number $\chi>0$ and 
for each $n$ a pleated surface
$g_n:(\Sigma,\sigma(g_n))\to M_n$ homotopic to the inclusion 
with the following properties.
\begin{enumerate}
%\item The pleating lamination of $g_n$ contains
%  a simple closed curve $\beta_n$ with
%  $d_{\cal C\cal G}(\beta_n,{\cal D}_1\cup {\cal D}_2)\geq 5$ and 
%  ${\rm diam}(\beta_n)\subset (\Sigma,\sigma(g_n))\leq D$ for all $n$.
\item There exists a point $x_n\in \Sigma$ with 
${\rm inj}(x_n)\geq \chi$ and ${\rm inj}(g_n(x_n))\geq \chi$.
\item The thin part of $(\Sigma,\sigma(g_n))$ consist of annuli
whose core curves are of distance at least 
$3$ to ${\cal D}_1\cup {\cal D}_2$.
\end{enumerate}
Then up to passing to a subsequence, the pointed manifolds 
$(M_n,g_n(x_n))$ converge in the pointed geometric topology to a 
pointed hyperbolic $3$-manifold $(M,\hat x)$, and the pointed 
pleated surfaces
$g_n:(\Sigma,x_n)\to (M_n,g_n(x_n))$ converge to a pleated 
surface $g:W\to M$ where 
$W$ is a finite volume hyperbolic surface 
homeomorphic to an essential subsurface of 
$\Sigma$ of negative Euler characteristic, 
and $g$ maps cusps of $W$ to cusps of $M$. Furthermore,
if $W\not=\Sigma$ then $g$ is $\pi_1$-injecctive.
\end{lem}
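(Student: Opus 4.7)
The plan is to establish the geometric convergence in three steps, and then prove $\pi_1$-injectivity of the limit pleated surface using the loop theorem together with the curve graph distance assumption on the core curves of the thin parts.

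First, since $\mathrm{inj}(x_n)\geq \chi$ and the hyperbolic surfaces $(\Sigma,\sigma(g_n))$ have bounded topological type, a standard compactness argument for pointed hyperbolic surfaces yields, after passing to a subsequence, convergence of $(\Sigma,\sigma(g_n),x_n)$ in the pointed geometric topology to a complete finite-type pointed hyperbolic surface $(W,x_\infty)$. Here $W$ is homeomorphic to the component containing $x_\infty$ of $\Sigma\setminus C$, where $C$ is the collection of core curves of thin-part annuli of $\sigma(g_n)$ whose lengths tend to zero; by hypothesis every $c'\in C$ satisfies $d_{\cal C\cal G}(c',{\cal D}_1\cup {\cal D}_2)\geq 3$. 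Applying Proposition \ref{convergence} with the bound $\mathrm{inj}(g_n(x_n))\geq \chi$ gives a further subsequence along which $(M_n,g_n(x_n))$ converges to a pointed hyperbolic manifold $(M,\hat x)$. Since the ends of $W$ will be cusps corresponding to essential simple closed curves in $\Sigma$, the surface $W$ has finite volume and negative Euler characteristic (in particular, it is neither a disk nor an annulus).

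Second, the pleated maps $g_n$ are $1$-Lipschitz with respect to $\sigma(g_n)$, hence uniformly equicontinuous on compact sets. Composing with the approximation maps and applying Arzel\`a-Ascoli on a compact exhaustion of $W$, one extracts a further subsequence along which the $g_n$ converge uniformly on compact sets to a $1$-Lipschitz map $g:W\to M$. Path-isometry with respect to $\sigma(g_n)$ and the property of mapping a maximal geodesic lamination geodesically pass to the limit, so $g$ is itself a pleated surface. Each cusp of $W$ corresponds to a curve $c'\in C$; by the assumption $d_{\cal C\cal G}(c',{\cal D}_1\cup {\cal D}_2)\geq 3$, the curve $c'$ is essential in $M_n$, so $g_n(c')$ is a closed geodesic in $M_n$ whose length tends to zero. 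In the pointed geometric limit these short geodesics become cusps of $M$, so $g$ maps cusps of $W$ to cusps of $M$.

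The main obstacle is $\pi_1$-injectivity when $W\neq \Sigma$. Supposing $\ker(g_*:\pi_1(W)\to \pi_1(M))\neq 1$, fix a compact connected subsurface $W'\subset W$ that is homotopy equivalent to $W$ and carries a loop $\gamma$ representing a non-trivial kernel element. For $n$ large, the approximation gives an embedding $\phi_n:W'\hookrightarrow (\Sigma,\sigma(g_n))$ with $g_n\circ \phi_n$ uniformly close to $g|_{W'}$; in particular $\phi_n(W')$ is disjoint from the curves in $C$. The compact null-homotopy of $g(\gamma)$ in $M$ transports via the approximation maps $k_n$ to a null-homotopy in $M_n$ of a loop homotopic to $g_n(\phi_n(\gamma))$, so $\pi_1(\phi_n(W'))\to \pi_1(M_n)$ has non-trivial kernel. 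The loop theorem applied to the embedded surface $\phi_n(W')\subset \Sigma\subset M_n$ then produces a simple closed curve $c_n\subset \phi_n(W')$, essential in $\phi_n(W')$ and bounding an embedded disk $D_n$ in $M_n$. A peripheral such $c_n$ would be freely homotopic in $\phi_n(W')$ to some $c'\in C$ and would force $c'$ to bound a disk in $M_n$, which is ruled out by $d_{\cal C\cal G}(c',{\cal D}_1\cup {\cal D}_2)\geq 3$; hence $c_n$ is non-peripheral and disjoint from $C$.

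Finally, an innermost-disk surgery on $D_n$, using irreducibility of $M_n$ (valid because the Hempel distance is at least $4$), yields the following dichotomy. After all trivial intersections of $D_n$ with $\Sigma$ are eliminated, either $D_n$ lies entirely in $H_1$ or in $H_2$, in which case $c_n\in {\cal D}_1\cup {\cal D}_2$, or an innermost essential circle of $D_n\cap \Sigma$ yields a curve in ${\cal D}_1\cup {\cal D}_2$ contained in the interior of $D_n$ and thus disjoint from $c_n=\partial D_n$. In either case $d_{\cal C\cal G}(c_n,{\cal D}_1\cup {\cal D}_2)\leq 1$. Since $c_n$ is also disjoint from every $c'\in C$, the triangle inequality in ${\cal C\cal G}(\Sigma)$ gives
$d_{\cal C\cal G}(c',{\cal D}_1\cup {\cal D}_2)\leq d_{\cal C\cal G}(c',c_n)+d_{\cal C\cal G}(c_n,{\cal D}_1\cup {\cal D}_2)\leq 2$,
contradicting $d_{\cal C\cal G}(c',{\cal D}_1\cup {\cal D}_2)\geq 3$. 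This will complete the proof.
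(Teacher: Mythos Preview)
Your convergence steps and the overall strategy are sound and match the paper. The gap is in the $\pi_1$-injectivity argument, specifically where you invoke the loop theorem to produce $c_n\subset\phi_n(W')$.

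The loop theorem applies to properly embedded two-sided surfaces. The bordered surface $\phi_n(W')$ is not properly embedded in the closed manifold $M_n$, so from the bare fact that $\pi_1(\phi_n(W'))\to\pi_1(M_n)$ has nontrivial kernel you cannot conclude that some essential simple closed curve \emph{inside} $\phi_n(W')$ bounds a disk: applying the loop theorem to the closed Heegaard surface $\Sigma\subset M_n$ only yields a compressing curve somewhere on $\Sigma$, and such curves are plentiful. The missing ingredient is to show that the transported null-homotopy in $M_n$ can be taken to avoid the curves in $C$. This follows precisely because the null-homotopy $\psi:D\to M$ you start from is compact while the curves of $C$ correspond to cusps of $M$; hence $\psi(D)$ misses small horoball neighborhoods of those cusps, and for large $n$ the image $k_n(\psi(D))$ lies in the complement of the Margulis tubes about the short geodesics freely homotopic to $g_n(c')$. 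After removing open tubular neighborhoods of these curves, $\phi_n(W')$ becomes properly embedded in the resulting manifold with boundary, and now the loop theorem legitimately produces your $c_n$. This localization step is exactly the heart of the paper's argument.

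Once the gap is filled, your innermost-disk surgery and curve-graph triangle inequality are correct. The paper is more direct at this stage: having shown the null-homotopy misses $\partial W_n$, it simply invokes Lemma~\ref{incompressiblesub}, whose first part is precisely the statement that $\Sigma\setminus\gamma\hookrightarrow M_f\setminus\gamma$ is $\pi_1$-injective whenever $d_{\cal C\cal G}(\gamma,{\cal D}_1\cup{\cal D}_2)\geq 3$, and obtains the contradiction immediately. Your route reproves that lemma in situ via the surgery-and-triangle-inequality argument; this is fine but redundant given that Lemma~\ref{incompressiblesub} is already available.
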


\begin{proof} Fix as before a 
  Margulis constant $\kappa_0<\chi$ for hyperbolic surfaces.
Since there are only finitely many topological types of subsurfaces of 
$\Sigma$, after passing to a subsequence we may assume that there
exists a connected subsurface $W_n\subset \Sigma$ containing $x_n$
with geodesic boundary such 
that the following holds true.
\begin{itemize}
\item The homeomorphism type of the surfaces $W_n$ does not depend on $n$.
\item If $\hat W_n$ denotes the component of the $\kappa_0$-thick part of 
$(\Sigma,\sigma(g_n))$ containing $x_n$, then the inclusion 
$\hat W_n\to W_n$ is a homotopy equivalence.
\item The $\sigma(g_n)$-length of each boundary component of $W_n$ 
tends to $0$ as $n\to \infty$.
\end{itemize}

Using the first property above, we may identify each $W_n$ with a fixed 
subsurface $W$ of $\Sigma$. The surface $W$ has negative Euler characteristic
and may coincide with $\Sigma$.

By assumption, the image under $g_n$ of no boundary component 
$\alpha$ of $g_n(W)$
is contractible in $M_n$. Since $g_n$ is $1$-Lipschitz, this implies that 
for each $n$, the curve $g_n(\alpha)$ is homotopic to a closed
geodesic $\hat \alpha_n$ in $M$. 
As $n\to \infty$, the lengths of the curves $g_n(\alpha)$ and hence of
$\hat \alpha_n$ 
tend to zero.

Since the $\sigma(g_n)$-diameter of any component of 
the $\kappa_0$-thick part of 
$W_n$ is bounded from above by a constant only depending
on $\Sigma$, it follows from 
Proposition \ref{convergence} and the Arzela Ascoli theorem 
that up to passing to another subsequence, we may assume that 
the pointed manifolds $(M_n,g_n(x_n))$ converge in the pointed geometric topology 
to a pointed hyperbolic $3$-manifold $(M,\hat x)$,
and the pleated surfaces 
$g_n\vert W_n:(W_n,x_n)\to M_n$ converge in the pointed geometric
topology to a pleated surface
$g:(W,x)\to (M,\hat x)$ where $W$ is obtained from $W_n$ by
replacing each boundary component 
by a puncture. The surface $W$ is equipped
with a complete finite volume hyperbolic metric. 
Furthermore, since the length of $g_n(\alpha)$ tends to $0$ as
$n\to \infty$, the map $g$ maps cusps of $W$ to cusps of $M$.

We are left with showing that if $W\not=\Sigma$, then 
$g$ is $\pi_1$-injective.
Thus assume that $W$ has at least one cups. We know that
$g$ maps cusps in $W$ to cusps in $M$ and hence a closed curve
$\alpha\subset W$ whose image under $g$ is contractible
in $M$ is essential in $W$. Assume to the contrary that
$\alpha\subset W$ is such a curve.

Denote by $D\subset \mathbb{C}$ the closed unit disk. 
There exists a continuous map $\psi:D\to M$ with $\psi(\partial D)=g(\alpha)$.  
By compactness of $D$ and hence of $\psi(D)\subset M$, 
for large enough  $n$ the set $\psi(D)\subset M$ is contained in the 
domain $U_n$ of the almost isometric map $k_n$ which determine the 
geometric convergence $(M_n,g_n(x_n))\to (M,\hat x)$, 
and hence 
$k_n(g(\alpha))\subset M_n$ is contractible as well. 

But $\alpha$ is contained in $W$, and a simple closed
curve $\xi$ going around a cups of $W$ is a geometric 
limit of boundary curves  of $W_n$ which are of distance 
at least $3$ from ${\cal D}_1\cup {\cal D}_1$.   
Convergence
of the maps $g_n:W_n\subset \Sigma\to M_n$ to the map 
$g:W\to M$ yields that for large enough $n$, 
the image $k_ng(\xi)$ of $g(\xi)$
under $k_n$ is homotopic to 
the image under the map $g_n\vert W_n: W_n\to M_n$ of a boundary component 
of $W_n$, and $k_ng(\alpha)$ is contractible in 
$M_n-g_n(\partial W_n)$. 

On the other hand, 
since $d_{\cal C\cal G}(\partial W_n,{\cal D}_1\cup {\cal D}_2)\geq 3$
by assumption, Lemma \ref{incompressiblesub} shows that the
surface $g_n(W_n)$ is incompressible in $M-g_n(\partial W_n)$. 
This is a contradiction
which shows that indeed, $g$ is $\pi_1$-injective and 
completes the proof of the lemma.
\end{proof}

We use Lemma \ref{convergence2} to establish the following
relative version of Thurston's uniform injectivity result.
Let ${\bf P}(M)$ be the projectivized tangent bundle of the hyperbolic 3-manifold $M$. 
If $g:(\Sigma,\sigma(g))\to M$  is a pleated surface, with pleating lamination 
$\lambda$, then there is a map $p:\lambda\to {\bf P}(M)$ which 
associates to $x\in \lambda$ the tangent line of $g(\lambda)$ at $g(x)$. The number
$R_2>0$ in the statement of the proposition is the number from Lemma \ref{largediameter}.
%The following result is a version of a result of Thurston \cite{T86}
%who established the same statement in the incompressible case.

\begin{prop}\label{thurstonrelative2}
  For $b>0$ there exists a number $R_3=R_3(b)>R_2$, and for all
  $\epsilon >0$ there exists a number $\delta=\delta(b,\epsilon)>0$
  with the following property. Let $M$ be a closed hyperbolic 3-manifold
  with Heegaard surface $\Sigma$ and Hempel distance at least $4$.
  Let $X\subset \Sigma$ by a multicurve
with $d_{\cal C\cal G}(X,{\cal D}_1\cup {\cal D}_2)\geq p$.  
Assume that the diameter in $M$ of the geodesic representative of each 
component of $X$ is at most $R_2$ and that 
$X$ has a component $\beta$ whose length in $M$ 
at least $R_3$.
Let $g\in {\cal P}(X)$ be a pleated surface with all core curves
of Margulis tubes incompressible in $M$ and assume that
there exists a simple closed curve $\alpha$ on $\Sigma$ disjoint from
$X$ of $\sigma(g)$-length at most $b$. Then  
\[d_{\sigma(g)}(x,y)\leq \epsilon  
\text{ for all } x,y\in \beta \text{ with } d_{{\bf P}(M)}(p(x),p(y))\leq \delta
\]
where $d_{\sigma(g)}$ denotes the distance function of the hyperbolic
  metric $\sigma(g)$ on $\Sigma\supset \beta$. 
\end{prop}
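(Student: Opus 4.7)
The proof proceeds by contradiction, following Thurston's original compactness strategy for uniform injectivity, adapted to the Heegaard surface setting where \(\pi_1\)-injectivity fails. Suppose the proposition is false for some fixed \(b,\epsilon>0\). Then we obtain sequences \(M_n,X_n,\beta_n,g_n,\alpha_n,x_n,y_n\) satisfying all hypotheses with the required lower length bound \(R_{3,n}\to\infty\) and the tangent line distances \(d_{\mathbf{P}(M_n)}(p_n(x_n),p_n(y_n))\leq\delta_n\to 0\), yet \(d_{\sigma(g_n)}(x_n,y_n)\geq\epsilon\). The aim is to extract a geometric limit and contradict Thurston's injectivity theorem for \(\pi_1\)-injective pleated surfaces.

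The crucial observation is that \(g_n\) is a path isometry on \(\beta_n\), so the \(\sigma(g_n)\)-length of \(\beta_n\) equals \(\ell_{M_n}(\beta_n)\geq R_{3,n}\to\infty\). Since \(\Sigma\) has fixed topology and any hyperbolic metric on \(\Sigma\) admits only finitely many simple closed geodesics of bounded length, the metrics \(\sigma(g_n)\) must leave every compact set of Teichm\"uller space, so some simple closed curve develops \(\sigma(g_n)\)-length tending to zero. I would then shift the base point along \(\beta_n\) by a bounded amount so that \(x_n\) lies in the \(\kappa_0\)-thick part of both \((\Sigma,\sigma(g_n))\) and \(M_n\); the hypothesis that every Margulis tube core curve is incompressible in \(M_n\), combined with the distance assumption \(d_{\mathcal{C}\mathcal{G}}(X_n,\mathcal{D}_1\cup\mathcal{D}_2)\geq p\), verifies the thin-part hypothesis of Lemma~\ref{convergence2}. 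Applying that lemma, after extracting a subsequence \((M_n,g_n(x_n))\to(M_\infty,\hat x_\infty)\) geometrically and \(g_n\) converges to a pleated surface \(g_\infty:W\to M_\infty\) of finite area. Because at least one \(\sigma(g_n)\)-short curve pinches, \(W\) carries at least one cusp and is therefore a proper subsurface of \(\Sigma\), so Lemma~\ref{convergence2} forces \(g_\infty\) to be \(\pi_1\)-injective.

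Next, I would analyze the behavior of \(x_n,y_n\) in the limit. The assumption \(d_{\mathbf{P}(M_n)}(p_n(x_n),p_n(y_n))\to 0\) means not only that the tangent directions converge but also that the basepoints \(g_n(x_n),g_n(y_n)\) are close in \(M_n\); hence both \(g_n(x_n)\) and \(g_n(y_n)\) converge to a common point of \(M_\infty\) carrying a single projective tangent line on the image of the pleating lamination \(\lambda_\infty\) of \(g_\infty\). The point \(x_n\) converges to a point \(x_\infty\) on a leaf of \(\lambda_\infty\) through the base, and the sequence \(y_n\) — even if its \(\sigma(g_n)\)-distance to \(x_n\) grows unboundedly — yields (via the image in \(M_\infty\) and the path-isometric structure on \(\lambda_\infty\)) a second point \(y_\infty\) of \(\lambda_\infty\), lying on a leaf of \(\lambda_\infty\) and satisfying \(p_\infty(x_\infty)=p_\infty(y_\infty)\). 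Since \(g_\infty\) is \(\pi_1\)-injective, Thurston's injectivity theorem guarantees that \(p_\infty\) is injective on \(\lambda_\infty\) away from cusp neighborhoods, so \(x_\infty=y_\infty\) as points of \(W\). A standard compactness (uniform injectivity) upgrade then gives \(d_{\sigma(g_n)}(x_n,y_n)\to 0\), contradicting the standing assumption \(d_{\sigma(g_n)}(x_n,y_n)\geq\epsilon\).

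The main obstacle I anticipate lies in controlling the limit of \(y_n\). A priori \(y_n\) could sit at \(\sigma(g_n)\)-distance tending to infinity from the base \(x_n\), or could escape into a pinching collar, so that it is not directly captured by the pointed convergence to \(W\). To handle this one must use that \(g_n(y_n)\) remains in a bounded neighborhood of \(g_n(x_n)\) in \(M_n\) (hence is captured in \(M_\infty\)) and then reconstruct a limit leaf of \(\lambda_\infty\) through its image; here the role of the auxiliary short curve \(\alpha\) of \(\sigma(g_n)\)-length at most \(b\) is to rule out degenerations of \(\sigma(g_n)\) that would cause \(y_n\) to wander into a cusp of \(W\) whose image in \(M_\infty\) is a cusp, which would be incompatible with the proximity of \(g_n(y_n)\) and \(g_n(x_n)\). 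Verifying this compatibility and placing both limit points in the thick part of \(W\) — so that Thurston's injectivity theorem may be applied without ambiguity at cusps — is the technical heart of the argument and the step where the quantitative hypotheses on \(b\), the diameter bound \(R_2\), and the curvature assumption on core curves of Margulis tubes all come into play.
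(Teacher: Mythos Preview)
Your overall strategy---contradiction, geometric limits via Lemma~\ref{convergence2}, then uniform injectivity---matches the paper's. However, there is a genuine gap at the step where you claim the limit surface $W$ must be a proper subsurface of $\Sigma$.

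Your argument that ``the metrics $\sigma(g_n)$ must leave every compact set of Teichm\"uller space, so some simple closed curve develops $\sigma(g_n)$-length tending to zero'' is incorrect. The curves $\beta_n$ vary with $n$, so $\ell_{\sigma(g_n)}(\beta_n)\to\infty$ does not force $\sigma(g_n)$ to leave any given compact set of Teichm\"uller space (on a single hyperbolic surface there are simple closed geodesics of arbitrarily large length). Even granting that $\sigma(g_n)$ leaves compact sets in Teichm\"uller space, this could be entirely due to the action of the mapping class group, with the systole uniformly bounded below. In that case $W=\Sigma$, and since $\Sigma$ is a Heegaard surface, the limit map $g_\infty:\Sigma\to M_\infty$ is \emph{not} $\pi_1$-injective; Lemma~\ref{convergence2} gives you nothing here, and Thurston's uniform injectivity theorem does not apply.

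This is precisely where the hypothesis on $\alpha$ enters, and you have misidentified its role. The paper treats the case $W=\Sigma$ separately: the bounded-length curve $\alpha$ disjoint from $X$ survives in the limit as a curve $\alpha\subset\Sigma$ disjoint from the limit lamination $\beta$, and one shows (using the argument of Lemma~\ref{convergence2} applied to $\Sigma\setminus\alpha$) that $g_\infty:\Sigma\setminus\alpha\to M_\infty\setminus g_\infty(\alpha)$ is $\pi_1$-injective. The collar lemma gives a definite distance $\rho=\rho(b)$ between $\alpha$ and $\beta$, and the paper then runs a hands-on version of Thurston's argument (via the diagonal lamination $\beta\times\beta$ and recurrence of leaves) entirely within the $\rho$-neighborhood of $\beta$, where the relative $\pi_1$-injectivity suffices. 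Your proposal needs this case analysis and this use of $\alpha$; the control of $y_n$ that you attribute to $\alpha$ is not the issue.
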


\begin{proof} Following the strategy of \cite{T86}, assume to the
  contrary that the proposition does not hold true.
  Then there are numbers $b>0,\epsilon >0$ for which no
$R_3(b)>0,\delta(b,\epsilon) >0$ as in the statement of the proposition exists.
  This means that there exists a sequence of counterexamples,
  consisting of a sequence of hyperbolic 3-manifolds $M_n$ with Heegaard surface 
  $\Sigma$, multicurves $X_n\subset \Sigma$ with 
  $d_{\cal C\cal G}(X_n,{\cal D}_1\cup {\cal D}_2)\geq p$, 
  pleated surfaces $g_n:\Sigma\to M_n$ whose 
  pleating lamination contains $X_n\supset \beta_n$ 
  and the following properties. 
  \begin{itemize}
  \item The length of $\beta_n$ is at least $n$.
  \item 
  The diameter in $M_n$ of each component of $X_n$ is at most $R_2$. 
 \item The core curve of each Margulis tube of $\sigma(g_n)$ 
  is not homotopic to zero.
  \item There exists a simple closed curve $\alpha_n\subset \Sigma$ 
  disjoint from $X_n$ of $\sigma(g_n)$-length at most $b$.
  \item 
  There are points $x_n,y_n\in \beta_n$ with $d_{\sigma(g_n)}(x_n,y_n)\geq \epsilon$
  and $d_{{\bf P}(M)}(p(x_n),p(y_n))\leq 1/n$.
  \end{itemize}

Since the diameter in $M_n$ of the curve $\beta_n$ is at most $R_2$ and the length
of $\beta_n$ tends to infinity with $n$, for sufficiently large $n$ the curve $\beta_n$ is not the core 
curve of a Margulis tube and 
$\beta_n$ is contained in 
the $\chi$-thick part of $M_n$ for a constant $\chi>0$ only depending on $R_2$. 
Since core curves of Margulis tubes in $\Sigma$ for $\sigma(g_n)$ are not null-homotopic in $M_n$, 
%By assumption, any point $x_n\in \beta_n$ is contained in the 
%$\chi$-thick part of $(\Sigma,\sigma(g_n))$ and its image 
%$g_n(x_n)$ is contained in the $\chi$-thick part of $M_n$ for  constant
%$\chi>0$ only depending on $R_2$ 
we can apply Lemma \ref{convergence2}. It yields that 
by passing to a subsequence, we 
may assume that
the pointed pleated surfaces $g_n:(\Sigma,x_n)\to (M_n,g_n(x_n))$ converge in the geometric
topology to a pointed pleated surface $g:(W,x)\to (M,g(x))$ where $W$ is a finite
volume hyperbolic surface homeomorphic to the interior of an essential 
subsurface of $\Sigma$
and $g$ maps cusps to cusps.
The pleating lamination of $g$ contains the geometric 
limit $\beta$ of the simple closed curves $\beta_n$.

By Lemma \ref{convergence2}, if $W\not=\Sigma$ then the map 
$g:W\to M$ is $\pi_1$-injective. 
If $W=\Sigma$ then there exists a simple closed curve $\alpha$ on 
$\Sigma$ of 
$\sigma(g)$-length at most $b$ which is disjoint from a geometric
limit $\beta$ of the curves
$\beta_n$. Furthermore, the 
argument in the proof of Lemma \ref{convergence2} yields that the map
$g:\Sigma-\alpha\to M-g(\alpha)$ is $\pi_1$-injective. No modification of the 
argument is required. 
Let $Z=W$ if $W\not=\Sigma$ or $Z=\Sigma-\alpha$ otherwise where
we identify $\alpha$ with its geodesic representative for
the metric $\sigma(g)$.

Assume for the moment that $W=\Sigma$. 
By the collar lemma for hyperbolic surfaces, the distance in $(Z,\sigma(g))$ 
between $\alpha$ and the geodesic lamination $\beta$ is bounded from below by 
a positive constant $\rho=\rho(b) <\chi/2$ only depending on $b$.
Modify $g$ with a homotopy which equals the identity on the complement
of the $\rho/2$-neighborhood of $\alpha$ to a map $\hat g$ which maps 
$\alpha$ to the complement of the $\rho/2$-neighborhood of 
$g(\beta)$. This can be done in such a way that the map 
$\hat g$ is $L$-Lipschitz for some $L>0$ and that it maps
$W$ into $M^{>\chi/2}$. 
%Note that up to adjusting constants, 
%we may assume that 
%the injectivity radius of $M$ on the $\rho$-neighborhood of 
%$g(\beta)$ is at least $\chi/2$.

Resume the general case that includes $W\not=\Sigma$.
If $W\not=\Sigma$ then put $L=1$ and $\alpha=g(\alpha)=\emptyset$.
Consider the diagonal lamination
$\beta\times \beta \subset Z\times Z$.
If $x_1,x_2\in \beta$ are two points
which have the same image under $p$, then the leaves
$\ell_1,\ell_2$ of $\beta$ through $x_1,x_2$ are mapped to the same
geodesics in $\hat g(Z)\subset M$. Since the lamination $\beta$ and hence 
$\beta\times \beta$ is compact,
the leaf $\ell_1\times \ell_2$ of $\beta\times \beta$
enters a small neighborhood $U\subset Z\times  Z$ 
infinitely often. We may assume that the diameter of $U$
for the product metric on $Z\times Z$ is smaller than 
$\rho/2L$.

From each return of $\ell_1\times \ell_2$ to $U$
one can construct closed loops in $Z$ based at $x_1,x_2$ 
by connecting the endpoints 
of the subarcs of $\ell_1,\ell_2$ determined by these return times by
an arc of length at most $\rho/2L$.
The two resulting closed curves are not homotopic in $Z$. 
As the map $\hat g$ is $L$-Lipschitz, 
their images under $\hat g$ are obtained from each other
by concatenation with a loop of length smaller than
$\rho/2<\chi/4$, based at a point in $g(\beta)$.
But the injectivity radius of $M$ on $g(\beta)$ 
is a least $\chi/2$. This implies that 
the images under $\hat g$ of
these loops are homotopic with a homotopy entirely contained in
the $\rho/2$-neighborhood of $g(\beta)$. Now the
$\rho/2$-neighborhood of $g(\beta)$ is contained 
in $M-\hat g(\alpha)$.
 Since the map $\hat g:Z\setminus \alpha \to M\setminus \hat g(\alpha)$ is $\pi_1$-injective,
we deduce as on p.232 of \cite{T86}
that the leaves $\ell_1,\ell_2$ of
$\beta$ are identical. 

That this leads to a contradiction to the assumption 
$d_{\sigma(g_n)}(x_n,y_n)\geq \epsilon$ for all $n$ follows from 
the arguments on p.232-233 of \cite{T86} which work directly
  with compact subsurfaces filled by limit laminations and uses nowhere that
  the underlying surface is closed or of finite volume.
This completes the proof of 
 the proposition.
\end{proof}

\begin{rem}\label{relincom}\normalfont
It follows from the proof of \Cref{thurstonrelative2} that 
under the assumption in the proposition, there exists a constant
$\rho=\rho(b)>0$ 
such that the restriction of a map $g\in {\cal L}(X)$ to the 
$\rho$-neighborhood of $\beta$ is incompressible within the 
$\rho$-neighborhood of $g(\beta)\subset M$. The point here is 
that $\rho$ only depends on $b$. Furthermore, the conclusion of the 
proposition also holds true for any element $g\in {\cal L}(X)$ 
which is contained in some simplicial path in ${\cal L}(X)$. Namely,
the argument only used that the maps considered are one-Lipschitz and
map the boundary $\partial Y$ of the subsurface $Y$ isometrically.
\end{rem}

Recall that if $d_{\cal C\cal G}(\partial Y,{\cal D}_1\cup {\cal D}_2)\geq p$ then a simple closed 
curve on $\Sigma$ which is disjoint from $\partial Y$ is not homotopic to 
zero in $M_f$ and hence it has a geodesic representative in $M_f$. 
%from Lemma \ref{incompressiblesub} 

\begin{cor}\label{shortdisjoint}
For every $b>0$ 
there exists a number $R_4=R_4(b)>R_3(b)$ with the following property.
  Let $\partial Y\subset \Sigma$ be a subsurface 
with $d_{\cal C\cal G}(\partial Y,{\cal D}_1\cup {\cal D}_2)\geq p$   
  and assume that there exists
  a component $\beta$ of $\partial Y$ 
  whose geodesic representative in $M$ is 
  contained in a subset of
  $M$ of diameter at most $R_2$ and has length at least $R_4$. 
  Let $g_s\subset {\cal L}(\partial Y)$
  be a simplicial path and assume that for each $s$, the core
  curve of any Margulis tube for $\sigma(g_s)$
  is incompressible in $M$. Assume furthermore that for each $s$ there
  exists a simple closed curve on $(\Sigma,\sigma(g_s))$ 
  disjoint from
  $\partial Y$ of $\sigma(g_s)$-length
  at most $b$. If $Y$ is not an annulus then there exists a bridge
  arc $\tau$ for $Y$ whose length is smaller than $\kappa_0$ for
 each of the metrics $\sigma(g_s)$. If $Y$ is an annulus then there exists
 a simple closed curve crossing through $Y$
 which intersects a minimal curve crossing through $Y$ for each
 of the metrics $\sigma(g_s)$ in at most 2 points.
\end{cor}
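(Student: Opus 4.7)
The plan is to combine Lemma \ref{lengthbound}, which for each individual $s$ yields a short bridge arc (or in the annular case a short simple closed curve crossing $\beta$), with Proposition \ref{thurstonrelative2} (uniform injectivity on $\beta$), in order to promote these $s$-dependent objects to a single object which works for every $s$ simultaneously. The crucial simplification is that every $g\in {\cal L}(\partial Y)$ along the simplicial path $g_s$ restricts to a path isometry of $\beta$ onto its geodesic representative in $M$, so the projectivized tangent map $p:\beta\to {\bf P}(M)$ does not depend on $s$; this is what allows Proposition \ref{thurstonrelative2} (via Remark \ref{relincom}) to be applied uniformly across the path.

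First I would choose $R_4=R_4(b)>R_3(b)$ large enough so that Proposition \ref{thurstonrelative2} applies with some effective $\varepsilon$ much smaller than $\kappa_0$, yielding a threshold $\delta=\delta(b,\varepsilon)>0$, and simultaneously $R_4\geq R_1(\kappa_0/10)$ where $R_1$ is the constant from Lemma \ref{lengthbound}. For each $s\in [0,1]$ the second part of Lemma \ref{lengthbound} then produces, in the non-annular case, a bridge arc $\tau_s$ for $Y$ of $\sigma(g_s)$-length at most $\kappa_0/10$ with at least one endpoint on $\beta$; in the annular case an analogous half-collar construction yields a simple closed curve $\gamma_s$ crossing $\partial Y$ of bounded $\sigma(g_s)$-length.

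The main obstacle is to upgrade these $s$-dependent bridge arcs to a single bridge arc $\tau$ satisfying $\ell_{\sigma(g_s)}(\tau)<\kappa_0$ for every $s$. For this I would argue as follows. The endpoints $x_s,y_s\in\beta$ of $\tau_s$ satisfy $d_{\sigma(g_s)}(x_s,y_s)\leq \kappa_0/10$, while the arclength distance between $x_s$ and $y_s$ along $\beta$ must be bounded below by a uniform positive constant (else $\tau_s$ would cobound a disk with a short subarc of $\beta$, contradicting its essentiality as a bridge arc). Applying Proposition \ref{thurstonrelative2} in the contrapositive, the images satisfy $d_{{\bf P}(M)}(p(x_s),p(y_s))\leq \delta$. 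Since $p$ is fixed and the image $g(\beta)$ has diameter at most $R_2$ in $M$, the set of pairs $(x,y)\in\beta\times\beta$ at minimal $\beta$-distance bounded below with $d_{{\bf P}(M)}(p(x),p(y))\leq\delta$ is finite, bounded in cardinality by the $\delta$-covering number of ${\bf P}(M)|_{B_{R_2}}$. Property (P1) of $\kappa_0$ then implies that any two such short bridge arcs are either homotopic rel $\partial Y$ or disjoint, and continuity of geodesic length under variation of the metric $\sigma(g_s)$, together with control of discrete changes across diagonal moves in the simplicial path (in the spirit of Lemma \ref{homotopy}), forces the homotopy class of $\tau_s$ to be locally constant in $s$. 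Finiteness of the set of candidate homotopy classes and connectedness of $[0,1]$ then yield a single homotopy class of bridge arc which is $\kappa_0$-short for every $s$.

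The annular case proceeds analogously: the minimal curve crossing $\partial Y$ plays the role of the bridge arc, property (P2) of $\kappa_0$ replaces (P1), and the bound on intersections in at most two points follows from the fact that a simple closed curve meeting the Margulis tube of $\partial Y$ in a short geodesic subarc must cross through the tube by property (P2).
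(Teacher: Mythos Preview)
Your approach has a genuine gap at the crucial step. You write that from $d_{\sigma(g_s)}(x_s,y_s)\leq \kappa_0/10$ and a lower bound on the arclength separation of $x_s,y_s$ along $\beta$, one deduces ``applying Proposition~\ref{thurstonrelative2} in the contrapositive'' that $d_{{\bf P}(M)}(p(x_s),p(y_s))\leq\delta$. But the proposition says: \emph{if} $d_{{\bf P}(M)}(p(x),p(y))\leq\delta$ \emph{then} $d_{\sigma(g)}(x,y)\leq\varepsilon$. Its contrapositive is that large $\sigma(g)$-distance forces large ${\bf P}(M)$-distance; it gives no information in the direction you need. Knowing that $x_s,y_s$ are $\sigma(g_s)$-close tells you (via the $1$-Lipschitz map $g_s$) that $g_s(x_s),g_s(y_s)$ are close in $M$, but says nothing about proximity of their \emph{tangent lines} in ${\bf P}(M)$. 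Without that, there is no mechanism to pin down the pair $(x_s,y_s)$ to a finite list independent of $s$, and the subsequent ``locally constant homotopy class'' argument collapses. A secondary issue: Lemma~\ref{lengthbound} only guarantees that \emph{one} endpoint of $\tau_s$ lies on $\beta$, so your assumption that both $x_s,y_s\in\beta$ is not justified either.

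The paper's argument runs in the opposite direction and avoids this entirely. Since the geodesic representative $\hat\beta\subset M$ has length at least $R_4$ but diameter at most $R_2$, a pigeonhole argument in ${\bf P}(M)$ (not in $\Sigma$) produces three points $z_1,z_2,z_3\in\hat\beta$ whose projectivized tangents are pairwise $\delta$-close and which are pairwise far apart along $\hat\beta$. These points and their preimages $x_i\in\beta$ are fixed, independent of $s$. Proposition~\ref{thurstonrelative2} is then applied in its stated (forward) direction, uniformly in $s$ via Remark~\ref{relincom}, to conclude $d_{\sigma(g_s)}(x_i,x_j)<\chi/4$ for all $s$. A lower injectivity-radius bound along $\beta$ then gives a unique short $\sigma(g_s)$-geodesic between $x_i,x_j$ whose homotopy class is constant in $s$ by continuity. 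The reason three points are needed is to guarantee that at least one of the resulting short arcs leaves $\beta$ on the $Y$-side, i.e.\ is a bridge arc for $Y$; this replaces your appeal to Lemma~\ref{lengthbound}, which produces $s$-dependent arcs that cannot be synchronised.
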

\begin{proof} As in the proof of \Cref{thurstonrelative2}, 
there exists a universal constant $\chi <\kappa_0$ 
only depending on $R_2$ with the following property. 
Let $g\in {\cal L}(\partial Y)$ be such that the core curves of
Margulis tubes for $\sigma(g)$ 
are incompressible in $M$. Then for  
any point $x\in \beta$, the injectivity radius of 
$\sigma(g)$ at $x$ is at least $\chi$.

Let 
$\delta=\delta(b,\chi/4)>0$ be as in \Cref{thurstonrelative2}.
By standard hyperbolic 
geometry (see \cite{M00} for details), there exists a number 
$R_4=R_4(b,\chi/4)>0$ with the property that for any
closed geodesic $\zeta$ in a hyperbolic 3-manifold 
$M$ whose diameter in $M$ is at most $R_2$ and whose length 
is at least $R_4$, there are three points $z_1,z_2,z_3\in \zeta$ 
with $d_{{\bf P}(M)}((pT\zeta)(z_i),(pT\zeta)(z_j))< \delta$ 
and such that the distance along $\zeta$ between 
$z_i,z_j$ is 
larger than $2\chi$ $(i=1,2,3)$. Here $pT\zeta$ denotes the projectivized
tangent line of $\zeta$.

Let $g_s\subset {\cal L}(\partial Y)$ be a simplicial path
as in the statement of the corollary for this number $R_4$. 
For each $s$
the restriction of the map $g_s:(\Sigma,\sigma(g_s))\to M$
to the $\sigma(g_s)$-geodesic $\beta\subset \partial Y$ is an isometry onto
a geodesic $\hat \beta\subset M$. If the diameter of 
$\hat \beta$ in $M$  is at most $R_2$ and its length is at least $R_4$ then 
by the previous paragraph, 
there are points $z_1,z_2,z_3\in \hat \beta$ with the following property.
Let $x_i\in \beta$ be the preimage of $z_i$ under $g_s$; then 
$d_{{\bf P}(M)}(px_i,px_j)< \delta$ and the distance along $\beta$ between
$x_i,x_j$ is larger than $2\chi$.
It then follows from \Cref{thurstonrelative2} and \Cref{relincom} 
that 
$d_{\sigma(g_s)}(x_i,x_j)< \chi/4$ for all $s$ $(i,j=1,2,3)$.

Since the injectivity radius of $\sigma(g_s)$ at $x_i,x_j$ 
is at least $\chi$ for all $s$, the points $x_i,x_j$ can be connected
in $(\Sigma,\sigma(g_s))$ by a unique minimal geodesic arc $\alpha_s$ 
of length at most $\chi/4$. Since the metrics $\sigma(g_s)$ 
depend continuously on $s$, the arc depends continuously on $s$ and hence
its homotopy class with fixed endpoints is independent on $s$.
See also Lemma \ref{homotopy} and Remark \ref{pleatedhomotopy} for 
a similar statement.

Consider first the case that $Y$ is not an annulus. 
If $\alpha_s$ is contained in $Y$, then $\alpha_s$ is a bridge arc 
for $Y$ with 
the required properties. If $\alpha_s$ has a proper subsegment contained 
in $Y$ with one endpoint an endpoint of $\alpha_s$, then 
the same argument applies to this subsegment. 
Now the points $x_2,x_3$ are contained in the $\chi/4$-neighborhood of
$x_1$, and the simple closed geodesic $\beta$ crosses through these points.
Since $\beta$ does not have self-intersections, locally the subarcs
of $\beta$ through the points $x_i$ 
decompose a suitably chosen disk neighborhood of $x_1$ into two strips
with boundary in $\beta$ which are separated by a subarc of $\beta$, say
the subarc through $x_j$, and two half-disks. 
Then the subarc of $\beta$ through $x_j$ divides a small  disk about
$x_j$ into two half-disks, at least one of which is contained in
$Y$. Thus for either $\ell=j+1$ or $\ell=j-1$ 
(indices are taken modulo 3), the initial segment
of the minimal geodesic connecting $x_j$ to $x_\ell$ is contained in
$Y$. Its first intersection with $\partial Y$ defines a bridge
arc for $Y$ of length smaller than $\chi/2<\kappa_0/2$, and up to homotopy 
keeping the endpoints in $\partial Y$,
the length of this arc 
is smaller than $\kappa_0/2$ for all $s$. This shows the corollary
in the case that $Y$ is not an annulus.

Following once more \cite{M00}, the argument in the case that
$Y$ is an annulus is analogous. Namely, the above argument 
shows that for each of the two sides of the geodesic representative
$\beta $ of the core curve of $Y$ 
we can find a homotopy class of an arc  $\tau$ 
with endpoints in $\beta$ which leaves $\beta$ from the chosen side 
and whose 
length is smaller than
$\kappa_0/2$ for each of the metrics $g_s$. 
If the arc $\tau$ 
leaves and returns to different sides of $\beta$ then its 
concatenation with a subarc of $\beta$ determines a simple
closed curve crossing through $Y$ with the properties we are looking for. 
Otherwise there are two such arcs leaving and returning to distinct sides
of $Y$, and the union of these arcs with subarcs of $\partial Y$ define
a simple closed curve with the desired properties. This follows once
more from the fact that for a given hyperbolic metric, any two bridge arcs
for a subsurface $Y$ of length at most $\kappa_0$ are disjoint up to 
homotopy keeping the endpoints on 
$\partial Y$.
\end{proof}  

We are left with analyzing pleated surfaces $(\Sigma,g)$ whose pleating 
locus contains the boundary $\partial Y$ of a 
subsurface $Y\subset \Sigma$ with the following properties.
\begin{enumerate}
\item $d_{\cal C\cal G}(\partial Y,{\cal D}_1\cup {\cal D}_2)\geq p$.
\item The diameter of $(\Sigma,\sigma(g))$ is uniformly bounded.
\item The length in $(\Sigma,\sigma(g))$ 
of any simple closed curve disjoint from
  $\partial Y$ is large.
\end{enumerate}

A geodesic lamination $\beta$ on a surface $\Sigma$ is said to 
\emph{fill} $\Sigma$ if all complementary components of $\beta$ are
simply connected. We have

\begin{lem}\label{surfaces}
  Let $(\Sigma,\sigma_n)$ be a sequence of hyperbolic surfaces of uniformly
  bounded diameter. Let $\beta_n\subset (\Sigma,\sigma_n)$ 
  be a simple closed multicurve
  whose length tends to infinity with $n$ and assume that
  the same holds true for the length of any simple closed curve
  disjoint from $\beta_n$.
  Then up to passing to a subsequence and
  the action of the mapping class group, 
  the triple $(\Sigma,\sigma_n,\beta_n)$ converges
in the geometric topology to a triple
  $(\Sigma,\sigma,\beta)$ where $\sigma$ is a hyperbolic metric
  on $\Sigma$ of uniformly bounded diameter and $\beta$ is a
  geodesic lamination which fills $\Sigma$.
\end{lem}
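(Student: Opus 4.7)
The plan is to reduce the statement to a compactness argument in moduli space combined with Hausdorff compactness for geodesic laminations, and to extract the filling property of the limit from the length hypothesis on disjoint curves.

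First, I would observe that by the collar lemma for hyperbolic surfaces, if a simple closed geodesic on $(\Sigma,\sigma_n)$ has length $\ell$, then it admits an embedded annular neighborhood of width comparable to $\log(1/\ell)$. Since $\mathrm{diam}(\Sigma,\sigma_n)$ is uniformly bounded, this forces the systole of $\sigma_n$ to be bounded from below by a uniform positive constant. Hence, after identifying the $\sigma_n$ with their classes in moduli space, they all lie in a fixed compact subset by Mumford compactness. Passing to a subsequence and pulling back by mapping classes $\phi_n$, we can assume that $\phi_n^*\sigma_n$ converges smoothly to a hyperbolic metric $\sigma$ on $\Sigma$, which also has uniformly bounded diameter.

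Next, I would replace $\beta_n$ by $\phi_n^{-1}(\beta_n)$ (still called $\beta_n$) and view these as compact subsets of the metric space $(\Sigma,\sigma)$ once $n$ is large (using the bilipschitz comparisons between $\sigma$ and $\phi_n^*\sigma_n$ on all of $\Sigma$). Since the space of closed subsets of a compact metric space is compact in the Hausdorff topology, passing to a further subsequence gives a Hausdorff limit $\beta\subseteq\Sigma$. Standard facts about limits of geodesic multicurves with respect to converging hyperbolic metrics (see \cite{CEG06} or the discussion at the beginning of \Cref{lengthbounds}) show that $\beta$ is a geodesic lamination for $\sigma$. This addresses the existence of a limit triple $(\Sigma,\sigma,\beta)$; the hypothesis that $\ell_{\sigma_n}(\beta_n)\to\infty$ is consistent with this since $\beta_n$ can accumulate onto a lamination with irrational or minimal components.

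The main step, and the one I expect to require the most care, is to verify that $\beta$ fills $\Sigma$. I would argue by contradiction: if $\beta$ does not fill, then some complementary component of $\beta$ in $\Sigma$ is not a disk or a once-punctured disk, and hence contains an essential simple closed curve $\gamma$. Its $\sigma$-geodesic representative $\gamma^*$ is a simple closed geodesic in $\Sigma\setminus\beta$, so there is a positive $\eta>0$ with $d_\sigma(\gamma^*,\beta)\geq 2\eta$. By Hausdorff convergence $\beta_n\to\beta$ together with the bilipschitz comparison $\phi_n^*\sigma_n\to\sigma$, for $n$ large enough we have $d_{\phi_n^*\sigma_n}(\gamma^*,\beta_n)\geq\eta$, so $\gamma^*$ represents a simple closed curve in $\Sigma$ disjoint from $\beta_n$. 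Its length with respect to $\phi_n^*\sigma_n$ tends to $\ell_\sigma(\gamma^*)<\infty$, which is a mapping class group invariant statement and hence says the same about $\phi_n^{-1}(\gamma^*)\subset\Sigma\setminus\beta_n$ in the original metric $\sigma_n$. This contradicts the assumption that every simple closed curve disjoint from $\beta_n$ has $\sigma_n$-length tending to infinity, completing the proof. The delicate point is ensuring that the perturbation of $\gamma^*$ can indeed be realized as a simple closed curve genuinely disjoint from $\beta_n$; this is where uniform control on the Hausdorff distance and the bilipschitz constants of the comparison maps between $\sigma$ and $\phi_n^*\sigma_n$ enters crucially.
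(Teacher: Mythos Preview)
Your overall strategy matches the paper's: use Mumford compactness to extract a limiting metric, Hausdorff compactness to extract a limiting lamination, and argue filling by contradiction. However, there is a genuine gap in the filling step.

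You claim that the $\sigma$-geodesic representative $\gamma^*$ of an essential curve in a non-simply-connected complementary region $C$ of $\beta$ lies in $\Sigma\setminus\beta$ and hence is at positive distance from $\beta$. This is false when $C$ is an annulus or a pair of pants: in those cases every essential simple closed curve in $C$ is peripheral, hence homotopic in $\Sigma$ to a closed boundary leaf $\alpha$ of $\beta$, and then $\gamma^*=\alpha\subset\beta$. Your positive-distance argument does not apply, and the sentence you flag as delicate (realizing $\gamma^*$ disjointly from $\beta_n$) is not the actual obstruction.

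The paper treats this case separately. Once one knows $\alpha$ is a closed leaf of $\beta$, one argues that $\beta_n$ must intersect $\alpha$ essentially for large $n$: otherwise $\alpha$ can be realized disjointly from the multicurve $\beta_n$ (it is either already disjoint, or a component of $\beta_n$ and a push-off is disjoint), so the length hypothesis forces $\ell_{\sigma_n}(\alpha)\to\infty$, contradicting $\ell_{\sigma_n}(\alpha)\to\ell_\sigma(\alpha)<\infty$. Since the $\beta_n$ cross $\alpha$, the Hausdorff limit $\beta$ meets both components of $A\setminus\alpha$ for every annular neighborhood $A$ of $\alpha$, so $\alpha$ is not isolated on either side. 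This rules out the existence of the annular (or pair-of-pants) complementary region $C$ abutting $\alpha$. Adding this paragraph to your argument would close the gap.
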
  
\begin{proof}
Since by assumption the diameters of the hyperbolic metrics $\sigma_n$ 
are bounded from above by a universal constant
and since the mapping class group acts cocompactly on the thick part of
Teichm\"uller space, up to the action of 
the mapping class group we may assume that the hyperbolic metrics 
$\sigma_n$ converge in Teichm\"uller space to a hyperbolic metric 
$\sigma$. 

The space of geodesic laminations on 
$(\Sigma,\sigma)$
equipped with the Hausdorff topology on compact subsets of $\Sigma$ 
is compact. Thus up to passing to 
a subsequence, the geodesic laminations $\beta_n$ converge as $n\to \infty$ 
in the Hausdorff topology to a 
geodesic lamination $\beta$. We have to show that $\beta$ fills $\Sigma$.

Namely, otherwise there is a simple closed curve $\alpha\subset \Sigma$ disjoint 
from $\beta$. As $\beta_n\to \beta$ in the Hausdorff topology, either the curve 
$\alpha$ is disjoint from $\beta_n$ for all sufficiently large $n$, or $\alpha$ is a component of
$\beta$. Now the length of $\alpha$ for the metric $\sigma$ is close to the length of 
$\alpha$ for $\sigma_n$ and hence by the assumption that the 
$\sigma_n$-length of 
any closed curve disjoint from $\beta_n$ tends to infinity with $n$, the curve $\alpha$ can 
not be disjoint from $\beta_n$ for large $n$. 

We conclude that any simple closed curve $\alpha$ which is disjoint from $\beta$ is a component of 
$\beta$. Moreover, as $\beta$ is a limit in the Hausdorff topology of 
simple closed curves with an essential intersection with $\alpha$, if $A\subset \Sigma$ is any
annular neighborhood of $\alpha$ then $\beta$ intersects both components of 
$A-\alpha$. But this just means that $\beta$ fills $\Sigma$ and completes the proof of 
the lemma.
\end{proof}  

%For an essential subsurface $Y$ of $\Sigma$, a path in ${\cal L}(\partial Y)$ is called
%\emph{simplicial} if it consists of pleated surfaces connected by a diagonal exchange path
%as in \Cref{homotopy}. 

\begin{lem}\label{allconverge}
For $D>0, b>D$ there exists a number $R_5=R_5(D,b)>0$ with the following property.
Let $Y\subset \Sigma$ be an essential subsurface 
with $d_{\cal C\cal G}(\partial Y,{\cal D}_1\cup {\cal D}_2)\geq p$ 
and 
let $(\Sigma,g_s)\subset {\cal L}(\partial Y)$ be a
simplicial path with the property that the diameter
of $\sigma(g_s)$ is at most $D$ for all $s$. Assume that 
the $\sigma(g_0)$-length of every simple closed curve on $\Sigma$ 
disjoint from $\partial Y$ is at least $R_5$.
Then the following holds true.
\begin{enumerate}
\item The $\sigma(g_1)$-length of every simple closed curve disjoint from 
$\partial Y$ is at least $b$.
\item If $Y$ is not an annulus then 
there exists a bridge arc for $Y$ of length at most
$\kappa_0$ for each of the metrics $\sigma(g_s)$. If $Y$ is an annulus
then there exists a simple closed curve crossing through $Y$ which 
intersects a minimal curve crossing through 
$\sigma(g_s)$ in at most 2 points for all $s$.
\end{enumerate}
\end{lem}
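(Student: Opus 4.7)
The natural approach is contradiction combined with geometric convergence, in the spirit of Lemmas \ref{nothinpart}, \ref{largediameter}, and \ref{shortdisjoint}. Suppose the lemma fails for some $D>0$, $b>D$. Then we obtain a sequence of counterexamples: essential subsurfaces $Y_n\subset\Sigma$ with $d_{\cal C\cal G}(\partial Y_n,{\cal D}_1\cup{\cal D}_2)\geq p$, simplicial paths $g_s^n\subset{\cal L}(\partial Y_n)$ with $\mathrm{diam}(\sigma(g_s^n))\leq D$ for all $s$, and the $\sigma(g_0^n)$-length of every simple closed curve disjoint from $\partial Y_n$ at least $n$, yet either some simple closed curve disjoint from $\partial Y_n$ has $\sigma(g_1^n)$-length less than $b$, or no bridge arc (or curve crossing through $Y_n$) of length $\leq\kappa_0$ persists along the whole path.

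First I would apply Lemma \ref{surfaces} at $s=0$: after passing to a subsequence and composing with mapping class elements $\varphi_n$, the triples $(\Sigma,\varphi_n^*\sigma(g_0^n),\varphi_n^*(\partial Y_n))$ converge in the geometric topology to a triple $(\Sigma,\sigma_0,\beta_0)$ in which $\beta_0$ fills $\Sigma$. Because $\beta_0$ fills, for any prescribed $\kappa_0$ every complementary component of $\partial Y_n$ (for the metric $\sigma(g_0^n)$) has diameter less than $\kappa_0/3$ once $n$ is large, so Lemma \ref{lengthbound} already produces a bridge arc $\tau_n\subset Y_n$ of $\sigma(g_0^n)$-length at most $\kappa_0$, with both endpoints on $\partial Y_n$; if $Y_n$ is an annulus, the same argument gives a simple curve crossing $\partial Y_n$ of bounded length. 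This settles the existence of the desired arc or curve \emph{at} $s=0$; the task is to propagate it along the entire simplicial path.

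To do this I would parametrize the simplicial path as a finite concatenation of diagonal exchange paths from the proof of Lemma \ref{homotopy} and use a \emph{second} geometric convergence argument, now at a violating parameter. Namely, if for each $n$ there is $s_n\in[0,1]$ at which $\tau_n$ (viewed in its fixed homotopy class modulo $\partial Y_n$, as in Remark \ref{pleatedhomotopy}) has $\sigma(g_{s_n}^n)$-length larger than $\kappa_0$, then extract a subsequence so that $(\Sigma,\varphi_n^*\sigma(g_{s_n}^n),\varphi_n^*\partial Y_n)$ converges; by the bounded diameter hypothesis and the fact that the $\sigma(g_{s_n}^n)$-length of $\partial Y_n$ equals its length in $M$ (and hence equals the length in $\sigma(g_0^n)$, which blows up), one again obtains, via Lemma \ref{surfaces}, a filling limit lamination $\beta_0^\prime$. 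But $\beta_0^\prime$ is identified with the limit of $\varphi_n^*\partial Y_n$, which by the homotopy invariance in Remark \ref{pleatedhomotopy} is carried to $\beta_0$ by a homeomorphism of $\Sigma$ preserving the homotopy class of $\tau_n$. This forces $\tau_n$ to be short in the limit, contradicting the choice of $s_n$. The same limit argument handles part (1): if a simple closed curve $\alpha_n$ disjoint from $\partial Y_n$ had $\sigma(g_1^n)$-length $\leq b$, then after normalizing and passing to a subsequence it would produce in the geometric limit at $s=1$ a simple closed curve of bounded length disjoint from the filling lamination $\beta_0^\prime$, an impossibility.

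The main obstacle is that a simplicial path in ${\cal L}(\partial Y)$ carries no a priori combinatorial bound on its length, so direct continuity estimates on curve lengths are unavailable. The bounded diameter assumption along the \emph{entire} path is what allows one to apply Lemma \ref{surfaces} at every parameter and to identify the filling limit at $s_n$ with the filling limit at $s=0$ via the isometrically embedded $\partial Y_n$; once this identification is in place, the rigidity of filling laminations (they admit no disjoint short closed curves and pin down a unique homotopy class of short bridge arc in each complementary region) closes the argument. The secondary subtlety is ensuring that one set of normalizing mapping class elements $\varphi_n$ works simultaneously at $s=0$ and $s=s_n$; this follows because $\varphi_n^*\partial Y_n$ is a fixed multicurve at both parameters and determines the normalization up to finite ambiguity, which can be absorbed by a further diagonal extraction.
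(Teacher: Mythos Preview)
Your overall strategy---contradiction, geometric convergence, and Lemma \ref{surfaces} to produce a filling limit lamination---matches the paper's. The genuine gap is in your synchronization step. You claim that ``$\varphi_n^*\partial Y_n$ is a fixed multicurve at both parameters and determines the normalization up to finite ambiguity,'' but this is false: two hyperbolic metrics of diameter $\leq D$ sharing the same long geodesic multicurve can be arbitrarily far apart in Teichm\"uller space (for instance, differ by arbitrary powers of Dehn twists along components of $\partial Y_n$, or by mapping classes supported on complementary pieces). So there is no reason your normalization $\varphi_n$, chosen so that $\varphi_n^*\sigma(g_0^n)$ converges, also forces $\varphi_n^*\sigma(g_{s_n}^n)$ to converge; and without this, you cannot compare the two limiting filling laminations as \emph{marked} laminations on $\Sigma$, which is exactly what you need to derive a contradiction from a short curve disjoint from one of them.

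The paper resolves this by passing through the target 3-manifold rather than working purely on the surface. One first extracts a geometric limit $(M_n,z_n)\to (M,z)$ of the ambient manifolds and uses the almost isometric maps $k_n$ to push all of the pleated surfaces $g_n^s$ into $M$. The key point---coming from Lemma \ref{homotopy} and Remark \ref{pleatedhomotopy}---is that for each fixed $n$ all the maps $g_n^s$ lie in the \emph{same marked homotopy class} $\Sigma\to M_n$, and this passes to a single marked homotopy class $\Sigma\to M$. Consequently the geodesic multicurves $\partial\hat Y_n\subset M$ are well-defined independently of $s$, and any two subsequential limits (at $s=0$ and at $s=1$, say) define the same \emph{marked} geodesic lamination on $\Sigma$. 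For part (1) this immediately contradicts the existence of a bounded-length curve $c$ disjoint from $\partial\hat Y_n$ at $s=1$, since the filling limit at $s=0$ forces $c$ to intersect $\partial\hat Y_n$ topologically for large $n$. For part (2), the coincidence of the marked filling limits lets one choose a complementary ideal polygon and asymptotic boundary leaves to manufacture a single bridge arc that is short for both metrics. Your proposal would become correct if you replace the surface-level normalization argument with this 3-manifold rigidity; the rest of your outline (including the use of Lemma \ref{surfaces} at the violating parameter) is sound.
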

\begin{proof}
  Again we argue by contradiction and we assume that there are $D>0,b>0$ 
  such that a number $R_5=R_5(D,b)>0$ with property (1) in the lemma does not exist. 
 We then obtain a sequence 
  of counter examples, consisting of the following data.
  \begin{enumerate}
  \item[(a)] A hyperbolic $3$-manifold $M_n$ with Heegaard surface 
  $\Sigma$ and an essential subsurface $Y_n\subset \Sigma$ 
  with $d_{\cal C\cal G}(\partial Y_n,{\cal D}_1\cup {\cal D}_2)\geq p$ 
  whose boundary
  has diameter at most $D$ in $M_n$. 
  \item[(b)] A simplicial path $(\Sigma,g_n^s)\subset {\cal L}(\partial Y_n)$ 
$(s\in [0,1])$ such that the $\sigma(g_n^s)$-diameter of $\Sigma$ is bounded from 
above by $D$ for all $n,s$ and that the $\sigma(g_n^0)$-length 
of every simple closed curve on $\Sigma$ disjoint from $\partial Y_n$ 
tends to infinity with $n$.
\item[(c)] A simple closed curve $\alpha_n$ disjoint from $\partial Y_n$ whose
$\sigma(g_n^1)$-length is at most $b$.
\end{enumerate} 
%We also assume that there exists no bridge arc for $Y_n$ 
%of $\sigma(g_n^0)$-length smaller than
%$\kappa_0$ whose $\sigma(g_n^1)$-length has the same property. 

Choosing a point $z_n\in M_n$ on the geodesic representative 
of $\partial Y_n$,
by passing to a subsequence we may assume
 that the pointed hyperbolic manifolds $(M_n,z_n)$ converge in the pointed 
 geometric topology to a pointed 
 hyperbolic $3$-manifold $(M,z)$. Note to this
 end as before that the injectivity radius of $M_n$ at $z_n$ is bounded from below 
 by a universal constant.

 By assumption, the diameters of the hyperbolic  
 metrics $\sigma(g_n^s)$ are uniformly
 bounded. Thus for each $n$ 
 the images $g_n^s(\Sigma)\subset M_n$ 
 are contained in a 
 compact subset of $M_n$ of diameter bounded from above by a constant independent 
 of $n$. Namely, the maps $g_n^s$ are one-Lipschitz and their images 
 $g_n^s(\Sigma)$ 
 contain the geodesic representatives of $\partial Y_n$.

 Since for all $n$ the maps $g_n^s$ are homotopic within a fixed compact subset of 
 $M_n$, and by Lemma \ref{homotopy} and Remark \ref{pleatedhomotopy} they
 all define the same marked homotopy class of maps $\Sigma\to M_n$, 
 up to passing to a subsequence and the action of the mapping class group,
 the almost isometric maps $k_n:U_n\subset M\to M_n$ whose 
 existence follows from the assumption on geometric convergence $M_n\to M$
 define a fixed marked homotopy class of maps 
 $h:\Sigma\to M$.
  If $(\Sigma,g)$ is any geometric limit of 
 one of the maps $(\Sigma,g_n^s)$ as $n\to \infty$, then this limit is contained
 in this fixed marked homotopy class.  

For large enough $n$, the geodesic multicurve $\partial Y_n$ 
of uniformly bounded diameter in $M_n$ defines 
via the almost isometric map $k_n$ 
a geodesic multicurve $\partial \hat Y_n$ 
in $M$ which is the image of $\partial Y_n$ under the preferred homotopy class
of maps $\Sigma\to M$. The length in $M$ of these multicurves tends to 
infinity as $n\to \infty$. Using the preferred homotopy class of maps 
$\Sigma\to M$, for large enough $n$ each pleated surface 
$(\Sigma,g_n^s)$ in the simplicial path $g_n^s\subset {\cal L}(\partial Y_n)$  
determines via the map $k_n$ 
a pleated surface $(\Sigma,\hat g_n^s)$ in $M$. In particular, this holds true for 
$s=0,1$. Furthermore, by the explicit construction of simplicial paths in 
${\cal L}(\partial Y_n)$, we also obtain a corresponding simplicial path $\hat g_n^s
\subset {\cal L}(\partial \hat Y_n)$. In particular, the maps 
$\hat g_n^0, \hat g_n^1:\Sigma\to M$ are homotopic relative to the common
pleating lamination $\partial \hat Y_n$.

Using once more the almost isometric maps $k_n$, 
the length of any simple closed curve on $(\Sigma,\sigma(\hat g_n^0))$ disjoint from
$\partial \hat Y_n$ tends to infinity with $n$. Thus by Lemma \ref{surfaces},
up to passing to another subsequence we may assume that the 
multicurves $\partial \hat Y_n$ converge as $n\to \infty$ in the geometric topology 
to a geodesic lamination 
$\hat \mu^0$ on $\Sigma$ which fills $\Sigma$. 

On the other hand, by assumption, for each $n$ there exists a simple  
closed curve on 
$(\Sigma,\sigma(g_n^1))$ of length at most $b$ which is disjoint from $\partial Y_n$.
Via the almost isometric maps $k_n$, for large enough $n$ the same holds true
(for a perhaps slightly larger constant) for $(\Sigma,\sigma(\hat g_n^1))$. 
By passing once more to a subsequence, this property passes on to a
limiting pleated surface $(\Sigma,\hat g^1)$. In other words, there exists 
a simple closed curve $c$ on $\Sigma$ which is disjoint from a limit
$\hat \mu^1$ of the geodesic laminations $\partial \hat Y_n$ 
on $\Sigma$. Since $\hat \mu^1$ is a limit of 
$\partial \hat Y_n$ in the Hausdorff topology, we deduce that 
$c$ is disjoint from $\partial \hat Y_n$ for all large enough $n$.

But a limiting lamination $\hat \mu^0$ for 
$\sigma(\hat g_n^0)$ fills $\Sigma$. Since by Remark \ref{pleatedhomotopy}
the pleated surfaces $\hat g_n^0$ and $\hat g_n^ 1$ induce the
same maps $\pi_1(\Sigma)\to \pi_1(M)$ as marked homomorphisms, 
this implies that $\partial \hat Y_n$ has 
an essential intersection with $c$ for all large enough $n$. 
Recall that this is a topological property. This is a contradiction
and yields that there exists a number $R_5=R_5(D,b)>0$ for which property (1) 
stated in the proposition holds true.

We are left with showing that up to perhaps enlarging $R_5$, property (2) is satisfied as well. 
Again we argue by contradiction and we assume that the statement does not hold 
true. Then there is a sequence of counter examples with properties (a) and (b) from the beginning
of this proof and that moreover no such bridge arcs or simple closed curves exist.
By what we established so far, we may assume that as $n\to \infty$ 
and up to the action of the mapping class group, the hyperbolic 
metrics $\sigma(g_n^0)$ and $\sigma(g_n^1)$ on $\Sigma$ 
converge in Teichm\"uller space 
to metrics $\sigma^0,\sigma^1$, and the geodesic laminations 
$\beta_n\subset (\Sigma,\sigma(g_n^i))$ $(i=0,1)$ converge
to laminations $\hat \mu^0,\hat \mu^1$ which fill $\Sigma$. 
The pleated surfaces $g_n^0,g_n^1$ converge in the pointed
geometric topology to pleated surfaces 
$g^0,g^1:\Sigma\to M$ in the same homotopy class which 
map $\mu^0,\mu^1$ leafwise isometrically to the same geodesic lamination
$\mu\subset M$. This lamination is a geometric limit of the geodesic representatives
of the preimages $\hat \beta_n$ of the geodesics $\beta_n$ under the almost
isometric metric maps $k_n$ which define the geometric convergence. 

It follows from the above discussion that the laminations
$\mu^ 0,\mu^1$ coincide as marked geodesic lamination on $\Sigma$.
Denote this lamination by $\mu$ for convenience.
Let $\hat g^0,\hat g^1$ be two limiting pleated surfaces which are limits of 
the sequence $g_n^0,g_n^1$, respectively. 
Let $Q\subset \Sigma$ be a complementary component of the filling geodesic 
lamination $\mu$ which is contained in the pleating lamination for $\hat g^0,\hat g^1$
and let 
$\ell_1,\ell_2$ be two oriented boundary leaves of $Q$ which are backward asymptotic. 
Then for a given $\epsilon >0$, there are points $x\in \ell_1,y\in \ell_2$ of 
distance at most $\epsilon/2$ for both $\hat g^0,\hat g^1$. For large enough $n$
such that $\hat g^i(\Sigma)$ is contained in the domain of the map $k_n$, 
there are points on $\partial Y_n$ close to the images of
$x,y$ under $k_n$ of distance at most $\epsilon$ for both $g_n^0,g_n^1$ and 
such that there exists a bridge arc with endpoints on $\partial Y_n$ which is of length 
at most $\epsilon$ for both $g_n^0,g_n^1$. If this bridge arc is contained in 
$Y_n$ then this is a contradiction. This is in particular the case if 
$\partial Y_n$ is a non-separating simple closed curve.

We  are left with showing that the latter property can always be assumed. Namely,
if $Y_n$ is not an annulus then 
$Y_n\subset \Sigma$ is a subsurface of negative Euler characteristic. 
The pleating lamination $\lambda_n$ for $(\Sigma,g_n^0)$ 
decomposes $Y_n$ into ideal triangles. A limiting
ideal triangle is contained in the limit of the subsurfaces $Y_n$ and hence the above construction
applied to a complementary polygon which is contained in the limit of the surfaces
$Y_n$ yields the desired property. 
This completes the proof of the lemma in the case that $Y$ is not an annulus. 

The argument for the case
that $Y$ is an annulus is completely analogous and will be omitted. 
\end{proof}

The next proposition combines what we established so far 
to a subsurface projection bound for subsurfaces with 
large length geodesic realization. 
In its formulation,
the constants $R_2>0,k_2>0,p>0$ are as in Lemma \ref{largediameter}. 

\begin{prop}\label{diameter}
There exists a
number $R_6=R_6(\Sigma)>R_2$ with the
following property. Let $Y\subset \Sigma$ be a proper essential
subsurface. Assume that
$d_{\cal C\cal G}(\partial Y,{\cal D}_1\cup
{\cal D}_2)\geq p$, that the diameter in $M_f$ of each 
component of $\partial Y$ is at most $R_2$ and that 
$\partial Y$ contains a component
$\beta$ of length at least $R_6$; then 
\[{\rm diam}_Y({\cal D}_1\cup {\cal D}_2)\leq k_2.\]
\end{prop}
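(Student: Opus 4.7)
The plan is to mirror the covering argument from the proof of Lemma \ref{length}, replacing the direct use of a short diskbounding curve by the more refined tools Corollary \ref{shortdisjoint} and Lemma \ref{allconverge}. First I would fix $\gamma_i \in \mathcal{D}_i$ $(i=1,2)$, use Lemma \ref{projectionbound1} (or Lemma \ref{projectionbound3} if $Y$ is an annulus) to construct pleated surfaces $g_0, g_1 \in \mathcal{P}(\partial Y)$ with associated minimal proper arcs (or minimal crossing curves) $\tau_0, \tau_1$ satisfying $d_Y(\gamma_i, \tau_i) \le D_1$ (respectively $D_2$), and connect $g_0, g_1$ by a simplicial path $g_s \in \mathcal{L}(\partial Y)$ via Lemma \ref{homotopy}. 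I would then choose $R_6$ large enough that $\beta$ cannot be a core curve of any Margulis tube of $M_f$, that the case where $\beta$ enters the $\nu_1$-thin part of $M_f$ away from the core is already handled by Lemma \ref{nothinpart} (giving the bound $k_1$), and that $R_6$ exceeds both $R_4(b)$ from Corollary \ref{shortdisjoint} and the threshold $R_5(D,b)$ from Lemma \ref{allconverge} for a Bers-like constant $b$ and a uniform diameter bound $D$ coming from $\mathrm{diam}_{M_f}(\beta) \le R_2$.

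Next I would dichotomize along the simplicial path. If for every $s \in [0,1]$ the metric $(\Sigma, \sigma(g_s))$ admits a simple closed curve disjoint from $\partial Y$ of length at most $b$, Corollary \ref{shortdisjoint} applies directly and produces a fixed bridge arc (or crossing curve) for $Y$ short for every $\sigma(g_s)$. The incompressibility hypothesis of Corollary \ref{shortdisjoint} is then derived from $d_{\mathcal{C}\mathcal{G}}(\partial Y, \mathcal{D}_1 \cup \mathcal{D}_2) \ge p \ge 4$ together with Lemma \ref{pleatedmargulis}, since cases (i) and (iii) of that lemma would each yield a diskbounding curve at bounded $\mathcal{C}\mathcal{G}$-distance from $\partial Y$, contradicting the hypothesis. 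If instead some $\sigma(g_{s_0})$ has every disjoint simple closed curve of length at least $R_5(D,b)$, I would split the path at $s_0$ and apply Lemma \ref{allconverge} in both directions from $s_0$, using the reversibility of simplicial paths. By iterating this dichotomy I would extract a finite partition $0 = s_0 < \dots < s_n = 1$ and bridge arcs $\alpha_i$ short on $[s_i, s_{i+1}]$; property (P1) of $\kappa_0$ forces consecutive $\alpha_i, \alpha_{i+1}$ to be disjoint up to homotopy (both being short for $\sigma(g_{s_{i+1}})$), so $d_Y(\tau_0, \tau_1) \le 2(n+1)$. Finally $n$ is bounded by the Margulis-lemma counting from the proof of Lemma \ref{length}: each $\alpha_i$ paired with a subarc of $\partial Y$ yields a based loop in $M_f$ of length controlled by $R_2$ and $\kappa_0$, and Lemma \ref{incompressiblesub} ensures that homotopically distinct classes in $\Sigma$ produce pairwise non-commuting elements of $\pi_1(M_f)$ of uniformly bounded translation length, whose number is bounded by the Margulis lemma. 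Combined with Lemmas \ref{projectionbound1}/\ref{projectionbound3}, this yields a uniform upper bound on $\mathrm{diam}_Y(\mathcal{D}_1 \cup \mathcal{D}_2)$, which after adjusting constants can be taken to equal $k_2$.

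The main obstacle I foresee is producing the uniform diameter bound $D$ needed to invoke Lemma \ref{allconverge}. A priori the $\sigma(g_s)$-diameter of $\Sigma$ grows with $\ell_f(\beta) \ge R_6$, since $\beta$ is mapped isometrically to a geodesic of that length on the pleated surface; the hypothesis $\mathrm{diam}_{M_f}(\beta) \le R_2$ must be used essentially, together with the one-Lipschitz property of maps in $\mathcal{L}(\partial Y)$ and an analysis of how $\beta$ winds inside the pleated surface, to extract a $D$ depending only on $\Sigma$ and $R_2$. A secondary subtlety is closing the potential gap between the two regimes of the dichotomy: the interval of lengths $[b, R_5(D,b))$ for disjoint curves is covered neither by Corollary \ref{shortdisjoint} nor by Lemma \ref{allconverge} in a single application, so the thresholds $b$ and $R_5(D,b)$ must be coordinated (iterating $b \mapsto R_5(D,b)$) to ensure that at every point of the path exactly one of the two regimes applies on a neighborhood, and that the number of regime transitions is bounded by the Margulis count rather than producing uncontrolled oscillation.
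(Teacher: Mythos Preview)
Your overall architecture (simplicial path, partition, appeal to Corollary~\ref{shortdisjoint} and Lemma~\ref{allconverge}) matches the paper's, but two steps fail as written.

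First, you cannot derive the incompressibility hypothesis of Corollary~\ref{shortdisjoint} from $d_{\cal C\cal G}(\partial Y,{\cal D}_1\cup{\cal D}_2)\geq p$ and Lemma~\ref{pleatedmargulis}. That lemma concerns the situation where the geodesic $\partial Y\subset M_f$ enters the $\nu_0$-thin part of a Margulis tube \emph{in $M_f$}; it says nothing about whether core curves of $\sigma(g_s)$-Margulis tubes on $\Sigma$ are compressible. A short diskbounding curve $c$ for $\sigma(g_s)$ necessarily crosses $\partial Y$ (since the distance is $\geq 3$), so its existence does not put $c$ at small ${\cal C\cal G}$-distance from $\partial Y$ and gives no contradiction. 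The paper does not exclude this regime at all: it introduces a separate class of ``even'' subintervals on which some $\sigma(g_s)$-Margulis tube has a diskbounding core of length $\leq\kappa_0/10$, and handles these via Lemma~\ref{thin}. The key observation (which you are missing) is that because the Hempel distance is at least $4$ and consecutive short diskbounding cores are disjoint, \emph{all} such cores along the path lie in a single disk set ${\cal D}_j$; hence all the resulting bridge arcs are within diameter $p$ of one another in the arc graph of $Y$, with no counting needed.

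Second, your final Margulis count does not go through here. In Lemma~\ref{length} that argument used $\ell_f(\partial Y)\leq R$ to get based loops in $M_f$ of uniformly bounded length. Now $\ell_f(\partial Y)\geq R_6$ is large: closing a bridge arc by a subarc of $\partial Y$ gives a loop of length comparable to $\ell_f(\beta)$, not to $R_2$; closing it instead by a short path in $M_f$ (using ${\rm diam}_{M_f}(\beta)\leq R_2$) gives a loop that is not a curve on $\Sigma$, so Lemma~\ref{incompressiblesub} no longer guarantees that distinct bridge arcs produce non-commuting elements. The paper sidesteps this entirely: rather than bounding the number of subintervals, it shows that every short bridge arc along the path is either a persisting arc (on an ``odd'' subinterval, via Corollary~\ref{shortdisjoint} or Lemma~\ref{allconverge}) or at distance $\leq 1$ from a projection of a curve in the fixed ${\cal D}_j$. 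Chaining through the partition then bounds $d_Y(\tau_0,\tau_1)$ directly. Your nested choice of thresholds $b_0,\,b_1=R_5(R_2,b_0),\,b_2=R_5(R_2,b_1)$ (which the paper uses) is the right fix for the oscillation issue you flag, but it only pays off once the diskbounding regime has been isolated as above.
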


\begin{proof} We only show the proposition for non-annular subsurfaces,
the claim for annuli follows from exactly the same argument.

Thus let $Y$ be a proper essential 
subsurface of $\Sigma$ with $d_{\cal C\cal G}(\partial Y,
{\cal D}_1\cup {\cal D}_2)\geq p$. Assume that the diameter in 
$M_f$ of each component of $\partial Y$ is at most $R_2$ where
$R_2>0$ is as in Lemma \ref{largediameter}. 
%By Lemma \ref{nothinpart}, we may assume that each component of 
%$\partial Y$ of length at least $R_2$ 
%is contained in the $\nu_1$-thick part of $M_f$ where $\nu_1>0$ 
%is the constant introduced in that lemma.
%By Lemma \ref{length}, it suffices to assume that
%the length $\ell_f(\hat \partial Y)$ of $\hat \partial Y$ is
%bigger than a fixed a priori chosen constant.

Choose a number $b_0>0$ which is larger than $10$ times the Bers constant for 
$\Sigma$. For this number $b_0$ let $b_1=R_5(R_2,b)$ be as in Lemma \ref{allconverge}.
Define $b_2=R_5(R_2,b_1)$. Note that by Lemma \ref{allconverge}, if
$(\Sigma,g)$ is a pleated surface in the homotopy class of the inclusion of 
a Heegaard surface whose pleating lamination contains $\partial Y$ for an 
essential subsurface $Y$ of $\Sigma$, if
the diameter of $\sigma(g)$ is at most $R_2$, the length of $\partial Y$ is at least
$b_2$ and if there exists a simple closed curve disjoint from $\partial Y$ of length 
at most $b_1$, then all pleated surfaces in a path consisting of surfaces of diameter
at most $R_2$ contain a simple closed curve disjoint from $\partial Y$ of length at most
$b_2$. Let $R_6=R_4(b_2)$ be as in Proposition \ref{thurstonrelative2}.

Assume that the length of some component of $\partial Y$ is at least $R_6$.
Choose diskbounding simple closed curves $c_i\in {\cal D}_{i+1}$
$(i=0,1)$ and
use these curves to construct pleated surfaces
$g_0,g_1:\Sigma\to M$ with pleating lamimation defined
by spinning $c_i$ about $\partial Y$. Connect
$g_0$ to $g_1$ by a simplicial path $g_s\subset {\cal L}(\partial Y)$.
It follows from the assumption on
$Y$ that there exists a partition $0=t_0<\cdots <t_n=1$ of 
$[0,1]$ such that the path
$g_i=g\vert [t_{i-1},t_i]$ has one of the following properties.
\begin{enumerate}
\item If $i$ is even then for each $s\in [t_{i-1},t_i]$ 
there exists a Margulis tube for $\sigma(g_s)$ with core
  curve of length at most $\kappa_0/10$, and this core curve
  is diskbounding.
\item If $i$ is odd then 
either for each $s\in [t_{i-1},t_i]$ the diameter of $\sigma(g_s)$ is at most $R_2$ 
and for all $s$ there exists a simple closed curve disjoint from
$\partial Y$ whose $g_s$-length is at most $b_2$, or 
for all $s$ the shortest $\sigma(g_s)$-length 
of a simple closed curve disjoint from $Y$ is at least $b_1$.  
%\item The diameter of $\sigma(g_s)$ is smaller than
%  $b$ and the $\sigma(g_s)$-length of each simple closed
%  curve which is disjoint from $\partial Y$ is at least $R_3$.
\end{enumerate}

Now by Corollary \ref{shortdisjoint} and 
Lemma \ref{allconverge}, for each odd $i$ there exists a bridge 
arc for $Y$ of $\sigma(g_s)$-length smaller than $\kappa_0$
for each $s\in [t_{i-1},t_i]$. 
Furthermore, 
for each even $i$ there exists a bridge arc of length at most $\kappa_0$ 
contained in a diskbounding curve, and each of these
bridge arcs is a subarc of a curve from ${\cal D}_j$ for $j=1$ or $j=2$.
But as the distance in ${\cal C\cal G}(\Sigma)$ between
${\cal D}_1$ and ${\cal D}_2$ is at least 4 by assumption,
for consecutive even $i$ the disk sets which give rise to the 
short bridge arcs coincide. 
As a consequence,
either there is a short bridge arc persisting along the path,
or all the short bridge arcs are uniformly close
in the arc and curve graph of $Y$ 
to a bridge arc contained in a diskbounding curve from a fixed
disk set, say the set ${\cal D}_1$. This shows the proposition.
\end{proof}

\begin{proof}[Proof of \Cref{minsky}]
  Let $R=R_6>R_2$ where $R_2,R_6$ are as in Lemma \ref{largediameter}
  and Proposition \ref{diameter}.
For this number $R$ and the given number $\epsilon >0$ let $k=k(R,\epsilon)>0$ be as in Lemma
\ref{length}. Assume that the diameter of the subsurface projection of
${\cal D}_1\cup {\cal D}_2$ into $Y$ is at least $k$. By Lemma \ref{largediameter} and 
Proposition \ref{diameter}, we know that the length of $\partial Y$ is at most $R$. 
But then an application of Lemma \ref{length} shows that this length is in fact smaller than $\epsilon$.
This is what we wanted to show. 
\end{proof}

\section{Effective hyperbolization II}\label{convexcocompact}

The goal of this section is to complete the proof of
effective hyperbolization for closed 3-manifolds $M_f$ 
with large Hempel distance. Theorem \ref{relative} takes care of the case 
that a minimal geodesic in the curve graph of the Heegaard surface $\Sigma$
connecting the two disk sets ${\cal D}_1,{\cal D}_2$ for $M_f$ 
has a sufficiently long subsegment with bounded 
combinatorics (no large subsurface projections). Thus we are left with  
considering manifolds for which there are
proper strongly incompressible subsurfaces
$Y\subset \Sigma$ so that the diameter of the subsurface projection of
${\cal D}_1\cup {\cal D}_2$ into $Y$ is larger than some a
priori fixed constant.

Our strategy is to choose two of these subsurfaces of $\Sigma$, 
say the surfaces $Y_1,Y_2$, whose boundaries
are sufficiently far away in the curve graph of $\Sigma$ 
from both ${\cal D}_1\cup {\cal D}_2$ and from each other, and to   
choose a boundary curve $\alpha_1$ of $Y_1$.
Drilling the curve $\alpha_1$ from $M_f$ 
results in a non-compact
manifold $M_1$ with one end $C_1$ homeomorphic to $T^2\times (0,\infty)$
where $T^2$ denotes a 2-torus. Proposition 3.1 of \cite{FSV19} shows that 
the manifold $M_1$ is irreducible, atoroidal and Haken and hence it 
admits a complete finite volume hyperbolic metric
by Thurston's geometrization theorem for Haken manifolds \cite{T86}, \cite{Th86b}. 
The end $C_1$ of $M_1$ is a cusp for this hyperbolic 
metric. The torus $T^2=\partial C_1$ inherits a flat metric from the hyperbolic
metric on $M_1$.

By the Dehn filling theorem Theorem \ref{filling}, removal of 
$C_1$ and gluing a solid torus to the boundary $\partial C_1$ of $M_1-C_1$ 
in such a way that the meridian for the gluing is sufficiently long 
for the flat metric on $\partial C_1$ 
yields a closed hyperbolic manifold $N_1$. We show that 
for a suitable choice of the meridian for the gluing, the Heegaard surface 
$\Sigma$ for $M_f$ also is a Heegaard surface for $N_1$, and we can control
distances in the curve graph of $\Sigma$ and sizes of subsurface projections
for the disk sets of both $M_f$ and $N_1$. 
In particular, we observe that the Heegaard distance of $N_1$ coincides with the
Heegaard distance of $M_f$, and 
the diameter of the subsurface projection of the disk sets 
${\cal D}_1\cup {\cal D}_2$ of $M_f$ 
into the subsurface $Y_2$ of $\Sigma$ 
essentially coincides with the diameter of the subsurface projections of the disk
sets of $N_1$. 

By Theorem \ref{minsky}, the length of the boundary  
$\partial Y_2$ of $Y_2$ for the hyperbolic metric on $N_1$
is bounded from above by a constant only depending
on the size of the subsurface projection
of ${\cal D}_1\cup {\cal D}_2$ into $Y_2$,  
but not on the filling meridian defining $N_1$. Thus if the diameter of this
subsurface projection is large, then this length is smaller than
any a priori chosen constant.

Do the above construction with the manifold $M_f$ and a 
boundary curve $\alpha_2$ of the subsurface $Y_2$ of $\Sigma$. 
Drilling $\alpha_2$ from $M_f$ yields an irreducible atoroidal Haken manifold
$M_2$ with one end $C_2$ which admits a complete finite volume hyperbolic 
metric. Filling the cusp using a suitably chosen long meridian on the 
boundary of $C_2$ results in a hyperbolic manifold $N_2$. 
Using again Theorem \ref{minsky}, the length of the boundary
$\partial Y_1$ of $Y_1$ in $N_2$ is smaller than any a priori chosen constant 
provided that the diameter of the subsurface
projection of ${\cal D}_1\cup {\cal D}_2$ into $Y_1$ is sufficiently large,
independent of the choice of the filling of 
the cusp of $M_2$.

But this means the following.
Let $\hat M_f$ be the manifold obtained from $M_f$ by drilling both
curves $\alpha_1,\alpha_2$. This manifold admits a 
finite volume hyperbolic metric
with two rank two cusps $\hat C_1,\hat C_2$. There exists a uniform lower bound
for the lengths of the curves on the boundaries $\partial \hat C_1,
\partial \hat C_2$ of $\hat C_1,\hat C_2$ 
corresponding to the meridians for the filling of $\hat C_1,\hat C_2$ which
gives rise to $M_f$, and this lower bound only depends on the diameters of the subsurface 
projections of the disk sets of $M_f$ into $Y_1,Y_2$. respectively. 
As a consequence, we can use Theorem \ref{filling} to fill both cusps
and construct a hyperbolic metric on $M_f$.

To implement this strategy we have to assure that
suitably chosen Dehn surgeries about
a boundary curve 
of a strongly incompressible subsurface $Y\subset \Sigma$
yield manifolds $N$ with the same Heegaard surface $\Sigma$ as $M_f$, and we have to 
control the disk sets of the surgered manifold as well as their 
distances in the curve graph of 
$\Sigma$.

To set up this control we use Theorem 3.1 of 
\cite{MM00}. For a proper essential subsurface $Y\subset \Sigma$, 
we denote as before by $d_Y$ the distance in the arc and curve graph of $Y$.

\begin{thm}[Masur-Minsky]\label{subsurfaceprojection}
There exist constants $m=m(\Sigma)>0,p=p(\Sigma)<m$ with 
the following properties. Let $\alpha,\beta\in {\cal C\cal G}(\Sigma)$ be 
two simple closed curves and let $Y\subset \Sigma$ be a proper essential 
subsurface. 
If $d_Y(\alpha,\beta)\geq m$, then  any geodesic $\zeta:[0,n]\to  
{\cal C\cal G}(\Sigma)$ connecting $\alpha=\zeta(0)$ to 
$\beta=\zeta(n)$ has to pass through
a curve $\zeta(j)$ (for some $j\in [1,n-1]$)
which is disjoint from $Y$.
Furthermore, if $j\in [3,n-3]$ and if $a\in [0,j-3]$, $b\in [j+3,n]$ 
then 
\[ \vert d_Y(\zeta(a),\zeta(b))- d_Y(\alpha,\beta)\vert \leq p.\]
\end{thm}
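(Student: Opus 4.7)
The plan is to follow the original Masur-Minsky strategy linking geodesics in the curve graph to Teichm\"uller geodesics. The first step is to replace the combinatorial geodesic $\zeta$ by a Teichm\"uller geodesic: I pick marked hyperbolic metrics $X_0,X_1\in\mathcal{T}(\Sigma)$ on which $\alpha$ and $\beta$ have length bounded by a Bers constant, and let $G:[0,T]\to \mathcal{T}(\Sigma)$ be the Teichm\"uller geodesic from $X_0$ to $X_1$. By the quasi-geodesic property of the systole map recalled in \Cref{effective}, the composition $\Upsilon\circ G$ is a uniform unparameterized quasi-geodesic in ${\cal C\cal G}(\Sigma)$ from (a bounded neighborhood of) $\alpha$ to (a bounded neighborhood of) $\beta$. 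Hyperbolicity of ${\cal C\cal G}(\Sigma)$ then forces $\zeta$ to lie in a uniform Hausdorff neighborhood of $\Upsilon\circ G$, so every claim about indices of $\zeta$ can be translated, up to a universal additive error, into a claim about parameters of $G$.

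The second step is to control the subsurface projection $\pi_Y$ along this Teichm\"uller geodesic. A direct elementary argument shows that $\pi_Y$ is coarsely $2$-Lipschitz on the set of curves essentially intersecting $Y$: two disjoint simple closed curves on $\Sigma$ that both cross $Y$ have their $Y$-components disjoint in $Y$. Next, using the collar lemma for hyperbolic surfaces, one shows that whenever the hyperbolic length of $\partial Y$ on $G(t)$ exceeds a fixed threshold, every $G(t)$-short simple closed curve on $\Sigma$ projects to a uniformly bounded region of the arc and curve graph of $Y$. Hence the $\pi_Y$-image of the portion of $\Upsilon\circ G$ where $\partial Y$ is long has diameter bounded by a constant $p_0=p_0(\Sigma)$. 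On the complementary \emph{active interval} $I_Y\subset[0,T]$ where $\partial Y$ is short, $\partial Y$ itself appears (up to bounded distance in ${\cal C\cal G}(\Sigma)$) as the systole, so every $\Upsilon(G(t))$ with $t\in I_Y$ is at ${\cal C\cal G}(\Sigma)$-distance one from $\partial Y$, and therefore lies in the set $N(\partial Y)$ of curves disjoint from $Y$.

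These two observations combine to give both conclusions. For the main assertion, if $d_Y(\alpha,\beta)\geq m$ for $m$ larger than $2p_0$, the bounded contribution of the ``long $\partial Y$'' regime forces $I_Y\neq \emptyset$; fellow-traveling of $\zeta$ with $\Upsilon\circ G$ then provides an index $j\in[1,n-1]$ with $\zeta(j)$ disjoint from $Y$. For the \emph{furthermore} part, the set of such indices forms a coarsely connected subinterval of $\{0,\dots,n\}$ corresponding to $I_Y$; for $a\leq j-3$ and $b\geq j+3$ the subarcs $\zeta|[0,a]$ and $\zeta|[b,n]$ fellow-travel the two pieces of $\Upsilon\circ G$ where $\partial Y$ is long, and on these pieces $\pi_Y$ moves only an additive constant $p_0$, yielding $|d_Y(\zeta(a),\zeta(b))-d_Y(\alpha,\beta)|\leq p:=2p_0+O(1)$. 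The main obstacle is the quantitative bookkeeping: controlling how much $\pi_Y$ can drift along portions of $G$ where $\partial Y$ is long but some \emph{other} curve on $\Sigma$ becomes short (which is where the 2-Lipschitz property and Margulis-tube geometry on hyperbolic surfaces must be combined carefully), and converting coarse fellow-traveling between $\zeta$ and $\Upsilon\circ G$ into the precise additive constants $m$ and $p$.
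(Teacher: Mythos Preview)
The paper does not prove the first assertion at all: it is cited directly as Theorem~3.1 of \cite{MM00} (the Bounded Geodesic Image theorem). The only thing the paper justifies is the ``furthermore'' clause, and that argument is three lines of pure combinatorics. Any two vertices of $\zeta$ disjoint from $Y$ have distance at most~$2$ in ${\cal C\cal G}(\Sigma)$ (both are distance~$1$ from $\partial Y$), so there are at most three such vertices and they all lie in $[j-2,j+2]$. Hence for $a\le j-3$ the subgeodesic $\zeta|_{[0,a]}$ contains no curve disjoint from $Y$, and applying the cited theorem to this subgeodesic gives $d_Y(\zeta(0),\zeta(a))\le p/2$; likewise $d_Y(\zeta(b),\zeta(n))\le p/2$. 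The triangle inequality finishes.

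Your proposal instead attempts to reprove the Bounded Geodesic Image theorem from scratch via Teichm\"uller geodesics and the systole map. This is far more than the paper does, and it is also not how \cite{MM00} proves it (they use hierarchies of tight geodesics, not Teichm\"uller geometry). The route you sketch is closer in spirit to Rafi's later work relating subsurface projections to Teichm\"uller geodesics, and the key step you describe---``when $\partial Y$ is long on $G(t)$, short curves on $G(t)$ project to a uniformly bounded region of the arc and curve graph of $Y$''---is not a consequence of the collar lemma alone; it is essentially Rafi's bounded projection theorem, which requires substantial independent work. You flag this bookkeeping as the ``main obstacle,'' and indeed it is a genuine gap in your sketch. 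Even for the ``furthermore'' part, your argument routes through the active interval $I_Y$ and fellow-traveling, whereas the paper's argument is immediate from the first part applied to subgeodesics.
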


Here the last part of Theorem \ref{subsurfaceprojection} follows from the 
fact that a geodesic in ${\cal C\cal G}(\Sigma)$ can contain at most three
simple closed curves disjoint from a subsurface $Y$, and their mutual distance
is at most 2. Thus if $j\in [3,n-3]$ and if $a\in [0,j-3]$, $b\in [j+3,n]$ 
then up to adjusting constants,
Theorem 3.1 of \cite{MM00} shows that
$d_Y(\zeta(0),\zeta(a))\leq  p/2$, $d_Y(\zeta(b,\zeta(n))\leq p/2$ and hence
the statement follows from the triangle inequality.

Using the constants $m,p$ from Theorem \ref{subsurfaceprojection}, 
let us assume that $Y\subset \Sigma$ is a proper essential subsurface which is 
strongly incompressible for $M_f$ and such that the diameter of the 
subsurface projection of ${\cal D}_1\cup {\cal D}_2$ into $Y$ is at least
$2m$. Assume also that $d_{\cal C\cal G}(\partial Y,{\cal D}_i)\geq 3$. 
Let $\zeta:[0,n]\to {\cal C\cal G}(\Sigma)$ be a 
minimal geodesic in ${\cal C\cal G}(\Sigma)$ connecting
${\cal D}_1$ to ${\cal D}_2$. Choose 
\emph{markings} $\mu_1,\mu_2$ for $\Sigma$ whose pants decompositions
are composed of curves in ${\cal D}_1,{\cal D}_2$ and which contain 
$\zeta(0),\zeta(n)$. Here a marking  consists of a pants decomposition 
of $\Sigma$ together with a system of spanning curves, one for each component 
$\alpha$ 
of the pants decomposition $P$, which is disjoint from $P-\alpha$ and intersects
$\alpha$ in the minimal number of points (one or two, depending on the topological 
type of the component of $\Sigma-(P-\alpha)$ containing $\alpha$).

The marking graph is the graph whose vertices are markings and whose
edges are given by so-called \emph{elementary moves},
consisting of removal of one of 
the marking curves and replacing it by another curve while keeping all the 
remaining curves from the marking (see \cite{MM00} for a detailed discussion). 
Choose a simplicial path $\mu_s$ $(s\in [0,u])$
in the marking graph of $\Sigma$ so that each point of $\mu$
contains a point of $\zeta$. We require moreover that
the pants decomposition of the 
endpoints $\mu_0$ of $\mu$ consists of curves in the disk set
${\cal D}_1$ and contains $\zeta(0)$, and that the pants decomposition
of the endpoint $\mu_u$ of $\mu$ consists of curves in the disk set
${\cal D}_2$ and contains the endpoints of $\zeta(n)$ of $\zeta$. 
Since $\zeta$ passes through a curve disjoint from $Y$, we may assume that 
there exists a point in $\mu$ which contains the boundary of 
$Y$ as part of the pants decomposition.  
View the 3-manifold $M_f$ as being glued from two handlebodies 
$H_1,H_2$ of genus $g$ and the manifold $\Sigma\times [1,2]$, where 
$\Sigma\times \{1\}$ is equipped with the marking $\mu_0$, and 
$\Sigma\times \{2\}$ is equipped with the marking $\mu_u$. In this way 
the manifold $M_f$ is completely determined by the pair of 
 markings $(\mu_0,\mu_u)$ of $\Sigma$.

Let $T$ be a solid torus. Its boundary $\partial T$ 
contains the meridian as a distinguished
homotopy class of a simple closed curve $c$, characterized by
being  contractible in $T$. A \emph{longitude} for $T$ is a simple closed curve
on $\partial T$ which intersects $c$ in a unique point and is isotopic to 
the core curve of $T$, that is, to a generator of the fundamental group of $T$.

Let $\alpha$ be a boundary curve of the strongly incompressible subsurface 
$Y$ of $\Sigma$. Then $\alpha$ is not contractible in $M_f$ and hence it is the 
core curve of a solid torus $T\subset M_f$. 
Choose as a longitude on the boundary of $T$ 
the curve $\alpha
\subset \Sigma$. 
This construction associates to the torus $T\subset M_f$ with core curve $\alpha$ a 
preferred meridian-longitude pair $(c,\alpha)$ on $\partial T$.  
Theorem 6.2 of \cite{C96} now states the following.

\begin{thm}[Comar]\label{comar}
Let 
$(c,\alpha)$ be a preferred meridian-longitude pair in the boundary
of a tube in $M_f$ with core curve $\alpha\subset Y$. Let ${\cal T}_\alpha$ be the 
left Dehn twist about $\alpha\subset \Sigma$. Then the manifold defined by the 
pair of markings $(\mu_0, {\cal T}_\alpha^q \mu_u)$ is 
obtained from $M_f$ by $(1,q)$-Dehn surgery along $\alpha$ for all $q$.
\end{thm}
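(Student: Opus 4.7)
The plan is to exploit the classical correspondence between precomposition of a Heegaard gluing by a Dehn twist along a curve on the Heegaard surface and Dehn surgery along that curve in the resulting $3$-manifold. Viewing $M_f$ as $H_1 \cup (\Sigma\times[1,2]) \cup H_2$, where $\Sigma\times\{1\}$ carries the marking $\mu_0$ and $\Sigma\times\{2\}$ carries the marking $\mu_u$, the manifold defined by $(\mu_0, \mathcal{T}_\alpha^q\mu_u)$ is precisely obtained from $M_f$ by cutting along the intermediate surface $\Sigma\times\{3/2\}$ and regluing via $\mathcal{T}_\alpha^q$. This reduces the theorem to identifying this regluing with a Dehn surgery along $\alpha$ with coefficients $(1,q)$ relative to the preferred meridian-longitude pair $(c,\alpha)$.

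Next I would localize the regluing. Since $\mathcal{T}_\alpha^q$ is supported in an annular neighborhood $A$ of $\alpha$ in $\Sigma$, the modification of the gluing only affects the solid torus $V:=A\times[1,2]\subseteq \Sigma\times[1,2]$, whose core is $\alpha\times\{3/2\}$ and whose boundary is a torus. The regluing leaves $M_f\setminus V$ untouched and replaces $V$ by a new solid torus whose meridian disk is the image of the original meridian disk under $\mathcal{T}_\alpha^q$ on $\partial V$. Thus the surgery-theoretic content reduces to identifying the meridian and longitude of $V$ on $\partial V$ with those of the tubular neighborhood $T$ of $\alpha$, and then computing the action of $\mathcal{T}_\alpha^q$ on the meridian.

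For the identification of framings, note first that the curve $\alpha\times\{3/2\}\subseteq V$ is freely homotopic, hence isotopic in $M_f$, to the core of the tube $T$. The two solid tori $V$ and $T$ therefore have isotopic cores in $M_f$, so up to ambient isotopy we may identify $V=T$. Under this identification, the longitude $\alpha\times\{3/2\}\subseteq \partial V$ inherited from the Heegaard surface and the preferred longitude $\alpha\subseteq \partial T$ inherited from $\Sigma$ both arise as parallel copies of $\alpha$ pushed radially onto $\partial T$, and so they coincide. The meridian of $V$, being the boundary of a properly embedded disk in $V$ (of the form ``transversal in $A$'' times $[1,2]$), coincides with the meridian $c$ of $T$. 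Finally, acting by the left Dehn twist $\mathcal{T}_\alpha^q$ on $\partial V$ sends the meridian class $[c]$ to $[c]+q[\alpha]$ in $H_1(\partial V)$, which is precisely the surgery slope of $(1,q)$-Dehn surgery along $\alpha$ in the $(c,\alpha)$-framing.

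The main obstacle I anticipate is the framing bookkeeping in the third step: one has to verify that the canonical longitude obtained by pushing $\alpha$ from $\Sigma$ radially through $V$ and onto $\partial T$ really agrees with the preferred longitude $\alpha\subseteq \partial T$ used in the statement (and not some framing differing by integer twists along the core), and that the sign convention for the \emph{left} Dehn twist matches the orientation convention implicit in writing $(1,q)$. Both points are straightforward but require a careful choice of orientations on $\Sigma$, on $V$, and on the $[1,2]$-direction, together with the observation that the radial homotopy from $\Sigma$ to $\partial T$ inside $T$ is canonical up to isotopy and therefore does not introduce any extra twisting.
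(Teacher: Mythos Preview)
The paper does not prove this statement; it is quoted as Theorem~6.2 of Comar's thesis \cite{C96} and used as a black box. Your outline is the standard twist-versus-surgery correspondence and is essentially correct, so there is nothing in the paper to compare against.

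Two small points deserve tightening. First, you write ``the longitude $\alpha\times\{3/2\}\subseteq \partial V$,'' but $\alpha\times\{3/2\}$ is the \emph{core} of $V$, not a curve on $\partial V$; the surface-framing longitude on $\partial V$ is a parallel push-off such as a component of $\partial A\times\{3/2\}$ or $\alpha\times\{1\}$. Second, ``freely homotopic, hence isotopic'' is false for curves in a general $3$-manifold; here the isotopy from $\alpha\times\{3/2\}$ to $\alpha\subset\Sigma$ is immediate from the product structure on $\Sigma\times[1,2]$, and you should say so rather than invoke a nonexistent general principle. Finally, the phrase ``acting by $\mathcal{T}_\alpha^q$ on $\partial V$'' is imprecise: the twist acts on the splitting annulus $A\times\{3/2\}$ in the interior of $V$, and what you are really computing is the meridian of the reglued solid torus $V'=(A\times[1,3/2])\cup_{\mathcal{T}_\alpha^q}(A\times[3/2,2])$ as a homology class on $\partial V'=\partial V$. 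A direct computation with a meridian disk of $V'$ built from two half-disks (one in each piece, matched along the twisted arc) shows this class is $[c]+q[\alpha]$, confirming the $(1,q)$ slope. With these cosmetic fixes your argument is complete.
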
 

To control the diameter of subsurface projections of the disk sets of the 
Dehn surgered manifolds we begin 
with some preliminary discussion about Dehn twists.

\begin{lem}\label{modify}
Let $k>0$, let $A\subset \Sigma$ be an annulus and let
${\cal T}_A$ be the left Dehn twist about the core curve of $A$. Let 
$c,d\in {\cal C\cal G}(\Sigma)$ be simple closed curves which have an
essential intersection with $A$.
Then there exists a number $q\in \mathbb{Z}$ such that 
the diameter of the subsurface projection of $c,{\cal T}_A^qd$ into $A$ 
is contained in 
$[k,k+2]$. Up to perhaps replacing $q$ by $q\pm 1$, the number
$q$ is unique if we require in addition that its absolute value is
minimal with this property.
%Furthermore, if $S\subset \Sigma$ is a subsurface different from
%$A$ then the diameter of the subsurface projection of 
%$c,d$ and $c,T_A^md$ into $S$ coincide. 
\end{lem}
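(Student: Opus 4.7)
My plan is to reduce the lemma to the standard description of how a Dehn twist acts on the arc graph of an annulus. Following Section~2.4 of \cite{MM00}, identify $\mathcal{CG}(A)$ with the graph whose vertices are isotopy classes (fixing endpoints) of essential properly embedded arcs in the compactified annular cover $\tilde{\Sigma}_A$ of $\Sigma$ corresponding to $A$, where two arcs are joined by an edge if they can be realized with disjoint interiors. For any two such arcs $\tilde{a},\tilde{b}$ with $n\ge 1$ transverse interior intersections (minimized in the isotopy class fixing endpoints) one has $d_{\mathcal{CG}(A)}(\tilde{a},\tilde{b})=n+1$, while disjoint arcs are at distance~$1$. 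The key input I would use is that ${\cal T}_A$ acts on $\mathcal{CG}(A)$ by the rule $d_{\mathcal{CG}(A)}(\tilde b,{\cal T}_A^q\tilde b)=|q|+1$ for $q\neq 0$, together with the equivariance $\pi_A({\cal T}_A^q d)={\cal T}_A^q\pi_A(d)$.

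The bulk of the work is to analyze the function
\[
F(q):=\mathrm{diam}_{\mathcal{CG}(A)}\bigl(\pi_A(c)\cup\pi_A({\cal T}_A^q d)\bigr).
\]
Fixing base arcs $\tilde c\in\pi_A(c)$ and $\tilde d\in\pi_A(d)$, I would first verify that there is an integer $q_0=q_0(\tilde c,\tilde d)$ with $d_{\mathcal{CG}(A)}(\tilde c,{\cal T}_A^q\tilde d)=|q-q_0|+1$ for $q\neq q_0$; here $q_0$ is essentially the signed algebraic intersection of $\tilde c$ and $\tilde d$. Using the fact that each of $\pi_A(c)$ and $\pi_A(d)$ has $\mathcal{CG}(A)$-diameter at most $1$, this yields $\bigl|F(q)-(|q-q_0|+1)\bigr|\le 2$, whence $F(q)\to\infty$ as $|q|\to\infty$, while $|F(q{+}1)-F(q)|\le 1$ for all $q$ and the equality $F(q{+}1)-F(q)=\mathrm{sgn}(q-q_0)$ holds once $|q-q_0|$ is sufficiently large.

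Given these properties both claims of the lemma follow directly. Existence of $q$ with $F(q)\in[k,k+2]$ is a discrete intermediate value argument: $F$ is $\mathbb{Z}$-valued with $|F(q{+}1)-F(q)|\le 1$ and tends to $\infty$ on both tails of $\mathbb{Z}$, so it takes every sufficiently large value and in particular lands in $[k,k+2]$. For the uniqueness statement, eventual strict monotonicity of $F$ on each side of $q_0$ with unit step shows that $F^{-1}([k,k+2])$ consists of two intervals of consecutive integers, each of length at most~$3$; minimizing $|q|$ over this finite set then determines $q$ up to an ambiguity of at most $\pm 1$.

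I do not anticipate a significant obstacle. The one subtlety to keep track of is the small additive discrepancy arising because the projections $\pi_A(c)$ and $\pi_A(d)$ are finite sets of arcs rather than single arcs, and because the definition of $d_{\mathcal{CG}(A)}$ carries a built-in offset of $1$ relative to the interior intersection number. Both are absorbed into the width-$2$ window $[k,k+2]$ and the $\pm 1$ ambiguity allowed by the statement.
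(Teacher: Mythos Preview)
Your proposal is correct and follows essentially the same approach as the paper: both arguments work in the annular cover $\tilde\Sigma_A$, identify the subsurface projection distance with the intersection number of lifted arcs (up to $\pm 1$), and exploit that ${\cal T}_A$ shifts this number by one per twist. The only difference is packaging: the paper directly writes down $q=n\mp k$, where $n$ is the power making $\tilde d$ disjoint from $\tilde c$ (your $q_0$), whereas you phrase the same computation as a discrete intermediate value argument for the function $F(q)$; both yield the claimed $q$ and its essential uniqueness.
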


The ambiguity in the choice of $q$ in the last statement of the lemma reflects the
fact that the subsurface projection into an annulus is only well defined up to the 
ambiguity of possibly adding a single positive or negative twist. 

\begin{proof}
The subsurface projection into $A$ of two simple closed curves $c,d$ on $\Sigma$
is defined as follows. Equip $\Sigma$ with an auxiliary hyperbolic metric.
Consider the covering $V$ of $\Sigma$ with fundamental
group $\pi_1(A)$. This covering is an annulus, equipped with a complete 
hyperbolic metric. Both ends of $V$ have infinite volume
and hence the ideal boundary of $V$ consists of two disjoint circles. 
Since $c,d$ intersect $A$ essentially, there are components $\tilde c,\tilde d$ of 
a lift of $c,d$ to $V$ 
which are arcs abutting on the two distinct components of 
the ideal boundary of $V$. Up to an additive constant of $\pm 1$, the diameter of the 
subsurface projection of $c,d$ into 
$A$ then equals the number of essential intersections of $\tilde c,\tilde d$. 

Let $\hat d$ be an essential arc in $V$ with the same endpoints as $\tilde d$ which
is disjoint from $\tilde c$ except  perhaps at its endpoints. 
Up to homotopy with fixed endpoints, we have $\hat d={\cal T}_A^n\tilde d$ for some
$n\in \mathbb{Z}$; note that this 
makes sense since the Dehn twist ${\cal T}_A$ lifts to $V$. 
Write $q=n-k$ if $n\geq 0$, and write $q=n+k$ if $n<0$. Then
${\cal T}_A^{q}(\tilde d)$ has $k\pm 1$ intersections with
$\tilde c$. Thus the diameter of the subsurface projections into $A$ of the curves
$c,{\cal T}_A^{q}d$ is contained in the interval $[k-1,k+1]$.
Furthermore, up to perhaps replacing $q$ by $q\pm 1$, the number $q$ is
the unique number of minimal absolute value with this property. 
This shows the lemma.
\end{proof}

Let $\zeta:[0,n]\to {\cal C\cal G}(\Sigma)$ be a minimal geodesic
connecting ${\cal D}_1$ to ${\cal D}_2$ where as before, ${\cal D}_1\cup
{\cal D}_2$ are the disk sets of $M_f$.
Let $Y\subset \Sigma$ be a 
proper essential strongly incompressible subsurface
such that $d_Y({\cal D}_1,{\cal D}_2)\geq m+2p$ where $m,p>0$ are as in Theorem \ref{subsurfaceprojection}.
Let $\alpha\subset \partial Y$
be a boundary curve and let ${\cal T}_\alpha$ be the left Dehn twist about $\alpha$. 
By the choice of $m$, there exists $j>0$ be such that $\zeta(j)$ is disjoint from $Y$. 
Let $\ell >2m$ and let $q\in \mathbb{Z}$ be as in Lemma \ref{modify} 
of minimal absolute value such that
the diameter of the subsurface projection of $\zeta(0), {\cal T}_\alpha^q\zeta(n)$ into 
the annulus $A\subset \Sigma$ with core curve $\alpha$ is contained in the interval 
$[\ell,\ell+2]$.

Since the Dehn twist
${\cal T}_\alpha^q$ fixes the curve $\zeta(j)$, we can define a modified 
path $\zeta_\alpha:[0,n]\to {\cal C\cal G}(\Sigma)$ by 
\[ \zeta_\alpha(u) =\begin{cases} \zeta(u)  &\text{ for } u\leq j\\
{\cal T}_\alpha^q(\zeta(u)) & \text{ for } u\geq j\end{cases}.\]
Note that by Theorem \ref{comar}, the curve $\zeta_\alpha$ connects the 
disk sets
$\hat {\cal D}_1,\hat {\cal D}_2$ of the manifold $\hat M$ obtained from 
$M$ by $(1,q)$-Dehn surgery along $\alpha$. 
We have

\begin{lem}\label{deletesub}
\begin{enumerate}[i)]
\item 
The path $\zeta_\alpha$ is a geodesic in ${\cal C\cal G}(\Sigma)$. 
\item If $Y\subset \Sigma$ is not an annulus then 
$d_Y(\zeta_\alpha(0),\zeta_\alpha(n))=
d_Y(\zeta(0),\zeta(n))$.  
\item Let  $Z\subset \Sigma$ be a proper
  incompressible subsurface such that
$d_{\cal C\cal G}(\partial Y,\partial Z)\geq 5$.
Assume that $d_Z(\zeta(0),\zeta(n))\geq m+2p$ and that $\ell\in (0,n)$ is such that
$\zeta(\ell)$ is disjoint from 
$Z$. Then $\vert \ell-j \vert \geq 3$, and
\begin{align} 
  d_Z(\zeta(0), {\cal T}_\alpha^q \zeta(n))\geq d_Z(\zeta(0),\zeta(n))-2p
 &\text{ if } \ell <j,  \notag\\ 
d_{T^q_\alpha Z}(\zeta(0),{\cal T}^q_\alpha \zeta(n))\geq 
d_Z(\zeta(0),\zeta(n))-2p & \text{ if } \ell>j.\notag\end{align}
\end{enumerate}
\end{lem}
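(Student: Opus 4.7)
The plan is to prove the three parts in sequence, exploiting two basic facts: that the Dehn twist $\mathcal{T}_\alpha$ fixes $\zeta(j)$ (since $\zeta(j)$ is disjoint from $Y$ and hence from $\alpha\subset \partial Y$), and that its action on subsurface projections is controlled by whether $\alpha$ is disjoint from, boundary-parallel to, or crossing the subsurface in question.

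For (ii), since $\alpha$ is a boundary component of the non-annular subsurface $Y$, the twist $\mathcal{T}_\alpha$ restricts to a boundary twist on $Y$ which is isotopic to the identity on $Y$; thus subsurface projection to $Y$ is $\mathcal{T}_\alpha^q$-invariant and $d_Y(\zeta_\alpha(0),\zeta_\alpha(n))=d_Y(\zeta(0),\mathcal{T}_\alpha^q\zeta(n))=d_Y(\zeta(0),\zeta(n))$. For (i), combine this with $d_Y({\cal D}_1,{\cal D}_2)\geq m+2p\geq m$ and Theorem \ref{subsurfaceprojection}: any geodesic $\eta$ from $\zeta(0)$ to $\mathcal{T}_\alpha^q\zeta(n)$ must pass through some curve $\gamma$ disjoint from $Y$ and hence disjoint from $\alpha$, so $\mathcal{T}_\alpha^q\gamma=\gamma$. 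The length of $\eta$ is then at least $d(\zeta(0),\gamma)+d(\gamma,\mathcal{T}_\alpha^q\zeta(n))=d(\zeta(0),\gamma)+d(\gamma,\zeta(n))\geq d(\zeta(0),\zeta(n))=n$. Together with the obvious upper bound coming from the path $\zeta_\alpha$ of length $n$ (well-defined and continuous because $\mathcal{T}_\alpha^q$ fixes the gluing point $\zeta(j)$), this yields (i).

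For (iii), the bound $|\ell-j|\geq 3$ follows from the triangle inequality in ${\cal C\cal G}(\Sigma)$: both $\zeta(j)$ and $\zeta(\ell)$ have ${\cal C\cal G}$-distance at most $1$ from $\partial Y$ and $\partial Z$ respectively, so $d_{\cal C\cal G}(\zeta(j),\zeta(\ell))\geq d_{\cal C\cal G}(\partial Y,\partial Z)-2\geq 3$, and this distance equals $|j-\ell|$ since $\zeta$ is a geodesic. For the projection bounds, I apply the second part of Theorem \ref{subsurfaceprojection} twice: once to $\zeta$ with subsurface $Z$ and distinguished index $\ell$, and once to the geodesic $\zeta_\alpha$ from part (i) with an appropriate twisted subsurface and distinguished index. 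In the case $\ell<j$, the curve $\zeta_\alpha(\ell)=\zeta(\ell)$ is itself disjoint from $Z$ and $\ell+3\leq j$, so I take endpoints $a=0,b=j$ in both applications and compare them via $\zeta_\alpha(0)=\zeta(0)$ and $\zeta_\alpha(j)=\zeta(j)$. In the case $\ell>j$, the curve $\zeta_\alpha(\ell)=\mathcal{T}_\alpha^q\zeta(\ell)$ is disjoint from the twisted subsurface $\mathcal{T}_\alpha^q Z$ (but not from $Z$), so I must replace $Z$ by $\mathcal{T}_\alpha^q Z$ for the application to $\zeta_\alpha$ and take $a=j,b=n$; the identity $d_{\mathcal{T}_\alpha^q Z}(\zeta(j),\mathcal{T}_\alpha^q\zeta(n))=d_Z(\zeta(j),\zeta(n))$, valid because $\mathcal{T}_\alpha^q\zeta(j)=\zeta(j)$ and subsurface projection is equivariant under the mapping class group, allows one to chain the two estimates and obtain the stated inequalities with constant $2p$.

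The main subtlety is identifying the correct subsurface ($Z$ versus $\mathcal{T}_\alpha^q Z$) for the application of Theorem \ref{subsurfaceprojection} to $\zeta_\alpha$ in part (iii). Since $d_{\cal C\cal G}(\partial Y,\partial Z)\geq 5>1$ forces $\alpha$ to intersect $\partial Z$ essentially, $\mathcal{T}_\alpha^q$ genuinely moves $Z$, and which of the two subsurfaces actually witnesses a large projection along $\zeta_\alpha$ is governed by which side of the gluing point $j$ the disjoint-from-$Z$ index $\ell$ lies on.
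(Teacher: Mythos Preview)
Your proof is essentially correct and follows the same approach as the paper's: for (i) you both use that any geodesic from $\zeta(0)$ to $\mathcal{T}_\alpha^q\zeta(n)$ must pass through a $\mathcal{T}_\alpha^q$-fixed curve, then untwist; (ii) is the same boundary-twist observation; and for (iii) you both apply Theorem~\ref{subsurfaceprojection} twice, pivoting at the shared vertex $\zeta(j)=\zeta_\alpha(j)$.

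There is one small gap. Your justification of (i) invokes (ii) to ensure $d_Y(\zeta(0),\mathcal{T}_\alpha^q\zeta(n))\geq m$, but (ii) is stated only for non-annular $Y$. When $Y$ is an annulus, $\alpha$ is isotopic to its core curve, so $Y$ coincides with the annulus $A$ about $\alpha$, and the required lower bound $d_Y(\zeta(0),\mathcal{T}_\alpha^q\zeta(n))\geq \ell>2m$ comes directly from the construction of $q$ via Lemma~\ref{modify} rather than from (ii). The paper handles this case explicitly; once you add that sentence, your argument for (i) is complete and matches the paper's.
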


\begin{proof}
  The path $\zeta_\alpha$ has the same length as $\zeta$.
We claim that it is a geodesic in ${\cal C\cal G}(\Sigma)$.

To show the claim 
let $\beta:[0,b]\to {\cal C\cal G}(\Sigma)$ be a geodesic connecting
$\beta(0)=\zeta_\alpha(0)$ to $\beta(b)=\zeta_\alpha(n)$. 
Its length $b$ is at most the length
$n$ of the path $\zeta_\alpha$. Now note that 
if $Y$ is not an annulus, then 
$d_Y(\zeta(0),{\cal T}_\alpha^q \zeta(n))$ coincides with
$d_Y(\zeta(0), \zeta(n))$,
and if $Y$ is an annulus then 
$d_Y(\zeta(0),{\cal T}_\alpha^q\zeta(n))\geq 2m$ by construction. 
Thus any geodesic
in ${\cal C\cal G}(\Sigma)$ connecting $\zeta(0)=\zeta_\alpha(0)$
to $\zeta_\alpha(n)={\cal T}_\alpha^q(\zeta(n))$
has to pass through a curve disjoint from $Y$.

Let $a\in [0,b]$ be such that $\beta(a)$ is disjoint from 
$Y$. Then $\beta(a)$ is left fixed by ${\cal T}_\alpha^{-q}$
and therefore we can define 
an edge path $\hat \beta$ of length $b$ connecting $\zeta(0)$ to $\zeta(n)$ by
\[ \hat \beta(u)  =  \begin{cases} \beta(u)  & \text{ for }   u\leq a\\
{\cal T}_\alpha^{-q}\beta(u)  & \text{  for } u\geq a\end{cases}.\]
As $\zeta$ is a geodesic, the length $b$ of $\hat \beta$ is not smaller than the length 
$n$ of $\zeta$. Thus we have $b=n$ and consequently $\zeta_\alpha$ is a geodesic as claimed.
This shows the first part of the lemma. 

The second part of the lemma follows from the fact that the 
projection of a simple closed curve $c$ with an essential intersection with 
a proper essential non-annular subsurface $Y$ of $\Sigma$ equals the union of 
the intersection arcs $c\cap Y$. Thus if $c$ is replaced by $T_\alpha^q(c)$ for a 
boundary component $\alpha$ of $Y$, then these subsurface projections coincide.

To show the third part of the lemma, assume without loss of generality that
$\ell <j$, the case $\ell >j$ follows from an application of ${\cal T}_\alpha^q$. 
Since the distance in ${\cal C\cal G}(\Sigma)$ between $\partial Z$ and 
$\partial Y$ is at least 5, and a curve disjoint from $\partial Z,\partial Y$ has
distance at most 1 to $\partial Z,\partial Y$, 
we have $\vert j-\ell \vert\geq 3$ and hence 
Theorem \ref{subsurfaceprojection} shows that 
\[d_Z(\zeta(0),\zeta(j))\geq d_Z(\zeta(0),\zeta(n))- p\geq m+p.\] 
 Now the restriction of the geodesic $\zeta_\alpha$ to 
$[0,j]$ coincides with the restriction of the geodesic $\zeta$, and hence the same estimate
holds true for $\zeta_\alpha$ as well. 
As $\zeta_\alpha$ is a geodesic, and $\zeta_\alpha[0,j]$ passes through a curve disjoint from $Z$, 
the subsegment  
$\zeta_\alpha[j,n]$ does not pass through a curve disjoint from $Z$. 
Theorem \ref{subsurfaceprojection} then shows that  
$d_Z(\zeta_\alpha(0),\zeta_\alpha(n))\geq 
d_Z(\zeta_\alpha(0),\zeta_\alpha(j))- p\geq m-p.$ 
But 
$d_Z(\zeta_\alpha(0),\zeta_\alpha(j))=d_Z(\zeta(0),\zeta(j)) $ and consequently we have
$d_Z(\zeta_\alpha(0),\zeta_\alpha(n))\geq d_Z(\zeta(0),\zeta(n))-2p$ as claimed. 
\end{proof}
 
We are now ready to complete the proof of Theorem \ref{hyperbolizationfinal - introduction}
from the introduction. 

\begin{thm}\label{hyperbolizationfinal}
For every $g\geq 2$ there exist
numbers $R=R(g)>0$ and $C=C(g)>0$  with the following property. 
Let $M_f$ be a closed 3-manifold
with Heegaard surface $\Sigma$ of genus $g$, gluing map $f$ and disk sets 
${\cal D}_1\cup {\cal D}_2$, and assume that   
$d_{\cal C\cal G}({\cal D}_1,{\cal D}_2)\geq R$.
Then $M_f$ admits a hyperbolic metric, and the volume of $M_f$ for this 
metric is at least $Cd_{\cal C\cal G}({\cal D}_1,{\cal D}_2)$.
\end{thm}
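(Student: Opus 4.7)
The proof has two parts, the existence of the hyperbolic metric and the volume bound, and a dichotomy drives the first part. Fix constants $p, m$ as in Lemma \ref{homotopyidentity} and Theorem \ref{subsurfaceprojection}, and let $k = k(\Sigma, \varepsilon_0)$ be as in Theorem \ref{minsky} for a small parameter $\varepsilon_0 > 0$ to be chosen. Let $\zeta$ be a minimal geodesic in $\mathcal{C}\mathcal{G}(\Sigma)$ connecting $\mathcal{D}_1$ to $\mathcal{D}_2$. The plan is to distinguish two cases. In the \emph{easy case}, a sufficiently long subsegment $\hat{\zeta} \subset \zeta$ has endpoints with no subsurface projection of diameter at least $k$ into any proper essential subsurface of $\Sigma$; then Theorem \ref{relative} applies directly and produces a hyperbolic metric on $M_f$. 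In the \emph{hard case}, every sufficiently long subsegment of $\zeta$ contains an intermediate vertex disjoint from some proper essential subsurface $Y \subset \Sigma$ with $\mathrm{diam}_Y(\mathcal{D}_1 \cup \mathcal{D}_2) \geq k$.

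In the hard case, assuming $R = R(g)$ is large, I would select two such subsurfaces $Y_1, Y_2$ with $d_{\mathcal{C}\mathcal{G}}(\partial Y_i, \mathcal{D}_1 \cup \mathcal{D}_2) \geq p$ and $d_{\mathcal{C}\mathcal{G}}(\partial Y_1, \partial Y_2) \geq 5$, and for each $i$ pick a boundary curve $\alpha_i \subset \partial Y_i$. Drilling $\alpha_1 \cup \alpha_2$ from $M_f$ produces an open manifold $\hat{M}_f$; by Proposition 3.1 of \cite{FSV19} it is irreducible, atoroidal and Haken, so Thurston's hyperbolization theorem supplies a finite-volume hyperbolic structure on $\hat M_f$ with two rank-two cusps $\hat{C}_1, \hat{C}_2$. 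The core step is now to show that for suitable meridional fillings of $\hat C_1, \hat C_2$, one obtains back a manifold diffeomorphic to $M_f$ and the meridional curves on $\partial \hat C_i$ have large flat length. By Theorem \ref{comar}, $(1,q_i)$-Dehn surgery along $\alpha_i$ is realized by twisting the marking for $M_f$ by $\mathcal{T}_{\alpha_i}^{q_i}$, and by Lemma \ref{deletesub} the subsurface projection estimates for $Y_1, Y_2$ are preserved (up to a uniform error $2p$) under such twisting. For each large integer $\ell$ choose $q_i(\ell)$ as in Lemma \ref{modify} so that the projection of the disk sets into the annulus around $\alpha_i$ has size in $[\ell, \ell+2]$; the resulting surgered manifolds $N^\ell$ are all diffeomorphic to $M_f$, hyperbolic by Mostow rigidity, and Theorem \ref{minsky} applied inside $Y_i$ (whose large subsurface projection is preserved by Lemma \ref{deletesub}(ii),(iii)) forces the geodesic representative of $\partial Y_i$ in $N^\ell$ to have length at most $\varepsilon_0$.

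The main obstacle is precisely this step: one must verify that for the hyperbolic structures on the drilled manifold $\hat M_f$, as the meridional slopes on $\partial \hat C_i$ vary, the corresponding flat lengths tend to infinity along some sequence, while the filled manifolds $N^\ell$ remain diffeomorphic to $M_f$ with the same Heegaard surface and preserve large subsurface projection into $Y_j$ for $j \neq i$. The ingredient that makes this work is that $\varepsilon_0$-short geodesics in $N^\ell$ are necessarily cores of very deep Margulis tubes whose meridians are correspondingly long on their boundaries, combined with the compatibility of Comar's surgery description and the Masur--Minsky subsurface projection machinery. Once the meridional lengths on $\partial \hat{C}_1, \partial \hat{C}_2$ exceed the filling constant $L(\varepsilon_0, \kappa, m)$ from Theorem \ref{filling}, that theorem produces a hyperbolic metric on $M_f$ close in $C^2$ to the drilled-plus-filled model.

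For the volume bound, I would argue along $\zeta$ as follows. Subdivide $\zeta$ into disjoint subsegments of bounded length. Each subsegment is of one of two types: either it has endpoints with no subsurface projection of diameter $\geq k$, in which case Theorem \ref{relative} (more precisely, Proposition \ref{hyperbolization}) contributes a definite amount of volume to $M_f$ via an embedded almost-isometric copy of a bounded-geometry quasi-Fuchsian piece; or it contains a vertex disjoint from a subsurface $Y$ with large projection, in which case $\partial Y$ has short length in $M_f$ by Theorem \ref{minsky} and hence lies in a deep Margulis tube contributing definite volume to the $\varepsilon$-thick part of the tube. Since these subsegments are uniformly separated in $\zeta$ and the corresponding volume contributions are from disjoint (or controllably overlapping) regions of $M_f$, summing over $\mathrm{len}(\zeta) = d_{\mathcal{C}\mathcal{G}}(\mathcal{D}_1, \mathcal{D}_2) / O(1)$ subsegments gives the linear lower bound $\mathrm{vol}(M_f) \geq C d_{\mathcal{C}\mathcal{G}}(\mathcal{D}_1, \mathcal{D}_2)$ with $C = C(g)$.
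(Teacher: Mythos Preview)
Your overall architecture matches the paper's: the same dichotomy (a long bounded-combinatorics subsegment of $\zeta$ versus two subsurfaces $Y_1,Y_2$ with large projection), the same toolkit (Comar's theorem, Lemma~\ref{deletesub}, Theorem~\ref{minsky}, the drilling/filling theorems), and the same endgame (fill the doubly drilled $\hat M_f$ along long meridians). But there is a genuine error in the heart of the hard case.

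You write that ``the resulting surgered manifolds $N^\ell$ are all diffeomorphic to $M_f$, hyperbolic by Mostow rigidity.'' Both assertions are false. The manifold obtained from $M_f$ by $(1,q_i)$-surgery along $\alpha_i$ is \emph{not} diffeomorphic to $M_f$ for $q_i\neq 0$; by Theorem~\ref{comar} it is the Heegaard manifold for the \emph{different} gluing $\mathcal{T}_{\alpha_i}^{q_i}\circ f$. And Mostow rigidity does not produce hyperbolic structures, it only says they are unique. This breaks your chain of reasoning at the crucial point: you cannot apply Theorem~\ref{minsky} to $N^\ell$ until you know it is hyperbolic, and that hyperbolicity is exactly what must be established. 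The paper's fix is to drill \emph{one} curve at a time: $M_i=M_f\setminus\alpha_i$ is irreducible atoroidal Haken, hence hyperbolic by Thurston; then for large $q$ the $(1,q)$-filled manifold $\hat M_i$ is hyperbolic by Theorem~\ref{filling} (the $(1,q)$ slope is long on $\partial C_i$ since only finitely many slopes are short on any cusp torus). Now Theorem~\ref{minsky} applies to the hyperbolic $\hat M_i$ and, via Lemma~\ref{deletesub}, forces $\alpha_{i+1}$ short in $\hat M_i$ independently of $q$. Since $\hat M_i\to M_i$ geometrically as $q\to\infty$, the same holds in $M_i$; the deep Margulis tube about $\alpha_{i+1}$ in $M_i$ then has long meridian (Theorem~\ref{futer}), which after drilling transfers to the corresponding $M_f$-meridian on $\partial\hat C_{i+1}$ in $\hat M_f$. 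Running this for both $i$ gives both meridians long and Theorem~\ref{filling} finishes.

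Your volume argument also needs adjustment. Proposition~\ref{hyperbolization} does not give a per-segment volume contribution from each bounded-combinatorics subsegment; its volume estimate is $v\cdot d_{\mathcal{CG}}(\hat\zeta,\tilde\zeta)$ for the region \emph{between two} such segments. The paper therefore argues a dichotomy on the indices $\ell_0,\ell_1$ of the first and last bounded-combinatorics $b$-blocks: if $\ell_1-\ell_0\ge n/(2b)$ one invokes Proposition~\ref{hyperbolization} once for the pair; otherwise at least $n/(4b)$ of the remaining blocks each contain a vertex disjoint from some $Y_k$ with large projection, and Theorem~\ref{minsky} makes $\partial Y_k$ short, so each such $Y_k$ contributes a Margulis tube of definite volume. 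The disjointness of these tubes (at most three consecutive blocks can share a $Y_k$) gives the linear lower bound. Your ``either/or per segment, then sum'' does not directly work because the quasi-Fuchsian pieces from Proposition~\ref{hyperbolization} are not localized to single segments and their disjointness is not established.
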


\begin{proof}
Fix a Margulis constant $\mu$ for hyperbolic 3-manifolds and 
let $\xi\in (0,1/4)$ be a sufficiently small constant.
Let $L=L(\xi,1/100,2)>0$ be as in Theorem \ref{filling}.
By Theorem \ref{futer}, there exists a constant
$\varepsilon >0$ such that the following holds true.
If $M$ is any hyperbolic 3-manifold and if $N\subset M$ is a hyperbolic
solid torus whose core geodesic has length less than
$\varepsilon$, then the length of the meridian of the tube
on its boundary is at least $2L$.
For this number $\varepsilon$ let $k=k(\Sigma,\varepsilon)>0$
be as in Theorem \ref{minsky}.
For the number $k$ and the above number $\xi>0$ let
$b=b(\Sigma,k,\xi)>0$ be as in Theorem \ref{relative}.
Let $p\geq 3$ be as in \Cref{minsky}.

Consider   
a closed 3-manifold $M_f$, constructed from a gluing map
  $f:\partial H_1\to \partial H_2$. Assume that the Hempel distance 
  of $M_f$, that is, the distance in
  ${\cal C\cal G}(\Sigma)$ between the disk sets
  ${\cal D}_1,{\cal D}_2$, is larger than $2b+2p+3$.
  There are two possibilities.

  In the first case, a minimal geodesic $\zeta$ in ${\cal C\cal G}(\Sigma)$ 
  connecting ${\cal D }_1$ to ${\cal D}_2$ contains a subsegment of
    length at least $b$ whose endpoints do not have any subsurface
    projections of diameter at least $k$.
    Then $M_f$ fulfilles the assumptions in Proposition \ref{hyperbolization} and
    the existence of a hyperbolic metric on $M_f$ is an immediate consequence of
    \Cref{hyperbolization}.

    In the second case, no such subsegment exists. Then there exist at least
    two distinct proper essential incompressible subsurfaces $Y_1,Y_2$
    of $\Sigma$ whose boundaries have distance at least $5$ 
   in ${\cal C\cal G}(\Sigma)$, distance at least $p$ from
    ${\cal D}_1\cup {\cal D}_2$ 
    and such that the
    diameter of the subsurface projection of ${\cal D}_1\cup {\cal D}_2$ into
    $Y_1,Y_2$ is at least $k$. Namely, 
    in this case there exists such a subsurface $Y_1$, and there exist
    one or two points on the minimal geodesic $\zeta$ in ${\cal C\cal G}(\Sigma)$ 
    connecting ${\cal D}_1$ to ${\cal D}_2$ which are disjoint from $Y_1$, and these
    points are of distance one in ${\cal C\cal G}(\Sigma)$. 
    Such a point $\zeta(m)$ decomposes
    the geodesic into two subsegments, one of which has length at least 
    $b+p+1$. We then use this subsegment to find a second subsurface $Y_2$ of 
    $\Sigma$ with these properties and whose boundary if of distance at least $5$
    to the boundary of $Y_1$ in the curve graph of $\Sigma$.

    Let $\alpha_1,\alpha_2$ be a boundary component of $Y_1,Y_2$.
    Its distance in ${\cal C\cal G}(\Sigma)$ from ${\cal D}_1\cup {\cal D}_2$
    is at least $p\geq 3$ and hence by Proposition 3.1 of \cite{FSV19}, the 
3-manifold $M_i$ obtained by drilling $\alpha_i$ is 
irreducible atoroidal and Haken, with a single end $C_i$ which is homeomorphic
to $T\times [0,\infty)$ where $T=\partial C_i$  is a 2-torus.
The simple closed curve $\alpha_i\subset \partial Y_i$ determines
a distinguished free homotopy class $\beta_i$ 
on the boundary $\partial C_i$ of
$C_i$,  chosen so that the 3-manifold obtained by removing
$C_i$ and gluing a solid torus to the boundary $\partial C_i$ of
$M_i-C_i$ with 
meridian $\beta_i$ is just the manifold $M_f$.
We call the curve $\beta_i$ 
the \emph{meridian} of $M_f$ in the sequel.
By Theorem \ref{comar}, the manifold $M_i$ is obtained from
$M_f$ by $(1,\infty)$-Dehn surgery along a preferred
meridian-longitude pair for the boundary of a tube
about $\alpha_i$ in $M_f$.

By Thurston's hyperbolization theorem for irreducible atoroidal 
Haken manifolds (see \cite{T86}, \cite{Th86b}), $M_i$ admits 
a complete finite volume 
hyperbolic metric for which the end $C_i$ is a rank two cusp. 
Replace $M_i$ by a Dehn filling $\hat M_i$
which is obtained from $M_f$ by $(1,q)$-Dehn surgery
along a preferred meridian-longitude pair for the boundary
of a tube about $\alpha_i$ in $M_f$.
Theorem \ref{filling} shows that for sufficiently large $q$,
the manifold $\hat M_i$ admits a hyperbolic metric which is close
to the metric of $M_i$ away from the cusp $C_i$.

Since $\hat M_i$ is obtained from $M_f$ by $(1,q)$-surgery along 
along a preferred meridian-longitude pair for $M_f$, 
\Cref{comar} shows that the manifold $\hat M_i$ admits a Heegaard decomposition
with the same Heegaard surface $\Sigma$ as $M_f$. 
Let $\hat {\cal D}_1,\hat {\cal D}_2$ be the disk sets of $\hat M_i$ 
for this Heegaard decomposition. 
By Lemma \ref{deletesub}, the Heegaard distance of $\hat M_i$ coincides with 
the Heegaard distance of $M_f$, and the diameter 
of the subsurface projection of $\hat {\cal D}_1\cup \hat {\cal D}_2$
into $Y_{i+1}$ equals the diameter of the subsurface projection of 
${\cal D}_1\cup {\cal D}_2$ 
up to a uniformly bounded additive error
(indices are taken modulo 2).

As a consequence, the proper incompressible subsurface 
$Y_{i+1}$ fulfills the assumption of 
Theorem \ref{minsky} for the hyperbolic manifold $\hat M_i$ 
with Heegaard surface $\Sigma$. 
An application of Theorem \ref{minsky} shows that the length of 
the curve $\alpha_{i+1}$ in $\hat M_i$ is
less than $\varepsilon$. In particular, if we consider the meridian 
of the Margulis tube in $\hat M_i$ with core curve 
$\alpha_{i+1}$, viewed as a curve on the boundary of the Margulis tube 
defined by $\alpha_{i+1}$ in $\hat M_i$, then the length of this meridian 
is at least $2L$, independent
of the filling slope for the Dehn filling of $M_i$ which gives rise to $\hat M_i$.

Exchanging the roles of $M_1$ and $M_2$ in this argument shows the following.
For sufficiently large $q$, 
the manifold $N$ obtained from $M_f$ by $(1,q)$ Dehn surgery at both $\alpha_1,\alpha_2$
is hyperbolic, and the lengths of the
simple closed curves on the boundaries of the surgered Margulis tubes
which correspond to the meridians in $M_f$ (that is, which are obtained from
$N$ by $(1,-q)$-surgery) are at least $2L$.
Thus Theorem \ref{filling} shows that we can modify $N$ by Dehn surgery with slope
$(1,-q)$ at both $\alpha_1,\alpha_2$. The resulting
manifold is diffeomorphic to $M_f$, and it
carries a hyperbolic metric  whose restriction to the $\kappa_0$-thick part of 
$M_f$ is 
$\xi$-close in the $C^2$-topology to the restriction of the hyperbolic metric 
of $N$, where $\kappa_0>0$ is the constant with properties (P1),(P2) used in 
Section \ref{lengthbounds}. 
This completes the proof that $M_f$ admits a hyperbolic metric.

We are left with controlling the volume of this metric. 
Using the constant $k=k(\varepsilon)>0$ from Theorem \ref{minsky},
and for this number $k$ the integer   
$b=b(\Sigma,k,\varepsilon)$ from Theorem \ref{relative}, we find the following.
Denote by $n$ the Hempel distance $d_{\cal C\cal G}({\cal D}_1,{\cal D}_2)$ of 
$M_f$. 
Let $\zeta:[0,n]\to {\cal C\cal G}(\Sigma)$ be a shortest
geodesic in ${\cal C\cal G}(\Sigma)$
connecting ${\cal D}_1$ to ${\cal D}_2$. Let $Y_1,\dots,Y_s$
be the subsurfaces of $\Sigma$ with $d_{\cal C\cal G}(\partial Y_i,{\cal D}_1\cup {\cal D}_2)\geq p$
such that the diameter of the subsurface projection of 
${\cal D}_1\cup {\cal D}_2$ into $Y_i$ is at least $k$.
The geodesic $\zeta$ passes through
simple closed curves disjoint from $Y_i$.

Subdivide $\zeta$ into segments of length $b$. Let 
$\ell_0,\ell_1$, respectively, the smallest and largest integer
such that for the segment $[\ell_j b,(\ell_j +1)b]$, 
there exists no $u\in [\ell_j b,(\ell_j +1)b)$ so that
$\zeta(u)$ is disjoint from one of the surfaces $Y_i$ $(j=0,1)$.
There are now two possibilities. In the first case, 
we have $\ell_1-\ell_0\geq n/2b$. By Proposition \ref{hyperbolization} and its proof,
we conclude that in this case, the volume of $M_f$ is at least 
$v(d_{\cal C\cal G}(\zeta(b(\ell_0+1)),\zeta(b(\ell_1-1)))\geq 
vb(\ell_1-\ell_0-2)\geq vn/2-2vb$ which gives the required bound 
up to adjusting constants.

On the other hand, if $\ell_1-\ell_0\leq n/2b$ then 
each of the  segments $\zeta|_{[b\ell,b(\ell+1))}$ for $\ell <\ell_0$ or 
$\ell >\ell_1$ contains at least one curve which is disjoint from a 
subsurface with large subsurface projection. There are at least 
$\ell_0 +\lfloor \frac{n}{b}\rfloor -\ell_1\geq \lfloor n/2b\rfloor$ such segments. 
For each of these segments $[kb,(k+1)b)$, 
there exists at least one subsurface $Y_k$ such that 
$d_{Y_k}(\zeta(0),\zeta(n))$ is large, and such that $\zeta(u)$ is disjoint from 
$Y_k$ for some $u\in [kb,(k+1)b)$.
Now if $\zeta(u),\zeta(s)$ are both disjoint from $Y_k$, then 
$\vert u-s\vert \leq 2$ and hence if $Y_{k_1}=Y_{k_2}$ then 
$\vert k_1-k_2\vert \leq 2$. As a consequence,  
the number $s$ of such distinct subsurfaces is at least $\lfloor n/2b\rfloor/2$.

By Theorem \ref{minsky}, for each $i\leq s$ the total length of the geodesic representatives
of the boundary curves $\partial Y_i$ 
of the surface $Y_i$
is not bigger than $\varepsilon$ and therefore a boundary component of 
$Y_i$ is the core curve of a Margulis tube in $M_f$. These Margulis tubes are pairwise disjoint, 
and their volumes are bounded from below by a fixed number $w>0$ as this is already
true for the one-neighborhoods of their boundary tori. 
In other words, each of the tubes 
contributes at least the fixed amount $w$ to the
volume of $M_f$, independent of any choices. Adding up shows  
that the volume of $M_f$ and is at least $Cn$ where $C>0$ is a constant
only depending on $b$ and hence only depending on $\Sigma$.
\end{proof}

\begin{rem}\normalfont  
  Our construction for manifolds with Heegaard splitting $\Sigma$ and
  large subsurface projection of the disk sets ${\cal D}_1\cup {\cal D}_2$
into a proper essential non-annular subsurface $Y$ of $\Sigma$   
gives less information than the article \cite{FSV19}. Namely,
in contrast to these results, we do not obtain any information
on the shape of boundary tori of Margulis tubes arising
from such large subsurface projections which are reminiscent of 
the model manifold theorem for quasi-fuchsian groups in \cite{M10}.
\end{rem}

%\newpage
 
%\bigskip

%\newpage
 
 \bigskip
 
 %\begin{appendix}
 %\section{Some additional estimates}
 
%The next remark is concerned with the \(C^0\)-estimates in the more general situation of \Cref{Pinching without inj radius bound - introduction}.

%\end{appendix}

%\bibliography{Literatur}
%\bibliographystyle{alpha}

\bigskip

\noindent
MATH. INSTITUT DER UNIVERSIT\"AT BONN\\
ENDENICHER ALLEE 60, 53115 BONN, GERMANY\\
e-mail: ursula@math.uni-bonn.de

\bigskip

\noindent
MATH. INSTITUT DER UNIVERSIT\"AT BONN\\
ENDENICHER ALLEE 60, 53115 BONN, GERMANY\\
e-mail: fjaeckel@math.uni-bonn.de

%\bigskip
%\noindent 

\end{document}